\definecolor{dark-red}{rgb}{0.5,0.15,0.15}
\definecolor{dark-blue}{rgb}{0.15,0.15,0.6}
\definecolor{dark-green}{rgb}{0.15,0.6,0.15}
\numberwithin{equation}{section}% makes equat numb contain the section
\newtheorem*{thmx*}{Theorem}
\newtheorem{Thm}[equation]{Theorem}
\newtheorem*{Thm*}{Theorem}
\newtheorem{Prop}[equation]{Proposition}
\newtheorem{Lem}[equation]{Lemma}
\newtheorem{Cor}[equation]{Corollary}
\newtheorem*{Que*}{Question}
\theoremstyle{remark}
\newtheorem{Def}[equation]{Definition}
\newtheorem{Ter}[equation]{Terminology}
\newtheorem{Not}[equation]{Notation}
\newtheorem{Exa}[equation]{Example}
\newtheorem{Hyp}[equation]{Hypothesis}
\newtheorem*{Hyp*}{Hypothesis}
\newtheorem{Rem}[equation]{Remark}
\newtheorem{Que}[equation]{Question}
\tikzset{
    labelrotatebelow/.style={anchor=north, rotate=90, inner sep=1.0mm}
}
\tikzset{
    labelrotateabove/.style={anchor=south, rotate=90, inner sep=1.0mm}
}
\tikzset{negated/.style={
        decoration={markings,
            mark= at position 0.5 with {
                \node[transform shape] (tempnode) {$\backslash \! \! \backslash$};
            }
        },
        postaction={decorate}
    }
}
\newcommand{\nc}{\newcommand}
\nc{\dmo}{\DeclareMathOperator}
\renewcommand{\emptyset}{\varnothing}
\nc{\Beren}[1]{{\color{MidnightBlue}#1}}
\nc{\Drew}[1]{{\color{Orange}#1}}
\nc{\Tobi}[1]{{\color{Green}#1}}
\nc{\Natalia}[1]{{\color{Yellow}#1}}
\nc{\Dout}[1]{\Drew{\sout{#1}}}
\nc{\Bout}[1]{\Beren{\sout{#1}}}
\nc{\Tout}[1]{\Tobi{\sout{#1}}}
\nc{\Nout}[1]{\Natalia{\sout{#1}}}
\nc{\Ybarcon}{\overline{Y}^{\mathrm{con}}}
\nc{\mfrak}{\mathfrak}
\nc{\overbar}[1]{\mkern 1.5mu\overline{\mkern-1.5mu#1\mkern-1.5mu}\mkern 1.5mu}
\renewcommand{\L}{\mathrm{L}}
\nc{\weaklyfinite}{weakly closed}
\nc{\finite}{strongly closed}
\nc{\BCdual}[1]{{#1}^*}
\nc{\LCore}{\mathrm{LCore}}
\nc{\Stovicek}{\v{S}\v{t}ov\'{i}\v{c}ek}
\nc{\ftriple}{f_{\natural}}
\nc{\unitC}{\unit_{\cat C}}%
\nc{\unitD}{\unit_{\cat D}}%
\nc{\unitS}{\unit_{\cat S}}%
\nc{\unitT}{\unit_{\cat T}}%
\nc{\Pone}{{\mathbb{P}^1}}
\nc{\InvSupp}[1]{\Supp^{-1}(#1)}%this is pretty bad notation; beren
\nc{\InvCosupp}[1]{\Cosupp^{-1}(#1)}
\nc{\closureP}{\overbar{\{\cat P\}}}
\nc{\closureQ}{\overbar{\{\cat Q\}}}
\nc{\singP}{\{\cat P\}}
\nc{\singQ}{\{\cat Q\}}
\nc{\singm}{\{\mathfrak m\}}
\dmo{\Inj}{Inj}
\dmo{\Rep}{Rep}
\dmo{\Res}{Res}
\dmo{\KInjdmo}{K}
\dmo{\Dbdmo}{mod}
\nc{\KInj}[1]{\KInjdmo(\Inj #1)}
\nc{\Dbmod}[1]{\Der^b(\Dbdmo #1)}
\dmo{\Viss}{vis}%lol
\nc{\Vis}{\Viss}
\nc{\vis}{\Vis}
\nc{\kappaaux}{g}
\nc{\kappaCh}{{\kappaaux(\cat C_h)}}
\nc{\kappam}{{\kappaaux({\mathfrak m})}}
\nc{\kappaP}{{\kappaaux(\cat P)}}
\nc{\kappaQ}{{\kappaaux(\cat Q)}}
\nc{\kappaCP}{{\kappaaux_{\cat C}(\cat P)}}
\nc{\kappaDP}{{\kappaaux_{\cat D}(\cat P)}}
\nc{\kappaCQ}{{\kappaaux_{\cat C}(\cat Q)}}
\nc{\kappaDQ}{{\kappaaux_{\cat D}(\cat Q)}}
\nc{\kappaphiB}{{\kappaaux(\phi(\cat B))}}
\nc{\kappaphiQ}{{\kappaaux(\varphi(\cat Q))}}
\dmo{\Sub}{Sub}
\nc{\SpEn}{\cat S_{E(n)}}
\nc{\SpEnf}{\cat S_n}
\nc{\Lcomp}{L^{\mathrm{com}}} %I made this and the next one commands because I'm unsure of the choice of notation
\nc{\Ucomp}{U^{\mathrm{com}}}
\nc{\bbullet}{{\scriptscriptstyle\hspace{-1pt}\bullet}}
\nc{\bullett}{{\scriptscriptstyle\bullet}\hspace{-1pt}}
\nc{\LF}{L\hspace{-0.2ex}F}
\dmo{\StMod}{StMod}
\dmo{\Proj}{Proj}
\dmo{\Ind}{Ind}
\nc{\SpG}{\Sp^G}
\nc{\EG}{\bbE_G}
\nc{\DEG}{\Der(\EG)}
\nc{\DE}{\Der(\bbE)}
\nc{\Prst}{{\cat P}\mathrm{r^{st}}}
\nc{\Mack}[2]{\mathrm{Mack}_{#1}(#2)}
\nc{\SC}{S\cat C}
\dmo{\fin}{{fin}}
\dmo{\DM}{DM}
\dmo{\fp}{fp}
\nc{\DMQ}{\DM_Q}
\dmo{\DerKal}{DMack}
\dmo{\Perf}{Perf}
\dmo{\coh}{coh}
\dmo{\Der}{D}
\dmo{\DMot}{DMot}
\dmo{\rmH}{H}
\dmo{\piu}{\underline{\pi}}
\dmo{\Sphere}{\mathbb{S}}
\nc{\HR}{{H \hspace{-0.1em}R}}
\nc{\HA}{{\rmH \hspace{-0.2em}\bbA}}
\nc{\HZ}{{H \hspace{-0.1em}\bbZ}}
\nc{\HZbar}{{\rmH \hspace{-0.2em}\underline{\bbZ}}}
\nc{\Fp}{{\bbF_{\hspace{-0.1em}p}}}
\nc{\HFp}{{\rmH \hspace{-0.15em}\bbF_{\hspace{-0.1em}p}}}
\nc{\DHZG}{\Der(\HZ_G)}
\nc{\DHZH}{\Der(\HZ_H)}
\nc{\DHZK}{\Der(\HZ_K)}
\nc{\DHZGN}{\Der(\HZ_{G/N})}
\nc{\DHZGG}{\Der(\HZ_{G/G})}
\nc{\DHZCp}{\Der(\HZ_{C_p})}
\nc{\DHZGprime}{\Der(\HZ_{G'})}
\nc{\DHZ}{\Der(\HZ)}
\nc{\mathfrakp}{\mathfrak{p}}
\nc{\mathfrakq}{\mathfrak{q}}
\nc{\mathfrakS}{\mathfrak{S}}
\nc{\mathfrakT}{\mathfrak{T}}
\nc{\Z}{\mathbb{Z}}
\nc{\SSG}{\text{sSet}_*^G}
\nc{\sSet}{\text{sSet}}
\newcommand{\cosuppa}{\operatorname{co-supp}}
\dmo{\csupp}{csupp}
\dmo{\Con}{Conj}
\dmo{\Id}{Id}
\dmo{\rmK}{\textrm{\rm K}}
\dmo{\Spc}{Spc}
\dmo{\thick}{thick}
\dmo{\thickid}{thickid}
\nc{\thicko}[1]{\thickid\langle #1 \rangle}
\nc{\thickt}[1]{\thickid\langle #1 \rangle}
\dmo{\cone}{cone}
\dmo{\End}{End}
\dmo{\Derperf}{D_{perf}}
\dmo{\Mor}{Mor}
\dmo{\id}{id}
\dmo{\incl}{incl}
\dmo{\Img}{Im}
\dmo{\im}{im}
\dmo{\Ker}{Ker}
\dmo{\ind}{ind}
\dmo{\CoInd}{coind}
\dmo{\res}{res}
\dmo{\infl}{infl}
\dmo{\Derqc}{D_{qc}}
\nc{\DbcohX}{\Der^b(\coh X)}
\dmo{\triv}{triv}
\dmo{\Tel}{Tel} %telescope
\dmo{\grMod}{grMod}%
\dmo{\Mod}{Mod}%
\dmo{\opname}{op}
\dmo{\SH}{SH}% ground name for cat of spectra
\dmo{\smallb}{b}% ground exponent for ``bounded''
\dmo{\Spec}{Spec}
\dmo{\supp}{supp}
\dmo{\Supp}{Supp}
\dmo{\cosupp}{cosupp}
\dmo{\Cosupp}{Cosupp}
\nc{\SuppT}{\Supp_{\cat T}}
\nc{\SuppS}{\Supp_{\cat S}}
\nc{\CosuppT}{\Cosupp_{\cat T}}
\nc{\CosuppS}{\Cosupp_{\cat S}}
\nc{\SHc}{{\SH^c}}
\nc{\SHp}{{\SH_{(p)}}}
\nc{\SHcp}{{\SH^c_{(p)}}}
\nc{\SHG}{\SH(G)}
\nc{\SHGp}{\SH(G)_{(p)}}
\nc{\SHGc}{\SHG^c}
\nc{\SHGcp}{\SHG^c_{(p)}}
\nc{\quadtext}[1]{\quad\textrm{#1}\quad}
\nc{\qquadtext}[1]{\qquad\textrm{#1}\qquad}
\nc{\adj}{\dashv}
\nc{\adjto}{\rightleftarrows}
\nc{\bbL}{\mathbb{L}}
\nc{\bbA}{\mathbb{A}}
\nc{\bbE}{\mathbb{E}}
\nc{\bbN}{\mathbb{N}}
\nc{\bbQ}{\mathbb{Q}}
\nc{\bbZ}{\mathbb{Z}}
\nc{\bbF}{\mathbb{F}}
\nc{\bbT}{\mathbb{T}}
\nc{\cat}[1]{\mathscr{#1}}%or: \nc{\cat}[1]{\mathcal{#1}}
\nc{\ie}{{\sl i.e.}, }
\nc{\into}{\mathop{\rightarrowtail}}
\nc{\inv}{^{-1}}
\nc{\isoto}{\mathop{\overset{\sim}\to}}
\nc{\isotoo}{\mathop{\overset{\sim}\too}}
\nc{\onto}{\mathop{\twoheadrightarrow}}
\nc{\too}{\mathop{\longrightarrow}\limits}
\nc{\mapstoo}{\longmapsto}
\nc{\adh}[1]{\overline{#1}}% adherence
\nc{\adhpt}[1]{\adh{\{#1\}}}% adherence of a pt
\nc{\aka}{{a.\,k.\,a.}\ }
\nc{\calF}{\mathcal{F}}
\nc{\eg}{{\sl e.\,g.}}
\nc{\hook}{\hookrightarrow}
\nc{\borel}[2]{b_{#1}{#2}}
\nc{\ideal}[1]{\langle #1\rangle}
\dmo{\Hom}{Hom}
\nc{\Homcat}[1]{\Hom_{\cat #1}}
\nc{\ihomname}{\mathsf{hom}}
\nc{\ihom}[1]{\mathsf{hom}(#1)}
\nc{\ihomC}[1]{\mathsf{hom}_{\cat C}(#1)}
\nc{\ihomD}[1]{\mathsf{hom}_{\cat D}(#1)}
\nc{\ihomT}[1]{\mathsf{hom}_{\cat T}(#1)}
\nc{\ihomS}[1]{\mathsf{hom}_{\cat S}(#1)}
\nc{\ihomTU}[1]{\mathsf{hom}_{\cat T(U)}(#1)}
\nc{\ihomTV}[1]{\mathsf{hom}_{\cat T(V)}(#1)}
\nc{\ihomsub}[2]{\mathsf{hom}_{#1}(#2)}
\nc{\Mid}{\,\big|\,}
\nc{\MMod}{\,\text{-}\Mod}%
\nc{\GrMMod}{\,\text{-}\grMod}%
\nc{\op}{^{\opname}}
\nc{\oto}[1]{\overset{#1}\to}
\nc{\otoo}[1]{\overset{#1}{\,\too\,}}
\nc{\sminus}{\!\smallsetminus\!}
\nc{\poplus}[1]{^{\oplus #1}}%
\nc{\potimes}[1]{^{\otimes #1}}% tensor power
\nc{\sbull}{{\scriptscriptstyle\bullet}}%\mathbf{\cdot}}%{}}
\nc{\SET}[2]{\big\{\,#1\Mid#2\,\big\}}
\nc{\SpcK}{\Spc(\cat K)}% most used
\nc{\then}{\Rightarrow}
\nc{\unit}{\mathbb{1}}% unit for \otimes
\nc{\xra}{\xrightarrow}
\nc{\phigeom}[1]{\widetilde{\Phi}^{#1}}
\dmo{\Oname}{O}
\dmo{\proper}{proper}% for proper subgroups
\dmo{\lenormal}{\unlhd}
\dmo{\lnormal}{\lhd}
\nc{\normal}{\trianglelefteq}%\lhd
\nc{\Op}{\Oname^p}% O^p for maximal p-normal subgroup
\nc{\Oq}{\Oname^q}% as above for p=q
\dmo{\Sp}{Sp}
\dmo{\Ho}{Ho}
\dmo{\Fin}{Fin}
\dmo{\add}{add}
\dmo{\Fun}{Fun}
\dmo{\Ext}{Ext}
\dmo{\CAlg}{CAlg}
\dmo{\CMon}{CMon}
\dmo{\CC}{\cat C} %beren: I changed these, but left the O
\dmo{\DD}{\cat D}
\dmo{\OO}{\mathcal{O}}
\dmo{\Map}{Map}
\dmo{\Span}{Span}
\dmo{\N}{N}
\dmo{\Cat}{Cat}
\dmo{\colim}{colim}
\dmo{\hocolim}{hocolim}
\dmo{\Ch}{Ch}
\dmo{\A}{\mathbb{A}^{eff}}
\nc{\AGeff}{\mathbb{A}_G^{\mathrm{eff}}}
\nc{\BGeff}{\mathcal{B}_G^{\mathrm{eff}}}
\nc{\BG}{{\mathcal{B}_G}}
\nc{\NBGeff}{{\N}{\BGeff}}
\dmo{\Ab}{Ab}
\dmo{\Set}{Set}
\dmo{\ev}{ev}
\dmo{\Spcl}{Spcl}
\nc{\Funadd}{\Fun_{\add}}
\dmo{\proj}{proj}
\dmo{\cof}{cof}
\dmo{\Coideal}{Coideal}
\dmo{\gen}{gen}
\nc{\auxcoidealsymb}{\vartriangleleft}
\dmo{\Loc}{Loc}
\dmo{\Coloc}{Coloc}
\dmo{\Locideal}{Locid}
\dmo{\Colocideal}{Colocid}
\nc{\LOCO}{\Locideal}
\nc{\COLOCO}{\Colocideal}
\nc{\Loco}[1]{\LOCO\langle #1 \rangle}
\nc{\Coloco}[1]{\COLOCO\langle #1 \rangle}
\nc{\Thickidset}[1]{\mathcal{THICK}_{\otimes}(#1)}
\nc{\Locidset}[1]{\mathcal{LOC}_{\otimes}(#1)}
\nc{\Colocidset}[1]{\mathcal{C}\mathrm{olocid}(#1)}
\nc{\LambdaN}{\Lambda^{\hspace{-0.2ex}\bbN}}
\nc{\LambdaP}{\Lambda^{\hspace{-0.2ex}\cat P}} %beren: I've added this command here because we might need to make some spacing changes to make the typesetting less ugly
\nc{\LambdaQ}{\Lambda^{\hspace{-0.2ex}\cat Q}} %beren: I've added this command here because we might need to make some spacing changes to make the typesetting less ugly
\nc{\GammaP}{\Gamma_{\hspace{-0.3ex}\cat P}} %beren: I've added this command here because we might need to make some spacing changes to make the typesetting less ugly
\nc{\GammaQ}{\Gamma_{\hspace{-0.3ex}\cat Q}} %beren: I've added this command here because we might need to make some spacing changes to make the typesetting less ugly
\nc{\GammaphiQ}{\Gamma_{\hspace{-0.3ex}\varphi(\cat Q)}} %beren: I've added this command here because we might need to make some spacing changes to make the typesetting less ugly
\nc{\LambdaW}{\Lambda^{\hspace{-0.3ex}W}} %beren: I've added this command here because we might need to make some spacing changes to make the typesetting less ugly
\nc{\GammaW}{\Gamma_{\hspace{-0.3ex}W}} %beren: I've added this command here because we might need to make some spacing changes to make the typesetting less ugly
\nc{\GammainvW}{\Gamma_{\hspace{-0.3ex}\varphi^{-1}(W)}}
\nc{\LambdainvW}{\Lambda^{\varphi^{-1}(W)}}
\nc{\LambdainvphiP}{\Lambda^{\varphi^{-1}(\{\varphi(\cat P)\})}}
\nc{\GammaphiP}{\Gamma_{\hspace{-0.3ex}\varphi(\cat P)}}
\nc{\LambdaphiP}{\Lambda^{\varphi(\cat P)}}
\nc{\gW}{g_W}
\nc{\gP}{g_{\cat P}}
\nc{\gQ}{g_{\cat Q}}
\nc{\gphiQ}{g_{\varphi(\cat Q)}}
\nc{\cC}{{\cat C}}
\nc{\cT}{{\cat T}}
\nc{\cD}{{\cat D}}
\newcommand\noloc{%
  \nobreak
  \mspace{6mu plus 1mu}
  {:}
  \nonscript\mkern-\thinmuskip
  \mathpunct{}
  \mspace{2mu}
}
\nc{\mT}{\kern-0.5em\mod\kern-0.1em\text{-}\cat{T}^c}
\nc{\mTc}{\kern-0.5em\mod\kern-0.1em\text{-}\cat{T}^c}
\nc{\MTc}{\Mod\kern-0.1em\text{-}\cat{T}^c}
\nc{\MT}{\Mod\kern-0.1em\text{-}\cat{T}}
\newcounter{enum-resume-hack}
\Crefname{Thm}{Theorem}{Theorems}
\Crefname{Prop}{Proposition}{Propositions}
\Crefname{Rem}{Remark}{Remarks}
\Crefname{thmx}{Theorem}{Theorems}
\begin{document}

\title{Cosupport in tensor triangular geometry}

\author{Tobias Barthel}
\author{Nat{\`a}lia Castellana}
\author{Drew Heard}
\author{Beren Sanders}
\date{\today}

%%%
% this is a hack to remove the indentation in the address information, since drew's address line was line-breaking in a slightly ugly way
%%%
\makeatletter
\patchcmd{\@setaddresses}{\indent}{\noindent}{}{}
\patchcmd{\@setaddresses}{\indent}{\noindent}{}{}
\patchcmd{\@setaddresses}{\indent}{\noindent}{}{}
\patchcmd{\@setaddresses}{\indent}{\noindent}{}{}
\makeatother

\address{Tobias Barthel, Max Planck Institute for Mathematics, Vivatsgasse 7, 53111 Bonn, Germany}
\email{tbarthel@mpim-bonn.mpg.de}
\urladdr{https://sites.google.com/view/tobiasbarthel/home}

\address{Nat{\`a}lia Castellana, Departament de Matem\`atiques, Universitat Aut\`onoma de Barcelona, 08193 Bellaterra, Spain}
\email{natalia@mat.uab.cat}
\urladdr{http://mat.uab.cat/$\sim$natalia}

\address{Drew Heard, Department of Mathematical Sciences, Norwegian University of Science and Technology, Trondheim}
\email{drew.k.heard@ntnu.no}
\urladdr{https://folk.ntnu.no/drewkh/}

\address{Beren Sanders, Mathematics Department, UC Santa Cruz, 95064 CA, USA}
\email{beren@ucsc.edu}
\urladdr{http://people.ucsc.edu/$\sim$beren/}

\begin{abstract}
We develop a theory of cosupport and costratification in tensor triangular geometry. We study the geometric relationship between support and cosupport, provide a conceptual foundation for cosupport as categorically dual to support, and discover surprising relations between the theory of \mbox{costratification} and the theory of stratification. We prove that many categories in algebra, topology and geometry are costratified by developing and applying descent techniques. An overarching theme is that cosupport is relevant for diverse questions in tensor triangular geometry and that a full understanding of a category requires knowledge of both its support and its cosupport.
\mbox{}\vspace*{-2\baselineskip}
\end{abstract}

%\keywords{}
%\subjclass[2020]{55P42, 55U35, 18G80}
% 55P42 // stable homotopy theory, spectra
% 55U35 // abstract and axiomatic homotopy theory in algebraic topology
% 18G80 // derived categories, triangulated categories
% 55P91 // equivariant homotopy theory in algebraic topologyhttps://www.overleaf.com/project/60a501dbd19dd72aee55d656
%\subjclass[2010]{55P42, 55U35, 18E30}
%%%

%\thanks{The first-named author would like to thank the Max Planck Institute for Mathematics for its hospitality. The second-named author is supported by grant number TMS2020TMT02 from the Trond Mohn Foundation. The third-named author is supported by NSF grant~DMS-1903429.}

%\vspace*{-1.5cm}
\vspace*{-0.1cm}
\maketitle

{
\hypersetup{linkcolor=black}
\tableofcontents
}
%\addtocontents{toc}{\vspace{-1ex}}

\section{Introduction}

One approach for comparing the objects of a given category is via a support theory. The prototypical example is given by the support of a module in commutative algebra, which inspired similar constructions in related areas such as algebraic geometry, representation theory, and chromatic homotopy theory, to name just a few. The common perspective is to view the objects of the category as ``bundles'' over a geometric base space and record the points at which an object does not vanish. These theories of support have been remarkably successful in organizing the objects being studied, especially in derived and homotopical contexts \cite{BensonCarlsonRickard97, Thomason97, HopkinsSmith98}.

Tensor triangular geometry \cite{BalmerICM} puts these developments in a unified framework: It regards a tensor-triangulated category~$\cat T$ as a bundle over a certain space, its Balmer spectrum, and constructs the universal theory of support for the compact objects of $\cat T$. This universal theory of support classifies the compact objects up to how they build each other using the naturally available categorical structure. The theory was extended beyond compact objects to ``infinite-dimensional'' contexts in \cite{Neeman92a,HoveyPalmieriStrickland97,BalmerFavi11,BensonIyengarKrause08,Stevenson13,bhs1}. The theory of support is more subtle for non-compact objects but in desirable situations extends the classification of compact objects to all objects. This is characterized by a property of the category called stratification.

The goal of this paper is to systematically develop a dual theory of cosupport in tensor triangular geometry as well as the accompanying notion of costratification. Conceptually, this may be motivated from three complementary points of view:
	\begin{itemize}
		\item (Geometric) In algebraic geometry, the support of a quasi-coherent sheaf is captured by its local cohomology. The cosupport corresponds to local homology. Grothendieck's local duality expresses the relation between the two notions through an adjunction, which can be formalized and provides a definition of cosupport in more general settings. This is how cosupport has traditionally been introduced into the literature.
		\item (Constructive) Theories of support provide an approach to understanding infinite-dimensional objects in terms of how they build each other using the naturally available ``finite'' structure together with infinite coproducts. This amounts to studying the localizing ideals of the category. While the idea of generating objects using coproducts is instilled in us from birth, it is not the only choice: We can also gain insight by considering how objects build each other using other constructions, such as infinite products. This leads to studying the colocalizing coideals of the category, and is the organizational framework that cosupport provides.
		\item (Categorical) The cosupport of a compactly generated category $\cat T$ can be understood as the support associated to the opposite category $\cat T\op$. This provides a very conceptual understanding of cosupport which has been missing from the literature. It also highlights part of the subtlety of the theory as $\cat T\op$ fails to be compactly generated in all but trivial cases.
	\end{itemize}
Superficially, this seems to suggest that cosupport and its properties are merely a formal variant of the already established theory of support. However, this conclusion would be false, as we will demonstrate throughout this work. Moreover, it turns out that cosupport appears naturally even in situations where one is only interested in support, and only the combination of both provides a full picture.

Our construction of cosupport takes place in the context of a rigidly-compactly generated tensor-triangulated (``tt'') category $\cat T$ whose Balmer spectrum of compact objects $\Spc(\cat T^c)$ is weakly noetherian (a mild point-set topological assumption introduced in \cite{bhs1} which simultaneously generalizes noetherian and profinite). It takes the form of an assignment
	\[
		\Cosupp\colon \big\{ \text{objects of $\cat T$} \big\} \longrightarrow \big\{ \text{subsets of $\Spc(\cat T^c)$}\big\}
	\]
satisfying a number of compatibilities with respect to the tensor-triangulated structure of $\cat T$. We briefly and informally summarize the main features of our theory of cosupport as follows:
\begin{itemize}
	\item Cosupport illuminates key properties of support. For instance, the local-to-global principle for support is equivalent to the detection property for cosupport, and stratification can be characterized in terms of the combined behaviour of support and cosupport. The behaviour of cosupport also reflects the topology of the Balmer spectrum and thus provides insight into the structure of compact objects. More generally, we study the geometric relationship between support and cosupport and how they are intertwined through intrinsic dualities in $\cat T$.
    \item Cosupport affords an accompanying notion of costratification, which attempts to parameterize colocalizing coideals of $\cat T$ in terms of subsets of the Balmer spectrum. The basic properties of this theory are dual to those of stratification as developed in \cite{bhs1}. For example, we prove that our theory of cosupport is the universal choice for classifying colocalizing coideals. In particular, this implies that if $\cat T$ is costratified in the sense of Benson--Iyengar--Krause \cite{BensonIyengarKrause12} then it is also costratified in our sense, but there are many classes of examples for which the converse fails. 
	\item We clarify the relation between stratification and costratification, discovering a surprising asymmetry between the corresponding notions of (co)detection and (co)local-to-global principle. Nevertheless, while costratification is an \emph{a priori} stronger property than stratification, we verify that all known examples of stratified categories are also costratified. These results are obtained as applications of general descent techniques. Our methods provide streamlined proofs of all known classifications of colocalizing coideals in the literature, and also establish new ones which were not previously accessible.
	\item We unify support and cosupport by showing that they both arise as particular instances of a more general notion of support defined at a level of generality which encompasses both $\cat T$ and $\cat T\op$. This leads to a deeper conceptual understanding of cosupport as simply the support of the opposite category.
\end{itemize}

\subsection*{Content and summary of main results} 
We now proceed to give a more detailed outline of the main results of the paper. These can be loosely organized into six interconnected themes, as follows. Note however that the story being told here does not faithfully reflect the linear structure of the document, for which we instead refer to the end of the introduction.

\begin{Hyp*}
	Throughout the introduction, $(\cat T,\otimes,\unit)$ will denote a rigidly-compactly generated tt-category with weakly noetherian spectrum $\Spc(\cat T^c)$. We will denote the internal hom of $\cat T$ by $\ihom{-,-}$.
\end{Hyp*}

\subsection*{Theme I.~Cosupport and costratification}\label{themeI:cosupport}
The common starting point for the definition of support and cosupport in our tt-geometric setting is the existence of a suitable supply of idempotents $\gP \in \cat T$ which can be used to isolate attention at each point $\cat P \in \Spc(\cat T^c)$. The construction of these idempotents relies on our topological assumption that $\Spc(\cat T^c)$ is weakly noetherian. We then define the support and cosupport of an object $t \in \cat T$ (\cref{def:BF-support-and-cosupport}) as the following subsets of $\Spc(\cat T^c)$:
	\[
		\Supp(t) = \SET{\cat P}{g_{\cat P} \otimes t \neq 0} \quad \text{and} \quad \Cosupp(t) = \SET{\cat P}{\ihom{g_{\cat P},t} \neq 0}.
	\]
This definition of support is due to Balmer--Favi \cite{BalmerFavi11} and was studied in \cite{bhs1}. The definition of cosupport is inspired by the constructions of \cite{HoveyStrickland99}, \cite{Neeman11} and especially \cite{BensonIyengarKrause12}, where a similar definition is considered in the context of a triangulated category equipped with an auxiliary action of a commutative noetherian ring. We first extract the elementary properties the function $\Cosupp(t)$ satisfies and thereby formulate the axiomatic notion of a cosupport theory (\cref{def:axiomaticcosupp} and \cref{prop:tt_cosupport}). This is summarized as follows:

\begin{thmx*}\label{thmx:cosupport-is-cosupport}
	Cosupport satisfies the conditions of an axiomatic cosupport theory:
		\begin{enumerate}
			\item $\Cosupp(0) = \emptyset$ and $\Cosupp(\cat T)=\Spc(\cat T^c)$;
			\item $\Cosupp(\Sigma t) = \Cosupp(t)$ for every $t \in\cat T$;
			\item $\Cosupp(c) \subseteq \Cosupp(a) \cup \Cosupp(b)$ for any exact triangle $a \to b \to c$ in~$\cat T$;
			\item $\Cosupp(\prod_{i\in I} t_i) = \bigcup_{i \in I} \Cosupp(t_i)$ for any set of objects $t_i$ in $\cat T$;
			\item $\Cosupp(\ihom{s,t}) \subseteq \Cosupp(t)$ for all $s,t \in \cat T$.
		\end{enumerate}
\end{thmx*}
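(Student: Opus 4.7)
The plan is to verify each of the five axioms (a)--(e) directly from the definition
\[ \Cosupp(t) = \SET{\cat P \in \Spc(\cat T^c)}{\ihom{g_{\cat P}, t} \neq 0}. \]
Most of the assertions are formal consequences of the basic behaviour of the internal hom functor $\ihom{-,-}$ in a closed symmetric monoidal triangulated category, so no deep tt-geometric input is needed beyond the existence and nonvanishing of the idempotent $g_{\cat P}$ at each $\cat P \in \Spc(\cat T^c)$, which is guaranteed by the weakly noetherian hypothesis (and the construction of \cite{BalmerFavi11}).

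For (a), the equality $\Cosupp(0) = \emptyset$ is immediate from $\ihom{g_{\cat P}, 0} = 0$. For $\Cosupp(\cat T) = \Spc(\cat T^c)$, I would exhibit $g_{\cat P}$ itself as a witness: the identity map gives a nonzero element of $\Hom_{\cat T}(g_{\cat P}, g_{\cat P})$, so $\ihom{g_{\cat P}, g_{\cat P}} \neq 0$, using that $g_{\cat P} \neq 0$ by virtue of weak noetherianity. Property (b) follows because $\ihom{g_{\cat P}, -}$ commutes with $\Sigma$, being a triangulated functor. Property (c) also follows from triangulatedness: applying $\ihom{g_{\cat P}, -}$ to an exact triangle $a \to b \to c$ yields an exact triangle, and if two consecutive terms vanish then so does the third.

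For (d), I would invoke the fact that $\ihom{g_{\cat P}, -}$, as a right adjoint, preserves products, giving $\ihom{g_{\cat P}, \prod_i t_i} \cong \prod_i \ihom{g_{\cat P}, t_i}$; since a product in an additive category is zero if and only if every factor is, this yields equality and not merely containment. For (e), the key identity is the hom-tensor adjunction combined with symmetry, which produces the isomorphism
\[ \ihom{g_{\cat P}, \ihom{s, t}} \cong \ihom{s \otimes g_{\cat P}, t} \cong \ihom{s, \ihom{g_{\cat P}, t}}. \]
If $\cat P \notin \Cosupp(t)$ then the inner object $\ihom{g_{\cat P}, t}$ vanishes, so does the entire expression, and hence $\cat P \notin \Cosupp(\ihom{s,t})$; this is the contrapositive of the desired containment.

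The only substantive point in all of this is the nonvanishing $g_{\cat P} \neq 0$, which is exactly the content of the weakly noetherian hypothesis. Beyond this, each verification amounts to a routine application of the closed symmetric monoidal and triangulated structures, and there is no serious obstacle to overcome; the theorem is essentially a sanity check that the chosen definition of $\Cosupp$ is the correct one, parallel to the analogous checks for $\Supp$ carried out in \cite{bhs1}.
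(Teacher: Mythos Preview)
Your proof is correct and essentially identical to the paper's. The only difference is organizational: the paper packages axioms (b)--(e) and the first half of (a) into the single observation that $\SET{t \in \cat T}{\Cosupp(t)\subseteq Y}=\bigcap_{\cat P\in Y^c}\ker(\LambdaP)$ is a colocalizing coideal (invoking the equivalence of \cref{rem:axioms-equiv}), whereas you unpack the axioms one at a time; both arguments rest on exactly the same facts about $\ihom{\gP,-}$, and the surjectivity part of (a) is handled identically via $\ihom{\gP,\gP}\neq 0$.
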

Although basic, formulating the axioms correctly is subtle due to the asymmetric interaction between cosupport and support discussed in \hyperref[themeII:asymmetry]{Theme~II} below, which is hinted at by the unusual form of axiom (e). The key principle which guides our choice of axioms is the observation that, while the collection of objects supported on a given set forms a localizing ideal, the collection of objects cosupported on a given set forms a colocalizing coideal (\cref{def:subcategories}). The above axioms (excluding $\Cosupp(\cat T)=\Spc(\cat T^c)$) are in fact equivalent to the statement that the collection of objects cosupported on a given set form a colocalizing coideal. Cosupport is thus intimately related to the study of colocalizing coideals in the same way that support is intimately related to the study of localizing ideals.

In \cite{bhs1} we defined the category $\cat T$ to be stratified if the map
	\[
		\Supp\colon \big\{ \text{localizing ideals of $\cat T$} \big\} \longrightarrow \big\{ \text{subsets of $\Spc(\cat T^c)$}\big\}
	\]
induced by the Balmer--Favi support is a bijection. Although one could consider the analogous statement for any support theory on $\cat T$, we proved that the Balmer--Favi notion of support provides the universal choice of support theory for the purposes of stratifying $\cat T$. This justifies defining stratification as a property of the category as above, rather than as a notion relative to a choice of auxiliary support theory.

Similarly, we say that $\cat T$ is costratified (\cref{def:costratification}) if the map
	\[
		\Cosupp\colon \big\{ \text{colocalizing coideals of $\cat T$} \big\} \longrightarrow \big\{ \text{subsets of $\Spc(\cat T^c)$}\big\}
	\]
induced by our tensor triangular cosupport theory is bijective. This intrinsic definition of costratification as a property of $\cat T$ is justified by a corresponding universality result for our cosupport theory:

\begin{thmx*}[Informal version]
	Cosupport is the universal choice among costratifying \mbox{cosupport} theories for $\cat T$ which is compatible with the usual classification of compact objects in~$\cat T$. In particular, if $\cat T$ is costratified in the sense of \cite{BensonIyengarKrause12}, then it is also costratified in our sense. 
\end{thmx*}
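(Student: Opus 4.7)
The plan is to adapt the universality argument for stratification established in \cite{bhs1} to the dual setting of costratification, and to deduce the Benson--Iyengar--Krause implication as a corollary.

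First, I would formalize the notion of an \emph{axiomatic cosupport theory} on $\cat T$ as any assignment $\sigma\colon \cat T \to 2^{\Spc(\cat T^c)}$ satisfying conditions (a)--(e) from the previous theorem. Such a theory is \emph{compatible with the classification of compact objects} if $\sigma$ restricted to $\cat T^c$ equals the Balmer support, and \emph{costratifies} $\cat T$ if the induced assignment $\cat L \mapsto \bigcup_{t\in\cat L}\sigma(t)$ is a bijection between colocalizing coideals of $\cat T$ and subsets of $\Spc(\cat T^c)$.

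The heart of the universality claim is to show that any costratifying compatible $\sigma$ coincides pointwise with $\Cosupp$. The strategy is to identify, for each $\cat P \in \Spc(\cat T^c)$, the colocalizing coideal $\cat L_{\cat P}^{\sigma} := \{t \in \cat T \mid \sigma(t) \subseteq \{\cat P\}\}$, which under costratification is the preimage of $\{\cat P\}$ along the bijection induced by $\sigma$. The first step is to establish that both theories assign $\{\cat P\}$ to the Balmer--Favi idempotent $\gP$: for $\Cosupp$ this is a direct computation from the axiomatics of the $g$-objects, while for $\sigma$ it follows from the costratification bijection together with the compatibility of $\sigma$ on compacts, which pins down the colocalizing coideals corresponding to Thomason closures $\adhpt{\cat P}$ and, using axioms (c)--(e), isolates the ``point-at-$\cat P$'' piece built from $\gP$. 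Having identified $\sigma(\gP) = \{\cat P\} = \Cosupp(\gP)$, one intrinsically characterizes each $\cat L_{\cat P}^{\sigma}$ as the largest colocalizing coideal not containing $\gP$; this description is independent of the particular costratifying theory, hence $\cat L_{\cat P}^{\sigma} = \cat L_{\cat P}^{\Cosupp}$ for every $\cat P$, and $\sigma = \Cosupp$ pointwise follows.

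For the Benson--Iyengar--Krause implication, suppose $\cat T$ is costratified in the sense of \cite{BensonIyengarKrause12} via an action of a graded-commutative noetherian ring $R$. Pulling the BIK cosupport back along the comparison map $\rho\colon\Spc(\cat T^c)\to\Spec(R)$ yields a cosupport theory $\sigma(t) := \rho^{-1}(\cosupp_R(t))$ on $\cat T$ which satisfies axioms (a)--(e) (these are preserved by $\rho^{-1}$), agrees with Balmer support on compact objects by the standard functoriality of $\rho$, and inherits the costratification bijection from BIK. The universality statement then gives $\sigma = \Cosupp$, and in particular $\Cosupp$ itself costratifies $\cat T$.

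The principal obstacle is the intrinsic description of the colocalizing coideals $\cat L_{\cat P}^{\sigma}$. Colocalizing coideals are not in general controlled by compact objects, so one cannot reduce the argument to Balmer's classification alone. Instead it must proceed through the Balmer--Favi idempotents $\gP$ and exploit axiom (e), the condition $\Cosupp(\ihom{s,t})\subseteq\Cosupp(t)$, which encodes the asymmetric behaviour of cosupport with respect to internal hom. Establishing $\sigma(\gP) = \{\cat P\}$ in the general weakly noetherian (non-noetherian) setting, and then leveraging this to reconstruct the entire cosupport assignment from purely intrinsic data, will be the technical crux.
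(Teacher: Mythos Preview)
Your proposal contains two genuine errors that would cause the argument to fail.

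First, your formalization of ``compatible with the classification of compact objects'' as $\sigma|_{\cat T^c} = \supp$ is incorrect, because the Balmer--Favi cosupport itself does not satisfy this. As the paper notes just after the informal theorem and proves in \cref{exa:cosupp-compact}, for compact $x$ one has $\Cosupp(x) = \supp(x) \cap \Cosupp(\unit)$, and $\Cosupp(\unit)$ can be a proper subset of the spectrum (\cref{exa:KInj}). So your proposed compatibility axiom would exclude $\Cosupp$ from the class of theories under consideration. The correct compatibility condition, formulated in \cref{cor:Top-uniqueness}, is that the map $Y \mapsto \mathfrak C(\ihom{e_Y,\cat T})$ (equivalently $\cat C \mapsto \mathfrak C(\Loc\langle\cat C\rangle^\perp)^c$) gives a bijection between Thomason subsets of $\Spc(\cat T^c)$ and Thomason subsets of the target space.

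Second, your assertion that $\Cosupp(\gP) = \{\cat P\}$ is false in general: \cref{lem:cosupp-of-gP} only gives $\{\cat P\} \subseteq \Cosupp(\gP) \subseteq \gen(\cat P)$, and the remark after it together with \cref{exa:two-connected-points} shows the first inclusion can be strict. Consequently your intrinsic characterization of $\cat L_{\cat P}^\sigma$ via $\gP$ breaks down. The paper's route is quite different: it develops support for perfectly generated non-closed tt-categories (\cref{sec:cosupp-is-supp}), identifies $\Cosupp_{\cat T} = \Supp_{\cat T\op}$, and then proves the primordial uniqueness result \cref{thm:perfectly-generated-uniqueness} and \cref{prop:unique}, which characterize the support via its interaction with the localization functors $\Gamma_Y$ and $\L_Y$ rather than via values on specific objects. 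The BIK implication (\cref{cor:BIK-costrat}) is then obtained by verifying that the BIK cosupport satisfies these localization-compatibility conditions, not by pulling back along $\rho$ as you suggest.
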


\vspace{-0.25ex}%todo!
More precise statements are in \cref{cor:Top-uniqueness} and \cref{cor:BIK-costrat}. We merely remark in passing that this universality result for cosupport is more subtle than the corresponding result for support, since the restriction of $\Cosupp$ to the compact objects does not in general coincide with Balmer's universal support for compact objects. Examples abound of tt-categories which are costratified in our sense but not in the sense of \cite{BensonIyengarKrause12}, and so---in our tensor triangular setting---our theory of costratification is strictly more general than that of \cite{BensonIyengarKrause12}.

In order to discuss our results further, we need additional preparation. Recall that the spectrum of $\cat T$ controls its global geometric structure through a divide and conquer approach: First decompose $\cat T$ into pieces $\GammaP \cat T$ that capture those objects with support concentrated at a single point $\cat P \in \Spc(\cat T^c)$ and secondly study these individual pieces (or ``stalks''). Intuitively, the local-to-global principle stipulates that $\cat T$ can be reconstructed from its local pieces. In other words, each object of~$\cat T$ can be built from its stalks. In particular, it implies the detection property: $\Supp(t)=\emptyset$ if and only if $t=0$. If $\cat T$ satisfies the local-to-global principle, there is then a condition on the local pieces $\GammaP\cat T$ which characterizes when the category~$\cat T$ is stratified, namely that the stalks $\GammaP\cat T$ are minimal as localizing ideals. This is the beginning to the theory of stratification developed systematically in \cite{bhs1}.

Now we can consider the analogue of these notions based on cosupport: We define the colocalizing coideal $\LambdaP\cat T$ of $\cat T$ consisting of all objects which are cosupported at a single point $\cat P\in \Spc(\cat T^c)$ and we have a corresponding colocal-to-global principle (\cref{def:LGP}) which morally states that every object can be built from these ``costalks''. This in turn implies the codetection property (\cref{def:codetection}) which states that $\Cosupp(t) = \emptyset$ if and only if $t=0$. In complete analogy with the theory of stratification we have (\cref{thm:equiv-costrat}):

\begin{thmx*}\label{thmx:costratification}
	The following conditions are equivalent:
		\begin{enumerate}
			\item $\cat T$ is costratified;
			\item $\cat T$ satisfies the colocal-to-global  principle, and the colocalizing coideal $\LambdaP\cat T$ is minimal for each $\cat P \in \Spc(\cat T^c)$.
		\end{enumerate}
\end{thmx*}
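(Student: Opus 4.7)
The plan is to mimic the proof of the analogous stratification theorem from \cite{bhs1}, with both implications mediated by the auxiliary \emph{codetection} property ($\Cosupp(t) = \emptyset \Rightarrow t = 0$), which sits between costratification and the colocal-to-global principle.

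For $(a) \Rightarrow (b)$: Assume $\cat T$ is costratified, so that $\Cosupp$ is injective on colocalizing coideals. Codetection is immediate, since $\Cosupp(0) = \emptyset$ by Theorem~A. To establish the colocal-to-global principle for an object $t \in \cat T$, I would compare the two colocalizing coideals $\Coloco{t}$ and $\cat C := \Colocideal\langle \LambdaP t \mid \cat P \in \Spc(\cat T^c) \rangle$: by axiom (d) of Theorem~A together with the identity $\Cosupp(\LambdaP t) = \Cosupp(t) \cap \{\cat P\}$ (which follows from the tensor-hom adjunction and the idempotent properties of $\gP$), both coideals have cosupport equal to $\Cosupp(t)$, so injectivity of $\Cosupp$ forces them to coincide, placing $t$ inside $\cat C$. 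For minimality of $\LambdaP\cat T$, take any nonzero colocalizing coideal $\cat C \subseteq \LambdaP\cat T$: its cosupport is contained in $\{\cat P\}$ and nonempty by codetection, hence equal to $\{\cat P\}$. The axiom $\Cosupp(\cat T) = \Spc(\cat T^c)$ ensures that $\LambdaP\cat T$ itself is nonzero with cosupport $\{\cat P\}$, and a final appeal to injectivity of $\Cosupp$ yields $\cat C = \LambdaP\cat T$.

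For $(b) \Rightarrow (a)$: Assume the colocal-to-global principle and minimality of each $\LambdaP\cat T$. Codetection follows immediately: if $\Cosupp(t) = \emptyset$ then $\ihom{\gP,t} = 0$ for every $\cat P$ directly from the definition of cosupport, so $\LambdaP t = 0$ for every $\cat P$ and the colocal-to-global principle collapses to $t = 0$. For injectivity of $\Cosupp$, given colocalizing coideals $\cat C, \cat D$ with $\Cosupp(\cat C) = \Cosupp(\cat D)$, I would consider the intersection $\LambdaP\cat C := \cat C \cap \LambdaP\cat T$, itself a colocalizing coideal of $\cat T$ contained in $\LambdaP\cat T$. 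By minimality, $\LambdaP\cat C \in \{0, \LambdaP\cat T\}$, and a direct unpacking of cosupport shows that this alternative is governed by whether $\cat P$ lies in $\Cosupp(\cat C)$, and likewise for $\cat D$; hence $\LambdaP\cat C = \LambdaP\cat D$ for every $\cat P$. Upgrading the colocal-to-global principle to the coideal-level statement $\cat C = \Colocideal\langle \LambdaP\cat C \mid \cat P \in \Spc(\cat T^c) \rangle$---using that coideals are closed under $\ihom{\gP,-}$, so $\LambdaP t$ lies in $\cat C$ whenever $t$ does---then yields $\cat C = \cat D$. Surjectivity is the easiest step: for any $W \subseteq \Spc(\cat T^c)$, the coideal $\Colocideal\langle \LambdaP\cat T \mid \cat P \in W\rangle$ has cosupport exactly $W$ by the axioms of Theorem~A.

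The main point requiring care is the interaction between $\LambdaP$ and colocalizing coideals: specifically, that $\LambdaP$ preserves membership in $\cat C$ so that $\LambdaP\cat C$ can be realized as an intersection rather than via some external construction, and that the colocal-to-global principle stated on objects bootstraps to its coideal-level formulation. These are the places where the asymmetry between support and cosupport highlighted in Theme~II enters the proof; once they are in place, the remainder is routine bookkeeping with the cosupport axioms.
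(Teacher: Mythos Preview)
Your proof is correct and, in the direction $(b)\Rightarrow(a)$, takes a genuinely different and arguably cleaner route than the paper. The paper factors through an intermediate condition (call it $(b')$: for all $t$, $\Coloco{t} = \Coloco{\LambdaP\cat T \mid \cat P \in \Cosupp(t)}$), establishes injectivity of $\Cosupp$ only on \emph{set-generated} colocalizing coideals, and then invokes a separate result (\cref{prop:set-generated}) to conclude that all colocalizing coideals are set-generated. Your argument sidesteps this entirely: by upgrading the colocal-to-global principle to the coideal level and using minimality to compute $\cat C \cap \LambdaP\cat T$ directly, you obtain $\cat C = \Coloco{\cat C \cap \LambdaP\cat T \mid \cat P}$ for \emph{arbitrary} colocalizing coideals $\cat C$, with no set-theoretic detour. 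This is a real simplification.

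For $(a)\Rightarrow(b)$ the arguments are closer, though the paper derives the colocal-to-global principle indirectly via the chain costratified $\Rightarrow$ codetection $\Rightarrow$ colocal-to-global (the last step being \cref{thm:LGP-equiv}), whereas you compare $\Coloco{t}$ with $\Coloco{\LambdaP t \mid \cat P}$ directly and appeal to injectivity of $\Cosupp$. Your approach is more self-contained; the paper's buys access to \cref{thm:LGP-equiv} as a standalone result. One minor point: your claim that $\Cosupp(\LambdaP\cat T) = \{\cat P\}$ (needed for surjectivity and implicitly for the minimality step) is not purely a consequence of the axioms in Theorem~A---it needs the computation $\Cosupp(\LambdaP\GammaP\unit) = \{\cat P\}$ from \cref{cor:cosupp_surj}---but this is available and the argument goes through.
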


\vspace{-0.25ex}%todo!
In this way, the theory of costratification has the same basic features as the theory of stratification developed in \cite{bhs1}.

\vspace{-0.25ex}%todo!
\subsection*{Theme II.~Asymmetry between stratification and costratification}\label{themeII:asymmetry}

Although stratification and costratification have analogous characterizations, as described above, there is a remarkable asymmetry in the relationship between the two properties. Surprisingly, the local-to-global principle is equivalent to the codetection property and these are both equivalent to the colocal-to-global principle (\cref{thm:LGP-equiv}):

\begin{thmx*}\label{thmx:local-to-global}
	The following conditions are equivalent:
		\begin{enumerate}
			\item $\cat T$ satisfies the local-to-global principle;
			\item $\cat T$ satisfies the colocal-to-global principle;
			\item $\cat T$ satisfies the codetection property.
		\end{enumerate}
	These conditions imply the detection property, but the converse does not hold in general.
\end{thmx*}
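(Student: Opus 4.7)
My plan is to translate between generation by the idempotents $g_{\cat P}$ and orthogonality statements via the tensor-hom adjunction $\GammaP \dashv \LambdaP$. The key preparatory reformulation, which I would establish (or cite from earlier in the paper) first, is that (a) is equivalent to the single statement $\unit \in \cat L$, where $\cat L := \Loco{g_{\cat P} \mid \cat P \in \Spc(\cat T^c)}$; this follows by tensoring the $\unit$-level information with an arbitrary object $t$. I will freely use that (a)$\Rightarrow$(detection) and (b)$\Rightarrow$(c) are immediate: if $t$ lies in the (co)ideal generated by its (co)stalks and all of those vanish, then $t=0$.

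I would then prove (a)$\Leftrightarrow$(c). Assuming (a), if $\Cosupp(t) = \emptyset$ then $\ihom{g_{\cat P},t}=0$ for every $\cat P$, so the adjunction yields $\Hom_{\cat T}(x \otimes g_{\cat P},t)=0$ for all $x \in \cat T$; thus $t$ is right-orthogonal to a generating class for $\cat L$, hence right-orthogonal to $\cat L = \cat T$, forcing $t=0$. Conversely, $\cat L$ is generated by a set (the idempotents, indexed by the spectrum), so the Bousfield localization theorem in compactly generated triangulated categories supplies the implication $\cat L^\perp = 0 \Rightarrow \cat L = \cat T$. Running the same adjunction argument in reverse, every $s \in \cat L^\perp$ has empty cosupport, so (c) forces $s=0$, which gives $\cat L^\perp = 0$ and hence (a).

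Finally, for (a)$\Rightarrow$(b), I would apply the contravariant functor $\ihom{-,t}$ to the membership $\unit \in \cat L$. This functor sends coproducts to products, cofibers to (shifted) cofibers, and $x \otimes g_{\cat P}$ to $\ihom{x, \LambdaP t}$. Since a colocalizing coideal is by definition closed under shifts, products, cofibers, and $\ihom{x,-}$ for every $x$, the collection $\{s \in \cat T : \ihom{s,t} \in \Coloco{\LambdaP t \mid \cat P \in \Spc(\cat T^c)}\}$ is a localizing ideal of $\cat T$ containing each $g_{\cat P}$, hence containing all of $\cat L$. Consequently $t = \ihom{\unit,t}$ lies in $\Coloco{\LambdaP t \mid \cat P \in \Spc(\cat T^c)}$, which is the colocal-to-global principle for $t$.

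I expect the main obstacle to be the converse direction (c)$\Rightarrow$(a), which hinges on the nontrivial input that $\cat L$ is set-generated---here the weakly noetherian hypothesis on $\Spc(\cat T^c)$ ensures only a set of idempotents is produced---so that Bousfield localization in compactly generated triangulated categories applies and the vanishing of $\cat L^\perp$ forces $\cat L = \cat T$. For the final claim that detection does not imply (a)--(c) in general, I would defer to an explicit counterexample to appear later in the paper, naturally arising from a category in which the idempotents $g_{\cat P}$ fail to generate $\cat T$ as a localizing ideal yet nonetheless jointly detect zero.
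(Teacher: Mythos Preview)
Your proposal is correct and follows essentially the same route as the paper: both reduce (a) and (c) to the equivalent conditions $\cat L = \cat T$ and $\cat L^\perp = 0$ for $\cat L = \Loco{g_{\cat P} \mid \cat P}$, invoke strict localizability of the set-generated $\cat L$ for the nontrivial direction, and obtain (a)$\Rightarrow$(b) by applying $\ihom{-,t}$ to $\unit \in \cat L$ via the inclusion $\ihom{\Loco{\cat E},t} \subseteq \Coloco{\ihom{\cat E,t}}$. One small quibble: the spectrum is always a set regardless of the weakly noetherian hypothesis---that hypothesis is what makes the idempotents $g_{\cat P}$ well defined in the first place.
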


Moreover, costratification implies stratification (\cref{thm:costrat_implies_strat} and \cref{cor:costrat-perps}):

\begin{thmx*}\label{thmx:costrat-implies-strat}
	If $\cat T$ is costratified, then it is also stratified. Moreover, in this case the map sending a localizing ideal $\cat L$ to its right orthogonal $\cat L^{\perp} = \SET{t \in \cat T}{\ihom{\cat L,t} = 0}$ induces a bijection
		\[
			\{ \text{localizing ideals of $\cat T$} \} \xrightarrow{\sim} \{\text{colocalizing coideals of $\cat T$}\}
		\]
	with inverse given by the left orthogonal.
\end{thmx*}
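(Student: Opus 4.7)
The plan is to establish the following two identities, valid under costratification for any localizing ideal $\cat L$ and any colocalizing coideal $\cat C$,
\[
\cat L^{\perp} = \InvCosupp{\Spc(\cat T^c) \setminus \Supp(\cat L)} \qquad\text{and}\qquad {}^{\perp}\cat C = \InvSupp{\Spc(\cat T^c) \setminus \Cosupp(\cat C)},
\]
and then to deduce the orthogonal bijection (and hence stratification) from these by way of standard Bousfield--Krause orthogonality theory. By \cref{thmx:local-to-global,thmx:costratification}, costratification guarantees both the local-to-global and codetection properties, together with the minimality of each colocalizing coideal $\LambdaP\cat T$.

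The inclusion $\supseteq$ in the first identity is routine: codetection together with the tensor-hom isomorphism $\ihom{\gP, \ihom{\ell, t}} \cong \ihom{\gP \otimes \ell, t}$ readily implies that $\ihom{\ell, t} = 0$ whenever $t$ is cosupported off of $\Supp(\cat L)$. The main obstacle is the reverse inclusion, which relies crucially on the minimality of $\LambdaP\cat T$. The idea is the following: given $t \in \cat L^{\perp}$ and $\cat P \in \Supp(\cat L)$, choose $\ell \in \cat L$ with $x := \gP \otimes \ell \neq 0$, so that $x \in \cat L \cap \GammaP\cat T$ and $\ihom{x, t} = 0$. Assuming for contradiction that $\ihom{\gP, t} \neq 0$, observe that $\ihom{\gP, t} \in \LambdaP\cat T$ and hence generates $\LambdaP\cat T$ as a colocalizing coideal by minimality. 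Because $\{s \in \cat T : \ihom{x, s} = 0\}$ is itself a colocalizing coideal that contains $\ihom{\gP, t}$ (via the identity $\ihom{x, \ihom{\gP, t}} \cong \ihom{x, t} = 0$, using $\gP \otimes x = x$), it must contain all of $\LambdaP\cat T$; specializing to $s = \ihom{\gP, x} \in \LambdaP\cat T$ then yields $\ihom{x, x} = 0$, hence $x = 0$, a contradiction. The dual inclusion for ${}^{\perp}\cat C$ is proved by an entirely analogous argument.

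With these identities in hand, injectivity of $\cat L \mapsto \cat L^{\perp}$ is a consequence of Bousfield--Krause theory: since $\cat L$ is an ideal, $\cat L^{\perp}$ coincides with the Hom-orthogonal $\{t : \Hom_{\cat T}^{*}(\cat L, t) = 0\}$, which recovers $\cat L$ via the standard equality $\cat L = {}^{\perp_{\Hom}}(\cat L^{\perp_{\Hom}})$ in the compactly generated setting; combining this with the obvious inclusion ${}^{\perp} \subseteq {}^{\perp_{\Hom}}$ and with $\cat L \subseteq {}^{\perp}(\cat L^{\perp})$ forces $\cat L = {}^{\perp}(\cat L^{\perp})$. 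Surjectivity of $\perp$ and stratification then follow immediately: any colocalizing coideal $\cat C$ equals $\InvCosupp{W}$ for some $W$ by costratification, and $\InvSupp{W^c}^{\perp} = \InvCosupp{W} = \cat C$ by the first identity; dually, any localizing ideal $\cat L$ equals ${}^{\perp}(\cat L^{\perp}) = {}^{\perp}\InvCosupp{W} = \InvSupp{W^c}$ for $W = \Cosupp(\cat L^{\perp})$, exhibiting it as $\InvSupp{V}$ and completing the proof.
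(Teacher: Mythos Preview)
Your core argument---using cominimality of $\LambdaP\cat T$ to force $\ihom{x,s}\neq 0$ whenever $x \in \GammaP\cat T$ and $s \in \LambdaP\cat T$ are both nonzero---is correct and is exactly the idea the paper uses (see \cref{thm:costrat_implies_strat}). However, your packaging has a genuine gap.

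The problem is the claimed ``standard equality $\cat L = {}^{\perp_{\Hom}}(\cat L^{\perp_{\Hom}})$ in the compactly generated setting.'' This is \emph{not} standard for an arbitrary localizing ideal $\cat L$: it holds precisely when $\cat L$ is strictly localizing, i.e., arises as the kernel of a Bousfield localization (\cref{rem:strictly-localizing}). Set-generated localizing ideals are strictly localizing (\cref{thm:set-generated-are-strictly-localizing}), but you have not argued that every localizing ideal of $\cat T$ is set-generated. This \emph{does} follow from costratification---via the chain: only a set of colocalizing coideals $\Rightarrow$ only a set of strictly localizing ideals $\Rightarrow$ only a set of set-generated localizing ideals $\Rightarrow$ all localizing ideals are set-generated (cf.~\cref{rem:vopenka} and \cref{prop:set-generated})---but you have not supplied this, and without it your injectivity step, and hence your derivation of stratification, does not go through.

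There is a much simpler fix already contained in your own work. Your argument for the $\subseteq$ inclusion in the first identity actually proves: if $\cat P \in \Supp(\ell)$ and $\cat P \in \Cosupp(t)$, then $\ihom{\ell,t} \neq 0$. Contrapositively, $\ihom{t_1,t_2}=0$ implies $\Supp(t_1)\cap\Cosupp(t_2)=\emptyset$, which is condition~(c) of \cref{thm:strat_cosupport} and is equivalent to stratification (the local-to-global principle being in hand). This is the route the paper takes: establish stratification first, directly from cominimality via \cref{lem:minimality_crit}, then deduce your two perp identities from stratification (\cref{cor:cosupp-of-perp}), and finally read off the bijection since both sides are now parametrized by subsets of $\Spc(\cat T^c)$ (\cref{cor:costrat-perps}). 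Your order---identities first, stratification second---forces you through the unjustified Bousfield step.
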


We thus have a hierarchy of properties that a tt-category can possess, summarized by the following diagram:
\begin{equation*}\label{eq:big-figure}\resizebox{\columnwidth}{!}{
		\begin{tikzpicture}[mybox/.style={draw, inner sep=5pt}]
		\node[mybox] (row1) at (0,-0.5){%
		  \begin{tikzcd}[ampersand replacement=\&]
			\& \text{$\cat T$ is costratified} \&
		  \end{tikzcd}
		};
		\node[mybox] (row2) at (0,-2.5) {
		  \begin{tikzcd}[ampersand replacement=\&]
			\& \text{$\cat T$ is stratified} \&
		  \end{tikzcd}
			};
		\node[mybox] (row3) at (0,-5) {
		  \begin{tikzcd}[ampersand replacement=\&]
			  {\text{Codetection holds for } \cat T} \ar[Rightarrow, 2tail reversed, r] \& {	{\parbox{13em}{\centering The local-to-global principle \\ holds for $\cat T$}} }\ar[Rightarrow, 2tail reversed, r] \& {{\parbox{10em}{\centering The colocal-to-global \\ principle holds for $\cat T$}}}
		  \end{tikzcd}
			};
		\node[mybox] (row4) at (0,-7.5) {
		  \begin{tikzcd}[ampersand replacement=\&]
			\& \text{Detection holds for $\cat T$} \&
		  \end{tikzcd}
		};
		\draw[-{Implies},double equal sign distance] ([yshift=-3pt]row1.south) -- ([yshift=3pt]row2.north);
		\draw[-{Implies},dotted,double equal sign distance] ([xshift=-2ex,yshift=3pt]row2.north) -- ([xshift=-2ex,yshift=-3pt]row1.south) node[midway,left] {?};
		\draw[-{Implies},double equal sign distance] ([yshift=-3pt]row2.south) -- ([yshift=3pt]row3.north);
		\draw[-{Implies},negated,double equal sign distance] ([xshift=-2ex,yshift=3pt]row3.north) -- ([xshift=-2ex,yshift=-3pt]row2.south);
		\draw[-{Implies},double equal sign distance] ([yshift=-3pt]row3.south) -- ([yshift=3pt]row4.north);
		\draw[-{Implies},negated,double equal sign distance] ([xshift=-2ex,yshift=3pt]row4.north) -- ([xshift=-2ex,yshift=-3pt]row3.south);
		\end{tikzpicture}
}\end{equation*}

It remains an open question whether stratification implies costratification in general. This is related to questions concerning the existence of arbitrary Bousfield localizations and is thereby related to set-theoretic concerns such as Vopĕnka's principle (see \cref{rem:vopenka} and \crefrange{rem:does-strat-imply-costrat}{rem:vopenka-positive}). In light of this discrepancy, costratification remains an \emph{a priori} deeper property than stratification. From a more practical point of view, the proofs of the known instances of costratification---such as \cite{Neeman11} or \cite{BensonIyengarKrause12}---are significantly more involved than their stratification counterparts. We will return to this topic later in \hyperref[themeVI:descent]{Theme~VI}.

We now turn to another asymmetry between support and cosupport. A basic property of support is that 
	\begin{equation*}\label{eq:intro-supp}
		\Supp(s\otimes t)\subseteq \Supp(s) \cap \Supp(t)
		\tag{$\dagger$}
	\end{equation*}
for any $s, t \in \cat T$. The ``half-$\otimes$ formula'' states that this inclusion is an equality when the object~$s$ is compact. Moreover, this is promoted to a ``full-$\otimes$ formula'' (that is,~\eqref{eq:intro-supp} is an equality for all objects) when the category is stratified. In contrast, the behaviour for cosupport is as follows:

\begin{thmx*}
	The following statements hold:
	\begin{enumerate}
		\item For any $s,t \in \cat T$, $\Cosupp(\ihom{s,t}) \subseteq \Supp(s) \cap \Cosupp(t)$.
		\item For any $x \in \cat T^c$ and $t \in \cat T$, $\Cosupp(\ihom{x,t}) = {\Supp(x) \cap \Cosupp(t)}$.
		\item Assume the local-to-global principle holds for $\cat T$. The following conditions are equivalent:
			\begin{enumerate}
				\item[(i)] $\cat T$ is stratified;
				\item[(ii)] For all $s,t \in \cat T$, we have $\Cosupp(\ihom{s,t})=\Supp(s)\cap \Cosupp(t)$.
			\end{enumerate}
	\end{enumerate}
\end{thmx*}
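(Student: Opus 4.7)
The plan is to treat part (a) with tensor-hom adjunctions, part (b) with rigidity of compact objects, and part (c) with the characterization of orthogonality via the Balmer-Favi idempotents. For (a), apply both forms of the tensor-hom adjunction to obtain the natural isomorphisms
\[
\ihom{g_{\cat P}, \ihom{s,t}} \cong \ihom{g_{\cat P} \otimes s, t} \cong \ihom{s, \ihom{g_{\cat P}, t}}.
\]
The first form vanishes when $\cat P \notin \Supp(s)$, since then $g_{\cat P} \otimes s = 0$; the second form vanishes when $\cat P \notin \Cosupp(t)$, since then $\ihom{g_{\cat P}, t} = 0$. Together they force $\cat P \notin \Cosupp(\ihom{s,t})$, yielding (a). For (b), the $\subseteq$ direction is immediate from (a); for the reverse, exploit rigidity: $\ihom{x,-} \cong x^{\vee} \otimes (-)$, and one has the natural identity $\ihom{g_{\cat P}, x^{\vee} \otimes t} \cong x^{\vee} \otimes \ihom{g_{\cat P}, t}$. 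The question reduces to showing that $x^{\vee} \otimes \ihom{g_{\cat P}, t}$ is non-zero whenever $\cat P \in \Supp(x) \cap \Cosupp(t)$; this is obtained by combining the half tensor-product formula for the compact $x^{\vee}$ with the fact that $\ihom{g_{\cat P}, t}$ lies in $\LambdaP \cat T$ and inherits a detection property there.

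For part (c), the key input is that $\ihom{a, t} = 0$ iff $t \in \Loc_{\otimes}(a)^{\perp}$, which translates non-vanishing of $\ihom{g_{\cat P} \otimes s, t}$ into an orthogonality statement. For $(\text{i}) \Rightarrow (\text{ii})$, assume stratification. If $\cat P \in \Supp(s)$, the full tensor-product formula gives $\Supp(g_{\cat P} \otimes s) = \{\cat P\}$, and stratification forces $\Loc_{\otimes}(g_{\cat P} \otimes s) = \GammaP \cat T$. Since $t \in (\GammaP \cat T)^{\perp}$ iff $\ihom{g_{\cat P}, t} = 0$, the condition becomes $\cat P \in \Cosupp(t)$. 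Combined with the vanishing case $\cat P \notin \Supp(s)$ from (a), this yields (ii). For $(\text{ii}) \Rightarrow (\text{i})$, note that LGP gives codetection and detection (\cref{thmx:local-to-global}). Applying (ii) to $\ihom{m,m}$, which is non-zero whenever $m$ is, shows that any non-zero $m \in \GammaP \cat T$ satisfies $\cat P \in \Cosupp(m)$. To prove minimality of $\GammaP \cat T$, take a non-zero localizing ideal $\cat L \subseteq \GammaP \cat T$ with $0 \neq \ell \in \cat L$, and consider the Bousfield localization triangle $C m \to m \to L m$ at the singly-generated ideal $\Loc_{\otimes}(\ell)$. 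For any $m \in \GammaP \cat T$, both $C m \in \cat L$ and $L m \in \GammaP \cat T \cap \Loc_{\otimes}(\ell)^{\perp}$; from $\ihom{\ell, L m} = 0$, (ii) forces $\cat P \notin \Cosupp(L m)$, and the preceding observation then forces $L m = 0$. Hence $m = C m \in \cat L$, so $\cat L = \GammaP \cat T$, and stratification follows.

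The main obstacle is the reverse inclusion in (b): without stratification we cannot identify $\Loc_{\otimes}(g_{\cat P} \otimes x)$ with $\GammaP \cat T$, so the non-vanishing of $x^{\vee} \otimes \ihom{g_{\cat P}, t}$ must be extracted directly from the compatibility between rigidity of $x$, the half tensor-product formula, and the structural properties of $g_{\cat P}$ acting on the object $\ihom{g_{\cat P}, t} \in \LambdaP \cat T$. Once this is in place, the remaining arguments are largely formal consequences of the tensor-hom adjunctions and the existence of Bousfield localizations at singly-generated localizing ideals.
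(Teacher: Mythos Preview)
Your argument is essentially correct and close to the paper's, but part (b) is handled differently and your key step there is stated too loosely.

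For (a) and (c) you do essentially what the paper does. Your (i)$\Rightarrow$(ii) via $\Loco{g_{\cat P}\otimes s}=\Loco{g_{\cat P}}$ is the same idea as the paper's proof (which phrases it as $g_{\cat P}\in\Loco{\GammaP s}$ and then pushes through $\ihom{-,t}$). Your (ii)$\Rightarrow$(i) unfolds the paper's minimality criterion (\cref{lem:minimality_crit}) rather than citing it; that is fine, and your use of codetection (via \cref{thm:LGP-equiv}) to get $\Cosupp(\ihom{m,m})\neq\emptyset$ is a legitimate substitute for the paper's direct adjunction argument in \cref{lem:pointsuppcosupp}.

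For (b) the paper takes a shorter and conceptually different route: it replaces $x$ by the idempotent $e_{\supp(x)}$ using $\Loco{x}=\Loco{e_{\supp(x)}}$ together with \cref{lem:eqlococosupp}, and then invokes the already-established identity $\Cosupp(\ihom{e_Y,t})=Y\cap\Cosupp(t)$ from \cref{lem:cosuppcoloc}. No analysis of $\LambdaP\cat T$ is needed. Your approach via rigidity and the half tensor formula does work, but the phrase ``inherits a detection property there'' is not the right justification and does not name the fact you actually need. What you need is precisely \cref{lem:pointsuppcosupp}(b): any nonzero $u\in\LambdaP\cat T$ has $\cat P\in\Supp(u)$, which follows from the bare adjunction $\cat T(\GammaP u,u)\cong\cat T(u,\LambdaP u)\cong\cat T(u,u)\neq 0$, with no detection hypothesis. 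Once you have $\cat P\in\Supp(\ihom{g_{\cat P},t})$, the half tensor formula gives $\cat P\in\Supp(x^{\vee}\otimes\ihom{g_{\cat P},t})$, hence $g_{\cat P}\otimes x^{\vee}\otimes\ihom{g_{\cat P},t}\neq 0$, and in particular $x^{\vee}\otimes\ihom{g_{\cat P},t}\neq 0$. So your route is valid but longer, and the vague appeal to ``detection'' should be replaced by this specific adjunction argument.
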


This is established in \cref{lem:cosupp_hom}, \cref{prop:halfhom}, and \cref{thm:strat_cosupport}. Note how the inclusion in part $(a)$, which is the analogue of property \eqref{eq:intro-supp}, involves both cosupport \emph{and} support. Part $(b)$ of the theorem provides the ``half-$\ihomname$ formula''. Part $(c)$ establishes that the half-$\ihomname$ formula promotes to a ``full-$\ihomname$ formula'' if \emph{and only if} the category is stratified (provided the local-to-global principle holds). This fundamental relationship between stratification and the behaviour of \emph{cosupport} is a discovery due to Benson--Iyengar--Krause \cite{BensonIyengarKrause12}. One might have expected \emph{a priori} that the full-$\ihomname$ formula would instead be more closely related to \emph{co}stratification.

\subsection*{Theme III.~The geometric relationship between support and cosupport}\label{themeIII:geometric}

The above motivates the search for a systematic geometric description of the relation between support and cosupport as subsets of the spectrum. This turns out to be a subtle question, but there are several conceptual results we can prove. For example, we prove (\cref{cor:min}): 

\begin{thmx*}\label{thmx:min-theorem}
	If $\cat T$ has noetherian spectrum, then $\min\Supp(t) = \min\Cosupp(t)$ for any $t \in \cat T$. In general, if the spectrum is not noetherian, this identity can fail.
\end{thmx*}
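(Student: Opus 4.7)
The plan is to exploit that a noetherian spectrum satisfies the local-to-global principle, hence by Theorem~C above also the colocal-to-global and codetection properties. In this setting the Balmer--Favi idempotent admits the concrete decomposition $\gP=\Gamma_V L_W\unit$, where $V=\closureP$ and $W=V\smallsetminus\{\cat P\}$ is Thomason by noetherianity. I would prove the equality as a double inclusion.

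For the forward inclusion $\min\Supp(t)\subseteq\min\Cosupp(t)$, let $\cat P\in\min\Supp(t)$. Minimality forces $W\cap\Supp(t)=\emptyset$, whence $\Gamma_W t=0$; the localization triangle $\Gamma_W t\to\Gamma_V t\to\gP\otimes t$ then yields $\gP\otimes t\cong\Gamma_V t\neq 0$ with support precisely $\{\cat P\}$. Applying codetection to the nonzero object $\gP\otimes t$ gives a nonempty cosupport, and using the tensor--hom adjunction together with the identity $\ihom{g_{\cat Q},\Gamma_V\unit}=0$ for $\cat Q\notin V$ (which follows from detection and $\Supp(g_{\cat Q})\cap V=\emptyset$), one restricts this cosupport to lie in $V$; a further symmetric step rules out strict specializations of $\cat P$, pinning the cosupport of $\gP\otimes t$ down to $\{\cat P\}$. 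Hence $\ihom{\gP,\gP\otimes t}\neq 0$, and therefore $\ihom{\gP,t}\neq 0$, so $\cat P\in\Cosupp(t)$. The reverse inclusion $\min\Cosupp(t)\subseteq\min\Supp(t)$ is obtained by a symmetric argument exchanging $\Gamma$/$\otimes$ with $\Lambda$/$\ihomname$ and invoking detection and the local-to-global principle in place of their duals. Since every nonempty subset of a noetherian space dominates a minimal element, the two inclusions yield the desired equality.

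For the failure when the spectrum is merely weakly noetherian, I would construct a counterexample in a chromatic setting such as $\cat T=\SpEn$ (whose spectrum is profinite but not noetherian for $n\geq 2$), or in the derived category of a suitable non-noetherian ring; the strategy is to arrange that a completion functor $\LambdaW$ at a ``thick'' subset $W$ near the bottom of the spectrum fails to preserve the dictionary between support and cosupport, forcing the cosupport of a carefully chosen $t$ to dip strictly below its support in the specialization order. The main obstacle in the positive direction is precisely the containment $\Cosupp(\gP\otimes t)\subseteq\{\cat P\}$: because $\gP$ is typically non-compact, the sharp half-$\ihomname$ formula from Theme~II is unavailable, and this containment must be extracted by direct manipulation of the Balmer--Favi decomposition combined with the codetection property.
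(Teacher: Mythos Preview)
Your argument contains a genuine error. The claim that $\ihom{g_{\cat Q},\Gamma_V\unit}=0$ for $\cat Q\notin V$ is false: with two connected points as in \cref{exa:two-connected-points}, taking $V=\{\mathfrak m\}$ and $\cat Q=\eta$, one has $g_\eta=f_{\{\mathfrak m\}}$ and $\ihom{f_{\{\mathfrak m\}},e_{\{\mathfrak m\}}}$ is the Tate construction, which vanishes only when the spectrum disconnects (\cref{prop:tate}). More generally $\Cosupp(\gP\otimes t)$ need not lie in $\closureP$; already $\Cosupp(\gP)$ can be all of $\gen(\cat P)$. Your approach can be salvaged by skipping this containment: from $\gP\otimes t\neq 0$ one obtains $\ihom{\gP,\gP\otimes t}\neq 0$ directly via $0\neq\cat T(\gP\otimes t,\gP\otimes t)=\cat T(t,\LambdaP\GammaP t)$ (this is the content of \cref{lem:pointsuppcosupp}), and then $\ihom{\gP,\gP\otimes t}\cong\ihom{\gP,t}$ holds in your setup because $e_W\otimes t=0$ forces $\ihom{f_W,t}\cong f_W\otimes t$. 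Your proposed counterexample also misfires: $\Spc(\SpEn^c)$ is a finite chain of $n+1$ points (\cref{thm:hovey-strickland}), hence noetherian for every $n$. The paper instead uses $\Der(C(\bbN^+,k))$, whose spectrum $\bbN^+$ is profinite but not noetherian, and exhibits $t=\LambdaN\unit$ with $\infty\in\Supp(t)\setminus\Cosupp(t)$.

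The paper's route is considerably cleaner and sidesteps $\gP$ entirely. It first proves (\cref{prop:intersect-with-thomason}) that for any Thomason $Y$ one has $\Supp(t)\cap Y=\emptyset\Leftrightarrow\Cosupp(t)\cap Y=\emptyset$; the crux is that for compact $x$ the isomorphism $\ihom{x,t}\cong x^\vee\otimes t$ makes the vanishing of $\ihom{e_Y,t}$ and of $e_Y\otimes t$ equivalent via the compact generators of $\Loco{e_Y}$. Applying this with $Y=\closureP$ and $Y=\closureP\setminus\{\cat P\}$ (both Thomason when $\cat P$ is visible) gives $\cat P\in\min\Supp(t)\Leftrightarrow\cat P\in\min\Cosupp(t)$ in one stroke, without any DCC bookkeeping.
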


In the situation of the theorem, one might wonder whether it is possible for the support and cosupport functions to coincide. We prove (\cref{cor:finite-discrete-noetherian}):
\begin{thmx*} 
	If $\cat T$ has noetherian spectrum, then the following are equivalent:
		\begin{enumerate}
			\item $\Supp(t)=\Cosupp(t)$ for all $t \in \cat T$.
			\item $\Spc(\cat T^c)$ is a finite discrete space.
		\end{enumerate}
\end{thmx*}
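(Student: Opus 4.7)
The plan is to handle the two implications separately: $(b)\Rightarrow(a)$ reduces to noting that on a finite discrete spectrum each idempotent $g_{\cat P}$ is compact and self-dual, while $(a)\Rightarrow(b)$ proceeds by deducing that same compactness of $g_{\cat P}$ from the hypothesis $\Supp = \Cosupp$, and then invoking noetherianity to finish.

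For $(b)\Rightarrow(a)$: if $\Spc(\cat T^c)$ is finite discrete, then every $\{\cat P\}$ is clopen, so $g_{\cat P}$ is a compact tensor-idempotent, and it serves as the tensor unit of the rigidly-compactly generated direct summand $\Gamma_{\{\cat P\}}\cat T$ of $\cat T$. As the tensor unit of a rigid symmetric monoidal category, $g_{\cat P}$ is self-dual; therefore $\ihom(g_{\cat P},t)\cong g_{\cat P}^{\vee}\otimes t\cong g_{\cat P}\otimes t$ for every $t\in \cat T$, whence $\Cosupp(t)=\Supp(t)$.

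For $(a)\Rightarrow(b)$: fix $\cat P\in\Spc(\cat T^c)$. The hypothesis $\Supp=\Cosupp$ on all objects forces
\[
\Gamma_{\{\cat P\}}\cat T:=\{t\,:\,\Supp(t)\subseteq\{\cat P\}\}=\{t\,:\,\Cosupp(t)\subseteq\{\cat P\}\}=:\Lambda^{\{\cat P\}}\cat T.
\]
The left-hand side is a localizing tensor ideal, closed under arbitrary coproducts, whereas the right-hand side is a colocalizing coideal, closed under arbitrary products by the product axiom of the cosupport theory, and under $\ihom(s,-)$ by the internal-hom axiom. Hence this common subcategory is closed under both coproducts and products computed inside $\cat T$.

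Now the colocalization $g_{\cat P}\otimes(-)\colon\cat T\to\Gamma_{\{\cat P\}}\cat T$ is right adjoint to the inclusion and hence preserves limits. Since products in $\Gamma_{\{\cat P\}}\cat T$ agree with those in $\cat T$ (by the closure just established), the self-functor $g_{\cat P}\otimes(-)\colon\cat T\to\cat T$ preserves arbitrary products in addition to the coproducts it automatically preserves. A standard adjoint-functor-theorem argument in the rigidly-compactly generated setting then produces a left adjoint, which must take the form $g_{\cat P}^{\vee}\otimes(-)$, exhibiting $g_{\cat P}$ as dualizable and therefore compact. A compact tensor-idempotent splits off a direct summand with $\Supp(g_{\cat P})=\{\cat P\}$ clopen in $\Spc(\cat T^c)$. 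Applied to every $\cat P$, this makes the spectrum discrete; noetherianity then forbids infinitely many isolated points and forces it to be finite. The principal obstacle is the step from ``$g_{\cat P}\otimes(-)$ preserves products'' to ``$g_{\cat P}$ is compact'', which crucially invokes the rigid-compact generation of $\cat T$.
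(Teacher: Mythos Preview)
Your $(b)\Rightarrow(a)$ direction is fine and in fact more direct than the paper's: when the spectrum is finite discrete, $\cat T$ splits as a finite product and each $g_{\cat P}$ is the unit of its factor, so $\ihom{g_{\cat P},-}$ and $g_{\cat P}\otimes(-)$ coincide. The paper instead deduces this from the general fact $\min\Supp(t)=\min\Cosupp(t)$ together with zero-dimensionality.

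Your $(a)\Rightarrow(b)$ direction has a genuine gap, and it occurs \emph{before} the step you flagged as the obstacle. The assertion that ``$g_{\cat P}\otimes(-)\colon\cat T\to\Gamma_{\cat P}\cat T$ is right adjoint to the inclusion'' is false in general: the idempotent $g_{\cat P}=e_{Y_1}\otimes f_{Y_2}$ does not come equipped with a map $g_{\cat P}\to\unit$ (nor $\unit\to g_{\cat P}$), so $g_{\cat P}\otimes(-)$ is not a (co)localization onto its image. It is only for a \emph{left} idempotent $e_Y$ (with its structure map $e_Y\to\unit$) that $e_Y\otimes(-)$ is right adjoint to the inclusion of $e_Y\otimes\cat T$. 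The inclusion $\Gamma_{\cat P}\cat T\hookrightarrow\cat T$ does have a right adjoint (since $\Gamma_{\cat P}\cat T$ is set-generated), but you have not identified it with $g_{\cat P}\otimes(-)$, and under your hypothesis $\Gamma_{\cat P}\cat T=\Lambda^{\cat P}\cat T$ the obvious adjoint to the inclusion is the \emph{left} adjoint $\Lambda^{\cat P}=\ihom{g_{\cat P},-}$, not $g_{\cat P}\otimes(-)$. So the claim that $g_{\cat P}\otimes(-)$ preserves products is unsupported.

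The paper's route avoids this entirely. It applies the hypothesis not to $g_{\cat P}$ but to the left idempotents $e_Y$: since $\Cosupp(e_Y)=\Supp(e_Y)=Y$, one has $\Cosupp(e_Y)\subseteq Y$, and (using codetection, which holds under noetherianity) this forces $\ihom{f_Y,\Sigma e_Y}=0$, whence $\unit\simeq e_Y\oplus f_Y$ and $Y$ is clopen. Applying this to every Thomason $Y$ makes every point open, so the spectrum is discrete and (by quasi-compactness) finite. If you want to salvage your compactness strategy, you would need a different argument linking $\Gamma_{\cat P}\cat T=\Lambda^{\cat P}\cat T$ to the dualizability of $g_{\cat P}$; as written, the adjunction claim is the point of failure.
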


From a more general perspective, the question of whether support coincides with cosupport is related to the vanishing of the Tate construction and thereby reflects the topology of the Balmer spectrum. For example, we have (\cref{cor:cosupp-spc-disjoint}):

\begin{thmx*}
	Assume that the codetection property holds. Let $Y \subseteq \Spc(\cat T^c)$ be a Thomason subset and let $e_Y \in \cat T$ be the associated left idempotent with $\Supp(e_Y)=Y$. Then $\Cosupp(e_Y) \subseteq Y$ if and only if there is a decomposition $\Spc(\cat T^c) = Y \sqcup Y^c$ as a disjoint union of closed sets.
\end{thmx*}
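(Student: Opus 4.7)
The plan is as follows. For the \emph{forward direction}, suppose $\Spc(\cat T^c) = Y \sqcup Y^c$ as a disjoint union of closed sets, so that $Y$ is clopen and both $Y$ and $Y^c$ are Thomason in the spectral space $\Spc(\cat T^c)$. Standard tt-geometry then produces a product decomposition $\cat T \simeq \cat T_Y \times \cat T_{Y^c}$ corresponding to an orthogonal splitting of the unit $\unit \cong e_Y \oplus f_Y$ (with $f_Y \cong e_{Y^c}$). For any $\cat P \notin Y$, the Balmer--Favi idempotent $g_{\cat P}$ has support $\{\cat P\} \subseteq Y^c$, so the detection property---available via the codetection hypothesis and Theorem~C above---places $g_{\cat P}$ in the $\cat T_{Y^c}$ factor, while $e_Y$ lives in the orthogonal $\cat T_Y$ factor. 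Internal homs between objects in disjoint factors of a product vanish, so $\ihom{g_{\cat P}, e_Y} = 0$ and hence $\Cosupp(e_Y) \subseteq Y$.

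For the \emph{converse direction}, which is the substantive one, I would apply the half-hom formula from part~(a) of the preceding theorem to $s = f_Y$ and $t = e_Y$:
\[
\Cosupp(\ihom{f_Y, e_Y}) \subseteq \Supp(f_Y) \cap \Cosupp(e_Y) \subseteq Y^c \cap Y = \emptyset.
\]
By the codetection hypothesis this forces $\ihom{f_Y, e_Y} = 0$, and in particular $\Hom_{\cat T}(f_Y, \Sigma e_Y) = 0$. The connecting morphism of the smashing triangle $e_Y \to \unit \to f_Y \to \Sigma e_Y$ therefore vanishes, so the triangle splits: $\unit \cong e_Y \oplus f_Y$. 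Since the unit is compact, both summands $e_Y$ and $f_Y$ are compact. Their Balmer supports $\Supp(e_Y) = Y$ and $\Supp(f_Y) = Y^c$ are consequently closed subsets of $\Spc(\cat T^c)$, producing the required disjoint decomposition by closed sets.

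The main obstacle lies in the converse, specifically in the step converting the cosupport containment $\Cosupp(e_Y) \subseteq Y$ into the vanishing of the single internal hom object $\ihom{f_Y, e_Y}$ via the half-hom formula. Once this is in hand, the remainder of the argument reduces to a clean triangle-splitting followed by an appeal to the fact that supports of compact objects in a rigidly-compactly generated tt-category are closed Thomason subsets of the Balmer spectrum.
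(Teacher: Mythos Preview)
Your argument is correct, and for the substantive direction ($\Cosupp(e_Y)\subseteq Y \Rightarrow$ decomposition) it is essentially the paper's proof: compute $\Cosupp(\ihom{f_Y,e_Y})\subseteq Y^c\cap\Cosupp(e_Y)=\emptyset$, invoke codetection to kill $\ihom{f_Y,e_Y}$, hence the connecting map $f_Y\to\Sigma e_Y$ vanishes and the idempotent triangle splits, forcing $Y$ to be clopen.

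The difference lies in how the two proofs handle the converse. The paper packages the whole statement as a chain of \emph{equivalences}: using the exact formula $\Cosupp(\ihom{f_Y,e_Y})=Y^c\cap\Cosupp(e_Y)$ (an equality, not just an inclusion), one has $\Cosupp(e_Y)\subseteq Y \iff \ihom{f_Y,\Sigma e_Y}=0$, and the latter is equivalent to the clopen decomposition by the ``Tate'' proposition relating vanishing of $\ihom{f_Y,\Sigma e_Y}$ to $Y^c$ being Thomason. So both directions fall out at once. You instead prove the easy direction separately via the product splitting $\cat T\simeq\cat T_Y\times\cat T_{Y^c}$; this is fine, but note that your appeal to the detection property is unnecessary there, since $e_Y\otimes g_{\cat P}=g_{Y\cap\{\cat P\}}=g_\emptyset=0$ holds on the nose for $\cat P\notin Y$, without any detection hypothesis. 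The paper's route is a bit more economical; yours is slightly more hands-on but equally valid.
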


More refined statements could be made, but the significant point is that understanding cosupport is relevant even for questions purely about the structure of the category $\cat T^c$ of compact objects.

It would be desirable to find a process for computing the cosupport of a given object in terms of its support or vice versa. This turns out to be too optimistic: We provide explicit counterexamples showing that in general the support of an object does not determine its cosupport (\cref{exa:supp-no-cosupp}) and, vice versa, the cosupport of an object does not determine its support (\cref{exa:cosupp-no-supp}). There are, however, more refined ways to relate support and cosupport; for example, by considering pairs of objects related by some notion of duality. To this end, we undertake a general study of dualities in tt-categories (\cref{def:dualizes}) and analyze how support and cosupport transform under them. For example, Brown--Comenetz duality provides a way to construct objects with prescribed cosupport (\cref{prop:all-objects-dualize}):

\begin{thmx*}
	For any $t \in \cat T$, $\Cosupp(t^*) = \Supp(t)$, where $t^*$ denotes the Brown--Comenetz dual of $t$.
\end{thmx*}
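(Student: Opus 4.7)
The plan is to unwind the definitions of Brown--Comenetz duality, support, and cosupport, and reduce the claim to the defining property of the Brown--Comenetz dualizing object.

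Recall that the Brown--Comenetz dual $t^*$ of an object $t \in \cat T$ is defined as $t^* = \ihom{t, \bbI}$, where $\bbI \in \cat T$ is a Brown--Comenetz dualizing object; the crucial characterizing property is that $\ihom{s,\bbI} = 0$ if and only if $s = 0$ for any $s \in \cat T$. Using the tensor-hom adjunction internal to $\cat T$, we compute, for every point $\cat P \in \Spc(\cat T^c)$,
\[
\ihom{g_{\cat P}, t^*} \;=\; \ihom{g_{\cat P}, \ihom{t, \bbI}} \;\cong\; \ihom{g_{\cat P} \otimes t,\, \bbI}.
\]
The detection property of $\bbI$ then implies that $\ihom{g_{\cat P}, t^*} = 0$ if and only if $g_{\cat P} \otimes t = 0$. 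Translating through the definitions, this exactly says $\cat P \in \Cosupp(t^*)$ if and only if $\cat P \in \Supp(t)$, which gives the desired equality of subsets of $\Spc(\cat T^c)$.

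Thus the proof is short once the correct duality framework is in place. The main obstacle is really not a calculation but ensuring that the Brown--Comenetz dualizing object $\bbI$ satisfies the required detection property $\ihom{-,\bbI}=0 \Rightarrow (-)=0$ in our general tt-setting; this should follow from the construction of $\bbI$ as an injective cogenerator for the Brown representability formalism (or, equivalently, by representing the functor sending $s$ to the Pontryagin dual of $\pi_0 \Map_{\cat T}(s, \unit)$), so that no additional hypotheses on $\cat T$ beyond those already assumed are needed. Once this is invoked, the argument is the one-line computation above, and it fits naturally into the broader framework of dualities developed earlier in the section.
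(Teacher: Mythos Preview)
Your argument is correct and is essentially the same approach as the paper's: both reduce to the internal adjunction $\ihom{g_{\cat P},t^*}\cong\ihom{g_{\cat P}\otimes t,I_\unit}=(g_{\cat P}\otimes t)^*$ together with the fact that Brown--Comenetz duality detects zero, which the paper records in \cref{exa:BC-duality}. Your packaging is in fact slightly cleaner than the paper's (which handles the two inclusions separately via \cref{lem:cosupp_hom} and cogeneration by the $I_c$); one small slip: in your parenthetical, $I_\unit$ represents the Pontryagin dual of $\cat T(\unit,s)$, not of $\cat T(s,\unit)$.
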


Our results on dualities can be applied to a wide variety of examples. The following theorem (\cref{prop:cosupp-for-dualizing}) provides an illustrative example of the type of result which can be obtained. To understand the statement, we say that an object $t \in \cat T$ has small cosupport if $\Cosupp(t) \subseteq \Supp(t)$. Examples include compact objects, and also dualizing complexes in algebraic geometry.

\begin{thmx*}
	Let $\cat T$ be stratified and suppose $\kappa \in \cat T$ dualizes the subcategory $\cat T_0 \subseteq \cat T$. If the objects of $\cat T_0$ have small cosupport, then for any $t \in \cat T_0$ we have
		\[
			\Cosupp(t) = \Supp(t)\cap \Cosupp(\kappa).
		\]
\end{thmx*}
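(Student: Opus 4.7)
The plan is to leverage the full-$\ihomname$ formula established in \hyperref[themeII:asymmetry]{Theme~II} together with the small cosupport hypothesis applied to both $t$ and its $\kappa$-dual $Dt \coloneqq \ihom{t,\kappa}$. Since $\cat T$ is stratified (and therefore in particular satisfies the local-to-global principle), the previous theorem applies and yields
\begin{equation*}
\Cosupp(\ihom{s,t'}) = \Supp(s) \cap \Cosupp(t')
\end{equation*}
for all $s, t' \in \cat T$. We shall also use that ``$\kappa$ dualizes $\cat T_0$'' delivers two ingredients: the dual $Dt$ again belongs to $\cat T_0$, and the canonical biduality map is an isomorphism $t \cong DDt$. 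Both facts should be read off directly from the definition of dualizing.

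With this setup, applying the full-$\ihomname$ formula once to $\ihom{t,\kappa}$ and once to $\ihom{Dt,\kappa}$ (and using $t \cong DDt$) gives the two identities
\begin{equation*}
\Cosupp(Dt) = \Supp(t) \cap \Cosupp(\kappa)
\qquad\text{and}\qquad
\Cosupp(t) = \Supp(Dt) \cap \Cosupp(\kappa).
\end{equation*}
The second identity immediately implies $\Cosupp(t) \subseteq \Cosupp(\kappa)$; combining this with the small cosupport of $t \in \cat T_0$ (that is, $\Cosupp(t) \subseteq \Supp(t)$) yields the inclusion $\Cosupp(t) \subseteq \Supp(t) \cap \Cosupp(\kappa)$.

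For the reverse inclusion, I would apply small cosupport to $Dt \in \cat T_0$ to obtain $\Cosupp(Dt) \subseteq \Supp(Dt)$. Substituting the first identity of the display and intersecting both sides with $\Cosupp(\kappa)$ produces
\begin{equation*}
\Supp(t) \cap \Cosupp(\kappa) \;=\; \Cosupp(Dt) \cap \Cosupp(\kappa) \;\subseteq\; \Supp(Dt) \cap \Cosupp(\kappa) \;=\; \Cosupp(t),
\end{equation*}
where the last equality is the second identity of the display. Together with the previous paragraph this gives the desired equality.

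The only real obstacle is the bookkeeping around the dualizing hypothesis: one must extract from the definition both the biduality isomorphism $t \cong DDt$ (so that the full-$\ihomname$ formula can be bootstrapped to compute $\Cosupp(t)$ itself) and the fact that $Dt$ lies in $\cat T_0$ (so that small cosupport is available for the dual). Once these are confirmed, the argument is a clean two-step interplay: the full-$\ihomname$ formula converts cosupport of $Dt$ into support of $t$, and small cosupport converts these identities back into the sought-after description of $\Cosupp(t)$.
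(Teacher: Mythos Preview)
Your proof is correct and follows essentially the same approach as the paper's (\cref{prop:cosupp-for-dualizing}): both arguments extract the two identities $\Cosupp(Dt) = \Supp(t)\cap\Cosupp(\kappa)$ and $\Cosupp(t) = \Supp(Dt)\cap\Cosupp(\kappa)$ from the full-$\ihomname$ formula, then use small cosupport of $t$ for one inclusion and small cosupport of $Dt$ for the other. The only cosmetic difference is that your intersection with $\Cosupp(\kappa)$ in the final display is harmless but redundant, since $\Cosupp(Dt)\subseteq\Cosupp(\kappa)$ already.
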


Applied to Spanier--Whitehead duality, the statement of the theorem reduces to the half-$\ihomname$ formula. Applied to the derived category of a commutative noetherian ring which admits a dualizing complex, it extends the half-$\ihomname$ formula to bounded complexes of coherent sheaves. These results clarify a number of results in the literature concerning the relation between cosupport and completion.

\subsection*{Theme IV.~Cosupport is support}\label{themeIV:opposite}

A significant contribution of this paper is a unification of support and cosupport which shows that they are both manifestations of the same construction. This unification is not obvious. Attempts to find a deeper connection between support and cosupport are clouded by the fact that colocalizing coideals are not obviously categorically dual to localizing ideals. The key insight that leads to their unification is the realization that $\cat T$ and $\cat T\op$ share the same rigid tt-category $\cat T^d \cong (\cat T\op)^d$ of dualizable objects and that localizing $\cat T^d$-submodules of~$\cat T$ are precisely the localizing ideals of $\cat T$, while the localizing $\cat T^d$-submodules of $\cat T\op$ are precisely the colocalizing coideals of $\cat T$. Technicalities arise because the opposite category $\cat T\op$ is never compactly generated, but a framework which covers both $\cat T$ and $\cat T\op$ is provided by the notion of a perfectly generated non-closed tt-category (\cref{def:non-closed-tt-category}). Summarizing results from \cref{sec:cosupp-is-supp}, we have:

\begin{thmx*}
	There is a theory of support for perfectly generated non-closed tt-categories~$\cat T$ whose spectrum $\Spc(\cat T^d)$ of dualizable objects is weakly noetherian, and a corresponding notion of stratification for classifying the localizing $\cat T^d$-submodules~of~$\cat T$.
\end{thmx*}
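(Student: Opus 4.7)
The plan is to transport the Balmer--Favi construction from the rigidly-compactly generated setting to perfectly generated non-closed tt-categories, with the rigid tt-subcategory $\cat T^d$ of dualizable objects playing the role that $\cat T^c$ plays in the classical case. The first step is to construct, for each point $\cat P \in \Spc(\cat T^d)$, a Balmer--Favi idempotent $\gP \in \cat T$ as the tensor product of a left and a right idempotent associated to the Thomason subsets witnessing that $\{\cat P\}$ is weakly visible. In the classical setting this construction relies on Bousfield localizations produced via Brown representability for compact objects; here one must instead use Brown representability as afforded by the perfect generators of $\cat T$, which is precisely what the ``perfectly generated non-closed'' framework is designed to supply. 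Crucially, the construction uses only the tensor product with $\gP$, not an internal hom, so the fact that $\cat T$ is non-closed is harmless.

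With the idempotents in hand, the support of an object $t \in \cat T$ is defined by
\[
  \Supp(t) = \{\cat P \in \Spc(\cat T^d) \mid \gP \otimes t \neq 0\}
\]
and the usual axioms must be verified: $\Supp(0) = \emptyset$, suspension invariance, subadditivity on exact triangles, additivity over coproducts, and the tensor inclusion $\Supp(s \otimes t) \subseteq \Supp(s) \cap \Supp(t)$. Because the idempotents $\gP$ are built from objects of $\cat T^d$, the collection of objects supported on any fixed subset $W \subseteq \Spc(\cat T^d)$ is automatically a localizing $\cat T^d$-submodule of $\cat T$; in particular, the stalks $\GammaP \cat T$ are localizing $\cat T^d$-submodules.

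Stratification is then defined as the condition that the induced map
\[
  \Supp \colon \{\text{localizing } \cat T^d\text{-submodules of } \cat T\} \longrightarrow \{\text{subsets of } \Spc(\cat T^d)\}
\]
is a bijection. Adapting the arguments of \cite{bhs1}, I expect this to be equivalent to the conjunction of a local-to-global principle---every localizing $\cat T^d$-submodule is generated (as such) by its stalks---together with minimality of each $\GammaP \cat T$ as a localizing $\cat T^d$-submodule.

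The principal obstacle is that $\cat T$ need not be compactly generated, and $\cat T\op$ never is, so each argument in \cite{bhs1} that invokes compact generators must be recast in terms of the perfect generators and the rigid subcategory $\cat T^d$. The expected pattern is that compacts are replaced by dualizable objects when one needs rigidity, and by perfect generators when one needs a generating set to test vanishing or represent cohomological functors. Once this substitution is systematically carried out, the development parallels the rigidly-compactly generated case; the reward is that the resulting theory simultaneously specializes to the usual support theory for $\cat T$ and, applied to $\cat T\op$, to the cosupport theory developed in the preceding themes, thereby realizing the promised unification.
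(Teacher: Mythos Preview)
Your proposal has a genuine gap at the very first step. You plan to construct idempotent \emph{objects} $e_Y,f_Y,g_{\cat P}\in\cat T$ and then define support via $\Supp(t)=\{\cat P\mid g_{\cat P}\otimes t\neq 0\}$. This presupposes that the relevant Bousfield localizations are given by tensoring with an idempotent, i.e., $\Gamma_Y(t)\simeq \Gamma_Y(\unit)\otimes t$. In a non-closed tt-category this fails: $-\otimes t$ need not preserve coproducts for non-dualizable $t$, so even though the localization functors $\Gamma_Y,L_Y$ do preserve coproducts (the local objects form a localizing $\cat T^d$-submodule), they are not of the form $e_Y\otimes-$. The paper therefore works directly with the \emph{functors}: for a Thomason $Y$ one uses perfect generation to show $\Loc\langle\cat T^d_Y\otimes\cat T\rangle$ is strictly localizing, obtains functors $\Gamma_Y,L_Y$, proves composition formulas $\Gamma_{Y_1}\Gamma_{Y_2}=\Gamma_{Y_1\cap Y_2}$, $L_{Y_1}L_{Y_2}=L_{Y_1\cup Y_2}$, $\Gamma_{Y_1}L_{Y_2}=L_{Y_2}\Gamma_{Y_1}$, sets $\Gamma_{\cat P}\coloneqq\Gamma_{Y_1}L_{Y_2}$ as a composite functor, and defines $\Supp(t)=\{\cat P\mid\Gamma_{\cat P}(t)\neq 0\}$.

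The failure of your approach becomes concrete in the main application: for $\cat T\op$ with $\cat T$ rigidly-compactly generated, the paper computes that $\Gamma_Y^{\cat T\op}(t)=\ihom{e_Y,t}$, which is not $\ihom{e_Y,\unit}\otimes t$ in general (think of derived completion versus tensoring with the completed ring). So your definition would not recover cosupport. A smaller issue: your axiom $\Supp(s\otimes t)\subseteq\Supp(s)\cap\Supp(t)$ is too strong in this generality; the paper only obtains $\Supp(x\otimes t)\subseteq\Supp(t)$ for $x\in\cat T^d$, and this weaker axiom is exactly what characterizes $\cat T^d$-submodules. Finally, the paper does not develop the local-to-global/minimality equivalence in this generality; it simply \emph{defines} stratification as bijectivity of the support map on localizing $\cat T^d$-submodules and shows this specializes correctly to stratification for $\cat T$ and costratification for $\cat T\op$.
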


\Cref{exa:supp-for-T}, \cref{exa:supp-for-Top}, and \cref{thm:cosupp-is-supp} then provide:

\begin{thmx*} Let $\cat T$ be a rigidly-compactly generated tt-category.
	\begin{enumerate}
		\item Applied to $\cat T$ the above theory reduces to the theory of support and stratification for localizing ideals of $\cat T$ developed in \cite{bhs1}.
		\item Applied to $\cat T\op$ the above theory reduces to the theory of cosupport and costratification for colocalizing coideals of $\cat T$ developed in this paper.
		\item In particular, 
			\[\Cosupp_{\cat T}(t) = \Supp_{\cat T\op}(t)\]
			for all $t \in \cat T$, the colocalizing coideals of $\cat T$ are precisely the localizing \mbox{$\cat T^d$-submodules} of $\cat T\op$, and $\cat T$ is costratified precisely when $\cat T\op$ is stratified.
	\end{enumerate}
\end{thmx*}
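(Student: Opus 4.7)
The plan is to verify that the framework of the preceding theorem specializes, in the case of $\cat T$ itself, to the theory of support and stratification from \cite{bhs1}, and, in the case of $\cat T\op$, to the theory of cosupport and costratification developed earlier in this paper; once these two specialisations are established, statement (c) follows by reading (b) on objects (to get the pointwise equality of supports) and on submodules (to get the bijection between colocalizing coideals and localizing $\cat T^d$-submodules, and hence the equivalence of costratification of $\cat T$ with stratification of $\cat T\op$). The unifying observation is that dualizability is intrinsic to a symmetric monoidal structure, so $(\cat T\op)^d = \cat T^d$ as rigid tt-subcategories, their Balmer spectra are canonically identified, and the hypothesis that $\Spc(\cat T^c)$ is weakly noetherian suffices for both applications.

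For (a), rigidity gives $\cat T^d = \cat T^c$, and $\cat T$ is in particular a perfectly generated non-closed tt-category, so the framework applies. A subcategory of $\cat T$ is closed under the $\cat T^d$-action precisely when it is a $\otimes$-ideal, so localizing $\cat T^d$-submodules coincide with localizing ideals in the classical sense. The abstract idempotents $g_{\cat P}$ produced by the framework agree with the Balmer--Favi idempotents by inspection of the construction, hence the abstract support reduces to the Balmer--Favi support and abstract stratification specialises to the notion of \cite{bhs1}.

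For (b), I would first check that $\cat T\op$ is a perfectly generated non-closed tt-category with $\cat T^d$ as a set of perfect generators; this is a standard consequence of the fact that compact objects in a compactly generated triangulated category induce perfect generators of the opposite category, together with $(\cat T\op)^d = \cat T^d$. Next, since localizing subcategories of $\cat T\op$ are exactly colocalizing subcategories of $\cat T$, and since rigidity gives $\ihom(x,-) \cong x^\vee \otimes -$ for $x \in \cat T^c$, closure under the $\cat T^d$-action in $\cat T\op$ is equivalent to closure under $\ihom(x,-)$ in $\cat T$; hence the localizing $\cat T^d$-submodules of $\cat T\op$ are precisely the colocalizing coideals of $\cat T$. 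The central step is then to identify the abstract support function of $\cat T\op$ with $\Cosupp_{\cat T}$. The cleanest route is axiomatic: show that $\Supp_{\cat T\op}$, pulled back to $\cat T$, is a costratifying cosupport theory in the sense of Theme~I, and then invoke the universality of $\Cosupp_{\cat T}$ quoted above to conclude that the two agree. Alternatively, one can trace the construction explicitly: the left--right idempotent pair $(e_Y,f_Y)$ in $\cat T$ swaps roles when passing to $\cat T\op$, so the abstract idempotent $g_{\cat P}^{\cat T\op}$ is built from the dual Thomason data, and the nonvanishing of $g_{\cat P}^{\cat T\op} \otimes t$ in $\cat T\op$ translates, via the fiber sequence $e_Y \to \unit \to f_Y$ and the definition of the internal hom, into the nonvanishing of $\ihom_{\cat T}(g_{\cat P}^{\cat T},t)$ in $\cat T$.

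Statement (c) is then pure bookkeeping: the pointwise equality $\Cosupp_{\cat T}(t) = \Supp_{\cat T\op}(t)$ and the identification of colocalizing coideals with localizing $\cat T^d$-submodules of $\cat T\op$ are exactly what (b) provides, and by the definition of stratification given in the preceding theorem, $\cat T\op$ is stratified iff $\Supp_{\cat T\op}$ induces a bijection between localizing $\cat T^d$-submodules of $\cat T\op$ and subsets of $\Spc(\cat T^c)$, which under these identifications is the defining bijection for costratification of $\cat T$. The main obstacle is the support--cosupport identification in (b); everything else is a careful transport of definitions across the $\cat T \leftrightarrow \cat T\op$ duality.
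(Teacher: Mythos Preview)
Your outline for (a) and (c) is essentially correct, but your argument for (b) has real gaps. First, a small error: the perfect generators of $\cat T\op$ are not the dualizable objects themselves but their Brown--Comenetz duals $I_c$ (see \cref{prop:Top-is-perfectly-generated}); the compact objects do not in general cogenerate $\cat T$. More importantly, neither of your two proposed routes to the identification $\Supp_{\cat T\op}=\Cosupp_{\cat T}$ works as stated. The axiomatic route invokes universality, but the universality results (\cref{prop:unique}, \cref{cor:Top-uniqueness}) are conditional: they presuppose either (co)detection or that the theory already (co)stratifies, whereas the identification must hold for \emph{every} rigidly-compactly generated $\cat T$. Your explicit route speaks of an ``abstract idempotent $g_{\cat P}^{\cat T\op}$'' and of ``$(e_Y,f_Y)$ swapping roles'', but in $\cat T\op$ the tensor does not preserve coproducts, so there are no idempotent objects implementing the localizations; the framework only gives localization \emph{functors} $\Gamma_Y^{\cat T\op}$, $\L_Y^{\cat T\op}$.

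What the paper actually does (\cref{exa:supp-for-Top}) is identify these functors directly. By definition, $\Gamma_Y^{\cat T\op}$ is the Bousfield colocalization onto $\Loc_{\cat T\op}\langle \cat K_Y \otimes \cat T\op\rangle = \Coloc_{\cat T}\langle \cat K_Y \otimes \cat T\rangle$. The key input --- which your sketch omits --- is \cref{lem:key-observation} (from \cite{HoveyPalmieriStrickland97}): $\Coloc_{\cat T}\langle \cat K_Y \otimes \cat T\rangle = \ihom{e_Y,\cat T} = (\cat T_Y)^{\perp\perp}$. This identifies the Bousfield localization in $\cat T\op$ with the familiar one $\ihom{f_Y,t}\to t\to \ihom{e_Y,t}$ in $\cat T$, so $\Gamma_Y^{\cat T\op}=\ihom{e_Y,-}$ and $\L_Y^{\cat T\op}=\ihom{f_Y,-}$. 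Composing, $\Gamma_{\cat P}^{\cat T\op}(t)=\ihom{\gP,t}$, and the identity $\Supp_{\cat T\op}=\Cosupp_{\cat T}$ drops out. Without this lemma there is no way to recognize the abstractly defined $\Gamma_Y^{\cat T\op}$ as $\ihom{e_Y,-}$.
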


This provides sound conceptual foundations for the construction of cosupport.

\subsection*{Theme V.~Base change of support and cosupport}\label{themeV:base-change}
Another major theme in this paper is the study of the behaviour of support and cosupport in a relative setting, that is under base change along functors between tt-categories. Such results are of fundamental importance in the subject, as they allow us to reduce problems in a category of interest to simpler categories. This is vital in our applications to classification problems discussed below. It also emphasizes the value of basing our theory of cosupport on the Balmer spectrum, which readily affords a geometric perspective for relating tt-categories.

The basic setup is a geometric functor $f^*\colon \cat T\to \cat S$, that is, a tt-functor which preserves coproducts. The functor $f^*$ induces a continuous map on spectra
	\[
		\varphi \coloneqq \Spc(f^*)\colon \Spc(\cat S^c) \to \Spc(\cat T^c).
	\]
Our assumptions also guarantee the existence of two layers of right adjoints:
	\[
		f^* \dashv f_* \dashv f^!.
	\]
We obtain a variety of results concerning the image and preimage under $\varphi$ of support and cosupport. An interesting feature is the prominent role that the double right adjoint $f^!$ plays concerning base change for cosupport. For example, one highlight (\cref{cor:globalavruninscott}) establishes the Avrunin--Scott identities \cite{avruninscott} in a general tensor triangular context:

\begin{thmx*}
	If $f^*\colon \cat T \to \cat S$ a geometric functor with $\cat T$ stratified, then for any $t \in \cat T$:
	\[
		\Supp_{\cat S}(f^*(t)) = \varphi^{-1}(\Supp_{\cat T}(t)) \quad  \text{and} \quad \Cosupp_{\cat S}(f^!(t)) = \varphi^{-1}(\Cosupp_{\cat T}(t)).
	\]
\end{thmx*}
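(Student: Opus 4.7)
The plan is to establish the two identities by combining three ingredients: (1) the unconditional base-change inclusions $\Supp_{\cat S}(f^*(t)) \subseteq \varphi^{-1}(\Supp_{\cat T}(t))$ and $\Cosupp_{\cat S}(f^!(t)) \subseteq \varphi^{-1}(\Cosupp_{\cat T}(t))$, which should already be available from the earlier base-change results for geometric functors; (2) the stratification hypothesis on $\cat T$, which provides very strong control over localizing ideals in $\cat T$ (and, via the half-$\ihomname$ formula of Theme II, over colocalizing coideals); (3) the behavior of $f^*(g_{\cat P})$ on the fiber over $\cat P$.

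For the support identity, I would prove the reverse inclusion as follows. Fix $\cat P \in \Supp_{\cat T}(t)$ and $\cat Q \in \varphi^{-1}(\cat P)$. Stratification of $\cat T$ implies that $\Loc_\otimes\langle g_{\cat P}\otimes t\rangle = \GammaP\cat T$, in particular $g_{\cat P} \in \Loc_\otimes\langle t\rangle$. Applying the coproduct-preserving tt-functor $f^*$ yields $f^*(g_{\cat P}) \in \Loc_\otimes\langle f^*(t)\rangle$. Since the kernel of the exact coproduct-preserving functor $g_{\cat Q}\otimes(-)$ is a localizing ideal of $\cat S$, and since $g_{\cat Q}\otimes f^*(g_{\cat P}) \neq 0$ holds whenever $\varphi(\cat Q) = \cat P$ (a base change fact for Balmer--Favi idempotents which should already appear in the paper), we conclude $g_{\cat Q}\otimes f^*(t)\neq 0$, i.e.\ $\cat Q \in \Supp_{\cat S}(f^*(t))$.

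For the cosupport identity, I would run the argument through the projection formula $\ihom_{\cat S}(f^*(X), f^!(Y)) \cong f^!(\ihom_{\cat T}(X,Y))$ combined with the full-$\ihomname$ formula $\Cosupp_{\cat T}(\ihom_{\cat T}(s,t)) = \Supp_{\cat T}(s)\cap \Cosupp_{\cat T}(t)$ that stratification of $\cat T$ affords (Theme II). Taking $X = g_{\cat P}$ and using $\Supp(g_{\cat P}) = \{\cat P\}$, this controls the cosupport of $\ihom_{\cat T}(g_{\cat P},t)$, and hence—via the already-available base-change behavior of cosupport under $f^!$ and the fact that $f^*(g_{\cat P})$ is supported precisely on $\varphi^{-1}(\cat P)$—the cosupport of $f^!(t)$ at each $\cat Q \in \varphi^{-1}(\cat P)$. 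The reverse inclusion then parallels the support argument above, but inside the colocalizing coideal generated by $t$ rather than the localizing ideal.

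The main obstacle will be the cosupport half: the support direction directly exploits stratification of $\cat T$, whereas costratification of $\cat T$ is not assumed, so one cannot symmetrically push $g_{\cat Q}$ through $f^!$. The role of stratification on the cosupport side is therefore more indirect, entering via the full-$\ihomname$ formula and requiring careful bookkeeping to convert statements about $\ihom_{\cat S}(f^*(g_{\cat P}), f^!(t))$ into statements about $\ihom_{\cat S}(g_{\cat Q}, f^!(t))$ for the individual points $\cat Q$ lying over $\cat P$.
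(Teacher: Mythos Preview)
Your argument for the support identity is correct and is exactly the paper's approach: stratification gives $g_{\cat P}\in\Loco{t}$, hence $f^*(g_{\cat P})\in\Loco{f^*(t)}$, and since $\Supp_{\cat S}(f^*(g_{\cat P}))=\varphi^{-1}(\{\cat P\})$ one gets $\cat Q\in\Supp_{\cat S}(f^*(t))$.

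The cosupport half has a genuine gap. Your plan is to combine $f^!\ihom{g_{\cat P},t}\cong\ihom{f^*(g_{\cat P}),f^!(t)}$ with the full-$\ihomname$ formula in~$\cat T$, but this only tells you $\Cosupp_{\cat T}(\ihom{g_{\cat P},t})=\{\cat P\}$ when $\cat P\in\Cosupp_{\cat T}(t)$; to conclude anything about $\Cosupp_{\cat S}$ you would need the very identity you are trying to prove (applied to $\ihom{g_{\cat P},t}$ instead of $t$), so the argument is circular. Unwinding further, $\ihom{g_{\cat Q},f^!\ihom{g_{\cat P},t}}\cong\ihom{g_{\cat Q},f^!(t)}$ for $\cat Q\in\varphi^{-1}(\cat P)$, so no reduction occurs. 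Your ``parallel'' colocalizing argument would require cominimality of $\LambdaP\cat T$, which is not assumed.

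The paper's proof of the cosupport identity is genuinely different and supplies the missing idea. It argues by contrapositive and pushes \emph{down} via $f_*$ rather than up via $f^!$. Given $\cat Q\notin\Cosupp_{\cat S}(f^!(t))$, one first uses weak conservativity of $f_*$ to find a compact $c\in\cat S^c$ with $f_*(c\otimes g_{\cat Q})\neq 0$; a direct computation shows $\Supp_{\cat T}(f_*(c\otimes g_{\cat Q}))=\{\varphi(\cat Q)\}$. Now stratification of $\cat T$ enters as \emph{minimality} (not cominimality): $g_{\varphi(\cat Q)}\in\Loco{f_*(c\otimes g_{\cat Q})}$. Finally the adjunction $\ihom{f_*(-),t}\cong f_*\ihom{-,f^!(t)}$ transfers the vanishing $\ihom{g_{\cat Q},f^!(t)}=0$ to $\ihom{f_*(c\otimes g_{\cat Q}),t}=0$, whence $\ihom{g_{\varphi(\cat Q)},t}=0$, i.e.\ $\varphi(\cat Q)\notin\Cosupp_{\cat T}(t)$. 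The trick is that going through $f_*$ lets you land in a \emph{localizing} ideal of $\cat T$, where minimality is available.
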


\noindent
These identities play a vital role in the study of descent properties for stratification, a topic discussed in more detail below.

Another result (\cref{cor:img-of-spc}) gives an unconditional description of the image:
\begin{thmx*}
	If $f^*\colon \cat T \to \cat S$ is a geometric functor, then $\im \varphi = \SuppT(f_*(\unitS))$.
\end{thmx*}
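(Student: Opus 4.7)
The plan is to check the desired set equality pointwise. For each $\cat P \in \Spc(\cat T^c)$, I want to show that $\gP \otimes f_*(\unitS) \neq 0$ if and only if $\cat P \in \im \varphi$. First, the projection formula for the geometric adjunction $f^* \dashv f_*$ rewrites the left-hand side as
\[ \gP \otimes f_*(\unitS) \;\cong\; f_*\bigl(f^*(\gP) \otimes \unitS\bigr) \;=\; f_*\bigl(f^*(\gP)\bigr). \]
Second, the adjunction isomorphism
\[ \Hom_{\cat T}\bigl(\gP,\, f_* f^*(\gP)\bigr) \;\cong\; \End_{\cat S}\bigl(f^*(\gP)\bigr) \]
shows that $f_* f^*(\gP) = 0$ if and only if $\End_{\cat S}(f^*(\gP)) = 0$, i.e.\ if and only if $f^*(\gP) = 0$ (the identity of $f^*(\gP)$ is nonzero precisely when $f^*(\gP)$ is). The task therefore reduces to the assertion that $f^*(\gP) \neq 0$ precisely when $\cat P \in \im\varphi$.

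Since $\Spc(\cat T^c)$ is weakly noetherian, the point $\cat P$ is weakly visible, so we may choose Thomason subsets $Y_1, Y_2 \subseteq \Spc(\cat T^c)$ with $\{\cat P\} = Y_1 \setminus Y_2$ and $\gP = e_{Y_1} \otimes f_{Y_2}$, where $e_Y$ and $f_Y$ denote the left and right idempotents associated with $Y$. Geometricity of $f^*$ intertwines these idempotents along $\varphi$, so
\[ f^*(\gP) \;=\; e_{\varphi^{-1}(Y_1)} \otimes f_{\varphi^{-1}(Y_2)}. \]
A clean vanishing criterion for such products holds in any rigidly-compactly generated tt-category: for Thomason subsets $W, W'$ of the spectrum, $e_W \otimes f_{W'} = 0$ if and only if $W \subseteq W'$. (Tensor the fibre sequence $e_{W'} \to \unit \to f_{W'}$ with $e_W$ and use $e_W \otimes e_{W'} = e_{W \cap W'}$ together with the injectivity of the Thomason-to-idempotent correspondence.) Applied here this gives
\[ f^*(\gP) = 0 \iff \varphi^{-1}(Y_1) \subseteq \varphi^{-1}(Y_2) \iff \varphi^{-1}(Y_1 \setminus Y_2) = \varphi^{-1}(\{\cat P\}) = \emptyset, \]
which is precisely the statement $\cat P \notin \im\varphi$. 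Chaining the equivalences establishes the theorem.

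There is no essential obstacle: the argument is a short assembly of standard tt-geometric facts. The two inputs that deserve care are (i) the identity $f^*(e_Y) = e_{\varphi^{-1}(Y)}$ (and its dual for $f_Y$) for a geometric $f^*$ and Thomason $Y$, a standard base-change compatibility that should already appear in the paper's treatment of how support transports along geometric functors, and (ii) the vanishing criterion for $e_W \otimes f_{W'}$, which is an elementary consequence of the basic calculus of smashing idempotents. Everything else is a one-line application of the projection formula and Hom--tensor adjunction.
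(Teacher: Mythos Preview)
Your proof is correct and follows essentially the same approach as the paper. The paper derives this as a corollary of a slightly more general result (\cref{thm:img-of-spc}) about arbitrary weak ring objects $w\in\cat S$, but specialized to $w=\unitS$ the argument collapses to exactly your chain of equivalences: projection formula, conservativity of $f_*$ on the image of $f^*$ (your End argument is the paper's \cref{rem:conservative-on-adjoint}), the identification $f^*(\gP)=g_{\varphi^{-1}(\{\cat P\})}$ (\cref{rem:fiber-is-support}), and the vanishing criterion $g_W=0\iff W=\emptyset$ (\cref{rem:W-intersection}).
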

A number of more refined statements are obtained. For example, we prove that if $f^*$ satisfies Grothendieck--Neeman duality (in the sense of \cite{BalmerDellAmbrogioSanders16}) then the induced map on spectra $\varphi$ is a closed map (\cref{rem:map-is-closed}). We also exhibit a close relationship between the surjectivity of $\varphi$ and the conservativity of the functors $f^*$ and $f^!$; see, e.g., \cref{prop:weaklyfinite-surjective-conservative} and \cref{cor:conservative-iff-surjective}.

These are just the highlights of our base change results; they already demonstrate that cosupport arises naturally in the study of any geometric functor $f^*\colon \cat T \to \cat S$, providing insight into the behaviour of the double right adjoint $f^!\colon \cat T\to \cat S$.

\subsection*{Theme VI.~Descent and applications}\label{themeVI:descent}
Another theme is the development of general techniques which allow us to establish costratification for numerous categories of interest. These techniques will provide streamlined proofs for all known classifications of colocalizing coideals in the literature and also apply to new classes of examples that were not previously accessible. To provide proper context for our results, we briefly review a blueprint for proving stratification, and then explain how to bootstrap this process to establish costratification.

Suppose $\cat T$ is a tt-category that we wish to prove is stratified. This proceeds naturally in three steps:

\begin{enumerate}[labelindent=\parindent,leftmargin=!]
	\item[\textit{Step 1}] Construct geometric functors $f_i^*\colon \cat T \to \cat S_i$ to simpler tt-categories $\cat S_i$ such that the images of the maps $\varphi_i\coloneqq\Spc(f_i^*)$ jointly cover $\Spc(\cat T^c)$.
	\item[\textit{Step 2}] Prove stratification for the categories $\cat S_i$. 
	\item[\textit{Step 3}] Descend stratification along the functors $f_i^*$.
\end{enumerate}
Progress usually hinges on improvements in Step 3, i.e., finding more general criteria for descending stratification. To this end, we add to the existing toolbox for establishing stratification in the form of \emph{quasi-finite descent} (\cref{thm:etaledescent-strat}) and \emph{nil-descent} (\cref{thm:nilpotentdescent}).

Now the question presents itself of whether a similar approach works for costratification. We show that this is indeed the case in the strongest possible way, provided we already have stratification. In other words, we prove that whenever we can descend stratification, we can also descend costratification. The key insight is the following `bootstrap theorem':

\begin{thmx*}\label{thmx:bootstrap}
	Suppose $f^*\colon \cat T \to \cat S$ is a conservative geometric functor such that $\cat S$ is costratified. If $\cat T$ is stratified, then it is also costratified. 
\end{thmx*}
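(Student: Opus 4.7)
\medskip\noindent\textbf{Proof proposal.}
The plan is to verify the two characterizing conditions for costratification given in \cref{thmx:costratification}: that $\cat T$ satisfies the colocal-to-global principle, and that $\LambdaP\cat T$ is minimal as a colocalizing coideal for each $\cat P \in \Spc(\cat T^c)$. The first condition is immediate, since $\cat T$ is stratified and therefore satisfies the local-to-global principle, which by \cref{thmx:local-to-global} coincides with the colocal-to-global principle.

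The substantive content is the minimality of $\LambdaP\cat T$. Fix $\cat P$, a nonzero $t \in \LambdaP\cat T$, and an arbitrary $s \in \LambdaP\cat T$; the goal is to show $s \in \Coloco{t}_{\cat T}$. Conservativity of $f^*$ ensures that $\varphi \coloneqq \Spc(f^*)$ is surjective, while stratification of $\cat T$ delivers the Avrunin--Scott identity for cosupport, which yields
$$\Cosupp_{\cat S}(f^!(t)) = \varphi^{-1}(\{\cat P\}) \quad\text{and}\quad \Cosupp_{\cat S}(f^!(s)) \subseteq \varphi^{-1}(\{\cat P\}).$$
In particular $f^!(t) \neq 0$ by codetection in the costratified category $\cat S$. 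Since $\cat S$ is costratified, its colocalizing coideals are classified by cosupport, which forces $f^!(s) \in \Coloco{f^!(t)}_{\cat S}$.

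The remaining step, which is the main obstacle, is to descend this containment from $\cat S$ back to $\cat T$. The strategy is to identify $\Coloco{t}_{\cat T}$ with the preimage $\cat M \coloneqq \{u \in \cat T : f^!(u) \in \Coloco{f^!(t)}_{\cat S}\}$. That $\cat M$ is a colocalizing coideal containing $t$ is routine: $f^!$ preserves products, triangles, and retracts, and satisfies the projection-type formula $f^!\ihomT{x,-} \cong \ihomS{f^*(x), f^!(-)}$, from which the closure under internal-hom-action follows. Moreover, $f^!$ is itself conservative, because $f^!(u) = 0$ together with Avrunin--Scott and surjectivity of $\varphi$ forces $\Cosupp_{\cat T}(u) = \emptyset$, whence $u = 0$ by codetection in $\cat T$.

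The chief technical difficulty is that conservativity of $f^!$ on objects does not formally imply that $f^!$ reflects colocalizing-coideal membership, which is precisely what is required for the reverse inclusion $\cat M \subseteq \Coloco{t}_{\cat T}$. I anticipate resolving this using the framework of Theme~IV: costratification of $\cat T$ is stratification of $\cat T\op$, and the functor induced by the conservative geometric $f^*$ on opposite categories places us in the setting of a suitable conservative-descent theorem for stratification of perfectly generated non-closed tt-categories. Stratification of $\cat T$---supplying the Avrunin--Scott identities and the detection property on both sides---is the ingredient that makes this translation and the subsequent descent work.
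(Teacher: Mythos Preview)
Your setup through the containment $f^!(s) \in \Coloco{f^!(t)}_{\cat S}$ is correct and matches the paper's opening moves. You have also correctly identified where the argument breaks down: there is no mechanism by which conservativity of $f^!$ allows you to reflect colocalizing-coideal membership, and the appeal to ``a suitable conservative-descent theorem for stratification of perfectly generated non-closed tt-categories'' is not substantiated---no such result is established in the paper, and the framework of \cref{sec:cosupp-is-supp} does not provide one. This is a genuine gap: the final paragraph is a placeholder rather than an argument.

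The paper's proof (\cref{thm:detectingcomin}) resolves the descent problem by a completely different route that does not attempt to reflect along $f^!$. Instead it \emph{pushes forward} along $f_*$. From $\Coloco{\LambdaQ\cat S} \subseteq \Coloco{f^!(t)}$ for each $\cat Q \in \varphi^{-1}(\{\cat P\})$, one applies $f_*$ and uses the adjunction isomorphism $f_*\ihomS{-,f^!(t)} \cong \ihomT{f_*(-),t}$ to obtain $f_*\LambdaQ\cat S \subseteq \Coloco{t}_{\cat T}$. The remaining task is to show that $\LambdaP\cat T$ is cogenerated by objects lying in $f_*\LambdaQ\cat S$. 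This is where the machinery of \cref{sec:local-cogeneration} enters: the costalk $\LambdaP\cat T$ is perfectly cogenerated by the objects $\LambdaP I_d$ for $d \in \cat T^c$ (\cref{prop:local-generators}), where $I_d$ is the Brown--Comenetz dual. Using minimality of $\GammaP\cat T$ (from stratification of $\cat T$) one produces, for each $d$, a compact $c \in \cat S$ with $\GammaP\unit \in \Loco{f_*\GammaQ c}$; dualizing against $I_d$ and invoking the base-change formula $I_{f^*d} \cong f^! I_d$ (\cref{lem:bcdualbasechange}) then exhibits $\LambdaP I_d$ as lying in $\Coloco{f_*(\LambdaQ I_{c^\vee \otimes f^*d})} \subseteq \Coloco{t}$. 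The Brown--Comenetz duals and their local-cogeneration property are the missing ingredient in your approach; they are precisely what allows the descent to go through without any reflection principle for $f^!$.
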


This is \cref{cor:detectingcomin} and is one of our main results. In order to run the descent strategy and apply the theorem, we also need a sufficient supply of costratified categories. One source is provided by our next result which combines \cref{cor:stratttfields} and \cref{thm:costratification_for_weakly_affine}:

\begin{thmx*}\label{thmx:costrat-base-cases}
	The following tt-categories $\cat T$ are costratified (and hence also stratified):
		\begin{enumerate}
			\item $\cat T$ is pure-semisimple (e.g., a tt-field in the sense of \cite{BalmerKrauseStevenson19}). In this case, $\Spc(\cat T^c)$ is a finite discrete space.
			\item $\cat T$ is affine weakly regular in the sense of \cite{DellAmbrogioStanley16}. In this case, there is a canonical homeomorphism $\Spc(\cat T^c) \cong \Spec^h(\End_{\cat T}^*(\unit))$.
		\end{enumerate}
\end{thmx*}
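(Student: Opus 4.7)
For part (a), the plan is to verify the two conditions of \cref{thmx:costratification} directly. Pure-semisimplicity forces arbitrary products in $\cat T$ to split as coproducts of indecomposable compacts, so a subcategory is closed under products if and only if it is closed under coproducts; applying this on the tt-ideal side shows that colocalizing coideals coincide with localizing ideals. The finite discreteness of $\Spc(\cat T^c)$ follows from the standard block decomposition for pure-semisimple categories: the endomorphism ring $\End_{\cat T}(\unit)$ is semisimple, so the unit splits as a finite sum of orthogonal tt-idempotents producing a decomposition $\cat T \simeq \prod_i \cat T_i$ in which each $\cat T_i$ is a tt-field in the sense of \cite{BalmerKrauseStevenson19}. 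Each tt-field has only the trivial colocalizing coideals $0$ and $\cat T_i$, matching the two subsets of its one-point spectrum, and costratification is preserved by finite products, which gives the conclusion.

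For part (b), the plan is to apply the bootstrap theorem \cref{thmx:bootstrap}. Stratification of affine weakly regular categories can be established following the noetherian blueprint of \cite{bhs1}, so I would take stratification as already in hand and focus on upgrading it to costratification by producing a conservative geometric functor into a costratified target. For each homogeneous prime $\mathfrak{p}$ of $R^{*}=\End_{\cat T}^{*}(\unit)$, corresponding to a point $\cat P \in \Spc(\cat T^c)$, I would construct a ``residue field'' functor $\cat T \to \cat T(\mathfrak{p})$ by first forming the stalk at $\cat P$ and then inverting all homogeneous elements of $R^{*}$ outside $\mathfrak{p}$. Weak regularity at $\mathfrak{p}$ is precisely the condition that allows one to conclude that $\cat T(\mathfrak{p})$ is pure-semisimple, essentially because the graded residue field is a graded field and the tt-category built on top of it collapses to a tt-field. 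Part (a) then guarantees that each $\cat T(\mathfrak{p})$ is costratified. The collection of these functors is jointly conservative: by \cref{thmx:local-to-global} the local-to-global principle holds, so every nonzero object has nonempty support and therefore survives in at least one residue category. Assembling them into a single geometric functor $\cat T \to \prod_{\mathfrak{p}} \cat T(\mathfrak{p})$ and invoking \cref{thmx:bootstrap} yields costratification of $\cat T$.

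The main obstacle in part (a) is ensuring that the pure-semisimple block decomposition respects the tensor structure, i.e.\ that the central idempotents in $\End_{\cat T}(\unit)$ really produce a tt-decomposition and that $\Spc(\cat T^c)$ has precisely the expected number of points. In part (b) the main obstacle is the construction of the residue-field tt-category $\cat T(\mathfrak{p})$ and the verification that weak regularity forces it to be pure-semisimple; this is where the geometric content of the hypothesis is used, mirroring the classical reduction from a regular local ring to its residue field. Once these ingredients are in place, the bootstrap machinery from Theme~VI assembles them into costratification with essentially no further input.
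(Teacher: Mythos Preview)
Your first observation---that in a pure-semisimple category the canonical map $\coprod t_i \to \prod t_i$ splits, so colocalizing coideals coincide with localizing ideals---is correct and is exactly how the paper proceeds (see \cref{lem:coideal=ideal} and the proof of \cref{thm-pure-semisimple-costratified}). But your endgame is wrong: pure-semisimple tt-categories do \emph{not} decompose as finite products of tt-fields. The paper's own \cref{ex:puresemisimple} gives $\StMod(kC_{p^n})$ for $n\ge 2$, which is pure-semisimple with a one-point spectrum yet is not a tt-field; so it is not a product of tt-fields either. There is no reason for $\End_{\cat T}(\unit)$ to be semisimple, and even if the unit splits into blocks, the blocks need not be tt-fields. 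The paper instead proves finite discreteness of $\Spc(\cat T^c)$ directly by showing every Thomason subset is clopen (a Tate-vanishing argument, \cref{prop-balmer-spectrum-pure-semisimple}), proves stratification via the minimality criterion \cref{lem:minimality_crit}, and then invokes \cref{lem:coideal=ideal}.

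\textbf{Part (b).} Your bootstrap strategy is conceptually reasonable but runs into two genuine obstacles that the paper's direct approach avoids. First, to apply the bootstrap you need $\cat T(\mathfrak p)$ to be a rigidly-compactly generated tt-category receiving a \emph{geometric} functor from $\cat T$; this amounts to making the residue object $K(\mathfrak p)$ a commutative algebra, which is not part of the Dell'Ambrogio--Stanley construction and is not automatic in an abstract tt-setting. Second, assembling into $\prod_{\mathfrak p}\cat T(\mathfrak p)$ fails when there are infinitely many primes: an infinite product of rigidly-compactly generated tt-categories need not be one, and the paper only proves costratification is preserved by \emph{finite} products (\cref{Exa:finite_product_of_costratified_categories}). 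You could salvage the second issue via \cref{prop:bootstrap_for_algebraic_categories}, but the first remains. The paper sidesteps both by working \emph{internally}: it uses the residue objects $K(\mathfrak p)\in\cat T$ to show directly that for any nonzero $t\in\Lambda^{\mathfrak p}\cat T$ one has $\Coloc\langle t\rangle=\Coloc\langle K(\mathfrak p)\rangle$, the key point being that $\ihom{K(\mathfrak p),t}$ is a coproduct of shifts of $K(\mathfrak p)$ which is a retract of the corresponding product (\cref{lem:retract}).
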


Our methods are flexible and have wide applicability to diverse classes of examples in algebraic geometry, representation theory and homotopy theory. The next theorem collects a sample of our applications; further examples can be found in the main body of the paper.

\begin{thmx*}\label{thmx:costrat-examples}
	\hspace{1em}
	\begin{enumerate}
		\item Let $X$ be a quasi-compact and quasi-separated scheme which is topologically weakly noetherian. The derived category $\Derqc(X)$ is stratified if and only if it is costratified. In particular, it follows that  $\Derqc(X)$ is costratified for any noetherian scheme $X$. (See \cref{thm:qc_shseaves}.)
		\item For $X$ a $p$-good connected space with noetherian mod $p$ cohomology, the category of modules over the cochain algebra $\Mod(C^*(X;\Fp))$ is stratified if and only if it is costratified if and only if $X$ satisfies Chouinard's condition. In particular, these conditions hold for connected noetherian $H$-spaces. (See \cref{thm:cochains}.)
		\item The category of $E_n$-local spectra is costratified. (See \cref{thm:chromatic_costratification}.)
		\item Let $G$ be a finite group and let $\bbE_G \in \CAlg(\Sp_G)$ be a commutative equivariant ring spectrum such that the non-equivariant derived categories $\Der(\Phi^H\bbE_G)$ are costratified with noetherian spectrum for each $H \le G$, where $\Phi^H\bbE_G$ denotes the geometric fixed points. Then $\Der(\bbE_G)$ is costratified. (See \cref{thm:equivariant_costratification}.)
		\item As a special case of $(d)$, the category of derived Mackey functors is costratified for any finite group $G$. (See \cref{cor:derived_mackey_costratified}.)
		\item As a special case of $(d)$ and \cite{BCHNP1}, the category of equivariant modules over Borel-equivariant Morava $E$-theory is costratified. (See \cref{thm:borel-E_n}.)
		\item The category of rational $G$-spectra is costratified for compact Lie groups $G$. (See \cref{thm:rational-spectra}.)
		\item The derived category of permutation modules $\mathrm{DPerm}(G,k)$ is costratified for any finite group $G$ and field $k$. (See \cref{thm:dpermstratification}.)
	\end{enumerate}
	In each of these cases, we get a classification of localizing ideals and colocalizing coideals in terms of the (known) underlying set of the corresponding Balmer spectrum.
\end{thmx*}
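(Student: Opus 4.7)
The unifying strategy is to apply the preceding bootstrap theorem: for each category $\cat T$ in the list, we exhibit a conservative geometric functor $f^*\colon \cat T \to \cat S$ (or a jointly conservative family of such) where $\cat S$ is already known to be costratified, and separately verify that $\cat T$ is stratified. The costratified base cases are supplied by the immediately preceding theorem, namely pure-semisimple (in particular tt-field) and affine weakly regular tt-categories. The descent techniques flagged in Theme~VI---quasi-finite descent and nil-descent---together with the base-change compatibilities of support and cosupport from Theme~V, provide the technology to transport stratification across the same functors along which we bootstrap costratification.

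For the algebraic and geometric examples I would proceed as follows. For (a), stratification of $\Derqc(X)$ is available from the existing literature; taking an affine cover $\{U_i = \Spec R_i\}$ of the weakly noetherian $X$, the restriction functors $\Derqc(X) \to \Der(R_i)$ form a jointly conservative family of geometric functors, and each target is affine weakly regular, hence costratified by the base case. For (b), the residue field functors at the maximal ideals of the mod-$p$ cohomology ring furnish a jointly conservative family into tt-fields precisely when Chouinard's condition holds; combined with the parallel stratification statement this yields the equivalence in the theorem. For (c), the $E_n$-local category maps conservatively via the family of Morava $K(i)$-localizations ($0 \le i \le n$) into pure-semisimple tt-categories, completing the bootstrap once stratification is in hand. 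For (h), the geometric functors attached to the decomposition of the spectrum of $\mathrm{DPerm}(G,k)$ by subgroup strata reduce the problem to affine weakly regular fragments already known to be costratified.

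For the equivariant cases (d)--(g), the natural jointly conservative family consists of the geometric fixed point functors $\Phi^H\colon \Der(\bbE_G) \to \Der(\Phi^H\bbE_G)$ ranging over subgroups $H \le G$. By hypothesis in (d) each target is costratified with noetherian spectrum, and the same family supports the corresponding stratification descent. Cases (e) and (f) are specializations of (d): derived Mackey functors correspond to $\bbE_G = \HZ_G$, whose geometric fixed points are derived categories of ordinary (noetherian) rings, hence affine weakly regular and covered by the base case; for Borel-equivariant Morava $E$-theory the non-equivariant geometric fixed points are handled by \cite{BCHNP1}. For (g), rational $G$-spectra split via geometric fixed points into a product of module categories over rational chain complexes indexed by conjugacy classes of closed subgroups, once again reducing to affine weakly regular bases.

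The principal technical obstacle in each case is verifying that the candidate functors genuinely assemble into a geometric family that is conservative and whose images on spectra jointly cover $\Spc(\cat T^c)$ in the sense demanded by the descent machinery. In the equivariant and chromatic settings, this means combining the fixed point or localization decomposition with the Avrunin--Scott type identity $\Cosupp_{\cat S}(f^!(t)) = \varphi^{-1}(\Cosupp_{\cat T}(t))$ from Theme~V, so that cosupport on the target is controlled point-by-point and the classification of colocalizing coideals descends to the source. This is precisely the interplay that the bootstrap theorem is designed to exploit, which is why it delivers a uniform treatment of all eight parts, with the individual cases differing only in the specific family of conservative geometric functors employed.
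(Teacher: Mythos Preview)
Your overall strategy---bootstrap via conservative geometric functors to costratified targets---matches the paper exactly, and parts (d), (f), (g) are essentially right. However, several of your specific functors and base cases are incorrect.

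For (a), the rings $R_i$ in an affine cover of a merely weakly noetherian scheme need not be noetherian, let alone regular, so $\Der(R_i)$ is not affine weakly regular. The paper uses Zariski descent to reduce to the affine case, and then for each $\Der(R)$ applies the residue field functors $\Der(R) \to \Der(\kappa(\mathfrak p))$ into tt-fields; this is the only base case invoked.

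For (b), Chouinard's condition has nothing to do with residue fields of the cohomology ring. By definition it asserts that induction and coinduction along $C^*(X;\Fp) \to \prod_{(E,\phi)\in\mathcal E(X)} C^*(BE;\Fp)$ are conservative, where the product runs over elementary abelian pairs in Rector's category. The bootstrap target is $\prod \Mod(C^*(BE;\Fp)) \simeq \prod \Rep(E,k)$, whose costratification is itself a nontrivial input, proved in the paper via Galois ascent from the affine weakly regular $\Mod(k^{h\bbT})$.

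For (c), the $K(i)$-local categories are not rigidly-compactly generated (the unit fails to be compact for $i>0$) and are certainly not pure-semisimple. The paper instead uses base change $\cat S_{E_n} \to \Mod(E_n)$, which is conservative by the Hopkins--Ravenel smash product theorem, with $\Mod(E_n)$ affine weakly regular.

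For (h), the targets are not affine weakly regular fragments. Balmer--Gallauer construct jointly conservative functors $\check{\Psi}_H\colon \mathrm{DPerm}(G,k) \to \Rep(W_G(H),k)$ indexed by $p$-subgroups, and costratification of $\Rep(W_G(H),k)$ is the Benson--Iyengar--Krause theorem (reproved in the paper via Chouinard plus the elementary abelian case above).

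Finally, the Avrunin--Scott identities are used inside the \emph{proof} of the bootstrap theorem, not in its applications; once the bootstrap is in hand, each case only requires identifying the conservative functor and the costratified target.
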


For each of these examples, stratification was already known, and we give precise attributions and references in the main text.

Another new class of costratified categories, which uses the techniques of this paper but whose proof lies outside its scope, are stable module categories $\StMod(G,R)$ of finite groups with coefficients in noetherian commutative rings; see \cite{BBIKP_stratification}. In the present paper, we include a streamlined proof of the classical $R=k$ case, handling elementary abelian groups via Galois ascent (\cref{prop:galoisascent}) à la Mathew; see \cref{thm:rep_g_k_costratifcation} and \cref{thm:rep_e_k_costratifcation}.

In summary, we have exhausted the list of all stratified tt-categories we are aware of and have shown that each of them is also costratified. This in particular includes the case of $X = S^3\langle 3\rangle$ in part $(b)$ of the previous theorem, which was not accessible to previous technology; see \cref{exa:3-connected-cover}.

\subsection*{Outline of the document}
The paper consists of four parts.

\Cref{part:cosupport-costratification} begins with \cref{sec:colocalizing-coideals} where we state our terminological conventions, introduce notation, and recall fundamental facts about colocalizing coideals and Bousfield localization. We define the notion of a cosupport theory in \cref{sec:cosupport-theories}. In \cref{sec:BF-cosupport}, we define the tensor triangular cosupport of a rigidly-compactly generated category and investigate its elementary properties. This study continues in \cref{sec:stalk-and-costalk} where we define the stalk and costalk at each point and introduce the detection and codetection properties. We then proceed to a study of the local-to-global principle and the colocal-to-global principle in \cref{sec:LGP} where we show that they are equivalent and in fact also equivalent to the codetection property. In \cref{sec:costratification}, we define costratification, establish the conditions which characterize when a category is costratified, and also prove that costratification implies stratification. We conclude this part with \cref{sec:geometry-of-cosupport} which is a study of the geometric behaviour of cosupport and its relation to support.

In \cref{part:perfection-duality} we switch from the world of rigidly-compactly generated tt-categories to a more general setting. We recall the notion of a perfectly generated triangulated category in \cref{sec:perfect-generation} and recall how the opposite category of a compactly generated category is perfectly generated. In \cref{sec:cosupp-is-supp}, we set up a theory of support for perfectly generated tt-categories, and show that it provides a unification of support and cosupport. In \cref{sec:universality}, we prove a universality result for our general theory of support, obtaining a universality result for cosupport as a special case. Finally, in \cref{sec:duality} we study how support and cosupport are related under intrinsic dualities, such as Spanier--Whitehead duality and Brown--Comenetz duality.

In \cref{part:morphisms-and-descent} we study base change and descent results for (co)support and (co)stratification. In particular, we study the image of a geometric functor, including critera for it to be surjective, in \cref{sec:image}, and base change results for (co)support in \cref{sec:base-change}. In \cref{sec:descending-LGP}, we apply these results to study descent for the local-to-global principle. In \cref{sec:local-cogeneration}, we obtain local cogenerators for our costalk categories, which is an important technical ingredient for the results which follow. This culminates in \cref{sec:bootstrap} where we establish our key `bootstrap' theorem which allows us to descend costratification whenever we can descend stratification. We also establish descent techniques for stratification to power the theorem.

In \cref{part:applications-examples} we turn to applications and examples. We discuss abstract tensor triangular examples in \cref{sec:ttexamples}, algebraic examples in \cref{sec:algebraicexamples}, and homotopical examples in \cref{sec:homotopicalexamples}. We conclude the paper in \cref{sec:open-questions} with a list of open questions, which we hope will stimulate further research.

\subsection*{Acknowledgements}

We thank Scott Balchin, Paul Balmer, David Rubinstein and Changhan Zou for their interest. We would also like to thank the Max Planck Institute and the Hausdorff Research Institute for Mathematics for their hospitality in the context of the Trimester program \emph{Spectral Methods in Algebra, Geometry, and Topology} funded by the Deutsche Forschungsgemeinschaft (DFG, German Research Foundation) under Germany’s Excellence Strategy – EXC-2047/1 – 390685813.
The first-named author is supported by the European Research Council (ERC) under Horizon Europe (grant No.~101042990). The second-named author is partially supported by Spanish State Research Agency project PID2020-116481GB-I00, the Severo Ochoa and María de Maeztu Program for Centers and Units of Excellence in R$\&$D (CEX2020-001084-M), and the CERCA Programme/Generalitat de Catalunya. The third-named author is supported by grant number TMS2020TMT02 from the Trond Mohn Foundation. The fourth-named author is supported by NSF grant~DMS-1903429.

\newpage
\part{Cosupport and costratification}\label{part:cosupport-costratification}

\section{Colocalizing coideals}\label{sec:colocalizing-coideals}

\begin{Ter}
	We follow the notation and terminology from \cite{bhs1}. For the majority of the paper $\cat T$ will denote a rigidly-compactly generated tensor-triangulated category (with exceptions in \crefrange{sec:perfect-generation}{sec:universality}). 
	We will denote the tensor by $-\otimes-$ and the unit object by $\unit$.
	We will denote the internal hom by $\ihom{-,-}$ and the abelian group of morphisms by $\Hom_{\cat T}(-,-)$ or $\cat T(-,-)$. We also write $t^\vee \coloneqq \ihom{t,\unit}$ for the dual of an object.
\end{Ter}

\begin{Def}\label{def:subcategories}
	A \emph{localizing subcategory} of $\cat T$ is a thick subcategory which is closed under coproducts. A \emph{localizing ideal} is a localizing subcategory $\cat L$ which is also a tensor-ideal: $\cat L \otimes \cat T \subseteq \cat L$ where $\cat L \otimes \cat T =\SET{s\otimes t}{s\in \cat L ,t\in \cat T}$. A \emph{colocalizing subcategory} of $\cat T$ is a thick subcategory which is closed under products. A \emph{colocalizing coideal} is a colocalizing subcategory $\cat C$ with the property that $\ihom{\cat T,\cat C} \subseteq \cat C$ where $\ihom{\cat T,\cat C} =\SET{\ihom{t,s}}{{s\in \cat C \text{, } t\in \cat T}}$.
\end{Def}

\begin{Not}
	We write $\Loc\langle \cat E\rangle$, $\Loco{\cat E}$, $\Coloc\langle \cat E\rangle$, and $\Coloco{\cat E}$ for the localizing subcategory, localizing ideal, colocalizing subcategory, and colocalizing coideal generated by a collection of objects $\cat E \subseteq \cat T$.
\end{Not}

\begin{Rem}\label{rem:monogenic}
	If $\cat T=\Loc\langle \cat E\rangle$ for a collection of objects $\cat E\subseteq \cat T$, then a colocalizing subcategory $\cat C$ is a coideal if and only if $\ihom{\cat E,\cat C}\subseteq \cat C$. Thus, if~$\cat T$ is monogenic (meaning $\cat T=\Loc\langle \unit\rangle$) then every colocalizing subcategory is automatically a coideal.
\end{Rem}

\begin{Rem}
	For any object $t \in \cat T$, consider the three functors
		\[
			t\otimes - \colon \cat T \to \cat T, \quad
			\ihom{t,-}\colon \cat T \to \cat T, \quad \text{ and }\quad  \ihom{-,t}\colon \cat T\op \to \cat T.
		\]
	Localizing ideals pull back under the first functor to localizing ideals; colocalizing coideals pull back under the second functor to colocalizing coideals; and colocalizing coideals pull back under the third functor to localizing ideals. It follows that for any collection of objects $\cat E \subseteq \cat T$, we have
		\begin{align}
			t \otimes \Loco{\cat E} &\subseteq \Loco{t\otimes \cat E},\label{eq:t@loc}\\
			\ihom{t,\Coloco{\cat E}} &\subseteq \Coloco{\ihom{t,\cat E}}, \text{ and}\label{eq:[t,coloc]}\\
			\ihom{\Loco{\cat E},t} &\subseteq \Coloco{\ihom{\cat E,t}}.\label{eq:[loc,t]}
		\end{align}
\end{Rem}

\begin{Def}[Orthogonal subcategories]\label{def:orthogonal}
	If $\cat E \subseteq \cat T$ is a collection of objects, we define the \emph{right orthogonal} of $\cat E$ to be the full subcategory
		\[ 
			{\cat E}^\perp \coloneqq \SET{t \in \cat T}{\ihom{s,t}=0 \text{ for all } s\in \cat E}.
		\]
	Note that ${\cat E}^\perp$ is a colocalizing coideal of $\cat T$. Moreover, it follows from \eqref{eq:[loc,t]} that ${\cat E}^\perp = \Loco{\cat E}^\perp$. Similarly, the \emph{left orthogonal} is defined by
		\[
			{}^\perp {\cat E} \coloneqq \SET{t \in \cat T}{\ihom{t,s} = 0 \text{ for all } s \in \cat E}.
		\]
	It is a localizing ideal of $\cat T$ and it follows from \eqref{eq:[t,coloc]} that ${}^\perp \cat E = {}^\perp \Coloco{\cat E}$.
\end{Def}

\begin{Rem}
	If $\cat E \subseteq \cat T$ is an ideal then 
		\[
			{\cat E}^\perp = \SET{t \in \cat T}{\cat T(s,t) = 0 \text{ for all } {s \in \cat E}}.
		\]
	Similarly, if $\cat E \subseteq \cat T$ is a coideal then
		\[
			{}^\perp \cat E = \SET{t \in \cat T}{\cat T(t,s) = 0 \text{ for all } {s \in \cat E}}.
		\]
\end{Rem}

\begin{Rem}\label{rem:strictly-localizing}
	A localizing ideal $\cat L$ is \emph{strictly localizing} if the inclusion $\cat L \hookrightarrow \cat T$ has a right adjoint. This is the case if and only if $\cat L$ is the kernel of a Bousfield localization on $\cat T$. In this situation, the subcategory of local objects is $\cat L^\perp$. Significantly, if $\cat L$ is strictly localizing, then $\cat L={}^\perp(\cat L^\perp)$, see for example \cite[Proposition 4.9.1]{Krause10}. Similarly, a colocalizing coideal $\cat C$ is \emph{strictly colocalizing} if the inclusion $\cat C \hookrightarrow \cat T$ has a left adjoint. This is the case if and only if $\cat C$ is the image of a Bousfield localization.  Moreover, in this case, $\cat C=({}^\perp \cat C)^\perp$. We always have a function $\cat L \mapsto \cat L^\perp$ between the strictly localizing ideals of $\cat T$ and the strictly colocalizing coideals of $\cat T$. This assignment is a bijection modulo the set-theoretic question of whether there is only a set of such Bousfield localizations.
\end{Rem}

\begin{Rem}
	The question of whether Bousfield localizations always exist has an interesting history, starting with \cite{Bousfield79}. Of particular note for our purposes is the following:
\end{Rem}

\begin{Thm}[Neeman]\label{thm:set-generated-are-strictly-localizing}
	If $\cat T$ is a well generated triangulated category then every set-generated localizing subcategory is strictly localizing.
\end{Thm}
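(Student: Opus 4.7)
The plan is to apply a Brown representability theorem to the inclusion $\iota\colon \cat L \hookrightarrow \cat T$. The defining property of strict localizing is precisely that $\iota$ admits a right adjoint, and since $\iota$ automatically preserves coproducts (localizing subcategories are closed under them) and is cohomological on the target, an adjoint functor theorem for triangulated categories will deliver the adjoint, \emph{provided} that the source category $\cat L$ is itself well generated. Thus the whole argument reduces to the statement: a set-generated localizing subcategory of a well generated triangulated category is again well generated.

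To prove this key reduction, write $\cat L = \Loc\langle \cat E\rangle$ for a set $\cat E \subseteq \cat T$, and fix a regular cardinal $\alpha$ such that $\cat T$ is $\alpha$-well generated by a set $\cat G$ of $\alpha$-small, $\alpha$-perfect generators. Since $\cat E$ is a set, one can choose a sufficiently large regular cardinal $\beta \geq \alpha$ so that every object of $\cat E$ is $\beta$-small and $\beta$-perfect when viewed in $\cat T$; this is the content of Neeman's combinatorial lemmas (\emph{Triangulated Categories}, Chapter 4). I would then verify that $\cat E$ (possibly closed under shifts and a set of $\beta$-small homotopy colimit diagrams) constitutes a set of $\beta$-compact generators for $\cat L$: the generation condition follows from $\cat L = \Loc\langle \cat E\rangle$, and the smallness/perfectness conditions are inherited from $\cat T$ because coproducts in $\cat L$ coincide with coproducts in $\cat T$ (this last point is what makes the transfer of perfectness go through cleanly).

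With $\cat L$ shown to be well generated, I would then invoke Brown representability for $\cat L$: for each $t \in \cat T$, the functor
\[
F_t \colon \cat L\op \to \Ab, \qquad l \mapsto \Hom_{\cat T}(\iota(l), t)
\]
is cohomological and takes coproducts in $\cat L$ to products (because $\iota$ preserves coproducts and $\Hom_{\cat T}(-,t)$ is cohomological). Hence $F_t$ is representable by some $R(t) \in \cat L$, and the resulting assignment $t \mapsto R(t)$ extends to a functor right adjoint to $\iota$. This exhibits $\cat L$ as the kernel of a Bousfield localization, as required.

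The main obstacle is the middle step: carefully verifying that the perfectness axiom descends from $\cat T$ to $\cat L$. It is here that one really needs $\cat E$ to be a \emph{set} (so that a cardinal $\beta$ bounding all the objects of $\cat E$ exists) and it is here that one uses in an essential way that $\cat L$ is closed under all coproducts in $\cat T$, not just those of size below some cardinal. Once this technical point is handled, the rest of the argument is formal.
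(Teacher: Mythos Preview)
Your proposal is correct and follows exactly the same approach as the paper: reduce to the fact that a set-generated localizing subcategory of a well generated triangulated category is itself well generated, then apply Brown representability to obtain the right adjoint of the inclusion. The paper simply cites Neeman and Krause for these two ingredients rather than sketching the cardinal-chasing argument you outline.
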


\begin{proof}
	The key point is that a set-generated localizing subcategory of a well generated triangulated category is itself well generated, hence the inclusion has a right adjoint by Brown representability; see \cite[Remark~1.16 and Proposition~1.21]{Neeman01} or \cite[Proposition~4.9.1 and Theorem~7.2.1]{Krause10}.
\end{proof}

\begin{Rem}\label{rem:vopenka}
	There is no analogous result for colocalizing subcategories (morally, because the proof techniques do not apply to $\cat T\op$). This is the heart of the issue for why there could be ``more'' colocalizing subcategories and why one might expect a classification of colocalizing coideals to be ``harder'' than a classification of localizing ideals. For example, if there is only a set of localizing ideals (e.g., if $\cat T$ is stratified in the sense of \cite{bhs1}) then every localizing ideal is set-generated by \cite[Lemma~3.3]{KrauseStevenson19} and hence is strictly localizing by \cref{thm:set-generated-are-strictly-localizing}. Hence in this case the assignment $\cat L \mapsto \cat L^\perp$ provides an injection from the set of localizing ideals into the collection of colocalizing coideals. A priori, there might be more colocalizing coideals. The difference would disappear if we knew that all colocalizing coideals were strictly colocalizing, but therein lie set-theoretic dragons. For example, Casacuberta--Gutiérrez--Rosický \cite{CasacubertaGutierrezRosicky14} prove that if a large cardinal axiom known as Vopĕnka's principle holds (see \cite[Chapter 6]{AdamekRosicky94}) then every colocalizing subcategory of $\cat T$ is strictly colocalizing provided $\cat T=\Ho(\cat M)$ is the homotopy category of a stable combinatorial model category. It remains an open question whether this is true for arbitrary $\cat T$ and without assuming axioms beyond ZFC.
\end{Rem}

\begin{Rem}
	The modified version of \cite[Lemma 3.3.1]{KrauseStevenson19} provided by \cite[Proposition 3.5]{bhs1} actually establishes that if there is a set of \emph{set-generated} localizing ideals then all localizing ideals are set-generated. With this in hand, a variant of the above argument runs as follows: If there is a set of colocalizing coideals then there is a set of strictly colocalizing coideals, hence a set of strictly localizing ideals, hence a set of set-generated localizing ideals (by invoking \cref{thm:set-generated-are-strictly-localizing}), and hence all localizing ideals are set-generated (and strictly localizing). In summary: If there is a set of colocalizing coideals then there is a set of localizing ideals and they correspond to the strictly colocalizing coideals of $\cat T$.
\end{Rem}

\section{Cosupport theories}\label{sec:cosupport-theories}
Our goal is to classify colocalizing coideals using a suitable notion of ``cosupport'' for the objects of $\cat T$. First we axiomatize the properties such a cosupport theory should satisfy.

\begin{Def}\label{def:axiomaticcosupp}
	Let $X$ be a topological space and let $\mathfrak C\colon \cat T \to \mathcal{P}(X)$ be a function, where $\mathcal{P}(X)$ denotes the power set of (the underlying set of) $X$. This function extends to collections $\cat E$ of objects in $\cat T$ by setting $\mathfrak C(\cat E) = \bigcup_{t\in \cat E}\mathfrak C(t)$. The pair $(X,\mathfrak C)$ is called a \emph{cosupport theory} if it satisfies the following conditions:
    \begin{enumerate}
	    \item $\mathfrak C(0) = \emptyset$ and $\mathfrak C(\cat T)=X$;
	    \item $\mathfrak C(\Sigma t) = \mathfrak C(t)$ for every $t \in\cat T$;
	    \item $\mathfrak C(c) \subseteq \mathfrak C(a) \cup \mathfrak C(b)$ for any exact triangle $a \to b \to c \to \Sigma a$ in~$\cat T$;
	    \item $\mathfrak C(\prod_{i\in I} t_i) = \bigcup_{i \in I} \mathfrak C(t_i)$ for any set of objects $t_i$ in $\cat T$;
	    \item $\mathfrak C(\ihom{s,t}) \subseteq \mathfrak C(t)$ for all $s,t \in \cat T$.
    \end{enumerate}
	We also refer to $\mathfrak C$ as a \emph{cosupport function}. 
\end{Def}

\begin{Rem}\label{rem:axioms-equiv}
	These axioms (excluding $\mathfrak C(\cat T)=X$) are equivalent to the statement that for any subset $Y \subseteq X$, the subcategory $\SET{t \in \cat T}{\mathfrak C(t) \subseteq Y}$ is a colocalizing coideal of $\cat T$. In particular, $\mathfrak C(\Coloco{\cat E})=\mathfrak C(\cat E)$ for any collection of objects $ \cat E \subseteq \cat T$. For example, if $\Coloco{t_1}=\Coloco{t_2}$ then $\mathfrak C(t_1)=\mathfrak C(t_2)$.
\end{Rem}

\begin{Rem}
	Note that we have not yet made use of the topology on $X$. For now, it could therefore be dropped from the definition. We also remark in passing that our axiomatization is similar but not exactly the same as the one given by \cite[Definition~3.2]{Verasdanis22bpp}. Further discussion on this point will be given in \cref{rem:comp-veras}.
\end{Rem}

\begin{Exa}\label{exa:DR-cosupport}
    Let $\Der(R)$ be the derived category of a commutative noetherian ring~$R$. Neeman \cite{Neeman11} has given a classification of the colocalizing coideals of $\Der(R)$ using the assignment
		\[
			\big\{ \text{colocalizing coideals of $\Der (R)$} \big\} \xrightarrow{B(-)} \big\{ \text{subsets of $\Spec(R)$}\big\}
		\]
	where $B(\cat C) \coloneqq \SET{\mathfrak p \in \Spec(R) }{\kappa(\mathfrak p) \in \cat C}$ and $\kappa(\mathfrak p)$ is the residue field of $R$ associated to the prime ideal $\mathfrak p$. We claim that the function $B(t) \coloneqq B(\Coloco{t})$ defined on objects $t \in \Der(R)$ is a cosupport theory in the sense of \Cref{def:axiomaticcosupp}. Indeed, $B(0) = \SET{\mathfrak p \in \Spec(R)}{\kappa(\mathfrak p) \in (0)} = \emptyset$, while $\kappa(\mathfrak p) \in B(\kappa(\mathfrak p))$ by definition so that $\bigcup_{\mathfrak p \in \Spec(R)} B(\kappa(\mathfrak p)) = \Spec(R)$. Condition (b) is clear, while (c) follows because if $a \to b \to c \to \Sigma a$ is an exact triangle, then~$c$ is in the colocalizing subcategory generated by $a$ and $b$. Finally, $(e)$ is a consequence of the inclusion $\Coloco{\ihom{s,t}} \subseteq \Coloco{t}$. Note that this cosupport theory is defined for any commutative ring $R$, although Neeman's theorem requires $R$ noetherian.
\end{Exa}
 
\begin{Exa}\label{exa:SWW-cosupport}
     An alternative approach is given in \cite{SatherWagstaffWicklein17}, where the cosupport of $t \in \Der(R)$ is defined by
		 \[
			\cosuppa_R(t) = \SET{\mathfrak p \in \Spec(R) }{\ihom{k(\mathfrak p),t} \neq 0}. 
		 \]
	 Most of the axioms for cosupport are verified in \cite[Prop.~4.7--4.9]{SatherWagstaffWicklein17}. Axiom (b) is not verified, but is clear from the definition. The axiom $\cosuppa_R(\Der(R)) = \Spec(R)$ follows, for example, from the observation that $\ihom{k(\mathfrak p),k(\mathfrak p)} \ne 0$. The only remaining axiom to check is (e). To this end, suppose $\ihom{k(\mathfrak p),\ihom{s,t}} \ne 0$. Then $\ihom{k(\mathfrak p) \otimes s,t} \ne 0$. But $k(\mathfrak p) \otimes s$ is a coproduct of suspensions of $k(\mathfrak p)$, so that $\ihom{k(\mathfrak p),t} \ne 0$, as well.
\end{Exa}

\begin{Exa}\label{Exa:chromatic_cosupport}
	Let $\cat S_{E(n)}$ denote the category of $E(n)$-local spectra; see \cite{HoveyStrickland99} and \cite[Section 10]{bhs1}. For $t \in \cat S_{E(n)}$, Hovey and Strickland \cite[Section 6]{HoveyStrickland99} consider the chromatic cosupport, defined by 
		\[
			\cosuppa(t)\coloneqq \SET{m \in \{0,\ldots, n\}}{\ihom{K(m),t} \ne 0},
		\]
    where $K(m)$ is the $m$-th Morava $K$-theory. Arguments similar to those used for \cref{exa:SWW-cosupport} establish that this defines a theory of cosupport on $\cat S_{E(n)}$.
\end{Exa}

\begin{Exa}
	If a rigidly-compactly generated tt-category $\cat T$ is equipped with a central action by a graded commutative noetherian ring $R$, then Benson--Iyengar--Krause \cite{BensonIyengarKrause12} provide a cosupport theory $(\cosupp_R(\cat T), \cosupp_R)$ whose space of cosupports $\cosupp_R(\cat T) \subseteq \Spec^h(R)$ lies in the homogeneous spectrum of the acting ring. The cosupport axioms are established in Sections 4, 8 and 9 of their paper. For example, we could take the derived category~$\Der(R)$ of a noetherian commutative ring acted upon by $R$ itself, or we could take the stable module category $\StMod(kG)$ of a finite group $G$ over a field $k$ acted upon by the group cohomology ring $H^*(G;k)$. In the latter example the space of cosupports is $\Proj(H^*(G,k))\subsetneq \Spec^h(H^*(G,k))$. 
\end{Exa}

\section{Tensor triangular cosupport}\label{sec:BF-cosupport}

In this section we introduce the main cosupport theory of interest to us, which is related to the Balmer--Favi support (a.k.a.~small support) introduced in \cite{BalmerFavi11} and studied in depth in \cite{bhs1}. Throughout~$\cat T$ will denote a rigidly-compactly generated tensor-triangulated category whose spectrum $\Spc(\cat T^c)$ is weakly noetherian. We will briefly recall what the latter topological condition means, before proceeding with the definition of cosupport. Further discussion is found in \cite[Section~2]{bhs1}. First we recall some details about smashing and finite localizations and their idempotent triangles.

\begin{Rem}\label{rem:smashing-recollement}
	Let $\cat T$ be a rigidly-compactly generated tt-category. Recall from~\cite[Theorem 2.13]{BalmerFavi11} that a \emph{smashing ideal} is a strictly localizing ideal $\cat L$ which satisfies the following equivalent conditions:
		\begin{enumerate}
			\item $\cat L^\perp$ is a localizing subcategory of $\cat T$;
			\item $\cat L^\perp$ is a localizing ideal of $\cat T$;
			\item $\cat L^\perp$ is an ideal of $\cat T$.
		\end{enumerate}
	Associated to a smashing ideal is an idempotent exact triangle
		\[
			e \to \unit \to f \to \Sigma e
		\]
	and we have the following diagram of adjunctions
		\begin{equation}\label{eq:smashing-diagram}
				\begin{tikzcd}[column sep=-2em,row sep=2em,scale cd=1]
				\cat L = e\otimes \cat T 
			\ar[rr,bend left=7,"\ihom{e,-}"]
			\ar[rr,phantom,"\cong"description]
			\ar[rr,<-,bend right=7,"e\otimes-"']
			\ar[ddr,shift={(-5pt,-5pt)},hook,"\ihom{e,-}"{yshift=-5pt},shift left=2.5ex]
				\ar[ddr,shift={(-5pt,-5pt)},hook,"\mathrm{incl}"',shift right=2.5ex]
				&& \ihom{e,\cat T}=\cat L^{\perp \perp}
				\ar[ddl,shift={(5pt,-5pt)},hook,"\mathrm{incl}"{yshift=-0pt},shift left=2.5ex]
				\ar[ddl,shift={(5pt,-5pt)},hook,"e\otimes -"'{yshift=-5pt},shift right=2.5ex]
				\\
			& & \\
		& \cat T 
		\ar[uul,two heads,shift={(-5pt,-5pt)},"e\otimes-"description]
		\ar[uur,two heads,shift={(5pt,-5pt)},"\ihom{e,-}"description]
		\ar[dd,two heads,"\ihom{f,-}"{yshift=-0pt},shift left=2.5ex]
			\ar[dd,two heads,"f\otimes -"',shift right=2.5ex]
		& \\
		&&\\
		&\cat L^\perp = f\otimes \cat T=\ihom{f,\cat T}
		\ar[uu,hook,"\mathrm{incl}"description]
			\end{tikzcd}
		\end{equation}
	in which each of the six ``vertical" sequences $\bullet\hookrightarrow \cat T \onto \bullet$ is a Bousfield localization. See \cite[Theorem 3.5]{BalmerFavi11}, \cite[Remark 5.3]{BalmerSanders17}, \cite[Section 2]{BarthelCastellanaHeardValenzuela18} and \cite[Section 3.3]{HoveyPalmieriStrickland97} for further discussion.
\end{Rem}

\begin{Exa}[Finite localizations]
	Let $\cat T_Y^c\coloneqq \SET{x \in \cat T^c}{\supp(x)\subseteq Y}$ denote the thick ideal of compact objects corresponding to a Thomason subset $Y \subseteq \Spc(\cat T^c)$. The localizing ideal $\cat T_Y\coloneqq \Loco{\cat T_Y^c}=\Loc\langle\cat T_Y^c\rangle$ is a smashing ideal; see \cite[Remark 1.23]{bhs1}. We write
		\[
			e_Y \to \unit \to f_Y \to \Sigma e_Y
		\]
	for the corresponding idempotent triangle.
\end{Exa}

\begin{Exa}\label{exa:DRsmashing}
	If $\cat T=\Der(R)$ is the derived category of a commutative ring $R$ and $I \subseteq R$ is a finitely generated ideal, we can take $Y\coloneqq V(I)$, the set of prime ideals of $R$ containing $I$. In this case, $e_Y \otimes \cat T$ is the category of $I$-torsion complexes and $\ihom{e_Y,\cat T}$ is the category of $I$-adically complete complexes; see \cite[Section 5]{Greenlees01} and \cite[Example 2.24]{Stevenson18}. The equivalence $e_Y\otimes \cat T \cong \ihom{e_Y,\cat T}$ first arose in the work of Matlis \cite{Matlis78} and Greenlees--May \cite{GreenleesMay92}; cf.~\cite{PortaShaulYekutieli14}. The functors $e_Y\otimes-$ and $\ihom{e_Y,-}$ can be interpreted in terms of local cohomology and local homology, respectively; see, e.g., \cite{AlonsoJeremiasLipman97,DwyerGreenlees02, BarthelCastellanaHeardValenzuela18}.
\end{Exa}

\begin{Lem}\label{lem:key-observation}
	For each Thomason subset $Y \subseteq \Spc(\cat T^c)$,
		\[
			\Coloc \langle \cat T_Y^c \otimes \cat T\rangle = \ihom{e_Y,\cat T} = (\cat T_Y)^{\perp \perp}.
		\]
\end{Lem}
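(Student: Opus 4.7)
My plan is to handle the second equality first, then prove the first via two inclusions, with rigidity as the key technical input. The identity $\ihom{e_Y,\cat T}=(\cat T_Y)^{\perp\perp}$ is immediate from \cref{rem:smashing-recollement} applied to the smashing ideal $\cat T_Y$ with idempotent triangle $e_Y\to\unit\to f_Y$; this also records that $\ihom{e_Y,\cat T}$ is a colocalizing subcategory of $\cat T$.

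For the inclusion $\Coloc\langle \cat T_Y^c\otimes \cat T\rangle\subseteq \ihom{e_Y,\cat T}$, it suffices by the previous point to check that every generator $x\otimes t$ with $x\in \cat T_Y^c$ and $t\in \cat T$ lies in $\ihom{e_Y,\cat T}$. Rigidity of the compact object $x$ gives $x\otimes t\cong \ihom{x^\vee,t}$, and since $x^\vee\in \cat T_Y^c\subseteq \cat T_Y=e_Y\otimes \cat T$, we have $x^\vee\cong e_Y\otimes x^\vee$. Hence
\[
	x\otimes t\cong \ihom{e_Y\otimes x^\vee,t}\cong \ihom{e_Y,\ihom{x^\vee,t}}\in \ihom{e_Y,\cat T}.
\]

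For the reverse inclusion, fix $u\in \cat T$ and consider the full subcategory $\cat A\coloneqq \SET{a\in \cat T}{\ihom{a,u}\in \Coloc\langle \cat T_Y^c\otimes \cat T\rangle}$. The contravariant functor $\ihom{-,u}$ converts coproducts to products, triangles to triangles, and retracts to retracts, so $\cat A$ is a localizing subcategory of $\cat T$. For $x\in \cat T_Y^c$, rigidity gives $\ihom{x,u}=x^\vee\otimes u$, and since $\cat T_Y^c$ is a thick ideal (closed under duality), $x^\vee\otimes u\in \cat T_Y^c\otimes \cat T$. Thus $\cat T_Y^c\subseteq \cat A$, which forces $\cat A\supseteq \Loc\langle \cat T_Y^c\rangle=\cat T_Y\ni e_Y$, giving $\ihom{e_Y,u}\in \Coloc\langle \cat T_Y^c\otimes \cat T\rangle$ for every $u$, as required. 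I do not foresee serious obstacles: the entire argument reduces to the rigidity identification $x\otimes t\cong \ihom{x^\vee,t}$ together with closure of $\cat T_Y^c$ under duality, and it sidesteps the question of whether $\Coloc\langle \cat T_Y^c\otimes \cat T\rangle$ is automatically a colocalizing coideal (which becomes a corollary of the lemma itself).
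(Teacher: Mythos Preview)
Your proof is correct. The paper's own proof is simply a citation to \cite[Theorem~3.3.5(e)]{HoveyPalmieriStrickland97}, so your argument is genuinely different in that it is self-contained. The key ingredients you use---the smashing recollement diagram from \cref{rem:smashing-recollement}, the rigidity isomorphism $x\otimes t\cong\ihom{x^\vee,t}$ for compact $x$, and closure of $\cat T_Y^c$ under duals---are all available within the paper's setup, so your proof avoids the external reference entirely. What this buys is transparency: the reader sees directly why rigidity makes the statement work, rather than chasing a result in HPS. The paper's approach buys brevity and an acknowledgment that the result is classical.
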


\begin{proof}
	This is proved in \cite[Theorem 3.3.5(e)]{HoveyPalmieriStrickland97}.
\end{proof}

\begin{Def}
	A subset $W \subseteq \Spc(\cat T^c)$ is said to be \emph{weakly visible} if it can be written as the intersection of a Thomason subset and the complement of a Thomason subset: $W=Y_1 \cap Y_2^c$. We can then define an idempotent 
		\[	
			\gW \coloneqq e_{Y_1}\otimes f_{Y_2}.
		\]
	This object of $\cat T$ only depends, up to isomorphism, on the subset $W$; see \cite[Corollary~7.5]{BalmerFavi11}. We say that a point $\cat P\in\Spc(\cat T^c)$ is weakly visible if the singleton subset $\{\cat P\}$ is weakly visible, and we define
		\[
			\gP \coloneqq g_{\{\cat P\}}.
		\]
	That is, $\gP = e_{Y_1}\otimes f_{Y_2}$ for any choice of Thomason subsets $Y_1,Y_2\subseteq \Spc(\cat T^c)$ such that $\{\cat P\}=Y_1\cap Y_2^c$.
\end{Def}

\begin{Rem}\label{rem:good-Thomasons}
	For a weakly visible point $\cat P$, we can always take $Y_2 = \gen(\cat P)^c$, where $ \gen(\cat P)=\SET{\cat Q}{\cat P \subseteq \cat Q}$ and $Y_1=\supp(a)$ for some $a\in \cat T^c$; see \cite[Remark 2.8]{bhs1}.
\end{Rem}

\begin{Rem}\label{rem:W-intersection}
	The intersection $W_1 \cap W_2$ of two weakly visible subsets is again weakly visible and $g_{W_1} \otimes g_{W_2} = g_{W_1 \cap W_2}$. Moreover, $\gW=0$ if and only if $W =\emptyset$.  These facts are proved in \cite[Lemma 1.27]{bhs1}.
\end{Rem}

\begin{Exa}\label{ex:W-intersection}
	If $Y$ is a Thomason subset then both $Y$ and $Y^c$ are weakly visible subsets. We have $g_Y = e_Y$ and $g_{Y^c}=f_Y$. \Cref{rem:W-intersection} thus specializes to give $e_{Y_1} \otimes e_{Y_2} = e_{Y_1 \cap Y_2}$ and $f_{Y_1} \otimes f_{Y_2} = f_{Y_1 \cup Y_2}$ for Thomason subsets $Y_1$ and $Y_2$; see also \cite[Theorem 5.18]{BalmerFavi11} and \cite[Lemma 1.27]{bhs1}.
\end{Exa}

\begin{Def}
	A spectral space $X$ is \emph{weakly noetherian} if every point is weakly visible. This is the topological condition we will require of $\Spc(\cat T^c)$ in order to construct our cosupport theory.
\end{Def}

\begin{Rem}
	A spectral space $X$ is weakly noetherian if and only if its Hochster dual $X^*$ has the property that every point is locally closed. The latter is a separation axiom between $T_0$ and $T_1$ called $T_D$ and shows up in work on the analogue of these constructions for the smashing spectrum; see \cite{BalchinStevenson21pp,Verasdanis22pp} and \cite[Section~4.5]{DickmannSchwartzTressl19}.
\end{Rem}

\begin{Rem}\label{rem:spec-order}
	We define the specialization order on $X$ by $x \le y$ if and only if $x$ is a specialization of $y$, that is, $x \in \overbar{\{y\}}$. Be warned that this is opposite to how the specialization order is defined in \cite{DickmannSchwartzTressl19}. According to our convention, closed points are minimal for the specialization order.
\end{Rem}

\begin{Prop}\label{prop:weakly-noetherian-implies-DCC}
	Weakly noetherian spectral spaces satisfy the descending chain condition (DCC) on irreducible closed sets.
\end{Prop}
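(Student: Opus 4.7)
The plan is to argue by contradiction using compactness of the patch (constructible) topology. By sobriety, a strictly descending chain of irreducible closed sets in $X$ corresponds to a strictly descending sequence $x_1 > x_2 > \cdots$ in the specialization order, and the goal is to rule this out. The strategy is to produce a strict lower bound $z$ for the sequence as a patch-ultrafilter limit and then exploit the weak visibility of $z$ to derive a contradiction.

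The patch topology on a spectral space is compact Hausdorff, so any non-principal ultrafilter $\mathcal U$ on $\bbN$ determines a unique patch-limit $z \coloneqq \lim_{\mathcal U} x_n \in X$. Each $\overline{\{x_n\}}$ is closed in $X$ and hence in the patch topology; since $\{m : m \ge n\}$ is cofinite and therefore in $\mathcal U$, and $x_m \in \overline{\{x_n\}}$ for every such $m$, the ultrafilter limit satisfies $z \in \overline{\{x_n\}}$, i.e.\ $z \le x_n$, for every $n$. Strictness of the chain precludes $z = x_m$ for any $m$, so in fact $z < x_n$ strictly for all $n$.

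Now I would invoke weak visibility of $z$ to write $\{z\} = Y_1 \cap Y_2^c$ with Thomason subsets $Y_1, Y_2 \subseteq X$. Because $Y_2$ is specialization-closed, $Y_2^c$ is generization-closed; since $z \in Y_2^c$ and $x_n > z$, this yields $x_n \in Y_2^c$ for every $n$. Since $x_n \ne z$, we cannot have $x_n \in Y_1 \cap Y_2^c$, forcing $x_n \notin Y_1$ for every $n$.

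The key step — where weak visibility really bites — is to extract a patch-clopen neighborhood of $z$ inside $Y_1$. Writing $Y_1 = \bigcup_\alpha (X \setminus U_\alpha)$ as a union of closed sets with quasi-compact open complements $U_\alpha$, choose an index $\alpha_0$ with $z \in K \coloneqq X \setminus U_{\alpha_0}$. Then $K$ is constructible and hence patch-clopen, so $K$ is a patch-open neighborhood of $z$, and ultrafilter convergence forces $\{n : x_n \in K\} \in \mathcal U$. But $K \subseteq Y_1$ combined with $x_n \notin Y_1$ makes this set empty, which is absurd. The subtlety to flag is that a Thomason subset is only a (possibly uncountable) union of patch-closed sets and need not itself be patch-closed or patch-open, so one cannot apply the ultrafilter to $Y_1$ directly; the argument works precisely because $z$ sits inside a single constructible piece of $Y_1$.
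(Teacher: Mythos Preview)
Your proof is correct. The paper actually gives two arguments: the official proof is tt-categorical (reduce to $X=\Spc(\cat K)$ via Hochster's theorem, take the intersection $\cat P=\bigcap_n \cat P_n$ of the prime ideals, and use weak visibility $\{\cat P\}=\supp(a)\cap\gen(\cat P)$ together with the elementary fact that $a\notin\bigcap_n\cat P_n$ forces $a\notin\cat P_m$ for some $m$). The paper then sketches a second, purely point-set proof in a remark, which is close in spirit to yours: it takes the constructible closure $\overline{Y}^{\mathrm{con}}$ of the chain, invokes a result from Dickmann--Schwartz--Tressl that the infimum $y_\infty$ of the chain exists and lies in $\overline{Y}^{\mathrm{con}}$, writes $\{y_\infty\}=Z\cap\gen(y_\infty)$ with $Z$ Thomason closed, and observes that $Z^c$ is constructibly closed so the chain must meet $Z$.

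Your ultrafilter argument is a slick replacement for this second proof: instead of citing an external infimum-in-the-constructible-closure theorem, you manufacture a patch accumulation point directly via compactness, and your final step (passing from the Thomason $Y_1$ to a single constructible piece $K\subseteq Y_1$ containing $z$) plays exactly the role of choosing $Z$ Thomason closed in the paper's remark. Your version is self-contained and avoids the external reference; the paper's tt-categorical proof is shorter still but leans on the Hochster/Thomason identification of spectral spaces with Balmer spectra.
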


\begin{proof}
	It suffices to prove the result for the Balmer spectrum $X=\Spc(\cat K)$ of an essentially small tensor-triangulated category $\cat K$ since every spectral space arises in this way.\footnote{Every spectral space arises as the Zariski spectrum of a commutative ring by Hochster's theorem \cite{Hochster69} and it follows from Thomason's theorem \cite{Thomason97} that the Zariski spectrum of a commutative ring coincides with the Balmer spectrum of its derived category of perfect complexes.}
	We claim that if $X$ is weakly noetherian then $\cat K$ satisfies the descending chain condition on prime ideals. Indeed, if $\cat P_1 \supseteq \cat P_2 \supseteq \cat P_3 \supseteq \cdots$ is a descending chain of prime ideals, then consider the prime ideal $\cat P\coloneqq \bigcap_{n=1}^\infty \cat P_n$. If $\cat P$ is weakly visible then $\{\cat P\}=\supp(a)\cap \gen(\cat P)$ for some $a \in \cat K$ (\cref{rem:good-Thomasons}). Then $a \not\in \cat P$ implies $a \not\in \cat P_n$ for some $n$, so that $\cat P_n \in \supp(a)$. Since $\cat P_n \in \gen(\cat P)$ we conclude that $\cat P=\cat P_n$.
\end{proof}

\begin{Rem}\label{rem:DCC-point-set}
	There is also a purely point-set topological proof of the previous proposition. The result is equivalent to the statement that any weakly noetherian spectral space $X$ has the descending chain condition for the specialization order on $X$ (\cref{rem:spec-order}). To establish this, let $Y = (y_1 \ge y_2 \ge \cdots)$ be a descending specialization chain in $X$. Let $\Ybarcon$ denote the closure of $Y$ in the constructible topology. By \cite[Theorem~4.2.6]{DickmannSchwartzTressl19}, $Y$ has an infimum $y_\infty \in X$ which is contained in $\Ybarcon$. Since $X$ is weakly noetherian, $\{y_\infty\} = Z \cap \gen(y_\infty)$ for some Thomason closed subset $Z\subseteq X$. To establish that $y_\infty =y_n$ for some $n\ge 1$, it suffices to prove that $Y \cap  Z \neq \emptyset$ since $Y\subseteq \gen(y_\infty)$. If $Y \cap Z = \emptyset$ then $Y \subseteq Z^c$, but $Z^c$ is closed in the constructible topology. Thus it would follow that $\Ybarcon \subseteq Z^c$ which is a contradiction since $y_\infty \in \Ybarcon \cap Z$.
\end{Rem}

\begin{Exa}
	Let $X$ denote the Hochster dual of the Zariski spectrum $\Spec(\bbZ)$. Specialization chains in $X$ are of length at most one, hence $X$ satisfies the DCC on irreducible closed sets. However, the point of $X$ corresponding to the generic point of $\Spec(\bbZ)$ is contained in every Thomason subset of $X$. Thus, this point is not weakly visible. This example shows that the converse to \cref{prop:weakly-noetherian-implies-DCC} is false.
\end{Exa}

\begin{Not}\label{not:min}
	Let $X$ be a spectral space. For any subset $V \subseteq X$, we write $\min V$ for the collection of points in $V$ which are minimal for the specialization order ($x\leq y$ iff $x\in \overbar{\{y\}}$) among the points of $V$: 
		\[
			\min V \coloneqq \SET{ x \in V }{\overbar{\{x\}} \cap V = \{x\}}.
		\]
	For example, $\min X$ is the set of closed points of $X$, and $V \cap \min X \subseteq \min V$.
\end{Not}

\begin{Rem}
	\Cref{prop:weakly-noetherian-implies-DCC} establishes that if $X$ is weakly noetherian then 
		\[
			V \neq \emptyset \Longrightarrow \min V \neq \emptyset.
		\]
\end{Rem}

\begin{Def}
	We say that a point $\cat P$ is \emph{visible} if $\overbar{\{\cat P\}}$ is Thomason; in other words, it is a weakly visible point for which we can take $Y_1=\overbar{\singP}$. Recall that a spectral space is noetherian if and only if every point is visible; see \cite[Corollary 7.14]{BalmerFavi11}. Also note that the closed point of a local category is weakly visible if and only if it is visible; see \cite[Remark 2.9]{bhs1}.
\end{Def}

\begin{Not}\label{not:vis}
	We write $\Vis X$ for the set of visible points in $X$. We will also abuse notation slightly and write, for example, $\Vis \cat T$ for $\vis(\Spc(\cat T^c))$.
\end{Not}

\begin{Not}\label{not:the-functors}
	For a weakly visible point $\cat P \in \Spc(\cat T^c)$, we have the functor
		\[
			\GammaP\coloneqq- \otimes \gP: \cat T \rightarrow \cat T
		\]
	and its right adjoint
		\[
			\LambdaP\coloneqq \ihom{\gP,-}:\cat T\to \cat T.
		\]
	More generally, we have functors
		\[
			\GammaW\coloneqq - \otimes \gW \dashv \ihom{{\gW},-} \eqqcolon \LambdaW
		\]
	for any weakly visible subset $W \subseteq \Spc(\cat T^c)$.
\end{Not}

\begin{Rem}\label{rem:formulalambdaHom}
	Given $t_1,t_2\in \cat T$, adjunction provides the following useful formulas:
		\begin{equation}\label{eq:formulalambdaHom}
			\begin{split}
				\LambdaP\ihom{t_1,t_2} \simeq \ihom{\GammaP t_1,t_2}
				\simeq \ihom{t_1,\LambdaP t_2}.
			\end{split}
		\end{equation}
\end{Rem}

\begin{Def}[The Balmer--Favi support and cosupport]\label{def:BF-support-and-cosupport}
	Let $\cat T$ be a rigidly-compactly generated tt-category whose spectrum $\Spc(\cat T^c)$ is weakly noetherian. The \emph{support} of an object $t \in \cat T$ is defined as
		\begin{align*}
			\Supp(t) \coloneqq& \;\SET{\cat P \in \Spc(\cat T^c)}{\GammaP t \ne 0}\\
			=& \;\SET{\cat P \in \Spc(\cat T^c)}{\gP\otimes t \neq 0}.
	\intertext{The \emph{cosupport} of an object $t \in
			\cat T$ is defined as}
			\Cosupp(t) \coloneqq& \;\SET{\cat P \in \Spc(\cat T^c)}{\LambdaP t \ne 0}\\
				=& \;\SET{\cat P \in \Spc(\cat T^c)}{\ihom{\gP,t} \neq 0}.
		\end{align*}
\end{Def}

\begin{Rem}
	As discussed in \cite[Remark 2.12]{bhs1}, the function $\Supp$ defines a support theory on $\cat T$ in the sense of \cite[Definition~7.1]{bhs1}. Our present goal is to study the properties of the function $\Cosupp$ and the relationship between the two theories.
\end{Rem}

\bgroup\crefname{Def}{Def.}{Defs.}
\begin{Prop}\label{prop:tt_cosupport}
	The pair $(\Spc(\cat T^c), \Cosupp)$ defines a cosupport theory.
\end{Prop}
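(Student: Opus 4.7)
The plan is to verify each of the five axioms (a)--(e) of \cref{def:axiomaticcosupp} directly from the definition of $\Cosupp$ and the elementary properties of the functor $\LambdaP = \ihom{\gP,-}$. The two structural facts I would use throughout are: first, that $\LambdaP$, being a right adjoint, is an exact functor that preserves products and commutes with suspensions; and second, the formula $\LambdaP\ihom{s,t} \simeq \ihom{s,\LambdaP t}$ from \cref{rem:formulalambdaHom}.

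First I would handle axioms (b), (c), (d) in a single pass. Axiom (b) is immediate from $\LambdaP(\Sigma t) \cong \Sigma \LambdaP(t)$. Axiom (c) follows by applying the exact functor $\LambdaP$ to the triangle $a\to b\to c$: if $\cat P \notin \Cosupp(a)\cup \Cosupp(b)$ then the outer terms of $\LambdaP a\to \LambdaP b \to \LambdaP c$ vanish, forcing $\LambdaP c = 0$. Axiom (d) follows because $\LambdaP$ preserves products, so $\LambdaP(\prod_i t_i) \cong \prod_i \LambdaP(t_i)$, and a product of objects in $\cat T$ is zero if and only if each factor is zero.

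Next I would verify axiom (e). By the formula recalled above, $\LambdaP\ihom{s,t}\simeq \ihom{s,\LambdaP t}$. If $\cat P \notin \Cosupp(t)$ then $\LambdaP t = 0$, whence $\ihom{s,\LambdaP t}=0$ and so $\cat P\notin \Cosupp(\ihom{s,t})$. This gives the desired inclusion.

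Finally I would address axiom (a). The vanishing $\Cosupp(0)=\emptyset$ is trivial since $\LambdaP(0)=0$. The equality $\Cosupp(\cat T) = \Spc(\cat T^c)$ is the one slightly nontrivial point: I need to exhibit, for each $\cat P \in \Spc(\cat T^c)$, an object of $\cat T$ lying in the cosupport at $\cat P$. The natural candidate is $\gP$ itself. By \cref{rem:W-intersection} applied to $W=\{\cat P\}$, we have $\gP\otimes \gP\cong \gP\neq 0$, so by adjunction
\[
\Hom_{\cat T}(\gP,\LambdaP \gP) \cong \Hom_{\cat T}(\gP\otimes \gP,\gP)\cong \Hom_{\cat T}(\gP,\gP)
\]
contains the nonzero identity morphism; hence $\LambdaP\gP\neq 0$ and $\cat P\in \Cosupp(\gP)\subseteq \Cosupp(\cat T)$. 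The only mild obstacle in the whole argument is this last step, which rests crucially on the weakly noetherian hypothesis (so that $\gP$ exists and is nonzero); everything else is a formal consequence of $\LambdaP$ being an exact right adjoint satisfying \eqref{eq:formulalambdaHom}.
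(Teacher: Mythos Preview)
Your proof is correct and follows essentially the same approach as the paper. The only cosmetic difference is that the paper packages axioms (b)--(e) together via \cref{rem:axioms-equiv} (showing directly that $\SET{t}{\Cosupp(t)\subseteq Y}=\bigcap_{\cat P\in Y^c}\ker(\LambdaP)$ is a colocalizing coideal), whereas you verify each axiom individually; the underlying ingredients and the handling of the nontrivial step $\Cosupp(\cat T)=\Spc(\cat T^c)$ are the same.
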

\egroup

\begin{proof}
	For any subset $Y \subseteq \Spc(\cat T^c)$, observe that
		\[
			\SET{t \in \cat T}{\Cosupp(t) \subseteq Y} = \bigcap_{\cat P \in Y^c} \ker(\LambdaP(-))
		\]
	is a colocalizing subcategory since each $\LambdaP= \ihom{\gP,-}$ is a product preserving exact functor. Moreover, it is a coideal by the formula \eqref{eq:formulalambdaHom}. This establishes, by \cref{rem:axioms-equiv}, that $\Cosupp$ satisfies all properties of a cosupport theory except $\Cosupp(\cat T)=\Spc(\cat T^c)$. For this just note that since the object~$\gP$ is nonzero (\cref{rem:W-intersection}), the internal hom $\ihom{\gP,\gP}$ is nonzero. Thus, $\cat P \in \Cosupp(\gP)$ and hence $\Spc(\cat T^c) = \bigcup_{\cat P \in \Spc(\cat T^c)} \Cosupp(\gP)$.
\end{proof}

\begin{Lem}\label{lem:cosupp-of-gP}
	Let $\cat P \in \Spc(\cat T^c)$. Then	
		\[
			\singP \subseteq \Cosupp(\gP) \subseteq \gen(\cat P).
		\]
\end{Lem}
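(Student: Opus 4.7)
The plan is to handle the two containments separately. Both reduce to manipulations with the tensor idempotents $e_Y$, $f_Y$ and the formula $g_W \otimes g_{W'}= g_{W \cap W'}$ from \cref{rem:W-intersection}, together with the recollement \eqref{eq:smashing-diagram}.

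\textbf{Containment $\{\cat P\} \subseteq \Cosupp(\gP)$.} This should be immediate from the fact that $\gP$ is a tensor idempotent. Indeed, \cref{rem:W-intersection} gives $\gP \otimes \gP = g_{\{\cat P\} \cap \{\cat P\}} = \gP \neq 0$, so by tensor-hom adjunction
\[
\Hom_{\cat T}(\gP,\ihom{\gP,\gP}) \cong \Hom_{\cat T}(\gP \otimes \gP, \gP) \cong \Hom_{\cat T}(\gP,\gP),
\]
which contains the identity and is therefore nonzero. Hence $\LambdaP \gP = \ihom{\gP,\gP}\neq 0$, i.e., $\cat P \in \Cosupp(\gP)$.

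\textbf{Containment $\Cosupp(\gP) \subseteq \gen(\cat P)$.} Here is where the real work lies. Write $\gP = e_{Y_1} \otimes f_{Y_2}$ with $Y_2 = \gen(\cat P)^c$ and $Y_1$ a Thomason subset, as in \cref{rem:good-Thomasons}. Suppose $\cat Q \notin \gen(\cat P)$; then $\cat Q \in Y_2$, and using \cref{rem:W-intersection} together with $g_{Y_2}=e_{Y_2}$ we get
\[
g_{\cat Q}\otimes e_{Y_2}= g_{\cat Q}\otimes g_{Y_2}= g_{\{\cat Q\}\cap Y_2}= g_{\cat Q}.
\]
Substituting this into $\ihom{g_{\cat Q},\gP}$ and moving the $e_{Y_2}$ factor across by adjunction gives
\[
\ihom{g_{\cat Q},\gP}\;\cong\;\ihom{g_{\cat Q},\ihom{e_{Y_2},f_{Y_2}\otimes e_{Y_1}}}.
\]
So it suffices to prove that $\ihom{e_{Y_2}, f_{Y_2}\otimes e_{Y_1}} = 0$. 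For this I would apply $\ihom{-,f_{Y_2}\otimes e_{Y_1}}$ to the idempotent triangle $e_{Y_2}\to \unit\to f_{Y_2}$; since $f_{Y_2}\otimes e_{Y_1}\in \cat L^{\perp}$ for $\cat L=\cat T_{Y_2}$ (because $\cat L^{\perp}$ is an ideal and contains $f_{Y_2}$), the map $\ihom{f_{Y_2},f_{Y_2}\otimes e_{Y_1}}\to \ihom{\unit,f_{Y_2}\otimes e_{Y_1}}=f_{Y_2}\otimes e_{Y_1}$ is an isomorphism by the recollement \eqref{eq:smashing-diagram}, forcing the third term $\ihom{e_{Y_2},f_{Y_2}\otimes e_{Y_1}}$ to vanish.

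The main (mild) obstacle is isolating the right manipulation in the second step: one has to recognize that the appropriate way to exploit $\cat Q\in Y_2$ is to insert a factor of $e_{Y_2}$ on the left of $g_{\cat Q}$ and then transfer it, via adjunction, to a factor on the right of $\gP$, where it can be killed against the $f_{Y_2}$-local object $f_{Y_2}\otimes e_{Y_1}$. Once this is in place, the vanishing is essentially formal.
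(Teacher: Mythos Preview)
Your proof is correct and rests on the same underlying facts as the paper's, but your organization for the second inclusion is a bit more direct. The paper expands \emph{both} $\gQ=e_{U_1}\otimes f_{U_2}$ and $\gP=e_{V_1}\otimes f_{V_2}$, performs a sequence of manipulations to rewrite $\ihom{\gQ,\gP}$ as $\ihom{e_{U_1}\otimes f_{U_2\cup V_2},\,e_{U_1\cap V_1}\otimes f_{V_2}}$, and extracts two sufficient vanishing conditions ($U_1\subseteq U_2\cup V_2$ or $U_1\cap V_1\subseteq V_2$), finally verifying the first one using $V_2=\gen(\cat P)^c$. You instead only expand $\gP$, immediately absorb $e_{Y_2}$ into $g_{\cat Q}$ via $g_{\cat Q}\otimes e_{Y_2}=g_{\cat Q}$ (since $\cat Q\in Y_2=\gen(\cat P)^c$), and reduce at once to $\ihom{e_{Y_2},f_{Y_2}\otimes e_{Y_1}}=0$. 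Both arguments ultimately hinge on the same orthogonality $\ihom{e_Y,f_Y\otimes\cat T}=0$ coming from the smashing recollement; your route just reaches it with less bookkeeping, while the paper's symmetric computation makes the two possible vanishing criteria visible (even though only one is used).
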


\begin{proof}
	The first inclusion $\singP \subseteq \Cosupp(\gP)$ is established in the proof of \cref{prop:tt_cosupport}. For the second inclusion we need to show that if $\cat P \not\in \closureQ$ then $\ihom{\gQ,\gP}=0$. Write $\singQ = U_1 \cap U_2^c$ and $\singP = V_1 \cap V_2^c$ for Thomason subsets $U_1,U_2,V_1,V_2$. Then we have equalities 
		\begin{equation*}
			\begin{split}
				\ihom{\gQ,\gP} &= \ihom{e_{U_1} \otimes f_{U_2},e_{V_1}\otimes f_{V_2}} \\
				&= \ihom{e_{U_1}\otimes f_{U_2} \otimes f_{V_2},e_{V_1}\otimes f_{V_2}} \\
				&= \ihom{e_{U_1}\otimes f_{U_2} \otimes f_{V_2},e_{U_1}\otimes e_{V_1}\otimes f_{V_2}} \\
				&= \ihom{e_{U_1}\otimes f_{U_2\cup V_2},e_{U_1 \cap V_1}\otimes f_{V_2}},
			\end{split}
		\end{equation*}
	where the last step uses \cref{ex:W-intersection}. It follows that $\ihom{\gQ,\gP}$ vanishes if either of the following two conditions hold:
		\[
			U_1 \subseteq U_2 \cup V_2 \quad\text{ or }\quad U_1 \cap V_1 \subseteq V_2.
		\]
	By \cref{rem:good-Thomasons} we can always take $U_2 \coloneqq \gen(\cat Q)^c$ and $V_2\coloneqq \gen(\cat P)^c$. If $\cat P \not\in \closureQ$ then $\cat Q \in V_2$ hence $U_1 \cap U_2^c \subseteq V_2$ which means $U_1 \subseteq U_2 \cup V_2$ and the proof is complete.
\end{proof}

\begin{Rem} 
	In general, $\Cosupp(\gP) \neq \singP$; see \cref{exa:two-connected-points}.
\end{Rem}

\begin{Rem}
	The geometric behavior of cosupport is slightly counterintuitive. For example, it is often the case that $\Cosupp(\unit)\subsetneq \Spc(\cat T^c)$ is a proper subset; see \cref{exa:KInj}. We will study the geometry of cosupport, as well as its subtle geometric relationship with support, in \cref{sec:geometry-of-cosupport}. At present we focus on general properties. %The most useful general relation
\end{Rem}

\begin{Lem}\label{lem:cosupp_hom}
	For $t_1,t_2 \in \cat T$, there is an inclusion
		\begin{equation}\label{eq:cosupp_hom}
			\Cosupp(\ihom{t_1,t_2}) \subseteq \Supp(t_1) \cap \Cosupp(t_2). 
		\end{equation}
\end{Lem}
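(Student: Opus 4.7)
The plan is to show both containments separately by contrapositive, using the two adjunction identities recorded in \cref{rem:formulalambdaHom}. Fix $\cat P \in \Spc(\cat T^c)$ and suppose $\cat P \in \Cosupp(\ihom{t_1,t_2})$, i.e.\ $\LambdaP \ihom{t_1,t_2} \neq 0$.

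First I would establish $\cat P \in \Cosupp(t_2)$. Applying the identity $\LambdaP \ihom{t_1,t_2} \simeq \ihom{t_1,\LambdaP t_2}$ from \eqref{eq:formulalambdaHom}, the nonvanishing hypothesis forces $\ihom{t_1,\LambdaP t_2} \neq 0$, and hence in particular $\LambdaP t_2 \neq 0$. This is exactly the statement that $\cat P \in \Cosupp(t_2)$.

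Next I would establish $\cat P \in \Supp(t_1)$. Using the other half of \eqref{eq:formulalambdaHom}, namely $\LambdaP \ihom{t_1,t_2} \simeq \ihom{\GammaP t_1,t_2} = \ihom{\gP \otimes t_1,t_2}$, the same nonvanishing hypothesis forces $\gP \otimes t_1 \neq 0$, i.e.\ $\cat P \in \Supp(t_1)$.

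Combining the two conclusions gives the desired inclusion $\Cosupp(\ihom{t_1,t_2}) \subseteq \Supp(t_1) \cap \Cosupp(t_2)$. There is no real obstacle here: the lemma is a direct consequence of the two adjunction formulas already collected in \cref{rem:formulalambdaHom}, and no further structure on $\cat T$ or on the spectrum beyond what is needed to define $\gP$ is invoked.
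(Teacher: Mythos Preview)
Your proof is correct and follows essentially the same approach as the paper: both argue by contrapositive using the two adjunction identities from \cref{rem:formulalambdaHom} to conclude $\GammaP t_1 \neq 0$ and $\LambdaP t_2 \neq 0$ from the nonvanishing of $\LambdaP\ihom{t_1,t_2}$.
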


\begin{proof}
	If $\cat P \in \Cosupp(\ihom{t_1,t_2})$ then $\LambdaP\ihom{t_1,t_2} \neq 0$ by definition, and hence $\ihom{\GammaP t_1,t_2} \ne 0$ and $\ihom{t_1,\LambdaP t_2} \ne 0$ by \cref{rem:formulalambdaHom}. This implies that $\GammaP t_1\neq 0$ and $\LambdaP t_2\neq 0$, and the result follows. 
\end{proof}

\begin{Rem}
	The inclusion \eqref{eq:cosupp_hom} is our most fundamental relationship between support and cosupport. It is an equality (for all objects $t_1$ and~$t_2$) if and only if~$\cat T$ is stratified; see \cref{thm:strat_cosupport}. This demonstrates the significance of cosupport even if one is only interested in localizing ideals. In general, \eqref{eq:cosupp_hom} is an equality when $t_1=e_Y$ or $t_1=f_Y$ for $Y \subseteq \Spc(\cat T^c)$ a Thomason subset:
\end{Rem}

\begin{Lem}\label{lem:cosuppcoloc}
	Let $Y \subseteq \Spc(\cat T^c)$ be a Thomason subset and let $t \in \cat T$. Then:
		\begin{enumerate}
			\item $\Cosupp(\ihom{e_Y,t}) = Y \cap \Cosupp(t)$;
			\item $\Cosupp(\ihom{f_Y,t}) = Y^c \cap \Cosupp(t)$;
			\item $\Cosupp(\LambdaP t)=\{ \cat P\} \cap \Cosupp(t)$ for any $\cat P \in \Spc(\cat T^c)$.
		\end{enumerate}
\end{Lem}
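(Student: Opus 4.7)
For both (a) and (b) the plan is to prove the two inclusions separately, with the ``$\subseteq$'' direction being essentially formal and the ``$\supseteq$'' direction requiring a small calculation with the idempotent $\gP$. For the ``$\subseteq$'' direction I would invoke \cref{lem:cosupp_hom} directly: it gives $\Cosupp(\ihom{e_Y,t})\subseteq \Supp(e_Y)\cap\Cosupp(t)=Y\cap\Cosupp(t)$ and analogously $\Cosupp(\ihom{f_Y,t})\subseteq \Supp(f_Y)\cap\Cosupp(t)=Y^c\cap\Cosupp(t)$, using the standard identifications $\Supp(e_Y)=Y$ and $\Supp(f_Y)=Y^c$ of the Balmer--Favi supports of the two idempotents.

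For the ``$\supseteq$'' direction in (a), the key input is \cref{rem:W-intersection} (together with \cref{ex:W-intersection}): weakly visible subsets are closed under intersection and $g_{(-)}$ respects intersections up to isomorphism. Thus, provided $\cat P\in Y$, we have $\{\cat P\}\cap Y=\{\cat P\}$ and hence $e_Y\otimes\gP = g_{\{\cat P\}\cap Y}\simeq \gP$. Applying this inside the $\otimes$--$\ihomname$ adjunction,
\[
\LambdaP\ihom{e_Y,t}=\ihom{\gP,\ihom{e_Y,t}}=\ihom{e_Y\otimes\gP,t}\simeq \ihom{\gP,t}=\LambdaP t,
\]
so for $\cat P\in Y\cap\Cosupp(t)$ the left-hand side is nonzero, as required. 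The corresponding argument for (b) is identical: if $\cat P\in Y^c$, then $\{\cat P\}\subseteq Y^c$, so $\{\cat P\}\cap Y^c=\{\cat P\}$ and $f_Y\otimes\gP\simeq \gP$, whence $\LambdaP\ihom{f_Y,t}\simeq \LambdaP t$.

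For (c), I would decompose $\gP=e_{U_1}\otimes f_{U_2}$ coming from a presentation $\{\cat P\}=U_1\cap U_2^c$ with $U_1,U_2$ Thomason, so that $\LambdaP t=\ihom{e_{U_1}\otimes f_{U_2},t}=\ihom{e_{U_1},\ihom{f_{U_2},t}}$. Two successive applications of (a) and (b) then yield
\[
\Cosupp(\LambdaP t)=U_1\cap\Cosupp(\ihom{f_{U_2},t})=U_1\cap U_2^c\cap\Cosupp(t)=\{\cat P\}\cap\Cosupp(t).
\]

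I do not anticipate a serious obstacle here: once one observes that the $g_W$'s behave multiplicatively under intersection of weakly visible subsets, both directions reduce to formal manipulations. The only mildly delicate point is making sure the ``$\supseteq$'' step goes through for all $\cat P\in Y$ simultaneously, which is where it is important that $g_{\{\cat P\}\cap Y}$ is genuinely isomorphic to $\gP$ (not merely both nonzero) so that $\LambdaP\ihom{e_Y,-}$ is naturally identified with $\LambdaP$ on the nose.
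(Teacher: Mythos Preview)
Your proposal is correct and follows essentially the same approach as the paper's proof: the $\subseteq$ inclusions come from \cref{lem:cosupp_hom} together with $\Supp(e_Y)=Y$ and $\Supp(f_Y)=Y^c$, the $\supseteq$ inclusions use $e_Y\otimes\gP\simeq\gP$ (resp.\ $f_Y\otimes\gP\simeq\gP$) from \cref{rem:W-intersection} to identify $\LambdaP\ihom{e_Y,t}\simeq\LambdaP t$, and part (c) is obtained by applying (a) and (b) in succession to a decomposition $\{\cat P\}=U_1\cap U_2^c$.
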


\begin{proof}
	First note that for any Thomason subset $Y \subseteq \Spc(\cat T^c)$ and $\cat P \in Y$ we have $\gP\otimes e_Y=\gP$ by \cref{rem:W-intersection}.

	Now, by \Cref{lem:cosupp_hom} and \cite[Lemma 2.13]{bhs1} we have
		\[
			\Cosupp(\ihom{e_Y,t}) \subseteq Y \cap \Cosupp(t).
		\] 
	To establish the reverse inclusion, let $\cat P \in Y \cap \Cosupp(t)$. By the previous paragraph, we have 
		\[
			\ihom{\gP,\ihom{e_Y,t}} \simeq \ihom{\gP,t} \ne 0
		\]
	so that $\cat P \in \Cosupp(\ihom{e_Y,t})$ as well. This establishes the equality in part~$(a)$. The equality in part $(b)$ can be proved similarly. The final statement is a consequence of $(a)$ and $(b)$ by writing $\{\cat P\}=Y_1\cap Y_2^c$:
		\begin{align*}
			\Cosupp(\LambdaP t) &= \Cosupp(\ihom{e_{Y_1}\otimes f_{Y_{2}},t})\\
			&=\Cosupp(\ihom{e_{Y_1},\ihom{f_{Y_{2}},t}}) \\
			&= Y_1 \cap \Cosupp(\ihom{f_{Y_2},t}) \\
			&= \{ \cat P\} \cap \Cosupp(t). \qedhere
		\end{align*}
\end{proof}

\begin{Rem}
	We next establish a cosupport-theoretic analogue of the ``half \mbox{$\otimes$-theorem}'' of \cite[Theorem~7.22]{BalmerFavi11}; see also \cite[Lemma~2.18]{bhs1}. This requires the following lemma:
\end{Rem}

\begin{Lem}\label{lem:eqlococosupp}
	Let $s_1,s_2,t \in \cat T$.
	\begin{enumerate}
		\item If $s_1 \in \Loco{s_2}$, then $\Cosupp\ihom{s_1,t} \subseteq \Cosupp\ihom{s_2,t}$.
		\item If $s_1 \in \Coloco{s_2}$, then $\Cosupp\ihom{t,s_1} \subseteq \Cosupp\ihom{t,s_2}$.
	\end{enumerate}
\end{Lem}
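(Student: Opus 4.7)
The strategy is to prove both inclusions point by point: fix $\cat P \in \Spc(\cat T^c)$ and show that if $\cat P \notin \Cosupp\ihom{s_2,t}$ in (a) (respectively $\cat P \notin \Cosupp\ihom{t,s_2}$ in (b)), then the same holds with $s_2$ replaced by $s_1$. The key observation is that non-membership in $\Cosupp\ihom{-,t}$ and $\Cosupp\ihom{t,-}$ can each be expressed as orthogonality to a single fixed object by means of the two adjunction identities of \cref{rem:formulalambdaHom}, and these two orthogonals are exactly the two kinds of subcategories appearing in the hypotheses of (a) and (b).

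For (a), observe that by the first identity in \eqref{eq:formulalambdaHom},
\[
    \LambdaP\ihom{s,t} \,\cong\, \ihom{s,\LambdaP t},
\]
so that $\cat P \notin \Cosupp\ihom{s,t}$ if and only if $s \in {}^\perp\{\LambdaP t\}$. By \cref{def:orthogonal}, this left orthogonal is a localizing ideal of $\cat T$. Since it contains $s_2$ by assumption, it contains $\Loco{s_2}$, and hence $s_1$.

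For (b), the second identity in \eqref{eq:formulalambdaHom} gives
\[
    \LambdaP\ihom{t,s} \,\cong\, \ihom{\GammaP t,s},
\]
so that $\cat P \notin \Cosupp\ihom{t,s}$ if and only if $s \in \{\GammaP t\}^{\perp}$. Again by \cref{def:orthogonal}, this right orthogonal is a colocalizing coideal of $\cat T$. Since it contains $s_2$, it contains $\Coloco{s_2}$, and hence $s_1$.

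No serious obstacle is anticipated: the content of both statements is that the set of points not belonging to a cosupport of an internal hom is cut out by an orthogonality condition against a single object, which by \cref{def:orthogonal} automatically produces an ideal (on the left) or coideal (on the right). The only point requiring care is to apply \eqref{eq:formulalambdaHom} in the correct direction in each part so that the variable object lands in the appropriate slot of $\ihom{-,-}$.
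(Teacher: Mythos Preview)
Your proof is correct. The paper's own proof is a one-liner that takes a slightly different route: rather than fixing a point $\cat P$ and working with orthogonals, it invokes the inclusions \eqref{eq:[loc,t]} and \eqref{eq:[t,coloc]} directly to obtain $\ihom{s_1,t}\in\Coloco{\ihom{s_2,t}}$ (respectively $\ihom{t,s_1}\in\Coloco{\ihom{t,s_2}}$), and then appeals to \cref{rem:axioms-equiv}, which says that $\Cosupp$ is constant on the colocalizing coideal generated by any object. Your argument instead unpacks the condition $\LambdaP\ihom{-,-}=0$ via the adjunction identities of \cref{rem:formulalambdaHom} and recognizes it as membership in a left or right orthogonal, which is automatically a localizing ideal or colocalizing coideal by \cref{def:orthogonal}. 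Both approaches rest on the same underlying fact (how $\ihom{-,t}$ and $\ihom{t,-}$ interact with these subcategories); the paper's version is more compressed because the relevant inclusions have already been packaged, while yours is pleasantly self-contained and makes the pointwise mechanism explicit.
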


\begin{proof}
	Bearing in mind \cref{rem:axioms-equiv}, part~$(a)$ follows from \eqref{eq:[loc,t]} while part~$(b)$ follows from \eqref{eq:[t,coloc]}.
\end{proof}

\begin{Prop}[Half-$\mathsf{hom}$ Theorem]\label{prop:halfhom}
	If $t \in \cat T$ and $x \in \cat T^c$, then
		\[
			\Cosupp\ihom{x,t} = \supp(x) \cap \Cosupp(t). 
		\]
\end{Prop}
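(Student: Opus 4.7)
The plan is to establish the two inclusions separately. The inclusion ``$\subseteq$'' is essentially already contained in \cref{lem:cosupp_hom}: that result gives $\Cosupp\ihom{x,t} \subseteq \Supp(x)\cap\Cosupp(t)$, and one only needs the standard identification $\Supp(x)=\supp(x)$ for compact $x$, reconciling the Balmer--Favi support with Balmer's support on $\cat T^c$.

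For the reverse inclusion, the key idea is to replace the compact object $x$ by the left idempotent $e_Y$ associated with the Thomason subset $Y\coloneqq\supp(x)$, whose behaviour under $\ihom{-,t}$ has already been pinned down in \cref{lem:cosuppcoloc}(a). Concretely, Balmer's classification of thick tensor-ideals of $\cat T^c$ identifies $\thick^\otimes\langle x\rangle = \cat T^c_Y$, and passing to localizing ideals yields
\[
\Loco{x} = \Loc\langle \cat T^c_Y\rangle = \cat T_Y,
\]
so in particular $e_Y\in\Loco{x}$. Then \cref{lem:eqlococosupp}(a), applied with $s_1=e_Y$ and $s_2=x$, produces the inclusion $\Cosupp\ihom{e_Y,t}\subseteq\Cosupp\ihom{x,t}$, while \cref{lem:cosuppcoloc}(a) identifies the left-hand side with $Y\cap\Cosupp(t)=\supp(x)\cap\Cosupp(t)$. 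Combining these gives $\supp(x)\cap\Cosupp(t)\subseteq\Cosupp\ihom{x,t}$, as desired.

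There is no serious obstacle here; the proposition is essentially a recombination of the preceding lemmas. The only genuinely new input is the observation that the localizing ideal $\Loco{x}$ depends on a compact object~$x$ only through its support, so that for questions which are controlled by localizing-ideal membership---such as the behaviour of $\Cosupp\ihom{-,t}$ via \cref{lem:eqlococosupp}(a)---one may freely swap out $x$ for the idempotent $e_{\supp(x)}$, whose cosupport of internal hom into $t$ has already been computed.
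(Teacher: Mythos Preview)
Your proof is correct and takes essentially the same approach as the paper: both exploit the equality $\Loco{x}=\Loco{e_{\supp(x)}}$ together with \cref{lem:eqlococosupp} and \cref{lem:cosuppcoloc}. The only cosmetic difference is that the paper applies \cref{lem:eqlococosupp} in both directions at once to obtain $\Cosupp\ihom{x,t}=\Cosupp\ihom{e_{\supp(x)},t}$ directly, rather than splitting into two inclusions and invoking \cref{lem:cosupp_hom} for the ``$\subseteq$'' direction.
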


\begin{proof}
	The support $\supp(x) \subseteq \Spc(\cat T^c)$ of the compact object $x \in \cat T^c$ is a Thomason subset and we have an equality $\Loco{x} =  \Loco{e_{\supp(x)}}$ of compactly generated localizing ideals. By \cref{lem:eqlococosupp}, we obtain
		\[
			\Cosupp\ihom{x,t} = \Cosupp\ihom{e_{\supp(x)},t} = \supp(x) \cap \Cosupp(t),
		\]
	where the last equality uses \cref{lem:cosuppcoloc}.
\end{proof}

\begin{Exa}\label{exa:cosupp-compact}
	For any compact $x \in \cat T^c$, we have
		\[
			\Cosupp(x) = \supp(x) \cap \Cosupp(\unit).
		\]
	Indeed, since $x$ and its dual $x^\vee=\ihom{x,\unit}$ generate the same thick ideal of compact objects, they have the same cosupport. Thus $\Cosupp(x)=\Cosupp(\ihom{x,\unit})=\supp(x) \cap \Cosupp(\unit)$ by \cref{prop:halfhom}. It follows that, although
		\[
			\Cosupp(\cat T) = \Spc(\cat T^c)
		\]
	(as established in  \cref{prop:tt_cosupport}), we have
		\[
			\Cosupp(\cat T^c) = \Cosupp(\unit).
		\]
	It is possible for the latter to be a proper subset of $\Spc(\cat T^c)$; see  \cref{exa:KInj}.
\end{Exa}

\section{The stalk and costalk}\label{sec:stalk-and-costalk}

\begin{Def}
	We let $\GammaP \cat T$ and $\LambdaP \cat T$ denote the essential images of the functors $\GammaP\colon\cat T\to \cat T$ and $\LambdaP\colon\cat T \to \cat T$, respectively (\cref{not:the-functors}). We call $\GammaP \cat T$ the \emph{stalk} of~$\cat T$ at $\cat P$ and call $\LambdaP \cat T$ the \emph{costalk} of $\cat T$ at $\cat P$.
\end{Def}

\begin{Prop}\label{prop:local-cats-are-ok}
	Let $\cat P \in \Spc(\cat T^c)$.
	\begin{enumerate}
		\item The full subcategory $\GammaP \cat T$ is a localizing ideal and 
				\[
					\GammaP \cat T = \SET{t\in \cat T}{t \simeq \GammaP t}.
				\]
		\item The full subcategory $\LambdaP \cat T$ is a colocalizing coideal and
				\[
					\LambdaP \cat T = \SET{t\in\cat T}{t \simeq \LambdaP t}.
				\]
	\end{enumerate}
\end{Prop}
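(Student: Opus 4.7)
The plan is to handle both parts in parallel, leveraging the tensor-idempotency of $\gP$ together with the standard recollement pieces of \cref{rem:smashing-recollement}. The cornerstone is the observation that $\gP \otimes \gP \simeq g_{\{\cat P\} \cap \{\cat P\}} \simeq \gP$ by \cref{rem:W-intersection}; this makes $\GammaP = \gP \otimes -$ into an idempotent endofunctor of $\cat T$ and, by the $\otimes$-$\ihomname$ adjunction, $\LambdaP = \ihom{\gP,-}$ into an idempotent endofunctor as well, since $\LambdaP \LambdaP \simeq \ihom{\gP \otimes \gP, -} \simeq \LambdaP$. From idempotency I would immediately read off the set-theoretic characterizations $\GammaP \cat T = \{t \in \cat T : t \simeq \GammaP t\}$ and $\LambdaP \cat T = \{t \in \cat T : t \simeq \LambdaP t\}$: one inclusion is tautological, and the other follows from $\GammaP \GammaP s \simeq \GammaP s$, respectively $\LambdaP \LambdaP s \simeq \LambdaP s$.

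Next I fix a presentation $\gP = e_{Y_1} \otimes f_{Y_2}$ with $Y_1,Y_2 \subseteq \Spc(\cat T^c)$ Thomason, and use the characterizations above together with the analogous idempotency of $e_{Y_i} \otimes -$ and $f_{Y_i}\otimes -$ to obtain the intersection descriptions
\[
\GammaP \cat T = (e_{Y_1}\otimes \cat T) \cap (f_{Y_2}\otimes \cat T), \qquad \LambdaP \cat T = \ihom{e_{Y_1},\cat T} \cap \ihom{f_{Y_2},\cat T}.
\]
Each of the four pieces on the right is a standard component of the recollement of \cref{rem:smashing-recollement}: $e_{Y_1}\otimes \cat T$ is the smashing ideal $\cat T_{Y_1}$ itself, $f_{Y_2}\otimes \cat T = (\cat T_{Y_2})^\perp$ is likewise a localizing ideal, and $\ihom{e_{Y_1},\cat T}$ and $\ihom{f_{Y_2},\cat T}$ are colocalizing coideals — closure under products and triangles comes from their being essential images of right adjoints, and the coideal property comes from the adjunction identity $\ihom{t,\ihom{e,s}} \simeq \ihom{e,\ihom{t,s}}$ of \cref{rem:formulalambdaHom}. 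Since intersections of localizing ideals (respectively, colocalizing coideals) are again of the same type, parts (a) and (b) follow.

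The main obstacle I anticipate is verifying closure under retracts, which is not automatic from the bare iso $t \simeq \GammaP t$ in the absence of a natural transformation relating $\GammaP$ and $\id$ (none of $e_{Y_1}\to\unit$ or $\unit\to f_{Y_2}$ tensors into such a transformation). Routing through the four recollement pieces is precisely what sidesteps this difficulty, since each of them is already known to be thick as the essential image of a Bousfield (co)localization appearing in diagram \eqref{eq:smashing-diagram}.
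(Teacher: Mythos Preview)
Your proof is correct and follows essentially the same underlying idea as the paper: identify $\GammaP\cat T$ and $\LambdaP\cat T$ in terms of the recollement pieces for the smashing localizations at $Y_1$ and $Y_2$. The organization differs slightly. The paper uses the exact triangle $e_{Y_1\cap Y_2}\to e_{Y_1}\to \gP$ (obtained by tensoring $e_{Y_2}\to\unit\to f_{Y_2}$ with $e_{Y_1}$) to produce explicit canonical maps $\ihom{\gP,t}\to\ihom{e_{Y_1},t}\leftarrow t$, and characterizes membership in $\LambdaP\cat T$ as these maps being isomorphisms; closure under triangles then comes from naturality. You instead write $\LambdaP\cat T=\ihom{e_{Y_1},\cat T}\cap\ihom{f_{Y_2},\cat T}$ directly and import the triangulated and coideal structure from each factor. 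The two characterizations coincide: given $t\in\ihom{e_{Y_1},\cat T}$, the paper's condition $\ihom{e_{Y_1\cap Y_2},t}=0$ is equivalent to your condition $t\in\ihom{f_{Y_2},\cat T}$ since $\ihom{e_{Y_2},t}\simeq\ihom{e_{Y_2},\ihom{e_{Y_1},t}}\simeq\ihom{e_{Y_1\cap Y_2},t}$. Your intersection formulation is arguably cleaner, and your final remark correctly pinpoints the one genuine subtlety (there is no canonical transformation between $\GammaP$ and $\id$, so thickness must come from the recollement pieces rather than from the bare idempotency $\gP\otimes\gP\simeq\gP$).
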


\begin{proof}
	We prove part $(b)$; the proof of $(a)$ is similar. Since $\cat P$ is weakly visible we can write $\singP = Y_1 \cap Y_2^c$ with $Y_1$ and $Y_2$ Thomason subsets.  Consider the exact triangle
		\[
			e_{Y_1 \cap Y_2} \to e_{Y_1} \to \gP \to \Sigma e_{Y_1 \cap Y_2}
		\]
	obtained from $e_{Y_2} \to \unit \to f_{Y_2} \to \Sigma e_{Y_2}$ by tensoring with $e_{Y_1}$. We first establish that $t \in \LambdaP \cat T$ if and only if the canonical maps
		\begin{equation}\label{eq:canonical-maps}
			\ihom{\gP,t} \to \ihom{e_{Y_1},t} \leftarrow t
		\end{equation}
	induced from $e_{Y_1} \to \gP$ and $e_{Y_1} \to \unit$ are isomorphisms. Indeed, if $t\in \LambdaP \cat T$ then $t \simeq \ihom{\gP,t'}$ for some $t' \in \cat T$. Since $\cat P \not\in Y_1 \cap Y_2$ we have $e_{Y_1 \cap Y_2} \otimes \gP =0$ and hence ${\ihom{e_{Y_1 \cap Y_2},t}=\ihom{e_{Y_1 \cap Y_2}\otimes g_\cat P,t'}=0}$. Also, $\ihom{f_{Y_1},t}=\ihom{f_{Y_1}\otimes g_\cat P,t'}=0$ since $\cat P \in Y_1$ implies $f_{Y_1} \otimes \gP = 0$. (Here we have repeatedly invoked \cref{rem:W-intersection}.)
	
	We have thus established that $t \in \LambdaP \cat T$ if and only if the canonical maps in~\eqref{eq:canonical-maps} are isomorphisms if and only if there exists an isomorphism $t \simeq \ihom{\gP,t}=\LambdaP t$. Then, it follows from the latter characterization that $\LambdaP \cat T$ is closed under exact triangles and hence is a triangulated subcategory of $\cat T$. Now, $\LambdaP \cat T$ is closed under products since $\LambdaP = \ihom{\gP,-}$ is a right adjoint. Moreover, it is a coideal: if $t_1 \in \cat T$ and $t_2 \in \LambdaP \cat T$ then
		\[
			\ihom{t_1,t_2} \simeq \ihom{t_1,\ihom{\gP,t_2}} \simeq \ihom{\gP,\ihom{t_1,t_2}}=\LambdaP \ihom{t_1,t_2}
		\]
	by adjunction.
\end{proof}

\begin{Rem}\label{rem:supp-of-local-small}
	It follows from \cref{lem:cosuppcoloc} and \cite[Lemma~2.13]{bhs1} that 
		\[
			\Supp(\GammaP \cat T) \subseteq \singP \quad\text{ and }\quad \Cosupp(\LambdaP \cat T) \subseteq \singP.
		\]
	This leads to the following:
\end{Rem}

\begin{Def}\label{def:codetection}
	We say that  $\cat T$ satisfies
	\begin{enumerate} 
		\item the \emph{detection property} if $\Supp(t) = \emptyset$ implies $t=0$ for all $t\in \cat T$;
		\item the \emph{codetection property} if $\Cosupp(t)=\emptyset$ implies $t=0$ for all $t \in \cat T$.
	\end{enumerate}
\end{Def}

\begin{Lem}\label{lem:local-cat-when-detect}
	Let $\cat P \in \Spc(\cat T^c)$.
	\begin{enumerate}
		\item If $\cat T$ has the detection property then
			\[ \GammaP\cat T = \SET{t\in \cat T}{\Supp(t) \subseteq \singP}.\]
		\item If $\cat T$ has the codetection property then
			\[ \LambdaP\cat T=\SET{t \in \cat T}{\Cosupp(t) \subseteq \singP}.\]
	\end{enumerate}
\end{Lem}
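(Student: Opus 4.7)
The plan is to handle both parts in parallel using the smashing idempotents. The forward inclusion in each case is immediate: \cref{rem:supp-of-local-small} gives $\Supp(\GammaP\cat T)\subseteq\singP$ and $\Cosupp(\LambdaP\cat T)\subseteq\singP$, while \cref{prop:local-cats-are-ok} identifies $\GammaP\cat T$ and $\LambdaP\cat T$ as the categories of $\GammaP$-fixed and $\LambdaP$-fixed objects, respectively. The substance of the argument lies in the reverse containment, where (co)detection is used to promote the containment $\Supp(t)\subseteq\singP$ (resp.\ $\Cosupp(t)\subseteq\singP$) to the categorical statement $t\simeq\GammaP t$ (resp.\ $t\simeq\LambdaP t$). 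To set up both arguments I would fix Thomason subsets $Y_1, Y_2$ with $\singP = Y_1\cap Y_2^c$, so that $\gP = e_{Y_1}\otimes f_{Y_2}$.

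For part~(a), I would factor the natural map $\GammaP t \to t$ as
\[
e_{Y_1}\otimes f_{Y_2}\otimes t \longrightarrow e_{Y_1}\otimes t \longrightarrow t,
\]
obtained by tensoring the idempotent triangles $e_{Y_2}\to\unit\to f_{Y_2}$ and $e_{Y_1}\to\unit\to f_{Y_1}$ with $t$ (the first one further smashed with $e_{Y_1}$). Using $e_{Y_1}\otimes e_{Y_2}=e_{Y_1\cap Y_2}$ from \cref{ex:W-intersection}, the two cofibers are $e_{Y_1\cap Y_2}\otimes t$ and $f_{Y_1}\otimes t$. The support analogue of \cref{lem:cosuppcoloc} recorded in \cite[Lemma~2.13]{bhs1} bounds their supports by $(Y_1\cap Y_2)\cap\Supp(t)$ and $Y_1^c\cap\Supp(t)$, both of which are contained in $\singP$ intersected with a set not meeting $\cat P$ (since $\cat P\notin Y_2$ and $\cat P\in Y_1$), hence empty. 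Detection then forces both cofibers to vanish, proving $\GammaP t \xrightarrow{\sim} t$.

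For part~(b), I would reuse the canonical zig-zag $\ihom{\gP,t}\to\ihom{e_{Y_1},t}\leftarrow t$ already introduced in the proof of \cref{prop:local-cats-are-ok}, which arises from applying $\ihom{-,t}$ to the triangles $e_{Y_1\cap Y_2}\to e_{Y_1}\to\gP$ and $e_{Y_1}\to\unit\to f_{Y_1}$. Up to a shift, their fibers are $\ihom{e_{Y_1\cap Y_2},t}$ and $\ihom{f_{Y_1},t}$. By \cref{lem:cosuppcoloc} these have cosupport in $(Y_1\cap Y_2)\cap\singP=\emptyset$ and $Y_1^c\cap\singP=\emptyset$, respectively. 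Codetection therefore makes both maps in the zig-zag into isomorphisms, which gives $t\simeq\LambdaP t$ as required.

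The only point requiring care, rather than a real obstacle, is the variance asymmetry: part~(a) tensors the smashing triangles with $t$, whereas part~(b) applies $\ihom{-,t}$, which swaps the roles of cofibers and fibers. Beyond that, the argument is a transparent pairing of the half-(co)support identities for smashing localizations with (co)detection.
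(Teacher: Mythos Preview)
Your approach matches the paper's, and part~(b) is correct and essentially identical to what the paper does (the paper happens to take $Y_2=\gen(\cat P)^c$ specifically, but your argument works for arbitrary $Y_1,Y_2$).

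There is one small but genuine slip in part~(a): there is no natural map $\GammaP t \to t$ to factor. The idempotent $\gP = e_{Y_1}\otimes f_{Y_2}$ is neither a left nor a right idempotent, since there is a map $\unit\to f_{Y_2}$ but not $f_{Y_2}\to\unit$. So your purported map $e_{Y_1}\otimes f_{Y_2}\otimes t \to e_{Y_1}\otimes t$ goes the wrong way; the triangle you cite gives $e_{Y_1}\otimes t \to e_{Y_1}\otimes f_{Y_2}\otimes t$ with \emph{fiber} $e_{Y_1\cap Y_2}\otimes t$. The fix is exactly what you already do in part~(b): use the zig-zag
\[
\gP\otimes t \;\longleftarrow\; e_{Y_1}\otimes t \;\longrightarrow\; t,
\]
note that the fiber of the left arrow is $e_{Y_1\cap Y_2}\otimes t$ and the cofiber of the right arrow is $f_{Y_1}\otimes t$, and kill both via detection. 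Your support computations for these two objects are correct as written. Once you repair the direction, the argument is fine and parallel to the paper's (which only spells out part~(b) and says part~(a) is similar).
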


\begin{proof}
	The inclusion $\subseteq$ in both $(a)$ and $(b)$ follows from \cref{rem:supp-of-local-small}. We now establish the reverse inclusion of part $(b)$; the proof of the reverse inclusion of part~$(a)$ is similar. We can write $\singP = Y_1 \cap \gen(\cat P)$ with $Y_1 \subseteq \Spc(\cat T^c)$ Thomason, so that $\gP = e_{Y_1} \otimes f_{\gen(\cat P)^c}$. If $\Cosupp(t) \subseteq \singP$ then $\ihom{e_{\gen(\cat P)^c},t}$ and $\ihom{f_{Y_1},t}$ have empty cosupport by \cref{lem:cosuppcoloc}. It follows that $\ihom{e_{\gen(\cat P)^c},t}=0$ and $\ihom{f_{Y_1},t}=0$ since $\cat T$ has the codetection property. Hence $t \simeq \ihom{f_{\gen(\cat P)^c},t}$ and $t \simeq \ihom{e_{\gen(\cat P)^c},t}$. Together, this implies $t \simeq \ihom{\gP,t} \in \LambdaP\cat T$ and we are done.
\end{proof}

\begin{Prop}\label{prop:codetection-implies-detection}
	If $\cat T$ satisfies the codetection property then it also satisfies the detection property.
\end{Prop}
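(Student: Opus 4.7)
The plan is to show that the codetection hypothesis forces the internal hom $\ihom{t,-}$ to vanish on all of $\cat T$ whenever $t$ has empty support, and then to recover $t=0$ via the usual trick of factoring the identity through the internal hom.

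Concretely, suppose $t \in \cat T$ has $\Supp(t)=\emptyset$, so $\gP \otimes t = 0$ for every $\cat P \in \Spc(\cat T^c)$. For any $s \in \cat T$ and any $\cat P$, the tensor-hom adjunction gives
\[
    \LambdaP \ihom{t,s} \;\simeq\; \ihom{\gP, \ihom{t,s}} \;\simeq\; \ihom{\gP \otimes t,\, s} \;=\; \ihom{0,s} \;=\; 0.
\]
Hence $\Cosupp(\ihom{t,s})=\emptyset$. Invoking the codetection property, we conclude that $\ihom{t,s}=0$ for every $s \in \cat T$.

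Taking $s=t$ yields $\ihom{t,t}=0$. Using the adjunction once more,
\[
    \Hom_{\cat T}(t,t) \;\simeq\; \Hom_{\cat T}(\unit \otimes t,\, t) \;\simeq\; \Hom_{\cat T}(\unit,\, \ihom{t,t}) \;=\; 0,
\]
so $\mathrm{id}_t = 0$ and therefore $t = 0$, as required. The only step requiring any thought is the verification that $\gP \otimes t = 0$ really does force $\LambdaP\ihom{t,-}$ to vanish globally, which is immediate from the adjunction; everything else is formal. There is no serious obstacle here — the argument is essentially a categorical dualization of the observation that $\ihom{t,t} = 0$ implies $t=0$.
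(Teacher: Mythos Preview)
Your proof is correct and follows essentially the same approach as the paper: both show that $\Supp(t)=\emptyset$ forces $\Cosupp(\ihom{t,t})=\emptyset$ (you via the explicit adjunction $\LambdaP\ihom{t,s}\simeq\ihom{\gP\otimes t,s}$, the paper via the inclusion $\Cosupp(\ihom{t,t})\subseteq\Supp(t)\cap\Cosupp(t)$), then invoke codetection to get $\ihom{t,t}=0$ and hence $t=0$. The only difference is that you state the slightly stronger intermediate fact $\ihom{t,s}=0$ for all $s$, which is harmless but unnecessary.
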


\begin{proof}
	Let $t \in \cat T$ and suppose $\Supp(t) = \emptyset$. Then
		\[
			\Cosupp(\ihom{t,t}) \subseteq \Supp(t) \cap \Cosupp(t)
		\]
	by \cref{lem:cosupp_hom} which implies that $\Cosupp(\ihom{t,t})=\emptyset$. The codetection property then implies $\ihom{t,t}=0$. Hence $\cat T(t,t)=\cat T(\unit,\ihom{t,t})=0$, so that $t=0$.
\end{proof}

\begin{Exa}\label{exa:detect-but-not-codetect}
	In contrast, the following example shows that the detection property does not imply the codetection property. Let $R$ be an absolutely flat ring and let $\cat T=\Der(R)$. Since $\Spc(\cat T^c)\cong \Spec(R)$ has dimension zero, $\{\mathfrak p\} = \Spec(R) \cap \gen(\mathfrak p)$. Hence $g_{\mathfrak p} \cong f_{\gen(\mathfrak p)^c} \cong R_{\mathfrak p} \cong k(\mathfrak p)$ is the residue field of the prime ideal $\mathfrak p$; see~\cite[Lemma 4.2 and Equation (4.1)]{Stevenson14} and \cite[Example 1.36]{bhs1}. Hence, we have
		\[
			\Cosupp(t) = \SET{\mathfrak p \in \Spec(R)}{\ihom{k(\mathfrak p),t} \ne 0}
		\]
	for any $t \in \Der(R)$. The detection property holds for any absolutely flat ring~$R$ by \cite[Lemma 4.1]{Stevenson14}. On the other hand, the codetection property does not hold if $R$ is not semi-artinian. This follows from the argument in the proof of \cite[Theorem 4.8]{Stevenson14}: If $R$ is not semi-artinian then there exists a superdecomposable (pure-)injective $R$-module $E\neq 0$ by \cite[Lemma~1.2]{Trlifaj96}. For each $\mathfrak p\in \Spec(R)$, we must have $\Hom_R(k(\mathfrak p),E)=0$ for otherwise~$E$ would contain the (injective) simple module $k(\mathfrak p)$ as a direct summand, contradicting the superdecomposability of $E$. It follows that  $\Cosupp(E) = \emptyset$ even though $E \neq 0$. An explicit example of an absolutely flat ring which is not semi-artinian is an infinite product of fields, such as $R=\prod_{\bbN} \Fp$. \Cref{thm:LGP-equiv} below will provide further insight into this example and the relationship between the detection and codetection properties.
\end{Exa}

\begin{Lem}\label{lem:pointsuppcosupp}
	Let $\cat P \in \Spc(\cat T^c)$.
	\begin{enumerate}
		\item If $t \in \GammaP\cat T$, then $\Supp(t) \subseteq \Cosupp(t)$.
		\item If $t \in \LambdaP \cat T$, then $\Cosupp(t) \subseteq \Supp(t)$.
	\end{enumerate}
\end{Lem}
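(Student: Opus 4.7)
The plan is to reduce each statement to verifying a single containment, and then exploit the adjunction $\GammaP \dashv \LambdaP$ applied to the identity morphism.

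For part $(a)$, I would first invoke \cref{rem:supp-of-local-small} to observe that $\Supp(t) \subseteq \singP$, so it suffices to show that $\cat P \in \Supp(t)$ forces $\cat P \in \Cosupp(t)$. If $\cat P \in \Supp(t)$ then $\GammaP t \neq 0$, and since $t \simeq \GammaP t$ (by \cref{prop:local-cats-are-ok}(a)), we have $t \neq 0$. Then the adjunction $\GammaP = - \otimes \gP \dashv \ihom{\gP,-} = \LambdaP$ gives an isomorphism
\[
\cat T(t,t) \cong \cat T(\GammaP t, t) \cong \cat T(t, \LambdaP t),
\]
where the first isomorphism uses $t \simeq \GammaP t$. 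Since $\id_t \neq 0$, the right-hand group is nonzero, so $\LambdaP t \neq 0$, i.e., $\cat P \in \Cosupp(t)$.

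Part $(b)$ is dual. By \cref{rem:supp-of-local-small}, $\Cosupp(t) \subseteq \singP$, so it suffices to prove that $\cat P \in \Cosupp(t)$ implies $\cat P \in \Supp(t)$. If $\cat P \in \Cosupp(t)$ then $\LambdaP t \neq 0$, and since $t \simeq \LambdaP t$ (by \cref{prop:local-cats-are-ok}(b)), we have $t \neq 0$. The same adjunction, read in the other direction, yields
\[
\cat T(t,t) \cong \cat T(t, \LambdaP t) \cong \cat T(\GammaP t, t),
\]
so that $\GammaP t \neq 0$, i.e., $\cat P \in \Supp(t)$.

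There is really no serious obstacle: the statements follow once one notes that for objects in $\GammaP\cat T$ or $\LambdaP\cat T$ the (co)support is already concentrated at $\cat P$, so the question reduces to whether $\cat P$ itself lies in the other invariant, and this is settled by the adjunction applied to $\id_t$. The only thing to be careful about is treating the edge case $\Supp(t) = \emptyset$ (resp.\ $\Cosupp(t) = \emptyset$), where the required inclusion is vacuous.
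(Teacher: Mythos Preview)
Your proof is correct and essentially identical to the paper's own argument: both reduce to the single point $\cat P$ via \cref{rem:supp-of-local-small} and \cref{prop:local-cats-are-ok}, then use the adjunction $\GammaP \dashv \LambdaP$ applied to $\id_t$ to pass between $\GammaP t \neq 0$ and $\LambdaP t \neq 0$. The paper phrases the case split as ``$t=0$ versus $t\neq 0$'' rather than ``$\cat P \in \Supp(t)$ versus not,'' but since $t \simeq \GammaP t$ these are equivalent.
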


\begin{proof}
	If $t \in \GammaP \cat T$ then $t \simeq \GammaP t$ by \cref{prop:local-cats-are-ok} and hence $\Supp(\GammaP t) \subseteq \singP$ by \cref{rem:supp-of-local-small}. The claim is vacuously true if $t=0$; otherwise
		\[
			0 \neq \cat T(t,t)\simeq \cat T(\GammaP t,t)\simeq \cat T(t,\LambdaP t)
		\]
	shows that $\LambdaP t \neq 0$, i.e., $\cat P \in \Cosupp(t)$. This establishes $(a)$. The proof of~$(b)$ uses a similar argument. If $t \in \LambdaP \cat T$ then $t \simeq \LambdaP t$ by \cref{prop:local-cats-are-ok} and hence $\Cosupp(\LambdaP t) \subseteq \singP$ by \cref{rem:supp-of-local-small}. If $t=0$, the inclusion holds. If not, the same displayed equation applies to show that $\GammaP t \neq 0$.
\end{proof}

\begin{Cor}\label{cor:cosupp_surj}
	For any $\cat P \in \Spc(\cat T^c)$, we have $\Cosupp (\LambdaP \GammaP \unit)=\{\cat P\}$. Moreover,	for every subset $Y \subseteq \Spc(\cat T^c)$, there exists an object $t \in \cat T$ with $\Cosupp(t) = Y$.
\end{Cor}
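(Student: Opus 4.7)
The plan is to prove both assertions by combining the two lemmas immediately preceding the statement.

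\textbf{First part.} Unwinding definitions, $\GammaP\unit = \unit\otimes\gP = \gP$, so $\LambdaP\GammaP\unit = \LambdaP\gP$. Applying \cref{lem:cosuppcoloc}(c) with $t=\gP$ gives
\[
	\Cosupp(\LambdaP\gP) = \{\cat P\}\cap\Cosupp(\gP).
\]
By \cref{lem:cosupp-of-gP} we have $\{\cat P\}\subseteq\Cosupp(\gP)$, so the intersection equals $\{\cat P\}$, proving the first claim.

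\textbf{Second part.} Given an arbitrary subset $Y\subseteq\Spc(\cat T^c)$, set
\[
	t \coloneqq \prod_{\cat P\in Y}\LambdaP\GammaP\unit
\]
(with the convention that the empty product is $0$, handling $Y=\emptyset$). Using axiom (d) of a cosupport theory (\cref{prop:tt_cosupport}) and the first part, we compute
\[
	\Cosupp(t) \;=\; \bigcup_{\cat P\in Y}\Cosupp(\LambdaP\GammaP\unit) \;=\; \bigcup_{\cat P\in Y}\{\cat P\} \;=\; Y.
\]
There is no real obstacle here: the first statement is a direct combination of \cref{lem:cosupp-of-gP} and \cref{lem:cosuppcoloc}(c), and the surjectivity statement is then immediate from the fact that $\Cosupp$ converts products into unions.
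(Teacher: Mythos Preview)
Your proof is correct and follows essentially the same approach as the paper. The only difference is that for the containment $\cat P\in\Cosupp(\gP)$ you invoke \cref{lem:cosupp-of-gP}, while the paper instead cites \cref{lem:pointsuppcosupp}(a) (using that $\gP\in\GammaP\cat T$ is nonzero, so $\{\cat P\}=\Supp(\gP)\subseteq\Cosupp(\gP)$); both are valid and the second part is identical.
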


\bgroup\crefname{Lem}{Lem.}{Lems.}
\begin{proof}
	The first statement is a consequence of \cref{lem:pointsuppcosupp}(a) and \cref{lem:cosuppcoloc}(c), together with the observation that $\Gamma_p \unit = \gP \ne 0$ (\Cref{rem:W-intersection}). Then, given $Y \subseteq \Spc(\cat T^c)$ the object $t\coloneqq \prod_{\cat P \in Y} \LambdaP\GammaP \unit$ has cosupport equal to $Y$.
\end{proof}
\egroup

\begin{Rem}\label{rem:stalk-costalk-equivalence}
	Let $Y_1, Y_2 \subseteq \Spc(\cat T^c)$ be Thomason subsets. Recall from \eqref{eq:smashing-diagram} that for any smashing localization $e\to\unit \to f\to \Sigma e$ we have an equivalence $e\otimes \cat T\cong\ihom{e,\cat T}$ and an equality $f\otimes \cat T=\ihom{f,\cat T}$. In particular, it follows that $\ihom{f,f\otimes t}\simeq f\otimes t$ and $f\otimes\ihom{f,t}\simeq \ihom{f,t}$ for any $t \in \cat T$. From this, and the fact that idempotent functors are fully faithful on their essential image, we deduce that we have a diagram of adjunctions
		\[\begin{tikzcd}[row sep=3em]
			\ihom{e_{Y_1},\cat T} 
			\ar[d,shift left=1.25ex,"\ihom{f_{Y_2},-}"]
			\ar[r,phantom,"\cong"description]
			\ar[r,shift left=1.25ex]
			\ar[r,<-,shift right=1.25ex] 
			& e_{Y_1} \otimes \cat T
			\ar[d,shift right=1.25ex,"-\otimes f_{Y_2}"']
			\\
			\ihom{f_{Y_2},\ihom{e_{Y_1},\cat T},}
			\ar[u,phantom,"\dashv"description] \ar[u,hook,shift left=1.25ex,"-\otimes f_{Y_2}"]
			& f_{Y_2}\otimes e_{Y_1}\otimes \cat T \ar[u,hook,shift right=1.25ex,"\ihom{f_{Y_2},-}"'] \ar[u,phantom,"\dashv"description]
		\end{tikzcd}\]
	where the hooked arrows are fully faithful. In particular, writing $W\coloneqq Y_1 \cap Y_2^c$, the $\GammaW : \cat T \adjto \cat T:\LambdaW$ adjunction restricts to an adjoint equivalence
		\[\begin{tikzcd}
			\LambdaW \cat T
			\ar[r,bend left=10,shift left=1.25ex]
			\ar[r,<-,bend right=10,shift right=1.25ex]
			\ar[r,phantom,"\cong"description]& \GammaW \cat T.
		\end{tikzcd} \]
	This type of equivalence has been previously observed in \cite[Prop.~5.1]{BensonIyengarKrause12}. For $Y_2=\emptyset$ it specializes to the Matlis--Greenlees--May style equivalence mentioned in \cref{exa:DRsmashing}; cf.~\cite[Theorem 2.1]{DwyerGreenlees02}. In particular, we obtain an equivalence
		\[ 
			\LambdaP\cat T \cong \GammaP \cat T
		\]
	between the costalk and stalk for any weakly visible point $\cat P$. This equivalence provides another perspective on \cref{lem:pointsuppcosupp}.
\end{Rem}

\begin{Exa}
	Consider a smashing ideal $\cat L$ with idempotent triangle
		\[
			e \to \unit \to f\to\Sigma e.
		\]
	Then $\cat L=e\otimes \cat T$ and it follows from \cref{rem:smashing-recollement} that its right orthogonal $\cat L^\perp = f\otimes \cat T = \ihom{f,\cat T}$ is both a localizing ideal and colocalizing coideal, and $\cat L^{\perp \perp} = \ihom{e,\cat T}$ is a colocalizing coideal. Using these descriptions we can readily check that
		\[
			\Supp(e) = \Supp(\cat L)=\Cosupp(\cat L^{\perp \perp})
		\]
	and
		\[
			\Supp(f) = \Supp(\cat L^\perp)=\Cosupp(\cat L^\perp).
		\]
	For example, $\Cosupp(\cat L^\perp) = \Cosupp(\ihom{f,\cat T}) \subseteq \Supp(f)$ by \cref{lem:cosupp_hom}. Conversely, if $\cat P \in \Supp(f)$ then $\cat P \in \Cosupp(\cat L^\perp)$ since $\gP\otimes f \neq 0$ implies 
		\[
			\ihom{\gP,\ihom{f,\gP\otimes f}} = \ihom{\gP\otimes f,\gP\otimes f}\neq 0.
		\]
	A similar argument gives $\Cosupp(\cat L^{\perp\perp}) = \Supp(e)$.
\end{Exa}

\section{The local-to-global principle}\label{sec:LGP}

\begin{Def}[Local-to-global principle]\label{def:LGP}
	Let $\cat T$ be a rigidly-compactly generated \mbox{tt-category} with $\Spc(\cat T^c)$ weakly noetherian. Recall from \cite[Definition~3.8]{bhs1} that $\cat T$ satisfies the \emph{local-to-global principle for localizing ideals} (or simply the \emph{local-to-global principle}) if for every object $t \in \cat T$, we have
		\begin{equation}\label{eq:LGP}
			\Loco{t} = \Loco{\GammaP t \mid \cat P \in \Spc(\cat T^c)}.
		\end{equation}
	Note that this implies the detection property (\cref{def:codetection}) since the right-hand side of~\eqref{eq:LGP} is the same as $\Loco{\GammaP t\mid \cat P \in \Supp(t)}$. Similarly, we say that $\cat T$ satisfies the \emph{local-to-global principle for colocalizing coideals} (or simply the \emph{colocal-to-global principle}) if for every object $t \in \cat T$, we have
		\begin{equation}\label{eq:co-LGP}
			\Coloco{t} = \Coloco{\LambdaP t \mid \cat P \in \Spc(\cat T^c)}.
		\end{equation}
	Note that this implies the codetection property since the right-hand side of~\eqref{eq:co-LGP} is the same as $\Coloco{\LambdaP t \mid \cat P \in \Cosupp(t)}$.
\end{Def}

\begin{Thm}\label{thm:LGP-equiv}
	Let $\cat T$ be a rigidly-compactly generated tt-category with weakly noetherian spectrum. The following are equivalent:
	\begin{enumerate}
		\item $\cat T$ satisfies the codetection property;
		\item $\cat T$ satisfies the local-to-global principle for localizing ideals;
		\item $\cat T$ satisfies the local-to-global principle for colocalizing coideals.
	\end{enumerate}
\end{Thm}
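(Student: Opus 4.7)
The plan is to close the loop $(c) \Rightarrow (a) \Rightarrow (b) \Rightarrow (c)$. The first implication is already recorded in \cref{def:LGP}: co-LGP rewrites the right-hand side of \eqref{eq:co-LGP} as $\Coloco{\LambdaP t \mid \cat P \in \Cosupp(t)}$, so when $\Cosupp(t) = \emptyset$ this forces $\Coloco{t} = 0$ and hence $t = 0$.

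For $(a) \Rightarrow (b)$, I would fix $t \in \cat T$ and form the localizing ideal $\cat L \coloneqq \Loco{\GammaP t \mid \cat P \in \Spc(\cat T^c)}$. Since $\cat L$ is set-generated and $\cat T$ is well generated (being compactly generated), \cref{thm:set-generated-are-strictly-localizing} shows that $\cat L$ is strictly localizing, so by \cref{rem:strictly-localizing} we have $\cat L = {}^\perp(\cat L^\perp)$. For any $s \in \cat L^\perp$, the vanishings $\ihom{\GammaP t, s} = 0$ translate via the adjunction formula \eqref{eq:formulalambdaHom} into $\LambdaP \ihom{t, s} = 0$ for every $\cat P$, that is $\Cosupp(\ihom{t, s}) = \emptyset$. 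Codetection then forces $\ihom{t, s} = 0$, placing $t$ in ${}^\perp(\cat L^\perp) = \cat L$, which is LGP for $t$.

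The heart of the proof will be $(b) \Rightarrow (c)$, where the key step is to specialize LGP to the unit. Since $\GammaP \unit = \gP$, LGP delivers $\unit \in \Loco{\gP \mid \cat P \in \Spc(\cat T^c)}$. Applying $\ihom{-, t}$ and invoking the functorial inclusion \eqref{eq:[loc,t]} then yields
\[
	t \simeq \ihom{\unit, t} \in \Coloco{\ihom{\gP, t} \mid \cat P} = \Coloco{\LambdaP t \mid \cat P},
\]
which is precisely co-LGP. As a bonus, the very same argument with $\Cosupp(t) = \emptyset$ supplies a direct proof of $(b) \Rightarrow (a)$: the right-hand side collapses to $0$, so $t \simeq \ihom{\unit, t} = 0$.

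The genuinely subtle step is $(b) \Rightarrow (c)$: \emph{a priori} one might expect to need a strictly colocalizing replacement of $\Coloco{\LambdaP t \mid \cat P}$, which would run straight into the set-theoretic difficulties flagged in \cref{rem:vopenka}. The essential insight is that LGP applied to $\unit$ already supplies an inclusion into a \emph{localizing ideal}, and then \eqref{eq:[loc,t]}---which transports localizing ideals into colocalizing coideals via $\ihom{-, t}$---delivers the desired cogeneration statement without any strict-colocalization manoeuvres.
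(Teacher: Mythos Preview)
Your proof is correct and shares the paper's key manoeuvres: strict localization via \cref{thm:set-generated-are-strictly-localizing} together with $\cat L = {}^\perp(\cat L^\perp)$, and the transport inclusion \eqref{eq:[loc,t]} for $(b)\Rightarrow(c)$. The one genuine difference is in $(a)\Rightarrow(b)$: you work object-by-object with $\cat L_t \coloneqq \Loco{\GammaP t \mid \cat P}$ and use codetection on $\ihom{t,s}$ for each $s \in \cat L_t^\perp$, whereas the paper works with the single ideal $\cat L \coloneqq \Loco{\gP \mid \cat P}$ and observes that codetection is \emph{literally} the statement $\cat L^\perp = 0$, while LGP is \emph{literally} the statement $\cat L = \cat T$ (since $\unit \in \cat L$ plus \eqref{eq:t@loc} gives $t \in \Loco{t\otimes\gP \mid \cat P}$ for all $t$). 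This makes $(a)\Leftrightarrow(b)$ a one-line consequence of $\cat L = {}^\perp(\cat L^\perp)$. Your per-object argument is slightly longer but has the virtue of showing directly that codetection at the single object $\ihom{t,s}$ suffices to place $t$ in its own local-to-global ideal; the paper's reformulation is more economical and makes the symmetry between $(a)$ and $(b)$ more transparent.
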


\begin{proof}
	Let $\cat L \coloneqq \Loco{\gP \mid \cat P \in \Spc(\cat T^c)}$. An object $t \in \cat T$ satisfies $\Cosupp(t) = \emptyset$ if and only if $t \in \cat L^\perp$. Thus, the codetection property is equivalent to $\cat L^\perp = 0$. On the other hand, the local-to-global principle for localizing ideals is equivalent to~$\cat L=\cat T$ since if $\unit \in \cat L$ then for any $t \in \cat T$ we have 
		\[
			t=t\otimes \unit \in t\otimes \cat L \subseteq \Loco{t\otimes \gP \mid \cat P \in \Spc(\cat T^c)}
		\]
	by \eqref{eq:t@loc}. Clearly $\cat L=\cat T$ implies $\cat L^\perp=0$. On the other hand, $\cat L$ is strictly localizing by \cref{thm:set-generated-are-strictly-localizing}, hence we have $\cat L={}^\perp(\cat L^\perp)$ (\cref{rem:strictly-localizing}) so that $\cat L^\perp=0$ implies $\cat L=\cat T$. Thus, $(a)$ and $(b)$ are equivalent. Condition $(c)$ evidently implies $(a)$ so it remains to show that $(b)$ implies $(c)$. This is also immediate since if $\unit \in \cat L$ then for any $t \in \cat T$ we have \[ t=\ihom{\unit,t} \in \ihom{\cat L,t} \subseteq \Coloco{\ihom{\gP,t} \mid \cat P \in \Spc(\cat T^c)}\] by~\eqref{eq:[loc,t]}.
\end{proof}

\begin{Cor}\label{cor:noetherian_local_global}
	If $\Spc(\cat T^c)$ is noetherian, then $\cat T$ satisfies the local-to-global principle for colocalizing coideals.
\end{Cor}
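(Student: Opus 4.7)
The plan is to reduce this to the already-established noetherian case of the local-to-global principle for \emph{localizing ideals}, invoking the equivalence of \cref{thm:LGP-equiv} to transfer the conclusion to colocalizing coideals. Concretely, by \cref{thm:LGP-equiv} the three conditions (codetection, local-to-global for localizing ideals, colocal-to-global) are equivalent under our standing weakly noetherian hypothesis on $\Spc(\cat T^c)$. Hence it suffices to verify that $\cat T$ satisfies the local-to-global principle for localizing ideals whenever $\Spc(\cat T^c)$ is noetherian.

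First I would note that in the noetherian case, every point $\cat P \in \Spc(\cat T^c)$ is visible, so that $\overbar{\{\cat P\}}$ is Thomason and we may take $Y_1 = \overbar{\{\cat P\}}$ in the expression $\{\cat P\} = Y_1 \cap \gen(\cat P)$. The classical approach (due to Balmer--Favi and Stevenson, and already recorded in the prequel \cite{bhs1}) is then a noetherian induction along the closed subsets of $\Spc(\cat T^c)$: for any Thomason closed subset $Z \subseteq \Spc(\cat T^c)$ and any $t \in \cat T$, one shows that $e_Z \otimes t$ lies in the localizing ideal generated by the $\GammaP t$ with $\cat P \in Z$. When $Z$ has a single generic point $\cat P_0$, the exact triangle relating $e_Z$ to $e_{Z \smallsetminus \{\cat P_0\}}$ and $\gP[0]$ (viewed as an idempotent on $e_Z \otimes \cat T$) produces the required inductive step. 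Taking $Z = \Spc(\cat T^c)$, which is Thomason in the noetherian case, gives $t = e_Z \otimes t \in \Loco{\GammaP t \mid \cat P \in \Spc(\cat T^c)}$, which is the local-to-global principle.

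There is no genuine obstacle here beyond quoting this known noetherian fact; the novelty is the leverage afforded by \cref{thm:LGP-equiv}, which allows the conclusion for \emph{colocalizing coideals} to come for free. If one wanted a self-contained proof, the main technical point would be the noetherian induction described above, where one must be careful that the filtration of $\Spc(\cat T^c)$ by Thomason closed subsets terminates and that the layers are controlled by $\gP \otimes t$ rather than by arbitrary compact objects. Everything else is formal.
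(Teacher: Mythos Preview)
Your proposal is correct and takes essentially the same approach as the paper: invoke the known noetherian local-to-global principle for localizing ideals from \cite[Theorem~3.21]{bhs1} and then transfer it to colocalizing coideals via \cref{thm:LGP-equiv}. The paper's proof is just the two-line citation version of what you wrote; your additional sketch of the noetherian induction behind the cited result is accurate but not needed here.
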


\begin{proof}
	The local-to-global principle for localizing ideals holds by \cite[Theorem~3.21]{bhs1} and hence the result follows from \cref{thm:LGP-equiv}.
\end{proof}

\begin{Rem}\label{rem:tantalizing}
	Although the proof of \cref{thm:LGP-equiv} is not difficult in hindsight, we regard the statement as quite surprising. The local-to-global and colocal-to-global principles are equivalent, and in fact are equivalent to the codetection property. In particular, this shows that the theory of cosupport is highly relevant even for the task of classifying localizing ideals via a theory of support. The theorem also provides another way to see that the codetection property does not always hold. Indeed, \cite[Theorem 4.8]{Stevenson14} establishes that if $R$ is an absolutely flat ring which is not semi-artinian then $\Der(R)$ does not satisfy the local-to-global principle; cf.~\cref{exa:detect-but-not-codetect}. This example also shows that the detection property does not imply the codetection property. It remains a tantalizing possibility that the detection property is \emph{always}\footnote{When the spectrum is not weakly noetherian, the definition of support requires modification, as the example of $p$-local spectra shows. For some remarks in this direction, see \cite[Remark~5.14]{bhs2}. A general approach to extending the Balmer--Favi support to points which are not weakly visible is considered by William Sanders \cite{BillySanders2017pp} and has been further developed in recent work of Changhan Zou \cite{Zou23bpp}.} satisfied. We do not know of any counterexamples.
\end{Rem}

\section{Costratification}\label{sec:costratification}

\begin{Def}[Costratification]\label{def:costratification}
	Let $\cat T$ be a rigidly-compactly generated tensor-triangulated category with $\Spc(\cat T^c)$ weakly noetherian. We say that~$\cat T$ is \emph{costratified} if the cosupport theory $(\Spc(\cat T^c), \Cosupp)$ (\cref{def:BF-support-and-cosupport}) induces a bijection
		\begin{equation}\label{eq:costrat-map} 
			\big\{ \text{colocalizing coideals of $\cat T$} \big\} \xra{\Cosupp} \big\{ \text{subsets of $\Spc(\cat T^c)$}\big\}.
		\end{equation}
	The inclusion-preserving function \eqref{eq:costrat-map} is always surjective by \cref{cor:cosupp_surj}, so costratification amounts to its injectivity.
\end{Def}

\begin{Rem}
	For a subset $Y \subseteq \Spc(\cat T)$, we use the notation
		\[
			\InvSupp{Y} \coloneqq \SET{t \in \cat T}{\Supp(t) \subseteq Y}
		\]
	for the localizing ideal of objects supported on $Y$, and 
		\[
			\InvCosupp{Y} \coloneqq \SET{t \in \cat T}{\Cosupp(t) \subseteq Y}
		\]
	for the colocalizing coideal of objects cosupported on $Y$. If $\cat T$ is costratified, then the inverse of \eqref{eq:costrat-map} is necessarily given by $Y \mapsto \InvCosupp{Y}$. Indeed, we always have the inclusion $\cat C \subseteq \InvCosupp{\Cosupp(\cat C)}$ and costratification is equivalent to these inclusions being equalities.
\end{Rem}

\begin{Rem}\label{rem:set-generated}
    If a colocalizing coideal is generated by a set of objects $\cat E$ then it is also generated by a single object: $\Coloco{\cat E} = \Coloco{\prod_{t \in \cat E}t}$. We refer to such colocalizing coideals as the \emph{set-generated} colocalizing coideals.
\end{Rem}

\begin{Prop}\label{prop:set-generated}
	If the class of set-generated colocalizing coideals of $\cat T$ forms a set, then every colocalizing coideal of $\cat T$ is generated by a set.
\end{Prop}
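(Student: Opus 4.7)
The plan is to dualize the argument given for localizing ideals in \cite[Proposition~3.5]{bhs1} (itself a variant of \cite[Lemma~3.3.1]{KrauseStevenson19}). Given a colocalizing coideal $\cat C \subseteq \cat T$, I want to produce a set of objects of $\cat C$ which generates $\cat C$ as a colocalizing coideal.

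First, for each object $t \in \cat C$ the singleton-generated colocalizing coideal $\Coloco{t}$ is set-generated and is contained in $\cat C$. By hypothesis, the class of all set-generated colocalizing coideals of $\cat T$ is a set, so the sub-collection
\[
	\cat I \coloneqq \{\,\Coloco{t} \mid t \in \cat C\,\}
\]
is itself a set, even though $\cat C$ may a priori be a proper class.

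Next, by \cref{rem:set-generated}, each $\cat E \in \cat I$ is generated by a single object; pick one such generator $s_\cat E \in \cat E$ for every $\cat E \in \cat I$. The resulting collection $G \coloneqq \{\, s_\cat E \mid \cat E \in \cat I\,\}$ is a set of objects of $\cat C$, and I claim $\cat C = \Coloco{G}$. The inclusion $\Coloco{G} \subseteq \cat C$ is immediate since every generator $s_\cat E$ lies in $\cat C$. For the reverse inclusion, take any $t \in \cat C$; then $\cat E_t \coloneqq \Coloco{t}$ belongs to $\cat I$, and since $s_{\cat E_t}$ generates $\cat E_t$ we have $t \in \cat E_t = \Coloco{s_{\cat E_t}} \subseteq \Coloco{G}$.

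The argument is essentially bookkeeping: the size hypothesis lets me replace the potentially proper class ``objects of $\cat C$'' by the set-sized indexing collection ``set-generated coideals inside $\cat C$'' and then pick single generators via \cref{rem:set-generated}. I do not foresee a substantive obstacle beyond confirming that this choice does not require any axiom beyond those already in force; in particular, the foundational status is identical to the localizing-ideal version it dualizes.
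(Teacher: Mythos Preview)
Your proof is correct and takes essentially the same approach as the paper, which simply remarks that the argument in \cite[Lemma~3.3.1]{KrauseStevenson19} goes through verbatim with ``localizing subcategory'' and ``$\Loc$'' replaced by ``colocalizing coideal'' and ``$\Colocideal$''.
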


\begin{proof}
	The argument in \cite[Lemma 3.3.1]{KrauseStevenson19} goes through verbatim with ``localizing subcategory'' and ``$\Loc$'' replaced by ``colocalizing coideal'' and ``$\Colocideal$''; cf.~\cite[Proposition~3.5]{bhs1}.
\end{proof}

\begin{Rem}\label{rem:cominimality}
	It follows from \cref{cor:cosupp_surj} and \cref{lem:cosuppcoloc} that the colocalizing coideal $\LambdaP \cat T$ is nonzero and in fact has cosupport equal to the single point~$\singP$. Thus, a necessary condition for costratification is that $\LambdaP \cat T$ contains no nontrivial proper colocalizing coideals, i.e., that it is a \emph{minimal} colocalizing coideal.
\end{Rem}

\begin{Thm}\label{thm:equiv-costrat}
	Let $\cat T$ be a rigidly-compactly generated tensor-triangulated category with weakly noetherian spectrum. The following are equivalent:
	\begin{enumerate}
	\item	The local-to-global principle for colocalizing coideals holds for $\cat T$, and for all $\cat P\in \Spc(\cat T^c)$, $\LambdaP \cat T$ is a minimal colocalizing coideal of $\cat T$.
	\item	For all $t \in \cat T$, $\Coloco{t} = \Coloco{\LambdaP \cat T \mid \cat P \in \Cosupp(t)}$.
	\item	The function
				\[ 
					\big\{ \text{colocalizing coideals of $\cat T$} \big\} \xra{\Cosupp} \big\{ \text{subsets of $\Spc(\cat T^c)$}\big\}
				\]
			is injective (and hence a bijection by \Cref{cor:cosupp_surj}); that is, $\cat T$ is costratified.
	\end{enumerate}
\end{Thm}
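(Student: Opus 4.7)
The plan is to establish the cyclic implications $(a) \Rightarrow (b) \Rightarrow (c) \Rightarrow (a)$, mirroring the strategy used for the analogous theorem for stratification in \cite{bhs1}, but adapted to the dual setting of colocalizing coideals.

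For $(a) \Rightarrow (b)$, I would start from the colocal-to-global principle, which gives $\Coloco{t} = \Coloco{\LambdaP t \mid \cat P \in \Cosupp(t)}$. The inclusion $\Coloco{t} \subseteq \Coloco{\LambdaP \cat T \mid \cat P \in \Cosupp(t)}$ is then immediate since $\LambdaP t \in \LambdaP \cat T$. The reverse inclusion is where minimality enters: for each $\cat P \in \Cosupp(t)$, the object $\LambdaP t$ is nonzero and lies in the colocalizing coideal $\LambdaP \cat T$, so $\Coloco{\LambdaP t}$ is a nonzero colocalizing coideal contained in $\LambdaP \cat T$. By minimality $\Coloco{\LambdaP t} = \LambdaP \cat T$, and by the containment \eqref{eq:[t,coloc]} applied with $\cat E = \{t\}$, we have $\ihom{\cat T, t} \subseteq \Coloco{t}$, whence $\LambdaP t \in \Coloco{t}$ and thus $\LambdaP \cat T = \Coloco{\LambdaP t} \subseteq \Coloco{t}$. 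Taking the union over $\cat P \in \Cosupp(t)$ gives the reverse inclusion.

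For $(b) \Rightarrow (c)$, suppose $\cat C_1$ and $\cat C_2$ are colocalizing coideals with $\Cosupp(\cat C_1) = \Cosupp(\cat C_2)$, and let $t \in \cat C_1$. For each $\cat P \in \Cosupp(t) \subseteq \Cosupp(\cat C_2)$, choose $s \in \cat C_2$ with $\cat P \in \Cosupp(s)$; applying $(b)$ to $s$ shows that $\LambdaP \cat T \subseteq \Coloco{s} \subseteq \cat C_2$. Applying $(b)$ to $t$ then gives $\Coloco{t} = \Coloco{\LambdaP \cat T \mid \cat P \in \Cosupp(t)} \subseteq \cat C_2$, so $t \in \cat C_2$; by symmetry $\cat C_1 = \cat C_2$.

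For $(c) \Rightarrow (a)$, costratification immediately gives the codetection property (since $\Cosupp(\Coloco{t}) = \emptyset = \Cosupp(0)$ forces $t = 0$), which by \Cref{thm:LGP-equiv} is equivalent to the colocal-to-global principle. For minimality of $\LambdaP \cat T$, I would use \Cref{cor:cosupp_surj} together with \Cref{rem:supp-of-local-small} (or rather its cosupport half, \Cref{lem:cosuppcoloc}(c) via \Cref{prop:local-cats-are-ok}) to see that $\Cosupp(\LambdaP \cat T) = \singP$; any nonzero colocalizing coideal $\cat C \subseteq \LambdaP \cat T$ then has $\Cosupp(\cat C) \subseteq \singP$ but nonempty by codetection, hence equal to $\singP = \Cosupp(\LambdaP \cat T)$, so $\cat C = \LambdaP \cat T$ by costratification.

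The main obstacle is the step $(a) \Rightarrow (b)$, because one must resist the temptation to conclude $\Coloco{\LambdaP t} = \LambdaP \cat T$ purely from $\LambdaP t \neq 0$; it is essential that $\LambdaP \cat T$ genuinely be a colocalizing \emph{coideal} (not just a colocalizing subcategory) so that $\Coloco{\LambdaP t}$ sits inside it and minimality applies. This is exactly what \Cref{prop:local-cats-are-ok}(b) guarantees, and it is the point where the coideal structure—rather than mere closure under products—becomes essential.
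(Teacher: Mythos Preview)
Your proof is correct and follows the same cyclic scheme $(a)\Rightarrow(b)\Rightarrow(c)\Rightarrow(a)$ as the paper. The arguments for $(a)\Rightarrow(b)$ and $(c)\Rightarrow(a)$ are essentially identical to the paper's (your explicit verification that $\LambdaP t \in \Coloco{t}$ is not needed, since $\Coloco{\LambdaP t} = \LambdaP\cat T$ already forces both sides of the displayed equality to coincide, but it does no harm).

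The one genuine difference is in $(b)\Rightarrow(c)$. You prove injectivity of $\Cosupp$ directly on \emph{all} colocalizing coideals: given $\Cosupp(\cat C_1)=\Cosupp(\cat C_2)$ and $t\in\cat C_1$, you use $(b)$ on suitable $s\in\cat C_2$ to get $\LambdaP\cat T\subseteq\cat C_2$ for each $\cat P\in\Cosupp(t)$, then $(b)$ on $t$ to conclude $t\in\cat C_2$. The paper instead first proves injectivity only on \emph{set-generated} colocalizing coideals (reducing to single objects $t_1,t_2$ with equal cosupport), deduces there is only a set of them, and then invokes \cref{prop:set-generated} to conclude that every colocalizing coideal is set-generated, whence injectivity holds in general. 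Your route is more elementary and bypasses \cref{prop:set-generated} entirely; the paper's route has the incidental payoff that costratification forces every colocalizing coideal to be set-generated, which is of independent interest but not needed for the theorem as stated.
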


\begin{proof}
	$(a)\Rightarrow(b)$: By the local-to-global principle for colocalizing coideals we have 
		\[ 
			\Coloco{t} = \Coloco{\LambdaP t \mid \cat P \in \Cosupp(t)}.
		\]
	Since $\LambdaP \cat T$ is a minimal colocalizing coideal, for any $t\in \cat T$ we have equalities $\Coloco{\LambdaP t} = \LambdaP \cat T$ for any $\cat P \in \Cosupp(t)$. Therefore, 
		\[ 
			\Coloco{t} = \Coloco{\LambdaP \cat T \mid \cat P \in \Cosupp(t)}.
		\]
	%as claimed. 

	$(b) \Rightarrow (c)$: We will actually show that the map
        \begin{equation}\label{eq:supp-set-gen}
			\resizebox{0.9 \textwidth}{!}{
				$\big\{\text{set-generated colocalizing coideals of $\cat T$}\big\} \xra{\Cosupp} \big\{\text{subsets of $\Spc(\cat T^c)$}\big\}$
			}
        \end{equation}
	is injective. It will follow that there is only a set of set-generated colocalizing coideals. Hence by \cref{prop:set-generated}, every colocalizing coideal is set-generated, so we will have established $(c)$. Note that the map is always surjective by \cref{cor:cosupp_surj}. Recall that every set-generated colocalizing coideal of $\cat T$ is generated by a single object (\cref{rem:set-generated}) and that $\Cosupp(\Coloco{t}) = \Cosupp(t)$ (\cref{rem:axioms-equiv}). Thus the injectivity of \eqref{eq:supp-set-gen} is equivalent to 
		\[ 
			\forall t_1,t_2 \in \cat T, \Cosupp(t_1)=\Cosupp(t_2) \Longrightarrow \Coloco{t_1} = \Coloco{t_2}.
		\]
	If $\Cosupp(t_1) = \Cosupp(t_2)$ then $(b)$ implies that $\Coloco{t_1}=\Coloco{t_2}$, so we are done.
    
	$(c)\Rightarrow(a)$: 
	Suppose the function $\Cosupp$ is injective. Applied to the zero coideal, we obtain the codetection property and hence the local-to-global principle by \cref{thm:LGP-equiv}. Suppose then that $0 \ne \cat C \subseteq \LambdaP\cat T$ is a colocalizing coideal. Then
		\[
			\emptyset \ne \Cosupp(\cat C) \subseteq \Cosupp(\LambdaP \cat T) = \{ \cat P \}
		\]
	using \Cref{lem:cosuppcoloc}. Hence $\Cosupp(\cat C) = \Cosupp(\LambdaP\cat T)$ and so $\cat C = \LambdaP \cat T$. This establishes the minimality of $\LambdaP \cat T$. 
\end{proof}

\begin{Ter}
	We say that $\cat T$ satisfies (or has) ``cominimality at $\cat P$'' if~$\LambdaP \cat T$ is a minimal colocalizing coideal of $\cat T$ as in \cref{rem:cominimality} and part $(a)$ of \cref{thm:equiv-costrat}.
\end{Ter}

\begin{Rem}
	Recall from \cite{bhs1} that $\cat T$ is said to be \emph{stratified} if the Balmer--Favi support theory $(\Spc(\cat T^c),\Supp)$ provides a bijection
	\[ 
		\big\{ \text{localizing ideals of $\cat T$} \big\} \xra{\Supp} \big\{ \text{subsets of $\Spc(\cat T^c)$}\big\}.
	\]
	\Cref{thm:equiv-costrat} should be compared with \cite[Theorem 4.1]{bhs1} which provides a directly analogous characterization of stratification. Note that the statement of part~$(b)$ in \cref{thm:equiv-costrat} is slightly different than the formulation of part~$(b)$ in \cite[Theorem 4.1]{bhs1}. Morally this is because cosupport is not controlled by the unit $\unit$ in the same way that support is.
\end{Rem}

\begin{Rem}
	Our next goal is to prove that costratification implies stratification. To this end, we will utilize the following crucial result which is a minor modification of \cite[Lemma 3.9]{BensonIyengarKrause11a}.
\end{Rem}

\begin{Lem}[Benson--Iyengar--Krause]\label{lem:minimality_crit}
	Let $\cat T$ be a rigidly-compactly generated tt-category and let $\cat L$ be a nonzero localizing ideal of $\cat T$. The following statements are equivalent:
	\begin{enumerate}
		\item The localizing ideal $\cat L$ is minimal. 
		\item $\ihom{t_1,t_2}\neq 0$ for any two nonzero objects $t_1,t_2\in \cat L$.
	\end{enumerate}
\end{Lem}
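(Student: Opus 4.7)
The plan is to prove this via standard orthogonality arguments, leveraging the interaction between localizing ideals and the internal hom, with the main input on the harder direction being a Bousfield localization produced by \cref{thm:set-generated-are-strictly-localizing}. Both directions hinge on the key algebraic fact that $\ihom{t,t} = 0$ forces $t = 0$, since $\cat T(t,t) \cong \cat T(\unit, \ihom{t,t})$.

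For the implication $(a) \Rightarrow (b)$, I will argue by contraposition. Suppose $t_1, t_2 \in \cat L$ are nonzero but $\ihom{t_1, t_2} = 0$. The left orthogonal $\cat L'' \coloneqq {}^\perp\{t_2\}$ is a localizing ideal of $\cat T$ by \cref{def:orthogonal}, and it contains $t_1 \neq 0$. Therefore $\cat L'' \cap \cat L$ is a nonzero localizing ideal contained in $\cat L$, so by minimality $\cat L \subseteq \cat L''$. In particular $t_2 \in \cat L''$, forcing $\ihom{t_2,t_2} = 0$ and hence $t_2 = 0$, a contradiction.

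For $(b) \Rightarrow (a)$, let $\cat L' \subseteq \cat L$ be a nonzero localizing ideal and fix a nonzero $s \in \cat L'$. It suffices to show $\cat L \subseteq \Loco{s}$, since $\Loco{s} \subseteq \cat L'$. Here is where the main obstacle lies: to extract useful information from hypothesis $(b)$, I need to build approximations in $\Loco{s}$, which requires that $\Loco{s}$ be strictly localizing. This is supplied by \cref{thm:set-generated-are-strictly-localizing}, since $\Loco{s} = \Loc\langle s \otimes x \mid x \in \cat T^c \rangle$ is generated by a set (as $\cat T^c$ is essentially small) and $\cat T$ is well generated. Let $L \to \id \to L' \to \Sigma L$ be the associated Bousfield triangle with $Lt \in \Loco{s}$ and $L't \in \Loco{s}^\perp$.

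Given $t \in \cat L$, the object $Lt$ lies in $\Loco{s} \subseteq \cat L$, so by thickness of $\cat L$ we get $L't \in \cat L$ as well. Since $L't \in \Loco{s}^\perp$, we have $\ihom{s, L't} = 0$. Applying hypothesis $(b)$ to the pair $s, L't \in \cat L$ with $s \neq 0$, we conclude $L't = 0$, hence $t \simeq Lt \in \Loco{s} \subseteq \cat L'$. This yields $\cat L \subseteq \cat L'$, completing the proof of minimality.
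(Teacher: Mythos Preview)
Your proof is correct and follows essentially the same approach as the paper's. Both directions use the same key ingredients: for $(a)\Rightarrow(b)$, minimality forces the localizing ideal generated by $t_1$ (equivalently, ${}^\perp\{t_2\}\cap\cat L$) to be all of $\cat L$, whence $\ihom{t_2,t_2}=0$; for $(b)\Rightarrow(a)$, one invokes \cref{thm:set-generated-are-strictly-localizing} to get a Bousfield localization at $\Loco{s}$ and then uses $(b)$ to kill the local part.
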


\begin{proof}
	$(a)\Rightarrow(b)$: If $t_1 \neq 0$ then minimality implies $\cat L=\Loco{t_1}$. Hence if $\ihom{t_1,t_2}=0$ then $\ihom{\cat L,t_2}=0$. In particular $\ihom{t_2,t_2}=0$ so that~$t_2 =0$.

	$(b)\Rightarrow (a)$: Let $0 \neq t_1\in \cat L$. Then $\cat L_1\coloneqq \Loco{t_1} = \Loc\langle t_1\otimes \cat T^c\rangle$ is a set-generated localizing subcategory and hence is strictly localizing by \cref{thm:set-generated-are-strictly-localizing}. By \Cref{rem:strictly-localizing}, $\cat L_1\subseteq \cat L$ is the kernel of a Bousfield localization on $\cat T$. For any $t_2 \in \cat L$ consider the associated Bousfield localization triangle $\Gamma t_2 \to t_2 \to \L t_2\to\Sigma\Gamma t_2$. The first two objects are in $\cat L$, thus so is $\L t_2$. However, $(b)$ implies that no nonzero object of $\cat L$ can be contained in $(\cat L_1)^\perp$; that is, $(\cat L_1)^\perp \cap \cat L=\{0\}$. Hence $\L t_2=0$,  since $\Gamma t_2\in \cat L_1$, so that $\Gamma t_2 \simeq  t_2$ is contained in $\cat L_1$. This proves that $\cat L_1=\cat L$.
\end{proof}

\begin{Rem}
	We highlight that the proof of \cref{lem:minimality_crit} crucially depends on the existence of Bousfield localizations for arbitrary set-generated localizing ideals. In light of \cref{rem:vopenka}, one cannot establish minimality of colocalizing coideals by a similar argument.
\end{Rem}

\begin{Rem}\label{rem:relative_minimality_crit}
	\Cref{lem:minimality_crit} admits the following relative version: Let $\cat L$ be a localizing ideal of $\cat T$ and let $\cat J\subsetneq \cat L$ be a set-generated localizing subcategory. Then~$\cat L$ is minimal among those localizing ideals which properly contain $\cat J$ if and only if $\ihom{t_1,t_2}\neq 0$ for any two objects $t_1,t_2\in \cat L\setminus \cat J$. In the proof one needs to consider $\cat L_1\coloneqq \Loco{\cat J \cup \{t_1\}}$.
\end{Rem}

\begin{Thm}\label{thm:strat_cosupport}
	Let $\cat T$ be a rigidly-compactly generated tt-category whose spectrum is weakly noetherian. Assume that the local-to-global principle for localizing ideals holds. The following conditions are equivalent:
	\begin{enumerate}
		\item $\cat T$ is stratified. 
		\item $\Cosupp \ihom{t_1,t_2} = \Supp(t_1) \cap \Cosupp(t_2)$ for all $t_1,t_2 \in \cat T$.
		\item $\ihom{t_1,t_2} = 0$ implies $\Supp(t_1) \cap \Cosupp(t_2) = \emptyset$ for all $t_1,t_2 \in \cat T$.
	\end{enumerate}
\end{Thm}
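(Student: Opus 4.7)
The implication $(b) \Rightarrow (c)$ is immediate: if $\ihom{t_1,t_2} = 0$ then $\Cosupp\ihom{t_1,t_2} = \emptyset$, and $(b)$ forces $\Supp(t_1) \cap \Cosupp(t_2) = \emptyset$. The two substantive directions are $(a) \Rightarrow (b)$ and $(c) \Rightarrow (a)$; both proceed point-by-point through the idempotents $\gP$ and exploit the assumed local-to-global principle.

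For $(a) \Rightarrow (b)$, the inclusion $\subseteq$ is \cref{lem:cosupp_hom}, so the work is the reverse inclusion. Fix $\cat P \in \Supp(t_1) \cap \Cosupp(t_2)$. Using the adjunction identities in \eqref{eq:formulalambdaHom}, we have
\[
    \LambdaP \ihom{t_1,t_2} \;\simeq\; \ihom{\GammaP t_1, t_2},
\]
so I need to produce a nonzero map out of $\GammaP t_1$ into $t_2$. Since $\GammaP t_1 \in \GammaP\cat T$ is nonzero by hypothesis and $\cat T$ is stratified, minimality of the stalk (\cref{thm:equiv-costrat} applied to stratification, or the analogue from \cite{bhs1}) gives $\Loco{\GammaP t_1} = \GammaP \cat T \ni \gP$. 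The class $\cat E \coloneqq \SET{s \in \cat T}{\ihom{s,t_2} = 0}$ is a localizing ideal (it is closed under coproducts because $\ihom{-,t_2}$ turns them into products, and it is a $\otimes$-ideal by $\ihom{s \otimes s',t_2} \cong \ihom{s',\ihom{s,t_2}}$). If $\ihom{\GammaP t_1,t_2}$ vanished then $\cat E$ would contain $\Loco{\GammaP t_1} = \GammaP\cat T$, forcing $\ihom{\gP,t_2} = 0$ and contradicting $\cat P \in \Cosupp(t_2)$. Hence $\cat P \in \Cosupp\ihom{t_1,t_2}$.

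For $(c) \Rightarrow (a)$, invoke the stratification analogue of \cref{thm:equiv-costrat} (\cite[Thm.~4.1]{bhs1}): since the local-to-global principle is assumed, it suffices to show that $\GammaP\cat T$ is a minimal localizing ideal for every $\cat P \in \Spc(\cat T^c)$. By \cref{lem:minimality_crit} this reduces to checking that $\ihom{t_1,t_2} \ne 0$ for any two nonzero $t_1,t_2 \in \GammaP\cat T$. For such objects, the local-to-global principle gives the detection property, so $\Supp(t_1) \ne \emptyset$; combined with $\Supp(\GammaP\cat T) \subseteq \singP$ (\cref{rem:supp-of-local-small}) this forces $\Supp(t_1) = \singP$. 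Similarly $\Supp(t_2) = \singP$, and then \cref{lem:pointsuppcosupp}(a) yields $\cat P \in \Supp(t_2) \subseteq \Cosupp(t_2)$. Consequently $\Supp(t_1) \cap \Cosupp(t_2) = \singP \ne \emptyset$, and the contrapositive of $(c)$ gives $\ihom{t_1,t_2} \ne 0$, as required.

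The only nontrivial step is the one in $(a) \Rightarrow (b)$ that uses stratification to upgrade $\gP \in \GammaP\cat T$ to $\gP \in \Loco{\GammaP t_1}$; everything else amounts to standard manipulations with idempotents, the adjunction formulas \eqref{eq:formulalambdaHom}, and the Benson--Iyengar--Krause minimality criterion \cref{lem:minimality_crit}.
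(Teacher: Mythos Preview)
Your proof is correct and follows essentially the same route as the paper's. In $(a)\Rightarrow(b)$ you phrase the key step as ``$\SET{s}{\ihom{s,t_2}=0}$ is a localizing ideal containing $\GammaP t_1$, hence $\gP$'', whereas the paper uses the equivalent dual formulation via \eqref{eq:[loc,t]}: $\ihom{\gP,t_2} \in \Coloco{\ihom{\GammaP t_1,t_2}}$; these are contrapositives of each other. In $(c)\Rightarrow(a)$ your detour through the detection property is unnecessary---since $t_1 \simeq \GammaP t_1 \ne 0$ directly gives $\cat P \in \Supp(t_1)$---but it does no harm.
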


\begin{proof}
	Using \cref{lem:pointsuppcosupp} we can follow the proof of Theorem 9.5 of \cite{BensonIyengarKrause12}, which we spell out for the reader. 

	$(a)\Rightarrow(b)$: One inclusion ($\subseteq$) is \cref{lem:cosupp_hom}. For the other inclusion, let $\cat P \in \Supp(t_1) \cap \Cosupp(t_2)$. In particular, $\Gamma_{\cat P} t_1 \ne 0$. Since $\Gamma_{\cat P}\cat T$ is a nontrivial minimal localizing ideal,  $\gP \in \Loco{\Gamma_{\cat P} t_1}$. Using \cite[Lemma~8.4]{BensonIyengarKrause12} and the assumption $\cat P \in \Cosupp(t_2)$ we see that
		\[
			0 \ne \LambdaP t_2=\ihom{\gP,t_2} \in \Coloco{\ihom{\Gamma_{\cat P} t_1,t_2}}.
		\]
	Therefore $ 0 \ne \ihom{\Gamma_{\cat P} t_1,t_2} \simeq \ihom{\gP,\ihom{t_1,t_2}}$ and as a consequence we have that $\cat P \in \Cosupp\ihom{t_1,t_2}$. 

	$(b) \Rightarrow(c)$: This follows from $\Cosupp(0) = \emptyset$.

	$(c) \Rightarrow(a)$: The local-to-global principle for localizing ideals holds by hypothesis. We show that the criteria of \cref{lem:minimality_crit} are satisfied for $\GammaP \cat T$. Let $t_1,t_2 \in \GammaP \cat T$ be nonzero objects. Then $\cat P \in \Supp(t_1)$ and $\cat P\in\Supp(t_2)$. By \cref{lem:pointsuppcosupp}, we also have $\cat P \in \Cosupp(t_2)$. Hence $(c)$ implies $\ihom{t_1,t_2} \ne 0$.
\end{proof}

\begin{Rem}
	The appearance of cosupport in the above characterization of \emph{stratification} should be regarded as significant: it is a fundamental characterization of stratification which utilizes the notion of cosupport. This motivates the study of cosupport even for questions purely about localizing ideals.
\end{Rem}

\begin{Rem}
	Recall that stratification implies that the Balmer--Favi support satisfies the full tensor product property: $\Supp(t_1 \otimes t_2)=\Supp(t_1)\cap \Supp(t_2)$ for all $t_1,t_2\in \cat T$; see \cite[Theorem 8.2]{bhs1}. It would be natural to guess that costratification would promote the half-hom theorem (\cref{prop:halfhom}) to a full hom-theorem: $\Cosupp(\ihom{t_1,t_2})=\Supp(t_1)\cap \Cosupp(t_2)$. Perhaps surprisingly, \cref{thm:strat_cosupport} shows that this full hom-theorem is actually equivalent to \emph{stratification}.
\end{Rem}

\begin{Cor}\label{cor:cosupp-of-perp}
	If $\cat T$ is stratified then:
	\begin{enumerate}
		\item $\cat L^\perp=\InvCosupp{\Supp(\cat L)^c}$ for every localizing ideal $\cat L$.
		\item ${}^\perp \cat C = \InvSupp{\Cosupp(\cat C)^c}$ for every colocalizing coideal $\cat C$.
	\end{enumerate}
\end{Cor}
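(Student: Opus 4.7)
The plan is to deduce both identities directly from the full-$\ihomname$ formula established in \cref{thm:strat_cosupport}, combined with the codetection property.

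First I would observe that if $\cat T$ is stratified, then it satisfies the local-to-global principle (by definition of stratification, or equivalently by \cite[Theorem 4.1]{bhs1}), which by \cref{thm:LGP-equiv} is equivalent to the codetection property. Hence under our hypothesis, for any object $u \in \cat T$ we have $u = 0$ if and only if $\Cosupp(u) = \emptyset$. Moreover, since stratification implies LGP, \cref{thm:strat_cosupport} applies to give the full-$\ihomname$ formula
\[
    \Cosupp\ihom{s,t} = \Supp(s) \cap \Cosupp(t)
\]
for all $s,t \in \cat T$.

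For part (a), I would unwind the definition: $t \in \cat L^\perp$ if and only if $\ihom{s,t} = 0$ for every $s \in \cat L$. By codetection this is equivalent to $\Cosupp\ihom{s,t} = \emptyset$ for every $s \in \cat L$, and then by the full-$\ihomname$ formula equivalent to $\Supp(s) \cap \Cosupp(t) = \emptyset$ for every $s \in \cat L$. Taking the union over $s \in \cat L$ and recalling that $\Supp(\cat L) = \bigcup_{s \in \cat L}\Supp(s)$, this reduces to $\Supp(\cat L) \cap \Cosupp(t) = \emptyset$, i.e., $\Cosupp(t) \subseteq \Supp(\cat L)^c$. This is exactly the condition $t \in \InvCosupp{\Supp(\cat L)^c}$.

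For part (b), the argument is symmetric: $t \in {}^\perp \cat C$ if and only if $\ihom{t,s} = 0$ for every $s \in \cat C$, equivalently (by codetection) $\Cosupp\ihom{t,s} = \emptyset$ for every $s \in \cat C$, equivalently (by the full-$\ihomname$ formula) $\Supp(t) \cap \Cosupp(s) = \emptyset$ for every $s \in \cat C$. Taking the union over $s \in \cat C$ yields $\Supp(t) \cap \Cosupp(\cat C) = \emptyset$, i.e., $\Supp(t) \subseteq \Cosupp(\cat C)^c$, which is the condition $t \in \InvSupp{\Cosupp(\cat C)^c}$. No serious obstacle arises here; the entire content of the corollary is just the translation of the vanishing of internal homs into a geometric condition via the full-$\ihomname$ formula, with codetection providing the crucial passage from $\Cosupp = \emptyset$ to vanishing.
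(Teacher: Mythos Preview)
Your proof is correct and follows essentially the same approach as the paper: both derive the identities by combining codetection (obtained from stratification via \cref{thm:LGP-equiv}) with the full-$\ihomname$ formula of \cref{thm:strat_cosupport}, unwinding the definition of the orthogonal in each case.
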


\begin{proof}
	Stratification implies the local-to-global principle by \cite[Theorem~4.1]{bhs1} and hence also the codetection property by \cref{thm:LGP-equiv}. Hence we can use \cref{thm:strat_cosupport} to observe that
	\begin{align*}
		\cat L^\perp &= \SET{t \in \cat T}{\ihom{s,t}=0 \;\text{ for all }  s \in \cat L}\\
		&= \SET{t \in \cat T}{\Cosupp \ihom{s,t}=\emptyset \;\text{ for all }  s \in \cat L}\\
		&= \SET{t \in \cat T}{\Supp(s) \cap \Cosupp(t) = \emptyset \; \text{ for all }  s \in \cat L}\\
		&= \InvCosupp{\Supp(\cat L)^c}
	\end{align*}
	and similarly
	\begin{align*}
		{}^\perp \cat C &= \SET{s\in \cat T}{\ihom{s,t} = 0 \;\text{ for all }  t \in \cat C}\\
		&= \SET{s\in \cat T}{\Cosupp \ihom{s,t} = \emptyset \;\text{ for all }  t \in \cat C}\\
		&= \SET{ s \in \cat T}{\Supp(s)\cap \Cosupp(t) = \emptyset \; \text{ for all }  t \in \cat C}\\
		&= \InvSupp{\Cosupp(\cat C)^c}.\qedhere
	\end{align*}
\end{proof}

\begin{Thm}\label{thm:costrat_implies_strat}
	If $\cat T$ is costratified, then it is also stratified. 
\end{Thm}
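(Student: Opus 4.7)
The plan is to use the characterizations of stratification and costratification recalled in the excerpt to reduce the statement to the single nontrivial input: minimality of the stalk $\GammaP\cat T$ at every point $\cat P$. By \Cref{thm:equiv-costrat}, costratification of $\cat T$ implies the colocal-to-global principle as well as cominimality, i.e., minimality of $\LambdaP\cat T$ for every $\cat P \in \Spc(\cat T^c)$. Applying \Cref{thm:LGP-equiv}, the colocal-to-global principle upgrades to the local-to-global principle for localizing ideals. Therefore, via the characterization of stratification in \cite[Theorem~4.1]{bhs1}, it remains only to show that $\GammaP\cat T$ is a minimal localizing ideal for every $\cat P \in \Spc(\cat T^c)$.

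To verify minimality of $\GammaP\cat T$, I plan to invoke the Benson--Iyengar--Krause criterion (\Cref{lem:minimality_crit}): it suffices to prove $\ihom{t_1,t_2}\neq 0$ for any two nonzero objects $t_1,t_2\in \GammaP\cat T$. The key observation is that, since $t_1=\GammaP t_1$, adjunction yields
\[
  \ihom{t_1,t_2} \simeq \ihom{\GammaP t_1,t_2} \simeq \ihom{t_1,\LambdaP t_2},
\]
which converts the question from one about the stalk into one about the costalk.

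The main step is then to leverage cominimality of $\LambdaP\cat T$. Suppose, for contradiction, that $\ihom{t_1,\LambdaP t_2}=0$. Then $t_1 \in {}^\perp \{\LambdaP t_2\} = {}^\perp \Coloco{\LambdaP t_2}$ by \eqref{eq:[t,coloc]}. Now $t_2\in \GammaP\cat T$ and $t_2\ne 0$ give $\cat P \in \Supp(t_2) \subseteq \Cosupp(t_2)$ by \Cref{lem:pointsuppcosupp}(a), hence $\LambdaP t_2 \neq 0$; by minimality of $\LambdaP\cat T$ this forces $\Coloco{\LambdaP t_2}=\LambdaP\cat T$, so that $t_1 \in {}^\perp \LambdaP\cat T$. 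But this contradicts the fact that $\LambdaP t_1 \in \LambdaP\cat T$ and
\[
  \ihom{t_1,\LambdaP t_1} \simeq \ihom{\GammaP t_1,t_1} \simeq \ihom{t_1,t_1} \neq 0,
\]
the last inequality holding since $t_1\ne 0$. Hence $\ihom{t_1,t_2}\ne 0$, so $\GammaP\cat T$ is minimal, completing the reduction.

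The only step where one has to be slightly careful is the use of cominimality: one must convert the hypothesis $\ihom{t_1,\LambdaP t_2}=0$, which is about a single object, into the statement $t_1\in{}^\perp\LambdaP\cat T$, which requires both \eqref{eq:[t,coloc]} (to pass from ${}^\perp\{\LambdaP t_2\}$ to ${}^\perp\Coloco{\LambdaP t_2}$) and the minimality of $\LambdaP\cat T$ itself. Once this passage is made, the contradiction is immediate by pairing $t_1$ against $\LambdaP t_1\in \LambdaP\cat T$. Everything else is a direct invocation of the structural results established earlier in the section.
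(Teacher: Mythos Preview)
Your proof is correct and follows essentially the same route as the paper's own argument: reduce to minimality of $\GammaP\cat T$ via \Cref{thm:LGP-equiv} and \cite[Theorem~4.1]{bhs1}, then apply \Cref{lem:minimality_crit} by passing from $\ihom{t_1,t_2}$ to $\ihom{t_1,\LambdaP t_2}$, using \Cref{lem:pointsuppcosupp} to see $\LambdaP t_2\neq 0$, invoking cominimality, and deriving a contradiction from $\ihom{t_1,\LambdaP t_1}\neq 0$. The only cosmetic difference is that you phrase the final step as a contradiction via ${}^\perp\LambdaP\cat T$, whereas the paper argues directly that $\LambdaP t_1\in\Coloco{\LambdaP t_2}$ forces $\ihom{t_1,\LambdaP t_2}\neq 0$ by \eqref{eq:[t,coloc]}.
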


\begin{proof}
	Since the colocal-to-global principle is equivalent to the local-to-global principle by \cref{thm:LGP-equiv}, it suffices by \cite[Theorem 4.1]{bhs1} and \cref{thm:equiv-costrat} to establish that $\GammaP\cat T$ is minimal for all $\cat P \in \Spc(\cat T^c)$. For this we follow the proof of \cite[Theorem 9.7]{BensonIyengarKrause12} by invoking \cref{lem:minimality_crit}. To this end, we fix two nonzero objects $t_1,t_2 \in \GammaP\cat T$ and show that $\ihom{t_1,t_2}\neq 0$. Since $t_1\in \GammaP\cat T$, we have that $\GammaP t_1\simeq t_1$ by \Cref{prop:local-cats-are-ok}. Then by \Cref{rem:formulalambdaHom}, $\ihom{t_1,t_2}\simeq \ihom{t_1,\LambdaP t_2}$. Since $t_2\in \GammaP \cat T$, by \cref{lem:pointsuppcosupp} we have that $\cat P \in \Cosupp(t_2)$, that is, $\LambdaP t_2\neq 0$.  Because~$\cat T$ is costratified, the cominimality of $\LambdaP \cat T$ implies that $\LambdaP t_1\in \Coloco{\LambdaP t_2}$. To conclude, we observe that because $t_1 \ne 0$, we have ${0 \ne \ihom{t_1,t_1}} \simeq \ihom{t_1,\LambdaP t_1}$, which by \eqref{eq:[t,coloc]} is only possible if $0 \neq \ihom{t_1,\LambdaP t_2}\simeq \ihom{t_1,t_2}$. 
\end{proof}

\begin{Cor}\label{cor:costrat-perps}
	If $\cat T$ is costratified, then the map sending a subcategory $\cat L$ to $\cat L^{\perp}$ induces a bijection
	\[
		\{ \text{localizing ideals of $\cat T$} \} \xrightarrow{\sim} \{\text{colocalizing coideals of $\cat T$}\}
	\]
	The inverse map sends $\cat C$ to ${}^{\perp} \cat C$. 
\end{Cor}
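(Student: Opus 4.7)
The plan is to exhibit the desired bijection as the composition of the stratification bijection, the complementation involution on subsets of $\Spc(\cat T^c)$, and the inverse of the costratification bijection, and then to identify this composition with the orthogonality operations by means of \Cref{cor:cosupp-of-perp}.

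More precisely, since $\cat T$ is costratified, \Cref{thm:costrat_implies_strat} ensures that $\cat T$ is also stratified. Hence both assignments
\[
	\Supp\colon \{\text{localizing ideals of $\cat T$}\} \xrightarrow{\sim} \{\text{subsets of $\Spc(\cat T^c)$}\}
\]
and
\[
	\Cosupp\colon \{\text{colocalizing coideals of $\cat T$}\} \xrightarrow{\sim} \{\text{subsets of $\Spc(\cat T^c)$}\}
\]
are bijections, with inverses $Y \mapsto \InvSupp{Y}$ and $Y \mapsto \InvCosupp{Y}$ respectively. Composing the first with complementation and then with the inverse of the second yields a bijection
\[
	\Phi\colon \cat L \longmapsto \InvCosupp{\Supp(\cat L)^c},
\]
whose inverse is $\Psi\colon \cat C \mapsto \InvSupp{\Cosupp(\cat C)^c}$.

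The key step is to identify $\Phi$ and $\Psi$ with the orthogonality operations. For this, I would invoke \Cref{cor:cosupp-of-perp}, which, under the hypothesis that $\cat T$ is stratified, gives precisely the formulas
\[
	\cat L^\perp = \InvCosupp{\Supp(\cat L)^c} \quad \text{and} \quad {}^\perp \cat C = \InvSupp{\Cosupp(\cat C)^c}
\]
for every localizing ideal $\cat L$ and colocalizing coideal $\cat C$. This identifies $\Phi(\cat L) = \cat L^\perp$ and $\Psi(\cat C) = {}^\perp \cat C$, so the map $\cat L \mapsto \cat L^\perp$ is a bijection with inverse $\cat C \mapsto {}^\perp \cat C$, as claimed.

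The argument is essentially a bookkeeping exercise once \Cref{thm:costrat_implies_strat} and \Cref{cor:cosupp-of-perp} are in hand; there is no serious obstacle, and the only subtlety is to be mindful that the applicability of \Cref{cor:cosupp-of-perp} requires stratification, which we obtain from costratification via \Cref{thm:costrat_implies_strat}.
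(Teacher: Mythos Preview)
Your proof is correct and follows essentially the same approach as the paper: both use \Cref{thm:costrat_implies_strat} to get stratification, compose the two bijections with complementation, and then invoke \Cref{cor:cosupp-of-perp} to identify the resulting bijection with $\cat L \mapsto \cat L^\perp$ and its inverse with $\cat C \mapsto {}^\perp \cat C$. The paper's version is simply more terse.
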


\begin{proof}
	Costratification and stratification (\cref{thm:costrat_implies_strat}) provide bijections of both sets with the set of all subsets of $\Spc(\cat T^c)$. The map which sends a subset to its complement then induces a bijection from the set of localizing ideals to the set of colocalizing ideals which, by \cref{cor:cosupp-of-perp}, is given by $\cat L \mapsto \cat L^\perp$ with inverse~$\cat C\mapsto { }^\perp \cat C$.
\end{proof}

\begin{Prop}\label{prop:not-strictly-colocalizing}
	If $\cat T$ is stratified but not costratified, then $\cat T$ has a colocalizing coideal which is not strictly colocalizing.
\end{Prop}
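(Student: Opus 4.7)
The plan is to prove the contrapositive: assume $\cat T$ is stratified and that every colocalizing coideal of $\cat T$ is strictly colocalizing, and deduce that $\cat T$ is costratified. Since $\cat T$ is stratified and the map $\cat C\mapsto \Cosupp(\cat C)$ is always surjective (\cref{cor:cosupp_surj}), the task reduces to showing that $\Cosupp$ is injective on colocalizing coideals, i.e.\ that every colocalizing coideal $\cat C$ equals $\InvCosupp{\Cosupp(\cat C)}$.

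The strategy is to combine the hypothesis with the two orthogonality formulas provided by stratification (\cref{cor:cosupp-of-perp}). Under the strict-colocalization assumption, every colocalizing coideal $\cat C$ satisfies $\cat C = ({}^\perp\cat C)^\perp$ by \cref{rem:strictly-localizing}. Applying \cref{cor:cosupp-of-perp}(b) rewrites the inner orthogonal as ${}^\perp\cat C = \InvSupp{\Cosupp(\cat C)^c}$, which is a localizing ideal. Stratification then guarantees $\Supp({}^\perp\cat C) = \Cosupp(\cat C)^c$, since $\Supp\circ\,\InvSupp{-}$ is the identity on subsets of $\Spc(\cat T^c)$ (this is immediate from the fact that $\Supp$ is a bijection with inverse $Y\mapsto \InvSupp{Y}$). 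Feeding this back into \cref{cor:cosupp-of-perp}(a) applied to the localizing ideal ${}^\perp\cat C$ yields
\[
\cat C \;=\; ({}^\perp\cat C)^\perp \;=\; \InvCosupp{\Supp({}^\perp\cat C)^c} \;=\; \InvCosupp{\Cosupp(\cat C)},
\]
which is exactly what we needed.

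The argument is essentially formal once \cref{cor:cosupp-of-perp} is in hand, so I do not anticipate a substantive obstacle; the only ingredient that deserves a moment's care is confirming that stratification indeed forces $\Supp(\InvSupp{Y}) = Y$ for every subset $Y$, but this follows directly from the definition of stratification as the bijectivity of the support map. Thus the contrapositive is established, and the proposition follows.
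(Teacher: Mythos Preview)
Your proof is correct and takes essentially the same approach as the paper: both arguments hinge on the identity $\cat C = ({}^\perp\cat C)^\perp$ for strictly colocalizing coideals together with the two orthogonality formulas of \cref{cor:cosupp-of-perp}. The paper frames it as a direct argument (exhibit a bad coideal and show it cannot be strictly colocalizing) while you phrase it as the contrapositive, but the underlying computation is the same.
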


\begin{proof}
	A strictly colocalizing coideal must be of the form $\cat L^\perp$ for a localizing ideal~$\cat L$ (see \Cref{rem:strictly-localizing}). If $\cat T$ is stratified then for any $Y \subseteq \Spc(\cat T^c)$, we have
		\[
			(\InvSupp{Y})^\perp = (\Loco{\gP \mid \cat P \in Y})^\perp = \SET{\gP}{\cat P \in Y}^\perp = \InvCosupp{Y^c}.
		\]
	Now, since $\cat T$ is not costratified, there is some colocalizing coideal $\cat C$ such that $\cat C \neq \InvCosupp{\Cosupp(\cat C)}$. But if $\cat C$ was strictly colocalizing then
		\[
			\cat C = \cat L^\perp = (\InvSupp{\Supp(\cat L)})^\perp = \InvCosupp{\Supp(\cat L)^c}
		\]
		and $\Supp(\cat L)^c=\Supp({}^\perp\cat C)^c=\Cosupp(\cat C)$ by \cref{cor:cosupp-of-perp}(b), so that  $\cat C=\InvCosupp{\Cosupp(\cat C)}$.
\end{proof}

\begin{Rem}\label{rem:does-strat-imply-costrat}
	We do not know whether stratification implies costratification. This seems to be a difficult question. For example, suppose we could find an example~$\cat T$ which is stratified but not costratified. By \cref{prop:not-strictly-colocalizing}, this category would have a colocalizing coideal which is not strictly colocalizing. Assuming the counterexample arises from a combinatorial model, this would prove (\cref{rem:vopenka}) that Vopĕnka's principle is inconsistent with ZFC. Whether that is likely or unlikely is beyond our expertise.
\end{Rem}

\begin{Rem}\label{rem:vopenka-positive}
	On the other hand, if Vopĕnka's principle is taken as an axiom then stratification implies costratification for any example that arises from a stable combinatorial model. Indeed, in this case every colocalizing coideal $\cat C$ of $\cat T=\Ho(\cat M)$ is strictly colocalizing (\cref{rem:vopenka}). Hence $\cat C=({}^\perp\cat C)^\perp$. If $\cat T$ is stratified then $({ }^\perp \cat C)^\perp =\InvCosupp{\Cosupp(\cat C)}$ by \cref{cor:cosupp-of-perp} and costratification follows.
\end{Rem}

\begin{Rem}
	We can summarize the situation with the diagram displayed on page~\pageref{eq:big-figure}. It assembles the implications of \cref{prop:codetection-implies-detection}, \cref{thm:LGP-equiv}, \cref{thm:costrat_implies_strat}, along with \cref{exa:detect-but-not-codetect} and \cite[Example 4.6]{bhs1}.
\end{Rem}

\section{The geometry of cosupport}\label{sec:geometry-of-cosupport}

We have already seen some relations between support and cosupport in the previous sections. We now dig deeper into the somewhat mysterious relationship between the support and cosupport of a given object. Throughout this section~$\cat T$ will denote a rigidly-compactly generated tt-category whose spectrum~$\Spc(\cat T^c)$ is weakly noetherian.

\begin{Prop}\label{prop:intersect-with-thomason}
	Assume that codetection holds. Let $Y \subseteq \Spc(\cat T^c)$ be a Thomason subset. Then for any $t \in \cat T$ we have:
		\[
			\Supp(t) \cap Y = \emptyset \text{ if and only if } \Cosupp(t) \cap Y = \emptyset.
		\]
\end{Prop}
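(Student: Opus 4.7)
The plan is to establish the chain of equivalences
\[
	\Supp(t) \cap Y = \emptyset
	\iff e_Y \otimes t = 0
	\iff \ihom{e_Y,t} = 0
	\iff \Cosupp(t) \cap Y = \emptyset,
\]
each link of which is a direct consequence of material already developed. This recasts the statement as an elementary vanishing result for the smashing localization at the Thomason subset $Y$.

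For the outer equivalences, first I would record the identities $\Supp(e_Y \otimes t) = \Supp(t) \cap Y$ and $\Cosupp(\ihom{e_Y,t}) = \Cosupp(t) \cap Y$. The former follows from $\gP \otimes e_Y = g_{\{\cat P\} \cap Y}$ (see \cref{rem:W-intersection} and \cref{ex:W-intersection}), which equals $\gP$ when $\cat P \in Y$ and vanishes otherwise. The latter is exactly \cref{lem:cosuppcoloc}(a). The codetection hypothesis, combined with \cref{prop:codetection-implies-detection} which then supplies the detection property, converts the emptiness of these (co)support sets into the vanishing of $e_Y \otimes t$ and of $\ihom{e_Y,t}$, respectively.

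The central equivalence $e_Y \otimes t = 0 \iff \ihom{e_Y,t} = 0$ is an instance of the Matlis--Greenlees--May style equivalence $e_Y \otimes \cat T \cong \ihom{e_Y,\cat T}$ recorded in \cref{rem:smashing-recollement}. The key computation is $\ihom{e_Y,t} \simeq \ihom{e_Y, e_Y \otimes t}$, obtained by applying $\ihom{e_Y,-}$ to the localization triangle $e_Y \otimes t \to t \to f_Y \otimes t$ and noting that $\ihom{e_Y, f_Y \otimes t} = 0$, since $f_Y \otimes t \in \ihom{f_Y,\cat T} = \cat L^\perp$ is right orthogonal to $e_Y \in \cat L$. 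Under the equivalence \eqref{eq:smashing-diagram}, the object $e_Y \otimes t \in e_Y \otimes \cat T$ therefore corresponds to $\ihom{e_Y, e_Y \otimes t} \simeq \ihom{e_Y,t} \in \ihom{e_Y,\cat T}$, so one vanishes precisely when the other does.

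The main obstacle is minor; every ingredient is already present in the preceding sections. The only mildly delicate point is keeping track of the identification $e_Y \otimes t \leftrightarrow \ihom{e_Y,t}$ under the Matlis--Greenlees--May equivalence, but this is immediate from the triangle computation above.
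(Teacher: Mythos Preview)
Your proof is correct and follows the same overall chain of equivalences as the paper. The only difference is in the central step $e_Y \otimes t = 0 \iff \ihom{e_Y,t} = 0$: you invoke the Matlis--Greenlees--May equivalence $e_Y \otimes \cat T \cong \ihom{e_Y,\cat T}$ directly, while the paper passes through the compact generators of $\cat T_Y$, using that $\ihom{e_Y,t}=0 \iff \ihom{x,t}=0$ for all $x \in \cat T^c_Y$, then that $\ihom{x,t} \cong x^\vee \otimes t$ for dualizable $x$, and finally that $x \otimes t = 0$ for all $x \in \cat T^c_Y$ iff $e_Y \otimes t = 0$. Your route is slightly more conceptual; the paper's is marginally more elementary in that it avoids citing the full smashing recollement and instead uses only dualizability of compacts. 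Both arguments are equally short and essentially interchangeable here.
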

\begin{proof}
	The codetection property implies the detection property by \cref{prop:codetection-implies-detection}. Then observe that
		\begin{align*}
			\Cosupp(t) \cap Y = \emptyset	&\Longleftrightarrow \Cosupp(\ihom{e_Y,t}) = \emptyset  && \text{(\cref{lem:cosuppcoloc})}\\
											&\Longleftrightarrow \ihom{e_Y,t}=0 &&\text{(codetection)}\\
											&\Longleftrightarrow \ihom{x,t}=0 \text{ for all } x \in \cat T^c_Y &&\\
											&\Longleftrightarrow x\otimes t=0 \text{ for all } x \in \cat T^c_Y &&\\
											&\Longleftrightarrow e_Y \otimes t=0 &&\\
											&\Longleftrightarrow \Supp(e_Y \otimes t)=\emptyset &&\text{(detection)}\\
											&\Longleftrightarrow \Supp(t)\cap Y=\emptyset &&%\text{(\cite[Thm.~2.13]{bhs1})}
		\end{align*}
	which establishes the claim.
\end{proof}

\begin{Rem}
	The next theorem (and its corollary) expresses a fundamental relation between support and cosupport, generalizing \cite[Theorem 4.13]{BensonIyengarKrause12}. The statement uses \cref{not:vis} and \cref{not:min}.
\end{Rem}

\begin{Thm}\label{thm:general-min}
	Assume that codetection holds. Then for any $t \in \cat T$ we have
		\[
			\Vis \cat T \cap \min \Supp(t) = \Vis \cat T \cap \min \Cosupp(t).
		\]
\end{Thm}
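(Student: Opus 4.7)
The plan is to prove both inclusions using \cref{prop:intersect-with-thomason} (which requires codetection, our standing hypothesis), exploiting the fact that for a visible point the sets $\overline{\{\cat P\}}$ and $\gen(\cat P)^c$ (and hence their intersection) are all Thomason. By symmetry it suffices to do one inclusion, so suppose $\cat P\in\Vis\cat T\cap\min\Supp(t)$; the goal is to show $\cat P\in\min\Cosupp(t)$.

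First I would pin down the relevant Thomason subsets. Since $\cat P$ is visible, $Y_1\coloneqq\overline{\{\cat P\}}$ is Thomason. Since $\cat P$ is in particular weakly visible, \cref{rem:good-Thomasons} tells us that $Y_2\coloneqq\gen(\cat P)^c$ is also Thomason. In a spectral space, finite intersections of Thomason subsets are Thomason (this is the reason one defines Thomason sets via quasi-compact complements; equivalently in the tt-geometric setting, $\supp(x)\cap\supp(y)=\supp(x\otimes y)$). Thus $Z\coloneqq Y_1\cap Y_2=\overline{\{\cat P\}}\cap\gen(\cat P)^c$ is Thomason, and we have the set-theoretic decomposition $\overline{\{\cat P\}}=\{\cat P\}\sqcup Z$.

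Next I would run the support/cosupport exchange twice. On the one hand, the minimality hypothesis gives $\Supp(t)\cap\overline{\{\cat P\}}=\{\cat P\}$, which is nonempty, so \cref{prop:intersect-with-thomason} applied to $Y_1$ yields
\[
\Cosupp(t)\cap\overline{\{\cat P\}}\neq\emptyset.
\]
On the other hand, $\Supp(t)\cap Z=\{\cat P\}\cap\gen(\cat P)^c=\emptyset$, so \cref{prop:intersect-with-thomason} applied to $Z$ gives $\Cosupp(t)\cap Z=\emptyset$; combined with the decomposition of $\overline{\{\cat P\}}$ this forces $\Cosupp(t)\cap\overline{\{\cat P\}}\subseteq\{\cat P\}$. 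Together these show $\Cosupp(t)\cap\overline{\{\cat P\}}=\{\cat P\}$, which witnesses both $\cat P\in\Cosupp(t)$ and the minimality of $\cat P$ in $\Cosupp(t)$, so $\cat P\in\min\Cosupp(t)$. The reverse inclusion is literally the same argument with the roles of $\Supp$ and $\Cosupp$ swapped, since \cref{prop:intersect-with-thomason} is symmetric in the two.

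The only step that is not completely formal is verifying that $\gen(\cat P)^c$ is Thomason and that $Z$ is Thomason, but both are handed to us: the former by \cref{rem:good-Thomasons} (which only uses weak visibility), and the latter because Thomason subsets are closed under finite intersections. I do not expect any subtle obstacle beyond correctly harvesting these two topological facts and invoking \cref{prop:intersect-with-thomason} once in each direction.
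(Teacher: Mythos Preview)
Your proof is correct and essentially identical to the paper's own argument: both apply \cref{prop:intersect-with-thomason} to the two Thomason subsets $\overline{\{\cat P\}}$ and $\overline{\{\cat P\}}\setminus\{\cat P\}=\overline{\{\cat P\}}\cap\gen(\cat P)^c$, and then read off the characterization $\cat P\in\min V\iff V\cap\overline{\{\cat P\}}=\{\cat P\}$. The paper phrases the two applications as a pair of biconditionals (so that both inclusions come out simultaneously), whereas you do one inclusion and appeal to symmetry, but the mathematical content is the same.
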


\begin{proof}
	Let $\cat P$ be a visible point. Then $\singP = \closureP \cap \gen(\cat P)$ and $\closureP \setminus \singP = {\closureP \cap \gen(\cat P)^c}$ is the intersection of two Thomason subsets and hence is itself Thomason. Applying \cref{prop:intersect-with-thomason} to the Thomason $\closureP \setminus \singP$ yields
		\[
			\Supp(t) \cap \overbar{\{\cat P\}} \subseteq \{\cat P\} \Longleftrightarrow \Cosupp(t) \cap \overbar{\{\cat P\}} \subseteq \{\cat P\}.
		\]
	It follows, by invoking \cref{prop:intersect-with-thomason} again for the Thomason $\closureP$, that
		\[
			\Supp(t) \cap \overbar{\{\cat P\}} = \{\cat P\} \Longleftrightarrow \Cosupp(t) \cap \overbar{\{\cat P\}} = \{\cat P\}.
		\]
	Recall that $\cat P \in \min V$ means, by definition, that $V \cap \overbar{\{\cat P\}} = \{\cat P\}$ so we are done.
\end{proof}

\begin{Cor}\label{cor:min}
	If $\cat T$ has noetherian spectrum and $t \in \cat T$, then 
		\[
			\min\Supp(t) = \min\Cosupp(t).
		\]
\end{Cor}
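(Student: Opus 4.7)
The plan is to deduce this directly from \cref{thm:general-min}, which gives the identity $\Vis \cat T \cap \min \Supp(t) = \Vis \cat T \cap \min \Cosupp(t)$ under the hypothesis that codetection holds. So the only thing to verify is that the two hypotheses of that theorem are automatically satisfied when $\Spc(\cat T^c)$ is noetherian, and that the intersection with $\Vis \cat T$ in the conclusion can be removed.

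First I would observe that in a noetherian spectral space every point is visible: the closure of any singleton is a closed set, and since $X$ is noetherian, every closed subset is Thomason. Thus $\Vis \cat T = \Spc(\cat T^c)$, so $\Vis \cat T \cap \min \Supp(t) = \min \Supp(t)$ and likewise for cosupport. Next, I would invoke \cref{cor:noetherian_local_global}, which already establishes that noetherian spectrum implies the (co)local-to-global principle; equivalently, by \cref{thm:LGP-equiv}, codetection holds. With both hypotheses of \cref{thm:general-min} in hand, the claim is immediate.

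There is no real obstacle here; the work has already been done in the preceding theorem. The only thing worth double-checking is that the implication ``noetherian $\Rightarrow$ every point visible'' is used correctly, which is already recorded in the paper (cf.\ the remark after the definition of $\Vis$ citing \cite[Corollary 7.14]{BalmerFavi11}). So the proof is essentially a one-line reduction to \cref{thm:general-min} combined with \cref{cor:noetherian_local_global}.
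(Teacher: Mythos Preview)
Your proposal is correct and matches the paper's own proof essentially line for line: the paper also invokes \cref{cor:noetherian_local_global} and \cref{thm:LGP-equiv} to obtain codetection, then uses that every point of a noetherian spectral space is visible so that $\Vis\cat T=\Spc(\cat T^c)$, and concludes from \cref{thm:general-min}.
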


\begin{proof}
	Since the spectrum is noetherian, the colocal-to-global principle holds (by \cref{cor:noetherian_local_global}) and hence the codetection property holds (by \cref{thm:LGP-equiv}). Moreover, a spectral space is noetherian if and only if each of its points is visible, by \cite[Proposition~7.13]{BalmerFavi11}; see also \cite[Remark~2.2]{bhs1}. Hence the result follows immediately from \cref{thm:general-min}.
\end{proof}

\begin{Rem}\label{rem:hausdorff}
	Recall that a spectral space has Krull dimension zero if and only if its specialization order is trivial if and only if it is $T_1$ if and only if it is Hausdorff if and only if it is profinite (see, e.g., \cite[Remark 2.4]{bhs1}). In this case we have:
\end{Rem}

\begin{Cor}\label{cor:zero-dimensional}
	If the spectrum of $\cat T$ is zero-dimensional and $\cat T$ satisfies the codetection property then for any $t \in \cat T$ we have
		\[
			\Vis \cat T \cap \Supp(t) = \Vis \cat T \cap \Cosupp(t).
		\]
\end{Cor}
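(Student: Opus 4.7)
The plan is to reduce this corollary directly to \Cref{thm:general-min}, exploiting the triviality of the specialization order in a zero-dimensional spectral space. By \Cref{rem:hausdorff}, zero-dimensionality of $\Spc(\cat T^c)$ is equivalent to its specialization order being trivial; that is, $\overbar{\{\cat P\}} = \{\cat P\}$ for every point $\cat P \in \Spc(\cat T^c)$.

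First, I would observe that for any subset $V \subseteq \Spc(\cat T^c)$, the trivial specialization order forces $\min V = V$. Indeed, unwinding \Cref{not:min}, a point $x \in V$ lies in $\min V$ iff $\overbar{\{x\}} \cap V = \{x\}$, and in our setting $\overbar{\{x\}} = \{x\}$ holds automatically. Applying this to both $V = \Supp(t)$ and $V = \Cosupp(t)$ gives
\[
	\min \Supp(t) = \Supp(t) \quad \text{and} \quad \min \Cosupp(t) = \Cosupp(t).
\]

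Then I would invoke \Cref{thm:general-min}, whose hypothesis (codetection) is satisfied by assumption, to conclude
\[
	\Vis \cat T \cap \Supp(t) = \Vis \cat T \cap \min \Supp(t) = \Vis \cat T \cap \min \Cosupp(t) = \Vis \cat T \cap \Cosupp(t),
\]
which is the desired identity. There is no real obstacle here; the work has already been done in \Cref{thm:general-min}, and the zero-dimensionality hypothesis is used solely to collapse $\min$ to the identity operation on subsets. (Note also that the codetection hypothesis is not automatic here, since a weakly noetherian zero-dimensional spectrum need not be noetherian, so we cannot just cite \Cref{cor:noetherian_local_global} as in the proof of \Cref{cor:min}.)
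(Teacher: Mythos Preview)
Your proof is correct and matches the paper's approach exactly: the paper also observes that triviality of the specialization order gives $\min W = W$ for every subset $W$, and then invokes \Cref{thm:general-min}.
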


\begin{proof}
	Since the specialization order is trivial, we have $\min W = W$ for any subset $W \subseteq \Spc(\cat T^c)$. Hence the statement follows from \cref{thm:general-min}.
\end{proof}

\begin{Exa}\label{exa:KInj}
	Let $\cat T=\KInj{kG}$ denote the homotopy category of complexes of injective $kG$-modules with $G$ a finite group and $k$ a field whose characteristic divides the order of $G$. In \cite[Example 11.1]{BensonIyengarKrause12} the authors prove that $\Cosupp(\unit)=\{\mathfrak m\}$ is a singleton, where $\mathfrak m$ is the unique maximal ideal in $H^*(G,k)$. This shows that the statement of \cref{cor:min} is, in the given generality, optimal.
\end{Exa}

\begin{Exa}
	Consider $\bbN^+$, the one-point compactification of the discrete set $\bbN$ of natural numbers with $\infty$ denoting the accumulation point, and let $R = C(\bbN^+,k)$ be the commutative ring of locally constant functions on $\bbN^+$ with values in a field~$k$. If $\Der(R)$ denotes the derived category of $R$-modules, we have homeomorphisms
		\[
			\Spc(\Der(R)^c) \cong \Spec(R) \cong \bbN^+.
		\]
	Note that this space is profinite, hence weakly noetherian and generically noetherian\footnote{Recall that a spectral space $X$ is said to be \emph{generically noetherian} if $\gen(x)$ is noetherian for every $x \in X$; see \cite[Definition 9.5]{bhs1}.} but not noetherian. The canonical inclusion $\bbN \subset \bbN^+$ exhibits $\bbN$ as a Thomason subset and we can consider $\LambdaN \unit = \ihom{e_{\bbN},\unit}$. We claim:
    \begin{enumerate}
        \item $\infty \in \Supp \LambdaN \unit$; and
        \item $\Cosupp \LambdaN \unit \subseteq \bbN$.
    \end{enumerate}
	Since $\min W = W$ for any subset $W \subseteq \bbN^+$, we deduce that 
		\[
			\min \Supp t \neq \min \Cosupp t 
		\]
	for $t = \LambdaN \unit \in \Der(R)$. Thus \cref{cor:min} does not hold without the noetherian assumption, leading to the more precise statement~\cref{thm:general-min}. Moreover, since $\Vis \Der(R)^c = \bbN$, this example also shows that the restriction to visible points
	in \cref{cor:zero-dimensional}
	is necessary.

	Claim (b) follows from \cref{lem:cosuppcoloc}, so it remains to prove Claim~(a). To this end, note that since the space is Hausdorff, $\gen(\infty)=\{\infty\}$ and $g_{\infty} = f_{\gen(\infty)^c} = f_{\bbN}$. Thus, $\infty \not\in\Supp \LambdaN\unit$ would mean that $f_{\bbN}\otimes \LambdaN\unit=0$. This would imply that the homotopy pullback square 
		\[\begin{tikzcd}
			\unit \ar[r] \ar[d] & \LambdaN \unit \ar[d] \\
			f_{\bbN} \ar[r] & f_{\bbN} \otimes \LambdaN \unit
		\end{tikzcd}\]
	degenerates to an isomorphism $\unit \simeq f_{\bbN} \oplus \LambdaN \unit$. By naturality, this would force $\LambdaN \unit \simeq e_{\bbN}$ and consequently a decomposition $\bbN^+ = \bbN \sqcup \{\infty\}$. This is a contradiction, so $\infty \in \Supp \LambdaN \unit$ as claimed.
	
	In fact, $\Cosupp \LambdaN \unit = \bbN$, since for any $n \in \bbN$, one can use the delta function $\delta_n \in C(\bbN^+,k)$ to construct a nontrivial map $g_n \to \unit$ in $\Der(R)$, bearing in mind \cref{exa:detect-but-not-codetect}. It then follows from \cref{thm:general-min} and (a) that $\Supp \LambdaN \unit = \bbN^+$.
\end{Exa}

\begin{Rem}
	The above examples show that $\Supp(t) \neq \Cosupp(t)$ in general. In fact, it turns out that the relationship between $\Supp(t)$ and $\Cosupp(t)$ has an interesting connection with the Tate construction.
\end{Rem}

\begin{Prop}\label{prop:tate}
	Let $Y\subseteq \Spc(\cat T^c)$ be a Thomason subset and let
		\[
			e_Y \to \unit \to f_Y \xrightarrow{\theta} \Sigma e_Y
		\]
	be the associated finite (co)localization. The following are equivalent:
	\begin{enumerate}
		\item	The internal hom $\ihom{f_Y,\Sigma e_Y}$ vanishes.
		\item	The map $\theta\colon f_Y \to \Sigma e_Y$ vanishes.
		\item	The Thomason $Y$ is both open and closed and the spectrum decomposes
					\[
						\Spc(\cat T^c) = Y \sqcup Y^c
					\]
				as a disjoint union of closed subsets.
		\item	The complement $Y^c$ is Thomason.
	\end{enumerate}
\end{Prop}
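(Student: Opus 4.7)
My plan is to establish the cyclic chain (a) $\Leftrightarrow$ (b) $\Rightarrow$ (c) $\Rightarrow$ (d) $\Rightarrow$ (b).

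The direction (a) $\Rightarrow$ (b) is immediate from the tensor-hom adjunction, since $\theta$ corresponds to an element of $\Hom_{\cat T}(\unit, \ihom{f_Y, \Sigma e_Y})$. For (b) $\Rightarrow$ (a), if $\theta = 0$ the idempotent triangle splits, giving $\unit \simeq e_Y \oplus f_Y$; since $e_Y \otimes f_Y = 0$ this promotes to a splitting of $\cat T$ as a product of tt-categories $e_Y \cat T \times f_Y \cat T$, in which $\ihom{f_Y, \Sigma e_Y}$ is computed factorwise and manifestly vanishes. The same product decomposition proves (b) $\Rightarrow$ (c): the Balmer spectrum of such a product is the disjoint union of the two spectra, which here are identified with the clopen subsets $\Supp(e_Y) = Y$ and $\Supp(f_Y) = Y^c$. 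For (c) $\Rightarrow$ (d), the Thomason subset $Y$ is then clopen in the quasi-compact space $\Spc(\cat T^c)$, hence itself quasi-compact open; its complement $Y^c$ is therefore closed with quasi-compact open complement, i.e.\ Thomason.

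The crux is (d) $\Rightarrow$ (b). Assuming $Y^c$ is Thomason, we gain access to a second idempotent triangle $e_{Y^c} \xrightarrow{\beta} \unit \to f_{Y^c}$, and \cref{ex:W-intersection} supplies $e_{Y^c} \otimes e_Y = e_{\emptyset} = 0$ together with $f_{Y^c} \otimes f_Y = f_{\Spc(\cat T^c)} = 0$. Writing $\alpha \colon \unit \to f_Y$ for the canonical map from the $Y$-triangle, tensoring the $Y$-triangle with $e_{Y^c}$ and the $Y^c$-triangle with $f_Y$ collapses each to canonical isomorphisms $\iota_1 \coloneqq e_{Y^c} \otimes \alpha \colon e_{Y^c} \xrightarrow{\sim} e_{Y^c} \otimes f_Y$ and $\iota_2 \coloneqq \beta \otimes f_Y \colon e_{Y^c} \otimes f_Y \xrightarrow{\sim} f_Y$. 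By bifunctoriality the composite iso $\varphi \coloneqq \iota_2 \circ \iota_1$ equals $\alpha \circ \beta$, so it factors through $\unit$. Consequently $s \coloneqq \beta \circ \varphi^{-1} \colon f_Y \to \unit$ satisfies $\alpha \circ s = \mathrm{id}_{f_Y}$, splitting the $Y$-triangle and forcing $\theta = 0$.

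The main obstacle is this last implication: each individual step is formal, but one has to observe that $\varphi \colon e_{Y^c} \xrightarrow{\sim} f_Y$ genuinely \emph{factors through} $\unit$—it is this factorization that yields an honest section of $\alpha$, as opposed to an abstract object-level identification between $e_{Y^c}$ and $f_Y$. The remaining implications are routine consequences of the recollement structure (\cref{rem:smashing-recollement}) and of basic spectral-space topology.
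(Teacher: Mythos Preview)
Your proof is correct. The paper itself does not prove this proposition; it simply cites \cite[Proposition~2.29]{PatchkoriaSandersWimmer22}. You have supplied a complete, self-contained argument.

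Your treatment of the key implication $(d)\Rightarrow(b)$ is clean: by producing the second idempotent triangle for $Y^c$ and exploiting the vanishings $e_Y\otimes e_{Y^c}=0$ and $f_Y\otimes f_{Y^c}=0$ from \cref{ex:W-intersection}, you identify the composite $\alpha\circ\beta\colon e_{Y^c}\to\unit\to f_Y$ with the isomorphism $\varphi$, and this immediately yields an explicit section of $\alpha$. This is exactly the kind of elementary argument one would hope for. Your handling of the remaining implications is also fine; for $(b)\Rightarrow(c)$ one could alternatively note that once $\unit\simeq e_Y\oplus f_Y$, both idempotents are compact, hence $Y=\supp(e_Y)$ and $Y^c=\supp(f_Y)$ are both Thomason closed---but your product-decomposition route works just as well.
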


\begin{proof}
	This is established by \cite[Proposition 2.29]{PatchkoriaSandersWimmer22}.
\end{proof}

\begin{Lem}\label{lem:cosupp-tate}
	Let $Y \subseteq \Spc(\cat T^c)$ be a Thomason subset. The following are equivalent:
	\begin{enumerate}
		\item $\Cosupp(e_Y) \subseteq Y$.
		\item $\Cosupp(\ihom{f_Y,\Sigma e_Y}) = \varnothing$.
	\end{enumerate}
\end{Lem}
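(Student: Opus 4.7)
The plan is to observe that this lemma follows almost immediately from the ``half-hom'' calculation recorded earlier as \cref{lem:cosuppcoloc}. Specifically, part~(b) of that lemma gives, for any Thomason subset $Y$ and any object $t \in \cat T$, the equality
\[
    \Cosupp(\ihom{f_Y,t}) = Y^c \cap \Cosupp(t).
\]

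I would apply this with $t = \Sigma e_Y$ to obtain
\[
    \Cosupp(\ihom{f_Y,\Sigma e_Y}) = Y^c \cap \Cosupp(\Sigma e_Y) = Y^c \cap \Cosupp(e_Y),
\]
where the last equality uses the suspension invariance axiom (b) of a cosupport theory (\cref{def:axiomaticcosupp}), which applies since $\Cosupp$ is a cosupport theory by \cref{prop:tt_cosupport}.

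The conclusion is then a trivial set-theoretic reformulation: the right-hand side vanishes if and only if $\Cosupp(e_Y) \subseteq Y$, which is exactly the equivalence of (a) and (b). There is no real obstacle here; the lemma is essentially a repackaging of \cref{lem:cosuppcoloc}(b) tailored to the setup of \cref{prop:tate}, presumably to be combined with that proposition in a subsequent result relating the vanishing of the Tate-type map $\theta\colon f_Y \to \Sigma e_Y$ to the geometric condition $\Cosupp(e_Y) \subseteq Y$.
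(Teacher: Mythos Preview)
Your proposal is correct and matches the paper's proof essentially verbatim: the paper also applies \cref{lem:cosuppcoloc} to compute $\Cosupp(\ihom{f_Y,\Sigma e_Y}) = Y^c \cap \Cosupp(e_Y)$ and concludes immediately. The only difference is that you make the use of suspension invariance explicit, which the paper leaves implicit.
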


\begin{proof}
	By \cref{lem:cosuppcoloc}, we have
		\[
			\Cosupp(\ihom{f_Y,\Sigma e_Y}) = Y^c \cap \Cosupp(e_Y)
		\]
	and hence $(a)$ is equivalent to $(b)$.
\end{proof}

\begin{Cor}\label{cor:cosupp-spc-disjoint}
	Assume that the codetection property holds and let $Y \subseteq \Spc(\cat T^c)$ be a Thomason subset. Then $\Cosupp(e_Y) \subseteq Y$ if and only if there is a decomposition $\Spc(\cat T^c) = Y \sqcup Y^c$ as a disjoint union of closed subsets.
\end{Cor}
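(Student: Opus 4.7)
The plan is to chain the two preceding results, using the codetection hypothesis as the bridge. By \cref{lem:cosupp-tate}, the condition $\Cosupp(e_Y) \subseteq Y$ is equivalent to $\Cosupp(\ihom{f_Y, \Sigma e_Y}) = \varnothing$. The codetection property (\cref{def:codetection}) asserts precisely that empty cosupport detects the zero object, so this in turn is equivalent to $\ihom{f_Y, \Sigma e_Y} = 0$. Finally, by the equivalence of conditions $(a)$ and $(c)$ in \cref{prop:tate}, the vanishing $\ihom{f_Y, \Sigma e_Y} = 0$ is equivalent to the desired decomposition $\Spc(\cat T^c) = Y \sqcup Y^c$ as a disjoint union of closed subsets.

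In short, the proof is a one-line concatenation:
\[
    \Cosupp(e_Y) \subseteq Y
    \;\overset{\ref{lem:cosupp-tate}}{\Longleftrightarrow}\;
    \Cosupp(\ihom{f_Y,\Sigma e_Y}) = \varnothing
    \;\overset{\text{codetection}}{\Longleftrightarrow}\;
    \ihom{f_Y,\Sigma e_Y} = 0
    \;\overset{\ref{prop:tate}}{\Longleftrightarrow}\;
    \Spc(\cat T^c) = Y \sqcup Y^c.
\]
There is no genuine obstacle here since all of the substantive work has already been carried out: the geometric content lives in \cref{prop:tate}, the translation between $\Cosupp(e_Y)$ and the Tate-type internal hom lives in \cref{lem:cosupp-tate}, and the codetection hypothesis is invoked solely to replace the vanishing of cosupport by the vanishing of the object itself.
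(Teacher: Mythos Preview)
Your proof is correct and follows exactly the same approach as the paper's proof: invoke \cref{lem:cosupp-tate} together with codetection to obtain $\Cosupp(e_Y)\subseteq Y \Leftrightarrow \ihom{f_Y,\Sigma e_Y}=0$, then apply \cref{prop:tate}.
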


\begin{proof}
	Since we are assuming that the codetection property holds, \cref{lem:cosupp-tate} says that $\Cosupp(e_Y) \subseteq Y$ if and only if $\ihom{f_Y,\Sigma e_Y}=0$. The result then follows from \cref{prop:tate}.
\end{proof}

\begin{Prop}\label{prop:finite-discrete}
	Let $\cat T$ be a rigidly-compactly generated tt-category with $\Spc(\cat T^c)$ weakly noetherian. The following are equivalent:
	\begin{enumerate}
		\item The codetection property holds and $\Supp(t)=\Cosupp(t)$ for all $t \in \cat T$.
		\item $\Spc(\cat T^c)$ is a finite discrete space.
	\end{enumerate}
\end{Prop}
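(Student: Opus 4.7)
The plan is to prove both directions by exploiting \cref{cor:cosupp-spc-disjoint}, which turns cosupport containments into topological decomposition statements, combined with the weakly noetherian hypothesis.

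For $(b)\Rightarrow(a)$: If $\Spc(\cat T^c)$ is finite and discrete, every singleton $\{\cat P\}$ is clopen and hence Thomason with Thomason complement. Consequently $g_{\cat P}=e_{\{\cat P\}}=f_{\{\cat P\}^c}$, and applying \cref{prop:tate} to $Y=\{\cat P\}$ the idempotent triangle $e_{\{\cat P\}}\to\unit\to f_{\{\cat P\}}$ splits. Iterating over the finitely many points yields a product decomposition $\cat T\simeq\prod_{\cat P}\GammaP\cat T$, under which $\GammaP$ and $\LambdaP$ both agree with the projection onto the $\cat P$-th factor. From this description $\Supp(t)=\Cosupp(t)$ is the set of indices of the nonzero components of $t$, and codetection is immediate.

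For $(a)\Rightarrow(b)$: The key step is to show that every Thomason subset is clopen. Let $Y\subseteq\Spc(\cat T^c)$ be Thomason. Since $\Supp(e_Y)=Y$ by the standard properties of finite localization and we are assuming $\Supp=\Cosupp$, we obtain $\Cosupp(e_Y)=Y\subseteq Y$. Invoking \cref{cor:cosupp-spc-disjoint} (which uses codetection) gives a disjoint union decomposition $\Spc(\cat T^c)=Y\sqcup Y^c$ of closed subsets, so $Y$ is clopen. Next, I would deduce that every point is an open point: for any $\cat P$, weak noetherianity provides Thomason subsets $Y_1,Y_2$ with $\{\cat P\}=Y_1\cap Y_2^c$; by what we just proved $Y_1$ is clopen, hence open, and $Y_2^c$ is open as the complement of a Thomason, so $\{\cat P\}$ is open. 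Therefore $\Spc(\cat T^c)$ is discrete, and being spectral it is quasi-compact, hence finite.

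The main (minor) obstacle is getting from ``every Thomason is clopen'' to ``every point is clopen''; this is precisely where the weakly noetherian hypothesis enters, allowing us to write singletons as intersections of a clopen Thomason with the open complement of another Thomason. After that, the conclusion is immediate from quasi-compactness of spectral spaces.
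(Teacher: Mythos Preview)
Your $(a)\Rightarrow(b)$ argument is essentially identical to the paper's: use $\Supp(e_Y)=Y$ together with the hypothesis to get $\Cosupp(e_Y)\subseteq Y$, invoke \cref{cor:cosupp-spc-disjoint} to make every Thomason clopen, then use weak noetherianity to write each singleton as the intersection of two open sets, and conclude by quasi-compactness.

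For $(b)\Rightarrow(a)$ you take a different route from the paper. The paper simply observes that a finite discrete space is both noetherian and zero-dimensional, so \cref{cor:noetherian_local_global} and \cref{thm:LGP-equiv} give codetection, and \cref{cor:min} gives $\min\Supp(t)=\min\Cosupp(t)$, which in dimension zero is just $\Supp(t)=\Cosupp(t)$. Your argument is more hands-on: you split the idempotent triangles via \cref{prop:tate} to obtain a finite product decomposition $\cat T\simeq\prod_{\cat P}\GammaP\cat T$, then observe that $g_{\cat P}=e_{\{\cat P\}}=f_{\{\cat P\}^c}$ forces $\GammaP=\LambdaP$ (since $\ihom{f_{\{\cat P\}^c},-}=f_{\{\cat P\}^c}\otimes-$ for a right idempotent), so both support and cosupport record the nonzero factors. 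This is correct and arguably more transparent, since it avoids the chain of references through \cref{thm:general-min}; the paper's version is shorter on the page because those results are already available.
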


\begin{proof}
	$(a)\Rightarrow(b)$: Recall that $\Supp(e_Y) = Y$ for any Thomason subset $Y \subseteq \Spc(\cat T^c)$. Hence \cref{cor:cosupp-spc-disjoint} implies that $Y$ is both open and closed. Since $\Spc(\cat T^c)$ is assumed weakly noetherian, every point is the intersection of a Thomason and the complement of a Thomason. Hence every point is open and so the topology is discrete. Since $\Spc(\cat T^c)$ is quasi-compact, it is then finite and discrete.
	$(b)\Rightarrow(a):$ This is provided by \cref{cor:min}, \cref{thm:LGP-equiv} and \cref{cor:noetherian_local_global} since a finite discrete space is both noetherian and zero-dimensional.
\end{proof}

\begin{Cor}\label{cor:finite-discrete-noetherian}
	If $\Spc(\cat T^c)$ is noetherian then the following are equivalent:
	\begin{enumerate}
		\item $\Supp(t)=\Cosupp(t)$ for all $t \in \cat T$.
		\item $\Spc(\cat T^c)$ is a finite discrete space.
	\end{enumerate}
\end{Cor}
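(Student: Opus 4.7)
The plan is to deduce this corollary directly from \Cref{prop:finite-discrete}, with the noetherian hypothesis serving only to supply the codetection property for free. For the implication $(b) \Rightarrow (a)$, I would note that a finite discrete space is automatically noetherian, so \Cref{cor:min} gives $\min\Supp(t) = \min\Cosupp(t)$ for any $t \in \cat T$; but in a finite discrete space the specialization order is trivial, so $\min W = W$ for every subset $W$, yielding $\Supp(t)=\Cosupp(t)$. (Alternatively, one could simply quote $(b)\Rightarrow(a)$ of \Cref{prop:finite-discrete} verbatim.)

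For the nontrivial direction $(a) \Rightarrow (b)$, the key observation is that the noetherian hypothesis on $\Spc(\cat T^c)$ promotes $(a)$ to the stronger hypothesis of \Cref{prop:finite-discrete}(a). Indeed, by \Cref{cor:noetherian_local_global}, $\cat T$ satisfies the colocal-to-global principle, and by \Cref{thm:LGP-equiv} this is equivalent to the codetection property. Hence codetection holds automatically, and $(a)$ upgrades to condition $(a)$ of \Cref{prop:finite-discrete}. Invoking that proposition then yields that $\Spc(\cat T^c)$ is finite and discrete.

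There is no real obstacle here; the corollary is essentially a repackaging of \Cref{prop:finite-discrete} which records that, under the noetherian assumption, the codetection clause becomes redundant. The only substantive point to verify is that the noetherian hypothesis does force codetection, which is precisely the content of the chain \Cref{cor:noetherian_local_global} $\Rightarrow$ \Cref{thm:LGP-equiv}.
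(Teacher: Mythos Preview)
Your proposal is correct and follows essentially the same approach as the paper's proof, which simply cites \Cref{prop:finite-discrete} and \Cref{cor:noetherian_local_global}. You have merely spelled out the details of how these two results combine, including the (harmless but not strictly necessary) detour through \Cref{thm:LGP-equiv} to extract codetection from the colocal-to-global principle.
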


\begin{proof}
	This follows from \cref{prop:finite-discrete} and \cref{cor:noetherian_local_global}.
\end{proof}

\begin{Exa}
	If $\Spc(\cat T^c)=*$ is a single point then $\Supp=\Cosupp$. Indeed, both just detect whether an object is nonzero.
\end{Exa}

\begin{Exa}[Two connected points]\label{exa:two-connected-points}
	Let $\cat T$ be a rigidly-compactly generated tt-category whose spectrum 
		\[
			\Spc(\cat T^c) =\; \begin{tikzcd}
			\bullet \ar[d,dash] &[-25pt] \mathfrak m \\
			\bullet & \eta
			\end{tikzcd}
		\]
	consists of two connected points: a closed point $\mathfrak m$ and a generic point $\eta$. There is only one non-trivial Thomason subset, namely $\{\mathfrak m\}$, hence there is only one non-trivial finite localization
		\[
			e_{\singm} \to \unit \to f_{\singm} \to \Sigma e_{\singm}.
		\]
	Moreover, $g(\mathfrak m)=e_{\singm}$ and $g(\eta)=f_{\singm}$. We can directly observe that the codetection property holds (i.e., without invoking \cref{cor:noetherian_local_global}). In fact we can directly observe that the local-to-global principle and the colocal-to-global principle hold in this example. From \cref{lem:cosupp-of-gP} we have that $\Cosupp(g(\eta)) = \{\eta\}$ is a single point. On the other hand, since the two points are connected, we know by \cref{cor:cosupp-spc-disjoint} that $\Cosupp(g(\mathfrak m)) = \{\mathfrak m,\eta\}$ is the whole space. The nature of $\Cosupp(\unit)$ is more subtle. We always have $\mathfrak m \in \Cosupp(\unit)$, either by a direct argument or  by \cref{cor:min}. On the other hand, both $\eta \in \Cosupp(\unit)$ and $\eta \not\in \Cosupp(\unit)$ are possible, as the next sub-example demonstrates.
\end{Exa}

\begin{Exa}\label{exa:DVR}
	Take $\cat T=\Der(R)$ to be the derived category of a discrete valuation ring $(R,\mathfrak m,k)$. In this example, $\Cosupp(\unit) \subseteq \{\mathfrak m\}$ if and only if $R$ is complete. This follows from %\cref{lem:cosupp-tate}
	the triangle
	\[
	\ihom{f_{\{\mathfrak m\}},\unit} \to \unit \to  \ihom{e_{\{\mathfrak m\}},\unit}
	\]
	and $g(\eta)=f_{\{\mathfrak m\}}$, once one recognizes that in this case $\ihom{e_{\{\mathfrak m\}},-}$ is the derived functor of $\mathfrak m$-adic completion; see \cref{exa:DRsmashing} and the references therein.
\end{Exa}

\begin{Rem}
	If a category is stratified then its support function determines and is determined by its cosupport function. Indeed, for any $t \in \cat T$, we have $\Supp(t) = \Cosupp(\{t\}^\perp)^c$ and $\Cosupp(t) = \Supp({}^\perp \{t\})^c$ by \cref{cor:cosupp-of-perp}. However, the question remains whether the support of an individual object $\Supp(t)$ determines its cosupport $\Cosupp(t)$, or vice versa. The next two examples demonstrate that in general this is false.
\end{Rem}

\begin{Exa}\label{exa:supp-no-cosupp}
	Let $Y\coloneqq \supp(x)$ for a compact object $x \in \cat T^c$. Then $\Supp(x) = \Supp(e_Y)=Y$. In particular, $\Cosupp(x) \subseteq Y$ by \cref{exa:cosupp-compact}. On the other hand, we have seen in \cref{cor:cosupp-spc-disjoint} that if $\Cosupp(e_Y)\subseteq Y$ then the spectrum disconnects into $Y$ and its complement (provided the codetection property holds). Therefore there are many examples where we have two objects $t_1$ and $t_2$ with $\Supp(t_1) = \Supp(t_2)$ and yet $\Cosupp(t_1) \neq \Cosupp(t_2)$. For an explicit example, this occurs for any category $\cat T$ whose spectrum is two connected points (\cref{exa:two-connected-points}) and $t_1=e_{\{\mathfrak m\}}$ and $t_2=x$ any compact object with $\supp(x)=\{\mathfrak m\}$. Moreover, there are such examples which are costratified; e.g.,~the derived category $\cat T=\Der(R)$ of a discrete valuation ring (see \cref{prop:costratification_for_dr} below). We conclude that an object's support does not in general determine its cosupport, even in costratified examples.
\end{Exa}

\begin{Exa}\label{exa:cosupp-no-supp}
	Let $\cat T$ be a local category where $\Cosupp(\unit)=\{\mathfrak m\}$ just consists of the closed point. For example, we could take $\cat T=\Der(R)$ for~$R$ a complete discrete valuation ring (\cref{exa:DVR}) or $\cat T= \KInj{kG}$ (\cref{exa:KInj}). Then for any nonzero compact object $x$, we have $\Cosupp(x) = \supp(x) \cap \Cosupp(\unit) = \{\mathfrak m\}$ (\cref{exa:cosupp-compact}). Thus, any two nonzero compact objects have the same cosupport: $\Cosupp(x)=\Cosupp(y)$. On the other hand, provided the spectrum is not a single point, there exist nonzero compact objects with differing support. We conclude that an object's cosupport does not in general determine its support, even in costratified examples.
\end{Exa}

\begin{Rem}
	In \cref{sec:duality} we will establish a further important relationship between support and cosupport, namely that the support of an object coincides with the cosupport of its Brown--Comenetz dual; see \cref{prop:cosupp-BC-duals}. The discussion there will also elaborate on the relationship with completion.
\end{Rem}

\newpage
\part{Perfection and duality}\label{part:perfection-duality}

The opposite category $\cat T\op$ of a triangulated category $\cat T$ inherits a triangulated structure (with $\Sigma_{\cat T\op} = \Sigma^{-1}_{\cat T}$) and the notion of a thick subcategory is self-dual. In fact, the thick subcategories of a triangulated category $\cat T$ coincide with the thick subcategories of its opposite triangulated category~$\cat T\op$. Moreover, the localizing subcategories of $\cat T$ are precisely the colocalizing subcategories of $\cat T\op$ and vice versa. However, localizing ideals and colocalizing coideals are not obviously dual notions. As it stands, the notion of a colocalizing coideal is somewhat mysterious, besides its appearance in right orthogonals of localizing ideals. Similarly, definitions of cosupport in the literature --- including the notion we have studied above --- are not very conceptually motivated. It is not too far a stretch to say that one scratches one's head the first time one sees the definition of cosupport. Fortunately, in this part of the paper we will provide a conceptual understanding for it all, exhibit localizing ideals as dual to colocalizing coideals, support as dual to cosupport, and stratification as dual to costratification. In order to achieve this, we will work in a more general setting that encompasses both $\cat T$ and~$\cat T\op$. This leads us to perfection:

\section{Perfect generation}\label{sec:perfect-generation}

The opposite category $\cat T\op$ of a compactly generated triangulated category~$\cat T$ is never compactly generated.\footnote{Boardman \cite{Boardman70} proved this for $\cat T=\SH$. The general result is due to Neeman \cite[Appendix~E.1]{Neeman01}.} The more general notion of ``perfectly generated'' triangulated category is more flexible in this respect, encompassing both compactly generated triangulated categories and their opposites. This notion has a three-fold role to play in this work. Firstly, it will provide an adequate level of generality in which the Balmer--Favi approach to support can be constructed, which will facilitate a comparison between support and cosupport. Secondly, it will provide another way to construct objects with prescribed cosupport. Thirdly, it will be important for controlling descent of costratification.

\begin{Def}[Krause]\label{def:perfectly-generated}
	Recall from \cite[Section 5]{Krause10} that a triangulated category~$\cat T$ is \emph{perfectly generated} if it has coproducts and there exists a set of objects~$\cat G \subseteq \cat T$ satisfying the following two conditions:
	\begin{itemize}
		\item[(PG1)]	$\Loc\langle \cat G\rangle = \cat T$.
		\item[(PG2)]	Given any family $(x_i \to y_i)_{i \in I}$ of morphisms in $\cat T$ such that the induced map $\cat T(g,x_i) \to \cat T(g,y_i)$ is surjective for all $g \in \cat G$ and $i\in I$, the induced map
						\[
							\cat T(g,\coprod_i x_i) \to \cat T(g,\coprod_i y_i)
						\]
						is surjective.
	\end{itemize}
	Dually, we say that a triangulated category $\cat T$ is \emph{perfectly cogenerated} if $\cat T\op$ is perfectly generated.
\end{Def}

\begin{Rem}\label{rem:weak-generation}
	As noted in \cite[Remark 5.1.2]{Krause10}, in the presence of (PG2), the generation condition (PG1) is equivalent to the (\emph{a priori} weaker) generation condition that an object $t \in \cat T$ vanishes if $\cat T(\Sigma^n g,t) =0$ for all $g \in \cat G$ and $n \in \bbZ$.
\end{Rem}

\begin{Exa}
	A compactly generated triangulated category is perfectly generated. The (PG2) condition is automatic when $\cat G$ is a set of compact objects. More generally, a well generated triangulated category is perfectly generated; see \cite{Krause01}.
\end{Exa}

\begin{Def}\label{def:compact-BC}
	Let $\cat T$ be a compactly generated triangulated category. For any compact object $c \in \cat T^c$, the functor
		\begin{equation}\label{eq:homological}
			\Hom_{\bbZ}(\cat T(c,-),\bbQ/\bbZ)\colon\cat T\op \to \Ab
		\end{equation}
	is homological and sends coproducts in $\cat T$ to products in $\Ab$, hence by Brown representability is represented by an object $I_c \in \cat T$. Consequently, we have a natural isomorphism
		\begin{equation}\label{eq:BC-homs}
			\cat T(t,I_c) \cong \Hom_{\bbZ}(\cat T(c,t),\bbQ/\bbZ)
		\end{equation}
	for any $t \in \cat T$ and $c \in \cat T^c$.
\end{Def}

\begin{Rem}\label{rem:Q/Z-cogenerator}
	The significance of $\bbQ/\bbZ$ in the above construction is that it is an injective cogenerator of the category of abelian groups. Injectivity ensures that the functor \eqref{eq:homological} is homological, while cogeneration amounts to the fact that every nonzero abelian group admits a nonzero homomorphism to $\bbQ/\bbZ$. It follows that an object $t=0$ if and only if $\cat T(c,t)=0$ for all $c \in \cat T^c$ if and only if $\cat T(t,I_c)=\Hom_{\bbZ}(\cat T(c,t),\bbQ/\bbZ)=0$ for all $c \in \cat T^c$. %Furthermore, for any morphism $x \to y$ in $\cat T$, the induced map $\cat T(y,I_c)\to \cat T(x,I_c)$ is surjective if and only if the induced map $\cat T(c,x)\to \cat T(c,y)$ is injective.\ttodo{this is used a couple of times. i think it deserves to be its own lemma with a proof.} One implication follows from the injectivity of $\bbQ/\bbZ$: The induced map $\cat T(y,I_c)\to \cat T(x,I_c)$ is surjective if the induced map $\cat T(c,x)\to \cat T(c,y)$ is injective. The other implication uses the cogeneration property of $\bbQ/\bbZ$: If $\cat T(y,I_c)\to \cat T(x,I_c)$ is surjective then by the natural isomorphism \eqref{eq:BC-homs} any homomorphism of abelian groups $\cat T(c,x)\to \mathbb Q/\mathbb Z$ factors through $\cat T(c,x)\to \cat T(c,y)$. This implies that $\cat T(c,x) \to \cat T(c,y)$ is injective because if $f \in \cat T(c,x)$ is nonzero we can extend a nonzero homomorphism of abelian groups $\langle f \rangle \to \mathbb Q/\mathbb Z$\ttodo{introduce notation $\langle f \rangle$?} to a homomorphism $\cat T(c,x)\to\mathbb Q/\mathbb Z$ which does not annihilate~$f$ and hence $\cat T(c,x)\to \cat T(c,y)$ cannot annihilate $f$.
	Moreover, we have:
\end{Rem}

\begin{Lem}\label{lem:Q/Z-cogenerator}
    For any morphism $x \to y$ in $\cat T$ and $c \in \cat T^c$, the induced map $\cat T(y,I_c)\to \cat T(x,I_c)$ is surjective if and only if the induced map $\cat T(c,x)\to \cat T(c,y)$ is injective. 
\end{Lem}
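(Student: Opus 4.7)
The plan is to transport the question from $\cat T$ over to the category of abelian groups via the defining natural isomorphism \eqref{eq:BC-homs}, and then apply the standard fact that $\bbQ/\bbZ$ is an injective cogenerator of $\Ab$.

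First, I would apply the natural isomorphism $\cat T(-,I_c) \cong \Hom_{\bbZ}(\cat T(c,-),\bbQ/\bbZ)$ to both $x$ and $y$. By naturality, the map $\cat T(y,I_c)\to\cat T(x,I_c)$ induced by $x\to y$ is identified with the map
\[
\Hom_{\bbZ}(\cat T(c,y),\bbQ/\bbZ)\longrightarrow \Hom_{\bbZ}(\cat T(c,x),\bbQ/\bbZ)
\]
obtained by precomposition with $\cat T(c,x)\to\cat T(c,y)$. So the question reduces to the following purely algebraic statement: for a homomorphism $f\colon A\to B$ of abelian groups, the induced map $f^*\colon \Hom_{\bbZ}(B,\bbQ/\bbZ)\to\Hom_{\bbZ}(A,\bbQ/\bbZ)$ is surjective if and only if $f$ is injective.

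For the ``if'' direction, I would invoke the injectivity of $\bbQ/\bbZ$ in $\Ab$: if $f$ is injective, every homomorphism $A\to\bbQ/\bbZ$ extends along $f$ to a homomorphism $B\to\bbQ/\bbZ$, which gives the desired surjectivity of $f^*$. For the ``only if'' direction, I would argue by contrapositive using cogeneration: given $0\neq a\in\ker f$, pick $\phi\colon A\to\bbQ/\bbZ$ with $\phi(a)\neq 0$ (possible since $\bbQ/\bbZ$ is a cogenerator, cf.~\cref{rem:Q/Z-cogenerator}); if $f^*$ were surjective, we would have $\phi=\psi\circ f$ for some $\psi$, whence $\phi(a)=\psi(f(a))=\psi(0)=0$, a contradiction.

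There is no real obstacle here: the content is exactly the dual behaviour of $\Hom(-,\bbQ/\bbZ)$ with respect to exactness of short sequences of abelian groups, and the only thing to be careful about is the naturality step that identifies the two maps. In particular, no triangulated structure beyond the existence of $I_c$ and the defining isomorphism \eqref{eq:BC-homs} enters.
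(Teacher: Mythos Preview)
Your proposal is correct and follows essentially the same approach as the paper's proof: both reduce via the defining isomorphism \eqref{eq:BC-homs} to the algebraic fact that $\Hom_{\bbZ}(-,\bbQ/\bbZ)$ takes injections to surjections (by injectivity of $\bbQ/\bbZ$) and conversely (by cogeneration). The only cosmetic difference is that you phrase the second implication as a contrapositive while the paper argues directly, but the content is identical.
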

\begin{proof}
	One implication follows from the injectivity of $\bbQ/\bbZ$: The induced map $\cat T(y,I_c)\to \cat T(x,I_c)$ is surjective if the induced map $\cat T(c,x)\to \cat T(c,y)$ is injective. The other implication uses the cogeneration property of $\bbQ/\bbZ$: If $\cat T(y,I_c)\to \cat T(x,I_c)$ is surjective then by the natural isomorphism \eqref{eq:BC-homs} any homomorphism of abelian groups $\cat T(c,x)\to \mathbb Q/\mathbb Z$ factors through $\cat T(c,x)\to \cat T(c,y)$. This implies that $\cat T(c,x) \to \cat T(c,y)$ is injective because if $f \in \cat T(c,x)$ is nonzero we can extend a nonzero homomorphism of abelian groups $\langle f \rangle \to \mathbb Q/\mathbb Z$ to a homomorphism $\cat T(c,x)\to\mathbb Q/\mathbb Z$ which does not annihilate~$f$ and hence $\cat T(c,x)\to \cat T(c,y)$ cannot annihilate $f$.
\end{proof}

\begin{Prop}\label{prop:Top-is-perfectly-generated}
	If $\cat T$ is compactly generated then $\cat T\op$ is perfectly generated. More precisely, if $\cat G \subseteq \cat T$ is a set of compact generators, then $\cat T\op$ is perfectly generated by $I(\cat G)\coloneqq \SET{ I_c }{c \in \cat G}$. In particular, $\cat T=\Coloc\langle I(\cat G)\rangle$.
\end{Prop}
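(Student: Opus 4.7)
The plan is to verify the two conditions (PG1) and (PG2) of \cref{def:perfectly-generated} for $\cat T\op$ with respect to the candidate set $I(\cat G)$, translating everything via the dictionary: morphisms reverse, $\Sigma_{\cat T\op} = \Sigma^{-1}$, and coproducts in $\cat T\op$ correspond to products in $\cat T$. The main input is \cref{lem:Q/Z-cogenerator} together with the defining isomorphism \eqref{eq:BC-homs}.

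For (PG1), I would invoke \cref{rem:weak-generation} to reduce to the weak vanishing criterion: it suffices to show that $t \in \cat T$ vanishes whenever $\cat T\op(\Sigma^n_{\cat T\op} I_c, t) = \cat T(t, \Sigma^{-n} I_c) = 0$ for all $c \in \cat G$ and $n \in \bbZ$. A shifted version of \eqref{eq:BC-homs} identifies this group with $\Hom_{\bbZ}(\cat T(c, \Sigma^n t), \bbQ/\bbZ)$, and since $\bbQ/\bbZ$ is a cogenerator of $\Ab$ (\cref{rem:Q/Z-cogenerator}) this vanishes if and only if $\cat T(c, \Sigma^n t) = 0$ for all such $c$ and $n$. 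Because $\cat G$ compactly generates $\cat T$, this forces $t = 0$. The ``In particular'' statement $\cat T = \Coloc\langle I(\cat G)\rangle$ is just (PG1) read back in $\cat T$, since a localizing subcategory of $\cat T\op$ is by definition a colocalizing subcategory of $\cat T$.

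For (PG2), let $(x_i \to y_i)_{i\in I}$ be a family of morphisms in $\cat T\op$, viewed as a family $(y_i \to x_i)_{i\in I}$ in $\cat T$, and suppose each $\cat T\op(I_c, x_i) \to \cat T\op(I_c, y_i)$, i.e.,\ each $\cat T(x_i, I_c) \to \cat T(y_i, I_c)$, is surjective for $c \in \cat G$. By \cref{lem:Q/Z-cogenerator} this is equivalent to the injectivity of each $\cat T(c, y_i) \to \cat T(c, x_i)$. Taking products componentwise and using that $\cat T(c,-)$ commutes with products, I get an injection $\cat T(c, \prod_i y_i) \to \cat T(c, \prod_i x_i)$. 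A second application of \cref{lem:Q/Z-cogenerator} then upgrades this to the surjectivity of
\[
    \cat T\Big(\prod_i x_i, I_c\Big) \to \cat T\Big(\prod_i y_i, I_c\Big),
\]
which is precisely the required surjectivity on $\cat T\op(I_c, \coprod^{\cat T\op} x_i) \to \cat T\op(I_c, \coprod^{\cat T\op} y_i)$ for (PG2).

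There is no real obstacle beyond careful bookkeeping: the only delicate point is tracking the direction of arrows and the shift convention when passing between $\cat T$ and $\cat T\op$. Once \cref{lem:Q/Z-cogenerator} is available, both (PG1) and (PG2) drop out cleanly from the cogeneration/injectivity of $\bbQ/\bbZ$, with compactness of $c \in \cat G$ intervening only through the standard identity $\cat T(c, \prod_i -) = \prod_i \cat T(c, -)$ (which holds for any object, not just compact ones).
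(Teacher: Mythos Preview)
Your proof is correct and follows essentially the same route as the paper: both verify (PG1) via the weak vanishing criterion of \cref{rem:weak-generation} using that $\bbQ/\bbZ$ cogenerates $\Ab$, and both establish (PG2) by applying \cref{lem:Q/Z-cogenerator} twice to pass from surjectivity into $I_c$ to injectivity out of $c$, take products, and pass back. Your bookkeeping of the $\cat T\op$ conventions is more explicit than the paper's, but the mathematical content is the same.
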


\begin{proof}
	Suppose $t\in \cat T$ is such that $\cat T(t,I_c)=0$ for all $c \in \cat G$. Since $\bbQ/\bbZ$ is a cogenerator of $\Ab$, this is equivalent to the statement that $\cat T(c,t)=0$ for all $c \in \cat G$ (\cref{rem:Q/Z-cogenerator}). By assumption, $\cat G$ is a generating set for $\cat T$, hence $t=0$. This shows that $I(\cat G)$ is a set of cogenerators for $\cat T$ (\cref{rem:weak-generation}). In order to verify condition~(PG2), let $f_i\colon x_i \to y_i$ be a collection of maps in $\cat T$ with $\cat T(f_i,I_c)$ surjective for all $c \in \cat G$. In light of \cref{lem:Q/Z-cogenerator}, this translates to the statement that the maps $\cat T(c,f_i)$ are injective for all $c \in \cat G$. It follows that $\prod_i \cat T(c,f_i) = \cat T(c,\prod_i f_i)$ is injective in $\Ab$ for each $c \in \cat G$, which in turn is equivalent to the surjectivity of $\cat T(\prod_i f_i, I_c)$ for all $c \in \cat G$. This establishes that $I(\cat G)$ is a set of perfect cogenerators for $\cat T$.
\end{proof}

\begin{Rem}
	The Brown representability theorem holds for perfectly generated categories; see \cite{Krause02,Krause10}. In particular, if $\cat T$ is perfectly generated then any coproduct-preserving exact functor $f^*\colon\cat T\to \cat S$ admits a right adjoint~$f_*$.
\end{Rem}

\begin{Prop}\label{prop:perfect-conservation}
	Suppose the triangulated category $\cat T$ is perfectly generated by~${\cat G \subseteq \cat T}$. Let $f^*\colon \cat T \to \cat S$ be a coproduct-preserving exact functor whose right adjoint $f_*$ preserves coproducts. Then the following are equivalent:
	\begin{enumerate}
		\item The category $\cat S$ is perfectly generated by $f^*(\cat G)$.
		\item The right adjoint $f_*$ is conservative.
	\end{enumerate}
\end{Prop}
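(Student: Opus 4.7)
The plan is a direct application of the adjunction $f^* \dashv f_*$ together with the hypothesis that $f_*$ preserves coproducts, using the weak form of (PG1) highlighted in \cref{rem:weak-generation}.

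For $(a) \Rightarrow (b)$, I would take $s \in \cat S$ with $f_*(s) = 0$ and show $s = 0$. By \cref{rem:weak-generation} applied to $\cat S$ (perfectly generated by $f^*(\cat G)$), it suffices to verify that $\cat S(\Sigma^n f^*(g), s) = 0$ for all $g \in \cat G$ and $n \in \bbZ$. This is immediate from the adjunction: $\cat S(\Sigma^n f^*(g), s) \cong \cat T(\Sigma^n g, f_*(s)) = 0$.

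For $(b) \Rightarrow (a)$, the first task is to check the generation property (PG1) for $\cat S$ with respect to $f^*(\cat G)$. Again invoking \cref{rem:weak-generation}, suppose $s \in \cat S$ satisfies $\cat S(\Sigma^n f^*(g), s) = 0$ for all $g \in \cat G$ and $n \in \bbZ$. Adjunction converts this to $\cat T(\Sigma^n g, f_*(s)) = 0$, so the generation of $\cat T$ by $\cat G$ forces $f_*(s) = 0$, and conservativity of $f_*$ then gives $s = 0$.

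The second task is to verify (PG2). Given a family $(x_i \to y_i)_{i \in I}$ in $\cat S$ for which each $\cat S(f^*(g), x_i) \to \cat S(f^*(g), y_i)$ is surjective, I would translate through the adjunction to a surjection $\cat T(g, f_*(x_i)) \to \cat T(g, f_*(y_i))$, apply (PG2) for $\cat T$ to the family $f_*(x_i) \to f_*(y_i)$ to obtain surjectivity of $\cat T(g, \coprod_i f_*(x_i)) \to \cat T(g, \coprod_i f_*(y_i))$, and then use the hypothesis that $f_*$ preserves coproducts to rewrite this as the surjectivity of $\cat T(g, f_*(\coprod_i x_i)) \to \cat T(g, f_*(\coprod_i y_i))$. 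One final pass through the adjunction yields the required surjectivity of $\cat S(f^*(g), \coprod_i x_i) \to \cat S(f^*(g), \coprod_i y_i)$.

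No step here looks like a genuine obstacle; the only subtlety is remembering to use the weak form of the generation axiom from \cref{rem:weak-generation} rather than literally verifying $\Loc\langle f^*(\cat G)\rangle = \cat S$, and to invoke the coproduct-preservation of $f_*$ at exactly the right moment so that (PG2) for $\cat T$ can be transported to (PG2) for $\cat S$.
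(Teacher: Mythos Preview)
Your proposal is correct and follows essentially the same approach as the paper: both hinge on the adjunction identity $\cat S(\Sigma^n f^*(g),s)\cong \cat T(\Sigma^n g,f_*(s))$ together with \cref{rem:weak-generation}, and both verify (PG2) for $f^*(\cat G)$ by transporting the condition through the adjunction and the coproduct-preservation of $f_*$. The only minor point is ordering: since \cref{rem:weak-generation} presupposes (PG2), it is cleaner (as the paper does) to observe first that (PG2) for $f^*(\cat G)$ holds unconditionally, after which the equivalence of $(a)$ and $(b)$ reduces purely to the weak generation criterion.
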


\begin{proof}
	One verifies directly that $f^*(\cat G)$ satisfies (PG2). Here one uses the assumption that $f_*$ preserves coproducts. Then using \cref{rem:weak-generation}, the equivalence of $(a)$ and~$(b)$ just follows from observing that $f_*(s) = 0$ if and only if $\cat T(\Sigma^n \cat G,f_*s)=0$ if and only if $\cat S(\Sigma^n f^*(\cat G),s)=0$.
\end{proof}

\begin{Rem}\label{rem:replace-with-compact}
	With ``perfectly generated'' replaced by ``compactly generated'', \cref{prop:perfect-conservation} is well-known. In this case, the assumption that $f_*$ preserves coproducts can be removed since it is implied by (and is in fact equivalent to) the condition in part $(a)$ that $f^*$ preserves the compactness of the generators; see \cite[Theorem 5.1]{Neeman96}.
\end{Rem}

\section{Cosupport is support}\label{sec:cosupp-is-supp}

By a tensor-triangulated category we usually mean a triangulated category equipped with a compatible \emph{closed} symmetric monoidal structure as spelled out in \cite[Appendix A]{HoveyPalmieriStrickland97}. Indeed, the closed structure is used in a fundamental way in the definition of cosupport and is featured in the very definition of colocalizing coideal. Although the opposite category of a symmetric monoidal category inherits a symmetric monoidal structure, it does not inherit a \emph{closed} symmetric monoidal structure. For this reason, we will need the following weaker notion:

\begin{Ter}\label{def:non-closed-tt-category}
	In this paper, by a \emph{non-closed tensor-triangulated category} we mean a triangulated category $\cat T$ equipped with a symmetric monoidal structure such that $-\otimes t\colon\cat T\to \cat T$ is an exact functor for each $t \in \cat T$ (with the usual compatibility between the associated suspension isomorphisms; see \cite[Definition~3]{BalmerICM}). We will not assume that $\cat T$ has an internal hom, and even when we assume that $\cat T$ has coproducts, we do not assume that $-\otimes t\colon\cat T\to \cat T$ preserves them. However we will require that the full subcategory $\cat T^d \subseteq \cat T$ of dualizable objects is an essentially small triangulated subcategory and that $(-)^\vee\colon\cat T^d \to (\cat T^d)\op$ preserves exact triangles.
\end{Ter}

\begin{Rem}
	The requirement that $\cat T^d$ is a triangulated subcategory amounts to the assumption that an extension of dualizable objects is again dualizable. If $\cat T$ is idempotent-complete (for example, if it has countable coproducts) then $\cat T^d$ is a thick subcategory.
\end{Rem}

\begin{Rem}
	If $\cat T$ is a non-closed tensor-triangulated category then $\cat T^d$ is a rigid essentially small tensor-triangulated subcategory of $\cat T$.
\end{Rem}

\begin{Rem}
	If $\cat T$ is a rigidly-compactly generated tensor-triangulated category in the usual sense then $\cat T^d=\cat T^c$. Nevertheless, we will often write~$\cat T^d$ when emphasizing dualizability over compactness. The results which follow provide evidence that it is~$\cat T^d$ which is more fundamental to tensor triangular geometry and that $\Spc(\cat T^d)$ is the correct definition of ``the'' Balmer spectrum of $\cat T$.
\end{Rem}

\begin{Exa}
	Let $\cat T$ be a rigidly-compactly generated tensor-triangulated category in the usual sense. Its opposite category $\cat T\op$ is a non-closed tensor-triangulated category and $\cat T^d \cong (\cat T\op)^d$ are equivalent tensor-triangulated categories, hence have the same Balmer spectrum. In fact, since thick ideals of dualizable objects are closed under taking duals, $\cat T$ and $\cat T\op$ have exactly the same thick ideals of dualizable objects, the same Balmer spectrum of dualizable objects, the same universal support for dualizable objects, and so on. For all intents and purposes, $\cat T$ and $\cat T\op$ are just two extensions of ``the same'' rigid tensor-triangulated category that we will sometimes denote by~$\cat K$.
\end{Exa}

\begin{Rem}
	The following definition is the key to reconciling localizing ideals and colocalizing coideals.
\end{Rem}

\begin{Def}
	Let $\cat T$ be a non-closed tensor-triangulated category which has small coproducts. A \emph{localizing $\cat T^d$-submodule} of $\cat T$ is a localizing subcategory $\cat L\subseteq \cat T$ such that ${\cat T^d \otimes \cat L \subseteq \cat L}$.
\end{Def}

\begin{Rem}\label{rem:submodule-generated-by}
	The localizing $\cat T^d$-submodule generated by a collection of objects $\cat E \subseteq \cat T$ coincides with $\Loc\langle \cat E \otimes \cat T^d\rangle$.
\end{Rem}

\begin{Exa}\label{exa:submodules-of-T}
	Let $\cat T$ be a rigidly-compactly generated tt-category (in the usual sense). A localizing subcategory $\cat L$ is a localizing ideal if and only if $\cat L \otimes \cat T \subseteq \cat L$ if and only if $\cat L \otimes \cat T^d \subseteq \cat L$ if and only if $\cat L$ is a $\cat T^d$-submodule of $\cat T$. That is, the localizing ideals of $\cat T$ are precisely the localizing $\cat T^d$-submodules of $\cat T$. The key reason for this is that $\cat T=\Loc\langle \cat T^d \rangle$ is generated by the subcategory of dualizable objects.
\end{Exa}

\begin{Exa}\label{exa:submodules-of-Top}
	Let $\cat T$ be a rigidly-compactly generated tt-category (in the usual sense). A colocalizing subcategory $\cat C$ of $\cat T$ is a colocalizing coideal if and only if $\ihom{\cat T,\cat C}\subseteq \cat C$ if and only if $\ihom{\cat T^d,\cat C}\subseteq \cat C$ if and only if $\cat T^d \otimes \cat C \subseteq \cat C$ if and only if $\cat C$ is a localizing $\cat T^d$-submodule of $\cat T\op$. That is, the colocalizing coideals of~$\cat T$ are precisely the localizing $\cat T^d$-submodules of $\cat T\op$. Thus, the task of classifying localizing ideals and colocalizing coideals is, in both cases, the question of classifying the localizing $\cat T^d$-submodules of $\cat T$ and $\cat T\op$, respectively.
\end{Exa}

\begin{Rem}
	Our next goal is to explain how cosupport is related to support.
	Recall from \cref{prop:Top-is-perfectly-generated} that if $\cat T$ is compactly generated then~$\cat T\op$ is perfectly generated.
\end{Rem}

\begin{Lem}\label{lem:perfectly-generated}
	Let $\cat T$ be a non-closed tensor-triangulated category and let $\cat E \subseteq \cat T^d$ be a set of dualizable objects. If $\cat T$ is perfectly generated by $\cat G \subseteq \cat T$ then $\Loc\langle \cat E \otimes \cat T\rangle$ is perfectly generated by $\cat E \otimes \cat G$.
\end{Lem}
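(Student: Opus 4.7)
The plan is to verify the two axioms (PG1) and (PG2) of \Cref{def:perfectly-generated} for the set $\cat E \otimes \cat G$ inside $\cat L \coloneqq \Loc\langle \cat E \otimes \cat T\rangle$. The crucial structural observation is that every $e \in \cat E$ is dualizable, so the functor $e \otimes - \colon \cat T \to \cat T$ admits both adjoints: it is simultaneously left and right adjoint to $e^\vee \otimes -$. In particular, for each fixed $e \in \cat E$, the endofunctor $e \otimes -$ is exact and preserves (arbitrary) coproducts, and we have a natural isomorphism
\[
    \cat T(e \otimes g,\, -) \;\cong\; \cat T(g,\, e^\vee \otimes -).
\]
This is really the only non-formal ingredient; once it is in hand, both axioms fall out by transport.

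For (PG1), I would argue that $\Loc\langle \cat E \otimes \cat G\rangle = \cat L$. The inclusion $\supseteq$ is immediate. For $\subseteq$, fix $e \in \cat E$; since $e \otimes -$ preserves coproducts and exact triangles, the full subcategory $\SET{t \in \cat T}{e \otimes t \in \Loc\langle e \otimes \cat G\rangle}$ is localizing and contains $\cat G$, hence equals $\cat T$ by (PG1) for~$\cat G$. Therefore $e \otimes \cat T \subseteq \Loc\langle e \otimes \cat G\rangle$, and taking the union over $e \in \cat E$ and closing under the localizing subcategory operation gives $\cat L \subseteq \Loc\langle \cat E \otimes \cat G\rangle$.

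For (PG2), suppose $(x_i \to y_i)_{i \in I}$ is a family of morphisms in $\cat L$ such that
\[
    \cat L(e \otimes g, x_i) \longrightarrow \cat L(e \otimes g, y_i)
\]
is surjective for every $e \in \cat E$, $g \in \cat G$, $i \in I$. Homs in $\cat L$ agree with homs in $\cat T$, and the adjunction isomorphism rewrites this hypothesis as the surjectivity of $\cat T(g, e^\vee \otimes x_i) \to \cat T(g, e^\vee \otimes y_i)$. Fixing $e$ and varying $g,i$, axiom (PG2) for the perfect generating set $\cat G$ of $\cat T$ yields that
\[
    \cat T\bigl(g,\textstyle\coprod_i e^\vee \otimes x_i\bigr) \longrightarrow \cat T\bigl(g,\textstyle\coprod_i e^\vee \otimes y_i\bigr)
\]
is surjective for every $g \in \cat G$. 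Since $e^\vee \otimes -$ commutes with coproducts, $\coprod_i e^\vee \otimes x_i \cong e^\vee \otimes \coprod_i x_i$, and transporting back across the adjunction gives the surjectivity of $\cat T(e \otimes g, \coprod_i x_i) \to \cat T(e \otimes g, \coprod_i y_i)$, as required. (The coproduct of the $x_i$ taken in $\cat L$ coincides with the one taken in $\cat T$, since localizing subcategories are closed under coproducts.)

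There is no real obstacle here beyond correctly extracting the two adjunctions from dualizability in the non-closed setting; once one records that $e \otimes -$ is both a left and a right adjoint to $e^\vee \otimes -$, both (PG1) and (PG2) transfer from $\cat G$ in $\cat T$ to $\cat E \otimes \cat G$ in $\cat L$ by the standard manipulations above.
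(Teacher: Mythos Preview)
Your proposal is correct and follows essentially the same approach as the paper: the paper establishes (PG1) by the same localizing-subcategory argument and for (PG2) simply remarks that the key point is the adjunction $e\otimes - \dashv e^\vee\otimes -$ with coproduct-preserving right adjoint, leaving the verification to the reader; you have spelled out precisely that verification. One cosmetic slip: in your (PG1) paragraph the labels ``$\supseteq$'' and ``$\subseteq$'' are swapped --- the immediate inclusion is $\Loc\langle \cat E\otimes \cat G\rangle \subseteq \cat L$, and the argument you give establishes the converse $\cat L \subseteq \Loc\langle \cat E\otimes \cat G\rangle$.
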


\begin{proof}
	Let $\cat L\coloneqq \Loc\langle \cat E \otimes \cat G\rangle$. Then $\SET{t\in \cat T}{\cat E \otimes t \in \cat L}$ is a localizing subcategory since the objects in $\cat E$ are dualizable. It contains $\cat G$ and hence since $\cat T=\Loc\langle \cat G\rangle$ we conclude that $\cat E \otimes \cat T \subseteq \Loc\langle \cat E \otimes \cat G\rangle$. This establishes~(PG1). Condition (PG2) can be readily checked from the definition. The key point here is that for a dualizable object $d\in \cat T^d$, the functor $d\otimes -\colon\cat T \to \cat T$ has a right adjoint $d^\vee\otimes -\colon\cat T\to \cat T$ which itself preserves coproducts; cf.~\cref{prop:perfect-conservation}.
\end{proof}

\begin{Prop}
	Let $\cat T$ be a perfectly generated non-closed tensor-trian\-gulated category and write $\cat K \coloneqq \cat T^d$. For every Thomason subset $Y \subseteq \Spc(\cat K)$, the localizing subcategory $\Loc\langle \cat K_Y \otimes \cat T\rangle$ is strictly localizing, where $\cat K_Y \coloneqq \SET{ a \in \cat K}{\supp(a) \subseteq Y}$. We write
		\begin{equation}\label{eq:bousfield-localization-triangle}
			\Gamma_Y(t) \to t \to \L_Y(t) \to \Sigma\Gamma_Y(t)
		\end{equation}
	for the associated Bousfield localization triangle.
\end{Prop}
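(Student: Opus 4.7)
The plan is to reduce the statement to the standard principle (recalled here as \cref{thm:set-generated-are-strictly-localizing}) that set-generated localizing subcategories of well generated triangulated categories are strictly localizing, using \cref{lem:perfectly-generated} to produce the required perfect generation.

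First, I would observe that since $\cat K = \cat T^d$ is essentially small by hypothesis, the thick ideal $\cat K_Y$ of objects supported on $Y$ is also essentially small, so we may choose a set $\cat E \subseteq \cat K_Y$ of representatives and obtain an equality
\[
\Loc\langle \cat K_Y \otimes \cat T \rangle = \Loc\langle \cat E \otimes \cat T \rangle.
\]
Choosing a set $\cat G \subseteq \cat T$ of perfect generators, \cref{lem:perfectly-generated} (applied to the set $\cat E \subseteq \cat T^d$) then tells us that $\Loc\langle \cat K_Y \otimes \cat T \rangle$ is itself perfectly generated, by the set $\cat E \otimes \cat G$.

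Second, I would conclude by invoking Brown representability. Since $\Loc\langle \cat K_Y \otimes \cat T \rangle$ is perfectly generated, in particular it is well generated (by Krause's result in the cited reference), so \cref{thm:set-generated-are-strictly-localizing} applies directly to show that the inclusion into $\cat T$ has a right adjoint. Equivalently, for each $t \in \cat T$ the cohomological, product-preserving functor $\Loc\langle \cat K_Y \otimes \cat T\rangle\op \to \Ab$ defined by $\ell \mapsto \cat T(\ell,t)$ is representable by Brown representability for perfectly generated triangulated categories, which manufactures the object $\Gamma_Y t$ together with the counit map $\Gamma_Y t \to t$ filling out the triangle \eqref{eq:bousfield-localization-triangle}.

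I do not expect a real obstacle here: the essential content is contained in \cref{lem:perfectly-generated}, and the only point requiring any care is checking that $\cat K_Y$ is small enough to feed into that lemma, which follows from essential smallness of $\cat K$. The non-closed structure of $\cat T$ causes no trouble since, crucially, each $d \in \cat E \subseteq \cat T^d$ is dualizable, so $d \otimes -$ admits the coproduct-preserving right adjoint $d^\vee \otimes -$, which is precisely what powers both the verification of (PG2) in \cref{lem:perfectly-generated} and the existence of the Bousfield localization.
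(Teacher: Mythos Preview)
Your overall strategy matches the paper's: invoke \cref{lem:perfectly-generated} to see that $\Loc\langle \cat K_Y \otimes \cat T\rangle$ is perfectly generated, then use Brown representability to produce the right adjoint. The paper's proof is literally those two sentences, citing \cite[Proposition~5.2.1]{Krause10} for the second step.

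However, your second paragraph contains a genuine error. Perfectly generated does \emph{not} imply well generated; the implication goes the other way. Indeed, the motivating example in this very section is $\cat T\op$ for $\cat T$ compactly generated, which is perfectly generated (\cref{prop:Top-is-perfectly-generated}) but never well generated (Neeman, \cite[Appendix~E.1]{Neeman01}). Consequently your appeal to \cref{thm:set-generated-are-strictly-localizing} fails: that theorem requires the ambient category to be well generated, and here $\cat T$ is only assumed perfectly generated. There is no ``Krause's result in the cited reference'' upgrading perfect generation to well generation.

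Your ``Equivalently'' sentence, on the other hand, is correct and is precisely the argument that works: Brown representability holds for perfectly generated triangulated categories \cite{Krause02,Krause10}, so once $\Loc\langle \cat K_Y \otimes \cat T\rangle$ is known to be perfectly generated, the functor $\ell \mapsto \cat T(\ell,t)$ is representable and the inclusion admits a right adjoint. Just delete the sentence about well generation and \cref{thm:set-generated-are-strictly-localizing}; what remains is the paper's proof.
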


\begin{proof}
	By \cref{lem:perfectly-generated}, the localizing subcategory $\Loc\langle \cat K_Y \otimes \cat T\rangle$ is perfectly generated. It follows from \cite[Proposition~5.2.1]{Krause10} that it is strictly localizing.
\end{proof}

\begin{Rem}
	The subcategory $\Gamma_Y(\cat T)=\Loc\langle\cat K_Y \otimes \cat T\rangle$ of colocal objects is a localizing $\cat T^d$-submodule of $\cat T$. The proof is a standard thick subcategory argument which uses the fact that for a dualizable object $a \in \cat T^d$, the functor $a \otimes -\colon\cat T \to \cat T$ preserves coproducts. Note that we are not assuming that $t\otimes-$ preserve coproducts in general (for an arbitrary $t \in \cat T$). This is related to the fact that $\Gamma_Y(\cat T)$ is not necessarily a localizing ideal of $\cat T$.
\end{Rem}

\begin{Rem}\label{rem:smashing}
	The subcategory of local objects $\L_Y(\cat T) = \Loc\langle \cat K_Y \otimes \cat T\rangle^\perp$ coincides with $\bigcap_{a \in \cat K_Y} \ker(a\otimes -)$ and hence is itself a localizing $\cat T^d$-submodule of $\cat T$. This implies that the Bousfield localization \eqref{eq:bousfield-localization-triangle} is smashing in the sense that $\Gamma_Y$ and~$\L_Y$ preserve coproducts. It follows that
		\begin{equation}\label{eq:smashing}
			\Gamma_Y(a \otimes t) \simeq a\otimes \Gamma_Y(t) \quad\text{ and }\quad \L_Y(a\otimes t) \simeq a \otimes \L_Y(t)
		\end{equation}
	for any $a \in \cat T^d$ and $t \in \cat T$.
\end{Rem}

\begin{Rem}\label{rem:not-generated-by}
	The localizing $\cat T^d$-submodule generated by the thick ideal~$\cat K_Y$ is given by $\Loc\langle \cat K_Y \rangle =\Loc\langle \cat K_Y \otimes \cat T^d\rangle$; cf.~\cref{rem:submodule-generated-by}. The localizing $\cat T^d$-submodule $\Gamma_Y(\cat T)=\Loc\langle \cat K_Y \otimes \cat T\rangle$ is potentially larger.
\end{Rem}

\begin{Lem}\label{lem:inflation-is-lattice-hom}
	The assignment $Y \mapsto \Gamma_Y\cat T$, regarded as a map from the lattice of Thomason subsets of $\Spc(\cat T^d)$ to the lattice of localizing $\cat T^d$-submodules of~$\cat T$, preserves arbitrary joins and finite meets.
\end{Lem}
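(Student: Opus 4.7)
The plan is to handle arbitrary joins and binary meets separately, with joins being essentially formal and meets requiring a smashing argument. Arbitrary unions of Thomason subsets are Thomason, so joins in the Thomason lattice coincide with unions; joins in the lattice of localizing $\cat T^d$-submodules are the localizing $\cat T^d$-submodules generated by the union. Given a family $\{Y_\alpha\}$ of Thomason subsets with union $Y$, Balmer--Thomason classification of thick ideals in the rigid category $\cat K=\cat T^d$ gives $\cat K_Y = \thick(\bigcup_\alpha \cat K_{Y_\alpha})$, whence
\[
	\Gamma_Y\cat T = \Loc\langle \cat K_Y\otimes \cat T\rangle = \Loc\langle \textstyle\bigcup_\alpha \cat K_{Y_\alpha}\otimes \cat T\rangle = \bigvee_\alpha \Gamma_{Y_\alpha}\cat T,
\]
which settles the preservation of arbitrary joins.

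For binary meets of $Y_1,Y_2$, monotonicity immediately gives $\Gamma_{Y_1\cap Y_2}\cat T \subseteq \Gamma_{Y_1}\cat T\cap \Gamma_{Y_2}\cat T$, so the content is the reverse inclusion. The key ingredient at the level of the rigid category is the identity
\[
	\cat K_{Y_1\cap Y_2} = \thick\langle \cat K_{Y_1}\otimes \cat K_{Y_2}\rangle,
\]
whose nontrivial direction uses rigidity of $\cat K$: any $c\in \cat K_{Y_1\cap Y_2}$ is a retract of $c\otimes c^\vee\otimes c$, in which the first tensor factor lies in $\cat K_{Y_1}$ and the object $c^\vee\otimes c$ lies in $\cat K_{Y_2}$ (since thick ideals are closed under duality). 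Given this identity, take $t\in \Gamma_{Y_1}\cat T\cap\Gamma_{Y_2}\cat T$. Since $t\in \Gamma_{Y_2}\cat T=\Loc\langle \cat K_{Y_2}\otimes \cat T\rangle$ and $t\simeq \Gamma_{Y_1}(t)$, it suffices to show that $\Gamma_{Y_1}$ carries $\Loc\langle \cat K_{Y_2}\otimes\cat T\rangle$ into $\Gamma_{Y_1\cap Y_2}\cat T$. The functor $\Gamma_{Y_1}$ is smashing, hence exact and coproduct preserving by \cref{rem:smashing}, so it suffices to verify this on generators $a\otimes x$ with $a\in \cat K_{Y_2}$ and $x\in \cat T$. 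The smashing identity \eqref{eq:smashing} then gives
\[
	\Gamma_{Y_1}(a\otimes x) \simeq a\otimes \Gamma_{Y_1}(x) \in \Loc\langle \cat K_{Y_2}\otimes \cat K_{Y_1}\otimes \cat T\rangle \subseteq \Gamma_{Y_1\cap Y_2}\cat T,
\]
using the displayed identity for the final inclusion. This completes the argument.

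The main obstacle is the meet direction, and specifically the subtlety flagged in \cref{rem:not-generated-by}: because $\Loc\langle \cat K_Y\otimes \cat T\rangle$ can be strictly larger than $\Loc\langle \cat K_Y\rangle$ in the perfectly generated setting, one cannot simply manipulate the idempotents $e_{Y_i}:=\Gamma_{Y_i}(\unit)$ and use $e_{Y_1}\otimes e_{Y_2}\simeq e_{Y_1\cap Y_2}$ to conclude. Instead the proof must work at the level of building blocks, combining the rigidity identity for $\cat K$ with the smashing property \eqref{eq:smashing} of $\Gamma_{Y_1}$ applied to $\Loc\langle \cat K_{Y_2}\otimes \cat T\rangle$.
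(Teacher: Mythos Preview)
Your proposal is correct and follows essentially the same approach as the paper. For joins you invoke $\cat K_Y = \thick(\bigcup_\alpha \cat K_{Y_\alpha})$ and conclude directly, while the paper spells out the underlying thick-ideal argument (that $\SET{a\in\cat T^d}{a\otimes\cat T\subseteq \bigvee_\alpha \Gamma_{Y_\alpha}\cat T}$ is a thick ideal containing each $\cat K_{Y_\alpha}$); for meets your argument---apply $\Gamma_{Y_1}$ to $t\in\Loc\langle\cat K_{Y_2}\otimes\cat T\rangle$, use smashing and \eqref{eq:smashing} on generators, and invoke the rigidity identity $\cat K_{Y_1\cap Y_2}=\thick\langle\cat K_{Y_1}\otimes\cat K_{Y_2}\rangle$---is exactly the paper's, and your explicit justification of that identity via the retract $c\mid c\otimes c^\vee\otimes c$ is a nice addition.
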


\begin{proof}
	For notational simplicity, write $\cat K \coloneqq \cat T^d$. It is immediate that the map preserves the greatest and least elements. We first prove that it preserves arbitrary joins. To this end, consider a union $\bigcup_i Y_i$ of Thomason subsets of $\Spc(\cat K)$. The inclusion $\bigvee_i \Gamma_{Y_i} \cat T \subseteq \Gamma_{\bigcup_i Y_i}\cat T$ is trivial. To establish the other inclusion we need to check that $\cat K_{\bigcup_i Y_i} \otimes \cat T \subseteq \bigvee_i \Gamma_{Y_i} \cat T$. Observe that
		\[
			\SET{a \in \cat T^d}{a\otimes\cat T\subseteq \bigvee\nolimits_{\hspace{-0.5ex}i} \Gamma_{Y_i} \cat T}
		\]
	is a thick ideal of $\cat T^d$. It contains $\cat K_{Y_i}$ for each $i$. Hence it contains $\thickt{\bigcup_i \cat K_{Y_i}}$ which --- by the classification of thick ideals of the rigid tt-category $\cat K$ --- coincides with $\cat K_{\bigcup_i Y_i}$. This establishes the desired claim.

	It remains to prove that the function preserves finite meets. That is, 
		\[
			\Gamma_{Y_1}\cat T \cap \Gamma_{Y_2}\cat T = \Gamma_{Y_1 \cap Y_2}\cat T
		\]
	for any pair of Thomason subsets $Y_1,Y_2$. The $\supseteq$ inclusion is immediate. For the $\subseteq$ inclusion we need to establish that 
		\begin{equation}\label{eq:finite-meet}
			 \Loc\langle \cat K_{Y_1}\otimes \cat T\rangle \cap \Loc\langle\cat K_{Y_2}\otimes \cat T\rangle \subseteq \Loc\langle\cat K_{Y_1 \cap Y_2} \otimes \cat T\rangle.
		\end{equation}
	First note that $\cat K_{Y_1 \cap Y_2} = \cat K_{Y_1} \cap \cat K_{Y_2} = \thick\langle\cat K_{Y_1} \otimes \cat K_{Y_2}\rangle$ where the last equality uses the rigidity of $\cat K$. Then consider an object $t$ contained in the left-hand side of~\eqref{eq:finite-meet}. Since $t \in \Loc\langle\cat K_{Y_2}\otimes \cat T\rangle$, we have
		\[
			\Gamma_{Y_1}(t) \in \Gamma_{Y_1}(\Loc\langle\cat K_{Y_2}\otimes \cat T\rangle) \subseteq \Loc\langle\Gamma_{Y_1}(\cat K_{Y_2}\otimes \cat T)\rangle,
		\]
	where the inclusion uses that $\Gamma_{Y_1}$ preserves coproducts (\cref{rem:smashing}). But also $t\simeq \Gamma_{Y_1}(t)$ since $t \in \Loco{\cat K_{Y_1}\otimes \cat T}$; hence
		\[
			t \in \Loc\langle \Gamma_{Y_1}(\cat K_{Y_2}\otimes \cat T)\rangle.
		\]
	Now using \eqref{eq:smashing} and the fact that tensoring with dualizable objects preserves coproducts, we have
		\begin{align}\label{eq:locY1Y2}
			\begin{split}
			 \Loc\langle\Gamma_{Y_1}(\cat K_{Y_2} \otimes \cat T)\rangle = \Loc\langle\cat K_{Y_2} \otimes \Gamma_{Y_1}\cat T\rangle
			&= \Loc\langle\cat K_{Y_2} \otimes \Loc\langle\cat K_{Y_1} \otimes \cat T\rangle\rangle\\
			&= \Loc\langle\cat K_{Y_2} \otimes \cat K_{Y_1} \otimes \cat T\rangle\\
			&= \Loc\langle\cat K_{Y_1 \cap Y_2} \otimes \cat T\rangle
			\end{split}
		\end{align}
	which establishes \eqref{eq:finite-meet}.
\end{proof}

\begin{Lem}\label{lem:properties-of-localization}
	 Let $Y_1, Y_2 \subseteq \Spc(\cat T^d)$ be two Thomason subsets. Then we have:
		 \begin{enumerate}
			\item $\Gamma_{Y_1}\Gamma_{Y_2} = \Gamma_{Y_1 \cap Y_2} = \Gamma_{Y_2}\Gamma_{Y_1}$;
			\item $\L_{Y_1} \L_{Y_2} = \L_{Y_1 \cup Y_2} = \L_{Y_2} \L_{Y_1}$;
			\item $\Gamma_{Y_1} \L_{Y_2} = \L_{Y_2} \Gamma_{Y_1}$.
		 \end{enumerate}
\end{Lem}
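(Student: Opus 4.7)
Write $\cat K \coloneqq \cat T^d$. The common engine for all three parts is a \emph{stability principle}: each of the functors $\Gamma_{Y_i}, \L_{Y_i}$ preserves the essential images $\Gamma_{Y_j}\cat T$ and $\L_{Y_j}\cat T$ of the others. My first move is to verify this. Using the smashing formula \eqref{eq:smashing}, for $a \in \cat K_{Y_2}$ and $t' \in \cat T$ we have $\Gamma_{Y_1}(a\otimes t')\simeq a\otimes \Gamma_{Y_1}(t')\in \cat K_{Y_2}\otimes \cat T \subseteq \Gamma_{Y_2}\cat T$; since $\SET{s\in\cat T}{\Gamma_{Y_1}(s)\in \Gamma_{Y_2}\cat T}$ is a localizing subcategory containing $\cat K_{Y_2}\otimes\cat T$, we conclude $\Gamma_{Y_1}(\Gamma_{Y_2}\cat T)\subseteq\Gamma_{Y_2}\cat T$, and the same proof (replacing $\Gamma_{Y_1}$ by $\L_{Y_2}$, or swapping roles) gives the remaining three containments. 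For the $\L$-side, I also invoke the description $\L_{Y_2}\cat T = \bigcap_{a\in\cat K_{Y_2}}\ker(a\otimes -)$ from \cref{rem:smashing}: if $s\in \L_{Y_2}\cat T$ then $a\otimes\Gamma_{Y_1}(s)\simeq \Gamma_{Y_1}(a\otimes s)=0$ (and analogously for $\L_{Y_1}(s)$), which yields stability of $\L_{Y_2}\cat T$ under both $\Gamma_{Y_1}$ and $\L_{Y_1}$.

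For (a), stability gives $\Gamma_{Y_1}\Gamma_{Y_2}(t)\in \Gamma_{Y_1}\cat T \cap \Gamma_{Y_2}\cat T$, which equals $\Gamma_{Y_1\cap Y_2}\cat T$ by \Cref{lem:inflation-is-lattice-hom}. Combining the natural counits produces a map $\Gamma_{Y_1}\Gamma_{Y_2}(t)\to t$; I will identify it as a Bousfield colocalization triangle at $Y_1\cap Y_2$ by showing its cone lies in $\L_{Y_1\cap Y_2}\cat T$. For this I invoke the octahedral axiom on $\Gamma_{Y_1}\Gamma_{Y_2}(t)\to \Gamma_{Y_2}(t)\to t$: the resulting cone is an extension of $\L_{Y_2}(t)$ by $\L_{Y_1}\Gamma_{Y_2}(t)$, both of which lie in $\L_{Y_1\cap Y_2}\cat T$ because $\L_{Y_j}\cat T\subseteq \L_{Y_1\cap Y_2}\cat T$ for $j=1,2$ (a direct consequence of $\Gamma_{Y_1\cap Y_2}\cat T\subseteq \Gamma_{Y_j}\cat T$). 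Uniqueness of Bousfield triangles then gives the required isomorphism $\Gamma_{Y_1}\Gamma_{Y_2}\simeq \Gamma_{Y_1\cap Y_2}$; swapping the indices gives the other equality.

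For (b), I first dualize Lemma~\ref{lem:inflation-is-lattice-hom} to obtain the key identity $\L_{Y_1\cup Y_2}\cat T = \L_{Y_1}\cat T\cap \L_{Y_2}\cat T$. This follows by taking right orthogonals of the join identity $\Gamma_{Y_1\cup Y_2}\cat T = \Loc\langle\Gamma_{Y_1}\cat T\cup \Gamma_{Y_2}\cat T\rangle$ provided by Lemma~\ref{lem:inflation-is-lattice-hom}. Then $\L_{Y_1}\L_{Y_2}(t)\in \L_{Y_1}\cat T\cap \L_{Y_2}\cat T=\L_{Y_1\cup Y_2}\cat T$ by stability. Running the octahedral argument on the composite $t\to \L_{Y_1}(t)\to \L_{Y_1}\L_{Y_2}(t)$, the fibre is an extension of $\Gamma_{Y_1}(t)$ and (a shift of) $\Gamma_{Y_2}\L_{Y_1}(t)$, both lying in $\Gamma_{Y_1\cup Y_2}\cat T$. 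Uniqueness of Bousfield triangles again yields $\L_{Y_1}\L_{Y_2}\simeq \L_{Y_1\cup Y_2}$, and symmetry finishes the part.

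For (c), I apply $\L_{Y_2}$ to the triangle obtained in (a), namely
\[
\Gamma_{Y_1\cap Y_2}(t)\longrightarrow \Gamma_{Y_1}(t)\longrightarrow \Gamma_{Y_1}\L_{Y_2}(t)\longrightarrow \Sigma\Gamma_{Y_1\cap Y_2}(t),
\]
which arises from applying $\Gamma_{Y_1}$ to the Bousfield triangle of $\L_{Y_2}$ and invoking (a). The left-hand term is annihilated by $\L_{Y_2}$ since $\Gamma_{Y_1\cap Y_2}(t)\in \Gamma_{Y_2}\cat T$, while the right-hand term is fixed by $\L_{Y_2}$ since $\Gamma_{Y_1}\L_{Y_2}(t)\in \L_{Y_2}\cat T$ by stability. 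Thus $\L_{Y_2}$ applied to the triangle collapses to a natural isomorphism $\L_{Y_2}\Gamma_{Y_1}(t)\isoto \Gamma_{Y_1}\L_{Y_2}(t)$, as required. The chief obstacle in the whole argument is not any individual step but ensuring that the stability principle is applied in the right four combinations, and that the comparison maps produced by the octahedral/universal property steps are canonical; once the stability lemma is cleanly stated, the remaining bookkeeping is routine.
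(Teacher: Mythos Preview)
Your proof is correct and close in spirit to the paper's, but the mechanism in parts (a) and (b) differs. The paper argues via idempotency of the colocalization: for (a) it first notes that $Y\subseteq Y'$ implies $\Gamma_Y\simeq \Gamma_Y\Gamma_{Y'}$, whence $\Gamma_{Y_1\cap Y_2}\simeq \Gamma_{Y_1\cap Y_2}\Gamma_{Y_1}\Gamma_{Y_2}$, and then shows directly (essentially redoing the meet computation of \cref{lem:inflation-is-lattice-hom}) that $\Gamma_{Y_1}\Gamma_{Y_2}(t)\in\Gamma_{Y_1\cap Y_2}\cat T$, so that applying $\Gamma_{Y_1\cap Y_2}$ is the identity on it. Your route instead cites \cref{lem:inflation-is-lattice-hom} for the meet, and then uses octahedral plus uniqueness of the Bousfield triangle to identify the whole triangle; this trades the paper's ``absorb the extra $\Gamma$'' trick for an explicit verification that the cone lands in the local subcategory. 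Both are standard; yours makes the dependence on \cref{lem:inflation-is-lattice-hom} cleaner, the paper's avoids octahedral. Part (c) is essentially the same in both: the paper phrases it as ``$\Gamma_{Y_1}\L_{Y_2}t$ is $\L_{Y_2}$-local and $\Gamma_{Y_1}t\to\Gamma_{Y_1}\L_{Y_2}t$ is an $\L_{Y_2}$-equivalence'', which is exactly your stability-then-collapse argument.

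One small slip in your plan for (b): with the composite $t\to \L_{Y_1}(t)\to \L_{Y_1}\L_{Y_2}(t)$ as you wrote it, the fibre of the second map is $\L_{Y_1}\Gamma_{Y_2}(t)$, not $\Gamma_{Y_2}\L_{Y_1}(t)$. This is harmless---by your stability principle $\L_{Y_1}\Gamma_{Y_2}(t)\in\Gamma_{Y_2}\cat T\subseteq\Gamma_{Y_1\cup Y_2}\cat T$ just the same---but you should correct the bookkeeping when you write it up.
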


\begin{proof}
	First note that if $Y \subseteq Y'$ then $\Gamma_{Y} \simeq \Gamma_{Y} \Gamma_{Y'}$. Hence, $\Gamma_{Y_1 \cap Y_2} \simeq \Gamma_{Y_1 \cap Y_2} \Gamma_{Y_2} \simeq \Gamma_{Y_1 \cap Y_2} \Gamma_{Y_1} \Gamma_{Y_2}$. To prove $(a)$ it suffices to verify that $\Gamma_{Y_1} \Gamma_{Y_2} t \in \Loc\langle\cat K_{Y_1 \cap Y_2} \otimes \cat T\rangle$ so that $\Gamma_{Y_1 \cap Y_2} \Gamma_{Y_1} \Gamma_{Y_2}(t) \simeq \Gamma_{Y_1} \Gamma_{Y_2}(t)$. Note that, by the classification of thick ideals of the rigid tt-category $\cat K$, $\cat K_{Y_1 \cap Y_2} = \cat K_{Y_1} \cap \cat K_{Y_2}$. Thus, we want to show that
		\[
			\Gamma_{Y_1} \Gamma_{Y_2} t \in \Loc\langle\cat K_{Y_1} \otimes \cat K_{Y_2} \otimes \cat T\rangle)
		\]
	for any $t \in \cat T$. Since $\Gamma_{Y_2} t \in \Loc\langle\cat K_{Y_2}\otimes \cat T\rangle$, we have
		\[
			\Gamma_{Y_1}\Gamma_{Y_2} t \in \Gamma_{Y_1}(\Loc\langle \cat K_{Y_2}\otimes \cat T\rangle) \subseteq \Loc\langle\Gamma_{Y_1}(\cat K_{Y_2}\otimes \cat T)\rangle,
		\]
		where the last inclusion uses that $\Gamma_Y$ preserves coproducts (\cref{rem:smashing}). Now using the same \cref{rem:smashing}, we see
	as in \eqref{eq:locY1Y2} above that
		\begin{align*}
			\Loc\langle\Gamma_{Y_1}(\cat K_{Y_2} \otimes \cat T)\rangle 
			= \Loc\langle\cat K_{Y_2} \otimes \cat K_{Y_1} \otimes \cat T\rangle
		\end{align*}
	which is what we wanted to show. This establishes $(a)$.

	For part $(b)$, first observe that if $Y \subseteq Y'$ then $\L_{Y'} \simeq \L_{Y'} \L_Y$. Hence $\L_{Y_1 \cup Y_2} \simeq \L_{Y_1 \cup Y_2} \L_{Y_2} \simeq \L_{Y_1 \cup Y_2} \L_{Y_1} \L_{Y_2}$. The proof will be finished if we can show that
		\[
			\L_{Y_1} \L_{Y_2} t \in \Loc\langle\cat K_{Y_1 \cup Y_2} \otimes \cat T\rangle^\perp.
		\]
	For this, it is enough to show that 
		\begin{equation}\label{eq:deepblah}
			\cat K_{Y_1 \cup Y_2} \otimes \L_{Y_1} \L_{Y_2} t = 0
		\end{equation}
	which, since $\cat K_{Y_1 \cup K_Y} = \thickt{\cat K_{Y_1} \cup \cat K_{Y_2}}$, is equivalent to 
		\[
			{\cat K_{Y_1} \otimes \L_{Y_1} \L_{Y_2} t = 0} \quad\text{ and }\quad \cat K_{Y_2} \otimes \L_{Y_1} \L_{Y_2} t = 0
		\]
	both of which follow from \cref{rem:smashing}.

	Finally, we prove part $(c)$. It follows from $(a)$ that $\Gamma_{Y_1}\L_{Y_2}t$ is $\L_{Y_2}$-local. Hence $\Gamma_{Y_1}\L_{Y_2} t \xra{\sim} \L_{Y_2}\Gamma_{Y_1}\L_{Y_2}t$. On the other hand, it also follows from $(a)$ that $\Gamma_{Y_1}t \to \Gamma_{Y_1}\L_{Y_2}t$ is an $\L_{Y_2}$-equivalence, hence 
		\[
			\Gamma_{Y_1}\L_{Y_2} t \xra{\sim} \L_{Y_2}\Gamma_{Y_1}\L_{Y_2}t \xleftarrow{\sim} \L_{Y_2}\Gamma_{Y_1}t.\qedhere
		\]
\end{proof}

\begin{Def}[Support for perfectly generated categories]\label{def:support-for-perfect}
	Let $\cat T$ be a perfectly generated non-closed tensor-triangulated category whose spectrum $\Spc(\cat T^d)$ is weakly noetherian. Write $\{\cat P\} = Y_1 \cap Y_2^c$ for Thomason subsets $Y_1, Y_2\subseteq \Spc(\cat T^d)$ and define $\GammaP \coloneqq \Gamma_{Y_1}L_{Y_2}$. Armed with \cref{lem:properties-of-localization}, one shows that this definition does not depend on the choice of $Y_1$ and $Y_2$ by following the ideas of \cite[Lemma~7.4]{BalmerFavi11}. The \emph{Balmer--Favi support} of an object $t \in \cat T$ is defined as
		\[
			\Supp_{\cat T}(t) \coloneqq \SET{\cat P \in \Spc(\cat T^d) }{\GammaP(t) \neq 0} \subseteq \Spc(\cat T^d).
		\]
\end{Def}

\begin{Rem}\label{rem:supp-not-supp}
	We warn the reader that it is not necessarily true that $\Supp_{\cat T}(x) = \supp(x)$ for $x \in \cat T^d$. That is, the small support does not necessarily recover the universal support of dualizable objects.
\end{Rem}

\begin{Def}\label{def:support-function-for-submodules}
	Let $\cat T$ be a non-closed tensor-triangulated category which has small coproducts. A function $\sigma \colon \cat T \to \mathcal{P}(X)$ is a \emph{support theory for localizing $\cat T^d$-submodules} if it satifies the following conditions:
		\begin{enumerate}
			\item $\sigma(0) = \emptyset$;
			\item $\sigma(\Sigma t) = \sigma(t)$ for every $t \in \cat T$;
			\item $\sigma(c) \subseteq \sigma(a) \cup \sigma(b)$ for any exact triangle $a \to b \to c \to \Sigma a$ in $\cat T$.
			\item $\sigma(\coprod_i t_i) = \bigcup_i \sigma(t_i)$ for any set of objects $t_i$ in $\cat T$.
			\item $\sigma(x \otimes t) \subseteq \sigma(t)$ for any $x \in \cat T^d$ and $t \in \cat T$.
		\end{enumerate}
	These properties are precisely equivalent to the statement that for every subset $Y \subseteq X$, the subcategory $\SET{ t\in \cat T}{ \sigma(t) \subseteq Y}$ is a localizing $\cat T^d$-submodule of~$\cat T$.
\end{Def}

\begin{Def}\label{def:stratifying-submodules}
	We say that a support theory $(X,\sigma)$ \emph{stratifies} $\cat T$ if the map $\cat L \mapsto \bigcup_{t\in \cat L} \sigma(t)$ is a bijection from the collection of localizing $\cat T^d$-submodules of $\cat T$ to the set of all subsets of $X$.
\end{Def}

\begin{Rem}\label{rem:inverse-of-strat}
	If $(X,\sigma)$ stratifies $\cat T$ then the inverse must necessarily be given by $Y \mapsto \SET{t \in \cat T}{\sigma(t) \subseteq Y}$. It follows that $\sigma$ provides a lattice isomorphism between the lattice of localizing $\cat T^d$-submodules and the lattice of all subsets of $X$. In particular, it preserves joins and meets.
\end{Rem}

\begin{Exa}\label{exa:perf-support}
	If $\Spc(\cat T^d)$ is weakly noetherian, then $\Supp_{\cat T}$ defined in \cref{def:support-for-perfect} is a support function in the sense of \cref{def:support-function-for-submodules}. Axioms (d) and (e) follow from \cref{rem:smashing}.
\end{Exa}

\begin{Def}\label{def:perfectly-stratified}
	We say that a perfectly generated non-closed tt-category~$\cat T$ with~$\Spc(\cat T^d)$ weakly noetherian is \emph{stratified} if it is stratified by $\Supp_{\cat T}$.
\end{Def}

\begin{Exa}\label{exa:supp-for-T}
	Let $\cat T$ be a rigidly-compactly generated tensor-triangulated category in the usual sense and let $\cat K\coloneqq \cat T^d$. The localizing $\cat K$-submodules of $\cat T$ are precisely the localizing ideals of $\cat T$ (\cref{exa:submodules-of-T}). If we apply the construction of \cref{def:support-for-perfect} to $\cat T$, we are just considering for each Thomason subset $Y \subseteq \Spc(\cat K)$, the Bousfield localization of $\cat T$ whose acyclics are
		\[
			\Loc\langle \cat K_Y \otimes \cat T\rangle = \Loco{\cat K_Y} = \Loco{e_Y} = \Loc\langle \cat K_Y\rangle \eqqcolon \cat T_Y.
		\]
	This is the usual idempotent triangle
		\[
			e_Y \otimes t \to t \to f_Y \otimes t \to \Sigma e_Y \otimes t
		\]
	on $\cat T$, and the support is
		\[
			\Supp_{\cat T}(t) = \SET{\cat P \in \Spc(\cat K)}{ \gP \otimes t \neq 0}.
		\]
	That is, we obtain the usual Balmer--Favi support of \cref{def:BF-support-and-cosupport}. Stratification for $\cat T$ in the sense of \cref{def:perfectly-stratified} recovers the notion of stratification studied in \cite{bhs1}. In this example, $\Supp_{\cat T}(x) = \supp(x)$ for $x \in \cat T^d$.
\end{Exa}

\begin{Exa}\label{exa:supp-for-Top}
	Let $\cat T$ be a rigidly-compactly generated tensor-triangulated category in the usual sense. Then $\cat T\op$ is a perfectly generated non-closed tensor-triangulated category with subcategory of dualizable objects $\cat K \coloneqq \cat (\cat T\op)^d \cong \cat T^d$. The localizing $\cat K$-submodules of $\cat T\op$ are precisely the colocalizing coideals of $\cat T$ (\cref{exa:submodules-of-Top}). If we apply the construction of \cref{def:support-for-perfect} to $\cat T\op$, we are then considering a Bousfield localization in $\cat T\op$ for each Thomason subset $Y \subset \Spc(\cat K)$,
		\begin{equation}\label{eq:BLinTop}
			\Gamma_Y\op(t) \to t \to \L_Y\op(t) \to \Sigma \Gamma_Y\op(t),
		\end{equation}
	whose acyclics are $\Loc_{\cat T\op}\langle \cat K_Y \otimes \cat T\op\rangle$. This corresponds to a Bousfield localization in $\cat T$,
		\begin{equation}\label{eq:BLinT}
			\L_Y\op(t) \to t \to \Gamma_Y\op(t) \to \Sigma \L_Y\op(t),
		\end{equation}
	whose \emph{local} objects are 
		\[
			\Loc_{\cat T\op}\langle \cat K_Y \otimes \cat T\op \rangle = \Coloc_{\cat T}\langle\cat K_Y \otimes \cat T\rangle.
		\]
	That is, expressed in $\cat T$, the subcategory of local objects is $\Coloc_{\cat T}\langle \cat K_Y \otimes \cat T\rangle = \ihom{e_Y,\cat T} = (\cat T_Y)^{\perp \perp}$ using \cref{lem:key-observation}. Thus the Bousfield localization~\eqref{eq:BLinT} in~$\cat T$ is the Bousfield localization
		\[
			\ihom{f_Y,t} \to t \to \ihom{e_Y,t} \to \Sigma \ihom{f_Y,t}
		\]
	whose acyclics are $(\cat T_Y)^\perp$. Thus, the original Bousfield localization \eqref{eq:BLinTop} in $\cat T\op$ is given by
		\[
			\Gamma_Y\op(t) = \ihom{e_Y,t}  \qquad\text{ and }\qquad \L_Y\op(t) = \ihom{f_Y,t}
		\]
	and the support function on $\cat T\op$ is given by
		\[
			\Supp_{\cat T\op}(t) \coloneqq \SET{ \cat P \in \cat K}{ \ihom{\gP,t} \neq 0}.
		\]
	In conclusion, we have established that the support (\cref{def:support-for-perfect}) of the opposite category $\cat T\op$ is precisely Balmer--Favi cosupport (\cref{def:BF-support-and-cosupport}):
\end{Exa}

\begin{Thm}\label{thm:cosupp-is-supp}
	Let $\cat T$ be a rigidly-compactly generated tensor-triangulated category with $\Spc(\cat T^d)$ weakly noetherian. Then 
		\[
			\Cosupp_{\cat T}(t) = \Supp_{\cat T\op}(t)
		\]
	for all $t \in \cat T$, the colocalizing coideals of $\cat T$ are precisely the localizing \mbox{$\cat T^d$-submodules} of $\cat T\op$, and $\cat T$ is costratified (\cref{def:costratification}) precisely when~$\cat T\op$ is stratified (\cref{def:perfectly-stratified}).
\end{Thm}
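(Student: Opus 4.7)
The proof is essentially an assembly of the computations carried out in \cref{exa:submodules-of-Top} and \cref{exa:supp-for-Top}, so the plan is to verify that each of the three assertions has already been extracted in those examples and to check the compatibilities between them.

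First I would establish the cosupport/support identification. By \cref{exa:supp-for-Top}, the Bousfield localization of $\cat T\op$ whose acyclics are $\Loc_{\cat T\op}\langle \cat K_Y \otimes \cat T\op\rangle$ corresponds, under the inversion of arrows, to the Bousfield localization in $\cat T$ whose subcategory of local objects is $\Coloc_{\cat T}\langle \cat K_Y \otimes \cat T\rangle$, and by \cref{lem:key-observation} this equals $\ihom{e_Y,\cat T}$. Unwinding triangles, this identifies $\Gamma_Y^{\opname}(t) = \ihom{e_Y,t}$ and $\L_Y^{\opname}(t) = \ihom{f_Y,t}$. For a weakly visible point $\cat P \in \Spc(\cat T^d)$ with $\{\cat P\} = Y_1 \cap Y_2^c$, \cref{def:support-for-perfect} applied to $\cat T\op$ therefore gives
\[
\Gamma_{\cat P}^{\opname}(t) = \Gamma_{Y_1}^{\opname} \L_{Y_2}^{\opname}(t) = \ihom{e_{Y_1},\ihom{f_{Y_2},t}} \simeq \ihom{e_{Y_1}\otimes f_{Y_2},t} = \ihom{\gP, t},
\]
so that $\Supp_{\cat T\op}(t) = \SET{\cat P}{\ihom{\gP,t}\neq 0} = \Cosupp_{\cat T}(t)$ by \cref{def:BF-support-and-cosupport}.

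Next I would verify the identification of subcategories, which is \cref{exa:submodules-of-Top}: a colocalizing subcategory $\cat C \subseteq \cat T$ is a coideal iff $\ihom{\cat T,\cat C} \subseteq \cat C$, which by rigidity of $\cat T^d$ and the fact that $\cat T = \Loc\langle \cat T^d\rangle$ reduces to $\cat T^d \otimes \cat C \subseteq \cat C$; since a colocalizing subcategory of $\cat T$ is exactly a localizing subcategory of $\cat T\op$, this is precisely the condition to be a localizing $\cat T^d$-submodule of $\cat T\op$.

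Finally, combining the two identifications, the map in \cref{def:costratification}
\[
\{\text{colocalizing coideals of }\cat T\} \xra{\Cosupp_{\cat T}} \mathcal{P}(\Spc(\cat T^d))
\]
is literally the same function as the stratification map for $\cat T\op$ from \cref{def:stratifying-submodules}, so costratification of $\cat T$ is the same statement as stratification of $\cat T\op$ in the sense of \cref{def:perfectly-stratified}. The one small point requiring attention, and arguably the only genuine obstacle, is to be sure that $\cat T\op$ meets the standing hypotheses of \cref{def:support-for-perfect}: it must be a perfectly generated non-closed tt-category with weakly noetherian spectrum of dualizables. Perfect generation of $\cat T\op$ is \cref{prop:Top-is-perfectly-generated}, the identification $(\cat T\op)^d \cong \cat T^d$ is by rigidity of dualizables, and the weakly noetherian hypothesis is shared by both. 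With these verifications in hand the three statements of the theorem follow.
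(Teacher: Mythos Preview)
Your proposal is correct and follows exactly the paper's approach: the theorem is stated in the paper as a summary of the computations in \cref{exa:submodules-of-Top} and \cref{exa:supp-for-Top}, with no separate proof environment, so your assembly of those two examples (together with the verification that $\cat T\op$ meets the hypotheses of \cref{def:support-for-perfect} via \cref{prop:Top-is-perfectly-generated}) is precisely what the paper does, only made more explicit.
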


\begin{Rem}
	Our perspective has been to regard a non-closed tt-category~$\cat T$ as a module over its rigid tt-subcategory of dualizable objects $\cat T^d$. There are similarities but also differences between the support theory we have constructed compared to what is achieved in \cite{Stevenson13}. Our setting is both more general and more specialized. When $\cat T$ is a rigidly-compactly generated \mbox{tt-category} acting on a compactly generated triangulated category $\cat S$, Stevenson constructs a notion of support for localizing $\cat T$-submodules of $\cat S$ taking values in $\Spc(\cat T^c)$. In particular, one can let $\cat T$ act on itself, in which case one recovers the Balmer--Favi support theory. On the other hand, Stevenson's theory applies to non-tensor examples like singularity categories (see, e.g., \cite{Stevenson14b}). Our setting is more specialized because we require a monoidal structure, but within the monoidal setting our construction is more general in that it only requires a non-closed perfectly generated tt-category; moreover, it zeros in on the significance of the $\cat T^d$ part of the action. These ideas also provide new examples beyond cosupport, as we now explain:
\end{Rem}

\begin{Exa}
	If $\cat L$ is any strictly localizing ideal of a rigidly-compactly generated tt-category $\cat T$ then the corresponding Bousfield localization $\cat T/\cat L$ inherits the structure of a tensor-triangulated category and the generators of $\cat T$ localize to a set of dualizable generators of $\cat T/\cat L$. In general, these generators are not compact when $n>0$. For example, the unit of the $K(n)$-local category is not compact. Nevertheless, the local category~$\cat T/\cat L$ is well generated hence perfectly generated (see \cite[Theorem 7.2.1]{Krause10}) and $(\cat T/\cat L)^d$ is essentially small (see \mbox{\cite[p.~413]{Mathew16}}). Thus, \cref{def:support-for-perfect} provides a support theory for any Bousfield localization of a rigidly-compactly generated tt-category which lies in the spectrum of dualizable objects $\Spc( (\cat T/\cat L)^d)$ provided this space is weakly noetherian. We will not pursue this class of examples in this work, but it would be an interesting support theory to study, e.g., for the $K(n)$-local category. Some work in this direction has been carried out in \cite{BarthelHeardNaumann20pp}.	
\end{Exa}

\section{Universality}\label{sec:universality}

Our next goal is to establish that our approach to costratification via the cosupport theory defined in \cref{sec:BF-cosupport} is in a certain sense the unique or universal such choice. We obtained analogous results for stratification in \cite[Section 7]{bhs1}. In fact, given the connection between stratification and costratification established in \cref{sec:cosupp-is-supp}, we will proceed by proving a generalized version of that uniqueness result which holds for support theories for perfectly generated categeories.

\begin{Hyp}
	Unless otherwise specified, $\cat T$ will denote a perfectly generated non-closed tensor-triangulated category (\cref{def:non-closed-tt-category}) and support theory will mean the notion of \cref{def:support-function-for-submodules}.
\end{Hyp}

\begin{Lem}\label{lem:sigma-forced}
	Let $\sigma\colon \cat T\rightarrow \mathcal{P}(\Spc(\cat T^d))$ be a support theory (\cref{def:support-function-for-submodules}) which satisfies $\sigma(\Gamma_Y \cat T) \subseteq Y$ and $\sigma(\L_Y \cat T) \subseteq Y^c$ for a Thomason subset $Y \subseteq \Spc(\cat T^d)$. Then 
		\[
			\sigma(\Gamma_Y t) = Y \cap \sigma(t) \quad\text{ and }\quad \sigma(\L_Y t) = Y^c \cap \sigma(t)
		\]
	for any $t \in \cat T$.
\end{Lem}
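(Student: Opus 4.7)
The plan is to apply axiom (c) of a support theory to the three rotations of the Bousfield localization triangle $\Gamma_Y t \to t \to \L_Y t \to \Sigma \Gamma_Y t$, and then exploit the disjointness of $Y$ and $Y^c$ to extract clean equalities from the resulting inclusions. The hypothesis that $\sigma(\Gamma_Y \cat T) \subseteq Y$ and $\sigma(\L_Y \cat T) \subseteq Y^c$ (extended to subcategories by taking unions) says precisely that for every $t\in\cat T$ the sets $\sigma(\Gamma_Y t)$ and $\sigma(\L_Y t)$ are separated by the partition $\Spc(\cat T^d)=Y\sqcup Y^c$. This is the whole content; no appeal to the full structure of the Bousfield localization, to generators, or to stratification is needed.

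Concretely, I would first record the three inclusions coming from the triangle and axiom (c):
\begin{align*}
\sigma(t) &\subseteq \sigma(\Gamma_Y t) \cup \sigma(\L_Y t),\\
\sigma(\Gamma_Y t) &\subseteq \sigma(t) \cup \sigma(\L_Y t),\\
\sigma(\L_Y t) &\subseteq \sigma(\Gamma_Y t) \cup \sigma(t),
\end{align*}
(the latter two coming from rotations, using axiom (b) to discard shifts). Intersecting the first with $Y$ and using $\sigma(\L_Y t)\cap Y=\emptyset$ gives $Y\cap\sigma(t)\subseteq \sigma(\Gamma_Y t)$; intersecting the second with $Y$ and using $\sigma(\Gamma_Y t)\subseteq Y$ gives $\sigma(\Gamma_Y t)=Y\cap\sigma(\Gamma_Y t)\subseteq Y\cap\sigma(t)$. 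Together this yields $\sigma(\Gamma_Y t)=Y\cap\sigma(t)$. The equality $\sigma(\L_Y t)=Y^c\cap\sigma(t)$ follows symmetrically by intersecting the first and third inclusions with $Y^c$ instead.

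I do not foresee a genuine obstacle here: the lemma is a purely formal consequence of the triangle axiom for $\sigma$ together with the disjointness hypothesis, and the real work of the section lies elsewhere (in establishing when a prospective $\sigma$ actually satisfies the containments $\sigma(\Gamma_Y\cat T)\subseteq Y$ and $\sigma(\L_Y\cat T)\subseteq Y^c$, which will presumably be the content of the universality theorem that this lemma feeds into).
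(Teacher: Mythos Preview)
Your argument is correct and is precisely what the paper has in mind: its proof reads in full ``This follows in a routine manner from the exact triangle~\eqref{eq:bousfield-localization-triangle},'' and you have simply written out that routine.
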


\begin{proof}
	This follows in a routine manner from the exact triangle~\eqref{eq:bousfield-localization-triangle}.
\end{proof}

\begin{Lem}
	Assume $\Spc(\cat T^d)$ is weakly noetherian. For any Thomason subset $Y \subseteq \Spc(\cat T^d)$ and $t \in \cat T$, we have
		\begin{enumerate}
			\item $\Supp_{\cat T}(\Gamma_Yt) = Y \cap \Supp_{\cat T}(t)$, and
			\item $\Supp_{\cat T}(\L_Yt) = Y^c \cap \Supp_{\cat T}(t)$.
		\end{enumerate}
\end{Lem}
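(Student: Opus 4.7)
The plan is to deduce this lemma as an immediate consequence of the preceding \cref{lem:sigma-forced} applied to $\sigma = \Supp_{\cat T}$. Indeed, $\Supp_{\cat T}$ is a support theory by \cref{exa:perf-support}, so it suffices to verify the two hypotheses
\[
\Supp_{\cat T}(\Gamma_Y \cat T) \subseteq Y \quad\text{and}\quad \Supp_{\cat T}(\L_Y \cat T) \subseteq Y^c.
\]

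First I would establish the containment $\Supp_{\cat T}(\Gamma_Y \cat T) \subseteq Y$. Fix $\cat P \notin Y$ and $t \in \cat T$; the goal is to show $\GammaP \Gamma_Y t = 0$. Choosing Thomason subsets $Y_1,Y_2$ with $\{\cat P\} = Y_1 \cap Y_2^c$, and using that this presentation is unique up to the well-definedness of $\GammaP$, I would replace $Y_2$ by $Y_2 \cup (Y_1 \cap Y)$; since $\cat P \notin Y$, this larger Thomason set still satisfies $\{\cat P\} = Y_1 \cap (Y_2 \cup (Y_1 \cap Y))^c$, so $\GammaP$ is also computed by $\Gamma_{Y_1}\L_{Y_2 \cup (Y_1\cap Y)}$. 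Now \cref{lem:properties-of-localization}(c) gives $\L_{Y_2\cup(Y_1\cap Y)}\Gamma_Y \simeq \Gamma_Y \L_{Y_2\cup(Y_1\cap Y)}$, and then \cref{lem:properties-of-localization}(a) yields $\Gamma_{Y_1}\Gamma_Y = \Gamma_{Y_1 \cap Y}$. Since $Y_1 \cap Y \subseteq Y_2 \cup (Y_1 \cap Y)$, another application of \cref{lem:properties-of-localization}(a) shows that $\Gamma_{Y_1 \cap Y} \simeq \Gamma_{Y_1 \cap Y}\Gamma_{Y_2\cup (Y_1\cap Y)}$, and composing with $\L_{Y_2\cup(Y_1\cap Y)}$ gives zero since $\Gamma_{Y_2\cup(Y_1\cap Y)}\L_{Y_2\cup(Y_1\cap Y)} = 0$.

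The second containment $\Supp_{\cat T}(\L_Y \cat T) \subseteq Y^c$ is proved dually. Fix $\cat P \in Y$ and $t \in \cat T$. Writing $\{\cat P\} = Y_1 \cap Y_2^c$ and using $\cat P \in Y$, I would replace $Y_1$ by the Thomason subset $Y_1 \cap Y$, so that $\{\cat P\} = (Y_1 \cap Y) \cap Y_2^c$ and we may assume $Y_1 \subseteq Y$. Then \cref{lem:properties-of-localization}(b) gives $\L_{Y_2}\L_Y = \L_{Y_2 \cup Y}$, and since $Y_1 \subseteq Y \subseteq Y_2 \cup Y$, \cref{lem:properties-of-localization}(a) shows $\Gamma_{Y_1} \simeq \Gamma_{Y_1}\Gamma_{Y_2 \cup Y}$. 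Composing with $\L_{Y_2 \cup Y}$ yields zero, so $\GammaP \L_Y t = 0$ as required.

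With both hypotheses verified, \cref{lem:sigma-forced} immediately delivers parts (a) and (b). The main subtlety is just the careful management of the Thomason presentations of $\{\cat P\}$ so that the absorption identities in \cref{lem:properties-of-localization} can be applied; the rest is formal.
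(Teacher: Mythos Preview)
Your proposal is correct and follows essentially the same approach as the paper: reduce to the containments $\Supp_{\cat T}(\Gamma_Y \cat T) \subseteq Y$ and $\Supp_{\cat T}(\L_Y \cat T) \subseteq Y^c$ via \cref{lem:sigma-forced} and \cref{exa:perf-support}, and then verify these containments using \cref{lem:properties-of-localization}. The paper simply declares this verification a ``routine exercise using \cref{lem:properties-of-localization} and the exact triangle~\eqref{eq:bousfield-localization-triangle}'', whereas you have carefully spelled out the Thomason bookkeeping needed to make the absorption identities apply; your argument is a correct execution of exactly what the paper has in mind.
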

\begin{proof}
	This is a routine exercise using \cref{lem:properties-of-localization} and the exact triangle~\eqref{eq:bousfield-localization-triangle}. Note that in light of \cref{lem:sigma-forced} and \cref{exa:perf-support}, it suffices to establish the $\subseteq$ inclusions in $(a)$ and $(b)$.
\end{proof}

\begin{Prop}\label{prop:unique}
	Let $\cat T$ be a perfectly generated non-closed tensor-triangu\-lated category with $\Spc(\cat T^d)$ weakly noetherian. The notion of support defined in \cref{def:support-for-perfect} is the only assignment of a subset $\sigma(t) \subseteq \Spc(\cat T^d)$ to each object $t \in \cat T$ which can satisfy the following two properties:
		\begin{enumerate}
			\item For every $t \in \cat T$, $\sigma(t)=\emptyset$ implies $t=0$.
			\item For every Thomason subset $Y \subseteq \Spc(\cat T^d)$, $\sigma(\Gamma_Y t)=\sigma(t)\cap Y$ and $\sigma(\L_Y t)=\sigma(t)\cap Y^c$.
		\end{enumerate}
\end{Prop}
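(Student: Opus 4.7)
The plan is to show, by a direct pointwise computation, that $\sigma(t)=\Supp_{\cat T}(t)$ for every $t\in\cat T$. The key input is that $\Spc(\cat T^d)$ is weakly noetherian, so every $\cat P\in\Spc(\cat T^d)$ is weakly visible and $\GammaP$ is defined.

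First I would dispose of the normalization $\sigma(0)=\emptyset$. Since $\emptyset$ is Thomason and $\Gamma_\emptyset\equiv 0$ (its acyclic subcategory $\Loc\langle\cat T^d_\emptyset\otimes\cat T\rangle$ is the zero subcategory), property $(b)$ applied to any $t\in\cat T$ forces
\[
\sigma(0)=\sigma(\Gamma_\emptyset t)=\sigma(t)\cap\emptyset=\emptyset.
\]
Next, fix $\cat P\in\Spc(\cat T^d)$ and choose Thomason subsets $Y_1,Y_2\subseteq\Spc(\cat T^d)$ with $\{\cat P\}=Y_1\cap Y_2^c$, so that $\GammaP=\Gamma_{Y_1}\L_{Y_2}$ by \cref{def:support-for-perfect}. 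Iterating $(b)$ yields
\[
\sigma(\GammaP t)=\sigma(\Gamma_{Y_1}\L_{Y_2}t)=Y_1\cap\sigma(\L_{Y_2}t)=Y_1\cap Y_2^c\cap\sigma(t)=\{\cat P\}\cap\sigma(t).
\]

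From this identity both implications follow at once. If $\GammaP t=0$, then by the first step $\sigma(\GammaP t)=\sigma(0)=\emptyset$, and consequently $\cat P\notin\sigma(t)$. Conversely, if $\GammaP t\neq 0$, then assumption $(a)$ guarantees $\sigma(\GammaP t)\neq\emptyset$; combined with the containment $\sigma(\GammaP t)\subseteq\{\cat P\}$ this forces $\sigma(\GammaP t)=\{\cat P\}$, and hence $\cat P\in\sigma(t)$. Therefore $\cat P\in\sigma(t)$ if and only if $\GammaP t\neq 0$, i.e.\ if and only if $\cat P\in\Supp_{\cat T}(t)$, proving $\sigma=\Supp_{\cat T}$.

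There is no genuine obstacle here: the argument is a two-line manipulation of $(a)$ and $(b)$ together with the definition of $\GammaP$, modulo the preliminary (and essentially free) observation that $(b)$ alone implies $\sigma(0)=\emptyset$. The only mild subtlety is that the derivation of $\sigma(\GammaP t)=\{\cat P\}\cap\sigma(t)$ requires a choice of Thomason pair $(Y_1,Y_2)$; this is harmless since $\GammaP$ itself is independent of that choice by \cref{def:support-for-perfect}.
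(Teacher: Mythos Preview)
Your argument is correct and follows the same route as the paper's proof: compute $\sigma(\GammaP t)=\{\cat P\}\cap\sigma(t)$ from $(b)$ and then use $(a)$ to conclude $\cat P\in\sigma(t)\iff\GammaP t\neq 0$. You are simply more explicit about the preliminary step $\sigma(0)=\emptyset$, which the paper uses implicitly in the biconditional.
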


\begin{proof}
	If $\{\cat P\}=Y_1 \cap Y_2^c$ then $\Gamma_{Y_1} \L_{Y_2} t = \{\cat P\} \cap \sigma(t)$. Thus $\cat P \in \sigma(t)$ if and only if $\sigma(\Gamma_{Y_1} \L_{Y_2} t) \neq \emptyset$ if and only if $\Gamma_{Y_1} \L_{Y_2} t\neq 0$ if and only if $\cat P \in \Supp_{\cat T}(t)$.
\end{proof}

\begin{Thm}\label{thm:perfectly-generated-uniqueness}
	Let $\cat T$ be a perfectly generated non-closed tensor-triangulated category. 
	Suppose $\cat T$ is stratified (\cref{def:stratifying-submodules}) by a support function $\sigma$ (\cref{def:support-function-for-submodules}) taking values in a spectral space $X$. Suppose $\sigma$ induces a bijection
		\[
			Y \mapsto \sigma(\Gamma_Y \cat T)
		\]
	between the set of Thomason subsets of $\Spc(\cat T^d)$ and the set of Thomason subsets of $X$. Then there is a unique homeomorphism $f\colon X\xra{\sim}\Spc(\cat T^d)$ such that $\sigma(\Gamma_{\supp(a)}\cat T) = f^{-1}(\supp(a))$ for all $a \in \cat T^d$. Moreover, if the space $X$ is weakly noetherian then $\sigma(t) = f^{-1}(\Supp_{\cat T}(t))$ for all $t \in \cat T$.
\end{Thm}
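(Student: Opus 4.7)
The plan is to proceed in two stages: first construct the homeomorphism $f$, then use it to identify $\sigma$ with the pullback of $\Supp_{\cat T}$. For the first stage, I would begin by upgrading the hypothesized bijection $Y \mapsto \sigma(\Gamma_Y\cat T)$ to a \emph{lattice} isomorphism between the Thomason subsets of $\Spc(\cat T^d)$ and those of $X$. Indeed, this map factors as $Y \mapsto \Gamma_Y\cat T$, which preserves arbitrary joins and finite meets by \cref{lem:inflation-is-lattice-hom}, followed by the stratifying bijection $\cat L \mapsto \sigma(\cat L)$, which is a lattice isomorphism between localizing $\cat T^d$-submodules and subsets of $X$ by \cref{rem:inverse-of-strat}. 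A join- and meet-preserving bijection of lattices is automatically a lattice isomorphism, so the claim follows.

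Next, I would invoke the fact that a spectral space is determined up to homeomorphism by its lattice of Thomason subsets (equivalently, the open-set lattice of its Hochster dual is a spatial coherent frame whose prime filters recover the space). Applying this --- together with the analogous argument for universality of support appearing in \cite{bhs1} --- produces a homeomorphism $f\colon X \xrightarrow{\sim} \Spc(\cat T^d)$ satisfying $f^{-1}(Y) = \sigma(\Gamma_Y\cat T)$ for every Thomason $Y \subseteq \Spc(\cat T^d)$. In particular, $\sigma(\Gamma_{\supp(a)}\cat T) = f^{-1}(\supp(a))$ for all $a \in \cat T^d$. Uniqueness of $f$ with this property is clear, since every Thomason subset of $\Spc(\cat T^d)$ is a union of sets of the form $\supp(a)$, and Thomason subsets determine the spectral topology.

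For the second stage, assume $X$ is weakly noetherian and consider the composite $\tilde\sigma \coloneqq f \circ \sigma\colon \cat T \to \mathcal{P}(\Spc(\cat T^d))$, which is again a support theory. I would show $\tilde\sigma = \Supp_{\cat T}$ by verifying the two hypotheses of \cref{prop:unique}; this yields the desired identity $\sigma(t) = f^{-1}(\Supp_{\cat T}(t))$. The vanishing-detection hypothesis is immediate from stratification of $\cat T$ by $\sigma$. For compatibility with $\Gamma_Y$ and $\L_Y$, I would appeal to \cref{lem:sigma-forced} once I know that $\tilde\sigma(\Gamma_Y\cat T) \subseteq Y$ (immediate from the construction of $f$) and $\tilde\sigma(\L_Y\cat T) \subseteq Y^c$. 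For the latter, observe that $\Gamma_Y\cat T$ and $\L_Y\cat T$ are complementary localizing $\cat T^d$-submodules: their intersection is $0$ and their join is $\cat T$, thanks to the Bousfield localization triangle \eqref{eq:bousfield-localization-triangle}. Since $\sigma$ is a lattice isomorphism onto $\mathcal{P}(X)$, it sends these to complementary subsets, giving $\sigma(\L_Y\cat T) = X \setminus f^{-1}(Y) = f^{-1}(Y^c)$ and hence $\tilde\sigma(\L_Y\cat T) = Y^c$.

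The main obstacle will be the appeal to Hochster duality in the first stage, specifically the conversion of the lattice isomorphism on Thomason subsets into a homeomorphism of the underlying spectral spaces. Once this structural input is in hand --- either argued directly or imported from the parallel treatment in \cite{bhs1} --- the remainder of the proof reduces to a routine unwinding of the definitions against \cref{lem:sigma-forced} and \cref{prop:unique}.
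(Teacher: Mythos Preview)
Your proposal is correct and follows essentially the same route as the paper's proof: upgrade the bijection to a lattice isomorphism via \cref{lem:inflation-is-lattice-hom} and \cref{rem:inverse-of-strat}, invoke Stone duality for coherent frames to obtain the homeomorphism $f$, then use the complementarity of $\Gamma_Y\cat T$ and $\L_Y\cat T$ together with \cref{lem:sigma-forced} and \cref{prop:unique} to identify $\sigma$ with $f^{-1}\circ\Supp_{\cat T}$. The only cosmetic differences are that the paper names the duality ``Stone duality'' (citing \cite{Johnstone82,KockPitsch17}) rather than Hochster duality, and cites \cite[Lemma~3.3]{Balmer05a} for uniqueness.
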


\begin{proof}
	Recall from \cref{rem:inverse-of-strat} that $\sigma$ provides a lattice isomorphism. Together with \cref{lem:inflation-is-lattice-hom} this ensures that the hypothesized bijection $Y \mapsto \sigma(\Gamma_Y \cat T)$ is a lattice isomorphism. Both lattices are coherent frames (see, e.g., \cite{KockPitsch17}) hence by Stone duality \cite{Johnstone82} it corresponds to a homeomorphism $f\colon X \xra{\sim} \Spc(\cat T^d)$. By construction this homeomorphism satisfies $f^{-1}(Y) = \sigma(\Gamma_Y \cat T)$ for every Thomason subset $Y \subseteq \Spc(\cat T^d)$. Since $\sigma$ is a lattice isomorphism, $\Gamma_Y \cat T \cap \L_Y \cat T = 0$ and $\Gamma_Y \cat T \vee \L_Y \cat T=\cat T$ which together imply that $\sigma(\L_Y \cat T)=f^{-1}(Y)^c$. The fact that $f^{-1}(\Supp_{\cat T}(t)) = \sigma(t)$ for each $t \in \cat T$ then follows from \cref{lem:sigma-forced} and \cref{prop:unique}. The uniqueness statement is standard; see \cite[Lemma~3.3]{Balmer05a} for example.
\end{proof}

\begin{Rem}
	Recall from \cref{rem:not-generated-by} that $\Gamma_Y \cat T$ is not necessarily the localizing $\cat T^d$-submodule generated by $\cat T^d_Y$. For example, the localizing $\cat T^d$-submodule generated by a dualizable object $a \in \cat T^d$ is $\Loc\langle a\otimes \cat T^d\rangle$ while $\Gamma_{\supp(a)} \cat T = \Loc\langle a\otimes \cat T\rangle$. This may be helpful in understanding \cref{thm:perfectly-generated-uniqueness}: $\sigma(a)$ is the support of the localizing $\cat T^d$-submodule generated by $a$, while $\sigma(\Gamma_{\supp(a)} \cat T)$ is the support of a potentially larger localizing $\cat T^d$-submodule; cf.~\cref{rem:supp-not-supp}.
\end{Rem}

\begin{Rem}
	The lattice homomorphism $\cat C \mapsto \Loc\langle\cat C\otimes \cat T\rangle$ from thick ideals of~$\cat T^d$ to localizing $\cat T^d$-submodules of $\cat T$ (cf.~\cref{lem:inflation-is-lattice-hom}) is not obviously injective. In situations where it is known to be injective, the hypothesis of \cref{thm:perfectly-generated-uniqueness} simplifies slightly. This will be the case for our two main examples of interest below.
\end{Rem}

\begin{Cor}\label{cor:usual-uniqueness}
	Let $\cat T$ be a rigidly-compactly generated tensor-triangulated category. Suppose $\cat T$ is stratified by a support theory $\sigma$ in a spectral space~$X$. Suppose that~$\sigma(a)$ is Thomason closed for every $a \in \cat T^d$ and that every Thomason closed subset of~$X$ arises in this way. If $X$ is weakly noetherian then there is a homeomorphism $X \cong \Spc(\cat T^d)$ under which $\sigma(t)\cong \Supp_{\cat T}(t)$.
\end{Cor}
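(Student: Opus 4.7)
The plan is to deduce this corollary directly from \cref{thm:perfectly-generated-uniqueness} applied to $\cat T$, which is perfectly generated (indeed compactly generated) and satisfies $\cat T^d = \cat T^c$. The only real work is to verify the hypothesis of that theorem, namely that $Y \mapsto \sigma(\Gamma_Y \cat T)$ restricts to a bijection between the Thomason subsets of $\Spc(\cat T^c)$ and the Thomason subsets of $X$.

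First I would unpack what $\sigma(\Gamma_Y \cat T)$ is in the rigidly-compactly generated setting. By \cref{exa:supp-for-T}, $\Gamma_Y \cat T = \cat T_Y = \Loc\langle \cat T^c_Y\rangle$. Because $\sigma$ stratifies $\cat T$, the assignment $\cat L \mapsto \sigma(\cat L)$ is a lattice isomorphism (\cref{rem:inverse-of-strat}), and the axioms of a support theory force $\sigma(t) \subseteq \sigma(a)$ whenever $t \in \Loc\langle a\rangle$. Combining these observations yields
\[
\sigma(\Gamma_Y \cat T) \;=\; \bigcup_{a \in \cat T^c_Y} \sigma(a).
\]
By hypothesis each $\sigma(a)$ is Thomason closed in $X$, so $\sigma(\Gamma_Y \cat T)$ is a union of Thomason closed subsets and hence Thomason. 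Injectivity of $Y \mapsto \sigma(\Gamma_Y \cat T)$ is then immediate: the map $\cat L \mapsto \sigma(\cat L)$ is a bijection, and Balmer's classification identifies Thomason subsets of $\Spc(\cat T^c)$ with the corresponding compactly generated localizing ideals $\cat T_Y$.

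For surjectivity, given a Thomason subset $Z \subseteq X$, write $Z = \bigcup_i Z_i$ with each $Z_i$ Thomason closed. By assumption, each $Z_i = \sigma(a_i)$ for some $a_i \in \cat T^c$. Setting $Y \coloneqq \bigcup_i \supp(a_i)$ produces a Thomason subset of $\Spc(\cat T^c)$, and since $a_i \in \cat T^c_{\supp(a_i)} \subseteq \cat T^c_Y$, the formula above gives $\sigma(\cat T_Y) \supseteq \bigcup_i \sigma(a_i) = Z$; conversely, $\cat T_Y = \bigvee_i \Loc\langle a_i\rangle$ and $\sigma(\Loc\langle a_i\rangle) = \sigma(a_i) = Z_i$, so $\sigma(\cat T_Y) = Z$. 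This verifies the hypothesis of \cref{thm:perfectly-generated-uniqueness}.

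Applying the theorem yields a unique homeomorphism $f\colon X \xra{\sim} \Spc(\cat T^c)$ with $\sigma(\Gamma_{\supp(a)} \cat T) = f^{-1}(\supp(a))$, and since $\sigma(\Gamma_{\supp(a)} \cat T) = \sigma(a)$ by the displayed formula, we obtain $\sigma(a) = f^{-1}(\supp(a))$ for every $a \in \cat T^c$. The final identification $\sigma(t) = f^{-1}(\Supp_{\cat T}(t))$ for all $t \in \cat T$ is then the corresponding conclusion of \cref{thm:perfectly-generated-uniqueness} under the weakly noetherian hypothesis on $X$. There is no real obstacle: the entire argument is bookkeeping around Balmer's classification and the lattice isomorphism provided by stratification, and the Thomason-closedness assumption on $\sigma(a)$ is precisely what is needed to match the Thomason structure on both sides.
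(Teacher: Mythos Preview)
Your proposal is correct and follows essentially the same approach as the paper: both verify the hypothesis of \cref{thm:perfectly-generated-uniqueness} by identifying $\sigma(\Gamma_Y\cat T)$ with $\bigcup_{a\in\cat T^c_Y}\sigma(a)$, then checking that $Y\mapsto\sigma(\Gamma_Y\cat T)$ is a bijection onto the Thomason subsets of $X$ using the assumed form of $\sigma$ on compact objects and the injectivity of $Y\mapsto\cat T_Y$ (via Neeman's lemma/Balmer's classification). One small imprecision: your claim ``$\sigma(t)\subseteq\sigma(a)$ whenever $t\in\Loc\langle a\rangle$'' should really read $\Loco{a}$, since the support axioms only guarantee closure under localizing \emph{ideals}; this does not affect the conclusion because $\Loc\langle\cat T^c_Y\rangle$ is already an ideal.
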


\begin{proof}
	Note that $\Gamma_Y\cat T = \Loc\langle\cat K_Y \otimes \cat T\rangle = \Loc\langle\cat K_Y \otimes \Loc\langle\cat T^d\rangle\rangle = \Loc\langle\cat K_Y\rangle$. Hence $\sigma(\Gamma_Y\cat T) = \sigma(\cat K_Y)$. In particular, $\sigma(\Gamma_{\supp(a)}\cat T) =\sigma(a)$ for any ${a \in \cat T^d}$. Similarly, for a Thomason subset $Y=\bigcup_i \supp(a_i)$ we have (invoking \cref{lem:inflation-is-lattice-hom}) 
		\[
			\sigma(\Gamma_Y \cat T)=\sigma(\bigvee_i \Gamma_{\supp(a_i)} \cat T) = \bigcup_i \sigma(\Gamma_{\supp(a_i)}\cat T) = \bigcup_i \sigma(a_i).
	 	\]
	Thus, the hypothesis that $\sigma(a)\subseteq X$ is Thomason for all $a \in \cat T^d$ implies that $\sigma(\Gamma_Y \cat T)$ is also Thomason for each Thomason subset $Y\subseteq X$. Moreover, every Thomason closed of $X$ is hypothesized to be of the form $\sigma(a)$ for some $a \in \cat T^d$. Hence every Thomason subset of $X$ is of the form $\bigcup_i \sigma(a_i) = \bigcup_i \sigma(\Gamma_{\supp(a_i)}\cat T) = \sigma(\bigvee_i \Gamma_{\supp(a_i)}\cat T) = \sigma(\Gamma_{\cup_i \supp(a_i)} \cat T)$. Moreover, the map $Y \mapsto \Gamma_Y \cat T$ is injective, because $\Gamma_Y \cat T \cap \cat T^d =  \Loc\langle\cat K_Y\rangle\cap \cat T^d = \cat K_Y$ by \cite[Lemma 2.2]{Neeman92b}. Then the hypotheses of \cref{thm:perfectly-generated-uniqueness} hold and we are done.
\end{proof}

\begin{Rem}
	\Cref{cor:usual-uniqueness} is essentially the uniqueness theorem of \cite[Theorem~7.6]{bhs1}. We have included the above arguments to show how it is deduced from the primordial uniqueness \cref{thm:perfectly-generated-uniqueness}. The statement in \cite[Theorem~7.6]{bhs1} includes some further conditions equivalent to the hypotheses.
\end{Rem}

\begin{Cor}\label{cor:Top-uniqueness}
	Let $\cat T$ be a rigidly-compactly generated tensor-triangulated category. Suppose $\cat T$ is costratified by a cosupport theory $\mathfrak C$ taking values in a spectral space~$X$. The following are equivalent:
	\begin{enumerate}
		\item	For every Thomason subset $Y\subseteq \Spc(\cat T^d)$, $\mathfrak C(\ihom{e_Y,\cat T})$ is a Thomason subset of $X$, and every Thomason subset of $X$ arises in this way.
		\item	The map $Y \mapsto \mathfrak C(\ihom{e_Y,\cat T})$ is a bijection from the lattice of Thomason subsets of $\Spc(\cat T^d)$ onto the lattice of Thomason subsets of~$X$.
		\item	For every Thomason subset $Y \subseteq \Spc(\cat T^d)$, $\mathfrak C(\Loc\langle\cat K_Y\rangle^\perp)^c$ is a Thomason subset of~$X$, and every Thomason subset of $X$ arises in this way.
		\item	The map $\cat C \mapsto \mathfrak C(\Loc\langle\cat C\rangle^\perp)^c$ is a bijection between the thick ideals of~$\cat T^d$ and the Thomason subsets of $X$.
		\item	There is a unique homeomorphism $f\colon X \xra{\sim} \Spc(\cat T^d)$ such that 
					\[
						f^{-1}(\supp(a)) = \mathfrak C(\ihom{a,\cat T})
					\]
				for each $a \in \cat T^d$.
	\end{enumerate}
	If these conditions hold and the spectral space $X$ is weakly noetherian then under the homeomorphism $X \xra{\sim} \Spc(\cat T^d)$ the cosupport theory $\mathfrak C$ coincides with the Balmer--Favi cosupport $\Cosupp$.
\end{Cor}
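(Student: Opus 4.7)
My plan is to deduce this corollary from \cref{thm:perfectly-generated-uniqueness} applied to the opposite category $\cat T\op$, which is legitimate thanks to \cref{thm:cosupp-is-supp}. Recall that $\cat T\op$ is a perfectly generated non-closed tt-category with $(\cat T\op)^d \cong \cat T^d$, that the colocalizing coideals of $\cat T$ are exactly the localizing $\cat T^d$-submodules of $\cat T\op$, and that costratification of $\cat T$ via $\mathfrak C$ is the same as stratification of $\cat T\op$ via the support theory $\mathfrak C$ (axioms (a)--(d) of \cref{def:axiomaticcosupp} match those of \cref{def:support-function-for-submodules}, while axiom (e) of \cref{def:axiomaticcosupp} is precisely the module axiom for $\cat T\op$ because the internal hom of $\cat T$ is the tensor structure on $\cat T\op$ restricted to the dualizable factor). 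From \cref{exa:supp-for-Top}, the essential image $\Gamma_Y^{\opname}\cat T^{\opname}$ of the localization on $\cat T\op$ indexed by a Thomason $Y \subseteq \Spc(\cat T^d)$ is precisely the subcategory $\ihom{e_Y,\cat T}$ of $\cat T$.

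Under this translation, condition (a) is exactly the hypothesis of \cref{thm:perfectly-generated-uniqueness} for $\cat T\op$, and condition (b) is a bijectivity reformulation of (a): the only thing to add is that the map $Y \mapsto \mathfrak C(\ihom{e_Y,\cat T})$ is automatically injective, since costratification forces $\ihom{e_Y,\cat T}=\ihom{e_{Y'},\cat T}$ whenever the two have the same $\mathfrak C$-value, and a Thomason subset $Y$ is recovered from $\ihom{e_Y,\cat T}=(\cat T_Y)^{\perp\perp}$ via its left orthogonal $\cat T_Y$. For the equivalence of (a) with (c)/(d), I will establish the key identity
\[
    \mathfrak C(\cat T_Y^\perp)^c = \mathfrak C(\ihom{e_Y,\cat T})
\]
for every Thomason $Y$. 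This rests on two observations that use only the smashing localization triangle $e_Y \to \unit \to f_Y$ together with the axioms of a cosupport theory: (i)~the intersection $\cat T_Y^\perp \cap \ihom{e_Y,\cat T}$ is zero, since $t \in \cat T_Y^\perp$ forces $\ihom{e_Y,t}=0$, and any object in $\ihom{e_Y,\cat T}$ is canonically isomorphic to $\ihom{e_Y,-}$ of itself; and (ii)~every object $t \in \cat T$ sits in a triangle $\ihom{f_Y,t}\to t \to \ihom{e_Y,t}$ whose outer terms lie in $\cat T_Y^\perp$ and $\ihom{e_Y,\cat T}$ respectively. Together with costratification, (i) and (ii) imply that the $\mathfrak C$-values of the two subcategories are disjoint and cover $X$, hence are complementary. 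The bijectivity statements in (c)/(d) then correspond to those in (a)/(b) via complementation and the identification of thick ideals of $\cat T^d$ with Thomason subsets of $\Spc(\cat T^d)$.

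Finally, \cref{thm:perfectly-generated-uniqueness} produces the unique homeomorphism $f\colon X \xra{\sim} \Spc(\cat T^d)$ satisfying $f^{-1}(\supp(a)) = \mathfrak C(\Gamma_{\supp(a)}^{\opname}\cat T^{\opname}) = \mathfrak C(\ihom{e_{\supp(a)},\cat T})$ for each $a \in \cat T^d$. To conclude condition (e), I only need that $\mathfrak C(\ihom{e_{\supp(a)},\cat T})=\mathfrak C(\ihom{a,\cat T})$: this holds because $a$ and $e_{\supp(a)}$ generate the same localizing ideal $\cat T_{\supp(a)}$, so by \eqref{eq:[loc,t]} (and \cref{rem:axioms-equiv}) they generate the same colocalizing coideal when $\ihom{-,t}$ is applied, hence have equal $\mathfrak C$-value. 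Conversely, (e) implies (a) by writing any Thomason $Y = \bigcup_i \supp(a_i)$, noting $\ihom{e_Y,\cat T}=\Coloco{\ihom{a_i,\cat T}}$ (via \cref{lem:key-observation} and dualizability $a_i \otimes t \cong \ihom{a_i^\vee,t}$), and applying $\mathfrak C$. Under the additional hypothesis that $X$ is weakly noetherian, the last statement of \cref{thm:perfectly-generated-uniqueness} applied to $\cat T\op$ yields $\mathfrak C(t) = f^{-1}(\Supp_{\cat T\op}(t)) = f^{-1}(\Cosupp_{\cat T}(t))$ for all $t \in \cat T$.

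The main technical point to get right is the complementation identity $\mathfrak C(\cat T_Y^\perp)^c = \mathfrak C(\ihom{e_Y,\cat T})$; once this is in place, the rest amounts to careful bookkeeping between conditions phrased in terms of $\ihom{e_Y,-}$ (coming from $\Gamma^{\opname}$) and conditions phrased in terms of the more familiar localizing ideals $\cat T_Y$ and their orthogonals.
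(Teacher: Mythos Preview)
Your proposal is correct and follows essentially the same approach as the paper: apply \cref{thm:perfectly-generated-uniqueness} to $\cat T\op$, identify $\Gamma_Y\cat T\op = \ihom{e_Y,\cat T}$, establish the complementation identity $\mathfrak C(\cat T_Y^\perp)^c = \mathfrak C(\ihom{e_Y,\cat T})$ via the lattice isomorphism forced by costratification, and handle $(e)\Leftrightarrow(a)$ through the equality $\Loco{a}=\Loco{e_{\supp(a)}}$. One minor expositional slip: the tensor on $\cat T\op$ is still $\otimes$, not $\ihom{-,-}$; the correct translation is that for $x\in\cat T^d$ one has $x\otimes t\cong\ihom{x^\vee,t}$, so axiom~(e) of \cref{def:axiomaticcosupp} specialized to dualizable first argument yields axiom~(e) of \cref{def:support-function-for-submodules} (cf.~\cref{rem:axiom-differences}).
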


\begin{proof}
	We wish to apply \cref{thm:perfectly-generated-uniqueness} to $\cat T\op$. Recall from \cref{exa:supp-for-Top} that $\Gamma_Y \cat T\op = \ihom{e_Y,\cat T}$. Thus, the hypothesis of \cref{thm:perfectly-generated-uniqueness} (applied to~$\cat T\op$) is that $Y \mapsto \mathfrak C(\ihom{e_Y,\cat T})$ is a bijection between the set of Thomason subsets of $\Spc(\cat T^d)$ and the set of Thomason subsets of $X$; that is, statement $(b)$. Note that this map is necessarily injective: If $\mathfrak C(\ihom{e_{Y_1},\cat T})=\mathfrak C(\ihom{e_{Y_2},\cat T})$ then $\hom({e_{Y_1},\cat T}) = \hom({e_{Y_2},\cat T})$ since $\mathfrak C$ costratifies $\cat T$. Hence 
		\[
			\LOCO(e_{Y_1}) = {}^{\perp\perp} \ihom{e_{Y_1},\cat T} = {}^{\perp\perp} \ihom{e_{Y_2},\cat T} = \LOCO(e_{Y_2})
		\]
	so that 
		\[
			\cat K_{Y_1} = \cat T^d \cap \LOCO(e_{Y_1})\\ = \cat T^d \cap \LOCO(e_{Y_2}) = \cat K_{Y_2}
		\]
	and hence $Y_1 = Y_2$. Therefore, statement $(b)$ is equivalent to statement $(a)$.

	Now recall from \cref{rem:inverse-of-strat} that the costratifying cosupport $\mathfrak C$ is a lattice isomorphism; in particular it preserves joins and meets. If $\cat C_1 \coloneqq \Loc\langle\cat K_Y\rangle^\perp = f_Y \otimes \cat T = \ihom{f_Y,\cat T}$ and $\cat C_2\coloneqq \ihom{e_Y,\cat T} = \Loc\langle\cat K_Y\rangle^{\perp \perp}$, we have $\cat C_1 \cap \cat C_2 = 0$. On the other hand, for any $t\in \cat T$, there is an exact triangle $\ihom{f_Y,t} \to t \to \ihom{e_Y,t} \to \Sigma\ihom{f_Y,t}$ which shows that $\cat C_1 \vee \cat C_2=\cat T$. Since $\mathfrak C$ preserves joins and meets, we conclude that $\mathfrak C(\cat C_1)^c = \mathfrak C(\cat C_2)$. Thus statement $(c)$ is equivalent to statement $(a)$, and statement $(d)$ is equivalent to statement $(b)$.

	Bearing in mind \cref{lem:eqlococosupp} which gives $\ihom{a,\cat T}=\ihom{e_{\supp(a)},\cat T}$, \cref{thm:perfectly-generated-uniqueness} establishes that $(b)$ implies $(e)$. Conversely, suppose $(e)$ holds. For an arbitrary Thomason $Y=\bigcup_{i\in I} \supp(a_i)$ we have $\ihom{e_Y,\ihom{a_i,t}}=\ihom{a_i,t}$ which implies that $f^{-1}(Y) \subseteq \mathfrak C(\ihom{e_Y,\cat T})$. Moreover, since $e_Y \in \Loco{{a_i \mid i \in I}}$, we have
		\[
			\ihom{e_Y,t} \in \ihom{\Loco{a_i \mid i \in I},t}\subseteq \Coloco{\ihom{a_i,t} \mid i\in I}
		\]
	by \eqref{eq:[loc,t]} so that $\mathfrak C(\ihom{e_Y,\cat T}) \subseteq f^{-1}(Y)$. Thus, $\mathfrak C(\ihom{e_Y,\cat T})=f^{-1}(Y)$ which establishes that $(a)$ holds.

	Finally note that the last statement (when $X$ is weakly noetherian) is also provided by \cref{thm:perfectly-generated-uniqueness}.
\end{proof}

\begin{Rem}
	As with \cref{cor:usual-uniqueness} (and the original \cite[Theorem~7.6]{bhs1}), the point of \cref{cor:Top-uniqueness} is that if $\cat T$ is costratified by a notion of cosupport in a weakly noetherian spectral space such that the costratification is ``compatible'' with the usual classification of thick ideals of dualizable objects, then it must be achieved by the tensor triangular cosupport theory which lies in the Balmer spectrum.
\end{Rem}

\begin{Cor}[BIK costratification]\label{cor:BIK-costrat}
	Let $\cat T$ be a rigidly-compactly generated tt-category which, in the terminology of \cite{BensonIyengarKrause12} is noetherian and costratified by the action of a graded-noetherian ring $R$. Then the BIK space of cosupports $\cosupp_R(\cat T)$ is homeomorphic to $\Spc(\cat T^c)$ and the BIK notion of cosupport in $\cosupp_R(\cat T)$ coincides with the notion of cosupport in $\Spc(\cat T^c)$ defined in \cref{def:BF-support-and-cosupport}.
\end{Cor}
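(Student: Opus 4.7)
The strategy is to verify the hypotheses of \cref{cor:Top-uniqueness} applied to the cosupport theory $\mathfrak{C} \coloneqq \cosupp_R$ taking values in $X \coloneqq \cosupp_R(\cat T) \subseteq \Spec^h(R)$. Since $R$ is graded-noetherian, $\Spec^h(R)$ is a noetherian spectral space, so the subspace $X$ inherits a noetherian, hence weakly noetherian, topology. That $\cosupp_R$ defines a cosupport theory in the sense of \cref{def:axiomaticcosupp} is established by the results of \cite{BensonIyengarKrause12} recalled earlier in the paper, and the BIK hypothesis of costratification by $R$ is by definition the statement that this theory induces a bijection between colocalizing coideals of $\cat T$ and all subsets of $X$.

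The main task is to check condition~$(d)$ of \cref{cor:Top-uniqueness}: the assignment $\cat C \mapsto \cosupp_R(\Loc\langle\cat C\rangle^\perp)^c$ should be a bijection from thick tensor-ideals of $\cat T^c$ onto the Thomason subsets of $X$, which under noetherianity are precisely the specialization-closed subsets. By \cite[Theorem~9.7]{BensonIyengarKrause12}, BIK costratification implies BIK stratification; in particular $\supp_R(\cat T) = \cosupp_R(\cat T) = X$, and the BIK support $\supp_R$ classifies thick ideals of $\cat T^c$ by specialization-closed subsets of $X$. To match this with the assignment in~$(d)$, I would establish the identity
\[
    \cosupp_R(\Loc\langle\cat C\rangle^\perp)^c = \supp_R(\cat C)
\]
for every thick ideal $\cat C$ of $\cat T^c$. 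For a compact $x \in \cat T^c$, the BIK half-hom identity gives $\cosupp_R(\ihom{x,t}) = \supp_R(x) \cap \cosupp_R(t)$, and a routine argument parallel to the proof of the implication $(c)\Rightarrow(a)$ in \cref{cor:Top-uniqueness} then yields $\cosupp_R(\Loc\langle x\rangle^\perp)^c = \supp_R(x)$; the general case follows by taking unions over compact generators of $\cat C$.

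Once $(d)$ is verified, \cref{cor:Top-uniqueness} produces a unique homeomorphism $f\colon X \xra{\sim} \Spc(\cat T^c)$ satisfying $f^{-1}(\supp(a)) = \cosupp_R(\ihom{a,\cat T})$ for every $a \in \cat T^c$. Since $X$ is weakly noetherian, the final clause of that corollary then identifies $\cosupp_R$ with the Balmer--Favi cosupport $\Cosupp$ under~$f$, which is exactly the stated conclusion. The main obstacle is the verification of the identity $\cosupp_R(\Loc\langle\cat C\rangle^\perp)^c = \supp_R(\cat C)$; this amounts to a dictionary translation between the BIK Koszul-object local formalism and the Balmer--Favi formalism, but reduces, via compactness and the half-hom identity, to standard computations extractable from \cite[\S 5, \S 8]{BensonIyengarKrause12}.
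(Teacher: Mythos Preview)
Your approach is essentially the same as the paper's: both verify hypothesis~$(d)$ of \cref{cor:Top-uniqueness} by identifying $\cosupp_R(\Loc\langle\cat C\rangle^\perp)^c$ with $\supp_R(\cat C)$ and then invoking the BIK classification of thick ideals. The paper obtains this identity in one stroke by citing the proof of \cite[Corollary~9.9]{BensonIyengarKrause12}, which gives $\cosupp_R(\cat L^\perp)=\supp_R(\cat L)^c$ for any localizing ideal $\cat L$, and then appeals to \cite[Theorem~6.1]{BensonIyengarKrause12} for the bijection with specialization-closed subsets; your route via the half-hom identity works but is more laborious. The paper also mentions an alternative argument: use \cite[Corollary~7.11]{bhs1} to get the homeomorphism $\Spc(\cat T^c)\cong\supp_R(\cat T)$ from stratification, and then apply \cref{prop:unique} directly, checking its conditions~$(a)$ and~$(b)$ from \cite[Theorem~4.5 and Proposition~4.7]{BensonIyengarKrause12}.

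There is one small gap in your verification of the hypotheses: you note that $X=\cosupp_R(\cat T)$ inherits a noetherian topology from $\Spec^h(R)$, but \cref{cor:Top-uniqueness} requires $X$ to be a \emph{spectral} space, and an arbitrary subspace of a spectral space need not be spectral. The paper handles this by invoking \cite[Theorem~5.5]{BensonIyengarKrause08} to see that $\supp_R(\cat T)=\Supp_R(\unit)$ is \emph{closed} in $\Spec^h(R)$, hence itself a noetherian spectral space.
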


\begin{proof}
	We first note that because $\cat T$ is costratified by $R$, it is also stratified by $R$, see \cite[Theorem~9.7]{BensonIyengarKrause12}. Moreover, $\cosupp_R(\cat T)=\supp_R(\cat T)$ and for any localizing ideal $\cat L$, $\cosupp_R(\cat L^\perp) = \supp_R(\cat L)^c$ (see the proof of \cite[Corollary~9.9]{BensonIyengarKrause12}). Invoking \cite[Theorem~6.1]{BensonIyengarKrause11b} we see that $\cat C \mapsto \cosupp_R(\Loc\langle\cat C\rangle^\perp)^c$ is a bijection between the thick ideals and the Thomason subsets of $\supp_R \cat T$. Moreover, by \cite[Theorem~5.5]{BensonIyengarKrause08}, $\supp_R(\cat T)=\Supp_{R}(\unit)$ is a closed subspace of $\Spec(R)$, hence is itself a noetherian spectral space. Hence, we can invoke \cref{cor:Top-uniqueness} with hypothesis~$(d)$. Alternatively, we could invoke \cite[Corollary~7.11]{bhs1} to conclude that $\Spc(\cat T^c)\cong \supp_R(\cat T)\cong \cosupp_R(\cat T)$ and invoke \cref{prop:unique} noting that the BIK theory of cosupport satisfies conditions $(a)$ and $(b)$ (see in particular \cite[Theorem~4.5 and Proposition~4.7]{BensonIyengarKrause12}).
\end{proof}

\begin{Rem}\label{rem:axiom-differences}
	The axioms of a support function for localizing submodules given in \cref{def:support-function-for-submodules} look slightly weaker than the axioms  for a support function for localizing ideals given in \cite[Definition 7.1]{bhs1}. The former's axiom (e) states that $\sigma(x\otimes t)\subseteq \sigma(t)$ for any $x \in \cat T^d$ and $t\in \cat T$ while the latter's axiom (e) states that $\sigma(t_1 \otimes t_2) \subseteq \sigma(t_1) \cap \sigma(t_2)$ for all $t_1,t_2 \in \cat T$. For~$\cat T$ rigidly-compactly generated, this stronger axiom is forced by the axioms of \cref{def:support-function-for-submodules}. Indeed, for any $t \in \cat T$, $\SET{s\in \cat T}{{\sigma(s\otimes t)\subseteq \sigma(t)}}$ is a localizing subcategory which contains the dualizable objects. Similarly, axiom (e) of \cref{def:axiomaticcosupp} looks slightly stronger than what is provided by \cref{def:support-function-for-submodules} when applied to $\cat T\op$, namely it requires $\mathfrak C(\hom(s,t)) \subseteq \mathfrak C(t)$ for all $s,t \in \cat T$. Again it is implied by the \emph{a priori} weaker axioms by considering $\SET{s \in \cat T}{\mathfrak C(\ihom{s,t})\subseteq \mathfrak C(t)}$. On the other hand, \cref{def:support-function-for-submodules} does not include the second part of axiom (a) of \cref{def:axiomaticcosupp}.
\end{Rem}

\begin{Rem}\label{rem:comp-veras}
	The reader may find it interesting to compare our axioms for cosupport theory with the axioms of \cite[Definition~3.2]{Verasdanis22bpp} which explicitly involve Brown--Comenetz duals (which will make an appearance in our work in the next section). Our axioms are \emph{a priori} weaker. In particular, a cosupport datum in the sense of \cite[Definition~3.2]{Verasdanis22bpp} is a cosupport function in the sense of \cref{def:axiomaticcosupp}. We claim that our formulation of axiom (e) is morally ``correct'' due to \cref{rem:axioms-equiv}; also cf.~\cref{def:support-function-for-submodules}(e). In any case, it follows from \cref{prop:cosupp-BC-duals} below that the Balmer--Favi cosupport function satisfies the stronger axioms stated in \cite[Definition~3.2]{Verasdanis22bpp}. Hence, by \cref{cor:Top-uniqueness} any cosupport theory which costratifies the colocalizing coideals in a weakly noetherian space (in the sense of the corollary) satisfies the stronger axioms of \cite[Definition~3.2]{Verasdanis22bpp}.
\end{Rem}

\section{Duality}\label{sec:duality}

\begin{Def}\label{def:dualizes}
	Let $\kappa$ be an object in a closed symmetric monoidal category~$\cat T$. We say that $\kappa$ \emph{dualizes} a full subcategory $\cat T_0 \subseteq \cat T$ if $\ihom{-,\kappa}$ restricts to an equivalence $\cat T_0 \xra{\simeq} \cat T_0\op$.
\end{Def}

\begin{Rem}\label{rem:Deltakappa}
	Writing  $\Delta_\kappa(x) \coloneqq \ihom{x,\kappa}$, observe that the evaluation morphism $\Delta_\kappa(x)\otimes x \to \kappa$ is adjoint to a morphism $\phi_x \colon x\to \Delta_\kappa\Delta_\kappa(x)$. Dinaturality of evaluation implies that $\phi_x$ is natural in $x$. Unwinding the definition, we see that $\kappa$ dualizes $\cat T_0$ if and only if $\Delta_{\kappa}$ preserves $\cat T_0$ and $x \to \Delta_{\kappa}\Delta_\kappa x$ is an isomorphism for all $x \in \cat T_0$. 
\end{Rem}

\begin{Exa}
	The monoidal unit $\unit$ dualizes the full subcategory $\cat T^d \subseteq \cat T$ of dualizable objects and also the (potentially larger) full subcategory of reflexive objects of $\cat T$, i.e., those $x\in \cat T$ for which the canonical map $x \to (x^\vee)^\vee=\Delta_\unit\Delta_\unit(x)$ is an isomorphism. More generally, any $\otimes$-invertible object~$\kappa$ dualizes precisely the same subcategories as the monoidal unit~$\unit$.
\end{Exa}

\begin{Prop}\label{prop:all-objects-dualize}
	Let $\cat T$ be a triangulated category with a compatible closed symmetric monoidal structure (as in \cite[Appendix A]{HoveyPalmieriStrickland97}). Let $\kappa \in \cat T$ be an arbitrary object. It dualizes the full subcategory
		\[
			\cat T_\kappa \coloneqq \SET{x \in \cat T}{ \phi_x \colon x\to \Delta_\kappa\Delta_\kappa(x) \text{ is an equivalence}}.
		\]
	The full subcategory $\cat T_\kappa$ is a thick $\cat T^d$-submodule of $\cat T$ (that is, a thick subcategory satisfying $\cat T^d \otimes \cat T_\kappa \subseteq \cat T_\kappa$). It contains any full subcategory of $\cat T$ dualized by $\kappa$.
\end{Prop}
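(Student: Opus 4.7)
The plan is to verify the three claims in turn, all resting on the observation that $\Delta_\kappa = \ihom{-,\kappa}$ participates in a self-dual adjunction $\Delta_\kappa^{\opname} \dashv \Delta_\kappa \colon \cat T\op \to \cat T$ whose unit and counit are both realized by the biduality morphism $\phi_\bullet$ of \cref{rem:Deltakappa}. The triangle identities of this adjunction collapse to the single equation
\[
    \Delta_\kappa(\phi_y) \circ \phi_{\Delta_\kappa y} = \id_{\Delta_\kappa y}
\]
for every $y \in \cat T$, and this is the one computation everything else is bootstrapped from.

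For the dualization claim, I would first note that $\phi_x$ being an isomorphism for $x \in \cat T_\kappa$ holds by definition, so only the containment $\Delta_\kappa(\cat T_\kappa) \subseteq \cat T_\kappa$ needs an argument. Given $x \in \cat T_\kappa$, applying $\Delta_\kappa$ to the iso $\phi_x$ shows that $\Delta_\kappa(\phi_x)$ is an isomorphism, and then the triangle identity above forces $\phi_{\Delta_\kappa x}$ to be its inverse, hence also an isomorphism. Thus $\Delta_\kappa x \in \cat T_\kappa$, proving that $\kappa$ dualizes $\cat T_\kappa$.

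For the structural claim, thickness of $\cat T_\kappa$ is automatic: the composite $\Delta_\kappa \Delta_\kappa \colon \cat T \to \cat T$ is an exact (covariant) functor, $\phi \colon \id \Rightarrow \Delta_\kappa\Delta_\kappa$ is a triangulated natural transformation, and so by the $2$-out-of-$3$ principle for exact triangles together with closure under retracts, the class of objects on which $\phi$ is an isomorphism is a thick subcategory. For the $\cat T^d$-submodule property, I would use that for dualizable $d$ and arbitrary $x$ the closed monoidal structure gives a natural isomorphism $\Delta_\kappa(d \otimes x) \cong \ihom{x, d^\vee \otimes \kappa} \cong d^\vee \otimes \Delta_\kappa(x)$ (the second step using rigidity of $d$). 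Iterating this and applying it to $d^\vee$ yields a natural isomorphism $\Delta_\kappa\Delta_\kappa(d \otimes x) \cong d \otimes \Delta_\kappa\Delta_\kappa(x)$ under which $\phi_{d \otimes x}$ is identified with $d \otimes \phi_x$; the latter is an isomorphism whenever $\phi_x$ is, so $d \otimes x \in \cat T_\kappa$. I expect the main obstacle here to be verifying this last identification of natural transformations, which requires unwinding the definition of $\phi$ via evaluation/coevaluation and chasing a coherence diagram involving the adjunction isomorphism for $d$.

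Finally, if $\cat T_0 \subseteq \cat T$ is any full subcategory dualized by $\kappa$, then by \cref{rem:Deltakappa} the biduality morphism $\phi_x$ is an isomorphism for every $x \in \cat T_0$, and hence $\cat T_0 \subseteq \cat T_\kappa$ directly from the definition of $\cat T_\kappa$.
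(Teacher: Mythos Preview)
Your proof is correct and follows essentially the same approach as the paper's. The only difference is cosmetic: where the paper invokes the general fact that any adjunction restricts to an equivalence between the subcategories on which the unit (resp.\ counit) is an isomorphism, you unpack this explicitly via the triangle identity $\Delta_\kappa(\phi_y)\circ\phi_{\Delta_\kappa y}=\id_{\Delta_\kappa y}$; both arguments for the $\cat T^d$-submodule property and the final maximality claim match the paper's.
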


\begin{proof}
	The functor $\Delta_\kappa \coloneqq\ihom{-,\kappa}$ is adjoint to itself
		\begin{equation}\label{eq:delta-adj}
			\Delta_\kappa : \cat T \adjto \cat T\op : \Delta_\kappa
		\end{equation}
	with the map $\phi_x$ of \cref{rem:Deltakappa} serving as both the unit and the counit of the adjunction. The fact that $\kappa$ dualizes the subcategory $\cat T_{\kappa}$ is then just a manifestation of the fact that any adjunction induces an equivalence between the full subcategory of objects on which the unit is an isomorphism and the full subcategory of objects on which the counit is an isomorphism. Since~\eqref{eq:delta-adj} is a triangulated adjunction,~$\cat T_\kappa$ is a thick subcategory of $\cat T$. To see that it is a $\cat T^d$-submodule just note that for any~$x \in \cat T$ and $y \in \cat T^d$, we have
		\[
			\Delta_\kappa\Delta_\kappa(x) \otimes y \simeq \Delta_\kappa\Delta_\kappa(x \otimes y)
		\]
	and $\phi_x \otimes \text{id}_y \simeq \phi_{x \otimes y}$. The final statement is clear from \cref{rem:Deltakappa}.
\end{proof}

\begin{Rem}
	The fact that \emph{every} object $\kappa \in \cat T$ dualizes a certain canonical thick $\cat T^d$-submodule of $\cat T$ seems not to have been previously noticed (or appreciated) and leads to new approaches to organizing the objects of a tt-category. For example, the Picard group of a rigid tt-category $\cat K$ describes precisely those objects which dualize $\cat K$ itself. A more in-depth study of this notion will be the subject of future work. Our present focus is on how this connects with cosupport.
\end{Rem}

\begin{Exa}[Brown--Comenetz duality]\label{exa:BC-duality}
	Recall that in \cref{def:compact-BC} we constructed an object $I_c$ for each compact object $c$ in a compactly generated triangulated category $\cat T$. Now suppose $\cat T$ is a rigidly-compactly generated \mbox{tt-category}. It follows from \eqref{eq:BC-homs} that $I_c = c \otimes I_\unit$ for any compact object~${c \in \cat T^c}$. We define the \emph{Brown--Comenetz dual} of an arbitrary object~${t \in \cat T}$, as
		\[
			\BCdual{t} \coloneqq \ihom{t,I_\unit}.
		\]
	This object represents the functor
		\[
			\Hom_\bbZ(\cat T(\unit,t\otimes -),\bbQ/\bbZ)\colon\cat T\op \to \Ab.
		\]
	Note that for a compact object $c \in \cat T^c$, we have $\BCdual{c} = I_{c^{\vee}}$. In particular, $\BCdual{\unit}=I_\unit$. This observation, together with \eqref{eq:BC-homs}, implies that an object $t=0$ if and only if~$\BCdual{t}=0$.
\end{Exa}

\begin{Exa}
	When $\cat T=\SH$ is the category of spectra, the Brown--Comenetz dual of the sphere $I_\unit$ dualizes the full subcategory of spectra whose homotopy groups are finite; see \cite{BrownComenetz76}.
\end{Exa}

\begin{Prop}\label{prop:cosupp-BC-duals}
	For any compact object $c \in \cat T^c$, we have
		\[
			\Cosupp(I_c) = \supp(c).
		\]
	For an arbitrary object $t \in \cat T$, we have
		\[
			\Cosupp(\BCdual{t})= \Supp(t).
		\]
	If $t \in \cat T_{I_{\unit}}$, that is, $t \xra{\sim} t^{**}$ is an isomorphism, then
		\[
			\Cosupp(t) = \Supp(t^*).
		\]
\end{Prop}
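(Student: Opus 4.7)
The plan is to prove all three identities from the single natural isomorphism $\ihom{\gP, \ihom{t, I_\unit}} \simeq \ihom{\gP \otimes t, I_\unit} = (\gP \otimes t)^*$ together with the fact, recorded in \cref{exa:BC-duality}, that an object $s \in \cat T$ vanishes if and only if its Brown--Comenetz dual $s^* = \ihom{s, I_\unit}$ vanishes. (This fact follows from $\cat T(\unit, s^*) \cong \Hom_{\bbZ}(\cat T(\unit, s), \bbQ/\bbZ)$ together with the cogenerating property of $\bbQ/\bbZ$ applied to $\cat T(\unit, \Sigma^n s)$ for all $n \in \bbZ$.)

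First I would prove the middle identity $\Cosupp(t^*) = \Supp(t)$, as it is the most general of the three. By the $\otimes$-$\mathsf{hom}$ adjunction,
\[
	\ihom{\gP, t^*} = \ihom{\gP, \ihom{t, I_\unit}} \simeq \ihom{\gP \otimes t, I_\unit} = (\gP \otimes t)^*,
\]
and this is nonzero precisely when $\gP \otimes t \neq 0$, i.e., when $\cat P \in \Supp(t)$.

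Next, for $c \in \cat T^c$, I would rewrite $I_c = c \otimes I_\unit$ (this was noted in \cref{exa:BC-duality} via the isomorphism \eqref{eq:BC-homs}) and use that compact objects are dualizable to obtain
\[
	I_c = c \otimes I_\unit \simeq \ihom{c^\vee, I_\unit} = (c^\vee)^*.
\]
Applying the first identity, $\Cosupp(I_c) = \Cosupp((c^\vee)^*) = \Supp(c^\vee) = \supp(c^\vee) = \supp(c)$, where the last equality uses that $c$ and $c^\vee$ generate the same thick tensor-ideal, so $\supp(c^\vee) = \supp(c)$.

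Finally, the third identity is a direct consequence: if $t \in \cat T_{I_\unit}$, meaning $\phi_t \colon t \to t^{**}$ is an isomorphism, then applying the middle identity to $t^*$ in place of $t$ gives
\[
	\Supp(t^*) = \Cosupp((t^*)^*) = \Cosupp(t^{**}) = \Cosupp(t).
\]
None of the steps is really an obstacle: the crux of the argument is already contained in the adjunction formula and the cogenerator property of $I_\unit$; the only mild subtlety is the appeal to dualizability of compact objects in the identification $c \otimes I_\unit \simeq \ihom{c^\vee, I_\unit}$, but that is standard in a rigidly-compactly generated tt-category.
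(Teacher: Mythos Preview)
Your proof is correct and in fact slightly more streamlined than the paper's. You establish the general identity $\Cosupp(t^*) = \Supp(t)$ in one step via the adjunction $\ihom{\gP, t^*} \simeq (\gP \otimes t)^*$ together with the conservativity of $(-)^*$, and then deduce the compact case by writing $I_c \simeq (c^\vee)^*$. The paper proceeds in the opposite order: it first proves the compact case by a direct chain of equivalences for $\ihom{\gP, I_c}$, extracts $\Cosupp(I_\unit) = \Spc(\cat T^c)$, and then derives the general statement in two halves --- one inclusion from \cref{lem:cosupp_hom} and the other by explicitly producing a compact $c$ with $\cat T(\gP \otimes t, I_c) \neq 0$ via cogeneration. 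The underlying ingredients (the $\otimes$--$\ihomname$ adjunction and the fact that the $I_c$ cogenerate) are identical; your packaging is cleaner because it isolates the conservativity of $(-)^*$ as a single black box rather than unpacking it twice.

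One small caveat: your parenthetical justification of ``$s = 0 \iff s^* = 0$'' is incomplete as written. Testing only against $\Sigma^n\unit$ shows that $s^* = 0$ implies $\cat T(\Sigma^n\unit, s) = 0$ for all $n$, but this forces $s = 0$ only when $\cat T$ is monogenic. The argument implicit in \cref{exa:BC-duality} uses instead that $\cat T(c, s^*) \cong \Hom_{\bbZ}(\cat T(c^\vee, s), \bbQ/\bbZ)$ for every compact $c$, so $s^* = 0$ gives $\cat T(c, s) = 0$ for all $c \in \cat T^c$ and hence $s = 0$ by compact generation. Since you correctly cite \cref{exa:BC-duality} for the fact itself, this does not affect the validity of your proof.
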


\begin{proof}
	Let $c \in \cat T^c$. Then observe that
		\begin{align*}
			\ihom{\gP,I_c} = 0 &\Longleftrightarrow \cat T(d\otimes \gP,I_c)=0 \;\text{ for all } d\in \cat T^c\\
			&\Longleftrightarrow \cat T(c,d\otimes \gP) =0 \;\text{ for all }  d\in \cat T^c\\
			&\Longleftrightarrow \cat T(d^\vee,c^\vee \otimes \gP) = 0 \;\text{ for all }  d \in \cat T^c\\
			&\Longleftrightarrow c^\vee \otimes \gP = 0.
		\end{align*}
	Here, the second equivalence relies on the discussion in \Cref{rem:Q/Z-cogenerator}. Thus,
		\[
			\Cosupp(I_c)=\Supp(c^\vee)=\supp(c^\vee)=\supp(c).
		\]
	In particular, $\Cosupp(I_\unit)=\Cosupp(\BCdual{\unit})=\Spc(\cat T^c)$. For the second statement, \cref{lem:cosupp_hom} provides the inclusion
		\[
			\Cosupp(t^*)=\Cosupp(\ihom{t,\unit^*}) \subseteq \Supp(t) \cap \Cosupp(\unit^*) = \Supp(t).
		\]
	On the other hand, suppose $\cat P\in \Supp(t)$, so that $\gP \otimes t\neq 0$. Hence since the $I_c$ cogenerate we have $\cat T(\gP\otimes t,I_c)\neq 0$ for some compact $c$. Using $I_c \simeq c \otimes I_{\unit}$, we see that $\cat T(c^{\vee},\ihom{\gP,\ihom{t,I_\unit}})\neq 0$ so that $\cat P \in \Cosupp(t^*)$.
\end{proof}

\begin{Exa}[Dualizing complexes]\label{exa:dualizing-complex}
	A \emph{dualizing complex}\footnote{The classical literature sometimes also requires a ``dualizing complex'' to have finite injective dimension. Neeman \cite{Neeman10} emphasized that this should not be included as part of the definition.} for a separated noetherian scheme $X$ is an object $t\in \DbcohX$ which dualizes $\DbcohX$.
\end{Exa}

\begin{Def}
	We say that an object $t\in \cat T$ has \emph{small cosupport} if $\Cosupp(t) \subseteq \Supp(t)$.
\end{Def}

\begin{Exa}
	Compact objects have small cosupport by \cref{exa:cosupp-compact}.
\end{Exa}

\begin{Rem}
	If $\Supp(t)$ is Thomason and the codetection property holds then~$t$ has small cosupport if and only if the canonical map $t \to\ihom{e_{\Supp(t)},t}$ is an isomorphism. In this way, having small cosupport is related to derived notions of being adically complete.
\end{Rem}

\begin{Exa}\label{exa:coherent-have-small-cosupport}
	If $X=\Spec(R)$ is a noetherian affine scheme then any $t \in \DbcohX$ has small cosupport. Indeed, the Balmer--Favi support $\Supp(t)\subseteq \Spec(R)$ coincides with the ordinary cohomological support $\bigcup_{i\in \mathbb Z} \Supp(H^i(t))$ which is just a finite union of closed sets, hence is itself closed. Denoting this closed set by $V(I)$, the complex $t$ is cohomologically $I$-adically complete by \cite[Theorem 1.21]{PortaShaulYekutieli15} (see also \cite[Theorem 6.7]{SatherWagstaffWicklein17} and \cite[Proposition 4.19]{BensonIyengarKrause12}) meaning that the canonical map $t \to \ihom{e_{V(I)},t}$ is an isomorphism. Hence $\Cosupp(t) \subseteq V(I) = \Supp(t)$.
\end{Exa}

\begin{Prop}\label{prop:cosupp-for-dualizing}
	Let $\cat T$ be stratified and suppose $\kappa \in \cat T$ dualizes the subcategory~$\cat T_0 \subseteq \cat T$. For any $t \in \cat T_0$, we have
		\[
			\Cosupp(t) = \Supp(\Delta_\kappa(t)) \cap \Cosupp(\kappa).
		\]
	In particular, if $\unit \in \cat T_0$ (that is, if $\cat T^d \subseteq \cat T_0$) then
		\[
			\Cosupp(\unit) = \Supp(\kappa) \cap \Cosupp(\kappa).
		\]
	If the objects of $\cat T_0$ have small cosupport, then
		\[
			\Cosupp(t) = \Supp(t) \cap \Cosupp(\kappa)
		\]
	for each $t \in \cat T_0$.
\end{Prop}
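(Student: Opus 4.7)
The plan is to bootstrap the final statement from the first formula $\Cosupp(t) = \Supp(\Delta_\kappa(t)) \cap \Cosupp(\kappa)$ by exploiting the symmetry of the situation: since $\Delta_\kappa$ restricts to a duality on $\cat T_0$, the object $\Delta_\kappa(t)$ also lies in $\cat T_0$, so the first formula can be applied to it as well. The small cosupport hypothesis then supplies precisely the two inclusions needed to replace $\Supp(\Delta_\kappa(t))$ by $\Supp(t)$ on the right-hand side.

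For the forward inclusion $\Cosupp(t) \subseteq \Supp(t) \cap \Cosupp(\kappa)$, I would start from the first formula. The containment $\Cosupp(t) \subseteq \Cosupp(\kappa)$ is then immediate, while $\Cosupp(t) \subseteq \Supp(t)$ is exactly the assumption that $t \in \cat T_0$ has small cosupport. For the reverse inclusion, I would apply the first formula to $\Delta_\kappa(t) \in \cat T_0$, using the isomorphism $\Delta_\kappa\Delta_\kappa(t) \simeq t$ (which holds because $\kappa$ dualizes $\cat T_0$) to obtain
\[
	\Cosupp(\Delta_\kappa(t)) = \Supp(\Delta_\kappa\Delta_\kappa(t)) \cap \Cosupp(\kappa) = \Supp(t) \cap \Cosupp(\kappa).
\]
Invoking small cosupport of $\Delta_\kappa(t)$ then gives $\Cosupp(\Delta_\kappa(t)) \subseteq \Supp(\Delta_\kappa(t))$, and intersecting with $\Cosupp(\kappa)$ yields $\Supp(t) \cap \Cosupp(\kappa) \subseteq \Supp(\Delta_\kappa(t)) \cap \Cosupp(\kappa) = \Cosupp(t)$, as desired.

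I do not anticipate any serious difficulty in carrying this out: the real work is in the first formula, which will follow from the full-$\ihomname$ formula characterizing stratification (\cref{thm:strat_cosupport}) applied to the isomorphism $t \simeq \ihom{\Delta_\kappa(t),\kappa}$. The only subtlety in the final statement is that small cosupport is assumed as a property of \emph{all} of $\cat T_0$, so that both $t$ and its $\kappa$-dual $\Delta_\kappa(t)$ enjoy it — this symmetry is exactly what allows the argument to close.
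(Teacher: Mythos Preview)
Your proposal is correct and essentially identical to the paper's proof: both derive the first formula from $t \simeq \ihom{\Delta_\kappa(t),\kappa}$ together with the full-$\ihomname$ formula of \cref{thm:strat_cosupport}, and both handle the third statement by applying the first formula to $\Delta_\kappa(t)$ and using small cosupport of both $t$ and $\Delta_\kappa(t)$ to close the two inclusions.
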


\begin{proof}
	The first statement follows directly from the isomorphism $t\simeq \Delta_\kappa \Delta_\kappa(t)$ and \cref{thm:strat_cosupport}. The second statement is an immediate consequence of the first statement. It follows that if $t \in \cat T_0$ has small cosupport then $\Cosupp(t) \subseteq \Supp(t) \cap \Cosupp(\kappa)$. On the other hand, if $\Delta_\kappa(t) \in \cat T_0$ also has small cosupport, then $\Supp(t) \cap \Cosupp(\kappa)=\Cosupp(\Delta_\kappa(t)) \subseteq \Supp(\Delta_\kappa(t))$ and it follows that ${\Supp(t) \cap \Cosupp(\kappa)} \subseteq \Supp(\Delta_\kappa(t))\cap \Cosupp(\kappa) = \Cosupp(t)$.
\end{proof}

\begin{Rem}
	In general, we know that $\Cosupp(x) =\Supp(x) \cap \Cosupp(\unit)$ for any compact object $x \in \cat T^c$. The point of \cref{prop:cosupp-for-dualizing} is that it gives us relations between the support and cosupport for bigger (not necessarily compact) objects. For example:
\end{Rem}

\begin{Cor}\label{cor:scheme-dualizing-complex}
	Let $X$ be a noetherian affine scheme which admits a dualizing complex. Then
		\[
			\Cosupp(t) = \Supp(t) \cap \Cosupp(\unit).
		\]
	for any $t \in \DbcohX$.
\end{Cor}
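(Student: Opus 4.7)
The plan is to deduce this as a direct application of \cref{prop:cosupp-for-dualizing}, with the dualizing complex serving as the object $\kappa$ and $\cat T_0 = \DbcohX$ as the dualized subcategory. The three ingredients needed for the proposition are already in place: first, $\cat T = \Derqc(X) \simeq \Der(R)$ (for $R$ such that $X = \Spec R$) is stratified by Neeman's classification for noetherian rings; second, the dualizing complex $\kappa$ dualizes $\DbcohX$ by the very definition recalled in \cref{exa:dualizing-complex}; and third, every $t \in \DbcohX$ has small cosupport by \cref{exa:coherent-have-small-cosupport}. Plugging these hypotheses into \cref{prop:cosupp-for-dualizing} immediately gives
\[
    \Cosupp(t) = \Supp(t) \cap \Cosupp(\kappa)
\]
for all $t \in \DbcohX$.

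It then remains to replace $\Cosupp(\kappa)$ by $\Cosupp(\unit)$. Since $X$ is affine and noetherian, the structure sheaf $\unit = \OO_X$ belongs to $\DbcohX$, so one may apply the displayed formula to $t = \unit$. This yields
\[
    \Cosupp(\unit) = \Supp(\unit) \cap \Cosupp(\kappa) = \Cosupp(\kappa),
\]
where the second equality uses that $\Supp(\unit) = \Spc(\cat T^c)$ (each idempotent $\gP$ is nonzero by \cref{rem:W-intersection}). Substituting this back into the previous display gives the desired identity.

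There is no substantial obstacle; the result is essentially a specialization of \cref{prop:cosupp-for-dualizing} combined with the fact that coherent objects on a noetherian affine scheme are cohomologically adically complete along their support. The only point to verify carefully is the applicability of the prior results — namely that the stratification of $\Derqc(X)$ and the small-cosupport hypothesis for coherent complexes are both in force — but both have been established earlier in the paper.
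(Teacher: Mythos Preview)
Your proof is correct and follows essentially the same approach as the paper: both apply \cref{prop:cosupp-for-dualizing} with $\kappa$ a dualizing complex and $\cat T_0 = \DbcohX$, using \cref{exa:coherent-have-small-cosupport} for the small-cosupport hypothesis, and then identify $\Cosupp(\kappa)$ with $\Cosupp(\unit)$. The only cosmetic difference is in this last identification: you specialize the formula $\Cosupp(t) = \Supp(t)\cap\Cosupp(\kappa)$ to $t=\unit$ and use $\Supp(\unit)=\Spc(\cat T^c)$, whereas the paper instead invokes the second displayed equation of \cref{prop:cosupp-for-dualizing}, $\Cosupp(\unit)=\Supp(\kappa)\cap\Cosupp(\kappa)$, together with the observation that $\kappa\in\DbcohX$ itself has small cosupport.
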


\begin{proof}
	Let $\kappa \in \DbcohX$ be a dualizing complex for $X$ (\cref{exa:dualizing-complex}). We apply \cref{prop:cosupp-for-dualizing}. Since bounded complexes of coherent sheaves have small cosupport (\cref{exa:coherent-have-small-cosupport}), we obtain $\Cosupp(t)=\Supp(t)\cap \Cosupp(\kappa)$ for any $t \in \DbcohX$. Note that $\DbcohX \supseteq \Derqc(X)^c$ contains~$\unit$. Moreover, a dualizing complex is (by definition) itself contained in $\DbcohX$, hence itself has small cosupport. It follows that $\Cosupp(\unit)=\Cosupp(\kappa)$ and we are done.
\end{proof}

\begin{Rem}
	Many schemes admit dualizing complexes; for example, complete noetherian local rings, Dedekind domains, and any scheme of finite type over a field (see, e.g., \cite[Section 10]{Hartshorne66}). \Cref{prop:cosupp-for-dualizing} and \cref{cor:scheme-dualizing-complex} provide a general perspective on (and generalization of) results such as \cite[Proposition~4.18]{BensonIyengarKrause12}; see also \cite[Theorem~1.2]{SatherWagstaffWicklein17}. On the other hand, not every scheme admits a dualizing complex (see, e.g., \cite{Sharp79,Kawasaki02}).
\end{Rem}

\begin{Exa}[Matlis duality]
	Let $(R,\mathfrak m,k)$ be a noetherian local ring. Matlis duality implies that the injective hull $E(k)$ dualizes the full subcategory $\thick\langle k\rangle \subset \Der(R)\eqqcolon\cat T$. Indeed, this duality can be ``lifted'' from the ordinary duality on $\Der(k)$ along the map $R \to k$; see \cite[Example~7.2]{BalmerDellAmbrogioSanders16}. Recall that $\thick\langle k\rangle$ consists of those complexes of $R$-modules which have only finitely many nonzero homology modules, each of which is a module of finite length (see, e.g., \cite{DwyerGreenleesIyengar06}). The objects of $\thick\langle k \rangle$ have small cosupport since a module of finite length is $\mathfrak m$-adically complete. Consider $\thick\langle k\rangle \subseteq \cat T_{E(k)}$. Since $\ihom{E(k),E(k)}$ is isomorphic to the $\mathfrak m$-adic completion of $R$, we see that $\unit \in \cat T_{E(k)}$ if and only if $R$ is complete, while $\unit \in \thick\langle k\rangle$ if and only if $R$ is artinian (and hence complete). Note that the dualizing object $E(k)$ is typically not contained in the subcategory $\thick\langle k\rangle$ which it dualizes. For example, if $R=\mathbb{Z}_{(p)}$ then $E(k)=\mathbb{Z}(p^\infty)$ which is artinian but not noetherian. Note that if $R$ is regular (or, more generally, Gorenstein) of dimension $d$, then $E(k)=\Sigma^d e_{\{\mathfrak m\}}$, so in light of \cref{cor:cosupp-spc-disjoint}, $E(k)$ cannot have small cosupport unless $\dim(R)=0$, that is, unless $R$ is artinian (in which case $E(k) \in \thick\langle k\rangle$).

\end{Exa}

\newpage
\part{Morphisms and descent}\label{part:morphisms-and-descent}

We now return to the world of rigidly-compactly generated tt-categories and study base change results for support and cosupport, as well as descent techniques for establishing stratification and costratification.

\section{The image of a geometric functor}\label{sec:image}

We begin with a general observation about the image of the map on Balmer spectra induced by a geometric functor. This seems to have been overlooked in the literature; it gives further evidence for the distinguished role played by the Balmer--Favi notion of support.

\begin{Ter}
	A coproduct-preserving tt-functor $f^*\colon\cat T \to \cat S$ between rigidly-compactly generated tt-categories is called a \emph{geometric functor}. As explained in \cite{BalmerDellAmbrogioSanders16}, such a functor admits a right adjoint $f_*$ which itself admits a right adjoint~$f^!$. The object $\omega_f \coloneqq f^!(\unitT) \in \cat S$ is called the relative dualizing object for $f^*$.
\end{Ter}

\begin{Hyp}\label{hyp:geometric}
	Throughout this section $f^*\colon\cat T \to \cat S$ will denote a geometric functor between rigidly-compactly generated tt-categories and
		\[
			\varphi\colon\Spc(\cat S^c) \to \Spc(\cat T^c)
		\]
	will denote the induced map on Balmer spectra. We will further assume throughout that $\Spc(\cat S^c)$ and $\Spc(\cat T^c)$ are weakly noetherian.
\end{Hyp}

\begin{Rem}\label{rem:preimageofloco}
	The adjoints $f^* \dashv f_* \dashv f^!$ of a geometric functor $f^*$ are related by a number of useful formulas, spelled out in \cite[Proposition 2.15]{BalmerDellAmbrogioSanders16}. In particular, the projection formula
		\[
			f_*(f^*(t)\otimes s)\cong t\otimes f_*(s)
		\]
	holds for any $s \in \cat S$ and $t \in \cat T$. It follows, using the fact that $f_*$ preserves coproducts, that
		\begin{equation}\label{eq:f_*-inclusion}
			f_* \Loco{f^*(\cat E)} \subseteq \Loco{\cat E}
		\end{equation}
	for any collection of objects $\cat E \subseteq \cat T$.
\end{Rem}

\begin{Rem}
	While the kernel of $f^*$ is a localizing ideal, the kernel of $f^!$ is a colocalizing coideal. This follows from another useful formula:
		\begin{equation}\label{eq:f!hom}
			f^!\ihom{t_1,t_2}\cong\ihom{f^* t_1,f^! t_2}
		\end{equation}
	for any $t_1, t_2 \in \cat T$; see \cite[(2.19)]{BalmerDellAmbrogioSanders16}.
\end{Rem}

\begin{Rem}\label{rem:fiber-is-support}
	For any Thomason subset $Y \subseteq \Spc(\cat T^c)$, it is established in \cite[Proposition 5.11]{BalmerSanders17} that $f^*(e_Y)=e_{\varphi^{-1}(Y)}$ and $f^*(f_Y) = f_{\varphi^{-1}(Y)}$. In particular, if $\cat P \in \Spc(\cat T^c)$, then writing $\singP = Y_1 \cap Y_2^c$ for Thomason subsets $Y_1,Y_2$, we see that the preimage $\varphi^{-1}(\singP)=\varphi^{-1}(Y_1)\cap\varphi^{-1}(Y_2)^c$ is a weakly visible subset, and $f^*(\gP) = e_{\varphi^{-1}(Y_1)}\otimes f_{\varphi^{-1}(Y_2)} = g_{\varphi^{-1}(\singP)}$. Hence
	    \begin{equation}\label{eq:supportbasechangegp}
			\Supp_{\cat S}(f^*(\gP)) = \varphi^{-1}(\{\cat P \}).
	    \end{equation}
    Combined with \cite[Lemma 2.13]{bhs1}, this generalizes to  
	    \[
	    \Supp_{\cat S}(f^*(\gP)\otimes s)  = \varphi^{-1}(\singP) \cap \Supp_{\cat S}(s)
	    \]
	for any $s \in \cat S$. These observations will be used repeatedly in the proofs. 
\end{Rem}

\begin{Rem}\label{rem:conservative-on-adjoint}
	Note that the unit-counit equation
		\[\begin{tikzcd}[column sep=scriptsize]
			f^*(t) \ar[rr,bend left=13,"\id"] \ar[r]& f^*f_*f^*(t) \ar[r]& f^*(t) 
		\end{tikzcd}\]
	implies that a right adjoint $f_*$ is conservative on the essential image of its left adjoint $f^*$. The other unit-counit equation shows that, similarly, a left adjoint is conservative on the essential image of its right adjoint.
\end{Rem}

\begin{Def}
	A \emph{weak ring object} in a symmetric monoidal category is an object~$w$ which admits a map $\eta\colon\unit \to w$ such that $w\otimes \eta\colon w\to w\otimes w$ is split monic. Similarly, a \emph{weak coring object} is an object $c$ which admits a map $\epsilon\colon c\to \unit$ such that $c\otimes\epsilon\colon c\otimes c\to c$ is split epi. Note that these notions are left-right agnostic.
\end{Def}

\begin{Exa}
	The left idempotents $e_Y$ are weak corings and the right idempotents~$f_Y$ are weak rings. Indeed, these examples are idempotent (co)rings. More generally, the left and right idempotents of a (not-necessarily-finite) smashing localization are idempotent (co)rings.
\end{Exa}

\begin{Rem}\label{rem:conservative-on-weak-rings}
	Let $\cat A$ be a symmetric monoidal additive category in which the tensor product functors $a \otimes -\colon\cat A\to\cat A$ are additive functors. Let $F\colon\cat A \to \cat B$ be any additive functor. If $F$ admits a left adjoint $L\colon\cat B \to \cat A$ then $F$ is conservative on objects of the form $L(b)\otimes w$ where $b\in \cat B$ is an arbitrary object and $w \in \cat A$ is a weak ring object. Indeed, if $F(L(b)\otimes w)=0$ then
		\[
			\cat A(L(b),L(b)\otimes w)=\cat B(b,F(L(b)\otimes w))=0.
		\]
	Thus, if $\eta\colon\unit \to w$ is a unit for the weak ring $w$ then the map $L(b)\xra{1\otimes \eta}L(b)\otimes w$ vanishes. This implies $L(b)\otimes w=0$ since the identity of $L(b)\otimes w$ factors  as 
		\[
			L(b)\otimes w \xra{1\otimes \eta \otimes 1}L(b)\otimes w\otimes w \to L(b)\otimes w
		\]
	and so must itself vanish. Similarly, if $F\colon\cat A\to\cat B$ admits a right adjoint $R\colon\cat B\to \cat A$ then $F$ is conservative on objects of the form $R(b)\otimes c$, where $b$ is arbitrary and $c$ is a weak coring.
\end{Rem}

\begin{Thm}\label{thm:img-of-spc}
	Let $f^*\colon\cat T \to \cat S$ be a geometric functor as in \cref{hyp:geometric}. Then for any weak ring object $w \in \cat S$,
		\[	
			\varphi(\Supp_{\cat S}(w))=\Supp_{\cat T}(f_*(w))
		\]
	provided either that
		\begin{enumerate}
			\item the detection property holds for $\cat S$; or
			\item the weak ring object $w$ is compact.
		\end{enumerate}
\end{Thm}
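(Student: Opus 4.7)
The plan is to fix a point $\cat P \in \Spc(\cat T^c)$ and check membership in each side pointwise. The central computation is that, by the projection formula together with the base-change identity $f^*(\gP) = g_{\varphi^{-1}(\singP)}$ from \cref{rem:fiber-is-support},
\[
    \gP \otimes f_*(w) \;\cong\; f_*\bigl(f^*(\gP) \otimes w\bigr) \;\cong\; f_*\bigl(g_{\varphi^{-1}(\singP)} \otimes w\bigr).
\]
Since $w$ is a weak ring object, \cref{rem:conservative-on-weak-rings}, applied to the adjunction $f^* \dashv f_*$ with $b = \gP$, shows that $f_*$ is conservative on objects of the form $f^*(\gP) \otimes w$. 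Hence $\cat P \in \SuppT(f_*(w))$ if and only if $g_{\varphi^{-1}(\singP)} \otimes w \neq 0$, and the theorem reduces to the intersection criterion
\[
    \varphi^{-1}(\singP) \cap \SuppS(w) \neq \emptyset \;\iff\; g_{\varphi^{-1}(\singP)} \otimes w \neq 0.
\]

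For the ``$\Rightarrow$'' direction, which needs neither extra hypothesis, suppose $\cat Q$ lies in the intersection. Then $\gQ \otimes w \neq 0$, and since $\cat Q \in \varphi^{-1}(\singP)$ the intersection formula for weakly visible idempotents (\cref{rem:W-intersection}) gives $\gQ \otimes g_{\varphi^{-1}(\singP)} = \gQ$. Consequently $\gQ \otimes w = \gQ \otimes g_{\varphi^{-1}(\singP)} \otimes w \neq 0$, which in particular forces $g_{\varphi^{-1}(\singP)} \otimes w \neq 0$. Note that this already establishes the unconditional inclusion $\varphi(\SuppS(w)) \subseteq \SuppT(f_*(w))$.

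The hard part will be the ``$\Leftarrow$'' direction, where hypothesis (a) or (b) is needed. Under (a), detection for $\cat S$ forces the nonzero object $g_{\varphi^{-1}(\singP)} \otimes w$ to have nonempty Balmer--Favi support, and by \cref{rem:fiber-is-support} that support is precisely $\varphi^{-1}(\singP) \cap \SuppS(w)$. Under (b), compactness of $w$ means $\SuppS(w) = \supp(w)$ is Thomason and $e_{\supp(w)} \otimes w \simeq w$ (as $w$ is contained in the smashing ideal $\cat S_{\supp(w)}$). Combining this with \cref{ex:W-intersection} yields
\[
    g_{\varphi^{-1}(\singP)} \otimes w \;\cong\; g_{\varphi^{-1}(\singP)} \otimes e_{\supp(w)} \otimes w \;\cong\; g_{\varphi^{-1}(\singP) \cap \supp(w)} \otimes w.
\]
If $\varphi^{-1}(\singP) \cap \supp(w)$ were empty, then $g_\emptyset = 0$ would force the left-hand side to vanish, contradicting our hypothesis. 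Hence the intersection is nonempty, completing the proof.
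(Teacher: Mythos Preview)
Your proof is correct and follows essentially the same route as the paper's: both use the projection formula to reduce to the object $f^*(\gP)\otimes w$ in $\cat S$, invoke \cref{rem:conservative-on-weak-rings} for the weak-ring direction, and handle the reverse inclusion via detection or a compactness argument. Your treatment of case~(b) is a slightly cleaner repackaging---using $e_{\supp(w)}\otimes w \simeq w$ and the intersection formula $g_{\varphi^{-1}(\singP)}\otimes e_{\supp(w)} = g_{\varphi^{-1}(\singP)\cap\supp(w)}$ directly---where the paper instead unwinds the Thomason decomposition $\varphi^{-1}(\singP)=Y_1'\cap (Y_2')^c$ and runs an explicit chain of equivalences through $\Loco{x}$; but the content is the same.
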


\begin{proof}
	Let $\cat P \in \Spc(\cat T^c)$ and recall from \cref{rem:fiber-is-support} that $\Supp(f^*(\gP))=\varphi^{-1}(\singP)$. For any object $w \in \cat S$ consider the following implications:
		\begin{equation}\label{eq:im-phi-imp1}
		\begin{aligned}
			\cat P \in \Supp_{\cat T}(f_*(w)) 
			\Longleftrightarrow&\; \gP\otimes f_*(w) \neq 0 \\
			\Longleftrightarrow&\; f_*(f^*(\gP)\otimes w)\neq 0\\
			\underbrace{\Longrightarrow}_{(\dagger)}&\; f^*(\gP)\otimes w \neq 0\\
			\underbrace{\Longleftarrow}_{(\ddagger)}&\; \Supp_{\cat S}(f^*(\gP)\otimes w) \neq \emptyset\\
			\Longleftrightarrow&\;\varphi^{-1}(\singP) \cap \Supp_{\cat S}(w) \neq \emptyset\\
			\Longleftrightarrow&\; \cat P \in \varphi(\Supp_{\cat S}(w)).
		\end{aligned}
		\end{equation}
	Note that the converse of the $(\ddagger)$ implication holds if $\cat S$ has the detection property. It also holds if $w$ is compact. Indeed, writing $\singP = Y_1 \cap Y_2^c$ and setting $Y_1' \coloneqq \varphi^{-1}(Y_1)$ and $Y_2' \coloneqq \varphi^{-1}(Y_2)$ observe that for any compact $x \in \cat S^c$, we have
		\begin{align*}
			\Supp(f^*(\gP) \otimes x) = \emptyset
			&\Longleftrightarrow \supp(x)\cap Y_1'  \subseteq Y_2' \\
			&\Longleftrightarrow e_{\supp(x)\cap Y_1'} \otimes f_{Y_2'} =0 \\
			&\Longleftrightarrow e_{\supp(x)}\otimes e_{Y_1'} \otimes f_{Y_2'} =0 \\
			&\Longleftrightarrow \Loco{e_{\supp(x)}} \otimes e_{Y_1'} \otimes f_{Y_2'} = 0\\
			&\Longleftrightarrow \Loco{x} \otimes e_{Y_1'} \otimes f_{Y_2'} = 0\\
			&\Longleftrightarrow x \otimes e_{Y_1'} \otimes f_{Y_2'} = 0.
		\end{align*}
	Here we have used \cite[Lemma 1.27]{bhs1} several times. In summary, we have established that $\Supp_{\cat T}(f_*(w)) \subseteq \varphi(\Supp_{\cat S}(w))$ provided the detection property holds or~$w$ is compact. It remains to prove that the converse of the~$(\dagger)$ implication holds under our assumption that $w$ is a weak ring. This was explained in \cref{rem:conservative-on-weak-rings}: the right adjoint $f_*$ is conservative on objects of the form $f^*(t)\otimes w$ with $w$ a weak ring.
\end{proof}

\begin{Cor}\label{cor:img-of-spc}
	We always have the equality
		\[ 
			\im \varphi = \Supp_{\cat T}(f_*(\unitS)).
		\]
\end{Cor}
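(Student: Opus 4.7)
The plan is to deduce this immediately from \cref{thm:img-of-spc} applied to the weak ring object $w = \unitS$. Indeed, the unit of any symmetric monoidal category is trivially a weak ring object: the identity map $\eta = \id\colon \unitS \to \unitS$ is split monic after tensoring with $\unitS$ (since $\unitS \otimes \unitS \cong \unitS$). Moreover, since $\cat S$ is rigidly-compactly generated, the unit $\unitS$ is compact, so hypothesis~(b) of \cref{thm:img-of-spc} is satisfied and we do not need to assume detection for $\cat S$.

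The only remaining step is to identify $\varphi(\Supp_{\cat S}(\unitS))$ with $\im \varphi$. This reduces to the claim that $\Supp_{\cat S}(\unitS) = \Spc(\cat S^c)$, which holds because $\gP \otimes \unitS \simeq \gP$ and, as noted in \cref{rem:W-intersection}, $\gP \neq 0$ for every weakly visible point $\cat P \in \Spc(\cat S^c)$ (and by hypothesis every point of $\Spc(\cat S^c)$ is weakly visible). Combining these observations gives
\[
    \im \varphi = \varphi(\Spc(\cat S^c)) = \varphi(\Supp_{\cat S}(\unitS)) = \Supp_{\cat T}(f_*(\unitS)),
\]
as desired.

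There is no real obstacle here: the corollary is a one-line specialization of the preceding theorem once one observes that the unit is both compact and a (weak) ring. I expect the whole argument to fit in two or three lines in the paper.
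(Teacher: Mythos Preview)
Your proof is correct and matches the paper's own argument essentially line for line: apply \cref{thm:img-of-spc} to the (weak) ring object $w=\unitS$, which is compact, and use $\Supp_{\cat S}(\unitS)=\Spc(\cat S^c)$ via \cref{rem:W-intersection}.
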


\begin{proof}
	Apply \cref{thm:img-of-spc} to the ring object $w\coloneqq \unitS$ and note that $\Supp_{\cat S}(\unitS)=\Spc(\cat S^c)$ under our weakly noetherian assumption, e.g., by \cite[Lemma 2.18]{bhs1} or \cref{rem:W-intersection}.
\end{proof}

\begin{Rem}
	This result provides an unconditional formula for the image of the map on spectra induced by a geometric functor. It improves on \cite[Theorem~1.7]{Balmer18} which requires the right adjoint $f_*$ to preserve compactness. An analogous result for the homological spectrum is \cite[Theorem~5.12]{Balmer20_bigsupport}.
\end{Rem}

\begin{Exa}\label{exa:image-of-smashing}
	If $\cat T \to \cat S$ is a smashing localization with idempotent triangle $e \to \unitT \to f \to \Sigma e$ in $\cat T$, then the image of $\varphi\colon\Spc(\cat S^c)\to\Spc(\cat T^c)$ is $\Supp(f)$.
\end{Exa}

\begin{Exa}
    Let $\cat C$ be a rigidly-compactly generated symmetric monoidal stable $\infty$-category with associated homotopy category $\Ho(\cat C)$. If $\Ho(\cat C) \to \Ho(\Mod_{\cat C}(A))$ is extension-of-scalars with respect to a highly structured commutative algebra $A \in \CAlg(\cat C)$, then the image of $\Spc(\Ho(\Mod_{\cat C}(A))^c) \to \Spc(\Ho(\cat C)^c)$ is $\Supp(A)$.
\end{Exa}

\begin{Cor}\label{cor:surjective-for-conservative}
	If $f^*\colon\cat T\to \cat S$ is conservative then $\varphi\colon\Spc(\cat S^c)\to\Spc(\cat T^c)$ is surjective.
\end{Cor}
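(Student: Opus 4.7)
The plan is to reduce the claim directly to \cref{cor:img-of-spc}, which identifies $\im\varphi$ with $\Supp_{\cat T}(f_*(\unitS))$. Thus surjectivity of $\varphi$ amounts to showing $\Supp_{\cat T}(f_*(\unitS)) = \Spc(\cat T^c)$, i.e., $\gP \otimes f_*(\unitS) \neq 0$ for every $\cat P \in \Spc(\cat T^c)$.

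The main tool will be the projection formula (\cref{rem:preimageofloco}), which rewrites $\gP \otimes f_*(\unitS) \simeq f_*(f^*(\gP) \otimes \unitS) \simeq f_*(f^*(\gP))$. Weak visibility of $\cat P$ guarantees $\gP \neq 0$ (\cref{rem:W-intersection}), and conservativity of $f^*$ then yields $f^*(\gP) \neq 0$. It remains to show that this implies $f_*(f^*(\gP)) \neq 0$, which is a general fact about any adjunction $f^* \dashv f_*$: using the unit-counit triangle identity as recalled in \cref{rem:conservative-on-adjoint} (or equivalently, applying the adjunction isomorphism $\cat T(\gP, f_*(f^*(\gP))) \cong \cat S(f^*(\gP), f^*(\gP))$ and noting the identity on $f^*(\gP)$ is nonzero), the vanishing of $f_*(f^*(\gP))$ would force $f^*(\gP) = 0$.

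Assembling these observations, we conclude $\gP \otimes f_*(\unitS) \neq 0$, so $\cat P \in \Supp_{\cat T}(f_*(\unitS)) = \im\varphi$, and $\varphi$ is surjective. There is no real obstacle here; the argument is a clean formal consequence of \cref{cor:img-of-spc} once one notes the elementary fact that $f^*$ conservative implies $f_* f^*$ is conservative (via the triangle identity). The result can also be viewed as a special case of \cref{thm:img-of-spc}: take $w = \unitS$, so that $f_*(w) = f_*(\unitS)$ has support equal to $\varphi(\Spc(\cat S^c))$, which must then equal all of $\Spc(\cat T^c)$ because $f^*(f_*(\unitS)) \to \unitS$ being a $\otimes$-unital map (together with conservativity of $f^*$) forces $f_*(\unitS)$ to have full support.
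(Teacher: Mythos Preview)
Your proof is correct and essentially identical to the paper's own argument: both use $\gP\neq 0$, conservativity of $f^*$, \cref{rem:conservative-on-adjoint} (the triangle identity), and the projection formula to conclude $\gP\otimes f_*(\unitS)\neq 0$, then invoke \cref{cor:img-of-spc}. The final paragraph's alternative viewpoint is a bit muddled (the phrase about ``$f^*(f_*(\unitS))\to\unitS$ being a $\otimes$-unital map'' is unclear), but this is extraneous commentary that does not affect the main argument.
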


\begin{proof}
	Indeed, $0\neq \gP$ implies $0 \neq f^*(\gP)$ and thus $0 \neq f_*f^*(\gP)\simeq {f_*(\unitS)\otimes \gP}$ by \cref{rem:conservative-on-adjoint}. Therefore $\Supp(f_*(\unitS))=\Spc(\cat T^c)$ and we invoke \cref{cor:img-of-spc}.
\end{proof}

\begin{Rem}
	In contrast, the conservativity of
		\[ 
			f^*|_{\cat T^c}\colon \cat T^c \to \cat S^c
		\]
		(that is, conservativity of $f^*$ on compact objects) is equivalent to the image of $\varphi$ containing all the closed points; see \cite[Theorem 1.2]{Balmer18}. In fact, \cref{cor:surjective-for-conservative} and \cite[Theorem 1.4]{Balmer18} establish that if $f^*\colon\cat T \to \cat S$ is conservative then it satisfies a nilpotence theorem for morphisms between compact objects (see \emph{loc.\,cit.}~for more details). In fact, \cite[Theorem 2.25]{BCHNP1} deduces a stronger nilpotence theorem from the conservativity of $f^*$ in which only the source is assumed to be compact. Combined with \cite[Theorem 1.3]{Balmer18}, this gives another proof of \cref{cor:surjective-for-conservative}; see \cite[Corollary 2.26]{BCHNP1}.
\end{Rem}

\begin{Prop} \label{prop:f!conservative}
	If $f^!\colon\cat T\to \cat S$ is conservative then $f^*\colon \cat T \to \cat S$ is conservative.
\end{Prop}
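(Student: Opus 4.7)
The plan is to exploit the identity $f^!\ihom{t_1,t_2}\cong\ihom{f^*t_1,f^!t_2}$ from~\eqref{eq:f!hom} to convert a vanishing of $f^*$ into a vanishing of $f^!$ applied to an internal hom.

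Concretely, I would suppose $t\in\cat T$ satisfies $f^*(t)=0$ and apply \eqref{eq:f!hom} with $t_1=t_2=t$ to obtain
\[
f^!\ihom{t,t}\cong\ihom{f^*(t),f^!(t)}=\ihom{0,f^!(t)}=0.
\]
Since $f^!$ is assumed conservative, this forces $\ihom{t,t}=0$, and then the identification
\[
\Hom_{\cat T}(t,t)\cong\Hom_{\cat T}(\unit,\ihom{t,t})=0
\]
gives $\id_t=0$, hence $t=0$. This shows $f^*$ is conservative.

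There is no real obstacle here; the only subtlety is recognizing that the correct lever is the internal-hom compatibility \eqref{eq:f!hom} (which interchanges $f^*$ and $f^!$) rather than a projection-formula-type argument, because the latter transports vanishing along $f^*$ only into information about $f_*$, not $f^!$. The proof is a short two-line computation and does not require any hypothesis on $\Spc(\cat T^c)$ or $\Spc(\cat S^c)$ beyond what is already in \cref{hyp:geometric}.
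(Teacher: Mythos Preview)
Your proof is correct and is essentially identical to the paper's own proof: both apply \eqref{eq:f!hom} with $t_1=t_2=t$ to obtain $f^!\ihom{t,t}=0$, invoke conservativity of $f^!$ to conclude $\ihom{t,t}=0$, and then deduce $t=0$.
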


\begin{proof}
	If $f^*(t) = 0$ then $f^!\ihom{t,t}\cong\ihom{f^*(t),f^!(t)} = 0$ by \eqref{eq:f!hom}. Hence $\ihom{t,t}=0$ so that $t=0$.
\end{proof}

\begin{Rem}
	We will later prove that the converses to \cref{cor:surjective-for-conservative} and \cref{prop:f!conservative} hold when $\cat T$ is stratified; see \cref{cor:conservative-iff-surjective}. Another situation in which the converses hold will be given in \cref{prop:weaklyfinite-surjective-conservative} below. As we shall see, these conservativity properties are closely related to the following notion, whose name is motivated by terminology used in \cite{Mathew16}:
\end{Rem}

\begin{Def}\label{def:weakly-descendable}
	We say that a geometric functor $f^*\colon \cat T \to \cat S$ is \emph{weakly descendable} if~$\unitT \in \Loco{f_*(\unitS)}$.
\end{Def}

\begin{Rem}\label{rem:consequence-of-wd}
	If $\unitT \in \Loco{f_*(\unitS)}$ then it follows from \eqref{eq:t@loc} and \eqref{eq:[loc,t]} that
		\[
			t \in \Loco{f_*f^*t} \quad\text{ and }\quad t \in \Coloco{f_*f^!t}
		\]
		for all $t \in \cat T$. Hence $f^*$ and $f^!$ are both conservative.
\end{Rem}

\begin{Rem}\label{rem:image-of-compact}
	For any compact object $x \in \cat S^c$, \cref{thm:img-of-spc} implies that
		\[ 
			\varphi(\supp(x)) = \Supp_{\cat T}(f_*(x\otimes x^\vee)).
		\]
	Indeed just recall that $x \otimes x^\vee\cong\ihom{x,x}$ is the endomorphism ring object and that $\supp(x)=\supp(x\otimes x^\vee)$. For an arbitrary object $s \in \cat S$, we can also consider the endomorphism ring object $\ihom{s,s}$. The theorem provides
		\[
			\varphi(\Supp_{\cat S}(\ihom{s,s})) = \Supp_{\cat T}(f_*\ihom{s,s})
		\]
	assuming $\cat S$ satisfies the detection property.
\end{Rem}

\begin{Rem}\label{rem:map-is-closed}
	Note that if $f_*(\unitS)$ is compact then \cref{cor:img-of-spc} implies that the image $\im \varphi=\Supp(f_*(\unitS))=\supp(f_*(\unitS))$ is closed. Moreover, if $f_*$ preserves all compact objects (not just $\unitS$) then \cref{rem:image-of-compact} shows that $\varphi$ maps every Thomason closed subset to a Thomason closed subset. It follows that $\varphi$ is a closed map (see, e.g., \cite[Theorem 5.3.3]{DickmannSchwartzTressl19}). This leads to the following terminology:
\end{Rem}

\begin{Def}\label{def:weaklyfinite}
	We will say that $f^*\colon \cat T \to \cat S$ is \emph{\weaklyfinite} if $f_*(\unitS)$ is compact and \emph{\finite} if $f_*$ preserves compact objects. Note that if~$\cat S$ is monogenic then a {\weaklyfinite} morphism is automatically {\finite}.
\end{Def}

\begin{Rem}
	The above terminology is given more for grammatical and linguistic convenience rather than deep mathematical significance. We do not claim that it captures the correct geometric notion of ``closed morphism'' in tensor triangular geometry. For example, a closed immersion $\Spec(R/I)\hookrightarrow\Spec(R)$ of affine schemes induces a geometric functor $\Der(R)\to \Der(R/I)$ which is often not (weakly or strongly) closed in the sense of \cref{def:weaklyfinite}; see \cite[Example 7.8]{Sanders19}.
\end{Rem}

\begin{Rem}\label{rem:gnduality}
	In the terminology of \cite{BalmerDellAmbrogioSanders16}, a geometric functor $f^*$ is {\finite} if and only if it satisfies \emph{Grothendieck--Neeman duality} (or GN-duality, for short). As explained in \cite{BalmerDellAmbrogioSanders16}, a morphism $f^*$ satisfies GN-duality if and only if it preserves products if and only if it has a left adjoint if and only if the trio of functors $f^* \dashv f_* \dashv f^!$ extends on both sides to a sequence of five adjoints:
		\begin{equation}\label{eq:fiveadjoints}
			f_! \dashv f^* \dashv f_* \dashv f^! \dashv f_{(-1)}.
		\end{equation}
	There is then an analogue of \cref{thm:img-of-spc} for the left adjoint~$f_!$:
\end{Rem}

\begin{Thm}\label{thm:img-of-spc-left}
	Let $f^*\colon\cat T\to\cat S$ be a geometric functor as in \cref{hyp:geometric}. If $f^*$ is {\finite} then for any weak coring object $w \in \cat S$,
		\[
			\varphi(\Supp_{\cat S}(w))=\Supp_{\cat T}(f_!(w))
		\]
	provided either that
	\begin{enumerate}
		\item the detection property holds for $\cat S$; or
		\item the weak coring object is compact.
	\end{enumerate}
\end{Thm}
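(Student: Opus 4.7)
The plan is to mirror the proof of \cref{thm:img-of-spc} verbatim, swapping the right adjoint $f_*$ for the left adjoint $f_!$ and weak rings for weak corings. The {\finite} hypothesis on $f^*$ is essential for two reasons: it guarantees the existence of $f_!$ via the five-adjoint string \eqref{eq:fiveadjoints}, and it ensures the \emph{left} projection formula
\[
   f_!(f^*(t) \otimes s) \cong t \otimes f_!(s),
\]
which is a standard consequence of GN-duality (see \cite{BalmerDellAmbrogioSanders16}).

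First I would set up the analogous chain of implications: for $\cat P \in \Spc(\cat T^c)$,
\begin{align*}
   \cat P \in \Supp_{\cat T}(f_!(w))
       &\Longleftrightarrow \gP \otimes f_!(w) \neq 0 \\
       &\Longleftrightarrow f_!(f^*(\gP) \otimes w) \neq 0 \\
       &\underbrace{\Longrightarrow}_{(\dagger')} f^*(\gP) \otimes w \neq 0 \\
       &\underbrace{\Longleftarrow}_{(\ddagger')} \Supp_{\cat S}(f^*(\gP) \otimes w) \neq \emptyset \\
       &\Longleftrightarrow \varphi^{-1}(\singP) \cap \Supp_{\cat S}(w) \neq \emptyset \\
       &\Longleftrightarrow \cat P \in \varphi(\Supp_{\cat S}(w)),
\end{align*}
where the second step invokes the left projection formula and the penultimate step uses \cref{rem:fiber-is-support}. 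The forward implications $(\dagger')$ and $(\ddagger')$ are both trivial, since left adjoints and support functions preserve~$0$.

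The heart of the matter is the converse of $(\dagger')$: one needs $f_!$ to be conservative on objects of the form $f^*(\gP) \otimes w$. This is precisely the left-adjoint half of \cref{rem:conservative-on-weak-rings}: because $f_! \dashv f^*$, the functor $f_!$ is conservative on any object of the form $f^*(b) \otimes c$ whenever $c$ is a weak coring, and $w$ is a weak coring by hypothesis. Thus $w$ being a weak coring plays exactly the role that being a weak ring played for the right adjoint $f_*$ in \cref{thm:img-of-spc}, which is the formal dualization at work.

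Finally, the converse of $(\ddagger')$ is handled exactly as in the original proof: under hypothesis~(a) it is the detection property for $\cat S$, while under hypothesis~(b), with $w$ compact, one reuses verbatim the compact-support computation $w \otimes e_{\varphi^{-1}(Y_1)} \otimes f_{\varphi^{-1}(Y_2)} = 0$ based on \cite[Lemma~1.27]{bhs1}, an argument entirely independent of whether one is working with $f_*$ or $f_!$. I do not anticipate any genuine obstacle; the only point that requires care is the availability of the left projection formula, but this is a well-documented consequence of GN-duality.
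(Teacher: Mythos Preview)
Your proposal is correct and follows essentially the same approach as the paper's own proof, which simply instructs the reader to rerun the argument of \cref{thm:img-of-spc} with $f_!$ in place of $f_*$, citing the left projection formula \cite[(3.11)]{BalmerDellAmbrogioSanders16} and the weak-coring half of \cref{rem:conservative-on-weak-rings} for the converse of $(\dagger')$. You have spelled out the details more fully than the paper does, but there is no substantive difference.
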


\begin{proof}
	One proceeds through the argument as in the proof of \cref{thm:img-of-spc} replacing $f_*$ with $f_!$. Note that the left projection formula holds by \cite[(3.11)]{BalmerDellAmbrogioSanders16}. To obtain the right-to-left implication of the analogue of $(\dagger)$, namely that $f^*(\gP)\otimes w \neq 0$ implies $f_!(f^*(\gP)\otimes w) \neq 0$ one again uses the fact explained in \cref{rem:conservative-on-weak-rings} that the left adjoint $f_!$ is conservative on objects of the form $f^*(t)\otimes w$ for $w$ a weak coring.
\end{proof}

\begin{Rem}
	According to the ur-Wirthmüller isomorphism of \cite[(3.10)]{BalmerDellAmbrogioSanders16}, $f_!(s) \simeq f_*(s \otimes \omega_f)$ for any $s \in \cat S$. In particular, applied to the weak coring $s\coloneqq \unitS$, \cref{thm:img-of-spc-left} implies that $\im \varphi = \Supp(f_*(\omega_f))$. Thus, $f_*(\omega_f)$ and $f_*(\unitS)$ have the same support if $f^*$ is {\finite}. These objects can be very different for a general geometric functor $f^*$. For example, if~$f^*$ is a smashing localization as in \cref{exa:image-of-smashing} then $f_*(\unitS)=f$ while $f_*(\omega_f) =\ihom{f,\unitT}$. Of course, it is rare for a smashing localization to be {\finite}. On the other hand, according to \cite{Sanders22} a necessary condition for $f^*$ to be finite \'{e}tale is that  $f_*(\unitS) \simeq f_*(\omega_f)$. We have shown that these two objects have the same support for any {\finite} functor.
\end{Rem}

\begin{Prop}\label{prop:weaklyfinite-surjective-conservative}
	Suppose $f^*\colon\cat T\to\cat S$ is {\weaklyfinite} (\cref{def:weaklyfinite}). The following are equivalent:
	\begin{enumerate}
		\item $f^*$ is weakly descendable (\cref{def:weakly-descendable});
		\item $f^!$ is conservative;
		\item $f^*$ is conservative;
		\item $\varphi\colon\Spc(\cat S^c)\to\Spc(\cat T^c)$ is surjective.
	\end{enumerate}
\end{Prop}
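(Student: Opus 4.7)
The plan is to close the cycle $(a) \Rightarrow (c) \Rightarrow (d) \Rightarrow (a)$ together with the side route $(a) \Rightarrow (b) \Rightarrow (c)$. The implications $(a) \Rightarrow (b)$ and $(a) \Rightarrow (c)$ are immediate from \cref{rem:consequence-of-wd}: weak descendability gives $t \in \Loco{f_*f^*t}$ and $t \in \Coloco{f_*f^!t}$ for every $t \in \cat T$, and both conservativity statements follow. The implication $(b) \Rightarrow (c)$ is exactly \cref{prop:f!conservative}, while $(c) \Rightarrow (d)$ is \cref{cor:surjective-for-conservative}. Note that none of these steps uses the {\weaklyfinite} hypothesis.

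The whole content of the statement therefore lies in the implication $(d) \Rightarrow (a)$, and this is where the {\weaklyfinite} hypothesis is essential. Assuming $\varphi$ is surjective, \cref{cor:img-of-spc} gives
\[
    \Spc(\cat T^c) = \im \varphi = \Supp_{\cat T}(f_*(\unitS)).
\]
By the assumption that $f^*$ is {\weaklyfinite}, the object $f_*(\unitS)$ lies in $\cat T^c$, so its Balmer--Favi support coincides with its universal support $\supp(f_*(\unitS))$. Hence $\supp(f_*(\unitS)) = \Spc(\cat T^c)$, and Balmer's classification of thick ideals of $\cat T^c$ by Thomason subsets forces $\thickid\langle f_*(\unitS)\rangle = \cat T^c$. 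In particular,
\[
    \unitT \in \thickid\langle f_*(\unitS)\rangle \subseteq \Loco{f_*(\unitS)},
\]
which is precisely weak descendability of $f^*$.

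The main conceptual obstacle is already absorbed into the earlier structural results: the general expression $\im \varphi = \Supp_{\cat T}(f_*(\unitS))$ (which relies on the weak ring structure on $\unitS$ and the conservativity principle of \cref{rem:conservative-on-weak-rings}) does the heavy lifting, and the {\weaklyfinite} hypothesis converts a support-level equality into an ideal-level one via Balmer's classification. No further calculation is required.
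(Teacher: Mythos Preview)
The proposal is correct and follows essentially the same route as the paper: the cycle $(a)\Rightarrow(b)\Rightarrow(c)\Rightarrow(d)$ via \cref{rem:consequence-of-wd}, \cref{prop:f!conservative}, and \cref{cor:surjective-for-conservative}, and then $(d)\Rightarrow(a)$ by combining \cref{cor:img-of-spc} with compactness of $f_*(\unitS)$ and the classification of thick ideals. Your extra remark that the Balmer--Favi support of a compact object agrees with its universal support is a helpful clarification, but otherwise the argument is identical to the paper's.
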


\begin{proof}
	The implication $(a) \Rightarrow (b)$ follows from \cref{rem:consequence-of-wd}, $(b) \Rightarrow (c)$ is \cref{prop:f!conservative} and $(c) \Rightarrow (d)$ is \cref{cor:surjective-for-conservative}. For $(d)\Rightarrow (a)$ note that the surjectivity of $\varphi$ implies that $\SuppT(f_*(\unitS))=\Spc(\cat T^c)$ by \cref{cor:img-of-spc} and~$f_*(\unitS)$ is compact by assumption. Thus, by the classification of thick ideals of $\cat T^c$, $\unitT \in \thicko{f_*(\unitS)} \subseteq \Loco{f_*(\unitS)}$.
\end{proof}

\section{Base change for support and cosupport}\label{sec:base-change}

We now turn to base change formulas that are valid for arbitrary objects. The cost will be some additional hypotheses on our functors or on our categories.

\begin{Hyp}\label{hyp:geometric-base-change}
	We continue to let $f^*\colon\cat T\to \cat S$ denote an arbitrary geometric functor and assume both $\Spc(\cat T^c)$ and $\Spc(\cat S^c)$ are weakly noetherian. We write
		\[
			\varphi\colon\Spc(\cat S^c)\to\Spc(\cat T^c)
		\]
	for the induced map on spectra.
\end{Hyp}

\begin{Prop}\label{prop:general-object_f*}
	Suppose $f^*\colon\cat T \to \cat S$ is as above (\cref{hyp:geometric-base-change}) and let $s \in \cat S$.
	\begin{enumerate}
		\item If $\cat S$ has the detection property, then
			\[ 
				\SuppT(f_*(s)) \subseteq \varphi(\SuppS(s))
			\]
			with equality when the functor $f_*$ is conservative.
		\item If $\cat S$ has the codetection property, then
			\[ 
				\CosuppT(f_*(s)) \subseteq \varphi(\CosuppS(s))
			\]
			with equality when the functor $f_*$ is conservative.
	\end{enumerate}
\end{Prop}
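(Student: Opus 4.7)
The plan is to handle both parts in parallel by transporting the (co)support computation across the adjunction $f^* \dashv f_*$. The crucial input for part~(a) is the projection formula, and for part~(b) its internal-hom analogue $\ihom{t,f_*s} \cong f_*\ihom{f^*t,s}$ (which follows from $f^*$ being strong monoidal via the usual Yoneda calculation $\cat T(r,\ihom{t,f_*s}) \cong \cat S(f^*r\otimes f^*t, s)$).

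For part~(a), I would first apply the projection formula to rewrite $g_{\cat P}\otimes f_*(s)\cong f_*(f^*(g_{\cat P})\otimes s)$, then combine this with the ``fiber-is-support'' formula from \cref{rem:fiber-is-support}, namely $\SuppS(f^*(g_{\cat P})\otimes s) = \varphi^{-1}(\singP)\cap\SuppS(s)$. The forward inclusion then unfolds as follows: if $\cat P\in\SuppT(f_*s)$ then $f_*(f^*(g_{\cat P})\otimes s)\neq 0$, so $f^*(g_{\cat P})\otimes s\neq 0$, and detection in $\cat S$ forces $\SuppS(f^*(g_{\cat P})\otimes s)=\varphi^{-1}(\singP)\cap\SuppS(s)$ to be nonempty, witnessing $\cat P\in\varphi(\SuppS(s))$. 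Conversely, if $f_*$ is conservative and $\cat P=\varphi(\cat Q)$ with $\cat Q\in\SuppS(s)$, then $\cat Q\in\varphi^{-1}(\singP)\cap\SuppS(s)=\SuppS(f^*(g_{\cat P})\otimes s)$, so $f^*(g_{\cat P})\otimes s\neq 0$; conservativity of $f_*$ then gives $g_{\cat P}\otimes f_*(s)\neq 0$ via the projection formula.

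For part~(b), the first task is to establish the cosupport analogue of \cref{rem:fiber-is-support}: namely, the identity
\[
\CosuppS\bigl(\ihom{f^*(g_{\cat P}),s}\bigr) \;=\; \varphi^{-1}(\singP)\cap\CosuppS(s).
\]
Writing $\singP = Y_1\cap Y_2^c$, \cref{rem:fiber-is-support} gives $f^*(g_{\cat P}) = e_{\varphi^{-1}(Y_1)}\otimes f_{\varphi^{-1}(Y_2)}$, and applying \cref{lem:cosuppcoloc}(a) and (b) in succession to $\ihom{e_{\varphi^{-1}(Y_1)},\ihom{f_{\varphi^{-1}(Y_2)},s}}$ yields the displayed equality. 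With this in hand, the argument proceeds exactly as in part~(a), with the internal-hom adjunction playing the role of the projection formula: if $\cat P\in\CosuppT(f_*s)$, then $f_*\ihom{f^*(g_{\cat P}),s} = \ihom{g_{\cat P},f_*s}\neq 0$, so $\ihom{f^*(g_{\cat P}),s}\neq 0$, and codetection in $\cat S$ combined with the displayed identity exhibits a point of $\varphi^{-1}(\singP)\cap\CosuppS(s)$; for the converse under conservativity of $f_*$, any $\cat Q\in\varphi^{-1}(\singP)\cap\CosuppS(s)$ witnesses $\ihom{f^*(g_{\cat P}),s}\neq 0$, whence $\ihom{g_{\cat P},f_*s}\neq 0$.

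There is no serious obstacle here; the proof is essentially a routine bookkeeping exercise once the adjunction-transport identities are set up. The only mildly nontrivial step is deriving the cosupport analogue of the fiber formula, and this is a quick consequence of \cref{lem:cosuppcoloc}. The symmetry between (a) and (b) is a clean illustration of the $\cat T\op$-philosophy of \cref{sec:cosupp-is-supp}: cosupport on $\cat T$ is support on $\cat T\op$, and the internal-hom adjunction plays the role dual to the projection formula.
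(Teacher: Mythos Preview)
Your proof is correct and follows essentially the same approach as the paper. The paper organizes the argument as a single chain of biconditionals and one-way implications labeled $(\dagger)$ and $(\ddagger)$ (reusing the display from the proof of \cref{thm:img-of-spc} for part~(a)), while you spell out the same steps in prose; the ingredients---projection formula, the fiber identity from \cref{rem:fiber-is-support}, and (co)detection/conservativity to reverse the two one-way implications---are identical.
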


\begin{proof}
	The set-up is the same as the proof of \cref{thm:img-of-spc}. Let $\cat P \in \Spc(\cat T^c)$. For part $(a)$ consider the implications \eqref{eq:im-phi-imp1} displayed in the proof of \cref{thm:img-of-spc}. Note that the left-to-right direction of $(\ddagger)$ holds if $\cat S$ has the detection property. Hence, going from left-to-right we obtain the inclusion in part $(a)$. On the other hand, the right-to-left direction of $(\dagger)$ holds if $f_*$ is conservative.

	For part $(b)$, consider the analogous series of implications:
		\begin{equation}
		\begin{aligned}
			\cat P \in \CosuppT(f_*(s)) 
			\Longleftrightarrow&\; \ihom{\gP,f_*(s)} \neq 0 \\
			\Longleftrightarrow&\; f_*\ihom{f^*(\gP),s}\neq 0\\
			\underbrace{\Longrightarrow}_{(\dagger)}&\; \ihom{f^*(\gP),s} \neq 0\\
			\underbrace{\Longleftarrow}_{(\ddagger)}&\; \CosuppS(\ihom{f^*(\gP),s}) \neq \emptyset\\
			\Longleftrightarrow&\;\varphi^{-1}(\singP) \cap \CosuppS(s) \neq \emptyset\\
			\Longleftrightarrow&\; \cat P \in \varphi(\CosuppS(s)).
		\end{aligned}
		\end{equation}
	Again, the converse of $(\ddagger)$ holds when $\cat S$ has the codetection property, and the converse of $(\dagger)$ holds if $f_*$ is conservative.
\end{proof}

\begin{Rem}
	If the right adjoint $f_*$ is not conservative then there exist objects for which the inequalities of \cref{prop:general-object_f*} are strict.
\end{Rem}

\begin{Rem}\label{rem:cons}
	It is a well-known exercise (recall \cref{rem:replace-with-compact}) that the right adjoint~$f_*$ is conservative if and only if $f^*$ sends a set of compact generators of $\cat T$ to a set of (compact) generators of $\cat S$, in other words, if and only if $\cat S^c=\thick\langle f^*(\cat T^c)\rangle$. For example, if $\cat S$ is monogenic then every geometric functor $f^*\colon\cat T \to \cat S$ has a conservative right adjoint.
\end{Rem}

\begin{Rem}\label{rem:weaklyconservative}
	The right adjoint $f_*$ of a geometric functor is always ``weakly conservative'' in the following sense: For any nonzero $s \in \cat S$ there always exists a compact object $c \in \cat S^c$ such that $f_*(c\otimes s)\neq 0$. Indeed, since $\cat S$ is compactly generated, if $s \neq 0$ then $\cat S(c,s) \neq 0$ for some compact $c \in \cat S^c$. Replacing~$c$ by its dual, we can assert that there exists a compact $c \in \cat S^c$ such that $\cat S(\unitS,c\otimes s)\neq 0$. Since $\unitS=f^*(\unitT)$, adjunction implies $\cat T(\unitT,f_*(c\otimes s))\neq 0$ hence $f_*(c\otimes s)\neq 0$. One way of appreciating this phenomenon is to recognize that while the kernel of~$f_*$ is a localizing subcategory, it need not be an ideal. Indeed, the above argument shows that the largest localizing ideal contained in it, namely
		\[ 
			\SET{s \in \cat S}{f_*(c\otimes s)=0 \;\forall c \in \cat S^c}
		\]
	is always the zero ideal. In other words, the right adjoint $f_*$ of a geometric functor is conservative precisely when the kernel of $f_*$ is an ideal.
\end{Rem}

\begin{Prop}\label{prop:phi-f*}
	Suppose $f^*\colon\cat T\to \cat S$ is as above (\cref{hyp:geometric-base-change}) and let $t \in \cat T$.
	\begin{enumerate}
		\item	We always have inclusions
				\[ 
					\varphi(\SuppS(f^*(t))) \subseteq \SuppT(t \otimes f_*(\unitS)) \subseteq \SuppT(t) \cap \im \varphi.
				\]
		\item	If $\cat S$ has the detection property, then
				\[ 
					\varphi(\SuppS(f^*(t))) = \SuppT(t \otimes f_*(\unitS)).
				\] 
		\item	If $\cat S$ has the detection property and $\cat T$ is stratified, then
				\[ 
					\varphi(\SuppS(f^*(t))) = \SuppT(t) \cap \im \varphi.
				\]
		\item	If $f^*$ is conservative, then
				\[ 
					\SuppT(t \otimes f_*(\unitS)) = \SuppT(t),
				\]
				hence
				\[ 
					\varphi(\SuppS(f^*(t)) = \SuppT(t)
				\]
				if $\cat S$ also has the detection property.
	\end{enumerate}
\end{Prop}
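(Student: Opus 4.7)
The organizing observation is that by the projection formula,
\[ g_\cat P \otimes t \otimes f_*(\unitS) \;\simeq\; f_*\bigl(f^*(g_\cat P)\otimes f^*(t)\bigr) \;\simeq\; f_*\bigl(g_{\varphi^{-1}(\singP)}\otimes f^*(t)\bigr), \]
where the second step uses $f^*(g_\cat P)\simeq g_{\varphi^{-1}(\singP)}$ from \cref{rem:fiber-is-support}. The entire proposition hinges on analyzing when this object vanishes.

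For part (a), the right inclusion follows at once from the general tensor inclusion $\SuppT(a\otimes b)\subseteq\SuppT(a)\cap\SuppT(b)$ together with $\SuppT(f_*(\unitS))=\im\varphi$ (\cref{cor:img-of-spc}). For the left inclusion, take $\cat Q\in\SuppS(f^*(t))$ with $\varphi(\cat Q)=\cat P$. Since $\cat Q\in\varphi^{-1}(\singP)$, \cref{rem:W-intersection} gives $g_\cat Q\otimes g_{\varphi^{-1}(\singP)}=g_\cat Q$, whence
\[ g_\cat Q\otimes\bigl(g_{\varphi^{-1}(\singP)}\otimes f^*(t)\bigr)\;=\;g_\cat Q\otimes f^*(t)\;\neq\; 0. \]
This forces $g_{\varphi^{-1}(\singP)}\otimes f^*(t)=f^*(g_\cat P\otimes t)$ to be nonzero, and the decisive input is \cref{rem:conservative-on-adjoint}: since $f_*$ is conservative on the essential image of $f^*$, we conclude $f_*(f^*(g_\cat P\otimes t))\neq 0$, i.e., $\cat P\in\SuppT(t\otimes f_*(\unitS))$.

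Part (b) is obtained by reversing this chain under detection on $\cat S$: if $f_*(g_{\varphi^{-1}(\singP)}\otimes f^*(t))\neq 0$ then $g_{\varphi^{-1}(\singP)}\otimes f^*(t)\neq 0$, and detection forces its support to be nonempty. A direct case analysis---using that $g_\cat Q\otimes g_{\varphi^{-1}(\singP)}$ equals $g_\cat Q$ if $\cat Q\in\varphi^{-1}(\singP)$ and is $0$ otherwise---shows $\SuppS(g_{\varphi^{-1}(\singP)}\otimes f^*(t))=\varphi^{-1}(\singP)\cap\SuppS(f^*(t))$, so nonemptiness yields $\cat P\in\varphi(\SuppS(f^*(t)))$. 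Part (c) combines (b) with stratification of $\cat T$: the full tensor product formula upgrades the inclusion in (a) to $\SuppT(t\otimes f_*(\unitS))=\SuppT(t)\cap\SuppT(f_*(\unitS))=\SuppT(t)\cap\im\varphi$ via \cref{cor:img-of-spc} once more.

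Finally, for part (d) with $f^*$ conservative, I would prove $\SuppT(t\otimes f_*(\unitS))=\SuppT(t)$ via the pointwise equivalence $g_\cat P\otimes t\otimes f_*(\unitS)=0\iff g_\cat P\otimes t=0$. The nontrivial direction rewrites the hypothesis as $f_*(f^*(g_\cat P\otimes t))=0$; \cref{rem:conservative-on-adjoint} yields $f^*(g_\cat P\otimes t)=0$, and conservativity of $f^*$ completes the argument. Combining with (b) gives the final equality. The main obstacle throughout is resisting the temptation to invoke only weak conservativity (\cref{rem:weaklyconservative}), which would deliver some compact $c\in\cat S^c$ with $f_*(c\otimes-)\neq 0$ and thereby lose the specific form $f_*(\unitS)$ demanded by the statement; the stronger \cref{rem:conservative-on-adjoint} applied to objects in the image of $f^*$---which is precisely where the projection formula places our object of interest---is exactly the right tool.
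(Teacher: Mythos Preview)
Your proof is correct and follows essentially the same route as the paper: organize everything around the projection formula identity $g_\cat P\otimes t\otimes f_*(\unitS)\simeq f_*\bigl(f^*(g_\cat P)\otimes f^*(t)\bigr)$, use \cref{rem:fiber-is-support} for $f^*(g_\cat P)\simeq g_{\varphi^{-1}(\singP)}$, and invoke \cref{rem:conservative-on-adjoint} for the key conservativity step on the image of $f^*$. The paper presents this as a single chain of implications with two labeled arrows whose converses are supplied by detection and conservativity respectively, while you unpack the same logic pointwise---but the content is identical.
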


\begin{proof}
	Let $\cat P \in \Spc(\cat T^c)$ and write $\singP = Y_1 \cap Y_2^c$ with $Y_1$ and $Y_2$ Thomason. Recall \cref{rem:fiber-is-support}. For any object $t \in \cat T$ consider the following implications:
		\begin{equation}\label{eq:proof-of-f*}
		\begin{aligned}
			\cat P \in \varphi(\SuppS(f^*(t)))
			\Longleftrightarrow&\;\varphi^{-1}(\singP) \cap \SuppS(f^*(t)) \neq \emptyset\\
			\Longleftrightarrow&\; \SuppS(f^*(\gP)\otimes f^*(t)) \neq \emptyset\\
			\underbrace{\Longrightarrow}_{(\dagger)}&\; f^*(\gP)\otimes f^*(t) \neq 0\\
			\Longleftrightarrow&\; f^*(\gP \otimes t) \neq 0\\
			\underbrace{\Longleftarrow}_{(\ddagger)}&\; f_*(f^*(\gP\otimes t))\neq 0\\
			\Longleftrightarrow&\; \gP\otimes t \otimes f_*(\unitS) \neq 0\\
			\Longleftrightarrow&\; \cat P \in \SuppT(t\otimes f_*(\unitS)). 
		\end{aligned}
		\end{equation}
	Note that the converse of $(\ddagger)$ holds by \cref{rem:conservative-on-adjoint}. Hence, going from left to right establishes the first inclusion in part $(a)$, while the second inclusion in part $(a)$ is due to \cref{cor:img-of-spc} and \cite[Remark 2.12(e)]{bhs1}. This establishes $(a)$. Now the converse of $(\dagger)$ holds when we have the detection property. Hence going from right to left gives $(b)$. Part $(c)$ follows from part $(b)$ by invoking the tensor product formula (\cite[Theorem~8.2]{bhs1}), which holds because $\cat T$ is stratified, and \cref{cor:img-of-spc}: $\SuppT(t\otimes f_*(\unitS))=\SuppT(t) \cap \SuppT(f_*(\unitS))=\SuppT(t) \cap \im \varphi$. For part $(d)$ it suffices to show that $\SuppT(t) \subseteq \SuppT(t \otimes f_*(\unitS))$ when $f^*$ is conservative. Indeed if $f^*$ is conservative then so is the composite $f_*f^*$ (\cref{rem:conservative-on-adjoint}), hence $0 \neq \gP \otimes t$ implies $0 \neq f_*f^*(\gP \otimes t) \simeq \gP \otimes t \otimes f_*(\unitS)$.
\end{proof}

\begin{Rem}\label{rem:img-f*-compact}
	If $\cat S$ has the detection property then the equality
		\[ 
			\varphi(\SuppS(f^*(t))) = \SuppT(t) \cap \im \varphi
		\]
	also holds (without further hypotheses on $f^*$ or $\cat S$) if the object $t$ is compact, or a left idempotent $e_Y$, or a right idempotent $f_Y$, or an object $\gP$. Indeed, in these cases the inclusion $\SuppT(t \otimes f_*(\unitS)) \subseteq \SuppT(t) \cap \SuppT(f_*(\unitS))$ is an equality by \cite[Lemma 2.13 and Lemma 2.18]{bhs1}.
\end{Rem}

\begin{Prop}\label{prop:phi-f!}
	Suppose $f^*\colon\cat T\to \cat S$ is as above (\cref{hyp:geometric-base-change}) and let $t \in \cat T$.
	\begin{enumerate}
		\item We always have inclusions
			\[ 
				\varphi(\CosuppS(f^!(t))) \subseteq \CosuppT(\ihom{f_*(\unitS),t}) \subseteq \CosuppT(t) \cap \im \varphi.
			\]
		\item If $\cat S$ has the codetection property, then
			\[
				\varphi(\CosuppS(f^!(t))) = \CosuppT(\ihom{f_*(\unitS),t}).
			\] 
		\item If $\cat S$ has the codetection property and $\cat T$ is stratified, then
			\[ 
				\varphi(\CosuppS(f^!(t))) = \CosuppT(t) \cap \im \varphi.
			\]
		\item If $f^!$ is conservative, then
			\[ 
				\CosuppT(\ihom{f_*(\unitS),t}) = \CosuppT(t),
			\]
			hence
			\[ 
				\varphi(\CosuppS(f^!(t)) = \CosuppT(t)
			\]
			if $\cat S$ also has the codetection property.
	\end{enumerate}
\end{Prop}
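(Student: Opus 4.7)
The plan is to mirror the proof of \cref{prop:phi-f*}, replacing the adjunction $f^* \dashv f_*$ and tensor products with the adjunction $f_* \dashv f^!$ and internal homs throughout. The key observation is that while \cref{prop:phi-f*} relies on the projection formula $f_*(f^*t \otimes s) \simeq t \otimes f_*(s)$ and the unit $\id \to f_* f^*$, the dual proof relies on the identity $\ihom{f^* t_1, f^! t_2} \simeq f^!\ihom{t_1, t_2}$ from \eqref{eq:f!hom}, the formula $f_* f^!(t) \simeq \ihom{f_*(\unitS), t}$ (which one checks by adjunction: $\cat T(t', f_*f^!t) \simeq \cat S(f^*t', f^!t) \simeq \cat T(t' \otimes f_*(\unitS), t)$), and the dual form of \cref{rem:conservative-on-adjoint}, which states that the left adjoint $f_*$ is conservative on the essential image of its right adjoint $f^!$.

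For a point $\cat P \in \Spc(\cat T^c)$, first note that by \cref{rem:fiber-is-support} we have $f^*(\gP) = g_{\varphi^{-1}(\singP)}$, and so by iterated application of \cref{lem:cosuppcoloc}(a,b), $\CosuppS(\ihom{f^*(\gP), s}) = \varphi^{-1}(\singP) \cap \CosuppS(s)$ for any $s \in \cat S$. Applied with $s = f^!(t)$ and combined with \eqref{eq:f!hom}, this gives
\[
    \cat P \in \varphi(\CosuppS(f^!(t))) \iff \CosuppS(f^!(\LambdaP t)) \neq \emptyset.
\]
Then set up the analogue of \eqref{eq:proof-of-f*}:
\begin{align*}
    \CosuppS(f^!(\LambdaP t)) \neq \emptyset
    &\underbrace{\Longrightarrow}_{(\dagger)} f^!(\LambdaP t) \neq 0 \\
    &\underbrace{\Longleftrightarrow}_{(\ddagger)} f_* f^!(\LambdaP t) \neq 0 \\
    &\phantom{\underbrace{\Longleftrightarrow}_{(\ddagger)}{}} \iff \LambdaP \ihom{f_*(\unitS), t} \neq 0 \\
    &\phantom{\underbrace{\Longleftrightarrow}_{(\ddagger)}{}} \iff \cat P \in \CosuppT(\ihom{f_*(\unitS), t}),
\end{align*}
where the third equivalence uses $f_*f^!(-) \simeq \ihom{f_*(\unitS), -}$ together with \eqref{eq:formulalambdaHom}. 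Here $(\ddagger)$ is always an equivalence by \cref{rem:conservative-on-adjoint} (dual form), and the converse of $(\dagger)$ holds when $\cat S$ satisfies codetection. Going left-to-right yields the first inclusion in $(a)$, and the codetection hypothesis reverses this to give $(b)$.

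For the second inclusion in $(a)$, apply \cref{lem:cosupp_hom} to get $\CosuppT(\ihom{f_*(\unitS), t}) \subseteq \SuppT(f_*(\unitS)) \cap \CosuppT(t)$, and rewrite $\SuppT(f_*(\unitS)) = \im \varphi$ using \cref{cor:img-of-spc}. Part $(c)$ then follows by combining $(b)$ with \cref{thm:strat_cosupport}$(b)$, which under the stratification hypothesis upgrades the previous inclusion to an equality $\CosuppT(\ihom{f_*(\unitS), t}) = \SuppT(f_*(\unitS)) \cap \CosuppT(t) = \im \varphi \cap \CosuppT(t)$.

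For $(d)$, conservativity of $f^!$ means $\LambdaP t \neq 0 \iff f^!(\LambdaP t) \neq 0$, which combined with the always-valid equivalence $(\ddagger)$ and the identification $f_*f^!(\LambdaP t) \simeq \LambdaP\ihom{f_*(\unitS), t}$ gives $\CosuppT(t) = \CosuppT(\ihom{f_*(\unitS), t})$; the displayed equality under additional codetection of $\cat S$ then follows from $(b)$. I do not expect any serious obstacle: the only delicate points are verifying the dual conservativity statement from \cref{rem:conservative-on-adjoint} and the formula $f_*f^! \simeq \ihom{f_*(\unitS), -}$, both of which are formal consequences of adjunction.
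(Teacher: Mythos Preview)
Your proof is correct and follows essentially the same approach as the paper's own proof. The only cosmetic difference is that you apply the identity $\ihom{f^*(\gP),f^!(t)} \simeq f^!(\LambdaP t)$ from \eqref{eq:f!hom} at the outset and then carry $f^!(\LambdaP t)$ through the chain, whereas the paper keeps $\ihom{f^*(\gP),f^!(t)}$ explicit and only invokes \eqref{eq:f!hom} to justify that this object lies in the essential image of $f^!$; the logical structure (the roles of $(\dagger)$, $(\ddagger)$, the formula $f_*f^! \simeq \ihom{f_*(\unitS),-}$, and the appeals to \cref{lem:cosupp_hom}, \cref{cor:img-of-spc}, and \cref{thm:strat_cosupport}) is identical.
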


\begin{proof}
	The proof is similar to the proof of \cref{prop:phi-f*}. Let $\cat P \in \Spc(\cat T^c)$ and write $\singP = Y_1 \cap Y_2^c$ with $Y_1$ and $Y_2$ Thomason. For any object $t \in \cat T$ consider the following implications analogous to those of \eqref{eq:im-phi-imp1}:
		\begin{equation}\label{eq:proof-of_f!}
		\begin{aligned}
			\cat P \in \varphi(\CosuppS(f^!(t)))
			\Longleftrightarrow&\;\varphi^{-1}(\singP) \cap \CosuppS(f^!(t)) \neq \emptyset\\
			\Longleftrightarrow&\; \CosuppS(\ihom{f^*(\gP),f^!(t)}) \neq \emptyset\\
			\underbrace{\Longrightarrow}_{(\dagger)}&\; \ihom{f^*(\gP),f^!(t)} \neq 0\\
			\underbrace{\Longleftarrow}_{(\ddagger)}&\; f_*\ihom{f^*(\gP),f^!(t)}\neq 0\\
			\Longleftrightarrow&\; \ihom{\gP, f_*f^!(t)})\neq 0\\
			\Longleftrightarrow&\; \ihom{\gP, \ihom{f_*(\unitS),t}}\neq 0\\
			\Longleftrightarrow&\; \cat P \in \CosuppT(\ihom{f_*(\unitS),t}).
		\end{aligned}
		\end{equation}
	Here we have invoked the adjunction isomorphisms  $f_*\ihom{f^*(a),b}\cong\ihom{a,f_*(b)}$ and $f_*f^!(t)\cong f_*\ihom{\unitS,f^!(t)} \cong \ihom{f_*(\unitS),t}$ established in \cite{BalmerDellAmbrogioSanders16}. Moreover, the isomorphism $f^!\ihom{a,b}\cong\ihom{f^*(a),f^!(b)}$ from \eqref{eq:f!hom} shows that $\ihom{f^*(\gP),f^!(t)}$ is in the essential image of $f^!$ and hence the converse of ($\ddagger$) holds by \cref{rem:conservative-on-adjoint}. Hence, going from left to right we obtain the first inclusion in part $(a)$, while the second inclusion is a standard property of cosupport \eqref{eq:cosupp_hom} together with \cref{cor:img-of-spc}. This establishes~$(a)$. Now the converse of $(\dagger)$ holds when we have codetection property. Hence going from right to left gives $(b)$. Part $(c)$ follows from part $(b)$ by invoking the hom formula (\cref{thm:strat_cosupport}), which uses that $\cat T$ is stratified, and \cref{cor:img-of-spc}. For part $(d)$ it suffices to show that  $\CosuppT(t) \subseteq \CosuppT(\ihom{f_*(\unitS),t})$ when $f^!$ is conservative. Indeed, if $\ihom{\gP,t} \neq 0$ then $f_* f^! \ihom{\gP,t} \neq 0$ by \cref{rem:conservative-on-adjoint}. That is, $0\neq f_* f^!\ihom{\gP,t} \cong f_*\ihom{f^*(\gP),f^!(t)} \cong \ihom{\gP,f_*f^!(t)}$ and recall $f_*f^!(t) \cong \ihom{f_*(\unitS),t}$.
\end{proof}

\begin{Cor}\label{cor:push-phi}
	Suppose $f^*$ is {\weaklyfinite} (\cref{def:weaklyfinite}). Then:
	\begin{enumerate}
		\item	If $\cat S$ has the detection property then
				\[
					\varphi(\SuppS(f^*(t)) = \SuppT(t) \cap \im \varphi
				\]
				for every $t \in \cat T$.
		\item	If $\cat S$ has the codetection property then
				\[
					\varphi(\CosuppS(f^!(t)) = \CosuppT(t) \cap \im \varphi
				\]
				for every $t\in \cat T$.
	\end{enumerate}
\end{Cor}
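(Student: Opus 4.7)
The plan is to obtain both parts of the corollary as direct consequences of the preceding base-change propositions combined with the half-tensor and half-hom formulas, exploiting the hypothesis that $f_*(\unitS)$ is compact.

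For part $(a)$, since $\cat S$ has the detection property, \cref{prop:phi-f*}(b) already provides the equality
\[
\varphi(\SuppS(f^*(t))) = \SuppT(t \otimes f_*(\unitS)).
\]
So the task reduces to showing that $\SuppT(t \otimes f_*(\unitS)) = \SuppT(t) \cap \SuppT(f_*(\unitS))$. Since $f_*(\unitS)$ is compact by the \weaklyfinite\ hypothesis, this is precisely the half $\otimes$-theorem (\cite[Lemma~2.18]{bhs1}), applied to the compact object $f_*(\unitS)$ and the arbitrary object $t$. Finally, \cref{cor:img-of-spc} identifies $\SuppT(f_*(\unitS))$ with $\im\varphi$, which completes the argument.

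For part $(b)$, the setup is analogous. Since $\cat S$ has the codetection property, \cref{prop:phi-f!}(b) gives
\[
\varphi(\CosuppS(f^!(t))) = \CosuppT(\ihom{f_*(\unitS),t}),
\]
so it remains to evaluate $\CosuppT(\ihom{f_*(\unitS),t})$. Because $f_*(\unitS) \in \cat T^c$, the half-$\mathsf{hom}$ theorem (\cref{prop:halfhom}) applies and yields $\CosuppT(\ihom{f_*(\unitS),t}) = \supp(f_*(\unitS)) \cap \CosuppT(t)$. For a compact object $\supp$ agrees with $\Supp$, and by \cref{cor:img-of-spc} this common set is $\im\varphi$, giving the desired identity.

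Neither step involves a significant obstacle: the \weaklyfinite\ assumption is precisely what is needed to promote the weaker conclusions of \cref{prop:phi-f*} and \cref{prop:phi-f!} to the sharper form in terms of $\im\varphi$, by allowing us to invoke the half-$\otimes$ and half-$\mathsf{hom}$ formulas for compact objects. If anything warrants caution, it is simply to ensure that the detection/codetection hypotheses on $\cat S$ are used only to apply the corresponding parts of \cref{prop:phi-f*} and \cref{prop:phi-f!}, and that $\im\varphi$ is consistently interpreted via \cref{cor:img-of-spc}.
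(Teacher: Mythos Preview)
Your proof is correct and follows essentially the same route as the paper: the paper's proof also invokes parts $(a)$ and $(b)$ of \cref{prop:phi-f*} and \cref{prop:phi-f!} together with the half-tensor and half-hom theorems, using compactness of $f_*(\unitS)$ to pass from $\SuppT(t\otimes f_*(\unitS))$ and $\CosuppT(\ihom{f_*(\unitS),t})$ to intersections with $\im\varphi$ via \cref{cor:img-of-spc}.
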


\begin{proof}
	This follows from parts $(a)$ and $(b)$ of \cref{prop:phi-f*} and \cref{prop:phi-f!} together with the half-tensor and half-hom theorems (\cite[Lemma 2.18]{bhs1} and \cref{prop:halfhom}).
\end{proof}

\begin{Rem}\label{rem:AV-trivial-inclusions}
	It follows from \cref{prop:phi-f*}$(a)$ and \cref{prop:phi-f!}$(a)$ that the following inclusions hold for any $t \in \cat T$:
		\begin{align*}
			\SuppS(f^*(t)) &\subseteq \varphi^{-1}(\SuppT(t)), \text{ and}\\
			\CosuppS(f^!(t)) &\subseteq \varphi^{-1}(\CosuppT(t)).
		\end{align*}
	Our next goal is to establish that these are equalities when $\cat T$ is stratified; see \cref{cor:globalavruninscott} below.
\end{Rem}

\begin{Lem}\label{lem:supp-of-f_*-gP}
	Let $\cat Q\in \Spc(\cat S^c)$. For any $s \in \cat S$ we have
		\[ 
			\SuppT(f_*(s\otimes \gQ))\subseteq \{\varphi(\cat Q)\}.
		\]
\end{Lem}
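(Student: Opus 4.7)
The plan is to use the projection formula together with the compatibility of $f^*$ with the idempotents $\gP$, reducing the claim to the elementary multiplicativity of the $g$-idempotents.

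First I would fix a point $\cat P \in \Spc(\cat T^c)$ with $\cat P \neq \varphi(\cat Q)$, and aim to show that $\gP \otimes f_*(s \otimes \gQ) = 0$; this is exactly the statement that $\cat P \notin \SuppT(f_*(s \otimes \gQ))$. Applying the projection formula from \cref{rem:preimageofloco} gives
\[
    \gP \otimes f_*(s \otimes \gQ) \;\cong\; f_*\bigl(f^*(\gP) \otimes s \otimes \gQ\bigr),
\]
so it suffices to show the argument of $f_*$ vanishes, for which it is enough that $f^*(\gP) \otimes \gQ = 0$.

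Next I would invoke \cref{rem:fiber-is-support}, which identifies $f^*(\gP) = g_{\varphi^{-1}(\singP)}$, together with \cref{rem:W-intersection} on intersections of weakly visible subsets, to compute
\[
    f^*(\gP) \otimes \gQ \;\cong\; g_{\varphi^{-1}(\singP)} \otimes g_{\{\cat Q\}} \;\cong\; g_{\varphi^{-1}(\singP) \cap \{\cat Q\}}.
\]
Since $\cat P \neq \varphi(\cat Q)$, we have $\cat Q \notin \varphi^{-1}(\singP)$, so the intersection is empty, and \cref{rem:W-intersection} then forces $g_{\varphi^{-1}(\singP) \cap \{\cat Q\}} = 0$.

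There is no real obstacle here; the argument is a short chain of identifications that only uses the projection formula, the base-change behaviour of $f^*$ on the idempotents $\gP$, and the fact that $g_W = 0$ when $W = \emptyset$. The entire proof should be a couple of lines once these tools are cited.
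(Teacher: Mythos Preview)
Your proof is correct and essentially identical to the paper's, just run in the contrapositive direction: the paper assumes $\cat P \in \SuppT(f_*(s\otimes \gQ))$, applies the projection formula to get $f^*(\gP)\otimes \gQ \neq 0$, and reads this as $\cat Q \in \SuppS(f^*(\gP)) = \varphi^{-1}(\singP)$, whence $\cat P = \varphi(\cat Q)$. Both arguments use exactly the same ingredients (projection formula, $f^*(\gP) = g_{\varphi^{-1}(\singP)}$, and the multiplicativity in \cref{rem:W-intersection}).
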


\begin{proof}
	Suppose $\cat P \in \SuppT(f_*(s\otimes\gQ))$. Then
		\[
			f_*(f^*(\gP)\otimes s\otimes \gQ) \simeq \gP \otimes f_*(s\otimes \gQ) \neq 0
		\]
	so that $f^*(\gP)\otimes \gQ\neq 0$. That is, $\cat Q \in \SuppS(f^*(\gP))=\varphi^{-1}(\singP)$ by \cref{rem:fiber-is-support}. Thus $\cat P =\varphi(\cat Q)$.
\end{proof}

\begin{Thm}[Local Avrunin--Scott identities]\label{thm:localavruninscott}
	Let $f^*\colon\cat T \to \cat S$ be a geometric functor as in \cref{hyp:geometric-base-change} and let $t \in \cat T$. Suppose the localizing ideals $\GammaP \cat T$ are minimal for all $\cat P \in \SuppT(t)$. Then we have:
		\begin{align}
			\SuppS(f^*(t)) &= \varphi^{-1}(\SuppT(t)); \text{ and}\label{eq:localAV-f*}\\
			\CosuppS(f^!(t)) &= \varphi^{-1}(\CosuppT(t)).\label{eq:localAV-f!}
		\end{align}
\end{Thm}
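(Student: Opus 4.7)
The trivial inclusions $\subseteq$ in both \eqref{eq:localAV-f*} and \eqref{eq:localAV-f!} are precisely \cref{rem:AV-trivial-inclusions}, so the content is in the reverse inclusions. Fix $\cat P \in \Spc(\cat T^c)$ belonging to $\SuppT(t)$ (resp.~$\CosuppT(t)$) and $\cat Q \in \varphi^{-1}(\{\cat P\})$. Recall from \cref{rem:fiber-is-support} and \cref{ex:W-intersection} that $f^*(\gP)\otimes \gQ = g_{\varphi^{-1}(\singP)\cap \singQ} = \gQ$, a formula I will use repeatedly.

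For \eqref{eq:localAV-f*}, the plan is a direct descent using minimality. Since $\GammaP t \neq 0$ and $\GammaP\cat T$ is a minimal localizing ideal, $\gP \in \Loco{\gP\otimes t}$. Applying the coproduct-preserving tt-functor $f^*$ gives $f^*\gP \in \Loco{f^*\gP \otimes f^*t}$, and then tensoring with $\gQ$ (invoking \eqref{eq:t@loc}) yields $\gQ = f^*\gP\otimes \gQ \in \Loco{\gQ\otimes f^*t}$. As $\gQ\ne 0$, the generator must be nonzero, so $\cat Q \in \SuppS(f^*t)$.

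For \eqref{eq:localAV-f!}, the analogous argument cannot be dualized naively, since $\LambdaP t \neq 0$ does not \emph{a priori} force $\GammaP t \neq 0$. The plan is instead to bootstrap via the double adjunction $f_*\dashv f^!$ together with the Matlis--Greenlees--May equivalence $\GammaP\cat T \simeq \LambdaP\cat T$ of \cref{rem:stalk-costalk-equivalence}. By the $(f_*,f^!)$ adjunction, it suffices to find a compact $c\in \cat S^c$ (possibly after a shift) with $\cat T(f_*(c\otimes\gQ), t)\neq 0$, for then $\cat S(c,\LambdaQ f^!t)\ne 0$ and so $\cat Q\in\CosuppS(f^!t)$. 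Note that the projection formula and $\gQ=f^*\gP\otimes \gQ$ show $f_*(c\otimes\gQ)\in \GammaP\cat T$, so by adjunction $\cat T(f_*(c\otimes\gQ),t) = \cat T(f_*(c\otimes\gQ),\LambdaP t)$; and via the equivalence $\GammaP\colon\LambdaP\cat T\xra{\sim}\GammaP\cat T$ this further equals $\cat T(f_*(c\otimes\gQ),\GammaP\LambdaP t)$, with $\GammaP\LambdaP t\ne 0$ in $\GammaP\cat T$ because $\LambdaP t\ne 0$ in $\LambdaP\cat T$.

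It remains to produce such a compact $c$. The weak conservativity of $f_*$ (\cref{rem:weaklyconservative}) furnishes some $c_0\in \cat S^c$ with $f_*(c_0\otimes\gQ)\neq 0$; this object lies in $\GammaP\cat T$ by the same projection-formula argument. Minimality of $\GammaP\cat T$ together with the Benson--Iyengar--Krause criterion (\cref{lem:minimality_crit}) applied to the two nonzero objects $f_*(c_0\otimes\gQ)$ and $\GammaP\LambdaP t$ of $\GammaP\cat T$ gives $\ihom{f_*(c_0\otimes\gQ),\GammaP\LambdaP t}\ne 0$, so by compact generation of $\cat T$ there exists $c_1\in\cat T^c$ and $n\in\mathbb Z$ with $\cat T(c_1\otimes f_*(c_0\otimes\gQ),\Sigma^n\GammaP\LambdaP t)\ne 0$. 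A final application of the projection formula rewrites $c_1\otimes f_*(c_0\otimes\gQ) = f_*(c\otimes \gQ)$ where $c := f^*(c_1)\otimes c_0 \in \cat S^c$ (compact because $f^*$ preserves compacts). Chasing the identifications produces $\cat S(c,\Sigma^n\LambdaQ f^!t)\ne 0$ as required. The conceptual crux — and what I anticipate as the main obstacle to explain cleanly — is precisely this asymmetry: one cannot directly transport $\LambdaP t\ne 0$ across $f^!$, so one must instead generate a nonzero object of $\GammaP\cat T$ from $\gQ$ via $f_*$ and then exploit minimality of $\GammaP\cat T$ through internal hom, converting back to $f^!$-information only at the very end via adjunction.
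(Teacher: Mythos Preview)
Your argument for \eqref{eq:localAV-f*} matches the paper's. For \eqref{eq:localAV-f!}, you use the same ingredients---minimality, weak conservativity of $f_*$, and the $(f_*,f^!)$ adjunction---but assemble them more laboriously. The paper argues contrapositively: from $\ihom{\gQ,f^!(t)}=0$ and a compact $c\in\cat S^c$ with $f_*(c\otimes\gQ)$ nonzero in $\GammaphiQ\cat T$ (via \cref{rem:weaklyconservative} and \cref{lem:supp-of-f_*-gP}), minimality in the form ``any nonzero element generates'' gives $\gphiQ\in\Loco{f_*(c\otimes\gQ)}$; the isomorphism $\ihom{f_*(-),t}\simeq f_*\ihom{-,f^!(t)}$ then forces $\ihom{f_*(c\otimes\gQ),t}=0$, whence $\ihom{\gphiQ,t}=0$ by \eqref{eq:[loc,t]}. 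This bypasses your detour through $\GammaP\LambdaP t$, the stalk--costalk equivalence, the BIK criterion \cref{lem:minimality_crit}, and the second compact $c_1$: the paper propagates a \emph{vanishing} internal hom rather than manufacturing a nonzero one, and never leaves the level of internal homs for external Hom-sets. Your direct approach works, but the contrapositive is substantially cleaner. (Both arguments invoke minimality of $\GammaP\cat T$ at a prime of $\CosuppT(t)$, which the stated hypothesis literally only grants for primes of $\SuppT(t)$; this is immaterial in the global application \cref{cor:globalavruninscott}.)
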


\begin{proof}
	The $\subseteq$ inclusions always hold (\cref{rem:AV-trivial-inclusions}). To establish the $\supseteq$ inclusion in \eqref{eq:localAV-f*}, suppose $\cat Q \in \varphi^{-1}(\SuppT(t))$ so that $\varphi(\cat Q) \in \SuppT(t)$. Since $\GammaphiQ \cat T$ is minimal, it follows that $\gphiQ\in \Loco{t}$. Hence $f^*(\gphiQ) \in \Loco{f^*(t)}$. Therefore 
		\[
			\cat Q \in \varphi^{-1}(\{\varphi(\cat Q)\})=\SuppS(f^*(\gphiQ))\subseteq \SuppS(f^*(t)),
		\]
	where the first equality is because of \cref{rem:fiber-is-support}. It remains to establish the~$\supseteq$ inclusion in \eqref{eq:localAV-f!}. Suppose $\cat Q \not\in \CosuppS(f^!(t))$. We will show that $\cat Q \not\in \varphi^{-1}(\CosuppT(t))$. By \cref{rem:weaklyconservative} and \cref{lem:supp-of-f_*-gP}, there exists a compact $c\in \cat S^c$ such that $\SuppT(f_*(c\otimes \gQ)) = \{\varphi(\cat Q)\}$. Since $\GammaphiQ \cat T$ is minimal, we conclude that 
		\begin{equation}\label{eq:gvarphiQin}
			g_{\varphi(\cat Q)} \in \Loco{f_*(c\otimes \gQ)}.
		\end{equation}
	Now, since $\cat Q \not\in \CosuppS(f^!(t))$ by hypothesis, we have $\ihom{\gQ,f^!(t)}=0$. Hence $\ihom{c\otimes \gQ,f^!(t)}=0$ and so 
		\[
			\ihom{f_*(c\otimes \gQ),t}\simeq f_*\ihom{c\otimes \gQ,f^!(t)} = 0.
		\]
	Therefore, $\ihom{g_{\varphi(\cat Q)},t}=0$ by \eqref{eq:gvarphiQin}. That is, $\varphi(\cat Q) \not\in \CosuppT(t)$.
\end{proof}

\begin{Cor}[Global Avrunin--Scott identities]\label{cor:globalavruninscott}
    If $\cat T$ is stratified, then for any $t \in \cat T$ we have:
		\begin{align}
			\SuppS(f^*(t)) &= \varphi^{-1}(\SuppT(t)); \text{ and}\label{eq:AV-f*}\\
			\CosuppS(f^!(t)) &= \varphi^{-1}(\CosuppT(t)).\label{eq:AV-f!}
		\end{align}
\end{Cor}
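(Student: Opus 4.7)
\begin{proof}
The plan is to reduce this directly to the local Avrunin--Scott identities established in \cref{thm:localavruninscott}. Recall that \cref{thm:localavruninscott} gives the desired equalities \eqref{eq:AV-f*} and \eqref{eq:AV-f!} under the hypothesis that $\GammaP \cat T$ is a minimal localizing ideal for every $\cat P \in \SuppT(t)$. Hence it suffices to verify that stratification of $\cat T$ forces this minimality condition at every point of $\Spc(\cat T^c)$.

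By \cite[Theorem~4.1]{bhs1} (the stratification analogue of our \cref{thm:equiv-costrat}), $\cat T$ being stratified is equivalent to the combination of the local-to-global principle together with the condition that $\GammaP \cat T$ is a minimal localizing ideal for each $\cat P \in \Spc(\cat T^c)$. In particular, the minimality hypothesis of \cref{thm:localavruninscott} is satisfied for every object $t \in \cat T$, and both identities follow at once.
\end{proof}

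The only potential subtlety is confirming the precise characterization of stratification in terms of pointwise minimality, but this is exactly the content of the cited theorem from \cite{bhs1}, which is the localizing-ideal counterpart of \cref{thm:equiv-costrat} proved in the present paper. Thus the corollary is truly a one-line consequence of \cref{thm:localavruninscott}, with no genuine obstacle to overcome.
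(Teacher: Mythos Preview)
Your proof is correct and matches the paper's own argument essentially verbatim: both invoke \cite[Theorem~4.1]{bhs1} to extract pointwise minimality from stratification and then apply \cref{thm:localavruninscott}.
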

\begin{proof}
Indeed, \cite[Theorem 4.1]{bhs1} shows that stratification of $\cat T$ gives minimality at every point of its spectrum, so the result follows from \cref{thm:localavruninscott}.
\end{proof}

\begin{Rem}
	In a tensor triangular setting, the Avrunin--Scott identities for support and cosupport were studied (under more restrictive hypotheses) in \cite[Proposition 3.14]{BarthelCastellanaHeardValenzuela19}. They originate in the work of Avrunin--Scott \cite{avruninscott} on support varieties for representations of finite groups.
\end{Rem}

\begin{Rem}
    If we assume that the right adjoint $f_*$ is conservative, then we can prove the Avrunin--Scott identities of \cref{cor:globalavruninscott} under slightly weaker hypotheses on the category $\cat T$ by modifying the proof of \cite[Proposition 3.14]{BarthelCastellanaHeardValenzuela19}; namely, the identity for $f^*$ holds if $\cat T$ has the tensor product formula, while the identity for $f^!$ holds if $\cat T$ has the Hom formula. Recall that the former is implied by stratification (\cite[Theorem 8.2]{bhs1}) and the latter is equivalent to stratification (\cref{thm:strat_cosupport}). 
\end{Rem}

\begin{Cor}\label{cor:conservative-iff-surjective}
	Let $\cat T$ be stratified. For $f^*\colon\cat T \to \cat S$ as in \cref{hyp:geometric-base-change}, the following conditions are equivalent:
	    \begin{enumerate}
			\item $f^*$ is weakly descendable (\cref{def:weakly-descendable});
	        \item $f^!$ is conservative;
	        \item $f^*$ is conservative;
	        \item $\varphi\colon\Spc(\cat S^c)\to \Spc(\cat T^c)$ is surjective.
	    \end{enumerate}
\end{Cor}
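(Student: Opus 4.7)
The strategy is to largely recycle the implications from Proposition~\ref{prop:weaklyfinite-surjective-conservative}, which hold without any finiteness assumption; only the closing implication $(d)\Rightarrow(a)$ needs a different argument, where stratification of $\cat T$ replaces the {\weaklyfinite} hypothesis.

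First I would record that $(a)\Rightarrow(b)$ is immediate from \cref{rem:consequence-of-wd}, that $(b)\Rightarrow(c)$ is \cref{prop:f!conservative}, and that $(c)\Rightarrow(d)$ is \cref{cor:surjective-for-conservative}. None of these three steps requires any hypothesis beyond the standing assumptions of \cref{hyp:geometric-base-change}, and none uses stratification of $\cat T$.

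The key new implication is $(d)\Rightarrow(a)$. Here the plan is to use stratification of $\cat T$ to upgrade the identification of the image of $\varphi$ with the support of $f_*(\unitS)$ (\cref{cor:img-of-spc}) into a statement about localizing ideals. Concretely, surjectivity of $\varphi$ combined with \cref{cor:img-of-spc} yields $\SuppT(f_*(\unitS)) = \im\varphi = \Spc(\cat T^c) = \SuppT(\unitT)$. Because $\cat T$ is stratified, the map $\Supp$ is a bijection from localizing ideals of $\cat T$ to subsets of $\Spc(\cat T^c)$, so the equality of supports forces $\Loco{f_*(\unitS)} = \Loco{\unitT} = \cat T$, and in particular $\unitT \in \Loco{f_*(\unitS)}$, which is precisely weak descendability.

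I do not foresee any serious obstacle: the arguments of \cref{prop:weaklyfinite-surjective-conservative} carry over verbatim for the three easy implications, and stratification supplies exactly what is needed to replace the compactness hypothesis $f_*(\unitS)\in\cat T^c$ (used there to invoke the classification of thick ideals of $\cat T^c$) by the classification of localizing ideals of $\cat T$ given by stratification.
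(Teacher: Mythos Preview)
Your proposal is correct and follows essentially the same approach as the paper's proof: the three easy implications are cited exactly as in the paper, and for $(d)\Rightarrow(a)$ both you and the paper use \cref{cor:img-of-spc} together with stratification to conclude $\unitT\in\Loco{f_*(\unitS)}$. Your added remark contrasting this with the {\weaklyfinite} case is a nice touch that the paper leaves implicit.
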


\begin{proof}
	The implication $(a) \Rightarrow (b)$ follows from \cref{rem:consequence-of-wd}, $(b) \Rightarrow (c)$ is \cref{prop:f!conservative} and $(c) \Rightarrow (d)$ is \cref{cor:surjective-for-conservative}. For $(d)\Rightarrow (a)$ note that the surjectivity of $\varphi$ implies that $\SuppT(f_*(\unitS))=\Spc(\cat T^c)$ by \cref{cor:img-of-spc}. If $\cat T$ is stratified, this implies $\unitT \in \Loco{f_*(\unitS)}$.
\end{proof}

\begin{Exa}
	Let $(R,\mathfrak m,k)$ be a commutative noetherian local ring whose maximal ideal $\mathfrak m$ is nilpotent (that is, $R$ is a commutative artinian local ring). \Cref{cor:conservative-iff-surjective} implies that the functor $\Der(R) \to \Der(k)$ is conservative. Hence, if $M$ is an arbitrary flat $R$-module, we conclude that $M/\mathfrak m M=0$ implies~$M=0$. This is a well-known variant of Nakayama's Lemma which replaces finite generation of the module $M$ with flatness together with nilpotence of the ideal $\mathfrak m$; cf.~\cite[Theorem~7.10]{Matsumura89}.
\end{Exa}

\begin{Exa}\label{exa:keller}
	Let $A$ be the ring denoted $A$ in \cite{Keller94b}. It is a non-discrete valuation domain of rank 1 whose value group is $\bbZ[1/\ell]\subset \bbQ$; see \cite[Theorem~II.3.8]{FuchsSalce01}. The quotient $R\coloneqq A/xA$ by any nontrivial principal ideal is then a non-noetherian local ring whose spectrum is a single point. The map to the residue field $f\colon R\to k$ induces a functor $f^*\colon \Der(R)\to\Der(k)$ which is surjective on spectra. However, the maximal ideal of $R$ satisfies $\mathfrak m^2=\mathfrak m$ and is flat as an $R$-module. Hence $M \coloneqq \mathfrak m$ is a nonzero $R$-module which is annihilated by $f^*$. We conclude that $\Der(R)$ cannot be stratified, since this would contradict \cref{cor:conservative-iff-surjective}. Alternatively, it follows from the work of Bazzoni--{\Stovicek} (cf.~\cite[Example 5.24]{BazzoniStovicek17}) that $\Der(R)\to \Der(k)$ is a smashing localization which has no finite acyclics. Hence $\Der(R)$ does not satisfy the telescope conjecture and we can alternatively conclude that it is not stratified by invoking \cite[Theorem 9.11]{bhs1}. 
\end{Exa}

\begin{Exa}
	Let $(R,\mathfrak m,k)$ be a commutative noetherian local ring and let~$\widehat{R}$ denote the $\mathfrak m$-adic completion of $R$. The induced functor $\Der(R)\to\Der(\widehat{R})$ is conservative. Indeed, $R\to \widehat{R}$ is faithfully flat, so $\Spec(\widehat{R})\to\Spec(R)$ is surjective.
\end{Exa}

\section{Descending the local-to-global principle}\label{sec:descending-LGP}

We now provide a version of descent for the local-to-global principle.

\begin{Prop}\label{prop:descend-LGP}
	Let $f^*\colon\cat T\to \cat S$ be as above (\cref{hyp:geometric-base-change}). Assume that $\cat S$ satisfies the local-to-global principle. Then
		\[ 
			t \in \Loco{t\otimes \gP \mid \cat P\in \SuppT(t)}
		\]
	for any object $t \in \cat T$ such that $t \in \Loco{f_*f^*t}$.
\end{Prop}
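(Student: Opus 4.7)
The plan is to push the local-to-global principle in $\cat S$ forward along $f_*$, use the projection formula to concentrate the resulting generators at points of $\cat T$ indexed by the image of $\varphi$, and finally invoke the hypothesis $t \in \Loco{f_*f^*t}$.

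First, apply the LGP in $\cat S$ to the object $f^*t$ to obtain
$$f^*t \in \Loco[\cat S]{\GammaQ f^*t \mid \cat Q \in \Spc(\cat S^c)} = \Loc_{\cat S}\bigl\langle \gQ \otimes c \otimes f^*t \,\big|\, \cat Q \in \Spc(\cat S^c),\ c \in \cat S^c\bigr\rangle.$$
Since $f_*$ admits a right adjoint $f^!$, it preserves coproducts; combined with the projection formula $f_*(\gQ \otimes c \otimes f^*t) \simeq f_*(\gQ \otimes c) \otimes t$, this yields
$$f_*f^*t \in \Loc_{\cat T}\bigl\langle f_*(\gQ \otimes c) \otimes t \,\big|\, \cat Q \in \Spc(\cat S^c),\ c \in \cat S^c\bigr\rangle.$$

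The core of the argument is to show that each generator $f_*(\gQ \otimes c) \otimes t$ lies in $\cat L \coloneqq \Loco{t \otimes \gP \mid \cat P \in \SuppT(t)}$. By \cref{rem:fiber-is-support} we have $f^*g_{\varphi(\cat Q)} = g_{\varphi^{-1}(\{\varphi(\cat Q)\})}$, and since $\cat Q \in \varphi^{-1}(\{\varphi(\cat Q)\})$ the intersection formula of \cref{rem:W-intersection} gives $f^*g_{\varphi(\cat Q)} \otimes \gQ \simeq \gQ$. Applying the projection formula once more,
$$f_*(\gQ \otimes c) \otimes g_{\varphi(\cat Q)} \simeq f_*(\gQ \otimes c \otimes f^*g_{\varphi(\cat Q)}) \simeq f_*(\gQ \otimes c),$$
so that $f_*(\gQ \otimes c) \otimes t \simeq f_*(\gQ \otimes c) \otimes g_{\varphi(\cat Q)} \otimes t \in \Loco{t \otimes g_{\varphi(\cat Q)}}$. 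If $\varphi(\cat Q) \in \SuppT(t)$ this is a subset of $\cat L$ by definition; otherwise $t \otimes g_{\varphi(\cat Q)} = 0$ and the generator itself is zero.

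Combining the two steps gives $f_*f^*t \in \cat L$, and the hypothesis $t \in \Loco{f_*f^*t}$ then forces $t \in \cat L$, which is exactly the claim. The main obstacle is really just isolating the right projection-formula manipulation showing that each $f_*(\gQ \otimes c)$ is already $g_{\varphi(\cat Q)}$-local; once that is in hand, everything else is formal and uses only the LGP in $\cat S$ together with the coproduct-preservation of $f_*$.
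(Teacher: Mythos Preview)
Your proof is correct and follows essentially the same strategy as the paper: apply the local-to-global principle in $\cat S$ to $f^*t$, use the key identity $f^*g_{\varphi(\cat Q)}\otimes \gQ \simeq \gQ$ together with the projection formula to concentrate everything at the points $g_{\varphi(\cat Q)}\otimes t$, and conclude via the hypothesis $t\in\Loco{f_*f^*t}$. The only organizational difference is that the paper first rewrites $\Loco{f^*t\otimes\gQ\mid\cat Q\in\varphi^{-1}(\singP)}=\Loco{f^*(t\otimes\gP)}$ in $\cat S$ and then pushes forward via \eqref{eq:f_*-inclusion}, whereas you push forward immediately and do the concentration argument in $\cat T$; these are equivalent repackagings of the same computation.
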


\begin{proof} 
    By the local-to-global principle in $\cat S$, we have
		\begin{align*} \Loco{f^* \gP} &=
				\Loco{f^*\gP \otimes \gQ \mid \cat Q \in \varphi^{-1}(\singP)} \\
			&= \Loco{g_{\varphi^{-1}(\singP)} \otimes \gQ \mid \cat Q \in \varphi^{-1}(\singP)} \\
			&= \Loco{ \gQ \mid \cat Q \in \varphi^{-1}(\singP)} 
		\end{align*}
	for any $\cat P \in \Spc(\cat T^c)$, where the last equality uses \cref{rem:W-intersection}. Hence 
		\begin{equation}\label{eq:temp}
			\Loco{f^*t \otimes f^*\gP}=\Loco{f^*t \otimes \gQ \mid \cat Q \in \varphi^{-1}(\singP)}
		\end{equation}
    by \cite[Lemma 3.6]{bhs1}, for example. Then, by the local-to-global principle for $f^*t$, we have
		\begin{align*}
			f^*t &\in \Loco{f^*t \otimes \gQ \mid \cat Q \in \SuppS(f^*t)}\\
			&\subseteq \Loco{f^*t \otimes \gQ \mid \cat Q \in \varphi^{-1}(\SuppT(t))} &\text{(\cref{rem:AV-trivial-inclusions})}\\
			&= \Loco{f^*t \otimes f^*\gP \mid \cat P \in \SuppT(t)} &\text{(by \eqref{eq:temp}).}
		\end{align*}
	Using \eqref{eq:f_*-inclusion}, we obtain
		\[ 
			f_*f^*t \in \Loco{t\otimes \gP \mid \cat P \in \SuppT(t)}
		\]
    which establishes the claim.
\end{proof}

\begin{Rem}\label{rem:nice-LGP-descent}
	It follows from \cref{cor:img-of-spc} that objects satisfying $t \in \Loco{f_*f^*t}$ have their support contained in the image of $\varphi$. For some functors, the converse holds: If $\SuppT(t) \subseteq \im \varphi$ then $t \in \Loco{f_*f^*t}$. This is the case for the following two examples:
	\begin{enumerate}
		\item A finite localization $f^*\colon\cat T \to \cat T(U)$ under the assumption that $\cat T$ has the detection property. In this case, $\im \varphi = U$ and $f_*f^*t \cong t$ for all $t$ which are supported in $U$.
	    \item A {\weaklyfinite} functor $f^*\colon\cat T\to \cat S$ under the assumption that $\cat T$ has the detection property. This follows from \cite[Lemma~3.7]{bhs1}.
	\end{enumerate}
	Intuitively we can think of \cref{prop:descend-LGP} as saying that the local-to-global principle partially descends along $f^*$ to objects supported in the image of $\varphi$. However, note that if $f^*\colon\cat T \to \cat S$ is a functor which is not conservative and yet for which $\varphi$ is surjective (such as the functor described in \cref{exa:keller}) then there exists an object $t \in \cat T$ with $\SuppT(t) \subseteq \im\varphi$ and yet with $t \not\in \Loco{f_*f^*t}$, so the above intuition is not completely accurate.
\end{Rem}

\begin{Rem}\label{rem:detection-descends}
	It follows from \cref{rem:AV-trivial-inclusions} that the following hold:
	\begin{enumerate}
		\item If $f^*\colon\cat T\to \cat S$ is conservative then the detection property descends from $\cat S$ to~$\cat T$.
		\item If $f^!\colon\cat T\to \cat S$ is conservative then the codetection property descends from $\cat S$ to $\cat T$.
	\end{enumerate}
	Since the codetection property is equivalent to the (co)local-to-global principle (by \cref{thm:LGP-equiv}) we see that the local-to-global principle descends from $\cat S$ to $\cat T$ whenever~$f^!$ is conservative.
\end{Rem}

\begin{Exa}
	If $f^*\colon\cat T\to \cat S$ is fully faithful then the local-to-global principle descends from $\cat S$ to $\cat T$. This follows from \cref{prop:descend-LGP} since $t \cong f_*f^* t$ for all $t \in \cat T$. It also follows from \cref{rem:detection-descends} since if $f^*$ is fully faithful then $f^!$ is conservative. Indeed, in general $f_*f^!(t) \cong \ihom{f_*(\unitS),t}$, which becomes $f_*f^!(t) \cong t$ when $f^*$ is fully faithful since $f_*(\unitS)\cong\unitT$ (see, e.g., \cite[Remark~4.13]{Sanders22}).
\end{Exa}

\begin{Exa}\label{ex:localtoglobaldescent}
	If $f^*\colon\cat T\to \cat S$ is conservative and {\weaklyfinite} then the local-to-global principle descends from $\cat S$ to $\cat T$. This follows from \cref{prop:weaklyfinite-surjective-conservative} and \cref{rem:detection-descends}. It can also be obtained as an application of \cref{prop:descend-LGP}.
\end{Exa}

\section{Local cogeneration}\label{sec:local-cogeneration}

We have seen in \cref{sec:perfect-generation} that a compactly generated category is perfectly cogenerated by the Brown--Comenetz duals of its compact objects. We now explain how these can be used to construct suitable \emph{local} cogenerators in tt-geometry. This will be an important ingredient in our bootstrap result for descending costratification in \cref{sec:bootstrap}.

\begin{Prop}\label{prop:local-generators}
	Let $\cat T$ be a rigidly-compactly generated tt-category.
	\begin{enumerate}
		\item Let $Y\subseteq \Spc(\cat T^c)$ be a Thomason subset. The subcategory $\ihom{e_Y,\cat T}$ is perfectly cogenerated by $\SET{ I_c }{\supp(c) \subseteq Y}$. In particular,
			\[
				\ihom{e_Y,\cat T}=\Coloc\langle I_c \mid \supp(c) \subseteq Y\rangle.
			\]
		\item Let $Y_1, Y_2 \subseteq \Spc(\cat T^c)$ be Thomason subsets and consider the weakly visible subset $W \coloneqq Y_1 \cap Y_2^c$. The subcategory $\LambdaW\cat T$ is perfectly cogenerated by $\SET{\LambdaW I_c}{\supp(c) \subseteq Y_1}$. In particular,
			\[
				\LambdaW \cat T = \Coloc\langle \LambdaW I_c \mid \supp(c) \subseteq Y_1\rangle.
			\]
	\end{enumerate}
\end{Prop}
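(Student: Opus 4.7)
The strategy in both parts is to verify conditions (PG1) and (PG2) of \cref{def:perfectly-generated} applied to the opposite categories. Brown--Comenetz duality, in the form of \eqref{eq:BC-homs}, converts vanishing conditions on the $I_c$'s into vanishing conditions for $\cat T(c,-)$ with $c$ compact, which are easier to handle.

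For part (a), set $\cat C_Y \coloneqq \ihom{e_Y, \cat T}$ and $\cat E \coloneqq \{I_c \mid c \in \cat T_Y^c\}$. I would first check $\cat E \subseteq \cat C_Y$: since $c \otimes e_Y \cong c$ for $c \in \cat T_Y^c$, the representing isomorphism \eqref{eq:BC-homs} yields $\ihom{e_Y, I_c} \cong I_c$. For cogeneration, the dual of \cref{rem:weak-generation} reduces matters to showing that $t \in \cat C_Y$ with $\cat T(\Sigma^n t, I_c) = 0$ for all $c \in \cat T_Y^c$ and $n \in \bbZ$ forces $t = 0$. By \eqref{eq:BC-homs} and the injective cogenerator property of $\bbQ/\bbZ$, this vanishing is equivalent to $t \in (\cat T_Y)^\perp$; combined with $\cat C_Y = (\cat T_Y)^{\perp\perp}$ from \cref{lem:key-observation}, we obtain $t \in (\cat T_Y)^\perp \cap (\cat T_Y)^{\perp\perp}$, so $\ihom{t,t} = 0$ and hence $t = 0$. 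For (PG2) the crucial observation is that products in $\cat C_Y$ coincide with products in $\cat T$, since $\cat C_Y$ is a colocalizing coideal. Given $(x_i \to y_i)_{i \in I}$ in $\cat C_Y$ with each $\cat T(y_i, I_c) \to \cat T(x_i, I_c)$ surjective, \cref{lem:Q/Z-cogenerator} translates this to injectivity of $\cat T(c, x_i) \to \cat T(c, y_i)$; products preserve injections and $\cat T(c,-)$ commutes with products, so the product map is injective, and applying \cref{lem:Q/Z-cogenerator} again yields the required surjectivity. The ``in particular'' statement follows automatically from (PG1).

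Part (b) proceeds analogously with $\LambdaW \cat T$ in place of $\cat C_Y$ and $\cat E_W \coloneqq \{\LambdaW I_c \mid c \in \cat T_{Y_1}^c\}$ in place of $\cat E$; the inclusion $\cat E_W \subseteq \LambdaW \cat T$ holds by construction. The cogeneration step rests on the identity $\cat T(y, \LambdaW I_c) \cong \Hom_\bbZ(\cat T(c, \gW \otimes y), \bbQ/\bbZ)$: since $\gW \otimes y \in \cat T_{Y_1}$ (as $e_{Y_1} \otimes \gW = \gW$) and $\cat T_{Y_1}$ is compactly generated by $\cat T_{Y_1}^c$, vanishing of these groups for all $c \in \cat T_{Y_1}^c$ and all suspensions implies $\gW \otimes y = 0$, whence $y = 0$ via the equivalence $\LambdaW \cat T \cong \GammaW \cat T$ of \cref{rem:stalk-costalk-equivalence}.

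The delicate step, and the one I expect to be the main obstacle, is verifying (PG2) in part (b): although products in $\LambdaW \cat T$ coincide with products in $\cat T$, the functor $\gW \otimes -$ is a left adjoint and does not preserve products in general. I would circumvent this by establishing the identity
\[
	\cat T\bigl(c,\, \gW \otimes \textstyle\prod_i x_i\bigr) \;\cong\; \prod_i \cat T\bigl(c,\, \gW \otimes x_i\bigr) \qquad (c \in \cat T_{Y_1}^c,\ x_i \in \LambdaW \cat T),
\]
proved by decomposing $\gW = e_{Y_1} \otimes f_{Y_2}$, noting that $f_{Y_1} \otimes c = 0$ implies $\cat T(c, e_{Y_1} \otimes Z) \cong \cat T(c, Z)$ for any $Z \in \cat T$ (via the localization triangle), and then using that $\ihom{f_{Y_2}, -}$ is a right adjoint so that $\cat T(c, f_{Y_2} \otimes Z) = \cat T(c, \ihom{f_{Y_2}, Z})$ commutes with products in $Z$. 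Given this identity, the (PG2) argument from part (a) applies verbatim, and the ``in particular'' conclusion is again automatic.
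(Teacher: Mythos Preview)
Your proof is correct and takes a more direct route than the paper. You verify (PG1) and (PG2) explicitly by translating everything through the Brown--Comenetz formula \eqref{eq:BC-homs}, whereas the paper leverages the abstract machinery developed in \cref{sec:perfect-generation,sec:cosupp-is-supp}: for (a) it applies \cref{lem:perfectly-generated} to $\cat T\op$ (using that $\ihom{e_Y,\cat T}=\Coloc\langle\cat K_Y\otimes\cat T\rangle$ from \cref{lem:key-observation} and that $\cat T\op$ is perfectly generated by the $I_c$'s via \cref{prop:Top-is-perfectly-generated}), and for (b) it invokes \cref{prop:perfect-conservation} for the functor $\ihom{f_{Y_2},-}\colon\ihom{e_{Y_1},\cat T}\to\LambdaW\cat T$, observing that its left adjoint $-\otimes f_{Y_2}$ is fully faithful and coincides with the product-preserving functor $\ihom{e_{Y_1},-}$. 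The paper's approach is cleaner because (PG2) is handled once and for all inside \cref{lem:perfectly-generated} and \cref{prop:perfect-conservation}; your approach is more self-contained and avoids the detour through $\cat T^d$-submodules of $\cat T\op$.

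One point of your argument deserves sharpening: the equality $f_{Y_2}\otimes Z = \ihom{f_{Y_2},Z}$ that you invoke in the (PG2) step of (b) is \emph{not} valid for arbitrary $Z$, since $f_{Y_2}\otimes-$ and $\ihom{f_{Y_2},-}$ are respectively the left and right adjoint to the inclusion $f_{Y_2}\otimes\cat T\hookrightarrow\cat T$. What makes your identity work is the specific hypothesis $x_i\in\LambdaW\cat T\subseteq f_{Y_2}\otimes\cat T$: then $\prod_i x_i$ is already $f_{Y_2}$-local (products of locals are local), so $f_{Y_2}\otimes\prod_i x_i\cong\prod_i x_i$, and after peeling off $e_{Y_1}$ (via $c^\vee\otimes f_{Y_1}=0$) you get $\cat T(c,\gW\otimes\prod_i x_i)\cong\cat T(c,\prod_i x_i)\cong\prod_i\cat T(c,x_i)$ directly, without needing to pass through $\ihom{f_{Y_2},-}$ at all. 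This is precisely the observation the paper encodes by noting that the left adjoint $-\otimes f_{Y_2}$ agrees with $\ihom{e_{Y_1},-}$ on $\LambdaW\cat T$.
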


\begin{proof}
	Let $\cat K\coloneqq \cat T^c$ and let $\cat K_Y \coloneqq\SET{x \in \cat K}{\supp(x)\subseteq Y}$. By \cref{exa:BC-duality} and \cref{prop:Top-is-perfectly-generated}, $\cat T$ is perfectly cogenerated by $\SET{I_c}{c \in \cat K} = \cat K \otimes I_\unit$. Recall from \cref{lem:key-observation} that $\ihom{e_Y,\cat T} = \Coloc\langle \cat K_Y \otimes \cat T\rangle$. Hence \cref{lem:perfectly-generated} (applied to~$\cat T\op$) implies that $\ihom{e_Y,\cat T}$ is perfectly cogenerated by $\cat K_Y \otimes \cat K \otimes I_\unit = \cat K_Y\otimes I_\unit = \SET{I_c}{ c \in \cat K_Y}$. This establishes~$(a)$.

	We obtain part $(b)$ by applying \cref{prop:perfect-conservation} to part $(a)$. The functor 
		\begin{equation}\label{eq:e-down-to-g}
			\ihom{f_{Y_2},-}\colon\ihom{e_{Y_1},\cat T}\to \ihom{\gW,\cat T}
		\end{equation}
	preserves products. Hence, since the domain category is perfectly cogenerated (by part $(a)$), we know it must have a left adjoint. Indeed, as observed in \cref{rem:stalk-costalk-equivalence}, it has the fully faithful left adjoint
		\[
			-\otimes f_{Y_2}\colon\ihom{\gW,\cat T}\to \ihom{e_{Y_1},\cat T}.
		\]
		But this functor is naturally isomorphic to 
		\[
			\ihom{e_{Y_1},-}\colon\ihom{\gW,\cat T}\to \ihom{e_{Y_1},\cat T}
		\]
	which evidently preserves products. Thus, the functor \eqref{eq:e-down-to-g} has a conservative left adjoint, which itself preserves products. Hence we can invoke \cref{prop:perfect-conservation} to conclude that $\LambdaW\coloneqq\ihom{\gW,\cat T}$ is perfectly cogenerated by 
		\[
			\SET{ \ihom{f_{Y_2},I_c} }{{\supp(c)\subseteq Y_1}}.
		\]
	Note that $\ihom{f_{Y_2},I_c}=\ihom{\gW,I_c}=\LambdaW I_c$ since $I_c \in \ihom{e_{Y_1},\cat T}$ already.
\end{proof}

\begin{Rem}
	The statement in part $(a)$ of \cref{prop:local-generators} is the special case of part $(b)$ when $Y_2 \coloneqq \emptyset$. Indeed, as explained in the proof, if $\supp(c) \subseteq Y$ then $I_c \cong \ihom{e_Y,I_c} = \Lambda^Y I_c$. We have formulated the proposition as we have, since the special case is used to prove the more general statement.
\end{Rem}

\begin{Rem}
	The reader may find it interesting to compare with \cite[Prop.~5.4]{BensonIyengarKrause12}.
\end{Rem}

\begin{Rem}
	The compact generators of $e_Y \otimes \cat T = \Loc\langle c \mid \supp(c) \subseteq Y\rangle$ similarly pushdown to a set of perfect generators of $\GammaW \cat T = \Loc\langle \GammaW(c) \mid {\supp(c) \subseteq Y_1}\rangle$. Indeed, one uses the second adjunction displayed in \cref{rem:stalk-costalk-equivalence}, observing that the fully faithful right adjoint $\ihom{f_{Y_2},-}$ is naturally isomorphic to $e_{Y_1}\otimes -$, which preserves coproducts, whence one can invoke \cref{prop:perfect-conservation}.
\end{Rem}

\begin{Prop}\label{prop:stalk-trio}
	Let $f^*\colon\cat T\to \cat S$ be as above (\cref{hyp:geometric-base-change}) and let $W \subseteq \Spc(\cat T^c)$ be a weakly visible subset. The trio of adjoints $f^* \dashv f_* \dashv f^!$ induces a trio of adjoints
		\[\begin{tikzcd}[column sep=scriptsize]
			\GammaW\cat T \ar[d,"f^*"',shift right=1.25ex] \ar[r,phantom,"\cong"description] & \LambdaW\cat T\ar[d,"f^!",shift left=1.25ex]\\
			\GammainvW\cat S \ar[u,"f_*"',shift right=1.25ex]\ar[u,phantom,"\dashv"]\ar[r,phantom,"\cong"description]& \LambdainvW\cat S\ar[u,"f_*",shift left=1.25ex]\ar[u,phantom,"\dashv"]
		\end{tikzcd}\]
	where the middle square commutes and uses the equivalences of \cref{rem:stalk-costalk-equivalence}.
\end{Prop}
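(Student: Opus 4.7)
The plan is to verify, step by step, that each of the three functors in the trio $f^* \dashv f_* \dashv f^!$ restricts appropriately to the indicated (co)stalks, and then to check that the middle square commutes under the equivalences of \cref{rem:stalk-costalk-equivalence}. Writing $W = Y_1 \cap Y_2^c$ for Thomason subsets $Y_1, Y_2 \subseteq \Spc(\cat T^c)$, the key ingredients are: (a) the compatibility $f^*(g_W) = g_{\varphi^{-1}(W)}$, which follows from $f^*(e_Y) = e_{\varphi^{-1}(Y)}$ and $f^*(f_Y) = f_{\varphi^{-1}(Y)}$ recorded in \cref{rem:fiber-is-support}; (b) the projection formula $f_*(f^*(a) \otimes s) \simeq a \otimes f_*(s)$; (c) the adjoint formula $f_*\ihom{f^*(a), s} \simeq \ihom{a, f_*(s)}$ from \cite{BalmerDellAmbrogioSanders16}; and (d) the formula $f^!\ihom{a, t} \simeq \ihom{f^*(a), f^!(t)}$ from \eqref{eq:f!hom}. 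Note that $\varphi^{-1}(W) = \varphi^{-1}(Y_1) \cap \varphi^{-1}(Y_2)^c$ is itself weakly visible, since the preimage of a Thomason subset under a spectral map is again Thomason.

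First I would verify that $f^*$ restricts to $\GammaW\cat T \to \GammainvW\cat S$: for $t \in \GammaW\cat T$, i.e.\ $t \simeq g_W \otimes t$, applying $f^*$ and using (a) yields $f^*(t) \simeq g_{\varphi^{-1}(W)} \otimes f^*(t)$, so $f^*(t) \in \GammainvW\cat S$. Dually, $f^!$ restricts to $\LambdaW\cat T \to \LambdainvW\cat S$: if $t \simeq \ihom{g_W, t}$, combining (a) and (d) gives
\[
f^!(t) \simeq f^!\ihom{g_W, t} \simeq \ihom{f^*(g_W), f^!(t)} \simeq \ihom{g_{\varphi^{-1}(W)}, f^!(t)}.
\]
For the middle functor, the projection formula (b) shows that $f_*$ restricts to $\GammainvW\cat S \to \GammaW\cat T$: if $s \simeq f^*(g_W) \otimes s$ lies in $\GammainvW\cat S$, then $f_*(s) \simeq g_W \otimes f_*(s) = \GammaW f_*(s)$. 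Symmetrically, the adjoint formula (c) shows that $f_*$ restricts to $\LambdainvW\cat S \to \LambdaW\cat T$: if $s \simeq \ihom{f^*(g_W), s}$, then $f_*(s) \simeq \ihom{g_W, f_*(s)} = \LambdaW f_*(s)$. The restricted adjunctions $f^* \dashv f_*$ and $f_* \dashv f^!$ follow formally, since each subcategory in question is full and the adjunction units/counits restrict.

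It remains to verify commutativity of the middle square: that is, the two restrictions of $f_*$ correspond under the horizontal equivalences of \cref{rem:stalk-costalk-equivalence}. Since those equivalences are implemented by $\LambdaW$ (on $\GammaW\cat T$) and $\LambdainvW$ (on $\GammainvW\cat S$), this reduces to the natural isomorphism $\LambdaW \circ f_* \simeq f_* \circ \LambdainvW$, which is precisely the content of the adjoint formula (c) applied to $a = g_W$: for any $s \in \cat S$,
\[
\LambdaW f_*(s) = \ihom{g_W, f_*(s)} \simeq f_*\ihom{f^*(g_W), s} = f_* \LambdainvW(s).
\]
No serious obstacle is expected; the entire argument is formal, resting on the standard projection and adjoint formulas for the five-term trio together with the good behaviour of the Balmer--Favi idempotents under base change.
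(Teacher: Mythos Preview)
Your proposal is correct and takes essentially the same approach as the paper's own proof, which simply appeals to ``the definitions and the standard isomorphisms from \cite{BalmerDellAmbrogioSanders16} together with \cref{rem:fiber-is-support}'' and then says one checks the square commutes from the definition of the stalk--costalk equivalence. You have spelled out explicitly the formulas (projection formula, internal-hom adjunction, and \eqref{eq:f!hom}) that the paper leaves implicit, but the strategy is identical.
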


\begin{proof}
	It follows from the definitions and the standard isomorphisms from \cite{BalmerDellAmbrogioSanders16} together with \cref{rem:fiber-is-support} that the functors $f^*, f_*$ and $f^!$ restrict to the four functors in the statement, and one verifies directly that we have the two displayed adjunctions $f^* \dashv f_*$ and $f_* \dashv f^!$. One then checks that the middle square commutes from the definitions of the ``stalk-costalk equivalences'' in \cref{rem:stalk-costalk-equivalence}.
\end{proof}

\section{Bootstrap for costratification}\label{sec:bootstrap}

We now provide descent techniques for establishing costratification.

\begin{Lem}\label{lem:bcdualbasechange}
	Let $f^*\colon \cat T \to \cat S$ be a geometric functor. For any $d \in \cat T^c$, there is an isomorphism $I_{f^*d} \cong f^!I_d$. 
\end{Lem}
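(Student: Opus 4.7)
The plan is to establish the isomorphism by representability, using the defining adjoint relation of $I_{(-)}$ together with the adjoint triple $f^* \dashv f_* \dashv f^!$. First I would check that $I_{f^*d}$ is even well-defined: since $f^*$ is a geometric functor with coproduct-preserving right adjoint $f_*$, it preserves compact objects, so $f^*d \in \cat S^c$ and $I_{f^*d}$ exists by \cref{def:compact-BC}.

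Next I would string together the following natural isomorphisms in $s \in \cat S$:
\begin{align*}
\cat S(s, f^! I_d)
  &\cong \cat T(f_* s, I_d) \\
  &\cong \Hom_{\bbZ}\bigl(\cat T(d, f_* s),\, \bbQ/\bbZ\bigr) \\
  &\cong \Hom_{\bbZ}\bigl(\cat S(f^* d, s),\, \bbQ/\bbZ\bigr) \\
  &\cong \cat S(s, I_{f^* d}).
\end{align*}
The first and third isomorphisms are the $f_* \dashv f^!$ and $f^* \dashv f_*$ adjunctions respectively, while the second and fourth are the defining isomorphism \eqref{eq:BC-homs} for Brown--Comenetz duals (applied to the compact objects $d \in \cat T^c$ and $f^*d \in \cat S^c$). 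By the Yoneda lemma applied to $\cat S\op$, this chain of natural isomorphisms yields the desired isomorphism $I_{f^*d} \cong f^! I_d$.

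There is no substantive obstacle here; the argument is a formal manipulation of adjunctions, and the only thing to verify is that $f^*d$ is compact so that the representing object $I_{f^*d}$ exists. This is immediate from the hypothesis that $f^*$ is a geometric functor.
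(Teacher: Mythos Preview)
Your proof is correct and essentially identical to the paper's: both chain together the defining property \eqref{eq:BC-homs} of the Brown--Comenetz duals with the adjunctions $f^* \dashv f_* \dashv f^!$ and conclude by Yoneda. The only differences are that you run the chain in the reverse order and add the explicit remark that $f^*d$ is compact, which the paper leaves implicit.
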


\begin{proof}
	For any $s \in \cat S$, the defining property of the Brown--Comenetz dual (\cref{def:compact-BC}) together with adjunction provides natural isomorphisms
	\begin{align*}
		\cat S(s,I_{f^*d}) & \cong \Hom_{\bbZ}(\cat S(f^*d,s),\bbQ/\bbZ) \\
		& \cong \Hom_{\bbZ}(\cat T(d,f_*s),\bbQ/\bbZ) \\
		& \cong \cat T(f_*s,I_d) \\
		& \cong \cat S(s,f^!I_d)
	\end{align*}
	and we summon Yoneda.
\end{proof}

\begin{Thm}\label{thm:detectingcomin}
	Let $f^*\colon \cat T \to \cat S$ be a geometric functor as in \cref{hyp:geometric-base-change} and consider a prime $\cat P \in \im \varphi$. Assume that 
	\begin{enumerate}
		\item $\cat T$ is stratified; and 
		\item $\LambdaQ \cat S$ is a minimal colocalizing coideal of $\cat S$ for all $\cat Q \in \varphi^{-1}(\singP)$. 
	\end{enumerate}
	Then $\LambdaP \cat T$ is a minimal colocalizing coideal of $\cat T$.
\end{Thm}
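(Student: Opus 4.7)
Fix any nonzero $t\in\LambdaP\cat T$. Since $\cat T$ is stratified, codetection in $\cat T$ holds by \cref{thm:LGP-equiv}, forcing $\Cosupp(t)=\singP$. Writing $\singP=Y_1\cap Y_2^c$ for Thomason subsets $Y_1,Y_2\subseteq\Spc(\cat T^c)$, the local cogeneration result (\cref{prop:local-generators}) exhibits $\LambdaP\cat T$ as the colocalizing subcategory of $\cat T$ generated by the set $\{\LambdaP I_c\mid c\in\cat T^c,\ \supp(c)\subseteq Y_1\}$; it will therefore suffice to show $\LambdaP I_c\in\Coloco{t}$ for every such~$c$.

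First I would descend the problem to $\cat S$ via $f^!$. The Avrunin--Scott identity for cosupport (\cref{cor:globalavruninscott}, using hypothesis~(a)) yields $\Cosupp_{\cat S}(f^!(t))=\varphi^{-1}(\singP)$, so for each $\cat Q\in\varphi^{-1}(\singP)$ the costalk $\LambdaQ f^!(t)$ is nonzero, and hypothesis~(b) gives $\LambdaQ\cat S=\Coloco{\LambdaQ f^!(t)}$. Since $\LambdaQ f^!(t)=\ihom{\gQ,f^!(t)}$ already lies in $\Coloco{f^!(t)}$ by the coideal property, we conclude $\LambdaQ\cat S\subseteq\Coloco{f^!(t)}$ for every $\cat Q$ in the fiber above~$\cat P$.

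Next I would prove a push-down lemma: $f_*$ carries $\Coloco{f^!(t)}$ into $\Coloco{t}$. The relevant adjunction identity is $f_*\ihom{s',f^!(t)}\simeq\ihom{f_*(s'),t}$ (derived from the projection formula), which shows that $f^!(t)$ lies in the colocalizing coideal $\{s\in\cat S : f_*\ihom{s',s}\in\Coloco{t}\;\forall s'\in\cat S\}$ of $\cat S$; hence so does every object of $\Coloco{f^!(t)}$, and specializing $s'=\unitS$ yields $f_*(s)\in\Coloco{t}$ for all $s\in\Coloco{f^!(t)}$. I would then exploit the minimality of $\GammaP\cat T$ (from hypothesis~(a)) combined with $\cat P\in\im\varphi=\SuppT(f_*\unitS)$ (\cref{cor:img-of-spc}): these force $\gP\in\Loco{\gP\otimes f_*\unitS}$, and applying $\ihom{-,I_c}$ via~\eqref{eq:[loc,t]}, then simplifying with \cref{lem:bcdualbasechange} and the projection formula to recognize $\ihom{\gP\otimes f_*\unitS,I_c}\simeq f_*\ihom{g_{\varphi^{-1}(\singP)},I_{f^*c}}$, gives
\[
  \LambdaP I_c \in \Coloco{f_*\ihom{g_{\varphi^{-1}(\singP)},I_{f^*c}}}.
\]

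The hard part will be bridging these two halves: namely, showing $\ihom{g_{\varphi^{-1}(\singP)},I_{f^*c}}\in\Coloco{f^!(t)}$ in~$\cat S$, given only the pointwise containments $\LambdaQ\cat S\subseteq\Coloco{f^!(t)}$. My plan is to establish the relative local-to-global statement
\[
  g_{\varphi^{-1}(\singP)}\in\Loco{\gQ\mid\cat Q\in\varphi^{-1}(\singP)}
\]
in $\cat S$, which via~\eqref{eq:[loc,t]} would promote the pointwise containments to
\[
  \ihom{g_{\varphi^{-1}(\singP)},I_{f^*c}}\in\Coloco{\LambdaQ I_{f^*c}\mid\cat Q\in\varphi^{-1}(\singP)}\subseteq\Coloco{f^!(t)}.
\]
Establishing this fiber-wise generation is the principal obstacle; the plan is to leverage the minimality of each $\GammaQ\cat S$ (the stalk counterpart of hypothesis~(b), obtained via the stalk-costalk equivalence of \cref{rem:stalk-costalk-equivalence}) together with the perfect cogeneration of $\LambdainvphiP\cat S$ supplied by \cref{prop:local-generators}. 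Once this fiber-wise containment is in hand, the push-down lemma delivers $f_*\ihom{g_{\varphi^{-1}(\singP)},I_{f^*c}}\in\Coloco{t}$, and therefore $\LambdaP I_c\in\Coloco{t}$ as required.
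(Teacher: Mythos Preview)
Your setup through the first two paragraphs is fine and matches the paper's argument: the Avrunin--Scott identity, pointwise cominimality yielding $\LambdaQ\cat S\subseteq\Coloco{f^!(t)}$, and the push-down $f_*\Coloco{f^!(t)}\subseteq\Coloco{t}$ are all correct. Your computation $\ihom{\gP\otimes f_*\unitS,I_c}\simeq f_*\ihom{g_{\varphi^{-1}(\singP)},I_{f^*c}}$ is also correct.

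The genuine gap is precisely where you locate it, and your plan does not close it. You need $g_{\varphi^{-1}(\singP)}\in\Loco{\gQ\mid\cat Q\in\varphi^{-1}(\singP)}$, which is a local-to-global statement for~$\cat S$ restricted to the fiber. But the hypotheses impose \emph{nothing} of local-to-global type on~$\cat S$: you only have cominimality at each $\cat Q$ in the fiber. You can indeed extract minimality of each $\GammaQ\cat S$ from cominimality of $\LambdaQ\cat S$ (the argument of \cref{thm:costrat_implies_strat} works pointwise via \cref{lem:minimality_crit}), but pointwise minimality says nothing about whether $g_W$ is built from the individual $\gQ$'s. Likewise, the perfect cogeneration of $\LambdainvphiP\cat S$ from \cref{prop:local-generators} gives cogenerators of the form $\LambdaW I_c$, not objects built from the costalks $\LambdaQ\cat S$; so it does not help you bridge to $\Coloco{\LambdaQ\cat S\mid\cat Q\in W}$. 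In short, you are asking for a (co)local-to-global principle on $\cat S$ that is simply not available.

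The paper sidesteps the entire issue by never forming $g_{\varphi^{-1}(\singP)}$. Instead it works with a \emph{single} $\cat Q$ in the fiber: by weak conservativity of $f_*$ (\cref{rem:weaklyconservative}) and \cref{lem:supp-of-f_*-gP}, one can choose a compact $c\in\cat S^c$ with $\SuppT(f_*(\GammaQ c))=\singP$. Stratification of $\cat T$ then gives $\gP\in\Loco{f_*(\GammaQ c)}$, and applying $\ihom{-,I_d}$ yields $\LambdaP I_d\in\Coloco{\ihom{f_*\GammaQ c,I_d}}$. A direct computation identifies $\ihom{f_*\GammaQ c,I_d}\simeq f_*(\LambdaQ I_{c^\vee\otimes f^*(d)})$, which lies in $f_*\LambdaQ\cat S\subseteq\Coloco{t}$. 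The key point is that by staying at one $\cat Q$, only cominimality at that $\cat Q$ is needed, and no gluing over the fiber is required.
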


\begin{proof}
	Let $t\in \LambdaP\cat T$ be a nonzero object. Hence $\CosuppT(t) =\singP$ by the codetection property (which holds by \cref{thm:LGP-equiv}). Since $\cat T$ is stratified, we have
		\[
			\CosuppS(f^!(t))=\varphi^{-1}\CosuppT(t)=\varphi^{-1}(\singP)
		\]
	by \cref{cor:globalavruninscott}. Then, since $\cat S$ has cominimality at all primes $\cat Q \in \varphi^{-1}(\singP)$, we have
		\begin{align*}
			 \Coloco{\Lambda^{\cat Q}\cat S \mid \cat Q \in \varphi^{-1}(\singP)} \subseteq \Coloco{f^!(t)}. 
		\end{align*}
	Now, $\Coloco{f^!(t)} = \Coloc\langle \ihom{\cat S,f^!(t)}\rangle$ and so, since $f_*$ preserves products, we obtain an inclusion
		\[
			f_*\LambdaQ \cat S \subseteq \Coloc\langle f_*\ihom{\cat S,f^!(t)}\rangle.
		\]
	Using the adjunction isomorphism \cite[(2.18)]{BalmerDellAmbrogioSanders16}, we conclude that
		\begin{equation}\label{eq:detectingcomin:1}
			f_*\LambdaQ \cat S\subseteq \Coloc\langle \ihom{f_*\cat S,t}\rangle \subseteq \Coloco{t}
		\end{equation}
	for each $\cat Q \in \varphi^{-1}(\singP)$. 

	Combining \cref{lem:supp-of-f_*-gP} and \cref{rem:weaklyconservative}, there exists $c \in \cat S^c$ (possibly depending on $\cat Q \in \varphi^{-1}(\singP)$) with $\supp(f_*(\GammaQ c)) = \singP$. On the one hand, stratification of $\cat T$ shows that $\GammaP \unit \in \Loco{f_*\GammaQ c}$. Hence by \eqref{eq:[loc,t]} we have for every $d \in \cat T^c$:
		\[
			\LambdaP I_d \cong \ihom{\GammaP\unit,I_d} \in \Coloco{\ihom{f_*\GammaQ c,I_d}}.
		\]
	On the other hand, there are isomorphisms
		\begin{align*}
			f_*(\LambdaQ I_{c^{\vee} \otimes f^*(d)}) & \cong f_*\ihom{\GammaQ \unit, c^{\vee} \otimes I_{f^*d}} \\
			& \cong f_*\ihom{\GammaQ c, I_{f^*d}} \\
			& \cong \ihom{f_*\GammaQ c, I_d},
		\end{align*}
	where the fourth isomorphism uses \cref{lem:bcdualbasechange}. Combining these two observations with \eqref{eq:detectingcomin:1}, we see that 
		\[
			\LambdaP I_d  \in \Coloco{f_*(\LambdaQ I_{c^{\vee} \otimes f^*(d)})} \subseteq \Coloco{t}.
		\]
	\Cref{prop:local-generators} then implies that $\LambdaP\cat T \subseteq \Coloco{t}$. This establishes cominimality of $\cat T$ at $\cat P$.
\end{proof}

\begin{Cor}[Bootstrap]\label{cor:detectingcomin}
	Suppose $f^*\colon \cat T \to \cat S$ is a geometric functor as above (\cref{hyp:geometric-base-change}) such that
	\begin{enumerate}
		\item $\cat T$ is stratified;
		\item $\cat S$ is costratified; and
		\item $f^*$ is conservative. 
	\end{enumerate}
	Then $\cat T$ is costratified. 
\end{Cor}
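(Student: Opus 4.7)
The plan is to deduce costratification of $\cat T$ from the characterization provided by \cref{thm:equiv-costrat}, which requires two ingredients: (i) the colocal-to-global principle for $\cat T$, and (ii) cominimality of $\cat T$ at every point $\cat P \in \Spc(\cat T^c)$. The bulk of the work reduces to verifying (ii) via the pointwise bootstrap result \cref{thm:detectingcomin}.

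First I would dispatch (i). Since $\cat T$ is stratified by assumption~(a), \cite[Theorem~4.1]{bhs1} implies that the local-to-global principle for localizing ideals holds in $\cat T$. By \cref{thm:LGP-equiv}, this is equivalent to the colocal-to-global principle, so (i) follows immediately.

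For (ii), fix $\cat P \in \Spc(\cat T^c)$; the goal is to show $\Lambda^{\cat P}\cat T$ is a minimal colocalizing coideal of $\cat T$. Conservativity of $f^*$ together with the stratification of $\cat T$ implies via \cref{cor:conservative-iff-surjective} that the induced map $\varphi\colon\Spc(\cat S^c)\to\Spc(\cat T^c)$ is surjective. In particular $\cat P \in \im\varphi$ and $\varphi^{-1}(\{\cat P\})$ is nonempty. Next, since $\cat S$ is costratified by assumption~(b), \cref{thm:equiv-costrat} gives cominimality of $\cat S$ at every point of its spectrum, in particular at every $\cat Q \in \varphi^{-1}(\{\cat P\})$. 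We may therefore invoke \cref{thm:detectingcomin}: its hypotheses are that $\cat T$ is stratified and that $\Lambda^{\cat Q}\cat S$ is minimal for every $\cat Q \in \varphi^{-1}(\{\cat P\})$, both of which now hold. The conclusion is that $\Lambda^{\cat P}\cat T$ is a minimal colocalizing coideal of $\cat T$, as desired.

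With (i) and (ii) established, \cref{thm:equiv-costrat} yields that $\cat T$ is costratified. There is no genuine obstacle here since the substantive content is already packaged in \cref{thm:detectingcomin}; the corollary is really a matter of assembling three preceding results (\cref{thm:equiv-costrat}, \cref{cor:conservative-iff-surjective}, and \cref{thm:detectingcomin}) and observing that their hypotheses line up. If anything, the only subtle point worth flagging explicitly is that conservativity of $f^*$ combined with stratification of $\cat T$ is what upgrades the \emph{a priori} partial information at points in $\im\varphi$ to information at \emph{all} points of $\Spc(\cat T^c)$, which is precisely why both hypotheses (a) and (c) are needed.
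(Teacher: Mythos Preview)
Your proof is correct and follows essentially the same approach as the paper's: reduce via \cref{thm:equiv-costrat} and \cref{thm:LGP-equiv} to cominimality at each point, use conservativity to get surjectivity of $\varphi$, and then apply \cref{thm:detectingcomin} pointwise. The only minor difference is that the paper invokes \cref{cor:surjective-for-conservative} (which does not require stratification) rather than \cref{cor:conservative-iff-surjective}, but since you already have hypothesis~(a) available this makes no material difference.
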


\begin{proof}
	By \cref{thm:equiv-costrat} and \cref{thm:LGP-equiv}, we need to establish the minimality of $\LambdaP \cat T$ for each $\cat P \in \Spc(\cat T^c)$. By \cref{cor:surjective-for-conservative}, conservativity of $f^*$ implies that $\varphi$ is surjective. Therefore, \cref{thm:detectingcomin} applies at all points of~$\Spc(\cat T^c)$.
\end{proof}

\begin{Rem}\label{rem:detectingcomin}
    In the situation of \cref{cor:detectingcomin}, assuming $(a)$, condition~$(c)$ is equivalent to the statement that the map $\varphi$ is surjective, and also equivalent to $f^!$ being conservative, see \cref{cor:conservative-iff-surjective}.
\end{Rem}

\begin{Rem}\label{rem:bootstrap-empowerment}
	\Cref{thm:detectingcomin} becomes especially useful in combination with descent results for stratification. If $\cat S$ is costratified then it is also stratified (\cref{thm:costrat_implies_strat}) and one can then try to apply one of the stratification descent techniques to show that $\cat T$ is stratified. \Cref{cor:detectingcomin} implies that, under the mild hypothesis that the induced map on spectra is surjective, costratification then descends as well. Thus we view it as a bootstrap technique. We give three instances of this idea below, suggestively called
    \begin{itemize}
        \item Zariski descent (\cref{thm:costrat_cover});
        \item Quasi-finite descent (\cref{cor:quasifinite-costrat}); and
        \item Nil-descent (\cref{cor:nilpotentdescent}).
    \end{itemize}
	The following variant of the bootstrap theorem will also be useful:
\end{Rem}

\begin{Thm}\label{prop:bootstrap_for_algebraic_categories}
	Suppose that there exist geometric functors $f^*_{i} \colon \cat T \to \cat S_{i}$ (\cref{hyp:geometric-base-change}) to costratified categories $\cat S_{i}$ such that the induced maps $\varphi_i$ on spectra are jointly surjective: 
		\[
			\bigcup_i\im \varphi_i = \Spc(\cat T^c). 
		\]
	Then $\cat T$ is stratified if and only if $\cat T$ is costratified. 
\end{Thm}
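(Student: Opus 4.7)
The plan is to deduce the theorem as a fairly direct consequence of the pointwise bootstrap result \cref{thm:detectingcomin}, combined with the basic equivalences of the preceding sections.

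The forward direction (costratified $\Rightarrow$ stratified) requires no hypothesis on the $\cat S_i$ at all: it is exactly \cref{thm:costrat_implies_strat}. So the content lies in the reverse direction, which I treat as follows. Suppose $\cat T$ is stratified. By the characterization in \cref{thm:equiv-costrat}, to show $\cat T$ is costratified it suffices to verify two things: (i) the colocal-to-global principle holds for $\cat T$, and (ii) $\LambdaP \cat T$ is a minimal colocalizing coideal of $\cat T$ for every $\cat P \in \Spc(\cat T^c)$.

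For (i), stratification of $\cat T$ implies the local-to-global principle (\cite[Theorem 4.1]{bhs1}), which by \cref{thm:LGP-equiv} is equivalent to the colocal-to-global principle. For (ii), fix $\cat P \in \Spc(\cat T^c)$. By the joint surjectivity assumption $\bigcup_i \im \varphi_i = \Spc(\cat T^c)$, choose some index $i$ such that $\cat P \in \im \varphi_i$. Since $\cat S_i$ is costratified, applying \cref{thm:equiv-costrat} in $\cat S_i$ gives that $\Lambda^{\cat Q}\cat S_i$ is a minimal colocalizing coideal of $\cat S_i$ for every $\cat Q \in \Spc(\cat S_i^c)$, in particular for every $\cat Q \in \varphi_i^{-1}(\{\cat P\})$. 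We are therefore in the situation of \cref{thm:detectingcomin} applied to the geometric functor $f_i^* \colon \cat T \to \cat S_i$ at the prime $\cat P$: hypothesis (a) of that theorem is the stratification of $\cat T$, and hypothesis (b) is the cominimality of $\cat S_i$ at every point above $\cat P$. The conclusion is precisely that $\LambdaP \cat T$ is minimal, establishing (ii).

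The step to watch is making sure \cref{thm:detectingcomin} is genuinely applicable at each $\cat P$: this rests only on $\cat P \in \im \varphi_i$ (guaranteed by joint surjectivity) together with cominimality of $\cat S_i$ at all primes of the fibre $\varphi_i^{-1}(\{\cat P\})$ (guaranteed by costratification of $\cat S_i$ via \cref{thm:equiv-costrat}). I do not expect a serious obstacle; the proof is essentially a pointwise combination of the bootstrap theorem with the joint surjectivity assumption. A small stylistic decision is whether to phrase the argument as ``apply \cref{cor:detectingcomin} at the level of each $\cat S_i$ restricted to the image of $\varphi_i$'' or, as I prefer, to invoke \cref{thm:detectingcomin} directly, since the individual functors $f_i^*$ are not assumed conservative and only their joint image covers the spectrum.
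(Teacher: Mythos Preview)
Your proposal is correct and follows essentially the same route as the paper: both invoke \cref{thm:costrat_implies_strat} for one direction, and for the other apply \cref{thm:detectingcomin} pointwise (using joint surjectivity to place each $\cat P$ in some $\im\varphi_i$ and costratification of $\cat S_i$ for cominimality at the fibre), then obtain the colocal-to-global principle from stratification via \cref{thm:LGP-equiv} and conclude by \cref{thm:equiv-costrat}. Your remark that \cref{thm:detectingcomin} rather than \cref{cor:detectingcomin} is the right tool here (since no single $f_i^*$ need be conservative) matches the paper's choice exactly.
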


\begin{proof}
	That costratification for $\cat T$ implies stratification for $\cat T$ is \cref{thm:costrat_implies_strat}. For the converse, we apply \cref{thm:detectingcomin} to the functors $f^*_{i} \colon \cat T \to \cat S_{i}$. Because~$\cat S_{i}$ is costratified, the theorem implies that $\cat T$ satisfies cominimality at~$\cat P$ for every $\cat P \in \im \varphi_i$. Varying over $i$, we see that cominimality holds at all primes of $\Spc(\cat T^c)$. Since $\cat T$ is stratified by assumption the local-to-global principle holds, and hence so does the colocal-to-global principle (\cref{thm:LGP-equiv}). Therefore $\cat T$ is costratified by \cref{thm:equiv-costrat}.
\end{proof}

\begin{Rem}
    In \cite[Theorem 8.11]{Stevenson13} and \cite[Corollary 5.5]{bhs1}, it was shown that stratification is---in an appropriate sense---a Zariski-local property of a rigidly-compactly generated tt-category. We complement this result by establishing the analogous statement (\cref{thm:costrat_cover}) for costratification.
\end{Rem}

\begin{Prop}\label{prop:colocal_finite_localization}
    Let $\cat T$ be a rigidly-compactly generated tt-category with $\Spc(\cat T^c)$ weakly noetherian. 
        \begin{enumerate}
            \item If $\cat T$ satisfies the (co)local-to-global principle, then so does each finite localization $\cat T(V)$. 
            \item If $\Spc(\cat T^c) = V_1 \cup \ldots \cup V_r$ is a finite cover by complements of Thomason sets such that $\cat T(V_i)$ satisfies the (co)local-to-global principle, then so does $\cat T$.
        \end{enumerate}
\end{Prop}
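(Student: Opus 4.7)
The plan is to apply \cref{thm:LGP-equiv} to reduce both parts to the codetection property, which behaves most transparently under the smashing localization $j^*\colon \cat T \to \cat T(V)$ associated to $V = U^c$ with $U$ Thomason. The first key step is a preliminary compatibility: for any $t \in \cat T(V)$, $\Cosupp_{\cat T(V)}(t) = \Cosupp_{\cat T}(t)$, and both are automatically contained in $V$. This rests on the fact that, by \cref{rem:fiber-is-support}, $j^*(g_{\cat P}^{\cat T}) = g_{\cat P}^{\cat T(V)}$ for $\cat P \in V$, combined with the observation that $\cat T(V) = \ihom{f_U,\cat T}$ is a colocalizing coideal of $\cat T$, which forces $\ihom{g_{\cat P}^{\cat T(V)}, t}_{\cat T(V)} \cong \ihom{g_{\cat P}^{\cat T}, t}_{\cat T}$ by adjunction. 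For $\cat P \in U$, the containment $\{\cat P\} \subseteq U$ implies $g_{\cat P}^{\cat T} \simeq e_U \otimes g_{\cat P}^{\cat T}$, so $\ihom{g_{\cat P}^{\cat T}, t} \cong \ihom{g_{\cat P}^{\cat T}, \ihom{e_U, t}} = 0$ since $\ihom{e_U, t} = 0$ for $t \in \ihom{f_U, \cat T}$.

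With this preliminary step in hand, part~$(a)$ becomes immediate: if $\cat T$ has codetection and $t \in \cat T(V)$ satisfies $\Cosupp_{\cat T(V)}(t) = \emptyset$, then $\Cosupp_{\cat T}(t) = \emptyset$, forcing $t = 0$.

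For part~$(b)$, given $t \in \cat T$ with $\Cosupp_{\cat T}(t) = \emptyset$, I will consider $t_i \coloneqq \ihom{f_{U_i}, t} \in \cat T(V_i)$. By \cref{lem:cosuppcoloc}, $\Cosupp_{\cat T}(t_i) = V_i \cap \Cosupp_{\cat T}(t) = \emptyset$, and by the preliminary step this equals $\Cosupp_{\cat T(V_i)}(t_i)$. Codetection in $\cat T(V_i)$ gives $t_i = 0$, so the triangle $\ihom{f_{U_i}, t} \to t \to \ihom{e_{U_i}, t}$ collapses to an isomorphism $t \simeq \ihom{e_{U_i}, t}$ for every $i$. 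Iterating using the adjunction isomorphism $\ihom{e_{U_i}, \ihom{e_{U_j}, -}} \cong \ihom{e_{U_i} \otimes e_{U_j}, -} \cong \ihom{e_{U_i \cap U_j}, -}$ (combined with \cref{ex:W-intersection}) yields $t \simeq \ihom{e_{U_1 \cap \cdots \cap U_r}, t} = \ihom{0, t} = 0$, since the cover condition $\bigcup_i V_i = \Spc(\cat T^c)$ is equivalent to $\bigcap_i U_i = \emptyset$.

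The main obstacle is executing the preliminary compatibility carefully, since it requires tracking both the image of $g_{\cat P}$ under $j^*$ and the behavior of the internal hom with respect to the colocalizing coideal structure of $\cat T(V) \subseteq \cat T$; once that is secured, both parts reduce to short triangle chases and elementary intersection identities among the idempotents $e_{U_i}$.
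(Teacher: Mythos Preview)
Your argument is correct. Both your proof and the paper's use \cref{thm:LGP-equiv} to pass freely between the local-to-global principle, the colocal-to-global principle, and codetection. From there, however, the approaches diverge: the paper simply cites the corresponding statements for the local-to-global principle already established in \cite[Corollary~3.13 and Proposition~3.17]{bhs1}, whereas you give a direct and self-contained argument for codetection. Your preliminary compatibility $\Cosupp_{\cat T(V)}(t) = \Cosupp_{\cat T}(t)$ for $t \in \cat T(V)$ is correct (the key point being that $\cat T(V) = f_U \otimes \cat T = \ihomT{f_U,\cat T}$ is a colocalizing coideal so its internal hom agrees with that of $\cat T$), and the iterated idempotent argument for part~$(b)$ is a clean way to conclude. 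Your route is more work but has the virtue of being internal to the cosupport framework developed in this paper rather than relying on the support-side results from \cite{bhs1}.
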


\begin{proof}
	The statements for the local-to-global principle were proven in \cite[Corollary 3.13]{bhs1} and \cite[Proposition 3.17]{bhs1}, respectively. We established that the local-to-global principle and the colocal-to-global principle are equivalent in \cref{thm:LGP-equiv}.
\end{proof}

\begin{Rem}
	Stevenson proves in \cite[Proposition 8.4]{Stevenson13} that the finite localization functor $\cat T\to\cat T(Y^c)$ associated with a Thomason subset $Y \subseteq \Spc(\cat T^c)$ induces an equivalence of stalks
		\[
			\GammaP\cat T \xrightarrow{\sim} \GammaP(\cat T(Y^c))
		\]
	for any $\cat P \in Y^c$. His result was stated under the assumption that $\Spc(\cat T^c)$ is noetherian, but the argument works when the space is weakly noetherian. The corresponding statement for costalks holds as well:
\end{Rem}

\begin{Prop}\label{prop:zariskicostalks}
	Suppose $Y\subseteq \Spc(\cat T^c)$ is a Thomason subset and let $\cat P \in Y^c$. We have an equivalence of costalks
		\[
			\LambdaP\cat T \xrightarrow{\sim} \LambdaP(\cat T(Y^c)).
		\]
\end{Prop}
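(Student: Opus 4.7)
The plan is to identify the two costalks on the nose as subcategories of $\cat T$, using the fully faithful right adjoint $f_* \colon \cat T(Y^c) \hookrightarrow \cat T$ whose essential image is $f_Y \otimes \cat T = \ihom{f_Y,\cat T}$. The key preliminary observation is that when $\cat P \in Y^c$, the idempotent $\gP$ already lies in this essential image: since $\{\cat P\}\cap Y = \emptyset$, \cref{rem:W-intersection} gives $\gP \otimes e_Y = g_{\{\cat P\}\cap Y}=0$, and tensoring the idempotent triangle $e_Y \to \unit \to f_Y$ with $\gP$ yields $\gP \simeq \gP \otimes f_Y$. Moreover, writing $\{\cat P\}=Y_1\cap Y_2^c$ in $\Spc(\cat T^c)$ and applying the base-change formulas $f^*(e_Z)=e_{Z\cap Y^c}$ and $f^*(f_Z)=f_{Z\cap Y^c}$ (see \cref{rem:fiber-is-support}), the local idempotent of $\cat T(Y^c)$ at $\cat P$ equals $f^*(e_{Y_1})\otimes f^*(f_{Y_2}) = f^*(\gP)$, which $f_*$ identifies with $\gP$ itself.

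The second standard observation is that internal hom is preserved along $f_*$: for any $s_1,s_2 \in f_Y\otimes \cat T$, the internal hom $\ihom{s_1,s_2}_{\cat T}$ is itself $f_Y$-local, since the adjunction calculation $\ihom{f_Y,\ihom{s_1,s_2}}\simeq\ihom{f_Y\otimes s_1,s_2}\simeq\ihom{s_1,s_2}$ shows that it lies in $\ihom{f_Y,\cat T}$, and hence coincides with $\ihom{s_1,s_2}_{\cat T(Y^c)}$.

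Combining these, I would conclude as follows. For any $t \in \cat T$, adjunction gives $\ihom{\gP,t}\simeq \ihom{\gP\otimes f_Y, t}\simeq \ihom{\gP, \ihom{f_Y,t}}$; setting $s \coloneqq \ihom{f_Y,t}\in \cat T(Y^c)$, this exhibits every object of $\LambdaP\cat T$ as $\ihom{\gP,s}_{\cat T(Y^c)}$ for some $s \in \cat T(Y^c)$, and hence as an object of $\LambdaP\cat T(Y^c)$. Conversely, every such object visibly lies in $\LambdaP\cat T$. Thus the two costalks coincide as subcategories of $\cat T$ under $f_*$, and equivalently $f^*$ restricts to an equivalence $\LambdaP\cat T \xrightarrow{\sim} \LambdaP\cat T(Y^c)$, as claimed. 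The entire argument is bookkeeping with smashing idempotents and tensor-hom adjunction; the only potentially delicate point is the identification of the local idempotents, but this is handled cleanly by the formulas from \cref{rem:fiber-is-support}, so no serious obstacle should arise.
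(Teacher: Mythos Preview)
Your proof is correct and is essentially the paper's argument. The paper packages the same computation via the restricted adjunction $f_* \dashv f^!$ from \cref{prop:stalk-trio}: $f_*$ is fully faithful and the counit $f_*f^!(\LambdaP t)\simeq \ihom{f_Y,\LambdaP t}\simeq\LambdaP t$ is an isomorphism, which is exactly your identity $\ihom{\gP,t}\simeq\ihom{\gP,\ihom{f_Y,t}}$.
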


\begin{proof}
	Let $f^*\colon\cat T\to \cat T(Y^c)$ denote the finite localization functor. As in \cref{prop:stalk-trio}, the adjunction $f_* \dashv f^!$ restricts to an adjunction
		\[
			f_*\colon \LambdaP(\cat T(Y^c)) \adjto \LambdaP\cat T\noloc f^!.
		\]
	The left adjoint is fully faithful (being the restriction of a fully faithful functor). Hence it remains to prove that the counit $f_*f^!(x)\to x$ is an isomorphism for each $x \in \LambdaP\cat T$. Indeed, $f_*f^!(\LambdaP t)\simeq \ihom{f_Y,\LambdaP t} \simeq \LambdaP t$ for all $t \in \cat T$, which gives the desired claim.
\end{proof}

\begin{Prop}\label{prop:minim_zariski}
    Let $U\subseteq \Spc(\cat T^c)$ be the complement of a Thomason subset. If~$\cat P \in U$ is a weakly visible point, then $\cat T$ satisfies cominimality at $\cat P \in \Spc(\cat T^c)$ if and only if the localized category $\cat T(U)$ satisfies cominimality at $\cat P \in \Spc(\cat T(U)^c)$.
\end{Prop}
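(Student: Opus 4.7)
The approach is to exploit the equivalence of costalks $f_* \colon \LambdaP(\cat T(U)) \xrightarrow{\sim} \LambdaP \cat T$ provided by \cref{prop:zariskicostalks} (with inverse $f^!$), where $f^* \colon \cat T \to \cat T(U)$ denotes the finite localization functor and $f_*$ is the fully faithful inclusion of Bousfield-local objects. Cominimality at $\cat P$ is an assertion about the poset of colocalizing coideals of the \emph{ambient} category that are contained in the costalk at $\cat P$, so it suffices to construct an inclusion-preserving bijection between such coideals on both sides which sends the zero subcategory to the zero subcategory and sends $\LambdaP \cat T$ to $\LambdaP(\cat T(U))$.

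I set up the bijection by $\cat C \mapsto f^!(\cat C) \subseteq \LambdaP(\cat T(U))$ and $\cat D \mapsto f_*(\cat D) \subseteq \LambdaP \cat T$. That these are mutually inverse is immediate from the equivalence of costalks together with the fact that $f_*$ is fully faithful (so $f^! f_* \simeq \mathrm{id}$). Exactness of both functors and the fact that $f_*$ preserves products ensure that triangles and products are respected, so the only real content is to verify that the $\cat T$-coideal property on one side matches the $\cat T(U)$-coideal property on the other.

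This transfer of coideal structure is the main technical step, and I expect it to be the chief (though mild) obstacle. In one direction, I use the identification $f_*\ihom{y,x}_{\cat T(U)} \simeq \ihom{f_* y, f_* x}_{\cat T}$, valid because $f_*$ is a fully faithful right adjoint to a strong monoidal functor, to transfer a $\cat T$-coideal structure on $\cat C$ to a $\cat T(U)$-coideal structure on $f^!(\cat C)$. In the other direction, I use the formula $f^!\ihom{t,u} \simeq \ihom{f^* t, f^! u}$ from \eqref{eq:f!hom} to deduce, for any $t \in \cat T$ and $x \in f_*(\cat D)$, that $\ihom{t,x}_{\cat T}$ is again in $f_*(\cat D)$; namely, one applies $f^!$, reduces to $\ihom{f^*t, f^! x}_{\cat T(U)}$ which lies in $\cat D$ by its coideal property in $\cat T(U)$, and then applies $f_*$ using that $f_* f^! \simeq \mathrm{id}$ on $\LambdaP \cat T$. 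Since the resulting bijection clearly preserves the zero coideal and exchanges the two top elements $\LambdaP \cat T \leftrightarrow \LambdaP(\cat T(U))$, minimality of $\LambdaP \cat T$ as a colocalizing coideal of $\cat T$ is equivalent to minimality of $\LambdaP(\cat T(U))$ as a colocalizing coideal of $\cat T(U)$.
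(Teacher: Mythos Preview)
Your proof is correct and follows essentially the same approach as the paper: both use the costalk equivalence of \cref{prop:zariskicostalks} to set up an inclusion-preserving bijection between colocalizing subcategories of the two costalks, and then verify that the coideal property transfers in both directions via internal-hom adjunction isomorphisms. The only cosmetic difference is that the paper cites the pair $\ihomT{t,f_*(s)}\simeq f_*\ihomTU{f^*(t),s}$ and $f_*\ihomTU{s,f^!(t)}\simeq \ihomT{f_*(s),t}$ directly, whereas you derive one direction from \eqref{eq:f!hom} followed by $f_* f^!\simeq \mathrm{id}$ on $\LambdaP\cat T$; the paper also makes explicit the observation (implicit in your argument) that colocalizing subcategories of $\LambdaP\cat T$ coincide with colocalizing subcategories of $\cat T$ contained in $\LambdaP\cat T$, since the inclusion preserves products.
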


\begin{proof}
	Let $f^*\colon\cat T\to \cat T(U)$ denote the localization functor. \Cref{prop:zariskicostalks} establishes that the induced adjunction $f_*:\LambdaP\cat T(U) \adjto \LambdaP\cat T:f^!$ of \cref{prop:stalk-trio} is an adjoint equivalence. We thus have an inclusion-preserving bijection between the colocalizing subcategories of $\LambdaP\cat T(U)$ and the colocalizing subcategories of $\LambdaP\cat T$ given by $\cat C \mapsto \cat C'\coloneqq f_*(\cat C)$ with inverse $\cat C' \mapsto \cat C\coloneqq f^!(\cat C')$. Moreover, since the inclusion of the costalk $\LambdaP\cat T \hookrightarrow \cat T$ preserves products, the colocalizing subcategories of $\LambdaP\cat T$ are precisely the colocalizing subcategories of $\cat T$ which are contained in $\LambdaP\cat T$ --- and similarly for the costalk $\LambdaP\cat T(U)\hookrightarrow \cat T(U)$. We claim that under this correspondence $\cat C \mapsto \cat C'$ of colocalizing subcategories, $\cat C$ is a coideal of $\cat T(U)$ if and only if $\cat C'$ is a coideal of $\cat T$. This follows from the adjunction isomorphisms $\ihomT{t,f_*(s)}\simeq f_*\ihomTU{f^*(t),s}$ and $f_*\ihomTU{s,f^!(t)}\simeq \ihomT{f_*(s),t}$. This establishes an inclusion-preserving correspondence between the colocalizing coideals of $\cat T(U)$ contained in $\LambdaP\cat T(U)$ and the colocalizing coideals of $\cat T$ contained in $\LambdaP\cat T$. Hence $\LambdaP \cat T(U)$ is minimal among colocalizing coideal of $\cat T(U)$ if and only if $\LambdaP \cat T$ is minimal among colocalizing coideals of $\cat T$.
\end{proof}

\begin{Cor}\label{cor:zariski_descent_minimality}
    The category $\cat T$ satisfies cominimality at a point $\cat P \in \Spc(\cat T^c)$ if and only if the local category $\cat T/\langle \cat P \rangle$ satisfies cominimality at its unique closed point. 
\end{Cor}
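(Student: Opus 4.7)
The plan is to deduce this corollary directly from Proposition \ref{prop:minim_zariski} by taking $U = \gen(\cat P)$. The interpretation of ``$\cat T/\langle \cat P\rangle$'' here is the local category obtained as the finite localization $\cat T(U)$ where $U = \gen(\cat P) = \SET{\cat Q \in \Spc(\cat T^c)}{\cat P \subseteq \cat Q}$; under this localization the spectrum of the compact part becomes $\gen(\cat P)$ (the set of generalizations of $\cat P$) and $\cat P$ is its unique closed point for the specialization order.

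First I would verify that this $U$ is admissible for Proposition~\ref{prop:minim_zariski}, i.e.~that $U$ is the complement of a Thomason subset. Since $\cat P$ is weakly visible, Remark~\ref{rem:good-Thomasons} allows us to write $\{\cat P\} = Y_1 \cap \gen(\cat P)$ with $Y_1$ Thomason and $\gen(\cat P)^c$ Thomason, so $U = \gen(\cat P)$ is indeed the complement of a Thomason subset of $\Spc(\cat T^c)$. Moreover $\cat P \in U$, and $\cat P$ remains weakly visible when viewed inside $\Spc(\cat T(U)^c) = U$ (one can for example reuse the same pair $(Y_1 \cap U, \emptyset)$, bearing in mind that $\{\cat P\}$ is now the unique closed point of $U$).

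Then I would apply Proposition~\ref{prop:minim_zariski} to conclude that $\LambdaP\cat T$ is a minimal colocalizing coideal of $\cat T$ if and only if $\LambdaP(\cat T(U))$ is a minimal colocalizing coideal of $\cat T(U)$. Under our identification, the latter is precisely the statement that $\cat T/\langle \cat P\rangle$ satisfies cominimality at its unique closed point, and the former is the statement that $\cat T$ satisfies cominimality at $\cat P$, completing the argument.

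There is essentially no obstacle here: all the heavy lifting has already been done in Proposition~\ref{prop:zariskicostalks} and Proposition~\ref{prop:minim_zariski}, which established the equivalence of costalks and transferred the correspondence to colocalizing coideals. The only mild point to watch is the reinterpretation of ``local category at $\cat P$'' as the finite localization at $U = \gen(\cat P)$, which is legitimate precisely because $\cat P$ is weakly visible so that $\gen(\cat P)^c$ is Thomason.
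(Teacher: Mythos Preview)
Your proposal is correct and matches the paper's proof exactly: the paper simply says ``Apply \cref{prop:minim_zariski} with $U=\gen(\cat P)$.'' Your additional verification that $\gen(\cat P)$ is the complement of a Thomason subset is fine (and in fact $\gen(\cat P)^c$ is always Thomason, independent of weak visibility of $\cat P$).
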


\begin{proof}
	Apply \cref{prop:minim_zariski} with $U=\gen(\cat P)$.
\end{proof}

\begin{Cor}[Zariski descent]\label{thm:costrat_cover}
    Let $\cat T$ be a rigidly-compactly generated tt-category with $\Spc(\cat T^c)$ weakly noetherian and which satisfies the local-to-global principle. Suppose $\Spc(\cat T^c) = \bigcup_{i \in I}V_i$ is a cover by complements of Thomason subsets. Then~$\cat T$ is costratified if and only if each finite localization $\cat T(V_i)$ is costratified. 
\end{Cor}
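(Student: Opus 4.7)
The plan is to reduce the statement to a pointwise check of cominimality, which is exactly the situation handled by \cref{prop:minim_zariski}, and then combine this with the transfer of the (co)local-to-global principle between $\cat T$ and its finite localizations provided by \cref{prop:colocal_finite_localization}.

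First I would set up both implications simultaneously by invoking \cref{thm:equiv-costrat} together with \cref{thm:LGP-equiv}: for a rigidly-compactly generated tt-category with weakly noetherian spectrum, costratification is equivalent to the conjunction of the local-to-global principle and cominimality at every point of the spectrum. Since $\cat T$ is assumed to satisfy the local-to-global principle, $\cat T$ is costratified if and only if $\LambdaP \cat T$ is minimal for every $\cat P \in \Spc(\cat T^c)$. Similarly, by \cref{prop:colocal_finite_localization}(a), each localization $\cat T(V_i)$ inherits the local-to-global principle from $\cat T$, so $\cat T(V_i)$ is costratified if and only if cominimality holds at every point of $\Spc(\cat T(V_i)^c) = V_i$.

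For the forward implication, suppose $\cat T$ is costratified. Fix $i$ and a point $\cat P \in V_i = \Spc(\cat T(V_i)^c)$. By \cref{prop:minim_zariski} applied to $U = V_i$, cominimality for $\cat T$ at $\cat P$ is equivalent to cominimality for $\cat T(V_i)$ at $\cat P$. Since $\cat T$ satisfies cominimality everywhere, it does so at $\cat P$, and hence so does $\cat T(V_i)$. Combined with the local-to-global principle for $\cat T(V_i)$, this shows $\cat T(V_i)$ is costratified.

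For the reverse implication, suppose every $\cat T(V_i)$ is costratified. Let $\cat P \in \Spc(\cat T^c)$; since $\{V_i\}_{i \in I}$ covers $\Spc(\cat T^c)$, there is some $i$ with $\cat P \in V_i$. Costratification of $\cat T(V_i)$ gives cominimality at $\cat P$ in $\cat T(V_i)$, and \cref{prop:minim_zariski} transports this back to cominimality at $\cat P$ in $\cat T$. Thus $\cat T$ has cominimality at every point, and together with the hypothesized local-to-global principle, \cref{thm:equiv-costrat} yields that $\cat T$ is costratified.

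There is no real obstacle here: once \cref{prop:minim_zariski} and \cref{prop:colocal_finite_localization} are in place, the proof is essentially bookkeeping. The only subtlety to verify is that the point-set hypothesis that $\cat P \in V_i$ is weakly visible in $\Spc(\cat T(V_i)^c)$ (required to invoke \cref{prop:minim_zariski}) is automatic, because weak visibility is inherited when passing to the open subset $V_i$ of a weakly noetherian spectral space.
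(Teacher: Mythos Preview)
The proposal is correct and follows essentially the same approach as the paper: reduce costratification to the colocal-to-global principle plus cominimality via \cref{thm:equiv-costrat} and \cref{thm:LGP-equiv}, transfer the (co)local-to-global principle to each $\cat T(V_i)$ via \cref{prop:colocal_finite_localization}, and use \cref{prop:minim_zariski} to identify cominimality at $\cat P$ in $\cat T$ with cominimality at $\cat P$ in $\cat T(V_i)$. Your added remark about weak visibility being inherited by $V_i$ is correct and makes explicit a point the paper leaves implicit.
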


\begin{proof}
	Each $\cat T(V_i)$ has the colocal-to-global principle by \cref{prop:colocal_finite_localization} and \cref{thm:LGP-equiv}. Hence it suffices to consider cominimality (\cref{thm:equiv-costrat}) and we can invoke \cref{prop:minim_zariski}.
\end{proof}

\begin{Rem}
	We end this section with two descent techniques for stratification which empower our bootstrap theorem (cf.~\cref{rem:bootstrap-empowerment}). The first is a modification of the argument in \cite[Section~2.2.2]{barthel2021rep1} establishing finite \'etale descent, which in turn is a generalization of \cite[Theorem 6.4]{bhs1}.
\end{Rem}

\begin{Thm}[Quasi-finite descent]\label{thm:etaledescent-strat}
    Let $f^*\colon \cat T \to \cat S$ be a geometric functor as in \cref{hyp:geometric-base-change}. Suppose $f^*$ is {\finite} and $\varphi$ is surjective with discrete fibers. If $\cat S$ is stratified then so is $\cat T$.
\end{Thm}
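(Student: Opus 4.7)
My plan is to verify the two conditions that characterize stratification of $\cat T$ per \cite[Theorem 4.1]{bhs1}: the local-to-global principle and minimality of $\GammaP\cat T$ for every $\cat P \in \Spc(\cat T^c)$. The hypotheses will be exploited as follows: {\finite}-ness of $f^*$ ensures $f_*$ preserves coproducts and compact objects; surjectivity of $\varphi$ together with {\weaklyfinite}-ness gives conservativity of $f^*$ via \cref{prop:weaklyfinite-surjective-conservative}; discrete fibers will allow us to isolate individual fiber points geometrically.

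For the local-to-global principle, I would first observe that $f^*$ is conservative by \cref{prop:weaklyfinite-surjective-conservative}. Since $\cat S$ is stratified it satisfies LGP, so \cref{ex:localtoglobaldescent} (descent of LGP along conservative {\weaklyfinite} functors) transfers LGP to $\cat T$.

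For minimality at a fixed $\cat P \in \Spc(\cat T^c)$, take a nonzero $t \in \GammaP\cat T$; I want to show $\gP \in \Loco{t}$. Conservativity yields $f^*(t) \neq 0$ with $\SuppS(f^*(t)) \subseteq \varphi^{-1}(\SuppT(t)) \subseteq \varphi^{-1}(\singP)$ (\cref{rem:AV-trivial-inclusions}). Stratification of $\cat S$ gives both LGP in $\cat S$ and minimality of each $\GammaQ\cat S$, so for every $\cat Q \in \SuppS(f^*(t))$ we obtain $\gQ \in \Loco{\gQ \otimes f^*(t)} \subseteq \Loco{f^*(t)}$. Applying the coproduct-preserving functor $f_*$ together with the projection formula gives $f_*(\gQ) \in \Loco{t}$ for every such $\cat Q$. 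By \cref{rem:weaklyconservative} one may choose compacts $x_{\cat Q} \in \cat S^c$ with $y_{\cat Q} \coloneqq f_*(x_{\cat Q} \otimes \gQ) \neq 0$; these lie in $\Loco{t}$, are compact in $\cat T^c$ (by {\finite}-ness), and satisfy $\SuppT(y_{\cat Q}) \subseteq \{\cat P\}$ by \cref{lem:supp-of-f_*-gP}.

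The main obstacle is reconstructing $\gP$ itself from these pieces. The key geometric input is discreteness of the fiber: for each $\cat Q$ in the fiber one can choose a weakly visible neighborhood $W_{\cat Q} \subseteq \Spc(\cat S^c)$ such that $W_{\cat Q} \cap \varphi^{-1}(\singP) = \{\cat Q\}$, and combining the corresponding $g_{W_{\cat Q}}$-tensored versions of the argument above with the classification of thick tensor ideals in $\cat T^c$ should promote the compacts $y_{\cat Q}$ to generators of the thick ideal of $\cat T^c$ corresponding to $\{\cat P\}$. The subtlest point, which I expect to absorb most of the technical effort, is the passage from such compact generators all the way to the (possibly non-compact) idempotent $\gP$ when $\cat P$ is merely weakly visible rather than visible; I would handle this by first using \cref{cor:zariski_descent_minimality} (together with \cref{prop:zariskicostalks} and its analogue for stalks, \cite[Proposition 8.4]{Stevenson13}) to reduce to the local category $\cat T/\langle\cat P\rangle$ where $\cat P$ becomes the unique closed point, localizing $\cat S$ along $\varphi^{-1}(\gen(\cat P))$, and then running the argument above in this simplified setting where the fiber is now closed and the discrete fiber hypothesis becomes sharpest.
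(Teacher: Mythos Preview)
Your overall plan---reduce to the local case and then show $g_{\mathfrak m}\in\Loco{t}$ for every nonzero $t\in\Gamma_{\mathfrak m}\cat T$---is sound and in fact gives a route different from the paper's (which instead verifies the criterion of \cref{lem:minimality_crit} via cosupport and the Hom formula of \cref{thm:strat_cosupport}). However, there is a genuine error: your objects $y_{\cat Q}=f_*(x_{\cat Q}\otimes\gQ)$ are \emph{not} compact. The hypothesis that $f^*$ is {\finite} only says $f_*$ preserves compacts, but $x_{\cat Q}\otimes\gQ$ is not compact in $\cat S$ (the idempotent $\gQ$ essentially never is). Your subsequent appeal to the classification of thick ideals in $\cat T^c$ therefore has nothing to work with, and the passage you flag as ``subtlest'' actually collapses.

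The fix is to exploit the discrete-fiber hypothesis more sharply, and to perform the Zariski reduction \emph{first} rather than last. After localizing so that $\cat T$ is local with closed point $\mathfrak m$ (the hypotheses on $f^*$ are local in the target by \cite[Proposition~1.30]{bhs1}), the fiber $\varphi^{-1}(\{\mathfrak m\})$ is Thomason closed and discrete, hence consists of finitely many \emph{visible closed} points. For each such $\cat Q$ there is then a genuine compact $z\in\cat S^c$ with $\supp(z)=\{\cat Q\}$, and after adjusting by a further compact via \cref{rem:weaklyconservative} one can assume $f_*(z)\neq 0$. Now your argument runs: minimality in $\cat S$ gives $z\in\Loco{e_{\{\cat Q\}}}=\Loco{\gQ}\subseteq\Loco{f^*(t)}$, so $f_*(z)\in\Loco{t}$ is a nonzero compact with $\supp(f_*(z))=\{\mathfrak m\}$ by \cref{lem:supp-of-f_*-gP}, whence $g_{\mathfrak m}=e_{\{\mathfrak m\}}\in\Loco{f_*(z)}\subseteq\Loco{t}$. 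The paper uses these same compacts $z$ but then takes the cosupport route, showing $\varphi^{-1}(\{\mathfrak m\})\subseteq\SuppS(f^*(t_1))\cap\CosuppS(f^!(t_2))$ and invoking \cref{thm:strat_cosupport}; your corrected approach is more direct and avoids cosupport entirely.
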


\begin{proof}
	Since~$f^*$ is {\weaklyfinite}, the surjectivity assumption on~$\varphi$ implies that both $f^!$ and~$f^*$ are conservative (recall \cref{prop:weaklyfinite-surjective-conservative}). Hence the local-to-global principle descends from $\cat S$ to $\cat T$ by \cref{ex:localtoglobaldescent} and it suffices to check minimality at each prime in $\cat T$. The hypotheses on the functor $f^*$ are local in the target (cf.~\cite[Proposition~1.30]{bhs1}) and hence it suffices to assume that $\cat T$ is local and check minimality at the unique closed point $\mfrak m \in \Spc(\cat T^c)$.

	Since the Thomason closed subset $\varphi^{-1}(\{\mfrak m\})$ is discrete (by hypothesis), an elementary topological argument verifies that the fiber $\varphi^{-1}(\{\mfrak m\})$ consists of finitely many visible closed points. Now consider an object $t \in \cat T$. Recall that $\mfrak m \in \SuppT(t)$ if and only if $\mfrak m \in \CosuppT(t)$ (\cref{thm:general-min}). We claim that if $\mfrak m \in \SuppT(t)$ then $\varphi^{-1}(\{\mfrak m\}) \subseteq \SuppS(f^*(t)) \cap \CosuppS(f^!(t))$. To this end, let $\cat Q \in \varphi^{-1}(\{\mfrak m\})$. Since~$\cat Q$ is a visible closed point, there is some $z \in \cat S^c$ such that $\supp(z) = \{\cat Q\}$. Moreover, since the right adjoint $f_*$ is weakly conservative (\cref{rem:weaklyconservative}) there exists some $x \in \cat S^c$ such that $f_*(z\otimes x)\neq 0$; but $\supp(z\otimes x)=\{\cat Q\}$, so replacing $z$ by $z\otimes x$ if necessary we may assume without loss of generality that $f_*(z) \neq 0$. Note that $\emptyset \neq \Supp(f_*(z)) \subseteq \varphi(\Supp(z))=\{\mfrak m\}$ by \cref{prop:general-object_f*} so that $\Supp(f_*(z)) = \{\mfrak m\}$. Since $f_*(z)$ is compact, $\Supp(t\otimes f_*(z)) = \Supp(t) \cap \Supp(f_*(z)) = \{\mfrak m\}$ by \cite[Lemma 2.18]{bhs1}. In particular, $t \otimes f_*(z) \neq 0$ so that $f^*(t) \otimes z\neq 0$ and thus $\cat Q \in \SuppS(f^*(t))$. This establishes that $\varphi^{-1}(\{\mfrak m\}) \subseteq \SuppT(t)$. Now consider $\CosuppS(f^!(t))$. Again, since $f_*(z)$ is compact we have $\CosuppT(\ihom{f_*(z),t}) = \SuppT(f_*(z)) \cap \CosuppT(t) = \{\mfrak m\}$ by \cref{prop:halfhom}. Hence $f_*\ihom{z,f^!(t)}=\ihom{f_*(z),t}\neq 0$ so that $\ihom{z,f^!(t)} \neq 0$ and thus $\cat P \in \CosuppT(f^!(t))$. 

	In summary, if $t_1,t_2\in \cat T$ are objects with $\mfrak m$ contained in their (co)support then
		\begin{align*}
			\emptyset \neq \varphi^{-1}(\{\mfrak m\}) &\subseteq \SuppS(f^*(t_1)) \cap \CosuppS(f^!(t_2)) \\
													  &= \CosuppS(\ihom{f^*(t_1),f^!(t_2)}),
		\end{align*}
	where the equality uses the assumption that $\cat S$ is stratified and applies \cref{thm:strat_cosupport}. Therefore, $f^!\ihom{t_1,t_2} = \ihom{f^*(t_1),f^!(t_2)} \neq 0$ and hence $\ihom{t_1,t_2} \neq 0$. This establishes minimality at $\mfrak m$ by \cref{lem:minimality_crit}.
\end{proof}

\begin{Cor}\label{cor:quasifinite-costrat}
	If $f^*$ is {\finite}, $\varphi$ is surjective with discrete fibers, and $\cat S$ is costratified, then $\cat T$ is costratified as well.
\end{Cor}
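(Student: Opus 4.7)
The plan is to combine the quasi-finite descent theorem for stratification (\cref{thm:etaledescent-strat}) with the bootstrap theorem (\cref{cor:detectingcomin}) in a straightforward two-step argument. Since $\cat S$ is costratified, it is stratified by \cref{thm:costrat_implies_strat}. The hypotheses of \cref{thm:etaledescent-strat} then apply verbatim, yielding that $\cat T$ is stratified.

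Next, I would verify that $f^*$ is conservative. Since $f^*$ is {\finite} it is in particular {\weaklyfinite}, and the surjectivity of $\varphi$ then implies conservativity of $f^*$ via \cref{prop:weaklyfinite-surjective-conservative}. (Alternatively, conservativity of $f^*$ follows directly from the more general \cref{cor:conservative-iff-surjective} once stratification of $\cat T$ has been established.) With $\cat T$ stratified, $\cat S$ costratified, and $f^*$ conservative all in hand, the bootstrap \cref{cor:detectingcomin} immediately yields that $\cat T$ is costratified.

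There is no serious obstacle here; the entire content of the corollary lies in correctly invoking the preceding two results. One minor bookkeeping point worth highlighting is to make explicit that the hypotheses of \cref{cor:detectingcomin} are met: we need the weakly noetherian spectrum hypothesis (\cref{hyp:geometric-base-change}), which is inherited through the assumption that $f^*$ is as in the quasi-finite descent setup. No further calculation or new idea is required.
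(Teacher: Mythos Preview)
Your proposal is correct and matches the paper's proof essentially line for line: costratification of $\cat S$ gives stratification of $\cat S$ via \cref{thm:costrat_implies_strat}, then \cref{thm:etaledescent-strat} gives stratification of $\cat T$, and finally \cref{cor:detectingcomin} applies because $f^*$ is conservative by \cref{prop:weaklyfinite-surjective-conservative}. The only difference is that the paper's proof is more terse and does not mention the alternative route through \cref{cor:conservative-iff-surjective}.
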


\begin{proof}
	Since $\cat S$ is costratified, it is also stratified by \cref{thm:costrat_implies_strat}. It then follows from \cref{thm:etaledescent-strat} that $\cat T$ is also stratified. We can now invoke \cref{cor:detectingcomin} since $f^*$ is conservative due to \cref{prop:weaklyfinite-surjective-conservative}.
\end{proof}

\begin{Rem}
	The name of \cref{thm:etaledescent-strat} is motivated by algebraic geometry. A finite type morphism $f\colon X\to Y$ of schemes is quasi-finite in the sense of \cite[D\'{e}finition 6.2.3]{EGAII} if and only if it has discrete fibers.
\end{Rem}

\begin{Rem}
    The next result is inspired by work of Shaul and Williamson \cite{shaulwilliamson2020lifting} in the context of BIK-stratification and BIK-costratification. The following tt-geometric version strengthens \cite[Theorem 2.24]{barthel2021rep1}.
\end{Rem}

\begin{Thm}[Nil-descent]\label{thm:nilpotentdescent}
	Let $f^*\colon\cat T\to\cat S$ be a geometric functor as in \cref{hyp:geometric-base-change}. Suppose $f^!$ is conservative and $\varphi$ is injective. If $\cat S$ is stratified then so is~$\cat T$.
\end{Thm}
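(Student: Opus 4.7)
My plan is to verify the two conditions characterizing stratification from \cite[Theorem 4.1]{bhs1}: the local-to-global principle, and the minimality of $\GammaP \cat T$ for every $\cat P \in \Spc(\cat T^c)$. The local-to-global principle will descend from $\cat S$ to $\cat T$ essentially for free: conservativity of $f^!$ together with \cref{rem:detection-descends} shows the codetection property descends, and this is equivalent to the local-to-global principle by \cref{thm:LGP-equiv}.

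Before tackling minimality I would reconcile the hypotheses with a surjectivity statement. Since $f^!$ is conservative, so is $f^*$ by \cref{prop:f!conservative}, and hence $\varphi$ is surjective by \cref{cor:surjective-for-conservative}. Combined with the injectivity assumption, $\varphi$ is a bijection, and by \cref{rem:fiber-is-support} the unique preimage $\cat Q$ of any $\cat P \in \Spc(\cat T^c)$ satisfies $f^*(\gP) = g_{\cat Q}$. This is the key geometric input.

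To prove minimality of $\GammaP \cat T$ I would invoke the criterion of \cref{lem:minimality_crit}: given two nonzero objects $t_1, t_2 \in \GammaP \cat T$, it suffices to show $\ihom{t_1,t_2} \neq 0$. Since $f^!$ is conservative, this reduces (via the formula \eqref{eq:f!hom}) to showing that $\ihom{f^*t_1, f^!t_2} \neq 0$. Because $\cat S$ is stratified, codetection holds in $\cat S$ and the hom-formula of \cref{thm:strat_cosupport} applies, so it is enough to find a single point in $\Supp_{\cat S}(f^*t_1) \cap \Cosupp_{\cat S}(f^!t_2)$. For the first set: since $f^*t_1 \neq 0$ by conservativity and $\Supp_{\cat S}(f^*t_1) \subseteq \varphi^{-1}(\Supp_{\cat T}(t_1)) \subseteq \varphi^{-1}(\{\cat P\}) = \{\cat Q\}$ by \cref{rem:AV-trivial-inclusions} and detection in $\cat S$, we conclude $\Supp_{\cat S}(f^*t_1) = \{\cat Q\}$. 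For the second, I will check directly that $\cat Q \in \Cosupp_{\cat S}(f^!t_2)$ using the identity $f^*(\gP) = g_{\cat Q}$ together with \eqref{eq:f!hom}:
\[
\ihom{g_{\cat Q}, f^!t_2} \;=\; \ihom{f^*\gP, f^!t_2} \;=\; f^!\ihom{\gP, t_2} \;=\; f^!(\LambdaP t_2).
\]
Since $t_2 \in \GammaP \cat T$ is nonzero we have $\cat P \in \Cosupp_{\cat T}(t_2)$ by \cref{lem:pointsuppcosupp}(a), so $\LambdaP t_2 \neq 0$, and then conservativity of $f^!$ makes the whole expression nonzero. This produces the required point of intersection and completes the verification of minimality.

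The main conceptual obstacle is recognizing that the cosupport of $f^!t_2$ can be detected at the fiber point $\cat Q$ despite $\Cosupp_{\cat T}(t_2)$ being potentially large (since $t_2$ sits in the stalk $\GammaP\cat T$, not in the costalk $\LambdaP\cat T$). The trick is that we do not need to control the \emph{whole} cosupport of $f^!t_2$; we only need one point, and the manipulation $\ihom{g_{\cat Q},f^!t_2} = f^!\LambdaP t_2$ isolates the behaviour at the unique preimage $\cat Q$, where injectivity of $\varphi$ ensures nothing else can occur. The remaining routine ingredients—conservativity promotions, the hom-formula, and the identity $f^*\gP=g_{\cat Q}$—combine to close the argument.
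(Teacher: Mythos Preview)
Your proof is correct. It rests on the same core ingredients as the paper's argument---conservativity of $f^!$ (hence of $f^*$), the formula $f^!\ihom{t_1,t_2}\cong\ihom{f^*t_1,f^!t_2}$ from \eqref{eq:f!hom}, and the hom-formula in the stratified category $\cat S$---but the two proofs are organized differently. The paper argues globally via criterion (c)$\Rightarrow$(a) of \cref{thm:strat_cosupport}: starting from $\ihom{t_1,t_2}=0$ for arbitrary $t_1,t_2\in\cat T$, it applies $f^!$, uses stratification of $\cat S$ to get $\Supp(f^*t_1)\cap\Cosupp(f^!t_2)=\emptyset$, pushes forward along the injective map $\varphi$, and then invokes \cref{prop:phi-f*}(b),(d) and \cref{prop:phi-f!}(b),(d) to conclude $\Supp(t_1)\cap\Cosupp(t_2)=\emptyset$. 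You instead work pointwise via \cref{lem:minimality_crit}, exploiting bijectivity of $\varphi$ to pin down the unique preimage $\cat Q$ of each $\cat P$ and compute directly that $\ihom{g_{\cat Q},f^!t_2}=f^!\LambdaP t_2\neq 0$ using \cref{lem:pointsuppcosupp}(a). Your route avoids the base-change machinery of \cref{prop:phi-f*} and \cref{prop:phi-f!} in favor of the elementary identity $f^*(\gP)=g_{\cat Q}$; the paper's route is a little slicker in that it never needs to restrict to a stalk or single out a point.
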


\begin{proof}
	Since $f^!$ is conservative, $\cat T$ inherits the local-to-global principle from~$\cat S$ (\cref{rem:detection-descends}). To establish that $\cat T$ is stratified we will use \cref{thm:strat_cosupport}. To this end, let $t_1,t_2\in \cat T$ be objects with $\ihom{t_1,t_2}=0$. Our goal is to prove that $\Supp(t_1)\cap\Cosupp(t_2)=\emptyset$. We have $0=f^!\ihom{t_1,t_2}=\ihom{f^* t_1,f^! t_2}$ by \eqref{eq:f!hom}, hence
		\[
			\Supp(f^* t_1)\cap\Cosupp(f^! t_2) =\emptyset
		\]
    since $\cat S$ is assumed stratified. The assumption that $\varphi$ is injective then implies
		\begin{equation}\label{eq:nil-descent-equation} 
			\varphi(\Supp(f^*t_1))\cap \varphi(\Cosupp(f^! t_2)) = \emptyset.
		\end{equation}
	Since $\cat S$ has detection and codetection, \cref{prop:phi-f*} and \cref{prop:phi-f!} imply that \eqref{eq:nil-descent-equation} is the same as 
		\[
			\Supp(t_1\otimes f_*(\unitS)) \cap {\Cosupp(\ihom{f_*\unit,t_2}) = \emptyset}.
		\]
	Since $f^!$ is conservative, $f^*$ is also conservative by \cref{prop:f!conservative}. \Cref{prop:phi-f*} and \cref{prop:phi-f!} then imply that $\Supp(t_1\otimes f_*(\unitS))=\Supp(t_1)$ and $\Cosupp(\ihom{f_*(\unitS),t_2})=\Cosupp(t_2)$, which completes the proof.
\end{proof}

\begin{Cor}\label{cor:nilpotentdescent}
	If $f^!$ is conservative and $\varphi$ is injective (hence bijective) and $\cat S$ is costratified, then $\cat T$ is costratified as well.
\end{Cor}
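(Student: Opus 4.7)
The plan is to deduce this corollary by combining the preceding Nil-descent theorem for stratification with the bootstrap principle (\cref{cor:detectingcomin}). There is essentially no new work to do; the corollary packages together results already in hand.

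First I would observe that since $\cat S$ is costratified, it is also stratified by \cref{thm:costrat_implies_strat}. The hypotheses of \cref{thm:nilpotentdescent} are met ($f^!$ is conservative and $\varphi$ is injective), so we conclude that $\cat T$ is stratified.

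Next I would verify that the hypotheses of the bootstrap \cref{cor:detectingcomin} hold. We have just obtained stratification of $\cat T$, $\cat S$ is costratified by assumption, and conservativity of $f^!$ implies conservativity of $f^*$ by \cref{prop:f!conservative}. Applying \cref{cor:detectingcomin} then yields costratification of $\cat T$.

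Finally, for the parenthetical claim that $\varphi$ is automatically bijective under the hypotheses: conservativity of $f^*$ (derived above from conservativity of $f^!$) implies surjectivity of $\varphi$ by \cref{cor:surjective-for-conservative}, which combined with the assumed injectivity gives bijectivity. The main ``obstacle'' is purely bookkeeping --- verifying that the chain of implications $f^!$ conservative $\Rightarrow$ $f^*$ conservative $\Rightarrow$ $\varphi$ surjective lines up correctly with the hypotheses of the bootstrap --- and this is immediate from the cited results.
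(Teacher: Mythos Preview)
Your proposal is correct and follows exactly the same approach as the paper's proof: deduce conservativity of $f^*$ from \cref{prop:f!conservative}, then combine \cref{thm:costrat_implies_strat}, \cref{thm:nilpotentdescent}, and \cref{cor:detectingcomin}. Your version simply spells out the steps in more detail than the paper's terse one-line argument.
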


\begin{proof}
	Note that $f^*$ is also conservative by \cref{prop:f!conservative}. The claim then follows from \cref{thm:costrat_implies_strat}, \cref{thm:nilpotentdescent}, and \cref{cor:detectingcomin}.%, as in the proof of \cref{cor:quasifinite-costrat}.
\end{proof}

\begin{Rem}
	If $f^*$ is {\weaklyfinite} and $\varphi$ is a bijection then we can invoke \cref{thm:nilpotentdescent} and \cref{cor:nilpotentdescent} because of \cref{prop:weaklyfinite-surjective-conservative}.
\end{Rem}

\newpage
\part{Applications and examples}\label{part:applications-examples}

We now turn to applications and examples. We show that a pure-semisimple tt-category (in particular, any tt-field) is costratified (\cref{thm-pure-semisimple-costratified}) and that any affine weakly regular tt-category is costratified (\cref{thm:costratification_for_weakly_affine}). We also show that the derived category of a (topologically weakly noetherian) quasi-compact and quasi-separated scheme is costratified if and only if it is stratified (\cref{thm:qc_shseaves}). In representation theory, we show that the category of $k$-linear representations~$\Rep(G,k)$ and the derived category of \mbox{$k$-linear} permutation modules $\mathrm{DPerm}(G,k)$ is costratified for any finite group~$G$ and field $k$ (\cref{thm:rep_g_k_costratifcation}, \cref{thm:dpermstratification} and \cref{thm:rep_e_k_costratifcation}). In homotopy theory, we give a new proof that the category of $E(n)$-local spectra is costratified (\cref{thm:hovey-strickland}) and prove that certain cochain algebras have costratified derived categories (\cref{thm:cochains}). We establish that the category of rational $G$-spectra is costratified for any compact Lie group~$G$ (\cref{thm:rational-spectra}) and show that, for any finite group~$G$, the category of spectral $G$-Mackey functors valued in a commutative algebra $\mathbb{E} \in \CAlg(\Sp)$ is costratified whenever $\Der(\mathbb{E})$ itself is costratified and has a noetherian spectrum (\cref{thm:equivariant_costratification}). As a special case, this shows that Kaledin's category of derived Mackey functors is costratified (\cref{cor:derived_mackey_costratified}).

\section{Tensor triangular examples}\label{sec:ttexamples}

\subsection*{Pure-semisimple categories and tt-fields}\label{ssec:ttfieldexamples}

\begin{Lem}\label{lem:coideal=ideal}
	Let $\cat T$ be a rigidly-compactly generated tt-category with weakly noetherian spectrum. Suppose that every colocalizing coideal of $\cat T$ is also a localizing ideal. Then $\cat T$ is stratified if and only if it is costratified. 
\end{Lem}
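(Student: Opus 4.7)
The forward implication, costratified $\Rightarrow$ stratified, is precisely \cref{thm:costrat_implies_strat}, so my plan focuses on the converse: assuming $\cat T$ is stratified and that every colocalizing coideal is a localizing ideal, I aim to show that $\cat T$ is costratified. Since the $\Cosupp$ map on colocalizing coideals is always surjective onto $\mathcal{P}(\Spc(\cat T^c))$ by \cref{cor:cosupp_surj}, I only need to verify its injectivity.

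The crux of the proof will be the following claim: whenever $\cat M \subseteq \cat T$ is \emph{simultaneously} a localizing ideal and a colocalizing coideal, $\Supp(\cat M) = \Cosupp(\cat M)$. For the inclusion $\Supp(\cat M) \subseteq \Cosupp(\cat M)$, I plan to combine stratification with the local-to-global principle (both granted by the hypothesis on $\cat T$) to write $\cat M = \Loco{\gP \mid \cat P \in \Supp(\cat M)}$; then each $\gP$ with $\cat P \in \Supp(\cat M)$ lies in $\cat M$, and \cref{lem:cosupp-of-gP} gives $\cat P \in \Cosupp(\gP) \subseteq \Cosupp(\cat M)$. For the reverse inclusion, I will take $\cat Q \in \Cosupp(\cat M)$ together with a witness $t \in \cat M$ satisfying $\LambdaQ t = \ihom{\gQ,t} \neq 0$. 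The coideal condition puts $\LambdaQ t$ back into $\cat M$; by \cref{rem:supp-of-local-small} one has $\Cosupp(\LambdaQ t) \subseteq \{\cat Q\}$, and codetection --- which holds because $\cat T$ is stratified, via \cref{thm:LGP-equiv} --- keeps this set non-empty, so it must equal $\{\cat Q\}$. Finally, applying \cref{lem:pointsuppcosupp}(b) to the costalk object $\LambdaQ t \in \LambdaQ \cat T$ yields $\cat Q \in \Supp(\LambdaQ t) \subseteq \Supp(\cat M)$.

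Granting the claim, injectivity of $\Cosupp$ is immediate: given colocalizing coideals $\cat C_1,\cat C_2$ with $\Cosupp(\cat C_1) = \Cosupp(\cat C_2)$, the standing hypothesis makes both into localizing ideals, the claim gives $\Supp(\cat C_i) = \Cosupp(\cat C_i)$, hence $\Supp(\cat C_1) = \Supp(\cat C_2)$, and stratification of $\cat T$ then forces $\cat C_1 = \cat C_2$.

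The only real obstacle lies in the key claim $\Supp(\cat M) = \Cosupp(\cat M)$. The point is that one must exploit both structures on $\cat M$ at once: the ideal structure is needed to invoke stratification and produce the generators $\gP$ as elements of $\cat M$, while the coideal structure is needed to keep $\LambdaQ t$ inside $\cat M$. The bridge between the two is then the basic costalk comparison $\Cosupp \subseteq \Supp$ of \cref{lem:pointsuppcosupp}(b). Everything else is formal bookkeeping via \cref{cor:cosupp_surj} and \cref{thm:costrat_implies_strat}.
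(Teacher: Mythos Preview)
Your proof is correct, and it takes a genuinely different route from the paper's own argument. The paper works \emph{pointwise}: it reduces to showing that minimality of $\GammaP\cat T$ implies cominimality of $\LambdaP\cat T$ for each prime $\cat P$, and does so by transporting colocalizing coideals of $\LambdaP\cat T$ to localizing ideals of $\GammaP\cat T$ through the stalk--costalk equivalence $\LambdaP\cat T \cong \GammaP\cat T$ of \cref{rem:stalk-costalk-equivalence}, using the hypothesis to ensure that the inclusion $\LambdaP\cat T \hookrightarrow \cat T$ preserves coproducts. You instead work \emph{globally}: you prove that any subcategory $\cat M$ which is simultaneously a localizing ideal and a colocalizing coideal satisfies $\Supp(\cat M) = \Cosupp(\cat M)$, and then feed this directly into the injectivity of the stratification bijection. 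Your argument is arguably more elementary --- it avoids the stalk--costalk equivalence entirely and rests only on \cref{lem:cosupp-of-gP}, \cref{lem:pointsuppcosupp}(b), codetection, and the coideal closure property --- while the paper's approach has the advantage of yielding the sharper local statement that minimality at $\cat P$ implies cominimality at $\cat P$ (under the standing hypothesis).
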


\begin{proof}
	In light of \cref{thm:LGP-equiv} and \cref{thm:costrat_implies_strat}, it suffices to show that if minimality holds at a point $\cat P\in \Spc(\cat T^c)$ then cominimality also holds at~$\cat P$. Under the equivalence $\LambdaP\cat T\cong \GammaP\cat T$ of \cref{rem:stalk-costalk-equivalence}, the thick subcategories $\cat C$ of $\LambdaP \cat T$ correspond bijectively with the thick subcategories $\cat C'$ of $\GammaP\cat T$. Moreover, one readily checks that $\cat T^d \otimes \cat C \subseteq \cat C$ if and only if $\cat T^d \otimes \cat C' \subseteq \cat C'$. By our hypothesis, the costalk $\LambdaP\cat T$ is a localizing ideal of $\cat T$. In particular, the inclusion $\LambdaP\cat T \hookrightarrow \cat T$ preserves coproducts. It follows that if~$\cat C \subseteq \LambdaP\cat T$ is a localizing ideal of~$\cat T$ then its corresponding $\cat C'\subseteq \GammaP\cat T$ is also a localizing ideal of $\cat T$. Armed with these observations, suppose $\cat C \subsetneq \LambdaP\cat T$ is a proper colocalizing coideal. By hypothesis, it is a localizing ideal of $\cat T$, hence corresponds to a proper localizing ideal $\cat C' \subsetneq\GammaP\cat T$. Minimality at $\cat P$ implies that $\cat C'=0$ and hence~$\cat C=0$, so that we have cominimality at $\cat P$.
\end{proof}

\begin{Def}[Beligiannis \cite{Beligiannis00}, Krause \cite{Krause00}]\label{def:puresemisimple}
	A compactly generated triangulated category $\cat T$ is said to be \emph{pure-semisimple} if every pure monomorphism splits, where $f\colon x\to y$ is a pure monomorphism if the induced map $\cat T(c,x)\to\cat T(c,y)$ is a monomorphism for all compact objects $c\in \cat T^c$. This is equivalent to a host of other conditions, for example that any object of $\cat T$ is a coproduct of compact objects, that~$\cat T$ has all filtered colimits, or that~$\cat T$ is phantomless; see \cite[Theorem~9.3]{Beligiannis00} and \cite[Theorem~2.10]{Krause00}.
\end{Def}

\begin{Prop}\label{prop-balmer-spectrum-pure-semisimple}
	If $\cat T$ is a rigidly-compactly generated pure-semisimple tt-category then $\Spc(\cat T^c)$ is a finite discrete space.
\end{Prop}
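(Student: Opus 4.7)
The strategy is to exploit the defining property of pure-semisimplicity (\cref{def:puresemisimple})---every object of $\cat T$ decomposes as a coproduct of compact objects---to force strong rigidity on $X \coloneqq \Spc(\cat T^c)$. First, I would show that every localizing ideal $\cat L \subseteq \cat T$ is generated by its compact objects: if $t \in \cat L$ and $t \simeq \bigoplus_i c_i$ with $c_i \in \cat T^c$, then each summand $c_i$ is a retract of $t$ and hence lies in $\cat L$ by thickness. Together with Balmer's classification of thick tensor ideals of $\cat T^c$, this yields a lattice isomorphism between localizing ideals of $\cat T$ and Thomason subsets of $X$. A symmetric argument applied to the right orthogonal $\cat L^\perp$ (itself closed under retracts of its compact objects) shows every localizing ideal is smashing, so the telescope conjecture holds for $\cat T$.

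Second, I would argue that the right idempotent $f_Y$ of each smashing localization is itself compact. Writing $f_Y \simeq \bigoplus_i c_i$ and using that $f_Y$ is an idempotent ring object with compact unit $\unit \to f_Y$ factoring through a finite sub-coproduct, one concludes $f_Y \in \cat T^c$; alternatively, this compactness is an instance of the Krull--Schmidt-type decomposition of $\unit$ available in the pure-semisimple setting (cf.\ \cite{BalmerKrauseStevenson19}). Consequently, every Thomason subset $Y$ of $X$ arises as $\supp(e_Y)$ for compact $e_Y$, and $Y^c = \supp(f_Y)$ for compact $f_Y$. Hence the lattice of Thomason subsets of $X$ is closed under complementation, i.e.\ a Boolean algebra.

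By Stone duality for coherent frames, a spectral space whose Thomason lattice is Boolean is profinite, and in such a space the quasi-compact opens coincide with the clopens. Moreover, every point $\cat P \in X$ is visible: applying the previous steps to the local category obtained by inverting all compacts outside $\cat P$, one extracts a compact object $c_{\cat P}$ with $\supp(c_{\cat P}) = \{\cat P\}$. Thus each singleton is Thomason closed, and by the Boolean property its complement is Thomason as well, so $\{\cat P\}$ is clopen. The topology on $X$ is therefore discrete, and quasi-compactness of a spectral space forces $X$ to be finite. The main obstacle is the compactness of the right idempotents $f_Y$---equivalently, the Krull--Schmidt decomposition of $\unit$ in the pure-semisimple setting---which is where the deeper structural theory of pure-semisimple tt-categories enters the argument.
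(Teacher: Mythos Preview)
Your overall strategy is workable but differs from the paper's, and the step you flag as the ``main obstacle'' is indeed where the proposal is thin. Let me be precise about that gap and then compare.

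\textbf{The gap.} Your argument that $f_Y$ is compact needs one more ingredient that you do not supply. Writing $f_Y \simeq \coprod_i c_i$ with $c_i \in \cat T^c$, the unit $\eta\colon \unit \to f_Y$ factors through a finite sub-coproduct $c \coloneqq \coprod_{i \in J} c_i$; then idempotence gives that the identity of $f_Y$ factors as $f_Y \to f_Y \otimes c \to f_Y$. At this point you assert $f_Y \in \cat T^c$, but $f_Y \otimes c$ is not obviously compact. The missing observation is that each summand $c_i$, being a retract of $f_Y$, lies in $f_Y \otimes \cat T$ and hence satisfies $f_Y \otimes c_i \simeq c_i$; thus $f_Y \otimes c \simeq c$ is compact and $f_Y$ is a retract of it. This closes the gap, but neither your ``compact unit'' sketch nor the appeal to a Krull--Schmidt decomposition of $\unit$ actually provides it. Your step~6 (visibility via local categories) is also vague, but in fact it is unnecessary: once the Thomason lattice is Boolean, the space is profinite and, since Thomason then coincides with open, closure under complements forces every open to be closed; hence every (closed) point is open and the space is discrete.

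\textbf{Comparison with the paper.} The paper avoids the compactness of $f_Y$ altogether. Writing $f_Y \simeq \coprod_i x_i$ with each $x_i$ compact, it observes that each $x_i$ lies in $\Loco{f_Y} = \ker(e_Y\otimes -)$ and hence so do the duals $x_i^\vee$; therefore $\cat T(f_Y,\Sigma e_Y) = \prod_i \cat T(\unit,\Sigma e_Y \otimes x_i^\vee) = 0$. \Cref{prop:tate} then gives directly that $Y$ is clopen and $Y^c$ is Thomason. This yields the same Boolean conclusion with less machinery. The paper then deduces triviality of the specialization order from the clopen property and finishes by noting that $\{\cat P\} = \gen(\cat P)$ is the complement of a Thomason, hence open. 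Your route through ``telescope conjecture $\Rightarrow$ $f_Y$ compact $\Rightarrow$ Stone duality'' reaches the same endpoint but is more circuitous, and the step you identify as hardest is precisely the one the paper's argument sidesteps.
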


\begin{proof}
	We first prove that every Thomason subset $Y \subseteq \Spc(\cat T^c)$ is both open and closed. Since $\cat T$ is pure-semisimple, we can write $f_Y = \coprod_{i\in I} x_i$ as a coproduct of compact objects. Note that each $x_i$ is contained in $\ker(-\otimes e_Y)=\Loco{f_Y}$; see \cref{rem:smashing-recollement}. Since this is a localizing ideal, it also contains the duals $x_i^{\vee}$, and hence
		\[
			\cat T(f_Y,\Sigma e_Y) = \prod_{i \in I} \cat T(x_i, \Sigma e_Y) =\prod_{i\in I}\cat T(\unit,\Sigma e_Y \otimes x_i^{\vee})=0.
		\]
	Thus \cref{prop:tate} establishes that $Y$ is both open and closed.

	Next we establish that the specialization order is trivial (cf.~\cref{rem:hausdorff}). To this end, suppose there exists an inclusion of primes $\cat P \subseteq \cat Q$ with $\cat P \neq \cat Q$. Then $\cat Q \not\subseteq \cat P$, i.e., $\cat Q \not\in \overbar{\singP}$. Hence there exists a Thomason closed subset $Z$ which contains $\cat P$ but does not contain $\cat Q$. This is a contradiction, since the Thomason subset $Z$ is open (by the first part of the proof) and hence closed under generalization.

	It follows that $\singP = \gen(\cat P)$ is open since it is the complement of a Thomason subset. Since every point is open, the topology is discrete. Moreover, a discrete quasi-compact space is necessarily finite.
\end{proof}

\begin{Thm}\label{thm-pure-semisimple-costratified}
	If $\cat T$ is a rigidly-compactly generated pure-semisimple tt-category, then $\Spc(\cat T^c)$ is a finite discrete space and $\cat T$ is costratified and stratified. 
\end{Thm}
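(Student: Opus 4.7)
My plan is to combine \cref{prop-balmer-spectrum-pure-semisimple}, which already delivers the finite discrete spectrum, with the fact that pure-semisimple categories have an especially rigid supply of coproduct decompositions. Since $\Spc(\cat T^c)$ is therefore noetherian, the local-to-global principle, the colocal-to-global principle, and (co)detection all hold by \cref{cor:noetherian_local_global} and \cref{thm:LGP-equiv}. Invoking \cite[Theorem~4.1]{bhs1} for stratification and \cref{thm:equiv-costrat} for costratification, it then suffices to verify minimality of $\GammaP \cat T$ and cominimality of $\LambdaP \cat T$ at each $\cat P \in \Spc(\cat T^c)$.

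For minimality I will use \cref{lem:minimality_crit}. Given nonzero $t_1,t_2 \in \GammaP \cat T$, pure-semisimplicity (\cref{def:puresemisimple}) lets me write $t_1 \cong \coprod_i x_i$ with each $x_i \in \cat T^c$; thickness of $\GammaP \cat T$ then places each nonzero $x_i$ in $\GammaP \cat T \cap \cat T^c$, so that $\supp(x_i) = \{\cat P\}$. Since the spectrum is finite discrete, \cref{cor:finite-discrete-noetherian} yields $\Cosupp(t_2) = \Supp(t_2) = \{\cat P\}$, and the half-$\ihomname$ theorem (\cref{prop:halfhom}) then gives $\Cosupp\ihom{x_i,t_2} = \{\cat P\} \ne \emptyset$, so each $\ihom{x_i,t_2}$ is nonzero. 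As $\ihom{t_1,t_2} \cong \prod_i \ihom{x_i,t_2}$ is a product with nonzero factors, it is itself nonzero, and minimality follows.

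For costratification I will apply \cref{lem:coideal=ideal}, which reduces the claim to showing that an arbitrary colocalizing coideal $\cat C$ is automatically a localizing ideal. The coideal axiom immediately gives $\cat T^c \otimes \cat C \subseteq \cat C$ via $x \otimes t \cong \ihom{x^\vee,t}$ for compact $x$. To obtain closure under coproducts I first decompose each member $t_i \in \cat C$ as $t_i \cong \coprod_j y_{i,j}$ with $y_{i,j}$ compact, so by thickness every summand $y_{i,j}$ lies in $\cat C$. The canonical comparison $\coprod_{i,j} y_{i,j} \to \prod_{i,j} y_{i,j}$ becomes the canonical injection $\coprod \cat T(c,y_{i,j}) \hookrightarrow \prod \cat T(c,y_{i,j})$ after applying $\cat T(c,-)$ for any compact $c$, so it is a pure monomorphism and therefore splits by pure-semisimplicity. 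Hence $\coprod_{i,j} y_{i,j}$ is a summand of $\prod_{i,j} y_{i,j} \in \cat C$ and belongs to $\cat C$ by thickness. Combined with $\cat T^c \otimes \cat C \subseteq \cat C$ and the coproduct decomposition of an arbitrary $s \in \cat T$, this also yields $\cat T \otimes \cat C \subseteq \cat C$, completing the verification.

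The delicate point is the coproduct-closure step, since colocalizing coideals are only a priori closed under products, not coproducts. The key observation is that the canonical map $\coprod \to \prod$ is always a pure monomorphism, so pure-semisimplicity splits it; this realizes coproducts as retracts of products and reconciles colocalizing coideals with localizing ideals, at which point \cref{lem:coideal=ideal} finishes the argument.
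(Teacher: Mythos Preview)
Your proof is correct and follows essentially the same route as the paper: finite discrete spectrum from \cref{prop-balmer-spectrum-pure-semisimple}, minimality via \cref{lem:minimality_crit}, and costratification via \cref{lem:coideal=ideal} after checking that colocalizing coideals are localizing ideals. The only cosmetic differences are that the paper decomposes both $t_1$ and $t_2$ into compacts for the minimality step (rather than invoking \cref{cor:finite-discrete-noetherian} and \cref{prop:halfhom}), and it cites \cite[Theorem~9.3]{Beligiannis00} for the coproduct-to-product splitting rather than verifying the pure-monomorphism condition directly.
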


\begin{proof}
	\Cref{prop-balmer-spectrum-pure-semisimple} establishes that the Balmer spectrum is finite and discrete. We first show that $\cat T$ is stratified.  By \cite[Theorem 3.21]{bhs1} the local-to-global principle holds for $\cat T$. We establish minimality using \Cref{lem:minimality_crit}. To that end, fix $\cat P \in \Spc(\cat T^c)$ and consider nonzero objects $t_1,t_2 \in \GammaP \cat T$. Since $\cat T$ is pure-semisimple, we have $t_1 = \coprod_{i \in I}x_i$ and $t_2 = \coprod_{j \in J}y_j$ for nonzero compact objects $x_i,y_j$. Since $x_i$ (respectively, $y_j$) is a retract of $t_1$ (respectively, $t_2$), we see that each $x_i$ and $y_j$ is in $\GammaP \cat T$ as well. Thus $\supp(x_i) = \supp(y_j) =  \{ \cat P \}$ and hence $\ihom{x_i,y_j} \ne 0$. It follows that $\ihom{t_1,t_2} \ne 0$, as required. 

	For any collection of objects $\{t_i\}_{i \in I}$ in a pure-semisimple tt-category $\cat T$, the canonical map $\coprod_{i\in I} t_i \to \prod_{i \in I} t_i$ from the coproduct to the product splits; see \cite[Theorem~9.3]{Beligiannis00}. In particular, any colocalizing subcategory of $\cat T$ is localizing. Moreover, since a colocalizing subcategory of $\cat T$ is a coideal if and only if it is closed under tensoring with dualizable objects (\cref{exa:submodules-of-Top}), we conclude that any colocalizing coideal is a localizing ideal. Hence we can invoke \cref{lem:coideal=ideal} to conclude that $\cat T$ is costratified.
\end{proof}

\begin{Exa}
	Let $A$ be an Artin algebra which is derived equivalent to a hereditary algebra of Dynkin type. Then the homotopy category $\KInj{A}$ of complexes of injective $A$-modules is pure-semisimple \cite[Proposition 4.4]{Zhe13pp} and hence stratified and costratified by \Cref{thm-pure-semisimple-costratified}.
\end{Exa}

\begin{Rem}
	As a consequence of \Cref{prop-balmer-spectrum-pure-semisimple}, we see that an infinite product of nontrivial pure-semisimple tt-categories cannot be pure-semisimple. This should be compared with the following observation: An infinite product of nontrivial pure-semisimple commutative rings is never pure-semisimple. Indeed, a pure-semisimple ring is always artinian \cite[Theorem 4.4]{Chase60} and, in particular, its Zariski spectrum is always finite discrete. We note also that if $\Der(R)$ is pure-semisimple in the sense of \Cref{def:puresemisimple} then $R$ is pure-semisimple \cite[Corollary 7.2]{GarkushaPrest05}. We thus see that for a field $k$, neither $\prod_{i = 1}^{\infty} \Der(k)$ nor $\Der(\prod_{i = 1}^{\infty}k)$ is pure-semisimple, even though~$\Der(k)$ is pure-semisimple.
\end{Rem}

\begin{Exa}\label{rem:ttfield}
	In \cite[Definition 1.1]{BalmerKrauseStevenson19}, Balmer--Krause--Stevenson define a \emph{tt-field} to be a rigidly-compactly generated tt-category $\cat F$ for which every object $X \in \cat F$ is a coproduct $X \simeq \coprod_{i \in I}x_i$ of compact-rigid objects $x_i \in \cat F^c$ and for which each object $X \in \cat F$ is $\otimes$-faithful ($X \otimes f = 0 \implies f = 0$). A tt-field $\cat F$ is pure-semisimple and $\Spc(\cat F^c)=\{\ast\}$ is a single point; see \cite[Proposition 5.1]{BalmerKrauseStevenson19}.
\end{Exa}

\begin{Exa}\label{ex:puresemisimple}
	The stable module category $\cat T = \StMod(kC_{p^n})$ is a pure-semisimple triangulated category which, if $n \ge 2$, is not a tt-field; see \cite[Example~5.11]{BalmerKrauseStevenson19}.
\end{Exa}

\begin{Cor}\label{cor:stratttfields}
	Any tt-field $\cat F$ is both stratified and costratified. 
\end{Cor}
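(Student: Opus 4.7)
The plan is to deduce this corollary as an immediate consequence of \cref{thm-pure-semisimple-costratified} together with the structural properties of tt-fields recalled in \cref{rem:ttfield}.

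First, I would invoke \cite[Proposition 5.1]{BalmerKrauseStevenson19} (as recorded in \cref{rem:ttfield} above) to observe that any tt-field $\cat F$ is pure-semisimple in the sense of \cref{def:puresemisimple}: by definition every object of $\cat F$ decomposes as a coproduct of compact-rigid objects. Moreover, $\cat F$ is by hypothesis rigidly-compactly generated. Hence $\cat F$ satisfies precisely the hypotheses of \cref{thm-pure-semisimple-costratified}.

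Applying that theorem then yields simultaneously that $\Spc(\cat F^c)$ is a finite discrete space (which in the tt-field case is already known to reduce to a single point) and that $\cat F$ is both stratified and costratified. There is no genuine obstacle here since all the hard work has been done in \cref{thm-pure-semisimple-costratified}; the corollary is essentially a naming convention, recording that tt-fields form a distinguished class of pure-semisimple examples to which the general theorem applies.
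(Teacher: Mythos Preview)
The proposal is correct and takes essentially the same approach as the paper: the corollary is stated without proof immediately after \cref{thm-pure-semisimple-costratified} and \cref{rem:ttfield}, and is intended to follow directly from the fact that tt-fields are pure-semisimple.
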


\subsection*{Affine weakly regular tt-categories}\label{ssec:ttregularexamples}

\begin{Def}[Dell'Ambrogio--Stanley \cite{DellAmbrogioStanley16}] \label{def:affine_weakly_regular}
    A tensor-triangulated category~$\cat T$ is said to be \emph{affine weakly regular} if it satisfies the following two conditions:
		\begin{enumerate}
			\item (affine) $\cat T$ is compactly generated by its tensor unit $\unit$.
			\item (weakly regular) The graded endomorphism ring $R \coloneqq \Hom_{\cat T}^*(\unit,\unit)$ is a graded noetherian ring concentrated in even degrees, and for every homogeneous prime ideal $\mathfrak p$ of $R$, the maximal ideal of the local ring~$R_{\mathfrak p}$ is generated by a (finite) regular sequence of homogeneous non-zero-divisors.  
		\end{enumerate}
	The first axiom ensures that $\cat T$ is a rigidly-compactly generated tt-category and that every (co)localizing subcategory is a (co)ideal (cf.~\cref{rem:monogenic}).
\end{Def}

\begin{Rem}\label{Rem:residue_field_affine}
    Given an affine weakly regular tt-category we set
		\[
		\pi_*(X) \coloneqq \Hom^*_{\cat T}(\unit,X). 
		\]
    For any prime ideal $\mathfrak p \in \Spec^h(R)$, there is a residue field object $K(\mathfrak p) \in \cat T$ with the property that 
		\[
			\pi_*(K(\mathfrak p)) \simeq \kappa(\mathfrak p)
		\]
	where $\kappa(\mathfrak p) \coloneqq R_{\mathfrak p}/\mathfrak pR_{\mathfrak p}$ denotes the algebraic residue field; see \cite[\S 3]{DellAmbrogioStanley16}. This object plays a role analogous to $g_{\mathfrak p}$; in particular, $\Loc\langle g_{\mathfrak p}\rangle = \Loc\langle K(\mathfrak p)\rangle$. The following theorem states the main results of \cite{DellAmbrogioStanley16}:
\end{Rem}

\begin{Thm}[Dell'Ambrogio--Stanley]
    Let $\cat T$ be an affine weakly regular tt-category. There is a homeomorphism $\Spc(\cat T^c) \cong \Spec^h(R)$ and $\cat T$ is stratified.
\end{Thm}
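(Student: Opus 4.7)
The plan is to reduce both assertions to the construction and analysis of compact Koszul/residue field objects, using the regularity hypothesis in an essential way. Concretely, for any homogeneous $r \in R$ of degree $n$, form the Koszul object $\unit/\!/r$ as the cofiber of $r\colon \Sigma^{n}\unit \to \unit$; these are compact, and for a finite homogeneous sequence $\underline{r} = (r_1,\ldots,r_d)$ I would iterate to obtain $\unit/\!/\underline{r} \coloneqq \unit/\!/r_1 \otimes \cdots \otimes \unit/\!/r_d \in \cat T^c$. When $\underline{r}$ is a regular sequence in $R_{\mathfrak p}$ generating $\mathfrak p R_{\mathfrak p}$ (which exists by weak regularity), a standard Koszul homology computation shows $\pi_*(\unit/\!/\underline{r})_{\mathfrak p} \cong \kappa(\mathfrak p)$, whence the residue field object $K(\mathfrak p)$ of \cref{Rem:residue_field_affine} may be realized (up to building via a suitable telescope localization at elements outside $\mathfrak p$) from a \emph{compact} Koszul model. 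This compactness, unavailable in general BIK contexts, is the whole point of the regularity hypothesis.

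For the homeomorphism, I would use Balmer's comparison map $\rho\colon \Spc(\cat T^c)\to \Spec^h(R)$ sending $\cat P$ to $\{\,r\in R \mid \cone(r)\notin \cat P\,\}$. Continuity is automatic. For surjectivity, given $\mathfrak p \in \Spec^h(R)$, the compact Koszul model is nonzero (its $\mathfrak p$-localized homotopy is $\kappa(\mathfrak p)\neq 0$), so it has nonempty Balmer support, and a direct calculation of $\rho$ on this support forces it to hit $\mathfrak p$. Injectivity and the topological identification follow by showing that every Thomason closed subset $V\subseteq \Spec^h(R)$ is realized as $\supp(\unit/\!/\underline{r})$ for a suitable Koszul object: using $R$ noetherian, every such $V$ is $V(\mathfrak a)$ for a finitely generated homogeneous ideal $\mathfrak a = (r_1,\ldots,r_d)$, and one checks $\rho(\supp(\unit/\!/(r_1,\ldots,r_d))) = V(\mathfrak a)$. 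This, via the standard lattice-isomorphism dictionary (cf.~\cite[Lemma 3.3]{bhs1}-type arguments), pins down $\rho$ as a homeomorphism.

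For stratification, since $R$ is graded noetherian, $\Spc(\cat T^c)\cong \Spec^h(R)$ is a noetherian spectral space, so the local-to-global principle holds by \cref{cor:noetherian_local_global} (together with \cite[Theorem~3.21]{bhs1} and \cref{thm:LGP-equiv}). By \cite[Theorem~4.1]{bhs1} it then suffices to establish minimality of $\GammaP\cat T$ for every $\cat P$ corresponding to a prime $\mathfrak p \in \Spec^h(R)$. For this I would verify the criterion of \cref{lem:minimality_crit}: if $0\neq t_1,t_2 \in \GammaP\cat T$, reduce to testing $\ihom{t_1,t_2}\neq 0$ after tensoring with the compact residue field object $K(\mathfrak p)$. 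Because $K(\mathfrak p)$ is compact with $\pi_*K(\mathfrak p)=\kappa(\mathfrak p)$ a graded field, any nonzero $K(\mathfrak p)$-module in $\GammaP\cat T$ splits as a coproduct of shifts of $K(\mathfrak p)$, and the standard Künneth-style calculation $\pi_*(K(\mathfrak p)\otimes t)$ being a nonzero $\kappa(\mathfrak p)$-vector space yields a nonzero map between them. This is the structural analogue of the BIK minimality arguments, made effective by the compactness of $K(\mathfrak p)$.

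The main obstacle will be the minimality step: producing enough maps between $t_1$ and $t_2$ in $\GammaP\cat T$ to conclude $\ihom{t_1,t_2}\neq 0$. The strategy above relies on knowing that $\GammaP\cat T$ is $K(\mathfrak p)$-``semisimple'' in a precise sense—concretely, that tensoring with $K(\mathfrak p)$ detects nontriviality and that $K(\mathfrak p)$-modules split. The former is an instance of the detection property at $\cat P$, obtained from the observation $\Loco{\gP} = \Loco{K(\mathfrak p)}$ together with the exact triangles manifesting $\gP$ as a combination of the Koszul object and its telescope. The latter requires establishing that $K(\mathfrak p)$ behaves as a tt-field over $\GammaP\cat T$ in the sense of \cref{rem:ttfield}, which is ultimately the same technical input used in \cite{DellAmbrogioStanley16} and where the regular-sequence hypothesis is most delicately invoked.
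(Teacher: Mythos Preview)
The paper does not provide a proof of this statement; it is stated as a citation of the main results of \cite{DellAmbrogioStanley16}, introduced by ``The following theorem states the main results of \cite{DellAmbrogioStanley16}.'' Your proposal is thus not comparable to any argument in the paper itself. That said, your sketch is a reasonable reconstruction of the Dell'Ambrogio--Stanley approach: the residue field objects $K(\mathfrak p)$ built from Koszul objects and the analysis of their behavior (in particular, the splitting of $K(\mathfrak p)$-modules into coproducts of shifts via \cite[Proposition~3.6]{DellAmbrogioStanley16}) is exactly the mechanism they use, and the paper invokes these same structural facts in its proof of \cref{thm:costratification_for_weakly_affine}. If the intent is merely to justify quoting the theorem, a one-line citation suffices; if you want to give an independent proof, you should tighten the minimality argument (in particular, the reduction to $K(\mathfrak p)$-modules and the splitting claim) rather than leaving it as ``the main obstacle.''
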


\begin{Lem}\label{lem:retract}
	For any set $I$ and integers $\{d_i\}_{i \in I}$, the natural map 
		\begin{equation}\label{eq:coprod-to-prod}
		\begin{aligned}
			\coprod_{i \in I}\Sigma^{d_i}K(\mathfrak p) \longrightarrow \prod_{i \in I} \Sigma^{d_i}K(\mathfrak p)
		\end{aligned}
		\end{equation}
	is a split monomorphism.
\end{Lem}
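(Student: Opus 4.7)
The plan is to reduce the claim to an elementary linear-algebra fact about vector spaces over the graded residue field. Set $\pi_\ast \coloneqq \Hom^\ast_{\cat T}(\unit,-)$; by \cref{Rem:residue_field_affine}, $\pi_\ast K(\mathfrak p) \cong \kappa(\mathfrak p)$, which is a graded field by the weak regularity of $\cat T$. Since $\cat T$ is affine, $\unit$ is a compact generator, so $\pi_\ast$ commutes with arbitrary coproducts, and it automatically commutes with products as a representable functor. Applying $\pi_\ast$ to the natural map $\iota \colon \coprod_i \Sigma^{d_i} K(\mathfrak p) \to \prod_i \Sigma^{d_i} K(\mathfrak p)$ therefore yields the canonical inclusion $\bigoplus_i \Sigma^{d_i} \kappa(\mathfrak p) \hookrightarrow \prod_i \Sigma^{d_i} \kappa(\mathfrak p)$ of graded $\kappa(\mathfrak p)$-vector spaces. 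Since $\kappa(\mathfrak p)$ is a graded field, this map is a split monomorphism: choose any graded $\kappa(\mathfrak p)$-complement and project onto the direct-sum factor.

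Next I would lift this retraction from graded $\kappa(\mathfrak p)$-vector spaces to $\cat T$ by exploiting a pure-semisimplicity property of the residue field object. By Dell'Ambrogio--Stanley, $\cat T$ is stratified, so the localizing ideal $\Loco{K(\mathfrak p)}$ is minimal, and in this setting $K(\mathfrak p)$ plays the role of a ``compact generator'' of $\Loco{K(\mathfrak p)}$ whose graded endomorphism behaviour is controlled by $\kappa(\mathfrak p)$. The plan is to argue that, within this stalk, every monomorphism between coproducts of shifts of $K(\mathfrak p)$ (and, more generally, between objects whose $\pi_\ast$-image is a graded $\kappa(\mathfrak p)$-vector space) splits, by lifting the linear-algebraic retraction along $\pi_\ast$.

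The main obstacle will be to handle the product: \emph{a priori}, $P \coloneqq \prod_i \Sigma^{d_i} K(\mathfrak p)$ is computed in the ambient $\cat T$ and need not lie in $\Loco{K(\mathfrak p)}$, since the smashing idempotent $g_{\mathfrak p} \otimes -$ does not commute with arbitrary products. One remedy is to apply $\ihom{g_{\mathfrak p},-}$ (which \emph{does} preserve products) and transport the question across the stalk--costalk equivalence of \cref{rem:stalk-costalk-equivalence}, reducing to the costalk where the product is well-behaved. Alternatively, one could argue directly that $P$ lies in $\Loco{K(\mathfrak p)}$ by showing its $\pi_\ast$ is $\kappa(\mathfrak p)$-local and using that this property characterizes objects of the stalk (a claim relying on the Koszul-type construction of $K(\mathfrak p)$ and the weak regularity hypothesis). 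Either way, the heart of the argument is identifying $P$ as a coproduct of shifts of $K(\mathfrak p)$ up to isomorphism in $\cat T$, from which the splitting of $\iota$ follows immediately.
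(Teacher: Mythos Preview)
Your final sentence correctly identifies the heart of the matter---decompose $P = \prod_i \Sigma^{d_i}K(\mathfrak p)$ as a coproduct of shifts of $K(\mathfrak p)$---and your opening step (apply $\pi_*$, obtain a split monomorphism of graded $\kappa(\mathfrak p)$-vector spaces, extend the standard basis to a basis $\cat B$ of the product) matches the paper exactly. But your middle section, worrying about whether $P$ lies in $\Loco{K(\mathfrak p)}$ and proposing detours through the costalk equivalence or a $\pi_*$-local characterization of the stalk, is an unnecessary complication. The paper goes straight to the goal: it writes $P \simeq K(\mathfrak p) \otimes \prod_i \Sigma^{d_i}\unit$ and invokes \cite[Proposition~3.6]{DellAmbrogioStanley16}, which shows that any object of the form $K(\mathfrak p) \otimes X$ is a coproduct of shifts of $K(\mathfrak p)$ indexed by a $\pi_*$-basis. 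Since $\cat B$ was chosen to extend the standard basis indexed by~$I$, any set-theoretic retraction $\cat B \to I$ then lifts to the desired left inverse. The point is that Dell'Ambrogio--Stanley's structural result about $K(\mathfrak p)$---essentially, that its module objects are free on any $\pi_*$-basis because $\pi_*K(\mathfrak p)$ is a graded field---does all the work and applies directly to $P$, with no need to first situate $P$ inside any particular localizing subcategory.
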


\begin{proof}
	Applying $\pi_*$ to the morphism \eqref{eq:coprod-to-prod} gives the monomorphism of graded \mbox{$\kappa (\mathfrak p)$-modules} $\bigoplus_{i \in I} \Sigma^{d_i}\kappa(\mathfrak p) \to \prod_{i\in I} \Sigma^{d_i}\kappa(\mathfrak p)$. We can extend the standard basis of $\bigoplus_{i\in I} \Sigma^{d_i}\kappa(\mathfrak p)$ to a basis $\cat B$ of $\prod_{i\in I} \Sigma^{d_i}\kappa(\mathfrak p)$. As $\prod_{i\in I} \Sigma^{d_i}K(\mathfrak p) \simeq {K(\mathfrak p)\otimes \prod_{i\in I} \Sigma^{d_i}\unit},$ Proposition 3.6 of \cite{DellAmbrogioStanley16} provides an isomorphism
		\[
			\coprod_{b\in \cat B}\Sigma^{|b|}K(\mathfrak p) \simeq \prod_{i \in I} \Sigma^{d_i}K(\mathfrak p)
		\]
	where $|b|$ denotes the homological degree of $b$. Any set-theoretic splitting of $I \subseteq \cat B$ then lifts to a left-inverse of \eqref{eq:coprod-to-prod}. 
\end{proof}

\begin{Thm}\label{thm:costratification_for_weakly_affine}
	Let $\cat T$ be an affine weakly regular tt-category. Then $\cat T$ is costratified. 
\end{Thm}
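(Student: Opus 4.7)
The plan is to apply \cref{thm:equiv-costrat} to reduce costratification to cominimality at every $\mathfrak{p} \in \Spc(\cat T^c)$, and then to establish this by exploiting the Dell'Ambrogio--Stanley residue field object $K(\mathfrak{p})$. Since $\cat T$ is stratified by Dell'Ambrogio--Stanley's theorem and $\Spc(\cat T^c) \cong \Spec^h(R)$ is noetherian, the colocal-to-global principle and the codetection property follow from \cref{cor:noetherian_local_global} and \cref{thm:LGP-equiv}. Moreover, $\cat T$ is monogenic, so by \cref{rem:monogenic} every colocalizing subcategory is automatically a coideal, and cominimality at $\mathfrak{p}$ reduces to showing that $\Lambda^{\mathfrak{p}}\cat T$ is minimal as a colocalizing subcategory.

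The key technical step is that, for any nonzero $t \in \Lambda^{\mathfrak{p}}\cat T$, the residue field object $K(\mathfrak{p})$ lies in $\Coloco{t}$. To verify this, observe that codetection together with \cref{cor:min} places $\mathfrak{p}$ in both $\Cosupp(t)$ and $\Supp(t)$, and stratification supplies the tensor product formula \cite[Theorem~8.2]{bhs1} to conclude $K(\mathfrak{p}) \otimes t \neq 0$. Proposition~3.6 of \cite{DellAmbrogioStanley16} then decomposes $K(\mathfrak{p}) \otimes t$ as a nonempty coproduct $\coprod_i \Sigma^{d_i} K(\mathfrak{p})$, so a suspension of $K(\mathfrak{p})$ is a direct summand of $K(\mathfrak{p}) \otimes t$. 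Since $K(\mathfrak{p})$ is a compact, hence dualizable, object of $\cat T$, the object $K(\mathfrak{p}) \otimes t$ lies in the coideal $\Coloco{t}$, and therefore $K(\mathfrak{p}) \in \Coloco{t}$.

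To conclude, I would identify $\Lambda^{\mathfrak{p}}\cat T$ with $\Coloc\langle K(\mathfrak{p}) \rangle$. The inclusion $\supseteq$ follows from $K(\mathfrak{p}) \in \Lambda^{\mathfrak{p}}\cat T$, which in turn holds by \cref{exa:cosupp-compact} and \cref{lem:local-cat-when-detect}(b) since $\Supp K(\mathfrak{p}) = \{\mathfrak{p}\}$. The reverse inclusion is the main obstacle: applying the perfect cogeneration of \cref{prop:local-generators}(b) with $Y_1 = \overbar{\{\mathfrak{p}\}}$ and $Y_2 = \gen(\mathfrak{p})^c$, one reduces to showing that every cogenerator $\Lambda^{\mathfrak{p}}I_c \simeq c \otimes \Lambda^{\mathfrak{p}}I_\unit$ lies in $\Coloc\langle K(\mathfrak{p})\rangle$. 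Combining Proposition~3.6 of \cite{DellAmbrogioStanley16}, which decomposes the tensor product of $K(\mathfrak{p})$ with any object as a coproduct of shifts of $K(\mathfrak{p})$, together with \cref{lem:retract}, which identifies such a coproduct as a retract of the corresponding product lying in $\Coloc\langle K(\mathfrak{p})\rangle$, yields the required containment after tracing the argument through the functor $\Lambda^{\mathfrak{p}}$. Once these two steps are combined, $\Coloco{t} \supseteq \Coloc\langle K(\mathfrak{p}) \rangle = \Lambda^{\mathfrak{p}}\cat T$, which establishes cominimality.
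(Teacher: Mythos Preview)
Your overall strategy is sound and close to the paper's, but it rests on a false claim: you assert that $K(\mathfrak{p})$ is compact (hence dualizable). In general it is not. The object $K(\mathfrak{p})$ is constructed by first applying the finite localization $f_{\gen(\mathfrak{p})^c}\otimes -$ and then taking Koszul cofibers; the localization destroys compactness whenever $\mathfrak{p}$ is not a closed point. For example, already in $\Der(R)$ for $R$ a regular noetherian ring, $K(\mathfrak{p})\simeq \kappa(\mathfrak{p})$ is compact only when $\mathfrak{p}$ is maximal. Consequently your key step ``$K(\mathfrak{p})\otimes t \in \Coloco{t}$ because coideals are closed under tensoring with dualizables'' fails, and likewise your use of \cref{exa:cosupp-compact} to place $K(\mathfrak{p})$ inside $\Lambda^{\mathfrak{p}}\cat T$ is not justified.

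The paper sidesteps this by working with $\ihom{K(\mathfrak{p}),t}$ rather than $K(\mathfrak{p})\otimes t$: the internal hom $\ihom{s,t}$ lies in any colocalizing coideal containing $t$ for \emph{arbitrary} $s$, with no dualizability hypothesis. One first uses $\Loc\langle g_{\mathfrak{p}}\rangle = \Loc\langle K(\mathfrak{p})\rangle$ together with \cref{lem:eqlococosupp} to get $\Coloc\langle t\rangle = \Coloc\langle \ihom{K(\mathfrak{p}),t}\rangle$, then applies the analogue of \cite[Proposition~3.6]{DellAmbrogioStanley16} to decompose $\ihom{K(\mathfrak{p}),t}$ as $\coprod_\alpha \Sigma^{|\alpha|}K(\mathfrak{p})$. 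This simultaneously shows $K(\mathfrak{p})\in \Coloc\langle t\rangle$ and (via \cref{lem:retract}) that $\ihom{K(\mathfrak{p}),t}\in\Coloc\langle K(\mathfrak{p})\rangle$, yielding $\Coloc\langle t\rangle = \Coloc\langle K(\mathfrak{p})\rangle$ for every nonzero $t\in\Lambda^{\mathfrak{p}}\cat T$; minimality is then immediate without having to identify $\Lambda^{\mathfrak{p}}\cat T$ with $\Coloc\langle K(\mathfrak{p})\rangle$ via cogenerators.
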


\begin{proof}
	The proof follows \cite[Theorem 10.3]{BensonIyengarKrause12}. The colocal-to-global principle holds by \Cref{cor:noetherian_local_global}, so we only need to establish cominimality at every 
	${\mathfrak p \in \Spec^h(R)}$. To that end, let $0 \neq t \in \Lambda^{\mathfrak p}\cat T$. Then, since $\cat T$ is stratified we have
		\[
			\Coloc\langle t\rangle = \Coloc\langle\ihom{g_{\mathfrak p},t}\rangle = \Coloc\langle \ihom{K(\mathfrak p),t}\rangle
		\]
	where the last step uses that $\Loc\langle g_{\mathfrak p}\rangle = \Loc\langle K(\mathfrak p)\rangle$ and \cref{lem:eqlococosupp}. Now a similar argument to \cite[Proposition 3.6]{DellAmbrogioStanley16} shows that 
		\[
			\ihom{K(\mathfrak p),t} \simeq \coprod\nolimits_{\alpha} \Sigma^{|\alpha|} K(\mathfrak p)
		\]
	where $\alpha$ runs through a graded $\kappa(\mathfrak p)$-vector space basis of $\pi_*\ihom{K(\mathfrak p),t}$. It follows that $K(\mathfrak p) \in \Coloc\langle \ihom{K(\mathfrak p),t}\rangle$. Moreover, by \cref{lem:retract} we see that $\coprod_{\alpha} \Sigma^{|\alpha|} K(\mathfrak p)$ is a retract of $\prod_{\alpha} \Sigma^{|\alpha|} K(\mathfrak p)$, hence we have 
		\[
			\ihom{K(\mathfrak p),t} \simeq \coprod\nolimits_{\alpha} \Sigma^{|\alpha|} K(\mathfrak p) \in \Coloc\langle K(\mathfrak p)\rangle.
		\]
	Therefore, we obtain
		\[
			\Coloc\langle t\rangle = \Coloc\langle \ihom{K(\mathfrak p),t}\rangle= \Coloc\langle K(\mathfrak p)\rangle.
		\]
	If $\Coloc\langle t\rangle$ were a proper subcategory of $\Lambda^{\mathfrak p}\cat T$, then any object $s$ in the complement would have to satisfy $\Coloc\langle t\rangle = \Coloc\langle s\rangle$, which is absurd. Therefore, $\Lambda^{\mathfrak p}\cat T$ is minimal as a colocalizing subcategory of $\cat T$, as required.
\end{proof}

\subsection*{Finite products of tt-categories}

\begin{Exa}\label{Exa:finite_product_of_costratified_categories}
	Let $\{ \cat T_i \}_{i=1}^n$ be  a finite collection of rigidly-compactly generated tensor-triangulated categories. Their product $\prod_{i=1}^n \cat T_i$ is again rigidly-compactly generated and $(\prod_{i=1}^n \cat T_i)^c=\prod_{i=1}^n \cat T_i^c$. Moreover, 
		\[
			\Spc( \prod_{i=1}^n \cat T_i^c) \cong \coprod_{i=1}^n \Spc( \cat T_i^c)
		\]
	where the right-hand side has the disjoint union topology. Indeed, each prime ideal of $\prod_{i=1}^n \cat T_i^c$ is of the form $\prod_{i=1}^n \cat P_i$, where for some $k$, $\cat P_k$ is a prime ideal of~$\cat T_k^c$, and for $i \ne k$, $\cat P_i = \cat T_i^c$. Thus $\prod_{i=1}^n \cat T_i$ has (weakly) noetherian spectrum if and only if each~$\cat T_i$ has (weakly) noetherian spectrum. Moreover, by Zariski descent (\cref{prop:colocal_finite_localization,cor:zariski_descent_minimality}), $\prod_{i=1}^n \cat T_i$ is costratified if and only if each~$\cat T_i$ is costratified.
\end{Exa}

\begin{Exa}\label{exa:double-T}
	Let $\cat T$ be any rigidly-compactly generated tt-category. Let $\cat T\times_{\bbZ_2} \cat T$ denote the product category $\cat T\times \cat T$ with the $\bbZ_2$-graded tensor-triangulated structure described in \cite[Example~4.23]{Sanders22}. It is again rigidly-compactly generated and the inclusion $a \mapsto (a,0)$ is a fully faithful geometric functor $f^*\colon\cat T \hookrightarrow \cat T\times \cat T$. The projection onto the first coordinate is both left and right adjoint to $f^*$. Hence~$f^!=f^*$. The prime ideals of $(\cat T \times \cat T)^c = \cat T^c \times \cat T^c$ are $\cat P \times \cat P$ for $\cat P$ a prime ideal of $\cat T^c$, and the induced map
		\[
			\Spc(f^*)\colon\Spc(\cat T^c \times \cat T^c) \to \Spc(\cat T^c) 
		\]
	given by $\cat P \times \cat P \mapsto \cat P$ is a homeomorphism. Under this identification, one readily checks that $\Supp_{\cat T \times \cat T}( (t_0,t_1) ) = \Supp_{\cat T}(t_0) \cup \Supp_{\cat T}(t_1)$ and $\Cosupp_{\cat T \times \cat T}( (t_0,t_1) ) = \Cosupp_{\cat T}(t_0) \cup \Cosupp_{\cat T}(t_1)$. It is then straightforward to establish that if $\cat T$ is costratified then $\cat T \times_{\bbZ_2} \cat T$ is costratified. The converse follows from \cref{thm:nilpotentdescent} and \cref{cor:detectingcomin}.
\end{Exa}

\section{Algebraic examples}\label{sec:algebraicexamples}

\subsection*{Commutative rings and schemes}\label{ssec:alggeoexamples}

\begin{Prop}\label{prop:costratification_for_dr}
	Let $R$ be a commutative ring with $\Spec(R)$ weakly noetherian. Then $\Der(R)$ is stratified if and only if it is costratified. 
\end{Prop}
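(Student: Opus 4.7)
The plan is to invoke the bootstrap result \cref{prop:bootstrap_for_algebraic_categories}, which asserts that if $\cat T$ admits geometric functors $f^*_i \colon \cat T \to \cat S_i$ to costratified categories $\cat S_i$ whose induced maps on Balmer spectra are jointly surjective, then $\cat T$ is stratified if and only if $\cat T$ is costratified. (The easy direction, costratified implies stratified, is in any case given by \cref{thm:costrat_implies_strat}.) So it suffices to exhibit a sufficient supply of ``fiber'' functors out of $\Der(R)$ whose targets we already know to be costratified and whose images cover $\Spc(\Der(R)^c) \cong \Spec(R)$.

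For each prime $\mathfrak{p} \in \Spec(R)$, I would take the extension-of-scalars functor
\[
	f^*_{\mathfrak{p}} \colon \Der(R) \longrightarrow \Der(k(\mathfrak{p})), \qquad M \longmapsto M \otimes^{\mathrm{L}}_R k(\mathfrak{p}),
\]
associated to the canonical ring map $R \to k(\mathfrak{p}) \coloneqq R_{\mathfrak{p}}/\mathfrak{p}R_{\mathfrak{p}}$. This is manifestly a geometric functor. The target $\Der(k(\mathfrak{p}))$ is the derived category of a field; it is a tt-field in the sense of Balmer--Krause--Stevenson, and so it is costratified by \cref{cor:stratttfields} (equivalently, it is pure-semisimple with one-point spectrum, so \cref{thm-pure-semisimple-costratified} applies). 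The induced map on Balmer spectra $\Spc(f^*_{\mathfrak{p}}) \colon \{(0)\} \to \Spec(R)$ sends the unique point of $\Spec(k(\mathfrak{p}))$ to $\mathfrak{p}$ under the Thomason--Balmer identification $\Spc(\Der(R)^c) \cong \Spec(R)$: a perfect complex $x$ lies in the kernel of $f^*_{\mathfrak{p}}$ precisely when $\mathfrak{p} \notin \supp(x)$. As $\mathfrak{p}$ ranges over $\Spec(R)$, the images of these maps therefore jointly cover $\Spec(R)$, and \cref{prop:bootstrap_for_algebraic_categories} yields the conclusion.

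The main thing to verify is that the hypotheses of the bootstrap theorem are met, but these are essentially automatic: $\Spc(\Der(R)^c) = \Spec(R)$ is weakly noetherian by assumption and each $\Spc(\Der(k(\mathfrak{p}))^c)$ is a single point, hence noetherian. There is no substantial obstacle: the entire proof reduces to recognizing that the residue field functors give an abundance of geometric functors into tt-fields whose images exhaust the spectrum, at which point the heavy lifting is already done by \cref{prop:bootstrap_for_algebraic_categories} (whose proof in turn rests on \cref{thm:detectingcomin}).
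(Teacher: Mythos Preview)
Your proof is correct and follows essentially the same approach as the paper: both use the residue field functors $\Der(R)\to\Der(\kappa(\mathfrak p))$ as the jointly surjective family of geometric functors into tt-fields and then invoke \cref{prop:bootstrap_for_algebraic_categories} together with \cref{cor:stratttfields}.
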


\begin{proof}
	Recall that $\Spc(\Der(R)^c) \cong \Spec(R)$ by Thomason's theorem \cite{Thomason97}; see \cite[Example 1.36]{bhs1}. For each $\mathfrak p \in \Spec(R)$, we consider the base-change functor $f^*_{\mathfrak p} \colon \Der(R) \to \Der(\kappa(\mathfrak p))$, where $\kappa(\mathfrak p)$ denotes the residue field at~$\mathfrak p$. This is a geometric functor whose target is a tt-field and the induced map on spectra sends the unique point $\ast$ to $\mathfrak p$. The result then follows from \Cref{prop:bootstrap_for_algebraic_categories} and \Cref{cor:stratttfields}. 
\end{proof}

\begin{Exa}\label{ex:costratified_noetherian_ring}
    For any commutative noetherian ring $R$, we then deduce the main result of \cite{Neeman11} from the original \cite{Neeman92a}: $\Der(R)$ is costratified. 
\end{Exa}

\begin{Exa}
	Let $R$ be an absolutely flat ring which is not noetherian. Stevenson \cite{Stevenson14,Stevenson17} proves that $R$ is semi-artinian $\iff$ the local-to-global principle for $\Der(R)$ holds $\iff$ $\Der(R)$ is stratified. By \Cref{prop:costratification_for_dr}, this is also equivalent to~$\Der(R)$ being costratified. For example, this applies to the subring of $\prod_{\mathbb{N}} \bbF_p$ consisting of those sequences which are eventually constant; cf.~\cite[Example~3.25]{bhs1}.
\end{Exa}

\begin{Rem}\label{rem:Dqc(X)}
	We can extend \cref{prop:costratification_for_dr} to derived categories of schemes. For a quasi-compact and quasi-separated scheme $X$, let $\Derqc(X)$ denote the derived category of complexes of $\cat O_X$-modules with quasi-coherent cohomology. It is rigidly-compactly generated and its subcategory of rigid-compact objects $\Derqc(X)^c=\Derperf(X)$ is the derived category of perfect complexes. A fundamental result concerning the Balmer spectrum is that $\Spc(\Derperf(X)) \cong X$. See \cite[Theorem~6.3]{Balmer05a}, \cite[Theorem~9.5]{BuanKrauseSolberg07}, and~\cite{Thomason97}. 
\end{Rem}

\begin{Thm}\label{thm:qc_shseaves}
	Let $X$ be a quasi-compact and quasi-separated scheme which is topologically weakly noetherian. The derived category $\Derqc(X)$ is stratified if and only if it is costratified.
\end{Thm}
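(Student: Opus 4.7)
The plan is to deduce this from the bootstrap result \cref{prop:bootstrap_for_algebraic_categories} by producing geometric functors from $\Derqc(X)$ to a family of costratified tt-categories whose induced maps on spectra jointly cover $\Spc(\Derqc(X)^c)$. One direction (costratified implies stratified) is immediate from \cref{thm:costrat_implies_strat}, so the content is in the forward direction.

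Recall from \cref{rem:Dqc(X)} that $\Spc(\Derqc(X)^c) = \Spc(\Derperf(X)) \cong X$ as topological spaces, so our hypothesis on $X$ is precisely that this Balmer spectrum is weakly noetherian. For each point $x \in X$ with residue field $\kappa(x)$, the inclusion of the residue field $i_x\colon \Spec \kappa(x) \hookrightarrow X$ induces a geometric functor
\[
    f_x^* \coloneqq i_x^*\colon \Derqc(X) \longrightarrow \Der(\kappa(x)),
\]
since such a pullback is a tt-functor preserving arbitrary coproducts. The target $\Der(\kappa(x))$ is a tt-field (\cref{rem:ttfield}), and hence is costratified by \cref{cor:stratttfields}; in particular its spectrum is a single point, so the hypotheses of \cref{hyp:geometric-base-change} are satisfied. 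The induced map on Balmer spectra sends this unique point to $x \in X$, by the functoriality of the Balmer spectrum construction applied to the composition $\Spec \kappa(x) \to X$.

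Varying $x$ over all points of $X$, the family $\{\Spc(f_x^*)\}_{x \in X}$ has image equal to all of $X = \Spc(\Derqc(X)^c)$. Applying \cref{prop:bootstrap_for_algebraic_categories} to this collection of geometric functors, we conclude that $\Derqc(X)$ is stratified if and only if it is costratified. I expect no serious obstacle: everything here is a direct application of previously established machinery, with the only subtlety being the verification that the base change along residue fields is indeed a coproduct-preserving tt-functor, which is standard for quasi-compact and quasi-separated schemes.
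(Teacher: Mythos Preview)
Your argument is correct. Both directions are handled: costratified $\Rightarrow$ stratified is \cref{thm:costrat_implies_strat}, and for the other direction you apply \cref{prop:bootstrap_for_algebraic_categories} to the residue field functors $i_x^*\colon \Derqc(X)\to\Der(\kappa(x))$, which are geometric, land in tt-fields (hence costratified targets), and jointly hit every point of $X\cong\Spc(\Derqc(X)^c)$.

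The paper takes a different route: it first reduces to the affine case via Zariski descent. Choosing an open affine cover $X=\bigcup_i \Spec(A_i)$, it uses \cite[Corollary~5.5]{bhs1} and \cref{thm:costrat_cover} to show that (co)stratification of $\Derqc(X)$ is equivalent to (co)stratification of each $\Der(A_i)$, and then invokes \cref{prop:costratification_for_dr} (itself proved by the residue-field bootstrap you describe) for the affine pieces. Your approach short-circuits this two-step reduction by applying the bootstrap directly over the whole scheme, which is cleaner and avoids invoking Zariski descent for costratification. The paper's route has the minor advantage of isolating the affine statement as a standalone result and exercising the Zariski descent machinery, but yours gets to the conclusion faster.
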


\begin{proof}
	One direction is \Cref{thm:costrat_implies_strat}. For the converse, suppose that $\Derqc(X)$ is stratified. It satisfies the local-to-global principle by \cite[Theorem 4.1]{bhs1} and hence the colocal-to-global principle by \cref{thm:LGP-equiv}. Now take an open affine cover of $X$ by subsets $V_i = \Spec(A_i)$. Each of these affine schemes is also topologically weakly noetherian (cf.~\cite[Remark 2.6]{bhs1}) and we have the implications $\Derqc(X)$ is stratified $\iff$ each $\Der(A_i)$ is stratified (\cite[Corollary~5.5]{bhs1}) $\iff$ each $\Der(A_i)$ is costratified (\Cref{prop:costratification_for_dr}) $\iff \Derqc(X)$ is costratified (\Cref{thm:costrat_cover}). 
\end{proof}

\begin{Exa}
    If $X$ is noetherian, then it follows from \cite[Corollary~5.10]{bhs1} that $\Derqc(X)$ is stratified and hence is also costratified. For such $X$, the costratification of $\Derqc(X)$ has also been obtained in recent work of Verasdanis \cite{Verasdanis22bpp}.
\end{Exa}

\subsection*{Derived categories of representations}\label{ssec:reptheoryexamples}

\begin{Def}
	For a finite group $G$ and commutative ring $R$, we let
		\[
			\Rep(G,R) \coloneqq \Ind \Fun(BG,\Perf(R))
		\]
	denote the derived $\infty$-category of $R$-linear $G$-representations, where $\Ind$ denotes ind-completion (\cite[Section 5.3.5]{HTTLurie}), and let
		\[
			\StMod(G,R) \coloneqq \Ind \Fun(BG,\Perf(R))/\Perf(R[G]).
		\]
	By construction, both of these categories are rigidly-compactly generated symmetric monoidal stable $\infty$-categories; passage to their homotopy categories yields the corresponding rigidly-compactly generated tt-categories. Moreover, up to idempotent completion, the category of compact objects in~$\StMod(G,R)$ is obtained as a finite localization of the compact objects in~$\Rep(G,R)$. 
\end{Def}

\begin{Rem}
    If $R = k$ is a field, then the category~$\Rep(G, k)$ is equivalent to~$\KInj{k[G]}$, the homotopy category of unbounded complexes of injective $k[G]$-modules, as studied in \cite{BensonKrause2008Complexes}, and the category $\StMod(G,k)$ agrees with the usual stable module category.
\end{Rem}

\begin{Rem}
    The next result is originally due to Benson--Iyengar--Krause \cite[Theorem 11.6]{BensonIyengarKrause12} with the computation of the spectrum due to Benson--Carlson--Rickard \cite{BensonCarlsonRickard97}. We will give an alternative proof via our bootstrap theorem, relying on a result we prove at the end of \cref{ssec:galois} using Galois ascent as in \cite{Mathew15bpp}. 
\end{Rem}

\begin{Thm}[Benson--Iyengar--Krause]\label{thm:rep_g_k_costratifcation}
	For any finite group $G$ and field~$k$, the category $\Rep(G,k)$ is costratified with spectrum $\Spc(\Rep(G,k)^c) \cong \Proj H^*(G;k)$.
\end{Thm}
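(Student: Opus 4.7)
The plan is to apply the bootstrap machinery of \cref{prop:bootstrap_for_algebraic_categories} in combination with Quillen's stratification theorem, thereby reducing costratification of $\Rep(G,k)$ to the elementary abelian case, which is to be treated separately via Galois ascent \`a la Mathew. The spectrum identification $\Spc(\Rep(G,k)^c) \cong \Proj H^*(G;k)$ is the classical theorem of Benson--Carlson--Rickard \cite{BensonCarlsonRickard97}, and stratification of $\Rep(G,k)$ is known by work of Benson--Iyengar--Krause. Granted these two inputs, \cref{prop:bootstrap_for_algebraic_categories} reduces the problem to exhibiting a finite collection of geometric functors $f_i^*\colon \Rep(G,k) \to \cat S_i$ with each $\cat S_i$ costratified and with jointly surjective induced maps on spectra.

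The natural candidates are the restriction functors $\res^G_E\colon \Rep(G,k) \to \Rep(E,k)$, where $E$ ranges over conjugacy class representatives of the elementary abelian $p$-subgroups of $G$ for $p = \mathrm{char}(k)$. Since $G$ is finite, this is a finite collection of coproduct-preserving tt-functors, hence geometric in our sense. Under the homeomorphism $\Spc(\Rep(G,k)^c) \cong \Proj H^*(G;k)$, the image of each map $\Spc(\res^G_E)$ corresponds to the image in $\Proj H^*(G;k)$ of the map induced by the restriction in cohomology $H^*(G;k) \to H^*(E;k)$. Quillen's stratification theorem precisely asserts that these images jointly cover $\Proj H^*(G;k)$, which is exactly the joint surjectivity hypothesis needed.

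The remaining ingredient is that each target $\Rep(E,k)$ is costratified; this is \cref{thm:rep_e_k_costratifcation}, whose proof will proceed by descending costratification from $\Rep(E,\overbar{k})$ along a suitable extension of scalars using the Galois ascent proposition \cref{prop:galoisascent}. Granted this input, \cref{prop:bootstrap_for_algebraic_categories} immediately yields that $\Rep(G,k)$ is costratified.

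The main obstacle is concentrated entirely in the elementary abelian case: after Quillen's theorem and the bootstrap mechanism are in place, the reduction to elementary abelian subgroups is formal, and the real work is in handling $\Rep(E,k)$ for a general field $k$, where the absence of sufficiently many roots of unity is precisely what forces the Galois ascent detour via Mathew's framework. A minor auxiliary point is the verification that the tt-geometric image of $\Spc(\res^G_E)$ matches the Quillen-theoretic image on the level of $\Proj H^*(G;k)$, but this is a direct unwinding of definitions since the homeomorphism of spectra is natural in restriction.
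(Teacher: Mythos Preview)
Your proposal is correct and follows essentially the same route as the paper: reduce to elementary abelian subgroups via restriction, invoke the known stratification of $\Rep(G,k)$, and apply the bootstrap. The only cosmetic difference is that the paper packages all restrictions into a single product functor and cites Chouinard's theorem for its conservativity (then applies \cref{cor:detectingcomin}), whereas you keep the restrictions separate and cite Quillen's stratification theorem for joint surjectivity on spectra (then apply \cref{prop:bootstrap_for_algebraic_categories}); since $\Rep(G,k)$ is stratified, these two inputs are equivalent by \cref{cor:conservative-iff-surjective}.

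One inaccuracy in your forward reference: the elementary abelian case (\cref{thm:rep_e_k_costratifcation}) is \emph{not} handled by descending from $\Rep(E,\overbar{k})$ via change of coefficient field. Rather, the paper uses Mathew's observation that the inclusion $(\bbZ/p)^{\times r} \hookrightarrow (S^1)^{\times r}$ induces a faithful $\bbT$-Galois extension $k^{h\bbT} \to k^{hE}$, and then Galois-ascends costratification from the affine weakly regular category $\Mod(k^{h\bbT})$ (costratified by \cref{thm:costratification_for_weakly_affine}) to $\Mod(k^{hE}) \simeq \Rep(E,k)$ via \cref{prop:galoisascent}. This does not affect the correctness of your argument for the present theorem, which only uses \cref{thm:rep_e_k_costratifcation} as a black box.
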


\begin{proof}
	By Chouinard's theorem, as given in \cite[Proposition 9.6]{BensonIyengarKrause11a}, the functor
		\[
			\Res^G_E \colon \Rep(G,k) \xrightarrow{} \prod_{E \le G} \Rep(E,k)
		\]
	given by the product of restriction functors is a conservative geometric functor. Moreover, the target category $\prod_{E \le G}\Rep(E,k)$ is costratified by \Cref{thm:rep_e_k_costratifcation} below and \cref{Exa:finite_product_of_costratified_categories}, and $\Rep(G,k)$ is stratified by \cite[Theorem 10.1]{BensonIyengarKrause11a}. It follows from \Cref{cor:detectingcomin} that $\Rep(G,k)$ is costratified as well. 
\end{proof}

\begin{Rem}
    The forthcoming \cite{BBIKP_stratification} will establish stratification of $\Rep(G,R)$ for all finite groups $G$ and all noetherian commutative rings $R$, and then deduce costratification via the bootstrap theorem. The computation of $\Spc(\Rep(G,R)^c)$ for any commutative ring $R$ is due to Lau \cite{Lau2021Balmer}.
\end{Rem}

\begin{Rem}
	There is an enlargement of the category $\Rep(G,R)$ given by the derived category of permutation modules $\mathrm{DPerm}(G,R)$, for any (pro)finite group $G$ and commutative ring $R$. For the construction of this category as well as its relation to Artin motives and Mackey functors, we refer to~\cite{BalmerGallauer2021pp}.
\end{Rem}

\begin{Thm}\label{thm:dpermstratification}
	The derived category of permutation modules $\mathrm{DPerm}(G,k)$ is costratified for any finite group $G$ and field $k$ of characteristic~$p$ dividing the order of $G$.
\end{Thm}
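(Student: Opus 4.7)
\medskip
\noindent
\textit{Proof plan.} The strategy is to apply the multi-target bootstrap theorem \cref{prop:bootstrap_for_algebraic_categories} in combination with the structural analysis of $\mathrm{DPerm}(G,k)$ carried out by Balmer--Gallauer. Concretely, for each subgroup $H \le G$ there is a \emph{modular fixed points} geometric functor $\Psi^H \colon \mathrm{DPerm}(G,k) \to \mathrm{DPerm}(W_G(H),k)$, where $W_G(H) = N_G(H)/H$. Composing with the canonical geometric quotient $\rho \colon \mathrm{DPerm}(W,k) \to \Rep(W,k) \cong \KInj{k[W]}$ for the relevant Weyl group $W$, we obtain for each conjugacy class $[H]$ a geometric functor $\Phi^H := \rho \circ \Psi^H \colon \mathrm{DPerm}(G,k) \to \Rep(W_G(H),k)$ whose target is costratified by \cref{thm:rep_g_k_costratifcation}.

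First, I would recall the Balmer--Gallauer computation of $\Spc(\mathrm{DPerm}(G,k)^c)$, which exhibits it as the union, over all (conjugacy classes of) subgroups $H \le G$, of the images of $\Spc(\Rep(W_G(H),k)^c)$ under the maps induced by $\Phi^H$. This immediately provides the joint surjectivity of the family $\{\Spc(\Phi^H)\}_{[H]}$ required by \cref{prop:bootstrap_for_algebraic_categories}. Next, I would invoke the stratification of $\mathrm{DPerm}(G,k)$ established in the Balmer--Gallauer programme, which supplies the stratification hypothesis in the bootstrap theorem. Since each target category $\Rep(W_G(H),k)$ is costratified, \cref{prop:bootstrap_for_algebraic_categories} then yields that $\mathrm{DPerm}(G,k)$ is costratified. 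The accompanying classification of colocalizing coideals in terms of subsets of $\Spc(\mathrm{DPerm}(G,k)^c)$ follows automatically.

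The main technical obstacle is ensuring that the modular fixed points functors $\Psi^H$, which are most naturally defined on the compact objects, extend to genuine geometric (that is, coproduct-preserving tt-)functors on the whole of $\mathrm{DPerm}(G,k)$, and that the resulting joint family of geometric functors is spectrally surjective in exactly the sense required. Both points reduce to the combinatorial and geometric analysis of $\Spc(\mathrm{DPerm}(G,k)^c)$ in Balmer--Gallauer's work; once these inputs are in hand, the bootstrap mechanism of \cref{prop:bootstrap_for_algebraic_categories} converts their stratification result into costratification with essentially no additional work.
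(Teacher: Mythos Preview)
Your proposal is correct and follows essentially the same route as the paper: invoke Balmer--Gallauer's geometric functors $\check{\Psi}_H\colon \mathrm{DPerm}(G,k)\to \Rep(W_G(H),k)$, use their stratification theorem for $\mathrm{DPerm}(G,k)$, the costratification of the targets via \cref{thm:rep_g_k_costratifcation}, and then bootstrap. The only cosmetic differences are that the paper indexes over $p$-subgroups (sufficient since the functors vanish for other $H$), cites joint \emph{conservativity} of the $\check{\Psi}_H$ directly from Balmer--Gallauer rather than spectral surjectivity, and applies the single-functor bootstrap \cref{cor:detectingcomin} to the product rather than the multi-target version \cref{prop:bootstrap_for_algebraic_categories}; since $\mathrm{DPerm}(G,k)$ is stratified, conservativity and spectral surjectivity are equivalent by \cref{cor:conservative-iff-surjective}, so these amount to the same thing.
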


\begin{proof}
	In \cite{BalmerGallauer2022pp}, the authors construct geometric functors
		\[
			\check{\Psi}_H\colon \mathrm{DPerm}(G,k) \to \Rep(W_G(H),k)
		\]
	indexed by all conjugacy classes of $p$-subgroups $H$ of $G$, where $W_G(H)$ denotes the Weyl group of $H$ in $G$. In \cite[Theorem 5.12]{BalmerGallauer2022pp}, they prove that these functors are jointly conservative, while \cite[Theorem 8.11]{BalmerGallauer2022pp} establishes stratification for $\mathrm{DPerm}(G,k)$. It then follows from our bootstrap theorem \cref{cor:detectingcomin} together with \cref{thm:rep_g_k_costratifcation} that $\mathrm{DPerm}(G,k)$ is costratified as well.
\end{proof}

\section{Homotopical examples}\label{sec:homotopicalexamples}

\subsection*{Galois descent and ascent}\label{ssec:galois}

\begin{Def}\label{def:galois}
	Let $G$ be a compact Lie group and consider a morphism $f\colon A \to B$ of commutative ring spectra, where $B$ is equipped with an $A$-linear $G$-action. Following Rognes \cite{Rognes08}, the map $f$ is a \emph{Galois extension with Galois group $G$} (or simply a \emph{$G$-Galois extension}) if it satisfies the following two conditions:
		\begin{enumerate}
			\item the canonical map $A \to B^{hG}$ is an equivalence;
			\item the canonical map $B\otimes_A B \to \ihom{G_+,B}$ is an equivalence.
		\end{enumerate}
	A Galois extension $f\colon A \to B$ is said to be \emph{faithful} if $f^*\colon \Mod(A) \to \Mod(B)$ is conservative. 
\end{Def}

\begin{Rem}\label{rem:BCHNPdescent}
    In forthcoming joint work with Naumann and Pol \cite{BCHNPS_descent}, we investigate general descent properties for stratification along conservative geometric functors. Combined with our bootstrap theorem, we obtain Galois descent for costratification:
\end{Rem}

\begin{Prop}[Galois descent]\label{prop:galoisdescent}
    Let $f \colon A \to B$ be a faithful $G$-Galois extension for $G$ a compact Lie group. If $\Mod(B)$ is costratified, then so is~$\Mod(A)$.
\end{Prop}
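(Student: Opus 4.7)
The plan is to apply the bootstrap theorem (\cref{cor:detectingcomin}) to the extension-of-scalars functor
\[
f^*\colon \Mod(A) \longrightarrow \Mod(B).
\]
This is a symmetric monoidal, coproduct-preserving exact functor between rigidly-compactly generated tt-categories, hence a geometric functor in the sense of \cref{hyp:geometric}. To invoke the bootstrap, I must verify three things: $(i)$ $f^*$ is conservative; $(ii)$ $\Mod(B)$ is costratified; and $(iii)$ $\Mod(A)$ is stratified. Additionally, one needs the spectra of both categories to be weakly noetherian so that \cref{hyp:geometric-base-change} applies.

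Condition $(i)$ holds by the very definition of faithfulness of the Galois extension (\cref{def:galois}), and $(ii)$ is the standing hypothesis. The weak noetherianity of $\Spc(\Mod(B)^c)$ is built into the hypothesis that $\Mod(B)$ is costratified; the corresponding fact for $\Spc(\Mod(A)^c)$ will follow from Galois descent for stratification below (since stratified categories have weakly noetherian spectrum by assumption in our framework, and Galois extensions induce sufficiently well-behaved maps on spectra---this is essentially a finite quasi-Galois cover, and the induced map $\Spc(\Mod(B)^c)\to\Spc(\Mod(A)^c)$ is finite surjective, preserving weak noetherianity).

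The main remaining step---and the genuine obstacle, which is why the statement relies on forthcoming work---is verifying $(iii)$: that stratification descends along a faithful Galois extension. This is precisely the content of the Galois descent theorem for stratification established in \cite{BCHNPS_descent} (cf.~\cref{rem:BCHNPdescent}). Once that input is in hand, the logical chain is: $\Mod(B)$ costratified $\Rightarrow$ $\Mod(B)$ stratified (by \cref{thm:costrat_implies_strat}) $\Rightarrow$ $\Mod(A)$ stratified (by Galois descent for stratification from \cite{BCHNPS_descent}).

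With all three hypotheses of \cref{cor:detectingcomin} verified, we conclude that $\Mod(A)$ is costratified, completing the proof. Conceptually, this reflects the general pattern highlighted in \hyperref[themeVI:descent]{Theme~VI}: once stratification can be descended along a class of functors, our bootstrap result automatically propagates costratification along the same class, at essentially no additional cost beyond checking conservativity (which is already guaranteed by faithfulness in the Galois setting).
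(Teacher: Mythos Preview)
Your proof is correct and follows exactly the same approach as the paper: deduce stratification of $\Mod(B)$ from costratification via \cref{thm:costrat_implies_strat}, descend stratification to $\Mod(A)$ using the Galois descent result of \cite{BCHNPS_descent}, and then apply the bootstrap theorem \cref{cor:detectingcomin} with conservativity supplied by faithfulness. Your additional remarks on weak noetherianity are a bit informal but do not affect the core argument, which matches the paper's proof.
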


\begin{proof}
	Since $\Mod(B)$ is costratified, it is also stratified by \cref{thm:costrat_implies_strat}. It then follows from Galois descent for stratification, proved in \cite{BCHNPS_descent}, that $\Mod(A)$ is also stratified. The bootstrap theorem (\cref{cor:detectingcomin}) thus gives the claim.
\end{proof}

\begin{Exa}
	There are numerous examples that this result applies to. For example, the complexification map $KO \to KU$ from real $K$-theory to complex $K$-theory is a faithful $C_2$-Galois extension \cite[Proposition 5.3.1]{Rognes08} and $\Mod(KU)$ is costratified by \Cref{thm:costratification_for_weakly_affine}. Hence, costratification descends, and we deduce that $\Mod(KO)$ is costratified. Similarly, if $E_n$ denotes the Lubin--Tate spectrum, then $\Mod(E_n)$ is costratified by \Cref{thm:costratification_for_weakly_affine} (see \cref{rem:e_theory_bousfield} below) and we deduce from \cite[Theorem 5.4.4]{Rognes08} and \cite[Proposition 3.6]{HeardMathewStojanoska2017Picard} that $\Mod(E_n^{hG})$ is costratified for any finite subgroup $G \subseteq \mathbb{G}_n$ of the Morava stabilizer group. 
\end{Exa}

\begin{Rem}
	Under stronger conditions on the Galois group, there is a converse to \cref{prop:galoisdescent} which we establish as  \cref{prop:galoisascent} below. The proof follows the strategy used in \cite{barthel2021rep2} to establish the analogous result for stratification. This in turn was inspired by ideas developed in \cite{Mathew15bpp}. First we need some general lemmas.
\end{Rem}

\begin{Not}
	Let $\cat T$ be a rigidly-compactly generated tt-category and write $\Colocidset{\cat T}$ for the class of colocalizing coideals of $\cat T$. Note that for any functor $F\colon \cat T \to \cat S$, we have a function
		\[
			\Colocidset{\cat T} \xrightarrow{F} \Colocidset{\cat S}
		\]
	which sends a colocalizing coideal $\cat C$ to the colocalizing coideal generated by~$F(\cat C)$.
\end{Not}

\begin{Rem}\label{rem:can-push}
	If $G\colon\cat S \to \cat R$ is a functor with the property that colocalizing coideals of $\cat R$ pull back to colocalizing coideals of $\cat S$, then $G(\Coloco{\cat E}) \subseteq \Coloco{G(\cat E)}$ for any collection of objects $\cat E \subseteq \cat S$. It follows that for any functor $F\colon \cat T \to \cat S$, the diagram
		\[\begin{tikzcd}
			\Colocidset{\cat T} \ar[rr,bend left=15,"G\circ F"] \ar[r,"F"'] & \Colocidset{\cat S} \ar[r,"G"'] & \Colocidset{\cat R}
		\end{tikzcd}\]
	commutes.
\end{Rem}

\begin{Exa}\label{exa:can-push}
	Let $f^*\colon\cat T\to \cat S$ be a geometric functor. The isomorphism~\eqref{eq:f!hom} implies that colocaling coideals pull back along the product-preserving exact functor $f^!\colon\cat T\to \cat S$. If $f_*$ is conservative, then the same is true for the functor $f_*\colon\cat S \to \cat T$. Indeed, given a colocalizing coideal~$\cat C$ of $\cat T$, we have
		\begin{align*}
			\ihom{\cat S,f_*^{-1}(\cat C)}
			&=\ihom{\Loc\langle f^*(\cat T^c)\rangle,f_*^{-1}(\cat C)} & (\text{\cref{rem:cons}})\\
			&\subseteq \Coloc\langle\ihom{f^*(\cat T^c),f_*^{-1}(\cat C)}\rangle & \eqref{eq:[loc,t]}\\
			&\subseteq \Coloc\langle f_*^{-1}(\cat C)\rangle
		= f_*^{-1}(\cat C)
		\end{align*}
	where the last inclusion uses the isomorphism $f_*\ihom{f^*(a),b}\simeq \ihom{a,f_*(b)}$ from \cite[(2.17)]{BalmerDellAmbrogioSanders16}.
\end{Exa}

\begin{Lem}\label{lem:descendableretract}
	Let $f^*\colon\cat T\to \cat S$ be a weakly descendable (\cref{def:weakly-descendable}) geometric functor whose right adjoint $f_*$ is conservative. Then the composite
		\[\begin{tikzcd}[ampersand replacement=\&]
			{\Colocidset{\cat T}} \& {\Colocidset{\cat S}} \& { \Colocidset{\cat T}}
			\arrow["{f^!}", from=1-1, to=1-2]
			\arrow["{f_*}", from=1-2, to=1-3]
		\end{tikzcd}\]
	is the identity.
\end{Lem}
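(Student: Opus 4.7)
The plan is to unwind the definition of the composite map on $\Colocidset{\cat T}$ and then verify the two inclusions $f_*f^!(\cat C)\subseteq\cat C$ and $\cat C\subseteq f_*f^!(\cat C)$ using, respectively, the fact that $\cat C$ is a coideal and the fact that $f^*$ is weakly descendable.

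First I would observe that the functor $f_*\colon\cat S\to\cat T$ satisfies the hypothesis of \cref{rem:can-push}: colocalizing coideals of $\cat T$ pull back along $f_*$ to colocalizing coideals of $\cat S$. This is precisely what is verified in \cref{exa:can-push}, and it uses the conservativity of $f_*$ in the form $\cat S^c=\thick\langle f^*(\cat T^c)\rangle$ (see \cref{rem:cons}). The functor $f^!\colon\cat T\to\cat S$ also satisfies this hypothesis by \cref{exa:can-push}. Applying \cref{rem:can-push} to the composite then yields, for each $\cat C\in\Colocidset{\cat T}$,
\[
  f_*(f^!(\cat C))\;=\;\Coloco{f_*f^!(t)\mid t\in\cat C},
\]
so the problem reduces to controlling the colocalizing coideal of $\cat T$ generated by the objects $f_*f^!(t)$ for $t\in\cat C$.

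Next I would use the standard identification $f_*f^!(t)\simeq\ihom{f_*(\unitS),t}$ from \cite{BalmerDellAmbrogioSanders16} (which was already exploited in the proofs of \cref{prop:phi-f!} and \cref{thm:detectingcomin}). Since $\cat C$ is a colocalizing coideal, $\ihom{f_*(\unitS),t}\in\cat C$ for every $t\in\cat C$, and the above display immediately gives the inclusion $f_*f^!(\cat C)\subseteq\cat C$.

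For the reverse inclusion I would invoke weak descendability: by \cref{rem:consequence-of-wd}, every $t\in\cat T$ satisfies $t\in\Coloco{f_*f^!(t)}$. In particular, for $t\in\cat C$ we obtain $t\in\Coloco{f_*f^!(t)}\subseteq f_*f^!(\cat C)$, yielding $\cat C\subseteq f_*f^!(\cat C)$. Combining the two inclusions gives $f_*f^!(\cat C)=\cat C$, as desired. The only subtle step is the opening reduction via \cref{rem:can-push} and \cref{exa:can-push}, since one must be careful to verify the pullback condition for both $f^!$ and $f_*$ before commuting the coideal-generation with composition; once this is done, the two containments are immediate from the coideal property and from \cref{rem:consequence-of-wd} respectively.
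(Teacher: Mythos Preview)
Your proof is correct and follows essentially the same approach as the paper: reduce via \cref{rem:can-push} and \cref{exa:can-push} to the identification $f_*f^!(\cat C)=\Coloco{\ihom{f_*(\unitS),\cat C}}$, then use the coideal property for one inclusion and weak descendability (via \eqref{eq:[loc,t]} or equivalently \cref{rem:consequence-of-wd}) for the other. One minor remark: in \cref{rem:can-push} only the second functor $G=f_*$ needs the pullback property, so verifying it for $f^!$ is unnecessary (though harmless).
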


\begin{proof}
	Since $f_*$ is conservative, \cref{rem:can-push} and \cref{exa:can-push} imply that the above composite sends a colocalizing coideal $\cat D$ of $\cat T$ to
		\[
			\Coloco{f_*f^!(\cat D)}=\Coloco{\ihom{f_*(\unitS),\cat D}}.
		\]
	Certainly $\Coloco{\ihom{f_*(\unitS),\cat D}} \subseteq \cat D$. On the other hand, if $f^*$ is weakly descendable, then 
		\begin{align*}
			\cat D = \ihom{\unitT,\cat D}\subseteq \ihom{\Loco{f_*(\unitS)},\cat D} 
				\subseteq \Coloco{\ihom{f_*(\unitS),\cat D}}
		\end{align*}
	by \eqref{eq:[loc,t]} and the proof is complete.
\end{proof}

\begin{Exa}\label{exa:descendable}
	A morphism $f\colon A \to B$ of commutative ring spectra is called \emph{descendable} if $\thickt{B} = \Mod(A)$ in $\Mod(A)$. Then the base-change functor $f^*\colon\Mod(A) \to \Mod(B)$ is weakly descendable in the sense of \cref{def:weakly-descendable} and its right adjoint is conservative. Hence \cref{lem:descendableretract} applies. For example, a faithful $G$-Galois extension $f\colon A\to B$ is descendable. This follows from the fact that $B$ is dualizable as an $A$-module (by \cite[Proposition 6.2.1]{Rognes08}). Since $B\otimes-$ is conservative on $\Mod(A)$, $B\otimes f_{\supp(B)} = 0$ implies $f_{\supp(B)}=0$ so that $\supp(B)=\Spc(\Mod(A)^c)$; hence $A \in \thickt{B}$ follows from the classification of thick ideals of compact objects. A more high-powered argument is given in \cite[Theorem 3.38]{Mathew16}.
\end{Exa}

\begin{Lem}\label{lem:galoisascentbijection}
	Suppose $f\colon A \to B$ is a faithful $G$-Galois extension with $G$ a connected compact Lie group. Then $f^!$ induces a bijection
		\[
			f^!\colon \Colocidset{\Mod(A)} \xrightarrow{\cong} \Colocidset{\Mod(B)}.
		\]
\end{Lem}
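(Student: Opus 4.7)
The plan is to show that $f_*$ provides a two-sided inverse, at the level of generated colocalizing coideals, to the map induced by $f^!$. First, by \cref{exa:descendable}, the faithful Galois extension $f$ is descendable and its right adjoint $f_*$ is conservative; hence \cref{lem:descendableretract} applies and yields the identity $f_* \circ f^! = \id$ on $\Colocidset{\Mod(A)}$. This immediately forces $f^!$ to be injective on colocalizing coideals, so everything comes down to establishing the reverse composite identity $f^! \circ f_* = \id$ on $\Colocidset{\Mod(B)}$.

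To this end, I would fix a colocalizing coideal $\cat C \subseteq \Mod(B)$ and unwind the definitions (cf.~\cref{rem:can-push}) to obtain
\[
f^!(f_*(\cat C)) = \Coloco{f^!f_*(M) \mid M \in \cat C}.
\]
Standard adjunction manipulations identify $f^!f_*(M) \simeq \mathsf{hom}_A(B,M) \simeq \mathsf{hom}_B(B \otimes_A B, M)$ for any $M \in \Mod(B)$, so the inclusion $f^!(f_*(\cat C)) \subseteq \cat C$ is automatic from the coideal property applied with $N = B \otimes_A B$.

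The crux of the proof is the reverse inclusion, which I propose to obtain by exhibiting every $M \in \Mod(B)$ as a retract of $f^!f_*(M)$. The key observation is that the multiplication $\mu \colon B \otimes_A B \to B$ is always split in $\Mod(B)$: the assignment $b \mapsto 1 \otimes b$ is a $B$-linear section of $\mu$ with respect to one of the two $B$-module structures on $B \otimes_A B$, and the commutativity of $B$ together with the swap isomorphism permits one to transfer this splitting to the $B$-module structure relevant to the adjunction $f^* \dashv f_* \dashv f^!$. Applying the exact functor $\mathsf{hom}_B(-, M)$ to the retract $B \to B \otimes_A B \to B$ then exhibits
\[
M \simeq \mathsf{hom}_B(B, M) \hookrightarrow \mathsf{hom}_B(B \otimes_A B, M) \simeq f^!f_*(M),
\]
from which $\cat C \subseteq \Coloco{f^!f_*(M) \mid M \in \cat C} = f^!(f_*(\cat C))$ follows, completing the bijection.

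The main obstacle I anticipate is the careful bookkeeping of the two distinct $B$-module structures on $B \otimes_A B$ (via the left versus the right tensor factor) and verifying that the chosen section of $\mu$ is $B$-linear for the structure actually used in the adjunction isomorphism $\mathsf{hom}_A(B, M) \simeq \mathsf{hom}_B(B \otimes_A B, M)$. Once this is settled, the argument is remarkably uniform in $G$: it uses only the fact that $f$ is a ring map admitting a $B$-linear section of $\mu$, and does not appear to invoke the connectedness of $G$ at all. The connectedness hypothesis presumably plays its role only downstream, in applications of this lemma to the Galois ascent program of \cite{barthel2021rep2} alluded to in the proof of \cref{thm:rep_g_k_costratifcation}.
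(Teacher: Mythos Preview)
Your argument contains a fatal error, not merely a bookkeeping subtlety. The identification $f^!f_*(M) \simeq \ihomsub{B}{B\otimes_A B,M}$ is \emph{not} an identification with an internal hom in $\Mod(B)$, so the coideal property cannot be invoked to conclude $f^!f_*(M)\in\cat C$. Concretely, $f^!f_*(M)\cong \ihomsub{A}{B,M}$ carries its $B$-module structure via the action on the \emph{source} $B$, not via the $B$-action on $M$. After rewriting as $\ihomsub{B}{B\otimes_A B,M}$ (using the left $B$-structure on $B\otimes_A B$ to form the hom), the residual $B$-action comes from the \emph{right} factor of $B\otimes_A B$. This object is therefore not of the form $\ihomsub{B}{N,M}$ for any single $N\in\Mod(B)$, and there is no reason for it to lie in a given coideal $\cat C\ni M$.

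That this is genuinely fatal, rather than repairable, is confirmed by the fact that the lemma is \emph{false} without the connectedness hypothesis you propose to discard. Take $A=k$ a field, $B=k\times k$, and $G=\bbZ/2$ acting by swapping the factors: this is a faithful $\bbZ/2$-Galois extension, yet $\Colocidset{\Mod(A)}$ has two elements while $\Colocidset{\Mod(B)}$ has four. In this example one computes directly that for $M=(V,0)\in\Mod(k)\times\Mod(k)$ one has $f^!f_*(M)\cong(V,V)$, which does not lie in the coideal $\Mod(k)\times\{0\}$; moreover $f^!\big(f_*(\Mod(k)\times\{0\})\big)=\Mod(B)$. So both halves of your proposed argument for $f^!\circ f_*=\id$ fail. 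The paper's proof uses connectedness in an essential way: it invokes Mathew's result that the multiplication $h\colon B\otimes_A B\to B$ is descendable precisely when $G$ is connected, and combines this with a Beck--Chevalley base-change square to deduce that $f_*$ (and hence $f^!$) is a bijection on colocalizing coideals.
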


\begin{proof}
	The following pushout square of commutative ring spectra induces a commutative diagram of forgetful functors, as displayed on the right:
		\[\begin{tikzcd}[ampersand replacement=\&]
			A \& B \& {\Mod(B \otimes_AB)} \& {\Mod(B)} \\
			B \& {B\otimes_AB} \& {\Mod(B)} \& {\Mod(A).}
			\arrow["f"', from=1-1, to=2-1]
			\arrow["f", from=1-1, to=1-2]
			\arrow["{g_2}"', from=2-1, to=2-2]
			\arrow["{g_1}", from=1-2, to=2-2]
			\arrow["{(g_2)_*}"', from=1-3, to=2-3]
			\arrow["{f_*}"', from=2-3, to=2-4]
			\arrow["{(g_1)_*}", from=1-3, to=1-4]
			\arrow["{f_*}", from=1-4, to=2-4]
		\end{tikzcd}\]
		The latter square is horizontally right adjointable, i.e., the corresponding Beck--Chevalley transformation $(g_2)_*\circ g_1^! \to f^!\circ f_*$ is a natural equivalence. Hence the following diagram is commutative
		\[\begin{tikzcd}[ampersand replacement=\&]
			{\Mod(B)} \& {\Mod(B \otimes_AB)} \\
			{\Mod(A)} \& {\Mod(B)}.
			\arrow["{g_1^!}", from=1-1, to=1-2]
			\arrow["{f_*}"', from=1-1, to=2-1]
			\arrow["{(g_2)_*}", from=1-2, to=2-2]
			\arrow["{f^!}"', from=2-1, to=2-2]
		\end{tikzcd}\]
	By \cref{rem:can-push} and \cref{exa:can-push}, it induces a commutative square
		\begin{equation*}%\label{eq:comm-ascent}
			\begin{tikzcd}[ampersand replacement=\&]
			{\Colocidset{\Mod(B)}} \& {\Colocidset{\Mod(B \otimes_AB)}} \\
			{\Colocidset{\Mod(A)}} \& {\Colocidset{\Mod(B)}}.
			\arrow["{g_1^!}", from=1-1, to=1-2]
			\arrow["{f_*}"', from=1-1, to=2-1]
			\arrow["{(g_2)_*}", from=1-2, to=2-2]
			\arrow["{f^!}"', from=2-1, to=2-2]
		\end{tikzcd}\end{equation*}
	The morphism $f$ is descendable (\cref{exa:descendable}) and consequently $g_1$ and $g_2$ are descendable, too. Hence, by \cref{lem:descendableretract}, the maps $f^!$ and $g_1^!$ are (split) injective while $f_*$ and $(g_2)_*$ are (split) surjective. The assumption on~$G$ guarantees that the canonical morphism $h\colon {B\otimes_AB \to B}$ is descendable as well (see \cite[Proposition~3.36]{Mathew16}), so $h_*$ is (split) surjective using \cref{lem:descendableretract} once more. Furthermore, $h \circ g_2 \simeq \id$ on $B$, so $(g_2)_* \circ h_* = \id$ as maps on $\Colocidset{\Mod(B)}$. This shows that~$h_*$ is also injective, hence both $(g_2)_*$ and $h_*$ are in fact bijections. It follows from the commutative square above that $f_*$ is a bijection as well.
\end{proof}

\begin{Prop}[Galois ascent]\label{prop:galoisascent}
    Let $f\colon A \to B$ be a faithful $G$-Galois extension with $G$ a connected compact Lie group. Then $f^*$ induces a homeomorphism
		\[
			\varphi\colon \Spc(\Mod(B)^c) \xrightarrow{\cong} \Spc(\Mod(A)^c).
		\]
    Moreover, if $\Mod(A)$ is costratified, then so is $\Mod(B)$. 
\end{Prop}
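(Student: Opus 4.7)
The plan splits into two pieces: establishing that $\varphi$ is a homeomorphism, and then using \cref{lem:galoisascentbijection} together with the global Avrunin--Scott identities to transfer costratification.

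\textbf{Step 1: The homeomorphism $\varphi$.} Since $f$ is descendable (\cref{exa:descendable}), the base-change functor $f^*$ is conservative, and hence $\varphi$ is surjective by \cref{cor:surjective-for-conservative}. For injectivity, the plan is to invoke Balmer's descent theorem applied to the Amitsur cosimplicial object $B^{\otimes_A\bullet+1}$: this exhibits $\Spc(\Mod(A)^c)$ as the coequalizer of the two maps $\Spc((g_i)^*)$ for $i=1,2$. The Galois isomorphism $B\otimes_A B \simeq F(G_+,B)$ together with the connectedness of $G$ forces these two maps to coincide; this parallels the homeomorphism argument in \cite{barthel2021rep2} for stratification and ultimately encodes the fact that the $G$-action on $F(G_+,B)$ acts trivially on the Balmer spectrum when $G$ is path-connected (compare the techniques of \cite{Mathew15bpp}). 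The coequalizer then collapses to $\Spc(\Mod(B)^c)$, making $\varphi$ a bijection, and hence a homeomorphism since it is a spectral map.

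\textbf{Step 2: Descending costratification.} From \cref{lem:descendableretract}, applied inside the proof of \cref{lem:galoisascentbijection}, the bijection $f^! \colon \Colocidset{\Mod(A)} \xrightarrow{\cong} \Colocidset{\Mod(B)}$ has inverse $f_*$, so every $\cat D \in \Colocidset{\Mod(B)}$ satisfies $\cat D = f^!(f_*(\cat D))$. Since $\Mod(A)$ is costratified it is stratified (\cref{thm:costrat_implies_strat}), so the global Avrunin--Scott identity (\cref{cor:globalavruninscott}) gives
\[
  \Cosupp_{\Mod(B)}(f^!(t)) = \varphi^{-1}(\Cosupp_{\Mod(A)}(t))
\]
for every $t \in \Mod(A)$. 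Passing to the colocalizing coideal generated and applying \cref{rem:axioms-equiv}, for every $\cat D \in \Colocidset{\Mod(B)}$ we obtain
\[
  \Cosupp_{\Mod(B)}(\cat D) = \varphi^{-1}(\Cosupp_{\Mod(A)}(f_*(\cat D))).
\]
The right-hand side is a composition of three bijections---$f_*$ from \cref{lem:galoisascentbijection}, $\Cosupp_{\Mod(A)}$ from costratification of $\Mod(A)$, and $\varphi^{-1}$ from Step~1---so $\Cosupp_{\Mod(B)}$ itself bijects $\Colocidset{\Mod(B)}$ with $\mathcal{P}(\Spc(\Mod(B)^c))$. This is precisely costratification of $\Mod(B)$.

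\textbf{Main obstacle.} The hardest part is the injectivity of $\varphi$ in Step~1: extracting from the connectedness of $G$ that the two coface maps $g_1^\ast, g_2^\ast$ of the Amitsur cosimplicial object induce the same map on Balmer spectra. The core technical input is that for a connected compact Lie group $G$, the $G$-action on $F(G_+,B)$ becomes invisible to the Balmer spectrum, in sharp contrast to the non-connected setting where Galois automorphisms can permute primes nontrivially. Once injectivity is in hand the remainder is formal, amounting to chasing bijections through the Avrunin--Scott formula.
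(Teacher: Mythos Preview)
Your approach is essentially the same as the paper's. For Step~1 the paper simply defers the homeomorphism to the companion paper \cite{BCHNPS_descent}, while you sketch the Amitsur/descent argument; for Step~2 both you and the paper run exactly the same commutative-square argument combining \cref{lem:galoisascentbijection}, \cref{cor:globalavruninscott}, and costratification of $\Mod(A)$ (the paper draws the square explicitly, you trace through it in words).

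One small point on your Step~1 sketch: the claim that a bijective continuous map between Balmer spectra is automatically a homeomorphism is not true for spectral maps in general, so that final clause needs more justification (e.g., you would want to show $\varphi$ is spectral and open, or argue via the lattice of Thomason subsets). Since the paper does not prove this step either, this is not a discrepancy with the paper but a place where your sketch would need to be fleshed out before standing on its own.
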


\begin{proof}
	The statement about the Balmer spectra is part of \cite{BCHNPS_descent}, so assume that $\Mod(A)$ is costratified. We have a diagram
		\[
			\begin{tikzcd}
				\Colocidset{\Mod(A)} \ar[r,"f^!"] \ar[d,"\Cosupp"',"\cong"] & \Colocidset{\Mod(B)} \ar[d,"\Cosupp"] \\
				\mathcal{P}(\Spc(\Mod(A)^c)) \ar[r,"\varphi^{-1}"',"\cong"] & \mathcal{P}(\Spc(\Mod(B)^c)),
			\end{tikzcd}
		\]
	which commutes by \cref{cor:globalavruninscott} and \cref{thm:costrat_implies_strat}. The top horizontal map is a bijection by \cref{lem:descendableretract}, hence so is  the right vertical map. In other words, $\Mod(B)$ is costratified.
\end{proof}

\begin{Rem}
	As a consequence we can complete our proof of \cref{thm:rep_g_k_costratifcation} by giving the following alternative proof of \cite[Theorem 11.6]{BensonIyengarKrause12}:
\end{Rem}

\begin{Thm}[Benson--Iyengar--Krause]\label{thm:rep_e_k_costratifcation}
	Let $E$ be an elementary abelian $p$-group and $k$ a field of characteristic $p$. Then the category $\Rep(E,k)$ is costratified, with spectrum $\Spc(\Rep(E,k)^c) \cong \Proj H^*(E;k)$.
\end{Thm}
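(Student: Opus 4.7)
The plan is to apply Galois ascent (\cref{prop:galoisascent}) to transfer costratification from an affine weakly regular cochain algebra up to $\Rep(E,k)$. I would first invoke Koszul duality for a finite $p$-group in characteristic~$p$ to identify $\Rep(E,k) \simeq \Mod(C^*(BE;k))$ as rigidly-compactly generated tt-categories, where $C^*(BE;k)$ denotes the cochain $E_\infty$-algebra on the classifying space.

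Next, I would construct a faithful Galois extension with connected compact Lie Galois group whose target is $C^*(BE;k)$. I embed $E = (\bbZ/p)^n$ as the $p$-torsion subgroup of the compact torus $T = (S^1)^n$, so that the short exact sequence $1 \to E \to T \xrightarrow{\,p\,} T \to 1$ produces a fibre sequence $BE \to BT \xrightarrow{Bp} BT$ in which $\Omega(BT) = T$ acts on the fibre $BE$ with homotopy quotient $BT$. Applying cochains yields $f\colon C^*(BT;k) \to C^*(BE;k)$, and I claim that $f$ is a faithful $T$-Galois extension in the sense of \cref{def:galois}. The fixed-point identification $C^*(BT;k) \simeq C^*(BE;k)^{hT}$ follows from strong convergence of the Eilenberg--Moore spectral sequence for this simply-connected fibration; the trivialization $C^*(BE;k) \otimes_{C^*(BT;k)} C^*(BE;k) \simeq C^*(T_+, C^*(BE;k))$ reflects the homotopy-pullback identity $BE \times_{BT}^h BE \simeq BE \times T$, expressing the fact that $BE \to BT$ is the associated principal $T$-bundle of the $p$-th power fibration; and conservativity of $f^*$ follows from descendability, which holds because $C^*(BE;k)$ is finitely generated as a module over $C^*(BT;k)$ (Venkov--Evens--Quillen).

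To conclude, $C^*(BT;k)$ is formal and identifies with the graded polynomial algebra $k[x_1, \ldots, x_n]$ with $|x_i|=2$. This is affine weakly regular in the sense of \cref{def:affine_weakly_regular}, so \cref{thm:costratification_for_weakly_affine} ensures that $\Mod(C^*(BT;k))$ is costratified. Since $T$ is a connected compact Lie group, Galois ascent (\cref{prop:galoisascent}) then transfers costratification along $f$ to $\Mod(C^*(BE;k)) \simeq \Rep(E,k)$ and produces a homeomorphism on Balmer spectra, which I would match against the classical computation of $\Spc(\Rep(E,k)^c)$ as $\Proj H^*(E;k)$. The main obstacle is verifying the two Rognes conditions for $f$: these require a careful analysis of the $p$-th power fibration of classifying spaces and strong convergence of the attendant Eilenberg--Moore spectral sequence, an input originally due to Mathew. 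Once this is in hand, the rest of the proof is a direct application of the machinery developed in the paper.
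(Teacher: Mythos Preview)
Your proposal is essentially the same as the paper's proof: both identify $\Rep(E,k)\simeq\Mod(k^{hE})$, use the inclusion $E=(\bbZ/p)^r\subset (S^1)^r=\bbT$ to obtain a faithful $\bbT$-Galois extension $k^{h\bbT}\to k^{hE}$ (the paper cites Mathew for this, while you sketch the verification of the Rognes conditions), observe that $\Mod(k^{h\bbT})$ is affine weakly regular since $\pi_*k^{h\bbT}\cong k[x_1,\dots,x_r]$ with $|x_i|=2$, and then apply \cref{thm:costratification_for_weakly_affine} followed by Galois ascent (\cref{prop:galoisascent}). The only cosmetic difference is your notation $C^*(BT;k)$ versus the paper's $k^{h\bbT}$.
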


\begin{proof}
	Let $E$ be an elementary abelian subgroup of rank $r\ge 1$. By \cite[Proposition~3.9]{Mathew15bpp}, the inclusion $E \cong (\bbZ/p)^{\times} \subseteq (S^1)^{\times r} \eqqcolon \bbT$ induces a faithful $\bbT$-Galois extension $f\colon k^{h\bbT} \to k^{hE}$. Note that 
		\[
			\pi_*k^{h\bbT} \cong k[x_1,\ldots,x_r]
		\]
	with all generators $x_i$ in degree 2, so $\Mod(k^{h\bbT})$ is an affine weakly regular tt-category. Therefore, $\Mod(k^{h\bbT})$ is costratified by \cref{thm:costratification_for_weakly_affine}. Since $f$ satisfies the assumptions of \cref{prop:galoisascent}, Galois ascent implies that $\Mod(k^{hE})$ is costratified as well. Finally, we observe that there is an equivalence of tt-categories
		\[
			\Mod(k^{hE}) \simeq \Rep(E,k),
		\]
	so $\Rep(E,k)$ is costratified, too. 
\end{proof}

\subsection*{Chromatic homotopy theory}\label{ssec:chromaticexamples}

\begin{Def}
	For a fixed prime number $p$, let $\cat S$ denote the $p$-local stable homotopy category, let $E_n$ denote the $n$th \emph{Lubin--Tate spectrum}, and let~$\cat S_{E_n}$ denote the category of $E_n$-local spectra. Recall that $E_n$ is a commutative ring spectrum Bousfield equivalent to the $n$th Johnson--Wilson spectrum $E(n)$. For further background material on chromatic homotopy theory, we refer the interested reader to \cite{BarthelBeaudry19pp}.
\end{Def}

\begin{Rem}\label{rem:e_theory_bousfield}
    We write $L_n \colon \cat S \to \cat S_{E_n}$ for the corresponding Bousfield localization functor. This is a smashing localization, and as such, we have $\cat S_{E_n} \simeq \Mod(L_nS^0)$ and the localization $L_n \colon \cat S \to \Mod(L_nS^0)$ is given by base change along $S^0 \to L_nS^0$. 
\end{Rem}

\begin{Rem}\label{rem:costratification_for_en}
    We have 
		\[
			\pi_*(E_n) \cong W(\bbF_{\hspace{-0.1em}p^n})[\![u_1,\ldots,u_n]\!][u^{\pm 1}]
		\]
    where $|u_i| = 0$ and $|u| = -2$. In particular, $\Mod(E_n)$ is affine weakly regular (\Cref{def:affine_weakly_regular}) and hence $\Mod(E_n)$ is costratified by \Cref{thm:costratification_for_weakly_affine}. 
\end{Rem}

\begin{Not}\label{not:En-primes}
    For each $0 \le h \le n$, we define
        \[ 
            \cat P_h \coloneqq \SET{x \in \cat S_{E_n}^c }{K(h)_*(x)=0},
        \]
    a prime ideal of $\cat S_{E_n}^c$. The following is a restatement of \cite[Theorem 6.9]{HoveyStrickland99}, as given in \cite[Proposition 3.5]{BarthelHeardNaumann20pp}.
\end{Not}

\begin{Thm}[Hovey--Strickland] \label{thm:hovey-strickland}
    The spectrum
        \[
            \Spc(\cat S_{E_n}^c) = \cat P_{n} - \cdots - \cat P_{1} - \cat P_0
        \]
    is a local irreducible space consisting of $n+1$ points, where closure goes to the left: $\overbar{\{\cat P_h\}} = \SET{\cat P_k}{h \le k \le n}$.
\end{Thm}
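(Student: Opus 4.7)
The plan is to identify the $\cat P_h$ with all points of $\Spc(\cat S_{E_n}^c)$ by combining Balmer's general machinery with the nilpotence and periodicity theorems. First I would verify that each $\cat P_h$ is a prime thick tensor-ideal. Since Morava $K$-theory $K(h)$ satisfies a Künneth isomorphism $K(h)_*(x\otimes y)\cong K(h)_*(x)\otimes_{K(h)_*}K(h)_*(y)$ and takes values in graded modules over the graded field $K(h)_*$, the kernel of $K(h)_*\colon \cat S^c_{E_n}\to \grMod(K(h)_*)$ is a prime $\otimes$-ideal; this is precisely $\cat P_h$.

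Next I would pin down the inclusions. The key point is that, up to idempotent completion, every object of $\cat S^c_{E_n}$ is of the form $L_n X$ for a finite $p$-local spectrum $X$, and that $K(h)_*(L_nX)\cong K(h)_*(X)$ for $h\le n$. Combined with the Hopkins--Smith periodicity theorem, which provides a well-defined \emph{type} function $t(X)=\min\{h\mid K(h)_*X\neq 0\}$ on nonzero finite $p$-local spectra, it follows that $L_nX\in \cat P_h$ if and only if $h<t(X)$. In particular, $\cat P_{h+1}\subseteq \cat P_h$ for every $h$, and the existence of generalized type-$h$ Moore spectra for each $0\le h\le n$ (Mitchell) shows the inclusions are all strict and the primes mutually distinct.

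To prove that these are \emph{all} the primes, I would invoke the $E_n$-local thick subcategory theorem of Hovey--Strickland, which asserts that the thick $\otimes$-ideals of $\cat S^c_{E_n}$ form the chain $0\subsetneq \cat P_n\subsetneq \cat P_{n-1}\subsetneq\cdots\subsetneq \cat P_0\subsetneq \cat S^c_{E_n}$. Since any prime is a proper thick $\otimes$-ideal, the above list exhausts $\Spc(\cat S_{E_n}^c)$. Finally, the topology is read off from the specialization order in Balmer's convention: $\cat P_k\in\overline{\{\cat P_h\}}$ if and only if $\cat P_k\subseteq\cat P_h$, which by the preceding step happens if and only if $h\le k$. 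This gives $\overline{\{\cat P_h\}}=\{\cat P_k\mid h\le k\le n\}$, exhibiting $\cat P_n$ as the unique closed point (so the space is local) and $\cat P_0$ as a generic point (so it is irreducible).

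The main obstacle is the thick subcategory theorem for $\cat S^c_{E_n}$: this is a nontrivial input that ultimately depends on the Hopkins--Smith nilpotence theorem and on detailed properties of the $E_n$-localization of finite $p$-local spectra, and is invoked here as a black box from \cite{HoveyStrickland99}.
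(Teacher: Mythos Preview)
Your proof sketch is correct, but note that the paper does not actually prove this theorem: it is stated as a restatement of \cite[Theorem~6.9]{HoveyStrickland99} (as formulated in \cite[Proposition~3.5]{BarthelHeardNaumann20pp}) and is simply cited from the literature without argument. Your outline essentially reconstructs how one deduces the Balmer spectrum description from the Hovey--Strickland classification of thick ideals in $\cat S_{E_n}^c$, which is indeed the heart of the matter and which you correctly identify as the black box input. So there is nothing to compare: the paper defers entirely to the cited references, and your sketch is a faithful summary of what those references contain.
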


\begin{Rem}
	The following result recovers the classification of colocalizing subcategories of $\cat S_{E_n}$ given in \cite[Theorem 6.14]{HoveyStrickland99}.
\end{Rem}

\begin{Thm}[Hovey--Strickland]\label{thm:chromatic_costratification}
    The category of $E_n$-local spectra $\cat S_{E_n}$ is costratified. 
\end{Thm}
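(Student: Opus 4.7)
The plan is to apply our bootstrap theorem \cref{cor:detectingcomin} to a conservative geometric functor from $\cat S_{E_n}$ into the affine weakly regular tt-category $\Mod(E_n)$. First, I would observe that since $E_n$ is a commutative algebra in $\cat S_{E_n}$, base change along the unit map $L_n S^0 \to E_n$ yields a geometric functor $f^* \colon \cat S_{E_n} \to \Mod(E_n)$ (here $\Mod(E_n)$ is unambiguous, since $E_n$-modules in spectra are automatically $E_n$-local). This functor is conservative: if $X \in \cat S_{E_n}$ satisfies $E_n \otimes X = 0$, then $X$ is both $E_n$-local and $E_n$-acyclic, hence zero.

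Second, by \cref{rem:costratification_for_en} the target $\Mod(E_n)$ is costratified, since $\pi_*(E_n)$ is a regular complete local graded ring, so that $\Mod(E_n)$ is affine weakly regular in the sense of \cref{def:affine_weakly_regular} and \cref{thm:costratification_for_weakly_affine} applies. Third, \cref{thm:hovey-strickland} tells us that $\Spc(\cat S_{E_n}^c)$ is a finite and hence noetherian spectral space, so \cref{cor:noetherian_local_global} and \cref{thm:LGP-equiv} guarantee both the local-to-global principle and the colocal-to-global principle for $\cat S_{E_n}$. Combined with the Hovey--Strickland classification of localizing subcategories of $\cat S_{E_n}$, this verifies that $\cat S_{E_n}$ is stratified in the sense of \cite{bhs1}.

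With these ingredients in place, the bootstrap theorem \cref{cor:detectingcomin} applies directly: $\cat S_{E_n}$ is stratified, $\Mod(E_n)$ is costratified, and $f^*$ is conservative. We conclude that $\cat S_{E_n}$ is itself costratified, thereby recovering the classification of colocalizing subcategories of \cite[Theorem 6.14]{HoveyStrickland99}. The only place where care is needed is in citing stratification as an input; should one prefer to avoid this, the same conclusion follows from \cref{prop:bootstrap_for_algebraic_categories} applied to a finite family of geometric functors whose induced maps on spectra jointly cover $\Spc(\cat S_{E_n}^c)$, which is unproblematic here because the spectrum has only $n+1$ points.
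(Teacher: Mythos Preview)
Your proof is correct and follows essentially the same route as the paper: apply the bootstrap theorem \cref{cor:detectingcomin} to the conservative base-change functor $\cat S_{E_n}\to\Mod(E_n)$, using that $\Mod(E_n)$ is costratified (affine weakly regular) and that $\cat S_{E_n}$ is stratified. The only cosmetic difference is that the paper cites the Hopkins--Ravenel smash product theorem (via \cite{Mathew16}) for conservativity and \cite[Theorem~10.14]{bhs1} for stratification, whereas you give the direct ``$E_n$-local and $E_n$-acyclic implies zero'' argument and unpack stratification via \cref{thm:hovey-strickland} and the noetherian local-to-global principle.
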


\begin{proof}
	Consider the geometric functor $f^* \colon \cat S_{E_n} \to \Mod(E_n)$ given by base-change. It is a consequence of the Hopkins--Ravenel smash product theorem \cite[Chapter~8]{Ravenel92} that this functor is conservative (see also \cite[Prop.~3.18 and Thm.~4.17]{Mathew16}). Moreover, $\Mod(E_n)$ is costratified (\Cref{rem:costratification_for_en}) and $\cat S_{E_n}$ is stratified by \cite[Theorem 10.14]{bhs1}.   By \Cref{cor:detectingcomin}, we conclude that $\cat S_{E_n}$ is costratified. 
\end{proof}

\begin{Rem}[Chromatic cosupport]\label{rem:chromatic_cosupport}
	In \cite[Proposition~10.12]{bhs1}, we established an isomorphism $g_{\cat P_k}\simeq M_k S^0$ between the Balmer--Favi idempotent and the fiber of $L_kS^0 \to L_{k-1}S^0$. It follows that 
		\[
		\begin{split}
			\Cosupp(t)	&= \{ \cat P_k \in \Spc(\cat S_{E(n)}^c) \mid \ihom{g_{\cat P_k},t} \ne 0 \} \\
						&= \{ k \in \{ 0,\ldots,n \} \mid \ihom{M_kS^0,t} \ne 0 \}. 
		\end{split}
		\]
	As recalled in \cref{Exa:chromatic_cosupport}, Hovey--Strickland define the chromatic cosupport by
		\[
			\cosuppa(t)= \SET{k \in \{0,\ldots, n\}}{\ihom{K(k),t} \ne 0}.
		\]
    We claim that $\cosuppa(t) = \Cosupp(t)$ for all $t \in \cat S_{E_n}$.  Indeed, suppose that $\ihom{M_kS^0,t} = 0$. Then 
		\[
			\ihom{K(k),\ihom{M_kS^0,t}} \simeq \ihom{K(k) \otimes M_kS^0,t} = 0
		\]
	but $K(k) \otimes M_kS^0 \simeq M_kK(k) \simeq K(k)$, so $\ihom{K(k),t} = 0$. Conversely, suppose that $\ihom{K(k),t} = 0$. The collection of $Y \in \cat S_{E_n}$ for which $\ihom{Y,t} = 0$ is a localizing subcategory which contains $K(k)$, and hence also contains $M_kS^0$ by \cite[Proposition 6.17]{HoveyStrickland99}. Therefore, $\ihom{M_kS^0,t} = 0$ as well. Therefore the Balmer--Favi notion of cosupport agrees with the usual version of chromatic cosupport. Alternatively, one can prove this using \cref{cor:Top-uniqueness} and \cite[Theorem 6.14]{HoveyStrickland99}. 
\end{Rem}

\subsection*{Cochain algebras}\label{ssec:cochainexamples}

\begin{Def}
	For a space $X$, we let $C^*(X;\Fp)$ denote the function spectrum $F(\Sigma^{\infty}_+X ,\Fp)$.
\end{Def}

\begin{Rem}
    In \cite{BarthelCastellanaHeardValenzuela22} we investigated when $\Mod(C^*(X;\Fp))$ is stratified in the sense of BIK. This used a certain category $\mathcal E(X)$ associated to $X$, whose objects are (isomorphism classes of) pairs $(V,\phi)$ consisting of an elementary abelian $p$-group~$E$ and a finite morphism $\phi \colon H^*(X;\Fp) \to H^*(BE;\Fp)$ of unstable algebras over the dual Steenrod algebra. We showed that these maps $\phi$ lift to maps on the level of cochains, and therefore we have a map of commutative ring spectra
		\begin{equation}\label{eq:maps_of_spaces_rector}
			\rho_X \colon C^*(X;\Fp) \to \prod_{(E,\phi) \in \mathcal{E}(X)} C^*(BE;\Fp). 
		\end{equation}
\end{Rem}

\begin{Def}
	A $p$-good\footnote{in the sense of Bousfield--Kan \cite{BousfieldKan1972}.} connected topological space $X$ with noetherian mod $p$ cohomology is said to satisfy \emph{Chouinard’s condition} if induction and coinduction along the map \eqref{eq:maps_of_spaces_rector} are conservative. 
\end{Def}

\begin{Thm}\label{thm:cochains}
	Let $X$ be a $p$-good connected space with noetherian mod $p$ cohomology. Then the following are equivalent:
	\begin{enumerate}
		\item $\Mod(C^*(X;\Fp))$ is stratified. 
		\item $\Mod(C^*(X;\Fp))$ is costratified. 
		\item $X$ satisfies Chouinard's condition. 
	\end{enumerate}
	If any of these holds, then $\Spc(\Mod(C^*(X;\Fp))^c) \cong \Spec^h(H^*(X;\Fp))$.
\end{Thm}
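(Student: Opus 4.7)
The plan is to combine the bootstrap \cref{cor:detectingcomin} with the authors' earlier work identifying stratification of $\Mod(C^*(X;\Fp))$ with Chouinard's condition. First, the implication $(b) \Rightarrow (a)$ is immediate from \cref{thm:costrat_implies_strat}. The equivalence $(a) \Leftrightarrow (c)$, together with the computation $\Spc(\Mod(C^*(X;\Fp))^c) \cong \Spec^h(H^*(X;\Fp))$ under either of these equivalent assumptions, is the main content of the authors' previous paper \cite{BarthelCastellanaHeardValenzuela22}. Thus the only remaining implication to verify is $(c) \Rightarrow (b)$.

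For $(c) \Rightarrow (b)$, I would apply bootstrap to the geometric functor
\[
f^* \colon \Mod(C^*(X;\Fp)) \to \Mod\Bigl(\prod_{(E,\phi) \in \mathcal{E}(X)} C^*(BE;\Fp)\Bigr)
\]
of base change along $\rho_X$. A key preliminary step is to observe that, since $H^*(X;\Fp)$ is noetherian, the set $\mathcal{E}(X)$ is finite by a standard Lannes-theoretic finiteness result; consequently, the target module category decomposes as the finite product $\prod_{(E,\phi)} \Mod(C^*(BE;\Fp))$. Each factor is equivalent to $\Rep(E,\Fp)$ via the identification of $C^*(BE;\Fp)$ with the homotopy fixed point spectrum $\Fp^{hE}$, and hence each factor is costratified by \cref{thm:rep_e_k_costratifcation}. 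A finite product of costratified tt-categories is costratified by \cref{Exa:finite_product_of_costratified_categories}, so the target of $f^*$ is costratified.

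It then suffices to check the hypotheses of \cref{cor:detectingcomin}: the source $\Mod(C^*(X;\Fp))$ is stratified by the already-established equivalence $(a) \Leftrightarrow (c)$, and the conservativity of $f^*$ (which is induction along $\rho_X$) is precisely the half of Chouinard's condition hypothesized in $(c)$. Hence $\Mod(C^*(X;\Fp))$ is costratified.

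The main input--rather than obstacle--is the identification $(a) \Leftrightarrow (c)$, which is nontrivial and supplied by \cite{BarthelCastellanaHeardValenzuela22}; the substantive new content here is that, once stratification is in hand, costratification follows formally from the bootstrap theorem combined with Galois-theoretic costratification of the elementary abelian pieces. The finiteness of $\mathcal{E}(X)$ under the noetherian assumption is the subtle point that makes the elementary abelian base case usable; without it, one would be forced to handle infinite products of tt-categories, for which our descent machinery is less well suited.
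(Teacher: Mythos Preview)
Your proposal is correct and matches the paper's proof essentially line for line: both use the equivalence $(a)\Leftrightarrow(c)$ from \cite{BarthelCastellanaHeardValenzuela22}, \cref{thm:costrat_implies_strat} for $(b)\Rightarrow(a)$, and then bootstrap via \cref{cor:detectingcomin} along the base-change functor to the finite product $\prod_{(E,\phi)}\Mod(C^*(BE;\Fp))$, with finiteness of $\mathcal{E}(X)$ coming from the noetherian hypothesis and costratification of each factor from the elementary abelian case. The only cosmetic difference is that the paper cites \cite[Theorem~11.6]{BensonIyengarKrause12} for the costratification of $\Mod(C^*(BE;\Fp))$ where you invoke the paper's own \cref{thm:rep_e_k_costratifcation}, but these are the same result.
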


\begin{proof}
	We have that $(a)$ is equivalent to $(c)$ by \cite[Theorem 5.12]{BarthelCastellanaHeardValenzuela22} and $(b)$ implies $(a)$ by \Cref{thm:costrat_implies_strat}. These also imply claim $(b)$. Assume then that $(a)$, and hence $(c)$, hold and consider the base change functor
		\[
			f^* \colon \Mod(C^*(X;\Fp)) \to \prod_{(E,\phi) \in \mathcal{E}(X)} \Mod(C^*(BE;\Fp)).
		\]
	Since the product is finite (which is a consequence of \cite[p.~194]{Rector1984Noetherian}), the target category is costratified by  \cite[Theorem 11.6]{BensonIyengarKrause12} and \Cref{Exa:finite_product_of_costratified_categories}. Moreover, $f^*$ is a conservative geometric functor and $\Mod(C^*(X;\Fp))$ is stratified. Now apply \Cref{cor:detectingcomin}. 
\end{proof}

\begin{Exa}\label{exa:3-connected-cover}
    Let $X = S^3\langle 3 \rangle$ denote the 3-connected cover of $S^3$. In this case, there is a single non-trivial object in the category $\mathcal{E}(S^3\langle 3 \rangle)$, namely the pair coming from the composite $B\bbZ/p \to BS^1 \to S^3\langle 3 \rangle$. In \cite[Example 5.16]{BarthelCastellanaHeardValenzuela19} it is shown that 
		\[
			\Mod(C^*(S^3\langle 3 \rangle; \Fp)) \to \Mod(C^*(B\bbZ/p;\Fp))
		\]
    satisfies Chouinard's condition. We deduce that $\Mod(C^*(S^3\langle 3 \rangle; \Fp))$ is costratified. Note that in our previous work, we were unable to prove this; see \cite[Section~4.5]{BarthelCastellanaHeardValenzuela19} and the discussion therein. 
\end{Exa}

\begin{Exa}
    If $X$ is a noetherian $H$-space, then $X$ satisfies Chouniard's condition by \cite[Theorem 5.15]{BarthelCastellanaHeardValenzuela19}. Therefore, $\Mod(C^*(X;\Fp))$ is costratified.
\end{Exa}

\subsection*{Equivariant homotopy theory}\label{ssec:equivariantexamples}

\begin{Not}
	Let $\Sp_G$ denote the $\infty$-category of genuine $G$-spectra. For a commutative algebra $\mathbb{E} \in \CAlg(\Sp_G)$, we write $\Mod_{\Sp_G}(\mathbb{E})$ for the \mbox{$\infty$-category} of modules and $\Der_G(\mathbb{E})$ for the associated homotopy category. 
\end{Not}

\begin{Thm}\label{thm:equivariant_costratification}
	Let $G$ be a finite group and let $\mathbb{E} \in \CAlg(\Sp_G)$. Suppose that the following conditions hold for all subgroups $H \le G:$
    \begin{enumerate}
        \item $\Spc(\Der(\Phi^H\bbE)^c)$ is noetherian; and
        \item $\Der(\Phi^H\bbE)$ is costratified.
    \end{enumerate}
	Then $\Der_G(\bbE)$ is costratified and $\Spc(\Der_G(\bbE)^c)$ is noetherian.
\end{Thm}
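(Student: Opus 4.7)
The plan is to apply the bootstrap principle \cref{prop:bootstrap_for_algebraic_categories} to the finite family of geometric fixed point functors. For each subgroup $H \le G$, the symmetric monoidal colimit-preserving geometric fixed point functor on $G$-spectra extends, after passage to $\bbE$-modules, to a geometric functor
\[
\Phi^H\colon \Der_G(\bbE) \longrightarrow \Der(\Phi^H\bbE).
\]
The classical tom Dieck splitting implies that $\{\Phi^H\}_{H \le G}$ is jointly conservative on $\Sp_G$, and this joint conservativity transfers to $\bbE$-modules since the forgetful functor $\Der_G(\bbE) \to \Sp_G$ is itself conservative and intertwines the two flavours of geometric fixed points. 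Combined with \cref{rem:fiber-is-support}, joint conservativity implies that the induced maps $\varphi_H \coloneqq \Spc(\Phi^H)$ on Balmer spectra are jointly surjective: for any $\cat P \in \Spc(\Der_G(\bbE)^c)$, the nonvanishing of $\gP$ forces $\Phi^H(\gP) \neq 0$ for some $H$, which by \cref{rem:fiber-is-support} places $\cat P$ in the image of $\varphi_H$.

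With joint surjectivity established, noetherianity of $\Spc(\Der_G(\bbE)^c)$ follows from hypothesis $(a)$ and the observation that a finite group has only finitely many conjugacy classes of subgroups: the disjoint union $\coprod_{[H]} \Spc(\Der(\Phi^H\bbE)^c)$ is a finite disjoint union of noetherian spectral spaces and hence noetherian, and its continuous surjective image $\Spc(\Der_G(\bbE)^c)$ inherits noetherianity. In particular, $\Der_G(\bbE)$ satisfies the weakly noetherian hypothesis required to make sense of (co)support.

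It remains to verify that $\Der_G(\bbE)$ is stratified, which is established in the companion paper \cite{BCHNP1}: an equivariant descent argument along the same family of geometric fixed point functors reduces stratification of $\Der_G(\bbE)$ to the non-equivariant stratification of each $\Der(\Phi^H\bbE)$, which is itself a consequence of the assumed costratification via \cref{thm:costrat_implies_strat}. With stratification of $\Der_G(\bbE)$ in hand, costratification follows immediately from \cref{prop:bootstrap_for_algebraic_categories} applied to the family $\{\Phi^H\}_{H \le G}$, whose targets are costratified by hypothesis $(b)$ and whose induced maps on spectra have just been shown to be jointly surjective. The main obstacle is precisely the stratification step: joint conservativity of a family of geometric functors does not in general yield descent for stratification, and the required geometric analysis is the substance of \cite{BCHNP1}; once stratification is granted, the passage to costratification is essentially formal via the bootstrap.
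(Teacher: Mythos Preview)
Your approach is essentially the same as the paper's: apply bootstrap to the jointly conservative family of geometric fixed point functors, citing \cite{BCHNP1} for stratification of $\Der_G(\bbE)$. The paper bundles the $\Phi^H$ into a single product functor and invokes \cref{cor:detectingcomin}, whereas you keep them as a family and invoke \cref{prop:bootstrap_for_algebraic_categories}; via \cref{Exa:finite_product_of_costratified_categories} these are equivalent.

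There is, however, a circularity in your argument for joint surjectivity and noetherianity. You invoke \cref{rem:fiber-is-support} and the object $\gP$ for $\cat P \in \Spc(\Der_G(\bbE)^c)$, but $\gP$ only exists once $\cat P$ is weakly visible, and you establish (weak) noetherianity of $\Spc(\Der_G(\bbE)^c)$ only \emph{after} the surjectivity step that uses~$\gP$. The paper avoids this by citing \cite[Theorem~3.33]{BCHNP1} directly for both stratification \emph{and} noetherianity of the spectrum. You can repair your argument the same way, or alternatively obtain surjectivity from conservativity via \cite[Corollary~2.26]{BCHNP1} (see the remark following \cref{cor:surjective-for-conservative}), which does not require the weakly noetherian hypothesis on the source; your image-of-noetherian argument then goes through.
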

\begin{proof}
	We first observe that by \cref{thm:costrat_implies_strat}, condition $(b)$ implies that each $\Der(\Phi^H\bbE)$ is stratified. Then \cite[Theorem 3.33]{BCHNP1} shows that $\Der_G(\bbE)$ is stratified with noetherian spectrum $\Spc(\Der_G(\bbE)^c)$. In order to show that it is costratified, we use the bootstrap theorem (\cref{cor:detectingcomin}).  Indeed, let 
		\[
			\Phi \colon \Der_G(\bbE) \xrightarrow{\{\Phi^H\}_H} \prod_{H \le G} \Der(\Phi^H\bbE)
		\]
	denote the product of geometric fixed point functors. By  \cite[Proposition~3.25]{BCHNP1}, $\Phi$ is a conservative geometric functor. By assumption, each $\Der(\Phi^H\bbE)$ is costratified, and hence so is the product (\cref{Exa:finite_product_of_costratified_categories}) and we have already seen that $\Der_G(\bbE)$ is stratified. Now apply \Cref{cor:detectingcomin}. 
\end{proof}

\begin{Rem}
    The generalized Quillen stratification theorem proven in \cite[Theorem 4.3]{BCHNP1} describes the underlying set of $\Spc(\Der_G(\bbE)^c)$ in terms of the spectra of the geometric fixed points $\Der(\Phi^H\bbE)$ along with the Weyl group actions. 
\end{Rem}

\begin{Rem}
	Let $\mathcal{F}$ denote a family of subgroups of $G$. Recall that $\mathbb{E}\in \CAlg(\Sp_G)$ is $\mathcal{F}$-nilpotent if $\mathbb{E}$ is in the thick ideal of $\Sp_G$ generated by $\SET{G/H_+}{H \in \mathcal{F}}$. In this case, we have $\Phi^H \bbE = 0$ whenever $\bbE \not \in \mathcal{F}$ by \cite[Theorem 6.41]{MathewNaumannNoel17}, and in particular it suffices to check the conditions of \Cref{thm:equivariant_costratification} for $H \in \mathcal{F}$.   There is always a minimal such family $\mathcal{F}$ known as the \emph{derived defect base} of $E$. See \cite{mnn2} for a computation of the derived defect base of many equivariant spectra. 
\end{Rem}

\begin{Exa}
	There is a canonical geometric functor $\triv_G\colon \Sp \to \Sp_G$ that sends a spectrum to the corresponding $G$-spectrum with trivial \mbox{$G$-action.} See \cite[Section 3]{PatchkoriaSandersWimmer22} for further details. For $\bbE \in \CAlg(\Sp)$, let $\bbE_G\coloneqq \triv_G \bbE \in \CAlg(\Sp_G)$ denote the corresponding commutative algebra in genuine $G$-spectra. For each $H \le G$, we have that $\Phi^H\bbE_G \simeq \bbE$. In particular, \Cref{thm:equivariant_costratification} implies the following:
\end{Exa}

\begin{Cor}\label{cor:costratified_spectral_mackey}
	Let $\bbE \in \CAlg(\Sp)$ be a commutative ring spectrum such that $\Spc(\Der(\bbE)^c)$ is noetherian. If $\Der(\bbE)$ is costratified, then $\Der_G(\bbE_G)$ is costratified for any finite group G.
\end{Cor}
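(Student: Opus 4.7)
The plan is to deduce this corollary as a direct application of \cref{thm:equivariant_costratification} to the commutative algebra $\bbE_G = \triv_G \bbE \in \CAlg(\Sp_G)$. To do so, I need only verify the two hypotheses of that theorem for every subgroup $H \le G$, namely that $\Spc(\Der(\Phi^H \bbE_G)^c)$ is noetherian and that $\Der(\Phi^H \bbE_G)$ is costratified.

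The key input is the identification $\Phi^H \bbE_G \simeq \bbE$ for every subgroup $H \le G$, which was recalled in the example immediately preceding the corollary. This identification reduces both conditions $(a)$ and $(b)$ of \cref{thm:equivariant_costratification} to the corresponding hypotheses imposed on $\bbE$ itself. In particular, noetherianity of $\Spc(\Der(\Phi^H\bbE_G)^c) \cong \Spc(\Der(\bbE)^c)$ is assumed, and costratification of $\Der(\Phi^H\bbE_G) \simeq \Der(\bbE)$ is likewise assumed.

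With both hypotheses of \cref{thm:equivariant_costratification} thereby verified, the theorem immediately yields that $\Der_G(\bbE_G)$ is costratified (and also that $\Spc(\Der_G(\bbE_G)^c)$ is noetherian, although the latter is not asked for in the statement). There is essentially no obstacle here: all the real work has been done in establishing \cref{thm:equivariant_costratification}, which invokes the bootstrap theorem (\cref{cor:detectingcomin}) via the conservative product of geometric fixed point functors. The corollary is simply the observation that in the Borel-trivial case $\bbE_G = \triv_G\bbE$, all geometric fixed points collapse to $\bbE$ itself, so no further input beyond costratification of $\Der(\bbE)$ is needed.
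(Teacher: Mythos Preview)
Your proposal is correct and matches the paper's approach exactly: the paper states the corollary immediately after the example establishing $\Phi^H\bbE_G \simeq \bbE$ and explicitly says that \cref{thm:equivariant_costratification} implies it, with no further proof given. Your write-up simply makes explicit the verification of hypotheses $(a)$ and $(b)$ of that theorem, which is precisely what the paper leaves implicit.
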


\begin{Cor}\label{cor:derived_mackey_costratified}
	For any finite group $G$ and discrete commutative ring $R$, the category of derived Mackey functors $\Der_G(\HR_G)$ is costratified.
\end{Cor}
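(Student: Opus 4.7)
The plan is to apply Corollary~\ref{cor:costratified_spectral_mackey} directly to the commutative ring spectrum $\bbE \coloneqq \HR \in \CAlg(\Sp)$, for which the $G$-equivariant spectrum $\bbE_G = \triv_G(\HR) \simeq \HR_G$ is precisely the Borel-equivariant Eilenberg--MacLane spectrum whose module category recovers Kaledin's derived Mackey functors. So the task reduces to verifying the two hypotheses of that corollary: that $\Spc(\Der(\HR)^c)$ is noetherian, and that $\Der(\HR)$ itself is costratified.

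First I would invoke the standard equivalence of tt-categories $\Der(\HR) \simeq \Der(R)$ between the module category of the Eilenberg--MacLane spectrum and the ordinary derived category of the discrete commutative ring $R$ (assumed to be noetherian, as is needed for the hypotheses to hold). Under this equivalence, Thomason's theorem gives a homeomorphism $\Spc(\Der(\HR)^c) \cong \Spec(R)$, which is noetherian as a topological space since $R$ is. This handles the first hypothesis.

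For the second hypothesis, I would appeal to Example~\ref{ex:costratified_noetherian_ring}, which in turn records Neeman's classical result (recovered here via Proposition~\ref{prop:costratification_for_dr}) that $\Der(R)$ is costratified for any commutative noetherian ring $R$. Plugging both facts into Corollary~\ref{cor:costratified_spectral_mackey} immediately yields that $\Der_G(\HR_G)$ is costratified.

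There is essentially no obstacle: all the heavy lifting has been done in Theorem~\ref{thm:equivariant_costratification} and its specialization to Borel constructions via the trivial $G$-action functor $\triv_G\colon \Sp \to \Sp_G$, which ensures that the geometric fixed points $\Phi^H\HR_G \simeq \HR$ satisfy the required hypotheses uniformly in $H \le G$. The only mild point worth mentioning explicitly is the identification $\Phi^H(\triv_G\HR) \simeq \HR$, which is recorded before Corollary~\ref{cor:costratified_spectral_mackey} and ensures that the noetherianity and costratification assumptions need only be checked once, rather than for every subgroup.
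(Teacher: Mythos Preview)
The proposal is correct and takes essentially the same approach as the paper: apply Corollary~\ref{cor:costratified_spectral_mackey} with $\bbE = \HR$, invoking Example~\ref{ex:costratified_noetherian_ring} to verify the costratification hypothesis for $\Der(R)$. Your observation that $R$ must be noetherian for the hypotheses to hold is well taken; the paper's proof likewise relies on Example~\ref{ex:costratified_noetherian_ring}, so the noetherian assumption is implicitly in force.
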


\begin{proof}
	Using \cref{ex:costratified_noetherian_ring}, we can apply \cref{cor:costratified_spectral_mackey} to $\bbE=\HR$.
\end{proof}

\begin{Rem}
    By \cite[Proposition 4.9]{PatchkoriaSandersWimmer22} and \cite[Proposition 14.3]{bhs1}, there is an equivalence of symmetric monoidal $\infty$-categories
		\[
		   \Mack{G}{\bbE} \simeq \Mod_{\Sp_G}(\bbE_G)
		\]
		where $\Mack{G}{\bbE}$ denotes Barwick's category of spectral $G$-Mackey functors with $\bbE$-coefficients \cite{Barwick17}. In particular, the previous result shows that $HR$-valued spectral Mackey functors are costratified for any discrete noetherian commutative  ring $R$. For $R = \bbZ$, the corresponding spectrum was determined completely in \cite{PatchkoriaSandersWimmer22}. 
\end{Rem}

\begin{Exa}
	Taking $\bbE = L_nS^0$ (\cref{rem:e_theory_bousfield}) we obtain the category $\SH_{G,E_n} \coloneqq \Der_G(\bbE_G)$ of $E_n$-local spectral Mackey functors; see \cite[Example 13.15]{bhs1}. Equivalently, this is the category of spectral Mackey functors with coefficients in $\cat S_{E_n}$; see \cite[Example 14.4]{bhs1}. As explained in \emph{loc.\,cit.}, the underlying set of the spectrum of $\SH_{G,E_n}$ is known,  thanks to \cref{thm:hovey-strickland}.
\end{Exa}

\begin{Cor}
	For any finite group $G$, prime number $p$, and $0 \le n < \infty$, the category of $E_n$-local spectral Mackey functors $\SH_{G,E_n}$ is costratified.
\end{Cor}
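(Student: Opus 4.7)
The plan is to apply \Cref{cor:costratified_spectral_mackey} with $\bbE = L_nS^0$, so the proof reduces to verifying the two hypotheses of that corollary for this choice of commutative ring spectrum. First I would note that $\Der(L_nS^0) \simeq \cat S_{E_n}$, which follows from \Cref{rem:e_theory_bousfield} identifying $\cat S_{E_n}$ with $\Mod(L_nS^0)$ since the $E_n$-localization is smashing. With this identification in place, the two hypotheses become statements about the category of $E_n$-local spectra.

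For the first hypothesis (noetherian spectrum), I would invoke \Cref{thm:hovey-strickland}, which describes $\Spc(\cat S_{E_n}^c)$ as a finite irreducible local space with $n+1$ points $\cat P_0,\ldots,\cat P_n$. Any finite spectral space is automatically noetherian. For the second hypothesis (costratification of $\Der(\bbE)$), I would invoke \Cref{thm:chromatic_costratification}, which establishes that $\cat S_{E_n}$ is costratified. With both hypotheses verified, \Cref{cor:costratified_spectral_mackey} applies directly to yield costratification of $\Der_G((L_nS^0)_G) = \SH_{G,E_n}$.

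There is essentially no obstacle here: the corollary has been set up precisely to make this kind of downstream application straightforward, and the two input theorems (\Cref{thm:chromatic_costratification} and \Cref{thm:hovey-strickland}) supply exactly what is needed. The entire argument is two sentences of citation and identification, with the real mathematical content having been absorbed into the bootstrap theorem and the earlier chromatic results.
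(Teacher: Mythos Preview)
Your proposal is correct and follows the same approach as the paper: both apply \Cref{cor:costratified_spectral_mackey} with $\bbE = L_nS^0$, invoking \Cref{thm:chromatic_costratification} for costratification of $\cat S_{E_n}$. You are slightly more explicit than the paper in separately citing \Cref{thm:hovey-strickland} for the noetherian hypothesis, but this is the same argument.
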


\begin{proof}
	This follows from \Cref{cor:costratified_spectral_mackey} with $\bbE = L_nS^0$, using \Cref{thm:chromatic_costratification}.
\end{proof}

\begin{Rem}
    Given a non-equivariant spectrum $\bbE \in \CAlg(\Sp)$, there is another way to produce an equivariant spectrum, namely by taking the associated Borel equivariant spectrum $\borel{G}{\bbE} \in \CAlg(\Sp_G)$; see \cite[Section~6.3]{MathewNaumannNoel17}. 
\end{Rem}

\begin{Thm}\label{thm:borel-E_n}
	For any finite group $G$, the category $\Der_G(\borel{G}{E_n})$ is costratified by $\Spc(\Der_G(\borel{G}{E_n})^c) \cong \Spec(E_n^0(BG))$. 
\end{Thm}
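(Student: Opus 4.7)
The strategy is to reduce to \cref{thm:equivariant_costratification} applied to $\bbE = \borel{G}{E_n} \in \CAlg(\Sp_G)$. Thus what we must verify is that for every subgroup $H \le G$, the non-equivariant derived category $\Der(\Phi^H \borel{G}{E_n})$ is costratified and has noetherian Balmer spectrum; the assertion that $\Spc(\Der_G(\borel{G}{E_n})^c) \cong \Spec(E_n^0(BG))$ will be supplied by the companion paper \cite{BCHNP1}, which is where the tt-geometric description of the spectrum of Borel equivariant $E_n$-modules is carried out.

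First I would identify the geometric fixed point spectra $\Phi^H \borel{G}{E_n}$. The point is that, although $\borel{G}{E_n}$ is an a priori genuinely equivariant object, the combination of Borel completion with chromatic finiteness forces $\Phi^H \borel{G}{E_n}$ to be a homotopy-commutative ring spectrum of Lubin--Tate type for every $H \le G$. Concretely, one uses the analysis in \cite{BCHNP1} (building on Hopkins--Kuhn--Ravenel character theory and Greenlees--May style completion theorems for Borel theories) to exhibit $\Phi^H \borel{G}{E_n}$ as a finite product of even-periodic, complete, regular local ring spectra, each with noetherian homotopy ring concentrated in even degrees and with maximal ideal generated by a regular sequence.

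Granting this identification, each factor is an affine weakly regular tt-category in the sense of \cref{def:affine_weakly_regular}. Hence \cref{thm:costratification_for_weakly_affine} gives that the derived category of each factor is costratified, and the Dell'Ambrogio--Stanley homeomorphism identifies its spectrum with $\Spec^h$ of the (noetherian) coefficient ring, which is noetherian. A finite product of such categories is then costratified with noetherian spectrum by \cref{Exa:finite_product_of_costratified_categories}. In particular, hypotheses (a) and (b) of \cref{thm:equivariant_costratification} are met, and we conclude that $\Der_G(\borel{G}{E_n})$ is costratified and has noetherian spectrum. Finally, the tt-spectrum identification $\Spc(\Der_G(\borel{G}{E_n})^c) \cong \Spec(E_n^0(BG))$, together with the resulting classification of localizing ideals, is the main input coming from \cite{BCHNP1}.

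The main obstacle is the second paragraph: correctly identifying $\Phi^H \borel{G}{E_n}$ as a well-behaved (affine weakly regular) ring spectrum for every subgroup $H$. The subtlety is that geometric fixed points of Borel-completed objects are not simply $(E_n)^H$ but involve Tate-style constructions that must be shown, in the chromatic regime, to collapse to further Lubin--Tate data with noetherian coefficients. Once this structural input from \cite{BCHNP1} is in place, the rest of the argument is a formal bootstrap via \cref{thm:costratification_for_weakly_affine} and \cref{thm:equivariant_costratification}.
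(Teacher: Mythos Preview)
Your proposal is correct and follows essentially the same route as the paper: verify the hypotheses of \cref{thm:equivariant_costratification} by showing each $\Der(\Phi^H \borel{G}{E_n})$ is affine weakly regular (hence costratified with noetherian spectrum by \cref{thm:costratification_for_weakly_affine}), with the structural input on $\Phi^H \borel{G}{E_n}$ and the spectrum identification both drawn from \cite{BCHNP1}. One small imprecision: the paper cites \cite[Theorem~6.14]{BCHNP1} to say that $\Phi^H \borel{G}{E_n}$ is nonzero only when $H$ is an abelian $p$-group of rank at most $n$, and then its homotopy is a single regular noetherian even-periodic ring---so no appeal to finite products (\cref{Exa:finite_product_of_costratified_categories}) is needed at that step.
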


\begin{proof}
	It is shown in \cite[Theorem 6.14]{BCHNP1} that $\Phi^H(\borel{G}{E_n})$ is nonzero only when $H$ is an abelian $p$-group of rank at most $n$, in which case its homotopy groups are a regular noetherian even-periodic ring. In particular, each $\Der(\Phi^H(\borel{G}{E_n}))$ is costratified with noetherian spectrum by \Cref{thm:costratification_for_weakly_affine}. By \Cref{thm:equivariant_costratification}, $\Der_G(\borel{G}{E_n})$ is costratified by $\Spc(\Der_G(\borel{G}{E_n})^c)$, which is homeomorphic to $\Spec(E^0(BG))$ by \cite[Lemma 7.3]{BCHNP1}.
\end{proof}

\begin{Rem}
	Our next example comes from the category of modules associated to equivariant complex $K$-theory $KU_G$ for a finite group $G$; see \cite{Segal68b}. We let $R(G)$ denote the complex representation ring associated to $G$. 
\end{Rem}

\begin{Thm}
	For any finite group $G$, the category $\Der_G(KU_G)$ is costratified by $\Spc(\Der_G(KU_G)^c) \cong \Spec(R(G))$. 
\end{Thm}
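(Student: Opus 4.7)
The plan is to apply \cref{thm:equivariant_costratification} to $\bbE = KU_G \in \CAlg(\Sp_G)$. We must verify, for every subgroup $H \le G$, that $\Der(\Phi^H KU_G)$ is costratified and has noetherian Balmer spectrum. The central input is the identification of the geometric fixed points: by classical equivariant $K$-theory (Segal, tom~Dieck, Greenlees--May), $\Phi^H KU_G$ is an even-periodic commutative ring spectrum whose homotopy ring is the localization of $R(H)[\beta^{\pm 1}]$ obtained by inverting the Euler classes of the nontrivial irreducible complex representations of $H$. This localization turns out to be a regular noetherian graded commutative ring concentrated in even degrees; one can see this concretely already for $H = \bbZ/n$, where the localization produces a copy of $\bbZ[1/n,\zeta_n][\beta^{\pm 1}]$ (or an appropriate product of such), and the general case is handled by reducing to cyclic subgroups via the standard restriction--induction formalism.

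Granting this, $\Der(\Phi^H KU_G)$ is affine weakly regular in the sense of \cref{def:affine_weakly_regular}, so \cref{thm:costratification_for_weakly_affine} shows that it is costratified, with Balmer spectrum $\Spec^h \pi_*\Phi^H KU_G$, which is noetherian. Both hypotheses of \cref{thm:equivariant_costratification} are therefore satisfied, and we conclude that $\Der_G(KU_G)$ is costratified with a noetherian Balmer spectrum.

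To identify the underlying set of $\Spc(\Der_G(KU_G)^c)$ with $\Spec R(G)$, one combines the generalized Quillen stratification theorem (the remark following \cref{thm:equivariant_costratification}, together with \cite[Theorem~4.3]{BCHNP1}), which expresses this spectrum as a colimit of the local spectra $\Spec^h \pi_*\Phi^H KU_G$ indexed by conjugacy classes of subgroups $H \le G$ and glued along Weyl group actions, with Segal's classical description of $\Spec R(G)$ in terms of pairs $(C, \mathfrak{p})$ consisting of a cyclic subgroup $C \le G$ (up to conjugacy) and a prime $\mathfrak{p}$ of the corresponding character ring. The two presentations match combinatorially, promoting the bijection of underlying sets to a homeomorphism of noetherian spectral spaces. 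The main obstacle in this proof is verifying that each $\pi_*\Phi^H KU_G$ really is a regular noetherian ring; once this is extracted from the equivariant $K$-theory literature, the costratification statement is a direct application of the descent machinery developed in this paper.
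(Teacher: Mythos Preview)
Your approach is essentially the same as the paper's: apply \cref{thm:equivariant_costratification} after verifying, via \cref{thm:costratification_for_weakly_affine}, that each $\Der(\Phi^H KU_G)$ is affine weakly regular and hence costratified with noetherian spectrum. The paper cites \cite[Lemma~8.6]{BCHNP1} for the structure of $\Phi^H KU_G$ and \cite[Lemma~8.11]{BCHNP1} for the identification $\Spc(\Der_G(KU_G)^c)\cong\Spec R(G)$, whereas you sketch both arguments; the overall strategy is identical.

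One point deserves sharpening. Your phrase ``the general case is handled by reducing to cyclic subgroups via the standard restriction--induction formalism'' is misleading. The correct statement, which the paper makes explicit, is that $\Phi^H KU_G = 0$ whenever $H$ is \emph{not} cyclic (equivariant $K$-theory has cyclic derived defect base), so those cases are vacuous. For cyclic $H$ your description of $\pi_*\Phi^H KU_G$ as a localization of $R(H)[\beta^{\pm 1}]$ is correct and yields a regular noetherian ring; there is no further reduction to perform. As written, your sketch suggests that for arbitrary $H$ one somehow passes to cyclic subgroups to compute the geometric fixed points, which is not what happens.
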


\begin{proof}
	This is similar to the previous proof. It is shown in \cite[Lemma 8.6]{BCHNP1} that $\Phi^H(KU_G)$ is nonzero only when $H$ is a cyclic subgroup of $G$, in which case its homotopy groups are a regular noetherian even-periodic ring. In particular, each $\Der(\Phi^H(KU_G))$ is costratified with noetherian spectrum by \Cref{thm:costratification_for_weakly_affine}. By \Cref{thm:equivariant_costratification}, $\Der_G(KU_G)$ is costratified by $\Spc(\Der_G(KU_G)^c)$, which is homeomorphic to $\Spec(R(G))$ by \cite[Lemma 8.11]{BCHNP1}.
\end{proof}

\begin{Rem}\label{rem:SHGQ}
	For a compact Lie group $G$, let $\SH_{G,\bbQ}$ denote the stable homotopy category of rational $G$-equivariant spectra. This is a rigidly-compactly generated tt-category which is stratified; see \cite{Greenlees19_rational} or \cite[Theorem 12.22]{bhs1}. Our final goal is to prove that it is costratified.
\end{Rem}

\begin{Rem}
	Recall that, by definition, $L$ is \emph{cotoral} in $K$ if $L$ is a normal subgroup of $K$ and $K/L$ is a torus. Moreover, for each $H \le G$ we have geometric fixed point functors
		\[
			\Phi^H \colon \SH_{G,\bbQ} \to \SH_{\bbQ}
		\]
	which are jointly conservative (for example, \cite[Proposition 3.3.10]{Schwede18_global}). Since the spectrum of $\SH_{\bbQ} \simeq \Der(\bbQ)$ is a single point, we obtain a prime ideal $\mathfrak{p}_H \in \Spc(\SH_{G,\bbQ})$ for each $H \le G$. Up to conjugacy, these turn out to be all the prime ideals: 
\end{Rem}

\begin{Thm}[Greenlees]\label{thm:spectrumrationalgspectra}
    Let $G$ be a compact Lie group. Then as a set 
        \[
            \Spc(\SH_{G,\bbQ}^c) = \SET{ \mathfrak{p}_H}{(H)  \text{ conjugacy class of closed subgroups in } G}.
        \]
    The specialization order is determined by cotoral inclusions:
        \[
            \mathfrak{p}_K \subseteq \mathfrak{p}_H  \text{ if and only if $K$ is conjugate to a subgroup cotoral in $H$.}
        \]
    The topology on $\Spc(\SH_{G,\bbQ}^c)$ is the ``Zariski topology on the $f$-topology'' of~\cite{Greenlees98_rational}. 
\end{Thm}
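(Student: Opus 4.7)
The plan is to use the jointly conservative family of geometric fixed point functors $\Phi^H \colon \SH_{G,\bbQ} \to \SH_{\bbQ}$ to reduce the description of the underlying set of $\Spc(\SH_{G,\bbQ}^c)$ to rational non-equivariant data, then to leverage the iterated-fixed-point identity for cotoral inclusions to pin down the specialization order, and finally to compare with the Greenlees f-topology via the classification of thick ideals.

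First I would establish the set-theoretic identification. For each closed $H \le G$, the functor $\Phi^H$ is geometric with target $\SH_{\bbQ} \simeq \Der(\bbQ)$, whose spectrum is a single point; hence the kernel $\mathfrak{p}_H \coloneqq \ker(\Phi^H|_{\SH_{G,\bbQ}^c})$ is a prime. Since the family $\{\Phi^H\}_{(H)}$ is jointly conservative, applying \cref{cor:surjective-for-conservative} to the product functor shows that the induced map on spectra is jointly surjective, so every prime of $\SH_{G,\bbQ}^c$ is of the form $\mathfrak{p}_H$. Conjugate subgroups give naturally isomorphic geometric fixed point functors, so the assignment $H \mapsto \mathfrak{p}_H$ descends to conjugacy classes. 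For injectivity, given non-conjugate $H$ and $K$, a compact $G$-spectrum built from orbit spectra $G/L_+$ and suitable Burnside-ring idempotents distinguishes $\mathfrak{p}_H$ and $\mathfrak{p}_K$.

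Next I would analyse the specialization order. If $K$ is cotoral in $H$, so $K \trianglelefteq H$ with $H/K$ a torus $T$, then geometric fixed points factor: rationally, for any $X \in \SH_{G,\bbQ}$ one has $\Phi^H(X) \simeq \Phi^{T}(\Phi^K(X))$ in $\SH_{N_G(K)/K,\bbQ}$, so $\Phi^K(X)=0$ forces $\Phi^H(X)=0$, giving $\mathfrak{p}_K \subseteq \mathfrak{p}_H$. Conjugation stability promotes this to the stated condition on conjugacy classes. The converse direction — that these are the only specializations — requires separation by explicit compact objects: for each pair $(H,K)$ with $K$ not cotorally subconjugate to $H$, one constructs a compact $G$-spectrum, typically built from Euler classes of representations of $H$ and from the rational idempotent decomposition of the Burnside ring, that lies in $\mathfrak{p}_K$ but not in $\mathfrak{p}_H$.

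The main obstacle will be matching the resulting Balmer topology with the Zariski topology on the f-topology of \cite{Greenlees98_rational}. My plan is to invoke the universality afforded by \cref{cor:usual-uniqueness}: it suffices to show that the supports $\supp(x) \subseteq \{\mathfrak{p}_H\}$ of compact generators produce precisely the Thomason closed subsets of the Zariski--f-topology. Concretely, the supports of the orbit spectra $G/H_+$ should realize cotoral closures of conjugacy classes of subgroups of $H$, while cofibers of Euler classes of representations should cut out the f-closed subsets governing continuous variation of subgroups. Verifying this dictionary in the compact Lie setting is delicate; an alternative and likely more efficient route is to transfer the computation through the algebraic model of \cite{Greenlees19_rational}, where the spectrum admits a direct description that can be matched with the f-Zariski space, and then transport the homeomorphism back along the tt-equivalence.
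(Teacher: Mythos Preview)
The paper does not prove this theorem: it is stated as a result of Greenlees, with attribution in the theorem header and a citation to \cite{Greenlees98_rational}, and no proof environment follows it. There is therefore nothing in the paper to compare your proposal against. Your sketch contains reasonable ingredients (joint conservativity of geometric fixed points, the cotoral factorization $\Phi^H \simeq \Phi^{H/K}\Phi^K$), but several steps are left as assertions --- notably the injectivity of $(H) \mapsto \mathfrak{p}_H$, the converse direction of the specialization order, and the identification of the topology --- and these are precisely the substantive content of Greenlees's work that the paper is importing rather than reproving.
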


\begin{Rem}
    The space $\Spc(\SH_{G,\bbQ}^c)$ is weakly noetherian by \cite[Lem.~12.12]{bhs1}.
\end{Rem}
 
\begin{Thm}\label{thm:rational-spectra}
    For any compact Lie group $G$, the category of rational $G$-spectra is costratified. 
\end{Thm}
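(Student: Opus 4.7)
The plan is to apply the joint form of our bootstrap result, namely \cref{prop:bootstrap_for_algebraic_categories}, to the family of geometric fixed point functors $\Phi^H \colon \SH_{G,\bbQ} \to \SH_{\bbQ}$ indexed by conjugacy classes of closed subgroups $H \le G$. Three ingredients are needed: (i) each $\Phi^H$ is a geometric functor in the sense of \cref{hyp:geometric-base-change}, (ii) the targets $\SH_{\bbQ}$ are costratified, and (iii) the induced maps on spectra are jointly surjective. Additionally, the hypothesis of \cref{prop:bootstrap_for_algebraic_categories} requires us to know that $\SH_{G,\bbQ}$ is already stratified, which is the theorem of Greenlees recalled in \cref{rem:SHGQ}.

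Ingredient (i) is standard: geometric fixed points preserve both the symmetric monoidal structure and arbitrary coproducts. Ingredient (ii) follows because $\SH_{\bbQ} \simeq \Der(\bbQ)$, which is costratified by \cref{ex:costratified_noetherian_ring} (or because $\Der(\bbQ)$ is a tt-field and we can invoke \cref{cor:stratttfields}). Ingredient (iii) is precisely the content of \cref{thm:spectrumrationalgspectra}: every prime ideal of $\Spc(\SH_{G,\bbQ}^c)$ is of the form $\mathfrak{p}_H$ for some (conjugacy class of) closed subgroup $H \le G$, and by construction $\mathfrak{p}_H$ is the image of the unique point of $\Spc(\SH_{\bbQ}^c)$ under $\Spc(\Phi^H)$. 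Thus the family $\{\Spc(\Phi^H)\}_{(H)}$ covers $\Spc(\SH_{G,\bbQ}^c)$.

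Putting these pieces together, we are in the situation of \cref{prop:bootstrap_for_algebraic_categories}: $\SH_{G,\bbQ}$ is a stratified, rigidly-compactly generated tt-category whose weakly noetherian spectrum (see \cref{rem:SHGQ}) is covered by the images of the spectral maps induced by a family of geometric functors to costratified categories. The proposition then yields that $\SH_{G,\bbQ}$ is costratified. There is no real obstacle here; the theorem is essentially a direct application of the machinery, with all the nontrivial geometric input already packaged into Greenlees' stratification result and his computation of $\Spc(\SH_{G,\bbQ}^c)$. The only mild point to verify is the joint surjectivity, but this is immediate from the description of the spectrum in \cref{thm:spectrumrationalgspectra}.
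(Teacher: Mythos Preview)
Your proposal is correct and follows essentially the same approach as the paper: both apply \cref{prop:bootstrap_for_algebraic_categories} to the family of geometric fixed point functors $\Phi^H\colon \SH_{G,\bbQ}\to \SH_{\bbQ}\simeq \Der(\bbQ)$, using Greenlees' stratification result (\cref{rem:SHGQ}), the costratification of $\Der(\bbQ)$ (\cref{cor:stratttfields}), and the description of the spectrum in \cref{thm:spectrumrationalgspectra} for joint surjectivity.
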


\begin{proof}
We apply bootstrap to the collection of geometric fixed point functors $\Phi^H \colon \SH_{G,\bbQ} \to \SH_{\bbQ} \simeq \Der(\bbQ)$. The target category is costratified (\Cref{cor:stratttfields}), while $\SH_{G,\bbQ}$ is stratified (\cref{rem:SHGQ}). Therefore, by \Cref{prop:bootstrap_for_algebraic_categories} and \Cref{thm:spectrumrationalgspectra} we deduce that $\Spc(\SH_{G,\bbQ})$ is costratified. 
\end{proof}

\section{Open questions}\label{sec:open-questions}

We collect here some questions we do not know the answer to.

\begin{Que}
	Does the detection property always hold? (See \cref{rem:tantalizing}.)
\end{Que}

\begin{Que}
	Does stratification imply costratification? (See \cref{rem:does-strat-imply-costrat}.)
\end{Que}

\begin{Que}
	What is the cosupport of $\gP$? More precisely, in what generality is it true that $\Cosupp(\gP) = \gen(\cat P)$? (See \cref{lem:cosupp-of-gP}.)
\end{Que}

\begin{Que}
	Does the (co)detection property always hold for weak (co)rings?
\end{Que}

\begin{Que}
	A rigidly-compactly generated tt-category $\cat T$ satisfies the telescope conjecture if it is stratified with generically noetherian spectrum; cf.~\cite[Theorem~9.11]{bhs1}. Can this result be improved if $\cat T$ is costratified?
\end{Que}

\begin{Que}
	Does there exist a geometric functor $f^*\colon \cat T \to \cat S$ between rigidly-compactly generated tt-categories such that $f^*$ is conservative but $f^!$ is not? (See \cref{cor:conservative-iff-surjective} and the results cited in its proof.) 
\end{Que}

\begin{Que}
	Is it true that if $f^*\colon\cat T\to\cat S$ is any geometric functor and~$\cat S$ satisfies the local-to-global principle, then $\cat T$ satisfies the local-to-global principle for objects $t\in \cat T$ with $\Supp(t) \subseteq \im \varphi$? It suffices to prove it under the additional assumption that $f_*$ is conservative. (See \cref{sec:descending-LGP}.)
\end{Que}

\begin{Que}
	Does the local-to-global principle hold for an object $t \in \cat T$ if its support $\Supp(t)$ is noetherian?
\end{Que}

\begin{Que}
	Is there a characterization of when the local-to-global principle holds purely in terms of the topology of $\Spc(\cat T^c)$?
\end{Que}

\begin{Que}
	Does stratification \emph{always} descend along a conservative geometric functor $f^*\colon \cat T \to \cat S$ between rigidly-compactly generated tt-categories? If so, our bootstrap theorem (\cref{cor:detectingcomin}) implies that costratification always descends as well.
\end{Que}

\addtocontents{toc}{\vspace{\normalbaselineskip}}
\bibliographystyle{alphasort}\bibliography{bibliography}

\end{document}